\documentclass[10pt,a4paper]{article}
\usepackage[utf8]{inputenc}
\usepackage[english]{babel}
\usepackage[all]{xy}
\usepackage{enumitem}
\usepackage[dvipsnames]{xcolor}
\usepackage{graphicx}
\usepackage{stmaryrd}
\usepackage{amsfonts,amstext,amssymb,amsmath,amsthm,pspicture,bigints}
\usepackage{fullpage}
\usepackage{moreverb}
\usepackage{pifont}
\usepackage{pst-all}
\usepackage{tikz}
\usepackage[left=2cm,right=2cm,top=2cm,bottom=2cm]{geometry}
\usepackage{esint}
\usepackage{fourier-orns}
\usepackage{mathrsfs}
\usepackage{enumitem}
\usepackage{ulem}
\usepackage{mathtools}
\usepackage{math}
\mathtoolsset{showonlyrefs}
\usepackage{array}
\usepackage{hyperref}
\usepackage{braket}
\newcommand{\ii }{{\rm i} }
\newcommand{\op }{\textnormal{Op}}
\usepackage{authblk}
\newcolumntype{C}[1]{>{\centering\arraybackslash}b{#1}}
\newcolumntype{R}[1]{>{\raggedleft\arraybackslash}b{#1}}
\newcolumntype{L}[1]{>{\raggedright\arraybackslash}b{#1}}
\newcolumntype{M}[1]{>{\centering}m{#1}}
\newtheorem{theo}{Theorem}[section]
\newtheorem{defin}{Definition}[section]
\newtheorem{lem}{Lemma}[section]
\newtheorem{prop}{Proposition}[section]
\newtheorem{assu}{Assumption}

\newtheorem{remark}{Remark}[section]

\newtheorem{cor}{Corollary}[section]
\usepackage{bbm}
\numberwithin{equation}{section}

\date{}
\title{Quasi-periodic traveling electron layers}
\author{Luca Franzoi \thanks{Dipartimento di Matematica, Universit\'a degli Studi di Roma Tor Vergata, Via della Ricerca Scientifica 1, 00133 Roma, Italy.\\
E-mail address : franzoi@mat.uniroma2.it}\qquad Emeric Roulley\thanks{Dipartimento di Matematica ``Federigo Enriques'', Universit\'a degli Studi di Milano, Via Cesare Saldini 50, 20133 Milano,
Italy.\\
		E-mail address : emeric.roulley@unimi.it}}
\begin{document}
	\maketitle
	\begin{abstract}
		We consider the one dimensional space-periodic Vlasov-Poisson equations and construct, close to symmetric flat velocity strips, small amplitude traveling quasi-periodic electron-layers, namely strip-shaped patches of electrons in the phase space. These solutions are found for most values of the strip area. The proof uses a Nash-Moser construction together with reducibility arguments based on pseudo-differential homogeneous expansions and KAM reductions. Thanks to a suitable linear transformation of the unknowns, we reveal a connection between the electron patch problem and the classical, physically relevant, electronic Euler-Poisson system with cubic pressure law. As a direct, non trivial consequence, we obtain small-amplitude quasi-periodic traveling waves for this model as well. To the best of our knowledge, this work provides the first rigorous example of construction of quasi-periodic solutions for kinetic models. 
	\end{abstract}

\smallskip

\noindent
{\bf Keywords:} Plasma dynamics, Vlasov-Poisson equations, electronic Euler-Poisson equations, quasi-periodic traveling waves, electron layers.

\noindent
{\bf MSC 2020:} 35C07, 35Q83, 35R35, 37K55.

\tableofcontents
\section{Introduction}
This work proposes to study 
the emergence of coherent structures in the form of quasi-periodic traveling wave solutions for equations coming from the kinetic theory of charged particles.
More precisely, we consider the simplest model describing the evolution of a collisionless plasma, namely the space-periodic one dimensional Vlasov-Poisson equation. While smooth solutions are generally known to decay towards trivial homogeneous states (Landau damping), lowering the regularity allows to obtain long-lived structures such as the classical BGK waves. In this work we construct quasi-periodic solutions in the electron-layer class for which the solution is a uniform probability distribution of electron density inside a strip-shaped domain in the phase space. In this introduction, we present the model, the type of solutions we are looking for and expose our main result. Then, we contextualize our work among the existing literature and end with the presentation of the key steps in the proof of our new Theorem \ref{thm QP traveling electron layers}.
\subsection{Electron layer solutions for the Vlasov-Poisson equations and main result}
This study investigates some dynamical properties for solutions to the one dimensional Vlasov-Poisson system which writes
	\begin{equation}\label{Vlasov Poisson}
		\begin{cases}
			\partial_{t}f(t,x,v)+v\,\partial_{x}f(t,x,v)-E(t,x)\,\partial_{v}f(t,x,v)=0,\\
			E(t,x)=\partial_{x}\boldsymbol{\varphi}(t,x),\\
			\partial_{x}^2\boldsymbol{\varphi}(t,x)=1-\displaystyle\int_{\mathbb{R}}f(t,x,v){\rm d}v,
		\end{cases}\qquad(t,x,v)\in\mathbb{R}_{+}\times\mathbb{T}\times\mathbb{R},
	\end{equation}
where $\mathbb{T}\triangleq\mathbb{R}/2\pi\mathbb{Z}$ is the periodic box of length $2\pi$. This model describes the evolution of a collisionless neutral plasma composed with free electrons moving in a neutralizing uniform background field of ions: for an introduction on its derivation, we refer the reader to \cite[Chap. 13]{BM02} and \cite{D87}. The unknown quantity $f(t,x,v)$ stands for the density of electrons traveling with speed $v$ at position $x$ and time $t.$ The plasma is supposed to have only one-dimensional properties, which explains the absence of magnetic effects and only the presence of electric ones through the electric field $E$. The latter is related to the electric potential $\boldsymbol{\varphi}$ through the second relation in \eqref{Vlasov Poisson}. The periodicity in the space variable $x\in\T$ imposes the neutrality condition
\begin{equation}\label{neutre cond}
	\fint_{\mathbb{T}}\int_{\mathbb{R}}f(t,x,v){\rm d}x{\rm d}v=1,\qquad\fint_{\mathbb{T}}\triangleq\frac{1}{2\pi}\int_{0}^{2\pi}.
\end{equation}
Note that the problem only involves $\partial_{x}\boldsymbol{\varphi}$, therefore $\boldsymbol{\varphi}$ is well-defined up to a time dependent additive constant that can be chosen so that $\boldsymbol{\varphi}$ has zero space average at any time. Throughout the paper, we shall use the notation
$$\forall \,m\in\mathbb{Z}^*,\quad\partial_{x}^{m}1=0\qquad\textnormal{and}\qquad\forall \,j\in\mathbb{Z}^*,\quad\partial_{x}^{m}\mathbf{e}_j=(\ii j)^m\mathbf{e}_j,\qquad\mathbf{e}_{j}(x)\triangleq e^{\ii jx}.$$
Hence, we get from \eqref{neutre cond}
\begin{equation}\label{inv Lap phi}
	\boldsymbol{\varphi}(t,x)=\partial_{x}^{-2}\bigg(1-\int_{\mathbb{R}}f(t,x,v){\rm d}v\bigg).
\end{equation}
Observe that the equation \eqref{Vlasov Poisson} can be reformulated as an active scalar equation as follows
\begin{equation}\label{active eq}
	\partial_{t}f(t,x,v)+\mathbf{v}(t,x,v)\cdot\nabla_{x,v}f(t,x,v)=0,\qquad\mathbf{v}(t,x,v)\triangleq\begin{pmatrix}
		v\\
		-E(t,x)
	\end{pmatrix},\qquad\nabla_{x,v}\triangleq\begin{pmatrix}
		\partial_x\\
		\partial_v
	\end{pmatrix}.
\end{equation}
We mention that the scalar product above must be understood as computed in the tangent space (isomorphic to $\mathbb{R}^2$) to the cylinder manifold $\mathbb{T}\times\mathbb{R}$ at the point $(x,v).$ We refer the reader to \cite[Sec. 1.1]{R23} for more details. Remark that the velocity field $\mathbf{v}$ is $(x,v)$-solenoidal (that is, divergence-free) and can be expressed as
\begin{equation}\label{vel pot}
	\mathbf{v}(t,x,v)=-\nabla_{x,v}^{\perp}\boldsymbol{\Psi}(t,x,v),\qquad\boldsymbol{\Psi}(t,x,v)\triangleq\frac{v^2}{2}+\boldsymbol{\varphi}(t,x),\qquad\nabla_{x,v}^{\perp}\triangleq\begin{pmatrix}
		-\partial_v\\
		\partial_{x}
	\end{pmatrix}.
\end{equation}
 A Yudovich type result has been obtained by Dziurzynski in \cite[Thm. 2.1.1]{D87}, where he showed that any compactly supported initial datum $f_0\in L^{\infty}(\mathbb{T}\times\mathbb{R})$ satisfying the neutrality condition \eqref{neutre cond} generates a unique global in time weak solution $f\in L^{\infty}\big([0,\infty),L^{\infty}(\mathbb{T}\times\mathbb{R})\big)$, which expresses as follows 
$$f(t,x,v)=f_0\big(X_t^{-1}(x,v)\big),\qquad\begin{cases}
	\partial_{t}X_{t}(x,v)=\mathbf{v}\big(t,X_{t}(x,v)\big),\\
	X_0(x,v)=(x,v).
\end{cases}$$
A special subclass of weak solutions is composed of the so-called \textit{patches of electrons}, given by functions of the form
$$f(t,x,v)=\frac{2\pi}{|\Omega_t|}\mathbf{1}_{\Omega_t},\qquad\Omega_t\triangleq X_t(\Omega_0),$$
where $\Omega_0\subset\mathbb{T}\times\mathbb{R}$ is a bounded initial domain.  Moreover, the divergence-free property of the velocity field $\mathbf{v}$ implies
\begin{equation}\label{area pres int}
    \forall\, t\geqslant 0, \quad |\Omega_t|=|\Omega_0|.
\end{equation}
In particular, if the initial domain is strip-shaped, namely there exist two initial $2\pi$-periodic profiles $v_{\pm}^{0}$ such that
$$\Omega_0 \equiv S_0\triangleq\big\{(x,v)\in\mathbb{T}\times\mathbb{R}\quad\textnormal{s.t.}\quad v_{-}^{0}(x)<v<v_{+}^{0}(x)\big\},$$
then, for any time $t>0$, this structure is preserved and we have
\begin{equation}\label{electron layer shape}
	f(t,x,v)=\frac{2\pi}{|S_t|}\mathbf{1}_{S_t}(x,v),\qquad S_t\triangleq X_{t}(S_0)=\big\{(x,v)\in\mathbb{T}\times\mathbb{R}\quad\textnormal{s.t.}\quad v_{-}(t,x)<v<v_{+}(t,x)\big\},
\end{equation}
with $x\mapsto v_{\pm}(t,x)$ two $2\pi$-periodic profiles. Furthermore, the area-preserving condition \eqref{area pres int} becomes
\begin{equation}\label{area cond0}
	\forall\,t\geqslant0,\quad\int_{0}^{2\pi}\big[v_{+}(t,x)-v_{-}(t,x)\big]{\rm d}x=|S_t|=|S_0|=\int_{0}^{2\pi}\big[v_{+}^{0}(x)-v_{-}^{0}(x)\big]{\rm d}x.
\end{equation}
Such solutions are called \textit{electron layers} and are of interest in this study. A family of trivial stationary electron layers  corresponds to symmetric flat strips parametrized by $a>0$, see \cite[Lem. 1.1]{R23},
	\begin{equation}\label{stationary sol}
		f_a(x,v)=\frac{1}{2a}\mathbf{1}_{S_{\textnormal{\tiny{flat}}}(a)}(x,v),\qquad S_{\textnormal{\tiny{flat}}}(a)\triangleq\mathbb{T}\times\left[-a,a\right].
	\end{equation}
	Let us mention that in this work we restrict the discussion to symmetric stationary states in the form \eqref{stationary sol} in order to simplify the  already complicated study, having only a one parameter dependence through the number $a>0$. However, we believe that the main result of the present work can be extended to non-symmetric configurations in the spirit of \cite{R23}, treating the time-periodic setting. Now, looking for non-trivial electron layer solutions  close to these equilibria, we are led to consider an ansatz in the form \eqref{electron layer shape}, where for all time $t\geqslant0,$ we have $|S_t|=4\pi a$ and the profiles $v_{\pm}(t,\,\cdot\,)$ take the following form, with $\varepsilon \in (0,1)$ a small number,
    \begin{equation}\label{v.pm}
        v_{\pm}(t,x)=\pm a+\varepsilon r_{\pm}(t,x),\qquad \fint_{\mathbb{T}}r_{\pm}(t,x){\rm d}x=0.
    \end{equation}

    \begin{figure}[!h]
		\begin{center}
			\begin{tikzpicture}[scale=0.8]
				\draw[->](-1,0)--(8,0)
				node[below right] {$x$};
				\draw[->](0,-3)--(0,3)
				node[left] {$v$};
				\draw[black,dashed] (0,-2)--(6.88,-2);
				\draw[black,dashed] (0,2)--(6.88,2);
				\draw[black] (6.88,-3)--(6.88,3);
				\node at (-0.4,-2) {$-a$};
				\node at (-0.5,2) {$a$};
				\node at (-0.2,-0.2) {$0$};
				\node at (7.15,-0.2) {$2\pi$};
				\draw[domain=0:6.88,thick, black,samples=500] plot [variable=\t] (\t,{2+0.5*sin(50*pi*\t)});
				\draw[domain=0:6.88,thick, black,samples=500] plot [variable=\t] (\t,{-2+0.5*cos(50*pi*\t)});
				\draw[red] (2.85,0.1)--(2.85,-0.1);
				\node at (2.85,-0.3) {$\textcolor{red}{x_2}$};
				\node at (2.85,1.3) {$\textcolor{red}{\varepsilon r_+(t,x_2)}$};
				\draw[->,red](2.85,2)--(2.85,2.5);
				\draw[red] (5.75,0.1)--(5.75,-0.1);
				\node at (5.75,-0.3) {$\textcolor{red}{x_1}$};
				\node at (5.75,-1.3) {$\textcolor{red}{\varepsilon r_-(t,x_1)}$};
				\draw[->,red](5.75,-2)--(5.75,-2.5);
			\end{tikzpicture}
		\end{center}
		\caption{Representation of electron layers near the symmetric homogeneous solution.}
	\end{figure}
    
	Following the computations carried out in \cite[Sec. 1.2]{R23}, adapting them to our $2\pi$-periodic setting, we obtain that the deformations $r_{\pm}$ solve the following system
	\begin{equation}\label{system rpm}
		\begin{cases}
			\partial_{t}r_{+}(t,x)=-\partial_{x}\Big(\frac{1}{2\varepsilon}\big(a+\varepsilon r_{+}(t,x)\big)^2-\tfrac{1}{2a}(\partial_{xx}^{-1}r_{+})(t,x)+\tfrac{1}{2a}(\partial_{xx}^{-1}r_{-})(t,x)\Big),\\
			\partial_{t}r_{-}(t,x)=-\partial_{x}\Big(\frac{1}{2\varepsilon}\big(-a+\varepsilon r_{-}(t,x)\big)^2-\tfrac{1}{2a}(\partial_{xx}^{-1}r_{+})(t,x)+\tfrac{1}{2a}(\partial_{xx}^{-1}r_{-})(t,x)\Big).
		\end{cases}
	\end{equation}
Note that the system \eqref{system rpm} is Hamiltonian. More precisely, the set of equations \eqref{system rpm} writes
    \begin{equation}\label{firstHAM:intro}
        \partial_tr=\frac{1}{\varepsilon^2}\mathcal{J}\nabla\mathcal{E}(r),\qquad r\triangleq(r_+,r_-),\qquad\mathcal{J}\triangleq\begin{pmatrix}
            -\partial_x & 0\\
            0 & \partial_x
        \end{pmatrix},
    \end{equation}
    where
    $$\mathcal{E}(t)\triangleq\frac{|S_t|}{2\pi}\Big(\mathcal{E}_{\textnormal{\tiny{kin}}}(t)+\mathcal{E}_{\textnormal{\tiny{pot}}}(t)\Big)$$
    is (up to a multiplicative renormalizing constant coefficient) the sum of the kinetic and potential energies $\mathcal{E}_{\textnormal{\tiny{kin}}}$ and $\mathcal{E}_{\textnormal{\tiny{pot}}}.$ For more details, we refer the reader to Proposition \ref{prop Hamiltonian form}.
    The equations \eqref{system rpm} enjoy two important symmetries. First of all, they are time reversible, namely if $(t,x)\mapsto r_{\pm}(t,x)$ is a solution, so is $(t,x)\mapsto r_{\pm}(-t,-x)$. Accordingly, we say that a solution of \eqref{system rpm} is \textit{reversible} if $r_{\pm}(t,x) = r_{\pm}(-t,-x)$. Second, since the vector field is autonomous, the solutions are \textit{invariant by space translations}. At last, we mention that the system \eqref{system rpm} is also invariant under the transformation $(r_+,r_-)\mapsto-(r_-,r_+).$\\
    
	In \cite{R23}, the emergence of both small and large amplitude \textit{E(lectron)-states}, that is uniformly traveling periodic electron layers,  has been proved using local and global bifurcation techniques. In particular, \cite[Thm. 1.1-$(i)$]{R23} gives the emergence of pairs of branches formed with $\mathbf{m}$-symmetric (i.e. $r_{\pm}(t,x+\tfrac{2\pi}{\mathbf{m}})=r_{\pm}(t,x)$) E-states and bifurcating from the symmetric flat strip $S_{\textnormal{\tiny{flat}}}(a)$ at the velocities (renormalized with the $2\pi$-periodic setting)
	$$c_{\mathbf{m}}^{\pm}(a)\triangleq\pm\sqrt{\frac{a^2\mathbf{m}^2+1}{\mathbf{m}^2}}\cdot$$
	The goal of the present paper is to extend this to the quasi-periodic setting. More precisely, we are going to construct \textit{traveling quasi-periodic} waves solutions to \eqref{system rpm}.
    \paragraph{Main result.}
     We first provide the notion of quasi-periodic traveling waves, as introduced in \cite{BFM21}. We also refer the reader to the general presentation in Appendix \ref{sec travQP}.
    \begin{defin}{\bf (Traveling quasi-periodic waves)}\label{def.travQP.intro}
        A function $r:\mathbb{R}\times\mathbb{T}\rightarrow\mathbb{R}^2$ is said to be \textit{traveling quasi-periodic} if there exist $d\in\mathbb{N}^*$, $\vec{\jmath}\in\mathbb{Z}^d$, $\check{r}:\mathbb{T}^{d}\rightarrow\mathbb{R}^2$ and $\omega\in\mathbb{R}^d$ such that
	\begin{equation}\label{non-res QP}
	    r(t,x)=\check{r}(\omega t-\vec{\jmath}x)\qquad\textnormal{with}\qquad\forall \,\ell\in\mathbb{Z}^d,\quad\omega\cdot \ell \neq0.
	\end{equation}
	We call $\omega$ the \textit{frequency vector} and $\vec{\jmath}$ the \textit{velocity vector}.
    \end{defin}

    Motivated by the periodic case \cite{R23}, the goal of this article is to prove the existence of small amplitude quasi-periodic traveling electron layers that are solutions of \eqref{system rpm} and therefore fill invariant tori around the trivial state $f_a$ in \eqref{stationary sol}. The frequency vector $\omega$ will depend on the strip size geometrical parameter $a$, chosen in particular so that the non-resonance property \eqref{non-res QP} holds. Regarding the regularity, we will prove the existence of quasi-periodic traveling electron layers $\check{r}=(\check{r}_{+},\check{r}_{-})$ belonging to some Sobolev space
    \begin{equation}\label{sob.space.d.intro}
        H^s(\T^d,\R^2) \triangleq \Big\{ \check{r}(\,\cdot\,) = \sum_{\ell \in \Z^d} \check{r}_{\ell} e^{\im \ell\cdot (\,\cdot\,)} \in L^2(\T^d,\R^2) \quad \textnormal{s.t.} \quad \| \check{r}\|_{s}<\infty \Big\}, \qquad \| \check{r} \|_{s}^2 \triangleq \sum_{\ell \in\Z^d} \braket{\ell}^{2s}|\check{r}_{\ell}|^2 .
    \end{equation}
    Linearizing the system \eqref{system rpm} at a general state yields
$$\partial_{t}\begin{pmatrix}
	\rho_{+}\\
	\rho_{-}
\end{pmatrix}=\mathcal{J}\mathbf{M}_{\varepsilon r}(a)\begin{pmatrix}
	\rho_{+}\\
	\rho_{-}
\end{pmatrix},\qquad\mathbf{M}_{\varepsilon r}(a)\triangleq\begin{pmatrix}
	v_{+}-\tfrac{1}{2a}\partial_{xx}^{-1} & \tfrac{1}{2a}\partial_{xx}^{-1}\vspace{0.1cm}\\
	\tfrac{1}{2a}\partial_{xx}^{-1} & -v_{-}-\tfrac{1}{2a}\partial_{xx}^{-1}
\end{pmatrix},$$
with $v_{\pm}$ as in \eqref{v.pm}.
Specializing it for the equilibrium state $(r_+,r_-)=(0,0),$ we find that following Fourier multiplier equation
\begin{equation}\label{linear op 0 intro}
	\partial_{t}\begin{pmatrix}
		\rho_{+}\\
		\rho_{-}
	\end{pmatrix}=\mathcal{J}\mathbf{M}_{0}(a)\begin{pmatrix}
		\rho_{+}\\
		\rho_{-}
	\end{pmatrix},\qquad\mathbf{M}_{0}(a)\triangleq\begin{pmatrix}
		a-\tfrac{1}{2a}\partial_{xx}^{-1} & \tfrac{1}{2a}\partial_{xx}^{-1}\vspace{0.1cm}\\
		\tfrac{1}{2a}\partial_{xx}^{-1} & a-\tfrac{1}{2a}\partial_{xx}^{-1}
	\end{pmatrix}.
\end{equation}
Expanding $\rho_{\pm}$ in Fourier series
$$\rho_{\pm}(t,x)=\sum_{j\in\mathbb{Z}^*}\rho_{j}^{\pm}(t)\mathbf{e}_{j}(x),$$
the previous system is equivalent to the following countable set of linear autonomous differential systems
\begin{equation*}
	\partial_{t}\begin{pmatrix}
	\rho_{j}^{+}\\
	\rho_{j}^{-}
\end{pmatrix}=M_{j}(a)\begin{pmatrix}
\rho_{j}^{+}\\
\rho_{j}^{-}
\end{pmatrix},\qquad M_{j}(a)\triangleq\frac{\ii j}{|j|}\begin{pmatrix}
-a|j|-\tfrac{1}{2a|j|} & \tfrac{1}{2a|j|}\\
-\tfrac{1}{2a|j|} & a|j|+\tfrac{1}{2a|j|}
\end{pmatrix}.
\end{equation*}
It is possible to diagonalize for any $j\in\mathbb{Z}^*$ the matrix $M_j(a)$ as follows
\begin{equation}\label{freq intro}
Q_{j}^{-1}(a)M_{j}(a)Q_{j}(a)=-\begin{pmatrix}
		\ii\Omega_{j}(a) & 0\\
		0 & \overline{\ii\Omega_{j}(a)}
	\end{pmatrix},\qquad\Omega_j(a)\triangleq\frac{j}{|j|}\sqrt{a^2j^2+1},    
\end{equation}
where
    \begin{equation}\label{def:Qj-intro}
        Q_{j}(a)\triangleq\frac{1}{\sqrt{1-b_{j}^{2}(a)}}\begin{pmatrix}
	1 & b_{j}(a)\\
	b_{j}(a) & 1
\end{pmatrix},\qquad b_{j}(a)\triangleq\frac{1}{2a|j|\left(\sqrt{a^2j^2+1}+a|j|+\frac{1}{2a|j|}\right)}\in(0,1).
    \end{equation}
    This allows to solve the system \eqref{linear op 0 intro} and find, for almost every values of the parameter $a$, reversible traveling  quasi-periodic solutions. More precisely, given $\mathbb{S}_{\pm}\subset\mathbb{N}^*$ with $\mathbb{S}_+\cap\mathbb{S}_-=\varnothing$, we have the following solutions to \eqref{linear op 0 intro},
    \begin{equation}\label{linear travQP}
        \begin{pmatrix}
		\rho_+\\
		\rho_-
	\end{pmatrix}(t,x)=\sum_{j\in\mathbb{S}_+}\frac{\mathfrak{a}_{j}^+\cos\big(\Omega_{j}(a)t-jx\big)}{\sqrt{1-b_{j}^2(a)}}\begin{pmatrix}
	1\\
	b_{j}(a)
\end{pmatrix}+\sum_{j\in\mathbb{S}_-}\frac{\mathfrak{a}_{j}^-\cos\big(\Omega_{j}(a)t+jx\big)}{\sqrt{1-b_{j}^2(a)}}\begin{pmatrix}
b_{j}(a)\\
1
\end{pmatrix}.
    \end{equation}
    Such solutions are linear superpositions of periodic traveling waves, with the waves in the first sum traveling to the right with speed $\tfrac{1}{j}\Omega_{j}(a)>0$, with $j\in \S_{+}$, and the ones in the second sum traveling to the left with speed $-\tfrac{1}{j}\Omega_{j}(a)$, with $j\in \S_{-}$. The assumption $\S_{+}\cap \S_{-}=\varnothing$ is here to avoid trivial resonances, therefore ensuring that there are no waves traveling in opposite directions with the same phase and generating destructive interferences. In addition, recalling Definition \ref{def.travQP.intro}, the restriction of the geometrical parameter among a full measure set is to impose the non-resonance condition 
    $$\forall\,\ell\in\mathbb{Z}^{|\mathbb{S}_+|+|\mathbb{S}_-|}\setminus\{0\},\quad \Big(\big(\Omega_{j}(a)\big)_{j\in\mathbb{S}_+},\big(-\Omega_{j}(a)\big)_{j\in\mathbb{S}_-}\Big)\cdot\ell\neq0.$$
This is possible since the parameter $a$ modulates the frequencies, see \eqref{freq intro}. For more details, we refer the reader to Lemma \ref{lem linear solutions}.
\smallskip
    
Our main result states that we are able to construct solutions at the nonlinear level preserving this structure. More precisely, we have the following theorem.  
\begin{theo}{\textbf{(Traveling quasi-periodic electron layers)}.}\label{thm QP traveling electron layers}
	Consider the following parameters
	$$0<a_0<a_1,\qquad(d_+,d_-)\in\mathbb{N}^2\setminus\{(0,0)\}.$$
	Fix finite sets of Fourier modes 
    \begin{equation}\label{sites.tang.S}
        \mathbb{S}_+=\big\{j_{1}^{+},\ldots,j_{d_+}^{+}\big\}\subset\mathbb{N}^*,\qquad\mathbb{S}_-=\big\{j_{1}^{-},\ldots,j_{d_-}^{-}\big\}\subset\mathbb{N}^*,\qquad|\mathbb{S}_+|=d_+,\qquad|\mathbb{S}_-|=d_-
    \end{equation}
	such that
    \begin{equation}\label{non.intersect.S}
        \mathbb{S}_+\cap\mathbb{S}_-=\varnothing .
    \end{equation}
	Consider some amplitudes
	$$\big(\mathfrak{a}_{j}^+\big)_{j\in\mathbb{S}_+}\in\mathbb{R}^{d_+},\qquad \big(\mathfrak{a}_{j}^-\big)_{j\in\mathbb{S}_-}\in\mathbb{R}^{d_-}.$$
	There exist $s_0=s_0(d,q_0)>0$ and $\varepsilon_0>0$ such that, for any $\varepsilon\leqslant\varepsilon_0$, there exists a Cantor set $\mathscr{C}(\varepsilon)\subset(a_0,a_1)$
	with 
	$$\lim_{\varepsilon\to 0}|\mathscr{C}(\varepsilon)|=a_1-a_0,$$
	such that, for any $a\in\mathscr{C}(\varepsilon)$, there exists a reversible traveling quasi-periodic electron layer solution to the one dimensional Vlasov-Poisson equation \eqref{Vlasov Poisson} in the form
	$$f(t,x,v)=\tfrac{1}{2a}\mathbf{1}_{S_t}(x,v),\qquad S_t=\big\{(x,v)\in\mathbb{T}\times\mathbb{R}\quad\textnormal{s.t.}\quad-a+\varepsilon r_-(t,x)<v<a+\varepsilon r_+(t,x)\big\},$$
	where
    \begin{equation}\label{solution.thm}
        \begin{pmatrix}
		r_+\\
		r_-
	\end{pmatrix}(t,x)=\sum_{j\in\mathbb{S}_+}\frac{\mathfrak{a}_{j}^+\cos\big(\Omega_{j}^+(a,\varepsilon)t-jx\big)}{\sqrt{1-b_{j}^2(a)}}\begin{pmatrix}
	1\\
	b_{j}(a)
\end{pmatrix}+\sum_{j\in\mathbb{S}_-}\frac{\mathfrak{a}_{j}^-\cos\left(\Omega_{j}^-(a,\varepsilon)t-jx\right)}{\sqrt{1-b_{j}^2(a)}}\begin{pmatrix}
b_{j}(a)\\
1
\end{pmatrix}+\mathtt{p}(t,x)
    \end{equation}
with
$$\forall\, j\in\mathbb{N}^*, \quad b_{j}(a)\triangleq\frac{1}{2a|j|\left(\sqrt{a^2j^2+1}+a|j|+\frac{1}{2a|j|}\right)}\cdot$$
It is associated with the frequency vector
$$\omega(a,\varepsilon)\triangleq\Big(\Omega_{j_1^+}^+(a,\varepsilon),\ldots,\Omega_{j_{d_+}^+}^+(a,\varepsilon),\Omega_{j_1^-}^-(a,\varepsilon),\ldots,\Omega_{j_{d_-}^-}^-(a,\varepsilon)\Big)$$
and the velocity vector
$$\vec{\jmath}\triangleq\big(j_1^+,\ldots,j_{d_+}^+,j_{1}^-,\ldots,j_{d_-}^-\big).$$
In addition, the perturbed frequencies satisfy the following limits
$$\forall\,\kappa\in\{-,+\},\quad\forall \,j\in\mathbb{S}_{\kappa},\quad\lim_{\varepsilon\to0}\Omega_{j}^{\kappa}(a,\varepsilon)=\kappa\sqrt{a^2j^2+1}.$$
Finally, the perturbation $\mathtt{p}$ writes
$$\mathtt{p}(t,x)=\check{\mathtt{p}}\big(\omega(a,\varepsilon)t-\vec{\jmath}x\big),$$
with $\check{\mathtt{p}}\in H^{s_0}(\mathbb{T}^{d_++d_-},\mathbb{R}^2)$ satisfying
$$\forall\,\varphi\in\mathbb{T}^{d_++d_-},\quad\check{\mathtt{p}}(-\varphi)=\check{\mathtt{p}}(\varphi),\qquad\|\check{\mathtt{p}}\|_{H^{s_0}(\mathbb{T}^{d_++d_-},\mathbb{R}^2)}=O(\varepsilon).$$
At last, the solutions are linearly stable.
\end{theo}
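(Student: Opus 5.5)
The proof will follow the Nash--Moser/KAM scheme of Berti--Franzoi--Maspero for traveling quasi-periodic waves, adapted to the Hamiltonian system \eqref{firstHAM:intro}.

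\emph{Reformulation.} First I rescale time and amplitude so that \eqref{system rpm} becomes a small perturbation of the linear flow $\mathcal{J}\mathbf{M}_0(a)$. The nonlinearity is the quadratic term $\tfrac{\varepsilon}{2}\partial_x(r_\pm^2)$, which is quasilinear of order one.

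Using the transformation suggested in the abstract, I would conjugate $(r_+,r_-)$ into variables of Euler--Poisson type: the density $r_+-r_-$ and the velocity $r_++r_-$. In these variables the linear part is a first-order hyperbolic system coupled through $\partial_{xx}^{-1}$.

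I then pass to the traveling ansatz $r(t,x)=\check r(\omega t-\vec\jmath x)$. This turns the problem into a functional equation on $\mathbb{T}^d$ for $\check r$. I introduce action-angle variables on the tangential sites $\mathbb{S}_\pm$, diagonalized by $Q_j(a)$ from \eqref{def:Qj-intro}, and normal variables on the complement.

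Following Berti--Bolle, I add a counterterm $\alpha\in\mathbb{R}^d$ to the frequency and look for zeros of a nonlinear operator $\mathcal{F}(i,\alpha)$ on embedded tori $i$. Reversibility $\check r(-\varphi)=\check r(\varphi)$ and the momentum (translation) invariance restrict the analysis to reversible, traveling-compatible maps.

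\emph{Nash--Moser and approximate inverse.} The core of the proof is a Nash--Moser iteration with tame estimates in $H^s(\mathbb{T}^d)$. At each step I need an approximate right inverse of $d_{i,\alpha}\mathcal{F}$ near an approximately invariant isotropic torus. Via the symplectic change of variables of Berti--Bolle, this reduces to inverting the linearized operator restricted to the normal subspace.

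That operator has the form
\[
\omega\cdot\partial_\varphi+\mathcal{J}\bigl(\mathbf{M}_0(a)+\varepsilon\,\mathrm{diag}(r_+,-r_-)+\dots\bigr).
\]
It is a $2\times2$ first-order transport system with variable coefficients and smoothing off-diagonal couplings.

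\emph{Reducibility (main obstacle).} The hardest step is reducing this linearized normal operator to constant coefficients with tame, Lipschitz-in-$a$ estimates.

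My first plan is to use the reversibility-preserving, momentum-preserving change of variables $x\mapsto x+\beta(\varphi,x)$, one for each component, to straighten the two transport terms into $(\pm\mathtt{m}_\pm)\partial_x$ with constants $\mathtt{m}_\pm=a+O(\varepsilon)$. This requires a Diophantine condition on $\omega-\mathtt{m}\vec\jmath$.

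The $\partial_{xx}^{-1}$ couplings are order $-2$. I will handle them with pseudo-differential homogeneous expansions, block-diagonalizing the remainder order by order down to a sufficiently negative order.

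After that, a KAM reducibility scheme with second Melnikov conditions
\[
|\omega\cdot\ell+\mu_j^\infty-\mu_{j'}^\infty|\geqslant \gamma\langle \ell\rangle^{-\tau},\qquad j+\vec\jmath\cdot\ell-j'=0,
\]
diagonalizes the operator, with final eigenvalues
\[
\mu_j^\infty=\Omega_j(a)\,\mathtt{m}/a\text{-type corrections}+r_j^\infty,\qquad \sup_j|r_j^\infty|=O(\varepsilon).
\]
The momentum constraint $j'=j+\vec\jmath\cdot\ell$ is essential for avoiding the trivial resonances, and here \eqref{non.intersect.S} is used.

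\emph{Measure estimates.} To show $|\mathscr{C}(\varepsilon)|\to a_1-a_0$, I use transversality (Rüssmann non-degeneracy) of $a\mapsto(\Omega_j(a))_j$. This follows from analyticity of $\sqrt{a^2j^2+1}$ and from the linear independence of these functions for distinct $j$, proved via their behaviour as $a\to\infty$ or through a Vandermonde-type argument on their Taylor expansions. Transversality then propagates to the perturbed frequencies $\mu_j^\infty$, giving the estimate $|\mathscr{C}(\varepsilon)^c|\lesssim\gamma^{c}$ with $\gamma=\varepsilon^{\mathtt{a}}$.

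Finally, the limit $\Omega_j^\pm\to\pm\sqrt{a^2j^2+1}$ comes from $\alpha=\omega+O(\varepsilon)$. Linear stability follows because the reduced normal operator is diagonal with purely imaginary eigenvalues.
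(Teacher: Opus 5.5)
Your overall architecture matches the paper's: action–angle–normal coordinates diagonalized by $Q_j(a)$, Berti–Bolle reduction to the normal operator, transport straightening, block decoupling by homogeneous expansions, KAM with momentum-constrained second Melnikov conditions, and Rüssmann measure estimates. The \emph{order} of your reducibility steps, however, does not work.

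You first straighten the two transports with two different diffeomorphisms $\mathscr{B}_+\neq\mathscr{B}_-$ (the speeds are $\approx a$ and $\approx -a$). Only afterwards do you remove the couplings by pseudo-differential homogeneous expansions. After conjugation by $\mathrm{diag}(\mathscr{B}_+,\mathscr{B}_-)$, an off-diagonal entry becomes $\mathscr{B}_+^{-1}A\,\mathscr{B}_-=(\mathscr{B}_+^{-1}A\,\mathscr{B}_+)(\mathscr{B}_+^{-1}\mathscr{B}_-)$. Here $\mathscr{B}_+^{-1}\mathscr{B}_-$ is composition with a diffeomorphism different from the identity, so these entries are no longer pseudo-differential. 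Neither the symbolic calculus of homogeneous expansions nor Egorov's theorem is then available for them. Moreover, every $\varphi$-derivative of such an operator costs one $x$-derivative. An off-diagonal block of order $-1$ (the true order of $\mathcal{J}\tfrac{1}{2a}\partial_{xx}^{-1}$, not $-2$) therefore can never meet the KAM requirement that $\langle D\rangle^{1/2}\langle\partial_\varphi\rangle^{\mathtt{b}}\mathscr{R}\langle D\rangle^{1/2}$ be modulo-tame with $\mathtt{b}$ large.

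The paper reverses the order. It first conjugates globally by $\mathbf{Q}$, which removes the $O(1)$ coupling exactly and leaves $O(\varepsilon)$ pseudo-differential off-diagonal terms of order $-1$. It then applies $\boldsymbol{\Phi}_{M-1}=\prod_m(\mathbb{I}_2+\mathbf{X}_m)$, each $\mathbf{X}_m$ obtained algebraically by dividing by $2a\kappa+\varepsilon(f_\kappa-f_{-\kappa})\neq0$. There are no small divisors, because the anti-commutator with the transports produces the difference of the two speeds. This continues until the off-diagonal part lies in $OPS^{-M}$ with $M=q+s_0+\mathtt{b}+2$. Only then are the transports straightened. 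The off-diagonal remainder, although no longer pseudo-differential, is smoothing enough that $\langle D\rangle^{n_1}\partial_\varphi^{l}\mathcal{S}_M\langle D\rangle^{n_2}$ is tame for $n_1+n_2\leqslant M-(q+l_0)-1$, which is what the KAM scheme needs.

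Your measure step also needs more than you state. The single parameter $a$ drives $d$ frequencies, so the equilibrium transversality only involves the first $q_0$ derivatives in $a$, and Rüssmann's theorem requires $C^{q_0+1}$ control of the perturbed curves. Hence every construction ($\mathtt{c}_\pm$, the KAM eigenvalues, $\vec\omega_\varepsilon(a)$) must be carried out with $C^q$ dependence on $(a,\omega)$, $q=q_0+2$. This is why the paper uses the $\mathcal{D}^q$-(modulo-)tame framework; Lipschitz estimates in $a$ are not enough.

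Likewise, $\sup_j|r_j^\infty|=O(\varepsilon)$ is too weak. For second Melnikov conditions with $j\cdot j'>0$ and $j-j'=-\vec\jmath\cdot\ell$ fixed, there are infinitely many pairs for each $\ell$. They are reduced to the transport conditions only thanks to $|j|\,|r_j^\infty|\lesssim\varepsilon\gamma^{-1}$ together with $|\Omega_j(a)-aj|\lesssim|j|^{-1}$. This decay is why the KAM remainder is kept of order $-1$.

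Finally, drop the Euler–Poisson variables. In $(r_+-r_-,r_++r_-)$ the first-order part is an off-diagonal hyperbolic coupling. The proof needs the Riemann-invariant variables $r_\pm$ (and then $\mathbf{Q}^{-1}r$), in which the transports are diagonal. The paper uses the Euler–Poisson link only as a consequence of the theorem.
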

\noindent Let us make some important remarks about Theorem \ref{thm QP traveling electron layers}.
	\begin{enumerate}
    \item  We want to emphasize once more that Theorem \ref{thm QP traveling electron layers} (and later Theorem \ref{thm EulerPoisson}) constitutes the first existence result of quasi-periodic solutions for kinetic models. In particular, Theorem \ref{thm QP traveling electron layers} selects initial data giving raise to non-decaying global in time solutions for the Vlasov-Poisson equation \eqref{Vlasov Poisson} with space periodic boundary conditions. This result goes in the opposite direction to the classical Landau damping phenomenon \cite{GI23,MV11}. This latter was mathematically proved first in the seminal work of Mouhot \& Villani \cite{MV11} using ideas from KAM theory. Our Theorem \ref{thm QP traveling electron layers} rather illustrates the KAM theory in its original purpose, namely showing the persistence of invariant tori filled by quasi-periodic solutions for Hamiltonian perturbations of integrable systems. More details are given in Section \ref{sect.ideas}. The solutions described in Theorem \ref{thm QP traveling electron layers} are far more general than the periodic ones found in \cite{R23} and their dynamics is much more complex than the classical BGK waves \cite{BGK57}.
    \item In Section \ref{appendix link Euler-Poisson}, we show that under a suitable linear change of variables, the system \eqref{system rpm} is actually equivalent to the classical Euler-Poisson system with cubic pressure law. Therefore, an immediate corollary of Theorem \ref{thm QP traveling electron layers} is the existence of small amplitude traveling quasi-periodic solution to the Euler-Poisson system with cubic pressure law. A more precise statement is given in Theorem \ref{thm EulerPoisson}.
    \item When working with periodic boundary conditions, the invariance by space translations allows to avoid some otherwise trivial resonances. For instance, in view of \eqref{freq intro}, we have
    $$\forall\, j\in\mathbb{N}^*,\quad\Omega_{j}(a)+\Omega_{-j}(a)=0.$$
    It is possible to remove this resonance by imposing the momentum condition since, with $j,j'\in\Z^*$,
    $$\begin{cases}
        \Omega_{j}(a)+\Omega_{j'}(a)=0,\\
        j-j'=0
    \end{cases} \qquad \Leftrightarrow \qquad\begin{cases}
        \frac{2j}{|j|}\sqrt{a^2j^2+1}=2\Omega_{j}(a) = 0,\\
        j=j'
    \end{cases}\qquad\Leftrightarrow \qquad j=j'=0.$$
    This reaches a contradiction with the fact that $j,j'\neq0.$ The momentum condition $j-j'=0$ naturally appears when working with traveling (quasi-periodic) waves. We refer the reader to Appendix \ref{sec travQP} and Lemma \ref{lem Fourier coeff op}. It is natural in the context of periodic boundary conditions and space translation invariant vector fields. Let us mention that under other types of boundary conditions, such as Dirichlet or specular reflection (see for instance \cite{MW25} in dimension 2), these trivial resonances are not expected to appear, with linear eigenvalues usually simple. However, it is not clear if patch-type solutions might make sense in these settings.

		\item The restriction of the parameter $a$ to the Cantor set $\mathscr{C}(\varepsilon)$ is not only a technical point. The problem involves small divisors and this set of ``good'' parameters is obtained gathering several Diophantine conditions to overcome the small divisors. Nevertheless, we expect that the non-chosen parameters do not generate invariant tori.
        \item  By linear stability we mean that the linearized vector field at the quasi-periodic traveling wave solution \eqref{solution.thm} and acting on quasi-periodic traveling waves has purely imaginary Floquet exponents. Here, the linear stability is a direct consequence of the proof:  it follows as a by-product of the reducibility procedure in Section \ref{sec redu} together with the Berti-Bolle reduction theory in action-angles-normal variables \cite{BB15} . For more details, we refer for instance to \cite[Thm. 1.2]{BBHM18}.
	\end{enumerate}
\subsection{Literature discussion}
Let us contextualize the Theorem \ref{thm QP traveling electron layers} by discussing about the related literature.\\

\textbf{On the Vlasov-Poisson system and related models.}
The Vlasov-Poisson system is the simplest kinetic model allowing to describe the time evolution of a collisionless plasma made of ions and electrons. In addition, we are in the regime where the ions are supposed to have sufficiently inertia to be considered as a uniform background neutralizing field. The well-posedness theory of the Vlasov--Poisson system has developed through a long sequence of progressively weaker results. The first existence theorems for classical solutions were established in low dimensions: in one dimension by Iordanskii \cite{I61} and Cottet \& Raviart \cite{CR84}; in two dimensions by Ukai \& Okabe \cite{UO78}; in the three-dimensional case for small initial data by Bardos \& Degond \cite{BD85}. Further advances for symmetric configurations were obtained in \cite{B77,H81,H82,S87,W80}. See also \cite{R07} for a broader survey of the literature. 
Later, independently by Pfaffelmoser \cite{P92} and Lions \& Perthame \cite{LP91}, was proved global-in-time existence for a large class of smooth initial data. The uniqueness problem has also been addressed in \cite{LP91} and \cite{L06}.

The above mentioned classical results depend on rather restrictive assumptions, typically involving strong integrability properties and the propagation of high moments; see also \cite{GJP00,P14}. Considerable effort has therefore been devoted to extending the theory to physically more natural settings, where only finite energy bounds are available. In this direction, Arsen’ev \cite{A75} (see also \cite{IN79}) proved the existence of global weak solutions assuming bounded initial data and finite kinetic energy, a result later refined in \cite{HH84}, where the boundedness requirement was replaced by a suitable $L^p$-integrability condition.

The difficulty of lowering the regularity assumptions is tied to the nonlinear coupling term $Ef$, which may fail to be locally integrable when $f$ is only integrable. To overcome this obstruction, DiPerna--Lions theory was adapted in \cite{DL88,DL89} through the introduction of renormalized solutions.

A central aspect of the Vlasov--Poisson dynamics is its underlying transport structure: smooth solutions are propagated along the characteristics generated by the phase-space vector field $(x,v)\mapsto(v,-E(t,x))$. At low regularity, however, the existence of a meaningful flow is no longer guaranteed, and the connection between the Eulerian formulation of the equation and the associated Lagrangian dynamics becomes substantially more delicate. This problem has been addressed by Ambrosio, Colombo \& Figalli in \cite{ACF17}, where the authors proved the equivalence between renormalized and Lagrangian weak solutions. As mentioned above, for the one dimensional case in the context of patches of electrons/electron layers -- of interest in the present manuscript -- Dziurzynski's theory applies  \cite{D87} : the flow is well-defined and weak solutions are Lagrangian.\\

In Section \ref{appendix link Euler-Poisson}, we give a correspondence between the system \eqref{system rpm} and the electronic Euler-Poisson system with cubic pressure law. More literature on this model is discussed there.\\

\textbf{Stability of kinetic waves: Landau damping and BGK waves.} The physics literature studying the existence and stability of stationary states or traveling waves for the Vlasov-Poisson equations is now well-developed \cite{AM67,BGK57,BD95,BG49,CC75,GIBFFS88,G70,HRK94,K55,L46,MB00,PA14,SLAS79}.
The system \eqref{Vlasov Poisson} admits a large family of trivial solutions called \textit{homogeneous states} made of time and space independent profiles, namely
$$f(t,x,v)=f_0(v),\qquad\int_{\mathbb{R}}f_0(v){\rm d}v=1.$$
Examples of such equilibria are the Maxwellians $f_0(v)=\tfrac{1}{2\sqrt{\beta}}e^{-\beta (v-v_0)^2}$, with $v_0\in\mathbb{R},\beta>0$, and -- of interest here -- flat electron layers $f_0(v)=\tfrac{1}{b-a}\mathbf{1}_{a<v<b}$, $a<b.$ A natural question concerns the stability/instability of these solutions. An important phenomenon is the so-called Landau damping: discovered by Landau \cite{L46}, this effect shows the decay of the electric field  and the convergence of the electron density   towards a nearby analytic homogeneous  for larger and larger times. The rigorous mathematical proof of this phenomenon was given by Mouhot \& Villani \cite{MV11} in analytic/Gevrey regularity. Their result has later been extended to different settings \cite{GI23,HRSS25,IPWW24} and it inspired later results on nonlinear damping effects for inviscid fluids, as for instance in \cite{BedM15,DM23,IJ20}.

When lowering the regularity, the situation becomes more interesting with the emergence of non-trivial steady states, as pointed out by Lin \& Zeng in \cite{LZ11}.
An important class of these solutions are the {\it BGK waves}, named after Bernstein, Greene \& Kruskal who discovered them \cite{BGK57}.  The first rigorous result of instability for such plasma waves is due to Guo \& Strauss \cite{GS95}. The extension of this stability analysis and other characterizations of BGK waves have been addressed in several other works: we refer the reader, for example, to \cite{BGH26,GL17,GS98,HM24,L01,L05,STZ25}. 
These results reveal that periodic kinetic waves exhibit subtle dynamical behavior and are often spectrally or nonlinearly unstable, emphasizing the need for a careful structural analysis of traveling solutions.  At last, we mention that Bostan \& Poupaud \cite{BP00} proved the existence of periodic mild solutions to \eqref{Vlasov Poisson}.\\



\textbf{Literature on quasi-periodic waves in fluids.}
The present work is actually motivated by a series of articles constructing quasi-periodic (traveling and non) solutions for fluid models. Indeed, electron patches are the kinetic equivalent to the vortex-patches in fluid mechanics. In the last years, a lot of attention has been given in the construction of quasi-periodic vortex patches via KAM methods and Nash-Moser scheme. The first work in this direction is part of the second author's PhD thesis \cite{HR21} where quasi-periodic vortex patches are found near the Rankine circular vortex for the Quasi-Geostrophic Shallow-Water equations. In the same period a similar result was proved for the generalized SQG equation by Hassainia, Hmidi \& Masmoudi \cite{HHM21}. In both articles, they use the internal parameter of the equations to impose non-resonance conditions. The year after, the same techniques were applied to the Euler-$\alpha$ model \cite{R22} and the idea of playing with a geometrical parameter was used to find quasi-periodic vortex patches for the planar Eulerian model near Kirkhoff ellipses \cite{BHM23}, near Rankine vortices in the unit disc \cite{HR21-1} and near annular shaped trivial solutions \cite{HHR23}. All these results were themselves motivated by a series of works on KAM for water-waves. We refer the reader to the seminal papers of Berti \& Montalto \cite{BM18} and Baldi, Berti, Haus \& Montalto \cite{BBHM18} for the construction of quasi-periodic standing water-waves for the gravity-capillary and pure gravity cases, respectively. Then, as part of the first author's PhD thesis, the extension to traveling quasi-periodic water-waves was possible allowing to include the presence of constant vorticity \cite{BFM21,BFM21-1}. Using weak Birkhoff normal form techniques, Feola \& Giuliani could also treat the deep water case \cite{FG20}. These works have recently been extended to the much more delicate 3D case in \cite{FMT25}. Other recent extensions include space quasi-periodic steady solutions (which become quasi-periodic traveling waves in a moving frame) for the Euler equations in the bounded channel \cite{FMM24}, large amplitude traveling waves for the $\beta$-plane approximation of rotating fluids \cite{BFMT25} and their nonlinear stability \cite{FFM26}. At last, we mention some other constructions of time quasi-periodic or almost-periodic solutions to Euler equations on the $d$-dimensional torus \cite{CF13,EPSL23,FM24} that do not rely on KAM arguments. These solutions are obtained as a finite or infinite superposition of rescaled non-interacting traveling waves built from fundamental compactly supported stationary Gravilov blocks.

For the non-autonomous case, in the presence of quasi-periodic forcing term, time quasi-periodic solutions were also constructed for the 3D Euler equations in \cite{BM21} and for 2D Navier-Stokes
equations \cite{FM23} approaching in the zero viscosity limit time quasi-periodic solutions of the 2D Euler equations for all times.\\

In the present manuscript, we follow the ideas of the above mentioned literature (in particular \cite{BM18,BFM21,HHR23}), using the area of the fondamental electron patch domain to avoid resonances and run KAM/Nash-Moser schemes. Also, we use the momentum condition to ensure the traveling quasi-periodicity and remove trivial resonances. For more details, we refer to the next subsection.


\subsection{Ideas of the proof}\label{sect.ideas}
In the remaining part of this introductory section, we present the main steps that lead to the proof of Theorem \ref{thm QP traveling electron layers}, highlighting the principal difficulties that we have to face and solve.\\

Our purpose is to construct a solution living near the linear quasi-periodic traveling solutions \eqref{linear travQP}. It is convenient to first perform a symplectic transformation, chosen to diagonalize the equilibrim linearization
\begin{equation}\label{def:bfQintro}
    \widetilde{r}\triangleq\mathbf{Q}^{-1}r,\qquad\mathbf{Q}\begin{pmatrix}
    \wtr_+\\
    \wtr_-
\end{pmatrix}\triangleq\sum_{j\in\mathbb{Z}^*}Q_j(a)\begin{pmatrix}
    \wtr_j^+\\
    \wtr_j^-
\end{pmatrix}\mathbf{e}_j,
\end{equation}
where the matrices $Q_j(a)$ are defined in \eqref{def:Qj-intro}. One has
\begin{equation}\label{def:bfOmgintro}
    \mathbf{Q}^{-1}\mathcal{J}\mathbf{M}_{0}\mathbf{Q}=-\ii\boldsymbol{\Omega}(a,D),\qquad\boldsymbol{\Omega}(a,D)\triangleq\begin{pmatrix}
    \Omega(a,D) & 0\\
    0 & -\Omega(a,D)
\end{pmatrix},
\end{equation}
where $\bM_{0}$ is as in \eqref{linear op 0 intro} and $\Omega(a,D)$ is the Fourier multiplier associated with the equilibrium frequencies $\big(\Omega_j(a)\big)_{j\in\mathbb{Z}^*}$ in \eqref{freq intro}.
The linear transformation $\mathbf{Q}$ allows to conjugate the full nonlinear system \eqref{firstHAM:intro} into a new Hamiltonian one 
$$\partial_t\widetilde{r}=\frac{1}{\varepsilon^2}\mathcal{J}\nabla H(\widetilde{r}),\qquad H(\widetilde{r})\triangleq\mathcal{E}(\mathbf{Q}\widetilde{r}),$$
which explicitly reads as
\begin{equation}\label{conj:eq.intro}
    \partial_{t} \wtr
=-\ii\boldsymbol{\Omega}(a,D) \wtr +\varepsilon \mathcal{J}\nabla P_{H}(\widetilde{r}),
\end{equation}
where the  Hamiltonian $H$ decomposes into a quadratic part $H_{\mathbf{L}_0}$ and a perturbation $P_H$ at least cubic
\begin{equation}\label{H dec.intro}
	H=H_{\mathbf{L}_0}+\varepsilon P_H,\qquad H_{\mathbf{L}_0}(\rho_+,\rho_-)\triangleq\sum_{j\in\mathbb{N}^*}\frac{\Omega_{j}(a)}{j}\Big(\big|\rho_{j}^+\big|^2+\big|\rho_j^-\big|^2\Big), \qquad P_{H}(\widetilde{r}) = O(\widetilde{r}^{\,3}).
\end{equation}
According to \eqref{H dec.intro}, $H$ is a Hamiltonian quasi-linear perturbation of the integrable system associated with $H_{\mathbf{L}_0}.$ This justifies the use of KAM techniques and is the new starting point of our analysis. The goal is to search for traveling quasi-periodic solutions of the equation \eqref{conj:eq.intro}. The approach that we pursue has been developed in the last decade for Fluid Dynamics PDEs, see for instance \cite{BFM21,BM18,HHR23}.

\paragraph{Action-angle-normal coordinates.} This is the content of Section \ref{sect. action-angles}.
Fix $\mathbb{S}_+$ and $\mathbb{S}_-$ two finite sets of integers as in \eqref{sites.tang.S} corresponding to the excited spatial Fourier modes generating the desired quasi-periodic solutions. We assume that $\S_{+}\cap \S_{-}=\varnothing$, as in \eqref{non.intersect.S}. Consider also some fixed amplitudes $(\mathfrak{a}_j^{\pm})_{j\in\mathbb{S}_{\pm}}$ giving raise to the linear solutions in \eqref{linear travQP}, see Lemma \ref{lem linear solutions}. This implies a decomposition of the functional space 
$$\mathbf{L}_0^2(\mathbb{T})\triangleq L_0^2(\mathbb{T})\times L_0^2(\mathbb{T}),\qquad L_0^2(\mathbb{T})\triangleq\Big\{u\in L^2(\mathbb{T})\quad\textnormal{s.t.}\quad\int_{\mathbb{T}}u(x){\rm d}x=0\Big\}$$
into tangential (localized on the sets $\mathbb{S}_{\pm}$) and normal components 
\begin{equation}\label{aan:intro}
    \widetilde{r}=\sum_{j\in\overline{\mathbb{S}}_+}\widetilde{r}_j^+\begin{pmatrix}
    1\\0
\end{pmatrix}\mathbf{e}_j+\sum_{j\in\overline{\mathbb{S}}_-}\widetilde{r}_j^-\begin{pmatrix}
    0\\1
\end{pmatrix}\mathbf{e}_j+z,\qquad\overline{\mathbb{S}}_{\kappa}\triangleq\mathbb{S}_{\kappa}\cup(-\mathbb{S}_{\kappa}),\qquad z\perp_{\mathbf{L}_0^2(\mathbb{T})}\mathtt{span}\{\mathbf{e}_j,\,\,j\in\overline{\mathbb{S}}_{\pm}\}.
\end{equation}
Then, on the tangential sites, we introduce the action ($I$) and angle ($\vartheta$) variables
\begin{align*}
    I&\triangleq\Big(\big(I_j^+\big)_{j\in\mathbb{S}_+},\big(I_j^-\big)_{j\in\mathbb{S}_-}\Big)\in\mathbb{R}^d,\qquad I_{-j}^{\pm}=I_{j}^{\pm},\\
    \vartheta&\triangleq\Big(\big(\vartheta_j^+\big)_{j\in\mathbb{S}_+},\big(\vartheta_j^-\big)_{j\in\mathbb{S}_-}\Big)\in\mathbb{T}^d,\qquad \vartheta_{-j}^{\pm}=-\vartheta_{j}^{\pm},\\
    d&\triangleq|\mathbb{S}_+|+|\mathbb{S}_-|
    \end{align*}
via a symplectic polar change of coordinates 
\begin{equation}\label{aa:intro}
    \forall\, \kappa\in\{-,+\},\quad\forall\, j\in\mathbb{S}_\kappa,\quad \widetilde{r}_j^{\kappa}=\sqrt{(\mathfrak{a}_j^{\kappa})^2+\tfrac{|j|}{2}I_j^{\kappa}}e^{\ii\theta_j^{\kappa}}.
\end{equation}
This allows to reformulate the problem in terms of embedded tori $\vf \mapsto i(\vf) \triangleq \big(\vartheta(\varphi),I(\varphi),z(\varphi)\big)$, $\varphi\in\mathbb{T}^{d}$, and searching for quasi-periodic solutions $\vf\mapsto r(\vf)=\bA\big(i(\vf)\big)$ (with the map $\mathbf{A}$ obtained combining \eqref{aan:intro} and \eqref{aa:intro}), of  the equation
\begin{equation}\label{conj:eqQP.intro}
    \omega\cdot\pa_{\vf} \wtr
=-\ii\boldsymbol{\Omega}(a,D) \wtr +\varepsilon \mathcal{J}\nabla P_{H}(\widetilde{r}).
\end{equation}
 Here, $\omega\in\R^{d}\setminus\{0\}$ is the unknown frequency vector of the quasi-periodic solution to determine: note also that the operator $\partial_t$ reads as $\omega\cdot\partial_{\varphi}$ with respect to the $(2\pi)^d$-periodic variable $\vf=\omega t$. In particular, we search for solutions that are reversible and traveling. With respect to the coordinates of the embedded torus,
 the reversibility and traveling conditions  translate into
\begin{equation}\label{pappa}
    \begin{aligned}
    \big( \vartheta(-\vf), I(-\vf),z(-\vf,-x)) \big)&=\big(-\vartheta(\varphi),I(\varphi),z(\varphi,x)\big),\\
 \forall\, y\in\mathbb{T},\quad \big(\vartheta(\varphi-\vec{\jmath}y),I(\varphi-\vec{\jmath}y),z(\varphi-\vec{\jmath}y,x)\big)&=\big(\vartheta(\varphi)-\vec{\jmath}y,I(\varphi),z(\varphi,x+y)\big). 
\end{aligned}
\end{equation}

\paragraph{Berti-Bolle approach to Nash-Moser theorem.}
To search for solutions of \eqref{conj:eqQP.intro} as embedded tori $\vf\mapsto i(\vf)$, we now
look for zeroes of the functional
\begin{equation}\label{BB.eq.intro}
    \mathscr{F}_{\varepsilon}(i,\alpha,\omega)=\omega\cdot \partial_{\varphi}i-X_{H_{\varepsilon}^{\alpha}}(i) = 0,
\end{equation}
where $X_{H_{\varepsilon}^\alpha}$ is the Hamiltonian vector field associated with the Hamiltonian $H_{\varepsilon}^\alpha,$ being $H$ expressed in terms of the new variables: its explicit form is given in \eqref{sF.vare.def}.
The degree of freedom $\alpha\in\R^{d}$ is introduced for technical reasons (to impose zero time averages in the inversion of the operator $\omega\cdot\partial_{\varphi}$) and allowing $\omega\in\R^d$ to be a free parameter to modulate: in the end, the variable $\alpha$ is linked back to $\omega$ to determine the final frequencies $\Omega_{j}^{\kappa}(a,\varepsilon)$ in \eqref{solution.thm} and produce an exact solution of \eqref{conj:eqQP.intro}.  Note that the linear solutions \eqref{linear travQP} correspond to the trivial embedding $\varphi\mapsto(\varphi,0,0),$ which is clearly reversible and traveling, in view of \eqref{pappa}.

Trying to invert the linearized operator $d_{(i,\alpha)}\mathscr{F}_{\varepsilon}$ at a given state naturally makes appear small-divisors due to time-space resonances. Therefore, it is hopeless to try a fixed point argument, a classical Newton method, a classical implicit function theorem or bifurcation methods. Indeed, Diophantine conditions (as in the proof of Lemma \ref{lem linear solutions}) imply a loss of a large amount of derivatives in Sobolev spaces for the formal inverse operator. The scheme to construct solutions of \eqref{BB.eq.intro} follows the approach proposed by Berti \& Bolle in \cite{BB15}, which in its essence is a symplectic version of the Lyapunov-Schmidt reduction combined with the nonlinear iteration of the Nash-Moser implicit function theorem. In order to track the traveling property, we implement the momentun preserving version of this theory developed in \cite{BFM21}. Actually, the analysis remaining at the linear level, it is not necessary to conserve the Hamiltonian structure in the Berti-Bolle theory. Here, we follow the simpler approach in \cite{HHR23}, avoiding the introduction of an isotropic torus and rather control the loss of symplecticity. The Nash-Moser iteration (run here in the Sobolev scale)
$$(i_{n+1},\alpha_{n+1})=(i_{n},\alpha_{n})-\Pi_{N_n}T_n\Pi_{N_n}\mathscr{F}_{\varepsilon}(i_n,\alpha_n),$$
is a modified Newton method with regularizing operators (Fourier projections), 
$$\Pi_{N_n}u(\varphi)\triangleq\sum_{\ell\in\mathbb{Z}^d\atop|\ell|\leqslant N_n}u_{\ell}e^{\ii\varphi\cdot\ell},\qquad N_{n}\triangleq N_0^{(\frac{3}{2})^n},\qquad N_0\gg1.$$
In our context, the first guess to start the Nash-Moser scheme is the linear reversible traveling quasi-periodic wave \eqref{linear travQP} that is seen in action-angle-normal variables as the trivial torus embedding $\varphi\mapsto(\varphi,0,0).$ Then, a key point in the Nash-Moser iteration is the construction, at any step of an approximate right inverse $T_n$ of $d_{(i,\alpha)}\mathscr{F}_{\varepsilon}(i_n,\alpha_n),$ that satisfies tame estimates with fixed loss of derivatives. The heart of Berti-Bolle's theory, implemented in Section \ref{sect:BB}, is the following. Given $(i_0,\alpha_0)$ a generic reversible and traveling approximate solution of the iterative scheme, it is possible to find a suitable linear transformation in the action-angle-normal variables that conjugates 
$d_{(i,\alpha)}\mathscr{F}_{\varepsilon}(i_0,\alpha_0)$ to a triangular system in the action-angle-normal variables, up to error terms vanishing at an exact solution. Therefore, one constructs an approximate right inverse to $d_{(i,\alpha)}\mathscr{F}_{\varepsilon}(i_0,\alpha_0),$ with tame estimates, by solving the triangular system. This is done modulo the inversion of the operator $\omega\cdot\partial_{\varphi}$ (there $\alpha$ plays its role to impose zero $\varphi$-averages) and the inversion of the linearized operator in the normal directions denoted $\mathscr{L}_{\omega}.$ This later fact is the core of the analysis and is described more in details in the next praragraph.
Finally, the scheme must guarantee that the final solution is indeed a reversible traveling quasi-periodic wave. This requires the notions of:
\begin{enumerate}[label=\textbullet]
    \item reversible operators, namely sending reversible functions onto anti-reversible functions (or the converse), i.e. 
    \begin{align*}
        r(-\varphi,-x)=-r(\varphi,x)\quad&\Rightarrow\quad (Ar)(-\varphi,-x)=(Ar)(\varphi,x),\\
        r(-\varphi,-x)=r(\varphi,x)\quad&\Rightarrow\quad (Ar)(-\varphi,-x)=-(Ar)(\varphi,x);
    \end{align*}
    \item reversibility preserving operators, namely sending (anti-)reversible functions onto (anti-)reversible functions, i.e. 
   \begin{align*}
        r(-\varphi,-x)=-r(\varphi,x)\quad&\Rightarrow\quad (Ar)(-\varphi,-x)=-(Ar)(\varphi,x),\\
        r(-\varphi,-x)=r(\varphi,x)\quad&\Rightarrow\quad (Ar)(-\varphi,-x)=(Ar)(\varphi,x);
    \end{align*}
    \item momentum preserving operators, namely sending quasi-periodic traveling waves onto quasi-periodic traveling waves, i.e.
    $$\big( \,\forall \, y\in\mathbb{T},\quad r(\varphi-\vec{\jmath}y,x)=r(\varphi,x+y)\Big)\qquad\Rightarrow\qquad \big(\,\forall \, y\in\mathbb{T},\quad (Ar)(\varphi-\vec{\jmath}y,x)=(Ar)(\varphi,x+y)\Big).$$
\end{enumerate}



\paragraph{Reducibility and invertibility of the linearized operator in the normal directions.}
Here we fix a reversible and traveling embedded torus $i_0$ that corresponds to a given approximate solution during the Nash-Moser iteration. As mentioned earlier, according to the Berti-Bolle theory, to construct the approximate right inverse to the full linearized operator at $i_0$, it is sufficient to build a right inverse to the linearized operator restricted to the normal directions. In Proposition \ref{prop Lnormal}, we find that this latter operator writes
 \begin{equation}\label{def:scrlomg-intro}
     \mathscr{L}_{\omega}=\Pi_{\overline{\mathbb{S}}_0}^{\perp}\big(\mathcal{L}^{(0)}-\varepsilon\partial_x\mathcal{R}\big)\Pi_{\overline{\mathbb{S}}_0}^{\perp},
 \end{equation}
 where $\Pi_{\overline{\mathbb{S}}_0}^{\perp}$ is the $\mathbf{L}^2(\mathbb{T})$-orthogonal projection onto the normal directions, $\mathcal{R}$ is a smooth operator and 
 $$\mathcal{L}^{(0)}\triangleq\omega\cdot\partial_{\varphi}\mathbb{I}_2-\tfrac{1}{\varepsilon^2}\di_{\widetilde{r}}\big(\mathcal{J}\nabla H(\widetilde{r})\big),\qquad\mathbb{I}_2\triangleq\textnormal{Id}_{\mathbf{L}^2(\mathbb{T})},\qquad \mathbf{L}^2(\mathbb{T})\triangleq L^2(\mathbb{T})\times L^2(\mathbb{T}).$$
As explained in Proposition \ref{prop L0}, the special structures of the equations and of the transformation $\mathbf{Q}$ in \eqref{def:bfQintro} imply that the operator $\mathcal{L}^{(0)}$ admits the expansion 
\begin{equation}\label{calL0:intro}
    \mathcal{L}^{(0)}=\omega\cdot\partial_{\varphi}\mathbb{I}_{2}+\ii\boldsymbol{\Omega}(a,D)+\varepsilon\partial_{x}\mathbf{F}+\mathbf{R}_0^{(d)}+\mathbf{R}_{0}^{(o)}+\mathbf{S}_{0,M},
\end{equation}
where $\boldsymbol{\Omega}(a,D)$ is as in \eqref{def:bfOmgintro}, $\mathbf{F}$ is a matrix diagonal multiplication operator
$$\mathbf{F}\triangleq\begin{pmatrix}
    f_+ & 0\\
    0 & f_-
\end{pmatrix},$$
the diagonal and anti-diagonal remainders $\mathbf{R}_0^{(d)}$ and $\mathbf{R}_0^{(o)}$ admit the following homogeneous expansions (in the sense of Definition \ref{def.HomExp})
$$\mathbf{R}_0^{(d)}=\begin{pmatrix}
			\displaystyle\sum_{p=1}^{M-1}\mathfrak{r}_{+,0,p}^{(d)}\partial_{x}^{-p} & 0\\
			0 & \displaystyle\sum_{p=1}^{M-1}\mathfrak{r}_{-,0,p}^{(d)}\partial_{x}^{-p}
		\end{pmatrix},\qquad\mathbf{R}_0^{(o)}=\begin{pmatrix}
		0 & \displaystyle\sum_{p=1}^{M-1}\mathfrak{r}_{+,0,p}^{(o)}\partial_{x}^{-p}\\
		\displaystyle\sum_{p=1}^{M-1}\mathfrak{r}_{-,0,p}^{(o)}\partial_{x}^{-p} & 0
	\end{pmatrix},$$
with $\mathfrak{r}_{+,0,p}^{(d)},\mathfrak{r}_{-,0,p}^{(d)},\mathfrak{r}_{+,0,p}^{(o)},\mathfrak{r}_{-,0,p}^{(o)}$ are $\varepsilon$-small functions of $(a,\omega;\varphi,x)$, and $\mathbf{S}_{0,M}$ is a pseudo-differential operator of order $-M$ and size $\varepsilon.$ Thus, the operator $\mathcal{L}^{(0)}$ is composed of a diagonal transport component with variable coefficients and a matrix pseudo-differential operator of order $-1.$ Our goal is to find linear, continuous, reversibility and momentum preserving transformations that conjugate $\mathscr{L}_{\omega}$ to a diagonal Fourier multiplier matrix operator. Nowadays, this procedure is referred to as \textit{reducibility} and is carried out through a sequence of precise transformations that progressively eliminate the various components in orders and size, with the goal to extract the normal form of the starting operator. 

A standard strategy suggests to start with the reduction the first order transport dynamics via (symplectic) diffeomorphisms, as it contributes in \eqref{calL0:intro} with the highest order of derivatives. Such diffeomorphisms preserve the particular homogeneous expansion form of the diagonal remainder $\mathbf{R}_0^{(d)}$ up to even smoothier terms: this latter fact is referred to as Egorov's Theorem. However, the anti-diagonal terms are not smoothing at any order and do not enjoy nice integral representations. Therefore, one cannot implement the reducibility in the hybrid off-diagonal/isotropic operatorial topology as in \cite{HHR23}. In addition, these terms loose the pseudo-differential structure after the diagonal transport reduction. Hence, one must perform a preliminary block-decoupling process. We now describe the main steps of this reduction scheme that leads to the inversion of $\sL_{\omega}$:
\begin{enumerate}[label=\textbullet]
    \item \textit{Block-decoupling up to smoothing operators:} This is the content of Section \ref{sect.block.anti}. The purpose is to eliminate the anti-diagonal part up to smoothing operators. As recalled in Proposition \ref{lem compo commu hom exp}, the class of operators having homogeneous expansion up to smoothing operators is stable under composition. Therefore, due to the particular form of $\mathcal{L}^{(0)}$ in \eqref{calL0:intro}, we are able to execute the reduction in a rather explicit way. More precisely, we can find a real, reversibility and momentum preserving transformation in the form
    $$\boldsymbol{\Phi}_{M-1}=\big(\mathbb{I}_2+\mathbf{X}_{1}\big)\circ\ldots\circ \big(\mathbb{I}_2+\mathbf{X}_{M-1}\big),\qquad\mathbf{X}_{0}=0,\qquad\mathbf{X}_{m}=\begin{pmatrix}
		0 & \mathfrak{p}_{+,m}\partial_{x}^{-m-1}\\
		\mathfrak{p}_{-,m}\partial_{x}^{-m-1} & 0
	\end{pmatrix},$$
    with $\mathfrak{p}_{+,m},\mathfrak{p}_{-,m}$ well-chosen functions such that
    $$\begin{array}{ccccccccccccc}
        \mathbf{R}_0^{(o)} & \overset{\mathbb{I}_2+\mathbf{X}_{1}}{\longrightarrow} &\mathbf{R}_1^{(o)} & \overset{\mathbb{I}_2+\mathbf{X}_{2}}{\longrightarrow} & \ldots & \overset{\mathbb{I}_2+\mathbf{X}_{m}}{\longrightarrow} & \mathbf{R}_m^{(o)} & \overset{\mathbb{I}_2+\mathbf{X}_{m+1}}{\longrightarrow} & \ldots & \overset{\mathbb{I}_2+\mathbf{X}_{M-2}}{\longrightarrow} &  \mathbf{R}_{M-2}^{(o)} & \overset{\mathbb{I}_2+\mathbf{X}_{M-1}}{\longrightarrow} &\mathbf{R}_{M-1}^{(o)}
    \end{array},$$
    with
    $$\mathbf{R}_m^{(o)}=\begin{pmatrix}
		0 & \displaystyle\sum_{p=m+1}^{M-1}\mathfrak{r}_{+,m,p}^{(o)}\partial_{x}^{-p}\\
		\displaystyle\sum_{p=m+1}^{M-1}\mathfrak{r}_{-,m,p}^{(o)}\partial_{x}^{-p} & 0
	\end{pmatrix},\qquad \mathfrak{r}_{\pm,m,p}^{(o)}=O(\varepsilon)$$
    so that in the end
    $$\boldsymbol{\Phi}_{M-1}^{-1}\mathcal{L}^{(0)}\boldsymbol{\Phi}_{M-1}=\omega\cdot\partial_{\varphi}\mathbb{I}_{2}+\ii\boldsymbol{\Omega}(a,D)+\varepsilon\partial_{x}\mathbf{F}+\mathbf{R}_{[-1]}^{(d)}+\mathbf{S}_{M},$$
    where $\mathbf{R}_{[-1]}^{(d)}$ is a purely diagonal remainders admitting the following homogeneous expansion
$$\mathbf{R}_{[-1]}^{(d)}=\begin{pmatrix}
			\displaystyle\sum_{p=1}^{M-1}\mathfrak{r}_{+,p}^{(d)}\partial_{x}^{-p} & 0\\
			0 & \displaystyle\sum_{p=1}^{M-1}\mathfrak{r}_{-,p}^{(d)}\partial_{x}^{-p}
		\end{pmatrix},$$
with $\mathfrak{r}_{+,p}^{(d)},\mathfrak{r}_{-,p}^{(d)}$ being $\varepsilon$-small functions of $(a,\omega;\varphi,x)$ and $\mathbf{S}_{M}$ is a pseudo-differential operator of order $-M$ and size $\varepsilon.$ 
    \item \textit{Transport reduction and Egorov argument:}
    The next step is to reduce to constant coefficients the first order (transport) component of $\mathcal{L}^{(0)}.$ This is now a classical KAM-type reduction procedure through toroidal diffeomorphisms that has been developed in \cite{FGMP19} for the scalar case with Lipshitz regularity in the parameters, and in \cite{BM21} for the 3-dimensional linear transport with higher regularity on the parameters. Symplectic variants associated with the anti-symmetric tensor $\partial_x$ have later been implemented in \cite{HR21} for scalar Hamiltonian vector fields and in \cite{HHR23} for matricial situation. Let us emphasize that here, similarly to \cite{HHR23}, we have a purely $2\times 2$ real system with two different transport velocities. Therefore, one must reduce each component separately and handle later mixed resonances. The result is given in Proposition \ref{prop trans red} and summarizes as follows. We can construct symplectic reversibility and momentum preserving diffeomorphisms of the torus in the form
    $$\mathscr{B}_{\pm}\rho(a,\omega;\varphi,x)=\big(1+\partial_{x}\beta_{\pm}(a,\omega;\varphi,x)\big)\rho\big(a,\omega;\varphi,x+\beta_{\pm}(a,\omega;\varphi,x)\big)$$
    continuous over $H^s$, and two constant coefficients $\mathtt{c}_{\pm}\triangleq\mathtt{c}_{\pm}(a,\omega;i_0)$ such that, in restriction to the set
    \begin{equation}\label{Cset:trans-intro}
        \mathcal{O}_{\textnormal{\tiny{T}},\infty}^{\gamma,\tau_1}(i_0)\triangleq\bigcap_{\kappa\in\{-,+\}}\bigcap_{{(\ell,j)\in\mathbb{Z}^d\times\mathbb{Z}\setminus\{(0,0)\}\atop\vec{\jmath}\cdot\ell+j=0}}\bigg\{(a,\omega)\in\mathcal{O}\quad\textnormal{s.t.}\quad\big|\omega\cdot\ell+j\mathtt{c}_{\kappa}(a,\omega;i_0)\big|>\frac{\gamma^{\upsilon}}{\langle\ell\rangle^{\tau_1}}\bigg\},
    \end{equation}
    with $\cO\subset \R^{d+1}$ a reference set as in \eqref{cO.ref.set}, and $\tau_1>d,$ $\upsilon\in(0,1),$ we have
    $$\mathscr{B}_{\pm}^{-1}\Big(\omega\cdot\partial_{\varphi}+\partial_{x}\Big(\big(\pm a+f_{\pm}(r)\big)\cdot\Big)\Big)\mathscr{B}_{\pm}=\omega\cdot\partial_{\varphi}+\mathtt{c}_{\pm}\partial_{x}.$$
    The non-resonance conditions \eqref{Cset:trans-intro} naturally appear to handle small divisors when solving homological equations during the transport reduction. Note that in \eqref{Cset:trans-intro} we have also imposed the momentum condition $\vec{\jmath}\cdot\ell+j=0$ in order to conserve the traveling property along the reduction scheme, in the spirit of \cite{BFM21}.
    
    Next, we study the effect of this conjugation on the lower order terms. For this, we make appeal to Egorov's Theorem stating that the conjugation by symplectic diffeomorphisms of the torus preserves the pseudo-differential operator class. Actually, in our situation, we use a more quantitative version (Proposition \ref{Egorov thm}) due to Berti, Kappeler \& Montalto \cite{BKM21} regarding the pseudo-differential homogeneous expansion class. More precisely, for any $m\in\mathbb{Z}$ and any $N\in\mathbb{N},$
    $$\mathscr{B}^{-1}(\mathfrak{p}\partial_{x}^m)\mathscr{B}=\sum_{k=0}^{N}\mathfrak{p}_{m-k}\partial_{x}^{m-k}+R_N,$$
    where $R_N$ is a $(m-N-1)$-smoothing pseudo-differential operator enjoying good tame estimates. Hence, considering the transformation
    $$\pmb{\mathscr{B}}\triangleq\begin{pmatrix}
		\mathscr{B}_+ & 0\\
		0 & \mathscr{B}_-
	\end{pmatrix},$$
	in Proposition \ref{prop trans Ego}, we find that in restriction to the Borel set 
    $\mathcal{O}_{\textnormal{\tiny{T}},\infty}^{\gamma,\tau_1}(i_0)$ in \eqref{Cset:trans-intro}, the following identity holds
	$$\pmb{\mathscr{B}}^{-1}\boldsymbol{\Phi}_{M-1}^{-1}\mathcal{L}^{(0)}\boldsymbol{\Phi}_{M-1}\pmb{\mathscr{B}}=\omega\cdot\partial_{\varphi}\mathbb{I}_2+\ii\boldsymbol{\Omega}(a,D)+\begin{pmatrix}
		(\mathtt{c}_+-a)\partial_{x} & 0\\
		0 & (\mathtt{c}_-+a)\partial_{x}
	\end{pmatrix}+\mathcal{R}_{[-1]}^{(d)}+\mathcal{S}_{M},$$
    where
    $$\mathcal{R}_{[-1]}^{(d)}\triangleq\begin{pmatrix}
		\displaystyle\sum_{p=1}^{M-1}\mathfrak{d}_{+,p}^{(d)}\,\partial_{x}^{-p} & 0\\
		0 & \displaystyle\sum_{p=1}^{M-1}\mathfrak{d}_{-,p}^{(d)}\,\partial_{x}^{-p}
	\end{pmatrix},\qquad \mathfrak{d}_{-,p}^{(d)}=O(\varepsilon\gamma^{-1})$$
    and $\mathcal{S}_{M}$ is a small $M$-smoothing operator that does not belong to the pseudo-differential class anymore but enjoys nice tame estimates.
    \item \textit{Projection onto normal modes and KAM diagonalization:}
    We can easily transpose the previous partial reduction concerning $\mathcal{L}^{(0)}$ (the full linearized operator of the original equations) onto $\mathscr{L}_{\omega}$ through \eqref{def:scrlomg-intro} by considering the following real, reversibility and momentum preserving projected transformation 
    $$\mathscr{M}_{\perp}\triangleq\Pi_{\overline{\mathbb{S}}_0}^{\perp}\boldsymbol{\Phi}_{M-1}\pmb{\mathscr{B}}\Pi_{\overline{\mathbb{S}}_0}^{\perp}.$$
    In Proposition \ref{lemma.sL.beforeKAM}, we show that the following reduction holds for the parameters inside 
    $\mathcal{O}_{\textnormal{\tiny{T}},\infty}^{\gamma,\tau_1}(i_0)$,
\begin{equation*}
\mathscr{M}_{\perp}^{-1}\mathscr{L}_{\omega}\mathscr{M}_{\perp}=\mathscr{L}^{[0]},\qquad\mathscr{L}^{[0]}\triangleq\omega\cdot\partial_{\varphi}\Pi_{\overline{\mathbb{S}}_0}^{\perp}+\im \,\mathscr{D}^{[0]}+\mathscr{R}^{[0]},
\end{equation*}
where
$$\mathscr{D}^{[0]}\triangleq\begin{pmatrix}
    \mathscr{D}_+^{[0]} & 0\\
    0 & \mathscr{D}_-^{[0]}
\end{pmatrix},\qquad\mathscr{D}_{\kappa}^{[0]}\triangleq\underset{j\in\mathbb{Z}^*\setminus\overline{\mathbb{S}}_{\kappa}}{\mathtt{diag}}\Big(\mu_{j,\kappa}^{[0]}\Big),\qquad \mu_{j,\kappa}^{[0]}(a,\omega)\triangleq\kappa\Omega_j(a)+j(\mathtt{c}_\kappa(a,\omega)-\kappa a)$$
and $\mathscr{R}^{[0]}$ is a $\mathcal{D}^q$-($-1$)-modulo tame operator in the sense of Definition \ref{modulo.tame.def}.

The structure of $\mathscr{L}^{[0]}$ is well-adapted for the implementation of a KAM reducibility scheme. Classically, for the water-wave equations, this latter is done exploiting the complex Hamiltonian reformulation of the equations, see \cite{BBHM18,BFM21,BFM21-1,BM18}. However, here such a reformulation seems hard to find out since we deal with a two-component real system. Therefore, one has to modify the KAM scheme. The first occurrence of such modified reduction is performed in \cite[Prop. 7.1]{HHR23} for constructing KAM invariant tori near annular vortex patches. In \cite{HHR23}, the authors work with a mixed matrix topology which is well-adapted to the fact that the anti-diagonal terms are smoothing at any order. In our situation, this is not the case and we prefer using the more adapted (modulo)-tame operator theory from \cite{BM18}, recalled in Section \ref{sec:tameOP}. To the best of our knowledge the only example of two-component reduction within this functional framework is due to Fu-Qu-Wu \cite{FQW25} for the two-component intermediate long-wave equation. Nevertheless they only need to run the scheme in the Lipshitz regularity. Here we perform it in the more challenging $C^{q}$-regularity. Using the $\mathcal{D}^{q}$-($-1$)-modulo tame formalism we can run a KAM reduction scheme at the operator level allowing to fully diagonalize the operator $\mathscr{L}^{[0]}.$ More precisely, as stated in Propositions \ref{prop.KAM.iter} and \ref{prop.KAM.conv}, we can find an invertible transformation $\Psi_{\infty}$ continuous over $H^s$, reversibility and momentum preserving, so that, under the following \textit{second-order Melnikov non-resonance conditions}
\begin{align}
		\mathcal{O}_{\mathtt{KAM},\infty}^{2\gamma,\tau}(i_0)& \triangleq
        \bigcap_{\kappa\in\{-,+\}}\bigcap_{(\ell,j,j')\in\mathbb{Z}^d\times(\mathbb{Z}\setminus\overline{\mathbb{S}}_{\kappa})^2\atop{\vec{\jmath}\cdot \ell+j-j'=0}}  \bigg\lbrace(a,\omega)\in\mathcal{O} \,\textnormal{ s.t. }\big|\omega\cdot \ell+\mu_{j,\kappa}^{[\infty]}(a,\omega;i_0)-\mu_{j',\kappa}^{[\infty]}(a,\omega;i_0)\big|>\frac{2\gamma\langle j-j'\rangle}{\langle \ell\rangle^{\tau}}\bigg\rbrace\\
		& \cap \bigcap_{(\ell,j,j')\in\mathbb{Z}^d\times(\mathbb{Z}\setminus\overline{\mathbb{S}}_{+})\times(\mathbb{Z}\setminus\overline{\mathbb{S}}_{-})\atop{\vec{\jmath}\cdot \ell+j-j'=0}}  \bigg\lbrace(a,\omega)\in\mathcal{O}\,\textnormal{ s.t. }\big|\omega\cdot \ell+\mu_{j,+}^{[\infty]}(a,\omega;i_0)-\mu_{j',-}^{[\infty]}(a,\omega;i_0)\big|>\frac{2\gamma \braket{j,j'}}{\langle \ell\rangle^{\tau}}\bigg\rbrace,
	\end{align}
    the following identity holds
    $$\Psi_{\infty}^{-1}\mathscr{L}^{[0]}\Psi_{\infty}=\omega\cdot\partial_{\varphi}\Pi_{\overline{\mathbb{S}}_0}^{\perp}+\ii\,\mathscr{D}_{\infty},\qquad\mathscr{D}_{\infty}\triangleq\begin{pmatrix}
			\mathscr{D}_{+}^{[\infty]} & 0\\
			0 & \mathscr{D}_{-}^{[\infty]}
		\end{pmatrix},$$
        where, for any $\kappa\in\{-,+\},$
        $$\mathscr{D}_{\kappa}^{[\infty]}\triangleq\underset{j\in\mathbb{Z}^*\setminus\overline{\mathbb{S}}_{\kappa}}{\mathtt{diag}}\Big(\mu_{j,\kappa}^{[\infty]}\Big),\qquad\mu_{j,\kappa}^{[\infty]}(a,\omega;i_0)\triangleq\mu_{j,\kappa}^{[0]}(a,\omega;i_0)+\mathtt{r}_{j,\kappa}^{[\infty]}(a,\omega;i_0),\qquad\mathtt{r}_{j,\kappa}^{[\infty]}=O\Big(\frac{\varepsilon\gamma^{-1}}{|j|}\Big).$$
    \item \textit{Inversion:} Proposition \ref{prop.almost.inv} will finally state that in restriction to the set 
    $$\mathcal{O}_{\textnormal{\tiny{T}},\infty}^{\gamma,\tau_1}(i_0)\cap\mathcal{O}_{\mathtt{KAM},\infty}^{2\gamma,\tau}(i_0)\cap\mathcal{O}_{\mathrm{\tiny{Inv}},\infty}^{\gamma,\tau}(i_0),$$
    with
    \begin{equation*}
            \mathcal{O}_{\mathrm{\tiny{Inv}},\infty}^{\gamma,\tau}(i_0)\triangleq\bigcap_{\kappa\in\{-,+\}}\bigcap_{(\ell,j)\in\mathbb{Z}^{d}\times(\mathbb{Z}^*\setminus\overline{\mathbb{S}}_{\kappa})\atop{\vec{\jmath}\cdot \ell+j=0}}\bigg\{(a,\omega)\in\mathcal{O}\quad\textnormal{s.t.}\quad\big|\omega\cdot \ell+\mu_{j,\kappa}^{[\infty]}(a,\omega;i_0)\big|>\frac{\gamma\braket{j}}{\braket{\ell}^{\tau}}\bigg\},
        \end{equation*}
        we find a full right inverse to $\mathscr{L}_{\omega}$ enjoying tame estimates.
\end{enumerate}

\paragraph{Measure estimates and momentum preserving degenerate KAM theory.} The Nash-Moser scheme produces a final state $(a,\omega)\mapsto\big(i_{\infty}(a,\omega),\alpha_{\infty}(a,\omega)\big)$ solution to $\mathscr{F}_{\varepsilon}(i,\alpha;a,\omega)=0$ provided that we restrict the parameters $(a,\omega)$ to a Borel set $\mathtt{G}_{\infty}$ constructed as the intersection of all the Cantor sets encountered along the different schemes of the multiple reductions. A solution to the original problem is obtained by constructing a frequency curve
$a\mapsto\omega(a,\varepsilon)$ solution to the implicit equation that corresponds to a solution of the original problem for
$$\alpha_{\infty}\big(a,\omega(a,\varepsilon)\big)=\Big(\big(-\Omega_{j}(a)\big)_{j\in\mathbb{S}_+},\big(\Omega_{j}(a)\big)_{j\in\mathbb{S}_-}\Big).$$
Therefore, one finally obtains a desired traveling quasi-periodic solution for any value $a$ taken inside
$$\mathscr{C}(\varepsilon)\triangleq\Big\{a\in(a_0,a_1)\quad\textnormal{s.t.}\quad\big(a,\omega(a,\varepsilon)\big)\in\mathtt{G}_{\infty}\Big\}.$$
In Proposition \ref{prop.measure}, we show that the measure of the set $\mathscr{C}(\varepsilon)$ satisfies
$$(a_1-a_0)-C\varepsilon^{\delta}\leqslant|\mathscr{C}(\varepsilon)|\leqslant a_1-a_0$$
for some $\delta>0,$ thus proving that the set $\mathscr{C}(\varepsilon)$ is asymptotically of full measure in $[a_0,a_1]$ as $\varepsilon\to0.$ This is proved by using classical measure theory (R\"ussmann's Theorem \cite{R01}) requiring transversality, i.e. lower bound on maximal derivative with respect to the geometrical parameter $a$ of the non-resonance conditions appearing in the various Borel sets. The transversality bounds are obtained by perturbative arguments from the equilibrium transversality. These latter are proved in Lemma \ref{Russmann equilibre} and follow combining the next two ingredients:
\begin{enumerate}[label=\textbullet]
    \item the non-degeneracy of the equilibrium frequency vector (and variants), that is the function 
$$a\mapsto\Big(\big(\Omega_{j}(a)\big)_{j\in\mathbb{S}_+},\big(-\Omega_{j}(a)\big)_{j\in\mathbb{S}_-}\Big)$$
is not contained in any hyperplane. This is proved in Lemma \ref{lem non-deg}.
\item the momentum condition like $\vec{\jmath}\cdot\ell+j-j'=0$ in the spirit of \cite{BFM21} to handle trivial resonances as explained in Remark 3 after Theorem \ref{thm QP traveling electron layers}. This simplifies the approach in \cite{HHR23} where they impose really weak quasi-resonance conditions to handle the two different transport velocities interactions.
\end{enumerate}

\medskip

\noindent\textbf{Acknowledgements:} 

\smallskip

Both authors L.F. and E.R. are supported by the ERC STARTING GRANT 2021 ``Hamiltonian Dynamics, Normal Forms and Water Waves'' (HamDyWWa), Project Number: 101039762. They are also supported by INdAM - GNAMPA Project ``(In)-stability in Fluid Mechanics'', CUP E53C25002010001. Part of this project was carried out while the author E.R. was supported by PRIN 2020XB3EFL, "Hamiltonian and Dispersive PDEs". 

\smallskip

Similarly to \cite{R23}, the author E.R. would like to thank Frédéric Rousset who made him discover the notion of patches of electrons during a conference in Lyon. The present paper is actually the original idea he got at this conference and \cite{R23} was only a natural preliminary step in this direction. He also would like to thank Changzhen Sun for useful discussions about the electronic Euler-Poisson system.

\section{Link with the electronic Euler-Poisson system}\label{appendix link Euler-Poisson}
The purpose of this small section, independant from the rest of the paper, is to highlight the relation between our system for the evolution of the patch boundaries and the classical electronic Euler-Poisson system given by
\begin{equation}\label{Euler-Poisson system}
	\begin{cases}
		\partial_{t}\rho+\partial_{x}\big((1+\rho) u\big)=0,\\
		\partial_{t}\big((1+\rho)u\big)+\partial_{x}\big((1+\rho)u^2\big)+\partial_{x}P(\rho)=(1+\rho)\partial_{x}\phi,\\
		\partial_{xx}^2\phi=\rho.
	\end{cases}
\end{equation}
The system \eqref{Euler-Poisson system} describes the dynamics of electrons, coupled to a
background density of ions through a self-consistent electric field. In such a model, one neglects
the ions motion, to account for the time-scale separation induced by the small mass ratio between
electrons and ions. Here $1+\rho$, $u$ and $\partial_{x}\phi$ represent the electron density, the electron velocity and the self-consistent electric field, respectively. The thermal pressure of electrons $P(\rho)$ is often assumed to follow an adiabatic polytropic $\gamma$ law
\begin{equation}\label{polytopic law}
    P(\rho)=T(1+\rho)^{\gamma},\qquad T\in\mathbb{R}_+,\qquad\gamma\geqslant1.
\end{equation}
The number $T$ is the global temperature of the plasma. According to Jackson \cite[p. 337]{J62}, in order to be physically relevant, the exponent $\gamma$ must related to the number $\fm$ of degrees of freedom of the system through the relation $\gamma=\frac{\fm+2}{\fm}$. In dimension one, we have $\fm=1$ and it corresponds to taking $\gamma=3$, namely having cubic pressure law. In \cite{GHZ17}, Guo, Han and Zhang proved the global well-posedness and abscence of shocks for all times for smooth small amplitude solutions for the system \eqref{Euler-Poisson system} with cubic pressure law. For extensions to higher dimensions, we refer the reader to \cite{G98,IP13,LW14}. In general, the local well-posedness theory can be carried out assuming the more general condition $P'(1)\geqslant0.$ It is worth emphasizing that the homogeneous equilibria are, at most, spectrally marginally stable. Consequently, the analyses developed in the aforementioned works rely heavily on dispersive mechanisms and normal form techniques. Hence, the arguments are rather sensitive to the precise structure of the pressure law.\\

In contrast, the literature concerning the existence and stability of traveling waves for Euler-Poisson systems is quite poor and the situation behaves better in the two-species or ionic cases \cite{BK19,BK22,CDMS95,CDMS96,GHHR26,HNS03,HS02,R26}. For the electronic Euler-Poisson, the only mathematical results seem to be \cite{NRS23} and \cite{R23}.\\

Let us now explicit the link between the electronic Euler-Poisson system \eqref{Euler-Poisson system} and the electron patch system \eqref{system rpm}. We shall see that the electron density $\rho$ and electron velocity $u$ are obtained from a linear combination of the deformations $r_+,r_-$ of the patch boundaries. More precisely, define 
\begin{equation}
    \begin{cases}
        \rho \triangleq \frac{\varepsilon}{2a}(r_+ - r_-), \\
        u \triangleq \frac{\varepsilon}{2}(r_+ + r_-), 
    \end{cases} \Leftrightarrow \quad \begin{cases}
        r_{+} = \frac{1}{\varepsilon} (u + a \rho), \\
        r_{-} = \frac{1}{\varepsilon} (u - a \rho).
    \end{cases}
\end{equation}
Then, from \eqref{system rpm}, we get
\begin{align*}
	\partial_{t}\rho&=\frac{\varepsilon}{2a}(\partial_tr_+-\partial_tr_-)\\
	&=-\frac{1}{4a}\partial_{x}\big((a+\varepsilon r_+)^2-(-a+\varepsilon r_-)^2\big)\\
	&=-\frac{1}{4a}\partial_{x}\big((2a+\varepsilon r_+-\varepsilon r_-)(r_++ r_-)\big)\\
	&=-\partial_{x}\big((1+\rho)u\big),
\end{align*}
which corresponds to the first equation in \eqref{Euler-Poisson system}. Besides, \eqref{system rpm} also implies
\begin{align*}
	\partial_{t}u&=\frac{\varepsilon}{2}(\partial_{t}r_++\partial_{t}r_-)\\
	&=-\frac{1}{4}\partial_{x}\Big((a+\varepsilon r_+)^2+(-a+\varepsilon r_-)^2\Big)+\frac{\varepsilon}{2a}\partial_{x}^{-1}(r_+-r_-)\\
	&=-a^2\partial_{x}\rho-\tfrac{1}{2}\partial_{x}\big(a^2\rho^2+u^2\big)+\partial_{x}^{-1}\rho.
\end{align*}
Therefore,
\begin{align*}
	\partial_{t}\big((1+\rho)u\big)&=u\partial_{t}\rho+(1+\rho)\partial_{t}u\\
	&=-u\partial_{x}\big((1+\rho)u\big)+(1+\rho)\Big[-a^2\partial_{x}\rho-\tfrac{1}{2}\partial_{x}\big(a^2\rho^2+u^2\big)+\partial_{x}^{-1}\rho\Big]\\
    & = -u \pa_{x} \big( (1+\rho)u \big) - (1+\rho) u \pa_{x}u - a^2 (1+\rho)\pa_{x} - a^2 (1+\rho)\rho \pa_{x}\rho + (1+\rho)\pa_{x}^{-1}\rho.
\end{align*}
Also,
$$ u \pa_{x} \big( (1+\rho)u \big)  = \pa_{x} \big( (1+\rho)u^2 \big) - (1+\rho) u \pa_{x}u$$
and
$$(1+\rho) \pa_{x}\rho + (1+\rho)\rho\pa_{x}\rho = (1+\rho)^2 \pa_{x}\rho = \pa_{x}\big( \tfrac13 (1+\rho)^3\big).$$
Combining the foregoing calculations yields
\begin{align*}
    \partial_{t}\big((1+\rho)u\big)+\partial_{x}\big((1+\rho)u^2\big)-(1+\rho)\partial_{x}^{-1}\rho&=-\tfrac{a^2}{3}\partial_{x}\big((1+\rho)^3\big).
\end{align*}
This corresponds to the second (and third) equation in \eqref{Euler-Poisson system} with a pressure term in the power law form
$$P(\rho)=\frac{a^2}{3}(1+\rho)^3,$$
that is \eqref{polytopic law} with
$$T=\frac{a^2}{3}\qquad\textnormal{and}\qquad\gamma=3.$$
This relation has a clear physical meaning, since $a\gg1$ corresponds to a high-velocity regime for the particles, which implies strong thermal agitation and therefore a high temperature $T\gg1.$\\

Thus, as a consequence of Theorem \ref{thm QP traveling electron layers}, we obtain the following result.
\begin{theo}
    {\textbf{(Traveling quasi-periodic solutions for the electronic Euler-Poisson system with cubic pressure term)}}\label{thm EulerPoisson}\\
	Consider the following parameters
	$$0<a_0<a_1,\qquad(d_+,d_-)\in\mathbb{N}^2\setminus\{(0,0)\}.$$
	Fix finite sets of Fourier modes 
	$$\mathbb{S}_+=\big\{j_{1}^{+},\ldots,j_{d_+}^{+}\big\}\subset\mathbb{N}^*,\qquad\mathbb{S}_-=\big\{j_{1}^{-},\ldots,j_{d_-}^{-}\big\}\subset\mathbb{N}^*,\qquad|\mathbb{S}_+|=d_+,\qquad|\mathbb{S}_-|=d_-$$
	such that
	$$\mathbb{S}_+\cap\mathbb{S}_-=\varnothing.$$
	Consider some amplitudes
	$$\left(\mathfrak{a}_{j}^+\right)_{j\in\mathbb{S}_+}\in\mathbb{R}^{d_+},\qquad \left(\mathfrak{a}_{j}^-\right)_{j\in\mathbb{S}_-}\in\mathbb{R}^{d_-}.$$
	There exist $s_0=s_0(d,q_0)>0$ and $\varepsilon_0>0$ such that for any $\varepsilon\leqslant\varepsilon_0$, there exists a Cantor set $\mathscr{C}(\varepsilon)\subset(a_0,a_1)$
	with 
	$$\lim_{\varepsilon\to 0}|\mathscr{C}(\varepsilon)|=a_1-a_0$$
	such that for any $a\in\mathscr{C}(\varepsilon)$ there exists a traveling quasi-periodic solution to the electronic Euler-Poisson system with cubic pressure term
    $$\begin{cases}
		\partial_{t}\rho+\partial_{x}\big((1+\rho) u\big)=0,\\
		\partial_{t}\big((1+\rho)u\big)+\partial_{x}\big((1+\rho)u^2\big)+\frac{a^2}{3}\partial_{x}\big((1+\rho)^3\big)=(1+\rho)\partial_{x}\phi,\\
		\partial_{xx}^2\phi=\rho
	\end{cases}$$
    taking the form
	\begin{align*}
	\begin{pmatrix}
		\rho\\
		u
	\end{pmatrix}(t,x)&=\frac{\varepsilon}{2}\sum_{j\in\mathbb{S}_+}\mathfrak{a}_{j}^+\cos\big(\Omega_{j}^+(a,\varepsilon)t-jx\big)\begin{pmatrix}
	\sqrt{\frac{1-b_j(a)}{a^2\big(1+b_{j}(a)\big)}}\\
	\sqrt{\frac{1+b_j(a)}{1-b_{j}(a)}}
\end{pmatrix}\\
&\quad+\frac{\varepsilon}{2}\sum_{j\in\mathbb{S}_-}\mathfrak{a}_{j}^-\cos\left(\Omega_{j}^-(a,\varepsilon)t-jx\right)\begin{pmatrix}
-\sqrt{\frac{1-b_j(a)}{a^2\big(1+b_{j}(a)\big)}}\\
	\sqrt{\frac{1+b_j(a)}{1-b_{j}(a)}}
\end{pmatrix}+\mathtt{q}(t,x)
	\end{align*}
with
$$\forall j\in\mathbb{N}^*,\quad b_{j}(a)\triangleq\frac{1}{2a|j|\left(\sqrt{a^2j^2+1}+a|j|+\frac{1}{2a|j|}\right)}\cdot$$
It is associated with the frequency vector
$$\omega(a,\varepsilon)\triangleq\Big(\Omega_{j_1^+}^+(a,\varepsilon),\ldots,\Omega_{j_d^+}^+(a,\varepsilon),\Omega_{j_1^-}^-(a,\varepsilon),\ldots,\Omega_{j_{d_-}^-}^-(a,\varepsilon)\Big)$$
and the velocity vector
$$\vec{\jmath}\triangleq\big(j_1^+,\ldots,j_{d_+}^+,j_{1}^-,\ldots,j_{d_-}^-\big).$$
In addition, the perturbed frequencies satisfy the following limits
$$\forall\kappa\in\{-,+\},\quad\forall j\in\mathbb{S}_{\kappa},\quad\lim_{\varepsilon\to0}\Omega_{j}^{\kappa}(a,\varepsilon)=\kappa\sqrt{a^2j^2+1}.$$
Finally, the perturbation $\mathtt{q}$ writes
$$\mathtt{q}(t,x)=\check{\mathtt{q}}\big(\omega(a,\varepsilon)t-\vec{\jmath}x\big),$$
with $\check{\mathtt{q}}\in H^{s_0}(\mathbb{T}^{d_++d_-},\mathbb{R}^2)$ satisfying
$$\check{\mathtt{q}}(-\varphi)=\check{\mathtt{q}}(\varphi),\qquad\|\check{\mathtt{q}}\|_{H^{s_0}(\mathbb{T}^{d_++d_-},\mathbb{R}^2)}=O(\varepsilon^2).$$
\end{theo}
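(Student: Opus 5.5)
Theorem \ref{thm EulerPoisson} follows from Theorem \ref{thm QP traveling electron layers} by the linear change of unknowns established just above. Fix $a_0,a_1$, the sets $\mathbb{S}_\pm$ and the amplitudes $\mathfrak{a}_j^\pm$. We apply Theorem \ref{thm QP traveling electron layers} with the \emph{same} data and keep the same Cantor set $\mathscr{C}(\varepsilon)$, the same frequency vector $\omega(a,\varepsilon)$ and the same velocity vector $\vec{\jmath}$. For $a\in\mathscr{C}(\varepsilon)$ this yields a reversible traveling quasi-periodic solution $(r_+,r_-)$ of \eqref{system rpm} of the form \eqref{solution.thm}. We then set
$$\rho\triangleq\tfrac{\varepsilon}{2a}(r_+-r_-),\qquad u\triangleq\tfrac{\varepsilon}{2}(r_++r_-),\qquad\phi\triangleq\partial_{xx}^{-2}\rho .$$
The computation preceding the statement shows that $(\rho,u,\phi)$ solves the Euler--Poisson system with $P(\rho)=\tfrac{a^2}{3}(1+\rho)^3$. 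Since $r_\pm$ have zero space average, so does $\rho$, and $\partial_{xx}^{-2}\rho$ is well defined. Note also that $\partial_x^{-1}\rho=\partial_x\phi$, consistently with the convention $\partial_x^m 1=0$. One point must be checked: the derivation divides nowhere by $1+\rho$, so the identities hold pointwise for the smooth solutions at hand. Because the map $(r_+,r_-)\mapsto(\rho,u)$ is linear with constant coefficients, it preserves the traveling quasi-periodic structure. Writing $r=\check r(\omega t-\vec{\jmath}x)$ gives $(\rho,u)=\check{(\rho,u)}(\omega t-\vec{\jmath}x)$ with $\check{(\rho,u)}$ obtained by the same linear combination. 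Reversibility $\check{r}(-\varphi)=\check r(\varphi)$ transfers identically.

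Next we identify the leading terms. For $j\in\mathbb{S}_+$, the vector $(1,b_j)^\top/\sqrt{1-b_j^2}$ is sent to
$$\tfrac{\varepsilon}{2}\Big(\tfrac{1-b_j}{a\sqrt{1-b_j^2}},\ \tfrac{1+b_j}{\sqrt{1-b_j^2}}\Big)=\tfrac{\varepsilon}{2}\Big(\sqrt{\tfrac{1-b_j}{a^2(1+b_j)}},\ \sqrt{\tfrac{1+b_j}{1-b_j}}\Big),$$
which is the stated vector. For $j\in\mathbb{S}_-$, the vector $(b_j,1)^\top$ gives the same expression with the first component negated. The cosines, the frequencies $\Omega_j^\kappa(a,\varepsilon)$ and their limits are inherited unchanged from Theorem \ref{thm QP traveling electron layers}.

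The only quantitative point is the remainder. The map gives $\mathtt{q}=\big(\tfrac{\varepsilon}{2a}(\mathtt p_+-\mathtt p_-),\tfrac{\varepsilon}{2}(\mathtt p_++\mathtt p_-)\big)$. Hence $\|\check{\mathtt q}\|_{H^{s_0}}\leqslant C(a_0)\,\varepsilon\,\|\check{\mathtt p}\|_{H^{s_0}}=O(\varepsilon^2)$, using $\|\check{\mathtt p}\|_{H^{s_0}}=O(\varepsilon)$ and $a\geqslant a_0$. The constant is uniform in $a\in(a_0,a_1)$. The argument has no real obstacle: everything is linear bookkeeping once Theorem \ref{thm QP traveling electron layers} and the algebraic equivalence are available. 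The part needing the most care is checking that the algebraic derivation is exact rather than formal, in particular the simplification in $\partial_t\rho$ and the pressure identity $(1+\rho)^2\partial_x\rho=\partial_x\tfrac13(1+\rho)^3$. The factor $(1+\rho)$ multiplying $\partial_x\phi$ on the right-hand side must also come out exactly.
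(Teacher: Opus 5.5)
Your proposal is correct and follows the paper's own argument. You apply Theorem~\ref{thm QP traveling electron layers} with the same data and Cantor set, push the solution through the constant-coefficient map $(r_+,r_-)\mapsto(\rho,u)=\big(\tfrac{\varepsilon}{2a}(r_+-r_-),\tfrac{\varepsilon}{2}(r_++r_-)\big)$ using the algebraic identity derived before the statement (with $\partial_x\phi=\partial_x^{-1}\rho$), and read off $\mathtt{q}=\tfrac{\varepsilon}{2}\big(a^{-1}(\mathtt{p}_1-\mathtt{p}_2),\mathtt{p}_1+\mathtt{p}_2\big)$, which gives reversibility and the $O(\varepsilon^2)$ bound. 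Your leading-order vectors and the uniform constant from $a\geqslant a_0$ are correct; the only point to fix is notational: your $\partial_{xx}^{-2}$ must mean the inverse of the second derivative, i.e.\ $\partial_x^{-2}$, as your identity $\partial_x\phi=\partial_x^{-1}\rho$ requires.
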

\begin{remark}
    Let us make the following remarks about the Theorem \ref{thm EulerPoisson}.
    \begin{enumerate}
        \item The Hamiltonian structure of system \eqref{Euler-Poisson system} is well-known. However, as mentioned in \cite{NRS23}, where they construct in particular traveling waves in the spatial case of the real line,
        \begin{quote}
            \footnotesize
            "Equilibria or relative equilibria - such as traveling waves - of the electronic Euler-Poisson system, even those of small amplitude, seem to have received no attention at all. The main reason is surely that none of the traveling waves whose existence requires the co-existence of two distinct equilibria at the same phase speed may exist for this system. Among plane waves this excludes both solitary waves and kinks."
        \end{quote}
        Therefore, finding traveling quasi-periodic solutions to \eqref{Euler-Poisson system} was a highly non-trivial task. By leveraging the equivalence with the simpler system \eqref{system rpm}, we have successfully constructed such solutions in the case of a cubic pressure law.
        \item Denoting $\mathtt{p}=(\mathtt{p}_1,\mathtt{p}_2)^{\top}$ the perturbation term in Theorem \ref{thm QP traveling electron layers}, then
        $$\mathtt{q}=\frac{\varepsilon}{2}\begin{pmatrix}
            a^{-1}(\mathtt{p}_1-\mathtt{p}_2)\\
            \mathtt{p}_1+\mathtt{p}_2
        \end{pmatrix}.$$
        \item The equivalence between the systems \eqref{Euler-Poisson system} and \eqref{system rpm} can be used to state that the results in \cite{R23} have a counterpart being the construction traveling periodic waves for the electronic Euler-Poisson system, recovering (with a simpler approach) the ones found in \cite{NRS23} in the particular case of cubic pressure law. This analogy can also be exploited in the converse way. Indeed, in \cite{NRS23} the authors showed that the traveling waves of the electronic Euler-Poisson system (with general pressure law that covers our cubic one) are spectrally unstable. This spectral instability is neither modulational nor co-periodic. Hence, we can deduce that the E-states (i.e. traveling periodic electron layers) constructed in \cite{R23} are spectrally unstable.
    \end{enumerate}
\end{remark}

\section{Hamiltonian system and its linearization}\label{sec QPS VP}
Throughout the paper, we consider the geometrical parameter $a$ introduced in \eqref{stationary sol} ranging in the interval $(a_0,a_1)$, with
\begin{equation}\label{a01 bfm}
	0<a_0<a_1.
\end{equation}
For $\varepsilon>0$ a small parameter, we take an electron layer ansatz in the form
\begin{equation}\label{ansatz.electron.layer}
    f(t,x,v)=\frac{2\pi}{|S_t|}\mathbf{1}_{S_t}(x,v),\qquad S_t=\big\{(x,v)\in\mathbb{T}\times\mathbb{R}\quad\textnormal{s.t.}\quad v_-(t,x)<v<v_+(t,x)\big\},
\end{equation}
with
\begin{equation*}
    |S_t|=4\pi a,\qquad v_{\pm}(t,x)=\pm a+\varepsilon r_{\pm}(t,x),\qquad\fint_{\mathbb{T}}r_{\pm}(t,x){\rm d}x=0.
\end{equation*} 
This solution lives close to the symmetric flat strip $S_{\textnormal{\tiny{flat}}}(a)$, as in \eqref{stationary sol}, and has the same area. The deformation functions $r_{\pm}$ satisfy the system \eqref{system rpm}. The purpose of this section is to expose the Hamiltonian nature of this system and then study its linearization around the stationary solution.

\subsection{Transport Hamiltonian system for the deformations of the flat strip} 

We shall now highlight the Hamiltonian nature of the system \eqref{system rpm}. This structure is important for the construction of quasi-periodic solution, specifically in the tangential analysis in Section \ref{sect. action-angles}. The corresponding Hamiltonian involves the kinetic and potential energies, is reversible and invariant under translations. The result reads as follows.
\begin{prop}\label{prop Hamiltonian form}
	Consider the following functional \begin{equation}\label{definition Hamiltonien}
		\mathcal{E}(t)\triangleq\frac{|S_t|}{2\pi}\Big(\mathcal{E}_{\textnormal{\tiny{kin}}}(t)+\mathcal{E}_{\textnormal{\tiny{pot}}}(t)\Big),
	\end{equation}
	where $\mathcal{E}_{\textnormal{\tiny{kin}}}(t)$ and $\mathcal{E}_{\textnormal{\tiny{kin}}}(t)$ are respectively the kinetic and the electrostatic potential energies defined by
    \begin{equation}
        \mathcal{E}_{\textnormal{\tiny{kin}}}(t)\triangleq\fint_{\mathbb{T}}\int_{\mathbb{R}}\frac{v^2}{2}f(t,x,v){\rm d}v{\rm d}x,\qquad\mathcal{E}_{\textnormal{\tiny{pot}}}(t)\triangleq-\frac{1}{2}\fint_{\mathbb{T}}\left(1-\int_{\mathbb{R}}f(t,x,v){\rm d}v\right)\boldsymbol{\varphi}(t,x){\rm d}x. \label{defin kin pot}
    \end{equation}
	Then, the system \eqref{system rpm} is Hamiltonian, namely it can be written as
	\begin{equation}\label{HAM VP}
		\partial_{t}r=\frac{1}{\varepsilon^2}\mathcal{J}\nabla\mathcal{E}(r),\qquad r\triangleq(r_{+},r_-),\qquad\mathcal{J}\triangleq\begin{pmatrix}
			-\partial_{x} & 0\\
			0 & \partial_{x}
		\end{pmatrix},
	\end{equation}
	where $\nabla\triangleq(\nabla_{r_+},\nabla_{r_-})$ denotes the $\mathbf{L}^2(\mathbb{T})$-gradient, with $\bL^2(\T)\triangleq L^2(\T)\times L^2(\T)$, associated with the scalar product
    \begin{equation}
        \big\langle (f_+,f_-),(g_+,g_-)\big\rangle_{\mathbf{L}^2(\mathbb{T})}\triangleq\fint_{\mathbb{T}}\big(f_+(x)g_+(x)+f_-(x)g_-(x)\big){\rm d}x.
    \end{equation}
\end{prop}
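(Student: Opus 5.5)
The plan is to compute $\mathcal{E}$ explicitly as a functional of $r=(r_+,r_-)$ using the ansatz \eqref{ansatz.electron.layer}, differentiate it, and compare with the right-hand side of \eqref{system rpm}. Since $|S_t|=4\pi a$, the prefactor $\frac{|S_t|}{2\pi}=2a$ cancels the normalization $\frac{1}{2a}$ in $f=\frac{1}{2a}\mathbf{1}_{S_t}$. For the kinetic part, integrating in $v$ between $v_-$ and $v_+$ gives
$$2a\,\mathcal{E}_{\textnormal{\tiny{kin}}}=\fint_{\mathbb{T}}\tfrac{1}{6}\big(v_+^3-v_-^3\big){\rm d}x=\fint_{\mathbb{T}}\tfrac16\Big((a+\varepsilon r_+)^3-(-a+\varepsilon r_-)^3\Big){\rm d}x.$$
For the potential part, I will use $1-\int_{\mathbb{R}}f\,{\rm d}v=1-\frac{v_+-v_-}{2a}=-\frac{\varepsilon}{2a}(r_+-r_-)$, which has zero mean by \eqref{v.pm}. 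Then \eqref{inv Lap phi} gives $\boldsymbol{\varphi}=-\frac{\varepsilon}{2a}\partial_{xx}^{-1}(r_+-r_-)$, and hence
$$2a\,\mathcal{E}_{\textnormal{\tiny{pot}}}=-\frac{\varepsilon^2}{8a^2}\cdot 2a\fint_{\mathbb{T}}(r_+-r_-)\,\partial_{xx}^{-1}(r_+-r_-)\,{\rm d}x=-\frac{\varepsilon^2}{4a}\fint_{\mathbb{T}}(r_+-r_-)\,\partial_{xx}^{-1}(r_+-r_-)\,{\rm d}x.$$

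Next I will compute the $\mathbf{L}^2(\mathbb{T})$-gradients. In the kinetic term, $\nabla_{r_+}$ yields $\frac{\varepsilon}{2}(a+\varepsilon r_+)^2$ and $\nabla_{r_-}$ yields $-\frac{\varepsilon}{2}(-a+\varepsilon r_-)^2$. For the quadratic potential term I will use that $\partial_{xx}^{-1}$ is $L^2$-self-adjoint. This gives the gradient $\mp\frac{\varepsilon^2}{2a}\partial_{xx}^{-1}(r_+-r_-)$ with respect to $r_\pm$.

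The final step applies $\mathcal{J}$ and divides by $\varepsilon^2$. The first component becomes
$$-\tfrac{1}{\varepsilon^2}\partial_x\Big(\tfrac{\varepsilon}{2}(a+\varepsilon r_+)^2-\tfrac{\varepsilon^2}{2a}\partial_{xx}^{-1}(r_+-r_-)\Big),$$
which is precisely $\frac{1}{\varepsilon}$ times the bracket in the first line of \eqref{system rpm}. For the second component, $\mathcal{J}$ contributes $+\partial_x$ and the gradient carries an extra minus sign, so the two signs combine to $-\partial_x\big(\frac{1}{2\varepsilon}(-a+\varepsilon r_-)^2-\frac{1}{2a}\partial_{xx}^{-1}r_++\frac{1}{2a}\partial_{xx}^{-1}r_-\big)$.

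The main obstacle is matching the powers of $\varepsilon$ against \eqref{system rpm}. The computation above gives an extra overall factor $\frac{1}{\varepsilon}$ in the nonlinear term, and the potential term appears as $-\frac{1}{2a}\partial_{xx}^{-1}(\cdot)$ without that factor. I will therefore recheck the normalization carefully. The expected resolution is that $\mathcal{E}$ must be understood modulo Casimirs and constants: the $O(1)$ and $O(\varepsilon)$ parts of the kinetic energy are of the form $\fint r_\pm$, whose gradients are constants annihilated by $\partial_x$. The exponent of $\varepsilon$ in \eqref{HAM VP} should then be fixed so that the quadratic-and-higher part matches \eqref{system rpm}, possibly after adjusting by one power of $\varepsilon$. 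I will also check the mean-zero constraint, namely that gradients are taken in $L^2_0$ so that added constants are harmless.
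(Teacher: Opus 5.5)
Your route is the same as the paper's: you write $\mathcal{E}$ explicitly in $r_\pm$ using $|S_t|=4\pi a$ and $1-\int f\,{\rm d}v=-\frac{\varepsilon}{2a}(r_+-r_-)$, use the self-adjointness of $\partial_{xx}^{-1}$, and obtain the gradients $\frac{\varepsilon}{2}(a+\varepsilon r_+)^2$, $-\frac{\varepsilon}{2}(-a+\varepsilon r_-)^2$ and $\mp\frac{\varepsilon^2}{2a}\partial_{xx}^{-1}(r_+-r_-)$. All of these coincide with the paper's. The gap is in your final comparison. The claimed "extra factor $\frac{1}{\varepsilon}$" is an arithmetic slip:
\[
\frac{1}{\varepsilon^2}\Big(\frac{\varepsilon}{2}(a+\varepsilon r_+)^2-\frac{\varepsilon^2}{2a}\partial_{xx}^{-1}(r_+-r_-)\Big)=\frac{1}{2\varepsilon}(a+\varepsilon r_+)^2-\frac{1}{2a}\partial_{xx}^{-1}r_++\frac{1}{2a}\partial_{xx}^{-1}r_-,
\]
so the first component of $\frac{1}{\varepsilon^2}\mathcal{J}\nabla\mathcal{E}$ is exactly the right-hand side of the first line of \eqref{system rpm}. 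Your own second-component computation, which correctly produces the $\frac{1}{2\varepsilon}$ term, already shows the exact match.

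Delete your "main obstacle" paragraph rather than try to resolve it. Its proposed cure is wrong. Adjusting the power of $\varepsilon$ in \eqref{HAM VP} would turn a true identity into a false one, and would no longer prove the stated proposition. No Casimir or "modulo constants" argument is needed either. The constant term $\frac{a^2}{2\varepsilon}$ appears identically inside $\partial_x$ on both sides, and the $O(\varepsilon)$ part of the kinetic energy is already accounted for by your gradient formulas. With that slip corrected, your computation is a complete proof, identical to the paper's.
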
 
\begin{remark}
	The quantities $\mathcal{E}_{\textnormal{\tiny{kin}}}$ and $\mathcal{E}_{\textnormal{\tiny{pot}}}$ are easily found in the physical literature when $(x,v)\in\mathbb{R}^3\times\mathbb{R}^3$ but can be adapted to our one-dimensional/periodic constraint and our choice of sign for the electrical potential. Using that renormalization by the area $|S_t|=4\pi a$ is constant in time, it is therefore a classical fact that $\mathcal{E}(t)$ in \eqref{definition Hamiltonien} is a prime integral of \eqref{Vlasov Poisson}.
\end{remark}
\begin{proof}
By \eqref{ansatz.electron.layer}, the kinetic energy $\cE_{\rm kin}$ in \eqref{defin kin pot} writes
	\begin{equation}\label{new:Ekin}
	    \mathcal{E}_{\textnormal{\tiny{kin}}}(r)(t)=\frac{1}{2a}\fint_{\mathbb{T}}\bigg(\int_{-a+\varepsilon r_-(t,x)}^{a+\varepsilon r_+(t,x)}\frac{v^2}{2}{\rm d}v\bigg){\rm d}x.
	\end{equation}
	Differentiating it with respect to $r_{\pm}$ in the direction $h_{\pm},$ we get
    \begin{align}
        \left\langle\nabla_{r_{\pm}}\mathcal{E}_{\textnormal{\tiny{kin}}}(r)(t),h_{-}(t)\right\rangle_{L^2(\mathbb{T})}&=\pm\frac{\varepsilon}{2a}\fint_{\mathbb{T}}\frac{\big(\pm a+\varepsilon r_{\pm}(t,x)\big)^2}{2}h_{\pm}(t,x){\rm d}x.
    \end{align}
	It implies that
	\begin{equation}\label{grad kin}
		\nabla_{r_+}\mathcal{E}_{\textnormal{\tiny{kin}}}(r)(t,x)=\frac{\varepsilon\big(a+\varepsilon r_+(t,x)\big)^2}{4a},\qquad\nabla_{r_-}\mathcal{E}_{\textnormal{\tiny{kin}}}(r)(t,x)=-\frac{\varepsilon\big(-a+\varepsilon r_-(t,x)\big)^2}{4a}\cdot
	\end{equation}
	Our next step is to study the potential energy $\cE_{\rm pot}$ in \eqref{defin kin pot}. By \eqref{ansatz.electron.layer}, we note that
	\begin{align*}
		1-\int_{\mathbb{R}}f(t,x,v){\rm d}v&=1-\frac{1}{2a}\int_{-a+\varepsilon r_-(t,x)}^{a+\varepsilon r_+(t,x)}{\rm d}v=-\frac{\varepsilon}{2a}\big(r_+(t,x)-r_-(t,x)\big).
	\end{align*}
	Therefore, using the third equation in \eqref{Vlasov Poisson}, we write
	\begin{equation}\label{new:Epot}
        \mathcal{E}_{\textnormal{\tiny{pot}}}(r)(t)=-\frac{\varepsilon^2}{8a^2}\fint_{\mathbb{T}}\big(r_+(t,x)-r_-(t,x)\big)\partial_{xx}^{-1}\big(r_+(t,x)-r_-(t,x)\big){\rm d}x.
	\end{equation}
	Differentiating with respect to $r_{\pm}$ in the direction $h_{\pm}$ and integrating by parts give
	\begin{align*}
		\left\langle\nabla_{r_{\pm}}\mathcal{E}_{\textnormal{\tiny{pot}}}(r)(t),h_{\pm}(t)\right\rangle_{L^2(\mathbb{T})}&=\mp\frac{\varepsilon^2}{8a^2}\fint_{\mathbb{T}}h_{\pm}(t,x)\partial_{xx}^{-1}\big(r_+(t,x)-r_-(t,x)\big){\rm d}x\\
		&\quad\mp\frac{\varepsilon^2}{8a^2}\fint_{\mathbb{T}}\big(r_+(t,x)-r_-(t,x)\big)\partial_{xx}^{-1}h_{\pm}(t,x){\rm d}x\\
		&=\mp\frac{\varepsilon^2}{4a^2}\fint_{\mathbb{T}}h_{\pm}(t,x)\partial_{xx}^{-1}\big(r_+(t,x)-r_-(t,x)\big){\rm d}x,
	\end{align*}
	which implies in turn
	\begin{equation}\label{grad pot}
		\nabla_{r_{\pm}}\mathcal{E}_{\textnormal{\tiny{pot}}}(r)(t,x)=\mp\tfrac{\varepsilon^2}{4a^2}\partial_{xx}^{-1}\big(r_+(t,x)-r_-(t,x)\big).
	\end{equation}
	Combining \eqref{definition Hamiltonien}, \eqref{grad kin}, \eqref{grad pot} and recalling that $|S_t|=4\pi a,$ we infer
	\begin{align*}
		\nabla_{r_+}\mathcal{E}(r)(t,x)&=\tfrac{\varepsilon}{2}\big(a+\varepsilon r_+(t,x)\big)^2-\tfrac{\varepsilon^2}{2a}\partial_{xx}^{-1}\big(r_+(t,x)-r_-(t,x)\big),\\
		\nabla_{r_-}\mathcal{E}(r)(t,x)&=-\left(\tfrac{\varepsilon}{2}\big(-a+\varepsilon r_-(t,x)\big)^2-\tfrac{\varepsilon^2}{2a}\partial_{xx}^{-1}\big(r_+(t,x)-r_-(t,x)\big)\right).
	\end{align*}
	Comparing with \eqref{system rpm}, we get \eqref{HAM VP}. This concludes the proof of Proposition \ref{prop Hamiltonian form}.
\end{proof}

The natural phase space associated with the Hamiltonian equation \eqref{HAM VP} is the zero-mean $\mathbf{L}_0^2(\mathbb{T})$ space defined by
\begin{equation}\label{def:bbL20}
    \mathbf{L}_0^2(\mathbb{T})\triangleq L_0^2(\mathbb{T})\times L_0^2(\mathbb{T}),\qquad L_{0}^{2}(\mathbb{T})\triangleq\left\{f\in L^2(\mathbb{T})\quad\textnormal{s.t.}\quad\fint_{\mathbb{T}}f(x){\rm d}x=0\right\}.
\end{equation} 
It is endowed with the symplectic form $\mathscr{W}$ given by 
\begin{equation}\label{symplectic.form}
    \begin{aligned}
    \mathscr{W}\big((r_+,r_-),(h_+,h_-)\big)& \triangleq\big\langle\mathcal{J}^{-1}(r_+,r_-)\,,\,(h_+,h_-)\big\rangle_{\mathbf{L}^2(\mathbb{T})} \\
    &=\fint_{\mathbb{T}}\Big[\partial_{x}^{-1}r_-(x)h_-(x)-\partial_{x}^{-1}r_+(x)h_+(x)\Big]{\rm d}x.
\end{aligned}
\end{equation}
The vector field $X_{\mathcal{E}}$ is defined through
$$d\mathcal{E}(r_+,r_-)[h_+,h_-]=\mathscr{W}\big(X_{\mathcal{E}}(r_+,r_-),(h_+,h_-)\big),\qquad\textnormal{i.e.}\qquad X_{\mathcal{E}}(r_+,r_-)=\mathcal{J}\nabla\mathcal{E}(r_+,r_-).$$
Now consider the involution $\mathscr{S}$ and the translation operator $\mathscr{T}_{y}$ (for a fixed $y\in\mathbb{T}$) given by
\begin{equation}\label{inv trans}
	\big(\mathscr{S}(r_+,r_-)\big)(x)\triangleq(r_+,r_-)(-x),\qquad\big(\mathscr{T}_{y}(r_+,r_-)\big)(x)\triangleq(r_+,r_-)(x+y).
\end{equation}
Then, one easily checks, using changes of variables, that
\begin{equation}\label{rev.inv.cE}
    X_{\mathcal{E}}\circ\mathscr{S}=-\mathscr{S}\circ X_{\mathcal{E}},\qquad\textnormal{i.e.}\qquad\mathcal{E}\circ\mathscr{S}=\mathcal{E},
\end{equation}
and
\begin{equation}\label{trans.inv.cE}
    X_{\mathcal{E}}\circ\mathscr{T}_y=\mathscr{T}_y\circ X_{\mathcal{E}},\qquad\textnormal{i.e.}\qquad\mathcal{E}\circ\mathscr{T}_y=\mathcal{E},
\end{equation}
which means that the Hamiltonian $\mathcal{E}$ is reversible and invariant under space translations, respectively.
\begin{remark}
   The system \eqref{HAM VP} has also an other symmetry. Defining the involution
$$\mathscr{I}(r_+,r_-)\triangleq-(r_-,r_+),$$
then it is clear from \eqref{definition Hamiltonien}, \eqref{new:Ekin} and \eqref{new:Epot} that
$$\mathcal{E}\circ\mathscr{I}=\mathcal{E}.$$
However, this last symmetry will not be used along this study.
\end{remark}

\subsection{Linear dynamics}\label{sect.linear.dyn}
The purpose of this subsection is to give the expression of the linearized operator at a general state and then to study the associated solutions for the linearized system at the equilibrium state $(r_+,r_-)=(0,0).$

Linearizing \eqref{system rpm} at a generic state $r=(r_+,r_-)$ in the direction $(\rho_+,\rho_-)$ yields
\begin{equation}\label{linear at general state}
    \partial_{t}\begin{pmatrix}
	\rho_{+}\\
	\rho_{-}
\end{pmatrix}=\mathcal{J}\mathbf{M}_{\varepsilon r}(a)\begin{pmatrix}
	\rho_{+}\\
	\rho_{-}
\end{pmatrix},\qquad\mathbf{M}_{\varepsilon r}(a)\triangleq\begin{pmatrix}
	v_{+}-\tfrac{1}{2a}\partial_{xx}^{-1} & \tfrac{1}{2a}\partial_{xx}^{-1}\vspace{0.1cm}\\
	\tfrac{1}{2a}\partial_{xx}^{-1} & -v_{-}-\tfrac{1}{2a}\partial_{xx}^{-1}
\end{pmatrix}, \qquad  v_{\pm}\triangleq \pm a+\varepsilon r_{\pm},
\end{equation}
with $\cJ$  as in \eqref{HAM VP}. At the equilibrium $(r_+,r_-)=(0,0)$, this becomes
\begin{equation}\label{lin eq QP}
	\partial_{t}\begin{pmatrix}
		\rho_{+}\\
		\rho_{-}
	\end{pmatrix}=\mathcal{J}\mathbf{M}_{0}(a)\begin{pmatrix}
		\rho_{+}\\
		\rho_{-}
	\end{pmatrix},\qquad\mathbf{M}_{0}(a)\triangleq\begin{pmatrix}
		a-\tfrac{1}{2a}\partial_{xx}^{-1} & \tfrac{1}{2a}\partial_{xx}^{-1}\vspace{0.1cm}\\
		\tfrac{1}{2a}\partial_{xx}^{-1} & a-\tfrac{1}{2a}\partial_{xx}^{-1}
	\end{pmatrix}.
\end{equation}
Expanding $\rho_{\pm}$ in Fourier series
$$\rho_{\pm}(t,x)=\sum_{j\in\mathbb{Z}^*}\rho_{j}^{\pm}(t)\mathbf{e}_{j}(x),$$
the system \eqref{lin eq QP} is equivalent to the following countably many linear autonomous differential systems
\begin{equation}\label{lin diff syst}
	\partial_{t}\begin{pmatrix}
	\rho_{j}^{+}\\
	\rho_{j}^{-}
\end{pmatrix}=M_{j}(a)\begin{pmatrix}
\rho_{j}^{+}\\
\rho_{j}^{-}
\end{pmatrix},\qquad M_{j}(a)\triangleq\frac{\ii j}{|j|}A_{j}(a), \qquad A_{j}(a) \triangleq  \begin{pmatrix}
-a|j|-\tfrac{1}{2a|j|} & \tfrac{1}{2a|j|}\\
-\tfrac{1}{2a|j|} & a|j|+\tfrac{1}{2a|j|}
\end{pmatrix}.
\end{equation}
The characteristic polynomial of the matrix $A_{j}(a)$ writes
\begin{align*}
	\chi_{A_j(a)}&=\Big(X+\big(a|j|+\tfrac{1}{2a|j|}\big)\Big)\Big(X-\big(a|j|+\tfrac{1}{2a|j|}\big)\Big)+\tfrac{1}{4a^2|j|^2}\\
	&=X^{2}-a^2|j|^2-1\\
	&=\Big(X-\sqrt{a^{2}|j|^{2}+1}\Big)\Big(X+\sqrt{a^{2}|j|^{2}+1}\Big).
\end{align*}
Hence, the matrix $M_{j}(a)$ is diagonalizable with simple pure imaginary eigenvalues $\pm\ii\Omega_{j}(a)$ defined by
\begin{equation}\label{eigenvalues VP}
	\Omega_{j}(a)\triangleq\tfrac{j}{|j|}\sqrt{a^{2}|j|^{2}+1}.
\end{equation}
In the sequel, refering the reader to the general discussion on pseudo-differential operators in Appendix \ref{appendix pseudo}, we shall consider the following corresponding Fourier multiplier $\Omega(a,D)$ acting on the space $L^{2}(\mathbb{T})$ associated with the following symbol
\begin{equation}\label{symbol eigenvalues VP}
	\Omega(a,\xi)\triangleq\chi(\xi)\frac{\xi}{|\xi|}\sqrt{a^2\xi^2+1},\qquad\forall j\in\mathbb{Z}^*,\quad\Omega(a,j)=\Omega_{j}(a),
\end{equation}
where $\chi\in C^{\infty}(\R,\R)$ is an even, positive $C^{\infty}$ cut-off function such that
\begin{equation}
    \chi(\xi) = 0 \ \textnormal{ if } \ |\xi|\leqslant\tfrac13 \,, \quad \chi(\xi) = 1 \ \textnormal{ if } \ |\xi|\geqslant \tfrac23 \,, \quad \pa_{\xi}\chi(\xi) >0 \ \ \forall\,\xi\in (\tfrac13,\tfrac23) .\label{def chi}
\end{equation}
Before going further into the analysis of the linearized operator, let us first discuss some basic properties of the equilibrium eigenvalues \eqref{eigenvalues VP} and their associated Fourier multiplier $\Omega(a,D).$
\begin{lem}\label{lem prop eig VP}
	For any $a\in (a_0,a_1)$, the frequencies $\Omega_{j}(a)$ in \eqref{eigenvalues VP} and their associated operator $\Omega(a,D)$ in \eqref{symbol eigenvalues VP} enjoy the following properties:
	\begin{enumerate}[label=(\roman*)]
		\item For any $\xi\in\mathbb{R},$ we have $\Omega(a,-\xi)=-\Omega(a,\xi)$;
		\item The sequence $\big(\Omega_{j}(a)\big)_{j\in\mathbb{N}^*}$ is strictly increasing;
		\item For any $j\in\mathbb{Z},$ we have
		$|\Omega_{j}(a)|\geqslant a_0|j|$;
		\item Let $M\in\mathbb{N}$. We denote
        \begin{equation}\label{frakNM.def}
            \mathfrak{N}_M\triangleq\max\big\{p\in\mathbb{N}\quad\textnormal{s.t.}\quad1-2p>-M\big\}=\begin{cases}
            \big\lfloor\tfrac{M+1}{2}\big\rfloor, & \textnormal{if }M\equiv0\,\,[2],\\
            \big\lfloor\tfrac{M+1}{2}\big\rfloor-1, & \textnormal{if }M\equiv1\,\,[2].
        \end{cases}
        \end{equation}
        Then, we have that $\Omega(a,D)\in OPS^{1}$ (recall Definition \ref{defin OPS}) with the following decomposition
		\begin{equation}\label{hom exp omgD}
			\ii\Omega(a,D)=a\partial_{x}+\sum_{p=1}^{\mathfrak{N}_M}\alpha_p\,a^{1-2p}\partial_{x}^{1-2p}+S_{M}(a,D),\qquad \alpha_{p}\triangleq\frac{(-1)^{p}}{p!}\prod_{m=0}^{p-1}(\tfrac{1}{2}-m),
		\end{equation}
	where $S_M(a,D)$ is a reversible Fourier multiplier in $OPS^{-M}$.
	\end{enumerate} 
\end{lem}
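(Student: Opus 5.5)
Items (i)--(iii) are elementary and I would dispatch them first. For (i), $\chi$ is even and $\xi\mapsto\frac{\xi}{|\xi|}\sqrt{a^2\xi^2+1}$ is odd, so $\Omega(a,\cdot)$ is odd; at $\xi=0$ the cut-off vanishes, so the formula is well defined. For (ii), $j\mapsto\sqrt{a^2j^2+1}$ is strictly increasing on $\mathbb{N}^*$ because $a>0$. For (iii), for $j\neq0$ we have $|\Omega_j(a)|=\sqrt{a^2j^2+1}\geqslant a|j|\geqslant a_0|j|$. For $j=0$ the inequality is trivial with the convention $\Omega(a,0)=0$ coming from the cut-off.

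For (iv), I would work at the level of symbols, for $|\xi|\geqslant\tfrac23$ where $\chi=1$. Factoring out $a|\xi|$ gives
$$\Omega(a,\xi)=\frac{\xi}{|\xi|}\,a|\xi|\Big(1+\frac{1}{a^2\xi^2}\Big)^{1/2}.$$
The binomial series $(1+u)^{1/2}=\sum_{p\geqslant0}\binom{1/2}{p}u^p$ converges for $|u|<1$, but $u=(a\xi)^{-2}$ may exceed $1$ when $a|\xi|<1$. I would therefore only use the finite Taylor expansion with integral remainder. This expansion holds for all $u\geqslant0$ and gives
$$\Omega(a,\xi)=a\xi\sum_{p=0}^{\mathfrak{N}_M}\binom{1/2}{p}(a\xi)^{-2p}+r_M(a,\xi).$$
Multiplying by $\ii$ and using $\ii\xi=$ the symbol of $\partial_x$, we get $\ii a\xi(a\xi)^{-2p}=a^{1-2p}\,\ii\xi\,\xi^{-2p}$. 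Since $(\ii\xi)^{-2p}=(-1)^p\xi^{-2p}$, this equals $(-1)^p\binom{1/2}{p}a^{1-2p}(\ii\xi)^{1-2p}$. This matches $\alpha_p=\frac{(-1)^p}{p!}\prod_{m=0}^{p-1}(\tfrac12-m)$, and the $p=0$ term gives $a\partial_x$. On $\mathbb{Z}^*$ the operators $\partial_x^{1-2p}$ are exactly the Fourier multipliers $(\ii j)^{1-2p}$ under the paper's convention, so the identity holds as operators on $L^2_0$.

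I would then define $S_M(a,D)$ as the difference $\ii\Omega(a,D)-a\partial_x-\sum_p\alpha_pa^{1-2p}\partial_x^{1-2p}$. For $|\xi|\geqslant\frac23$ its symbol is $\ii r_M$. The remainder satisfies $|r_M|\lesssim|\xi|\,|\xi|^{-2(\mathfrak{N}_M+1)}$, and by the definition of $\mathfrak{N}_M$ we have $1-2(\mathfrak{N}_M+1)\leqslant-M$. Symbol estimates for derivatives follow by differentiating. The cleanest way is to note that for $|\xi|$ large ($a|\xi|\geqslant 2/a_0$-type threshold, say $|\xi|\geqslant R$) the full series converges and $r_M$ is a convergent power series in $\xi^{-1}$ starting at order $\xi^{1-2(\mathfrak{N}_M+1)}$, so each $\partial_\xi^k$ gains $|\xi|^{-k}$. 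On the compact region $|\xi|\leqslant R$ the symbol is smooth once the homogeneous terms are also cut off by $\chi$ (these only differ on $j=0$, where everything acts trivially). So $S_M\in OPS^{-M}$, and $\Omega(a,D)\in OPS^1$ in the same way. These bounds are uniform for $a\in(a_0,a_1)$ since $a$ is bounded away from $0$.

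Finally, reversibility of $S_M$: its symbol $s(\xi)$ is purely imaginary and odd in $\xi$, being $\ii$ times an odd real function. Such a multiplier maps even real functions to odd real functions and conversely, which is the reversibility notion in $x$. The main (mild) obstacle is the bookkeeping of the cut-off and the non-convergence of the binomial series for small $a|\xi|$. I would handle it by the large/small frequency split above, rather than by a global series.
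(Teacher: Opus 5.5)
Your proof is correct and follows essentially the same route as the paper: a finite Taylor expansion of $\sqrt{1+u}$ at $u=(a\xi)^{-2}$ with integral remainder, the identification $\alpha_p=(-1)^p\binom{1/2}{p}$ via $(\ii\xi)^{-2p}=(-1)^p\xi^{-2p}$, the bound $1-2(\mathfrak{N}_M+1)\leqslant -M$ from maximality of $\mathfrak{N}_M$, and reversibility of $S_M(a,D)$ from the oddness of its symbol. The only minor difference is in how you get the symbol bounds: the paper reads the estimates on $\partial_a^k\partial_\xi^\ell\mathtt{s}_M$ directly from the explicit integral form of the remainder on all of $|\xi|\geqslant\tfrac13$, which also covers the $a$-derivatives, whereas you split into a region $|\xi|\geqslant R$ where the series converges and a compact region, which works equally well.
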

\begin{proof}
	The properties \textit{(i)}, \textit{(ii)} and \textit{(iii)} are readily obtained from \eqref{eigenvalues VP}-\eqref{symbol eigenvalues VP}.\\
	We prove now item \textit{(iv)}.
	According to \eqref{symbol eigenvalues VP}, we write for any $|\xi|\geqslant\tfrac13$
	\begin{equation}\label{split eig}
		\Omega(a,\xi)=\chi(\xi)\Big[a\xi+a\xi\left(\sqrt{1+\tfrac{1}{a^2\xi^2}}-1\right)\Big]\triangleq \chi(\xi)\Big(a\xi+\mathtt{r}(a,\xi)\Big).
	\end{equation}
	Note that it is sufficient to restrict the discussion to $|\xi|\geqslant\tfrac13$ because the cut of function $\chi$ vanishes below. Clearly $-\mathtt{r}(a,-\xi)=\mathtt{r}(a,\xi)>0.$ 
    In particular, the remainder $\mathtt{r}(a,\xi)$ is explicitly given, for a fixed $ M\in \mathbb{N}^*$, by
    \begin{equation}\label{r.(a.xi)}
        \mathtt{r}(a,\xi)=\sum_{p=1}^{\mathfrak{N}_M}\frac{c_{p}}{a^{2p-1}\xi^{2p-1}}+ \mathtt{s}_{M}(a,\xi), 
    \end{equation}
    where
    $$c_p\triangleq\frac{1}{p!}\prod_{m=0}^{p-1}\left(\tfrac{1}{2}-m\right)$$
    and
    \begin{equation}\label{def sM}
        \begin{aligned}
            \mathtt{s}_{M}(a,\xi) &\triangleq\frac{1}{\mathfrak{N}_M!(a\xi)^{2\mathfrak{N}_M}} \int_{0}^{1} (1-\tau)^{\mathfrak{N}_M} \Big( \partial_{z}^{\mathfrak{N}_M+1} \sqrt{1+z} \Big)|_{z=\frac{\tau}{(a\xi)^2}} {\rm d}\tau\\
            &=\frac{1}{\mathfrak{N}_M!(a\xi)^{2\mathfrak{N}_M}} \prod_{m=0}^{\mathfrak{N}_M}(\tfrac{1}{2}-m)\int_{0}^{1} (1-\tau)^{\mathfrak{N}_M} \left(1+\frac{\tau}{a^2\xi^2}\right)^{-\tfrac{1}{2}-\mathfrak{N}_M} {\rm d}\tau\\
            &=a\xi\left(\mathfrak{N}_M+1\right) c_{\mathfrak{N}_M+1}\int_{0}^{1}(1-\tau)^{\mathfrak{N}_M}\left(1+\tau a^2\xi^2\right)^{-\frac{1}{2}-\mathfrak{N}_M}d\tau.
        \end{aligned}
    \end{equation}
	Therefore,
	\begin{align}
    \ii\Omega(a,D)&=\op\big(\chi(\xi)a\ii\xi\big)+\op\left(\chi(\xi)\sum_{p=1}^{\mathfrak{N}_M}\frac{(-1)^pc_{p}}{a^{2p-1}(\ii\xi)^{2p-1}}\right)+\op\big(\chi(\xi)\ii\mathtt{s}_{M}(a,D)\big)\nonumber\\
		&=a\partial_{x}+\sum_{p=1}^{\mathfrak{N}_M}(-1)^{p}c_{p}a^{1-2p}\partial_{x}^{1-2p}+S_{M}(a,D),\qquad S_{M}(a,D)\triangleq\op\big(\chi(\xi)\ii\mathtt{s}_{M}(a,\xi)\big).\label{expand up to smooth omg}
	\end{align}
Clearly, with the last expression in \eqref{def sM}, we get $\mathtt{s}_{M}(a,-\xi)=-\mathtt{s}_{M}(a,\xi)$, which implies the reversibility of $S_{M}(a,D)$ by virtue of Proposition \ref{properties OPS}-6. Now, let us prove that $S_M(a,D)$ is indeed of order $-M.$
	Note that, from the last expression in \eqref{def sM}, one readily has that for any $(k,\ell)\in\mathbb{N}^2$,
	$$\sup_{|\xi|\geqslant\frac{1}{3}\atop a\in[a_0,a_1]}\langle\xi\rangle^{\ell+M}\left|\partial_{a}^{k}\partial_{\xi}^{\ell}\mathtt{s}_{M}(a,\xi)\right|\leqslant C(a_0,k,\ell)<\infty.$$
    Since $\chi$ is constant on $\{\xi\in\mathbb{R}:|\xi|\geqslant \tfrac23 \},$ then the Leibniz rule combined with the previous estimate allows to conclude the proof of the claim.
\end{proof}

Consider the symplectic (and thus invertible) transfer matrix $Q_{j}(a)$ defined by
\begin{equation}\label{def:Qj}
    Q_{j}(a)\triangleq\frac{1}{\sqrt{1-b_{j}^{2}(a)}}\begin{pmatrix}
	1 & b_{j}(a)\\
	b_{j}(a) & 1
\end{pmatrix},\qquad b_{j}(a)\triangleq\frac{1}{2a|j|\left(\sqrt{a^2j^2+1}+a|j|+\frac{1}{2a|j|}\right)}\in(0,1).
\end{equation}
Note that $Q_{j}(a)\in SL_{2}(\mathbb{R})=Sp_2(\mathbb{R}).$ In particular it is invertible with inverse given by
\begin{equation}\label{inverse Qj}
    Q_{j}^{-1}(a)=\frac{1}{\sqrt{1-b_{j}^{2}(a)}}\begin{pmatrix}
	1 & -b_{j}(a)\\
	-b_{j}(a) & 1
\end{pmatrix}.
\end{equation}
Moreover, one checks that
\begin{equation}\label{diago Mj}
	\forall j\in\mathbb{Z}^*,\quad Q_{j}^{-1}(a)M_{j}(a)Q_{j}(a)=-\begin{pmatrix}
		\ii\Omega_{j}(a) & 0\\
		0 & \overline{\ii\Omega_{j}(a)}
	\end{pmatrix}.
\end{equation}
Hence, one solves the linear differential systems \eqref{lin diff syst} and obtains the following structure of solutions to \eqref{lin eq QP}
\begin{align*}
	\begin{pmatrix}
		\rho_{+}\\
		\rho_{-}
	\end{pmatrix}(t,x)&=\sum_{j\in\mathbb{Z}^*}A_je^{\ii(jx-\Omega_{j}(a)t)}\begin{pmatrix}
	1\\
	b_{j}(a)
\end{pmatrix}+B_je^{\ii(jx+\Omega_{j}(a)t)}\begin{pmatrix}
b_{j}(a)\\
1
\end{pmatrix},
\end{align*}
where $A_{j}=\overline{A_{-j}}$ and $B_j=\overline{B_{-j}}.$ These solutions are either periodic, quasi-periodic or almost-periodic. In particular, the reversible solutions, namely solutions satisfying $\rho_{\pm}(t,x) = \rho_{\pm}(-t,-x)$, have the form
\begin{align}
   \begin{pmatrix}
	\rho_+\\
	\rho_{-}
\end{pmatrix}(t,x)=\sum_{j\in\mathbb{N}^*}\wtA_j\cos\big(\Omega_{j}(a)t-jx\big)\begin{pmatrix}
1\\
b_{j}(a)
\end{pmatrix}+\wtB_j\cos\big(\Omega_{j}(a)t+jx\big)\begin{pmatrix}
b_{j}(a)\\
1
\end{pmatrix}, \label{comp sol lin rev}
\end{align}
with $\wtA_j,\wtB_j\in\mathbb{R}.$

\subsection{Degenerate KAM theory and linear solutions}\label{sec transversal}
Our next task is to check the so called \textit{Rüssmann transversality conditions} for the equilibrium frequency vectors. We refer to Lemma \ref{Russmann equilibre} for a complete statement. Such conditions are required for measuring some sets of parameters in the sequel.\\

We fix $(d_+,d_-)\in\mathbb{N}^2\setminus\{(0,0)\}$ and consider two finite sets of distinct elements
\begin{equation}\label{defin S+S-}
	\mathbb{S}_+\triangleq\big\{j_{1}^{+},\ldots,j_{d_+}^{+}\big\}\subset\mathbb{N}^*,\qquad\mathbb{S}_-\triangleq\big\{j_{1}^{-},\ldots,j_{d_-}^{-}\big\}\subset\mathbb{N}^*,
\end{equation}
with
\begin{equation}\label{inter S+S-}
	|\mathbb{S}_+|=d_+,\qquad|\mathbb{S}_-|=d_-,\qquad\mathbb{S}_+\cap\mathbb{S}_-=\varnothing.
\end{equation}
In what follows, we also use the following notations
\begin{equation}\label{bbS}
	d\triangleq d_++d_-,\qquad\mathbb{S}\triangleq\mathbb{S}_+\sqcup\mathbb{S}_-\triangleq\big\{j_{1},\ldots,j_{d}\big\}\subset\mathbb{N}^*,\qquad\mathbb{S}_0\triangleq\mathbb{S}\cup\{0\}.
\end{equation}
For later purposes, we shall also introduce the symmetric tangential set
	\begin{equation}\label{symmetrized tangential}
	    \forall\,\kappa\in\{-,+\},\quad\overline{\mathbb{S}}_{\kappa}\triangleq\mathbb{S}_{\kappa}\sqcup(-\mathbb{S}_{\kappa}),\qquad\overline{\mathbb{S}}\triangleq\overline{\mathbb{S}}_{+}\sqcup\overline{\mathbb{S}}_-,\qquad\overline{\mathbb{S}}_0\triangleq\mathbb{S}_0\sqcup(-\mathbb{S}_0).
	\end{equation}
Then, we define the \textit{equilibrium frequency vector} (also called \textit{unperturbed frequency vector} later on)
\begin{equation}\label{equilibrium vector freq}
    \omega_{\textnormal{Eq}}(a)\triangleq\Big(\Omega_{j_1^+}(a),\ldots,\Omega_{j_{d_+}^+}(a),-\Omega_{j_1^-}(a),\ldots,-\Omega_{j_{d_-}^-}(a)\Big)\in\mathbb{R}^d.
\end{equation}

\begin{lem}[Non-degeneracy]\label{lem non-deg}
	The following vector functions 
    \begin{align}
        a & \mapsto\omega_{\textnormal{Eq}}(a) \in \R^{d}, \qquad a\mapsto\omega_{\textnormal{Eq}}(a)-\vec{\jmath}\,a\in \R^{d}, \qquad a\mapsto\big(\omega_{\textnormal{Eq}}(a),a\big)\in \R^{d+1}, \\
        a & \mapsto \big( \omega_{\rm Eq}(a), \Omega_{j}(a) \big) \in \R^{d+1} , \quad j\in\Z^*\setminus\overline{\S} , \\
        a & \mapsto \big( \omega_{\rm Eq}(a), \Omega_{j}(a), \Omega_{j'}(a) \big) \in \R^{d+2} , \quad j,j'\in\Z^*\setminus\overline{\S} , \quad |j|\neq |j'| ,
    \end{align}
    are non-degenerate on $[a_0,a_1]$, that is
	\begin{align}
		\forall\, c\in\mathbb{R}^d,&\qquad\Big(\forall \,a\in[a_0,a_1],\quad \big\langle c,\omega_{\textnormal{Eq}}(a)\big\rangle_{\mathbb{R}^d}=0\Big)\quad\Rightarrow\quad c=0,\\
       {\forall\, c\in\mathbb{R}^d},& { \qquad\Big(\forall\, a\in[a_0,a_1],\quad \big\langle c,\omega_{\textnormal{Eq}}(a)-\vec{\jmath}\,a \big\rangle_{\mathbb{R}^d}=0\Big)\quad\Rightarrow\quad c=0,} \\
		\forall \,c\in\mathbb{R}^{d+1},&\qquad\Big(\forall\, a\in[a_0,a_1],\quad \big\langle c,\big(\omega_{\textnormal{Eq}}(a),a\big)\big\rangle_{\mathbb{R}^{d+1}}=0\Big)\quad\Rightarrow\quad c=0, \\
        \forall \,c\in\mathbb{R}^{d+1},&\qquad\Big(\forall \, a\in[a_0,a_1],\quad \big\langle c,\big(\omega_{\textnormal{Eq}}(a),\Omega_{j}(a)\big)\big\rangle_{\mathbb{R}^{d+1}}=0\Big)\quad\Rightarrow\quad c=0 , \\
        \forall \,c\in\mathbb{R}^{d+2},&\qquad\Big(\forall \,a\in[a_0,a_1],\quad \big\langle c,\big(\omega_{\textnormal{Eq}}(a),\Omega_{j}(a),\Omega_{j'}(a)\big)\big\rangle_{\mathbb{R}^{d+2}}=0\Big)\quad\Rightarrow\quad c=0.
	\end{align}
\end{lem}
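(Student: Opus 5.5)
The five statements all reduce to one claim: the functions $a\mapsto\sqrt{a^2n^2+1}$, for distinct positive integers $n$, are linearly independent over $\mathbb{R}$, together with the functions $1$ and $a$ (and we only need $a$ or $1$ in some cases). We prove this claim by analytic continuation. Since $\Omega_{-j}=-\Omega_j$ and $|\Omega_j(a)|=\sqrt{a^2j^2+1}$, the entries of each vector in the statement are, up to signs, of the form $\pm\sqrt{a^2n^2+1}$ with $n\in\mathbb{S}$, possibly augmented by $\pm a$ (the second and third vectors) or by $\pm\sqrt{a^2j^2+1}$, $\pm\sqrt{a^2j'^2+1}$ (the last two).

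\emph{Distinctness of the moduli.} For the first vector the moduli are the $d$ distinct elements of $\mathbb{S}_+\sqcup\mathbb{S}_-$, by \eqref{inter S+S-}. For the fourth, $j\notin\overline{\mathbb{S}}$ gives $|j|\notin\mathbb{S}$. For the fifth we use in addition $|j|\neq|j'|$.

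\emph{Reduction to one linear relation.} A relation $\langle c,\cdot\rangle\equiv0$ on $[a_0,a_1]$ becomes
\[
\sum_{n\in N}\gamma_n\sqrt{a^2n^2+1}+\beta a=0 ,
\]
where $N\subset\mathbb{N}^*$ is a finite set of distinct integers, each $\gamma_n$ is $\pm$ a single component of $c$, and $\beta$ collects the coefficient of $a$. That coefficient is $0$ for the first, fourth and fifth vectors; it is the last component of $c$ for the third; and it is $-\sum_k c_k j_k$ for the second.

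\emph{Killing the square-root coefficients.} Each $a\mapsto\sqrt{a^2n^2+1}$ extends analytically to $\mathbb{C}$ minus the branch cuts $\pm\ii[1/n,\infty)$, so the identity persists on a connected domain, for instance the complex plane with the cuts $\pm\ii[1/\max N,\infty)$ removed. Fix $n_0\in N$. Continue the identity along a small loop around the branch point $a=\ii/n_0$. The only term that changes sign is $\sqrt{a^2n_0^2+1}$, since for $n\neq n_0$ the point $\ii/n_0$ is not a branch point of $\sqrt{a^2n^2+1}$, and $a$ is entire. Subtracting the continued identity from the original one gives $2\gamma_{n_0}\sqrt{a^2n_0^2+1}\equiv0$, hence $\gamma_{n_0}=0$.

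An equivalent real-variable route avoids monodromy: take $a\to\ii/n_0$ along a path, or differentiate. The derivative $\frac{d}{da}\sqrt{a^2n^2+1}$ blows up only at $a=\ii/n$, so the singular parts can be isolated one at a time.

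\emph{Conclusion.} Once all $\gamma_n$ vanish, we are left with $\beta a\equiv0$, so $\beta=0$. For the third vector this makes its last component zero. For the second, the remaining components were already the $\gamma_n$, so $c=0$ follows immediately. In every case $c=0$.

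The only delicate point is making the analytic continuation rigorous, namely choosing a simply connected domain containing $[a_0,a_1]$ and small loops around each $\ii/n_0$ that avoid the other branch points. This is standard because the branch points $\pm\ii/n$ are pairwise distinct.
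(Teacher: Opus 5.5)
Your argument is correct, but it takes a different route from the paper's. The paper stays real-analytic. After the same sign reduction, it writes the relation in the variable $x=a^{2}$, continues it to an interval containing $x=0$, and evaluates the first $d$ derivatives there. This gives $\sum_k c_k j_k^{2n}=0$ for $n=0,\dots,d-1$, an invertible Vandermonde system because the $j_k^{2}$ are distinct. The vectors containing the linear term $a$ are handled by noting that derivatives of order at least two annihilate $a$. The vectors containing $\Omega_j,\Omega_{j'}$ are handled by enlarging the Vandermonde system with the extra moduli.

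You instead separate the summands by their singularities: $\sqrt{a^2n^2+1}$ is the only term ramified at $\mathrm{i}/n$, so monodromy around that point isolates its coefficient. Your version treats all five cases uniformly, needs no determinant, and would absorb any extra term analytic near the points $\pm\mathrm{i}/n$, not just $\beta a$. The paper's version uses only the identity theorem on a real interval and explicit Taylor coefficients, and it makes the role of distinct moduli completely explicit.

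Two minor remarks. First, your small loop around $\mathrm{i}/n_0$ necessarily leaves your domain $U$, since it crosses the cut $\mathrm{i}[1/\max N,\infty)$. This is harmless because you continue germs along a path from a base point in $U$, where the germ of the sum vanishes. Second, the alternative you call ``real-variable'' is not real-variable. If you use the blow-up of the derivative instead, approach $\mathrm{i}/n_0$ from one side inside $U$, because the principal branches with $n>n_0$ jump across the cut there.
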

\begin{proof}
    We start by proving that $a  \mapsto\omega_{\textnormal{Eq}}(a)$ is non-degenerate.
	We decompose $c\in\mathbb{R}^d$ into $c=(c_{+},c_{-})\in\mathbb{R}^{d_+}\times\mathbb{R}^{d_-}$, then, up to replacing $c_{-}$ by $-c_{-}$, we see that we can assume that $d_-=0$ (i.e. $d_+=d$) and prove that the vector function $a\mapsto\big(\Omega_{j}(a)\big)_{j\in\mathbb{S}_+}$ is non-degenerate on $[a_0,a_1]$ in $\mathbb{R}^d$. Until the end of the proof, we simplify the notation into $j_{k}^+\triangleq j_k$ for $k\in\llbracket 1,d\rrbracket.$ Assume that there exists $(c_1,\ldots,c_d)\in\mathbb{R}^{d}$ such that
	$$\forall \,a\in[a_0,a_1],\quad\sum_{k=1}^{d}c_{k}\sqrt{a^2j_{k}^{2}+1}=0.$$
	This equation is equivalent to
	$$\forall\, x\in\big[a_0^2,a_1^2\big],\quad\sum_{k=1}^{d}c_{k}\sqrt{xj_{k}^{2}+1}=0.$$
	By analytic continuation principle, we extend the previous relation to the range $x\in\left(-\tfrac{1}{ j_{d}^2},\infty\right).$ For any $n\in\llbracket 0,d-1\rrbracket,$ differentiating the previous relation $n$ times with respect to $x$ and evaluating at $x=0$ yields 
	$$\forall\, n\in\llbracket 0,d-1\rrbracket,\quad \sum_{k=1}^{d}c_kj_{k}^{2n}=0.$$
	This is a Vandermonde invertible system since from \eqref{inter S+S-} all the $j_k^2$ are distinct. Hence, $c_1=\ldots=c_d=0,$ which achieves the proof of the first point. 
    
    {The second and the third vector functions are obtained by similar techniques, noting that $\partial_{\alpha}^n a = 0 $ for any $n\geqslant 2$, whereas the last two cases follow from the first, by extending $\omega_{\rm Eq}(a)$ with the extra components $(\Omega_{j}(a))$ and $(\Omega_{j}(a),\Omega_{j'}(a))$ and arguing as before. This ends the proof of Lemma \ref{lem non-deg}.}  
\end{proof}

From the non-degeneracy of the vectors in Lemma \ref{lem non-deg}, we deduce the following momentum preserving Rüssmann transversality conditions.
\begin{lem}[Equilibrium transversality]\label{Russmann equilibre}
    Let $\S_{+}$, $\S_{-}$, $(d_{+},d_{-})$ as in \eqref{defin S+S-}, \eqref{inter S+S-} and \eqref{bbS}. We define the velocity vector $\vec{\jmath}\in \Z^{d}$ as
    \begin{align}\label{velocity vector}
    \vec{\jmath}&\triangleq\big(j_{1}^+,\ldots,j_{d_+}^+,j_{1}^-,\ldots,j_{d_-}^-\big)\in\mathbb{Z}^{d}.
	\end{align}
	There exist $\rho_0>0$ and $q_0\in\mathbb{N}$ such that the following hold true.
	\begin{enumerate}[label=(\roman*)]
		\item For all $\ell\in\mathbb{Z}^d\setminus\{0\}$,
		$$\inf_{a\in[a_0,a_1]}\max_{k\in\llbracket 0,q_0\rrbracket}\Big|\partial_{a}^{k}\omega_{\textnormal{Eq}}(a)\cdot\ell\Big|\geqslant\rho_0\langle\ell\rangle.$$
        \item For all $\ell \in\mathbb{Z}^{d}$, 
		$$\inf_{a\in[a_0,a_1]}\max_{k\in\llbracket 0,q_0\rrbracket}\Big|\partial_{a}^{k}\big(\omega_{\textnormal{Eq}}(a) - \vec{\jmath}\,a \big)\cdot \ell \Big|\geqslant\rho_0\langle\ell\rangle.$$
		\item Let $\kappa\in\{-,+\}.$ For all $(\ell,j)\in\mathbb{Z}^{d}\times(\mathbb{Z}^*\setminus\overline{\mathbb{S}}_{\kappa})$ with $\vec{\jmath}\cdot\ell+j=0,$
		$$\inf_{a\in[a_0,a_1]}\max_{k\in\llbracket 0,q_0\rrbracket}\Big|\partial_{a}^{k}\big(\omega_{\textnormal{Eq}}(a)\cdot\ell+\kappa\Omega_{j}(a)\big)\Big|\geqslant\rho_0\langle\ell\rangle.$$
		\item Let $\kappa\in\{-,+\}.$ For all $(\ell,j,j')\in\mathbb{Z}^{d}\times(\mathbb{Z}^*\setminus\overline{\mathbb{S}}_{\kappa})^2$ with $(\ell,j,j')\neq(0,j,j)$ and $\vec{\jmath}\cdot\ell+j-j'=0,$
		$$\inf_{a\in[a_0,a_1]}\max_{k\in\llbracket 0,q_0\rrbracket}\Big|\partial_{a}^{k}\big(\omega_{\textnormal{Eq}}(a)\cdot\ell+\kappa\Omega_{j}(a)-\kappa\Omega_{j'}(a)\big)\Big|\geqslant\rho_0\langle\ell\rangle.$$
		\item For all $(\ell,j,j')\in\mathbb{Z}^{d}\times(\mathbb{Z}^*\setminus\overline{\mathbb{S}}_+)\times(\mathbb{Z}^*\setminus\overline{\mathbb{S}}_-)$ with $\vec{\jmath}\cdot\ell+j-j'=0,$
		$$\inf_{a\in[a_0,a_1]}\max_{k\in\llbracket 0,q_0\rrbracket}\Big|\partial_{a}^{k}\big(\omega_{\textnormal{Eq}}(a)\cdot\ell+\Omega_{j}(a)-\Omega_{j'}(a)\big)\Big|\geqslant\rho_0\langle\ell\rangle.$$
	\end{enumerate}
\end{lem}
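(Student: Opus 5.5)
We follow the standard argument for momentum-preserving Rüssmann conditions (as in \cite{BFM21,HHR23}). First we reduce to bounded sets of indices. In (iii)--(v) we use asymptotics of the frequencies to show that only finitely many $(\ell,j,j')$ can fail a lower bound of order $\langle\ell\rangle$. After that, a compactness/contradiction argument based on the non-degeneracy of Lemma \ref{lem non-deg} handles the remaining indices. Every statement is homogeneous in $\ell$, so we argue by contradiction. Suppose that for every $q_0$ and $\rho_0$ there are indices and $a_n\in[a_0,a_1]$ such that $\max_{k\leqslant q_0}|\partial_a^k(\cdots)\cdot\ell_n|<\rho_0\langle\ell_n\rangle$. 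We normalize by dividing by $\langle\ell_n\rangle$ and extract convergent subsequences $\ell_n/\langle\ell_n\rangle\to\bar c\neq0$ (or $\bar c$ with a nonzero companion coefficient) and $a_n\to\bar a$. The limit is a real-analytic function of $a$ all of whose derivatives vanish at $\bar a$. Hence it vanishes identically on $[a_0,a_1]$, and Lemma \ref{lem non-deg} then forces the coefficient vector to be zero, a contradiction. For (i) this is immediate.

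For (ii), the case $\ell=0$ gives $\langle\ell\rangle=1$ but the quantity vanishes, so we must read the statement for $\ell\neq0$, or note that it is trivially $0$ and exclude it. We treat $\ell\neq0$ using the second non-degenerate function $\omega_{\rm Eq}(a)-\vec\jmath a$.

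For (iii) we use the momentum condition $j=-\vec\jmath\cdot\ell$, which gives $|j|\leqslant C\langle\ell\rangle$. We expand $\kappa\Omega_j(a)=\kappa\,\mathrm{sgn}(j)\big(a|j|+O(1/(a|j|))\big)$, so that
\[
\omega_{\rm Eq}(a)\cdot\ell+\kappa\Omega_j(a)=\big(\omega_{\rm Eq}(a)\mp\vec\jmath a\big)\cdot\ell+O(|j|^{-1}).
\]
In the case of sign $-$, the bound from (ii) applies for large $|j|$. In the case of sign $+$, the leading part is $\omega_{\rm Eq}\cdot\ell+a\vec\jmath\cdot\ell$, and this is controlled by non-degeneracy of $(\omega_{\rm Eq},a)$; alternatively one uses the first derivative in $a$, since $|\Omega_j'|\sim|j|$. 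When $|j|$ is bounded by a constant $C_0$, the contradiction argument applies to the finitely many vectors $(\omega_{\rm Eq},\Omega_j)$, using non-degeneracy of $a\mapsto(\omega_{\rm Eq}(a),\Omega_j(a))$ with normalized coefficients $(\ell,\kappa)/\langle\ell\rangle$. When $|j|$ is large, the $O(1/|j|)$ tails are small uniformly in $a$ together with their derivatives, and the limit is again one of the non-degenerate functions. Note that $j\notin\overline{\mathbb S}_\kappa$ is needed so that $\Omega_j$ is not a component of $\omega_{\rm Eq}$ with coefficient cancelling, because of $\mathbb S_+\cap\mathbb S_-=\varnothing$ and the momentum condition.

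For (iv) and (v) we proceed the same way with $\Omega_j-\Omega_{j'}$. If $j,j'$ have the same sign and both are large, then $\Omega_j-\Omega_{j'}=a(j-j')+O(1/\min(|j|,|j'|))$ and $j-j'=-\vec\jmath\cdot\ell$. The leading part becomes $(\omega_{\rm Eq}-\vec\jmath a)\cdot\ell$ (up to $\kappa$), which is handled by (ii) for $\ell\neq0$. If $\ell=0$, then $j=j'$ is excluded in (iv), while in (v) we get $\Omega_j-\Omega_{j}=0$ with $\ell=0$. For (v) this requires $j\notin\overline{\mathbb S}_+$, $j\notin\overline{\mathbb S}_-$; since $\ell=0$ and the quantity is zero there, I expect this case must be excluded or interpreted, and I would check it against the later definition of $\mu^{[\infty]}$. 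If the signs of $j,j'$ are opposite, the momentum condition gives $|j|+|j'|\leqslant C\langle\ell\rangle$, and $\Omega_j-\Omega_{j'}\approx \pm a(|j|+|j'|)$, handled by non-degeneracy of $(\omega_{\rm Eq},a)$. In the mixed regime, with one index bounded and one large, the large one is absorbed into $a\vec\jmath\cdot\ell$ as in (iii), using non-degeneracy of $(\omega_{\rm Eq},\Omega_j,a)$; this needs a slight extension of Lemma \ref{lem non-deg} proved by the same Vandermonde argument. Finally, for $|j|=|j'|$ with $j=-j'$ we have $\Omega_j-\Omega_{j'}=2\Omega_j$, which falls under the $(\omega_{\rm Eq},\Omega_j)$ case.

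The main obstacle is making the asymptotic reduction uniform. We must choose a threshold $C_0$ and a regime split so that the $O(1/|j|)$ corrections, together with their $a$-derivatives up to order $q_0$, are dominated by $\rho_0\langle\ell\rangle/2$. Here $q_0$ and $\rho_0$ come from the finitely many limiting non-degenerate functions, obtained first and independently of $C_0$. We also have to track exactly which degenerate index configurations (such as $\ell=0$, $j=j'$) must be excluded.
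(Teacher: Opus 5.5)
Your route is essentially the paper's: normalize by $\langle\ell\rangle$, use compactness and real-analyticity, conclude with Lemma \ref{lem non-deg}, and split regimes through $\Omega_j=aj+\mathtt{r}_j$. Using $|j|\leqslant|\vec\jmath|\,|\ell|$ directly in (iii) is cleaner than the paper's triangle-inequality reduction. One step fails as written, though: the bounded-index case.

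In (iii) you appeal to non-degeneracy of $a\mapsto(\omega_{\textnormal{Eq}}(a),\Omega_j(a))$. Lemma \ref{lem non-deg} gives this only for $j\in\mathbb{Z}^*\setminus\overline{\mathbb{S}}$, whereas (iii) excludes only $\overline{\mathbb{S}}_\kappa$. For $j=\mathrm{sgn}(j)\,j^{-\kappa}_{k_0}\in\overline{\mathbb{S}}_{-\kappa}$, the function $\omega_{\textnormal{Eq}}\cdot\ell+\kappa\Omega_j$ vanishes identically when $\ell$ is $\mathrm{sgn}(j)$ times the unit vector of the slot $[-\kappa,k_0]$. So no non-degeneracy statement can exclude this; only momentum can. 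Concretely (this is the paper's Subcase 1.2): if $\omega_{\textnormal{Eq}}\cdot\ell_\infty+\kappa\Omega_{j_\infty}\equiv0$, then Lemma \ref{lem non-deg} together with $\mathbb{S}_+\cap\mathbb{S}_-=\varnothing$ forces every entry of $\ell_\infty$ to vanish except $\ell_\infty^{[-\kappa,k_0]}=\mathrm{sgn}(j_\infty)$. The momentum condition then gives $\vec\jmath\cdot\ell_\infty+j_\infty=2j_\infty\neq0$. (For $j\in\overline{\mathbb{S}}_\kappa$ the cancelling $\ell$ does satisfy momentum, which is why that set is excluded.) Your closing sentence on (iii) gestures at this, but it has to be carried out. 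The same coefficient matching is needed in (iv)--(v) whenever $j$ or $j'$ lies in the opposite tangential set, which you do not address. In (iv) it yields $2(j_\infty-j'_\infty)$, or $4j_\infty$ when $j'_\infty=-j_\infty$ occupy the same slot.

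The same objection applies to the extension to $(\omega_{\textnormal{Eq}},\Omega_j,a)$ you invoke in the mixed regime, which you do not need anyway. For all $j,j'$, the quantity in (iv) equals $(\omega_{\textnormal{Eq}}-\kappa a\vec\jmath)\cdot\ell+\kappa(\mathtt{r}_j-\mathtt{r}_{j'})$, where $\partial_a^k(\mathtt{r}_j-\mathtt{r}_{j'})=O(1)$ uniformly in $j,j'$. So the leading term dominates once $\langle\ell\rangle$ is large, or once $\ell\neq0$ and $\min(|j|,|j'|)$ is large. Only finitely many triples remain, and this also resolves the uniformity issue you raise. Separately, your alternative in (iii) of using $|\Omega_j'|\sim|j|$ alone fails: since $|j|\lesssim|\ell|$, it can cancel against $\omega_{\textnormal{Eq}}'\cdot\ell$.

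Your two flags on the statement are correct.
\begin{itemize}
\item (ii) is false at $\ell=0$. The paper's proof only treats $\ell_m\neq0$, which is all that is used later.
\item (v) as printed fails at $\ell=0$, $j=j'\in\mathbb{Z}^*\setminus\overline{\mathbb{S}}$, and this is a sign typo. Since $\mu^{[0]}_{j',-}=-\Omega_{j'}+(\mathtt{c}_-+a)j'$, the proof of Lemma \ref{pert.transversality}-(v) actually uses $\omega_{\textnormal{Eq}}\cdot\ell+\Omega_j+\Omega_{j'}$.
\end{itemize}
Your leading-order argument for (v) does not transfer to that corrected quantity, because momentum fixes $j-j'$ but not $j+j'$. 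One first reduces to $|j|+|j'|\lesssim\langle\ell\rangle$: for $jj'>0$ use $|\Omega_j+\Omega_{j'}|\geqslant a_0(|j|+|j'|)$, and for $jj'<0$ use momentum. The unbounded limit is then $\omega_{\textnormal{Eq}}\cdot\bar\ell+\bar e\,a$ with $\bar e=\lim_m(j_m+j'_m)/|\ell_m|$ and $\bar\ell\neq0$, which non-degeneracy of $(\omega_{\textnormal{Eq}},a)$ excludes. The bounded cases again go through coefficient matching plus momentum.
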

\begin{proof}
	\textit{(i)} We argue by contradiction. Hence, we assume the existence of sequences $(a_m)_{m\in \N}\in[a_0,a_1]^{\mathbb{N}}$ and $(\ell_m)_{m\in\N}\in(\mathbb{Z}^d\setminus\{0\})^{\mathbb{N}}$ such that
	\begin{equation}\label{seqrus-1}
		\forall \,k\in\mathbb{N},\quad\forall\, m\geqslant k,\quad  \Big|\partial_{a}^{k}\Big(\omega_{\textnormal{Eq}}(a) \cdot\tfrac{\ell_m}{|\ell_m|} \Big)\Big|_{a=a_m}\Big|<\frac{1}{1+m}.
	\end{equation}
	By compactness, using that $\ell_{m}\neq 0$, up to subsequences we have the following convergences
    \begin{equation}\label{CV a l/l blue}	\lim_{m\to\infty}a_m=\overline{a}\in[a_0,a_1],\quad\textnormal{and}\quad\lim_{m\to\infty}\frac{\ell_m}{|\ell_m|}=\overline{\ell}\in\mathbb{R}^*
	\end{equation}
	Therefore, taking $m\to\infty$ in \eqref{seqrus-1}, we get
	$$\forall\, k\in\mathbb{N},\quad\partial_{a}^{k}\omega_{\textnormal{Eq}}(\overline{a})\cdot\overline{\ell}=0.$$
	We deduce that the analytic mapping $a\mapsto\omega_{\textnormal{Eq}}(a)\cdot\overline{\ell}$ is identically zero, which is in contradiction with Lemma \ref{lem non-deg} since $\overline{\ell}\neq 0$.\\
    \textit{(ii)} As in the previous point, we argue by contradiction, assuming the existence of sequences $(a_m)_{m\in\N}\in[a_0,a_1]^{\mathbb{N}}$ and $(\ell_m)_{m\in\N}\in(\mathbb{Z}^d\setminus\{0\})^{\mathbb{N}}$ 
    such that
	\begin{equation}\label{seqrus-2-blue}
		\forall\, k\in\mathbb{N},\quad\forall \,m\geqslant k,\quad
        \Big|\partial_{a}^{k}\Big(\big(\omega_{\textnormal{Eq}}(a) -\vec{\jmath}\,a \big)\cdot\tfrac{\ell_m}{|\ell_m|} \Big)\Big|_{a=a_m}\Big|
        <\frac{1}{1+m}\cdot
	\end{equation}
	Once again, a compactness argument implies, up to subsequences, the same convergences as in \eqref{CV a l/l blue}.
	Passing to the limit $m\to\infty$ in \eqref{seqrus-2-blue}, we obtain
	$$\forall \,k\in\mathbb{N},\quad\partial_{a}^{k}\Big(\big(\omega_{\textnormal{Eq}}(a)-\vec{\jmath}\,a\big)\cdot\overline{\ell}\Big)\Big|_{a=\overline{a}}=0.$$
	Hence, the analytic mapping $a\mapsto\big(\omega_{\textnormal{Eq}}(a) -\vec{\jmath}\,a \big)\cdot\overline{\ell}$ is identically zero, which contradicts Lemma \ref{lem non-deg} since $\overline{\ell}\neq0$.\\
	\textit{(iii)} First, if $\ell=0$, then, by the momentum condition $\vec{\jmath}\cdot \ell + j=0$, we have $j=0$, which is excluded.
    Therefore, we restrict the discussion to the case $\ell\in\mathbb{Z}^d\setminus\{0\}.$ In addition, by triangle inequality and Lemma \ref{lem prop eig VP}-\textit{(iii)}, we get for any $a\in[a_0,a_1],$
	\begin{align*}
		\big|\omega_{\textnormal{Eq}}(a)\cdot\ell+\kappa\Omega_{j}(a)\big|&\geqslant|\Omega_{j}(a)|-\big|\omega_{\textnormal{Eq}}(a)\cdot\ell\big|\\
		&\geqslant a_0 |j|-C|\ell|\\
		&\geqslant|\ell|,
	\end{align*}
	provided that $|j|\geqslant C_1|\ell|$ with $C_1\triangleq \tfrac{C+1}{a_0}.$ Therefore, we shall restrict our discussion to the indices satisfying
    \begin{equation}\label{restr.transv.iii}
    \ell\in\mathbb{Z}^{d}\setminus\{0\},\qquad j\in\mathbb{Z}^*\setminus\overline{\mathbb{S}}_{\kappa},\qquad |j|<C_1|\ell|.
    \end{equation}
	As in the first point, we argue by contradiction, assuming the existence of sequences $(a_m)_{m\in\N}\in[a_0,a_1]^{\mathbb{N}}$, $(\ell_m)_{m\in\N}\in(\mathbb{Z}^d\setminus\{0\})^{\mathbb{N}}$ and $(j_m)_{m\in\N}\in(\mathbb{Z}^*\setminus\overline{\mathbb{S}}_{\kappa})^{\mathbb{N}}$ such that
	\begin{equation}\label{momentum m iii}
	    |j_m|<C_1|\ell_m|,\qquad\vec{\jmath}\cdot \ell_m+j_m=0
	\end{equation}
	and
	\begin{equation}\label{seqrus-3}
		\forall\, k\in\mathbb{N},\quad\forall \,m\geqslant k,\quad\Big|\partial_{a}^{k}\Big(\omega_{\textnormal{Eq}}(a)\cdot\tfrac{\ell_m}{|\ell_m|}+\kappa\tfrac{\Omega_{j_m}(a)}{|\ell_m|}\Big)\Big|_{a=a_m}\Big|<\frac{1}{1+m}\cdot
	\end{equation}
	Once again, using also \eqref{momentum m iii}, a compactness argument implies that, up to subsequences,
	\begin{equation}\label{CV a l/l j/l}
		\lim_{m\to\infty}a_m=\overline{a}\in[a_0,a_1],\qquad\lim_{m\to\infty}\frac{\ell_m}{|\ell_m|}=\overline{\ell}\in\mathbb{R}^*\qquad\textnormal{and}\qquad\lim_{m\to\infty}\frac{j_m}{|\ell_m|}=\overline{d}\in\mathbb{R}.
	\end{equation} We distinguish two cases.\\
	\texttt{Case 1: $(\ell_m)_{m\in\N}$ is bounded.} By \eqref{momentum m iii}, also the sequence $(j_{m})_{m\in\N}$ is bounded. Therefore, by compactness, up to subsequences, we have that both sequences of integers satisfy
	$$\ell_m=\ell_\infty\in\mathbb{Z}^{d}\setminus\{0\},\qquad\textnormal{and}\qquad j_m=j_\infty\in\mathbb{Z}^*\setminus\overline{\mathbb{S}}_{\kappa} \qquad \textnormal{definitely for any} \quad m\gg 1 \quad \textnormal{large enough}.$$
	Hence, taking the limit $m\to\infty$ in \eqref{seqrus-3} and \eqref{momentum m iii}, we obtain
    \begin{equation} \label{momentum lim iii}
        \forall \,k\in\mathbb{N},\quad\partial_{a}^{k}\Big(\omega_{\textnormal{Eq}}(a)\cdot\ell_\infty+\kappa\Omega_{j_\infty}(a)\Big)\Big|_{a=\overline{a}}=0\qquad\textnormal{and}\qquad\vec{\jmath}\cdot\ell_{\infty}+j_{\infty}=0.
    \end{equation}
	Consequently, the analytic mapping $a\mapsto\omega_{\textnormal{Eq}}(a)\cdot\ell_{\infty}+\kappa\Omega_{j_\infty}(a)$ is identically zero, namely
	\begin{equation}\label{identically zero}
		\forall\, a\in[a_0,a_1],\quad\omega_{\textnormal{\tiny{Eq}}}(a)\cdot\ell_{\infty}+\kappa\Omega_{j_\infty}(a)=0.
	\end{equation}
	Now, we distinguish two subcases.
	\begin{enumerate}[label=\textbullet]
		\item \texttt{Subcase 1.1: $j_\infty\not\in\overline{\mathbb{S}}_{-\kappa}.$} In this case $j_{\infty}\in \Z^{*}\setminus\overline{\S}$ and we reach a contradiction with Lemma \ref{lem non-deg} since $(\ell_{\infty},\kappa)\neq 0$.
		\item \texttt{Subcase 1.2: $j_\infty\in\overline{\mathbb{S}}_{-\kappa}.$} There exists $k_0\in\llbracket 1,d_{-\kappa}\rrbracket$ such that $j_{\infty}=\mathtt{sgn}(j_{\infty})j_{k_0}^{-\kappa},$ where $\mathtt{sgn}$ is the sign function. We denote 
		\begin{equation}\label{notation l infty}
			\ell_{\infty}=(\ell_{\infty}^{[+,1]},\ldots,\ell_{\infty}^{[+,d_+]},\ell_{\infty}^{[-,1]},\ldots,\ell_{\infty}^{[-,d_-]}).
		\end{equation}
        Then, recalling \eqref{equilibrium vector freq}, the identity in \eqref{identically zero} reads
        \begin{equation} \label{indentically expanded} 
              \begin{aligned}
        0&= \omega_{\textnormal{\tiny{Eq}}}(a)\cdot\ell_{\infty}+\kappa\Omega_{j_\infty}(a)\\
            & =\sum_{k\in \llbracket1,d_{\kappa}\rrbracket} \ell_{\infty}^{[\kappa,k]}\Omega_{j_{k}^{\kappa}}(a)  + \sum_{k\in \llbracket1,d_{-\kappa}\rrbracket\setminus\{k_0\}} \ell_{\infty}^{[-\kappa,k]}\Omega_{j_{k}^{-\kappa}}(a)  + \big( -\kappa\,\ell_{\infty}^{[-\kappa,k_0]}+\kappa\,\mathtt{sgn}(j_{\infty})\big)\Omega_{j_{k_{0}}^{-\kappa}}(a).
        \end{aligned}
        \end{equation}
		Combining \eqref{identically zero}, \eqref{indentically expanded} and Lemma \ref{lem non-deg}, we obtain
		$$\forall\, k\in\llbracket 1, d_{\kappa}\rrbracket,\quad\ell_{\infty}^{[\kappa,k]}=0\qquad\textnormal{and}\qquad\forall\, k\,\in\llbracket 1,d_{-\kappa}\rrbracket\setminus\{k_0\},\quad\ell_{\infty}^{[-\kappa,k]}=0$$
		and
		$$-\kappa\,\ell_{\infty}^{[-\kappa,k_0]}+\kappa\,\mathtt{sgn}(j_{\infty})=0,\qquad\textnormal{i.e.}\qquad\ell_{\infty}^{[-\kappa,k_0]}=\mathtt{sgn}(j_{\infty}).$$
		Inserting this into the momentum condition \eqref{momentum lim iii}, we get
		$$0=\vec{\jmath}\cdot\ell_{\infty}+j_{\infty}=j_{k_0}^{-\kappa}\ell_{\infty}^{[-\kappa,k_0]}+\mathtt{sgn}(j_{\infty})j_{k_{0}}^{-\kappa}=2j_{\infty}.$$
		This is a contradiction with $j_{\infty}\in\mathbb{Z}^*.$
	\end{enumerate}
	\texttt{Case 2: $(\ell_m)_{m\in\N}$ is unbounded.} Without loss of generality, up to subsequences we assume 
	$$\lim_{m\to\infty}|\ell_m|=\infty.$$
	According to the Lemma \ref{lem prop eig VP}-\textit{(iv)}, we write
	\begin{equation}\label{decompo Omj}
		\Omega_{j}(a)=a j+\mathtt{r}_{j}(a),
	\end{equation}
	with $\mathtt{r}_{j}(a)$ satisfying  $\mathtt{r}_{-j}(a)=-\mathtt{r}_{j}(a)$ and
	\begin{equation}\label{e-ttrj}
		\forall\, k\in\mathbb{N},\quad\sup_{a\in[a_0,a_1]\atop
			j\in\mathbb{Z}^*}|j||\partial_{a}^k\mathtt{r}_{j}(a)|\leqslant C(a_0,a_1,k).
	\end{equation}
	Then, we write
	\begin{equation*}
	    \frac{\Omega_{j_m}(a)}{|\ell_m|}=\frac{a j_m}{|\ell_m|}+\frac{\mathtt{r}_{j_m}(a)}{|\ell_m|}\cdot
	\end{equation*}
	Hence, whatever the fact that $(j_m)_{m\in\N}$ is bounded or not, by \eqref{e-ttrj} we have
	$$\forall \, k\in\mathbb{N},\quad\lim_{m\to\infty}\frac{\partial_{a}^{k}\mathtt{r}_{j_m}(\overline{a})}{|\ell_m|}=0.$$
	Thus, taking the limit $m\to\infty$ in \eqref{seqrus-3} and recalling \eqref{CV a l/l j/l}, we obtain
	$$\forall \,k\in\mathbb{N},\quad\partial_{a}^{k}\Big(\omega_{\textnormal{Eq}}(a)\cdot\overline{\ell}+\overline{d}\kappa\, a\Big)\Big|_{a=\overline{a}}=0.$$
    Consequently, the analytic mapping $a\mapsto\omega_{\textnormal{Eq}}(a)\cdot\overline{\ell}+\overline{d}\kappa \,a$ is identically zero, namely
	\begin{equation*}
		\forall\, a\in[a_0,a_1],\quad\omega_{\textnormal{\tiny{Eq}}}(a)\cdot\overline{\ell}+\overline{d}\kappa \,a = 0,
	\end{equation*}
	which also leads to a contradiction with Lemma \ref{lem non-deg} since $(\overline{\ell},\overline{d})\neq 0$.\\
	\textit{(iv)} 
    First, if $\ell=0$, then, by the momentum condition $\vec{\jmath}\cdot \ell + j-j'=0$, we have $j=j'$, which is excluded. Therefore, we restrict the discussion to $\ell \in \Z^{d}\setminus\{0\}$.
    By triangle inequality and \eqref{decompo Omj}-\eqref{e-ttrj}, we have
	\begin{align*}
		\big|\omega_{\textnormal{Eq}}(a)\cdot\ell+\kappa\Omega_{j}(a)-\kappa\Omega_{j'}(a)\big|&\geqslant a|j-j'|-|\mathtt{r}_{j}(a)|-|\mathtt{r}_{j'}(a)|-\big|\omega_{\textnormal{Eq}}(a)\cdot\ell\big|\\
		&\geqslant a_0|j-j'|-C|\ell|\\
		&\geqslant|\ell|,
	\end{align*}
	provided that $|j-j'|\geqslant C_2|\ell|$ with $C_2\triangleq\tfrac{C+1}{a_0}.$ Therefore, we shall restrict our discussion to the indices satisfying
	$$\ell\in\mathbb{Z}^d\setminus\{0\},\qquad j,j'\in\mathbb{Z}^*\setminus\overline{\mathbb{S}}_{\kappa},\qquad |j-j'|<C_2|\ell|.$$
	As in the first point, we argue by contradiction assuming the existence of sequences $(a_m)_{m\in\N}\in[a_0,a_1]^{\mathbb{N}}$, $(\ell_m)_{m\in\N}\in(\mathbb{Z}^d\setminus\{0\})^{\mathbb{N}}$ and $(j_m)_{m\in\N},(j'_m)_{m\in\N}\in(\mathbb{Z}^*\setminus\overline{\mathbb{S}}_{\kappa})^{\mathbb{N}}$ such that
    \begin{equation}\label{momemtum m iv}
        |j_m-j'_m|<C_2|\ell_m|, \qquad \vec{\jmath}\cdot \ell_{m} + j_{m}-j_{m}'= 0 , \qquad j_m \neq j'_m ,
    \end{equation}
	and
	\begin{equation}\label{seqrus-4}
		\forall\, k\in\mathbb{N},\quad\forall\, m\geqslant k,\quad\Big|\partial_{a}^{k}\Big(\omega_{\textnormal{Eq}}(a)\cdot\tfrac{\ell_m}{|\ell_{m}|}+\kappa\,\tfrac{\Omega_{j_m}(a)-\Omega_{j'_m}(a)}{|\ell_{m}|}\Big)\Big|_{a=a_m}\Big|<\frac{1}{1+m}\cdot
	\end{equation}
	By compactness, up to subsequences, we have the following convergences
	\begin{equation}\label{compact iv}
	    \lim_{m\to\infty}a_m=\overline{a}\in[a_0,a_1],\qquad\lim_{m\to\infty}\frac{\ell_m}{|\ell_m|}=\overline{\ell}\in\mathbb{R}^*\qquad\textnormal{and}\qquad\lim_{m\to\infty}\frac{j_m-j'_m}{|\ell_m|}=\overline{d}\in\mathbb{R}.
	\end{equation}
    We distinguish four cases.\\
	\texttt{Case 1: $(\ell_m)_{m\in\N}$, $(j_m)_{m\in\N}$ and $(j'_m)_{m\in\N}$ are bounded.}  Therefore, by compactness, up to subsequences, we have that these sequences of integers satisfy
	$$\ell_m=\ell_\infty\in\mathbb{Z}^{d}\setminus\{0\},\qquad j_m=j_\infty\in\mathbb{Z}^*\setminus\overline{\mathbb{S}}_{\kappa}\qquad\textnormal{and}\qquad j'_m=j'_\infty\in\mathbb{Z}^*\setminus\overline{\mathbb{S}}_{\kappa}$$
    definitely for any $m\gg 1$ large enough. In particular, by \eqref{momemtum m iv}, we have that
    \begin{equation}\label{jinfty neq jinftyprime}
	 \vec{\jmath}\cdot \ell_{\infty} + j_{\infty} - j'_{\infty} = 0\qquad\textnormal{and}\qquad j_{\infty}\neq j_{\infty}'.
\end{equation}
    Passing to the limit $m\to\infty$ in \eqref{seqrus-4}, we infer
	$$\forall\, k\in\mathbb{N},\quad\partial_{a}^{k}\Big(\omega_{\textnormal{Eq}}(a)\cdot\ell_\infty+\kappa\Omega_{j_\infty}(a)-\kappa\Omega_{j'_\infty}(a)\Big)\Big|_{a=\overline{a}}=0.$$
	Hence, the analytic mapping $a\mapsto\omega_{\textnormal{Eq}}(a)\cdot\ell_\infty+\kappa\Omega_{j_\infty}(a)-\kappa\Omega_{j'_\infty}(a)$ is identically zero, namely
	\begin{equation}\label{identically zero2}
		\forall \,a\in[a_0,a_1],\quad\omega_{\textnormal{\tiny{Eq}}}(a)\cdot\ell_{\infty}+\kappa\Omega_{j_\infty}(a)-\kappa\Omega_{j_{\infty}'}(a)=0.
	\end{equation}
	 We further need to seperate three cases.
	\begin{enumerate}[label=\textbullet]
		\item \texttt{Subcase 1.1: $j_{\infty},j_{\infty}'\in\mathbb{Z}^*\setminus\overline{\mathbb{S}}_{-\kappa}.$} In this case $j_{\infty},j_{\infty}'\in \Z^{*}\setminus\overline{\S}.$ If $|j_{\infty}|\neq|j_{\infty}'|$, then we immediately reach a contradiction with Lemma \ref{lem non-deg}. Otherwise, in view of \eqref{jinfty neq jinftyprime}, we have $j_{\infty}=-j_{\infty}'$ and \eqref{identically zero2} becomes
        $$\forall\,a\in[a_0,a_1],\quad\omega_{\textnormal{\tiny{Eq}}}(a)\cdot \ell_{\infty}+2\kappa\Omega_{j_{\infty}}(a)=0,$$
        which, combined with the fact that $j_{\infty}\in\mathbb{Z}^*\setminus\overline{\mathbb{S}},$ leads also to a contradiction with Lemma \ref{lem non-deg} since $(\ell_{\infty},2\kappa)\neq 0$.
		\item \texttt{Subcase 1.2: $j_{\infty},j_{\infty}'\in\overline{\mathbb{S}}_{-\kappa}.$} There exists $(k_0,k_0')\in\llbracket 1,d_{-\kappa}\rrbracket^2$ such that $j_{\infty}=\mathtt{sgn}(j_{\infty})j_{k_0}^{-\kappa}$ and $j_{\infty}'=\mathtt{sgn}(j_{\infty}')j_{k_0'}^{-\kappa}$. We denote $\ell_{\infty}$ as in \eqref{notation l infty}.  Then, recalling \eqref{equilibrium vector freq}, the identity in \eqref{identically zero2} reads
        \begin{equation} \label{indentically expanded2} 
              \begin{aligned}
        0&= \omega_{\textnormal{\tiny{Eq}}}(a)\cdot\ell_{\infty}+\kappa\Omega_{j_\infty}(a)-\kappa \Omega_{j_{\infty}'}(a)\\
            & =\sum_{k\in \llbracket1,d_{\kappa}\rrbracket} \ell_{\infty}^{[\kappa,k]}\Omega_{j_{k}^{\kappa}}(a)  + \sum_{k\in \llbracket1,d_{-\kappa}\rrbracket\setminus\{k_0,k_0'\}} \ell_{\infty}^{[-\kappa,k]}\Omega_{j_{k}^{-\kappa}}(a)  \\
            & \quad + \big( -\kappa\,\ell_{\infty}^{[-\kappa,k_0]}+\kappa\,\mathtt{sgn}(j_{\infty})\big)\Omega_{j_{k_{0}}^{-\kappa}}(a) +  \big( -\kappa\,\ell_{\infty}^{[-\kappa,k_0']}-\kappa\,\mathtt{sgn}(j_{\infty}')\big)\Omega_{j_{k_{0}'}^{-\kappa}}(a).
        \end{aligned}
        \end{equation}
        Combining \eqref{identically zero2}, \eqref{indentically expanded2} and Lemma \ref{lem non-deg}, we obtain
		$$\forall\, k\in\llbracket 1, d_{\kappa}\rrbracket,\quad\ell_{\infty}^{[\kappa,k]}=0\qquad\textnormal{and}\qquad\forall\, k\in\llbracket 1,d_{-\kappa}\rrbracket\setminus\{k_0,k_0'\},\quad\ell_{\infty}^{[-\kappa,k_0]}=0$$
		as well as
		$$-\kappa\ell_{\infty}^{[-\kappa,k_0]}+\kappa\mathtt{sgn}(j_{\infty})=0\qquad\textnormal{and}\qquad -\kappa\ell_{\infty}^{[-\kappa,k_0']}-\kappa\mathtt{sgn}(j_{\infty}')=0.$$
		i.e.
		$$\ell_{\infty}^{[-\kappa,k_0]}=\mathtt{sgn}(j_{\infty})\qquad\textnormal{and}\qquad \ell_{\infty}^{[-\kappa,k_0']}=-\mathtt{sgn}(j_{\infty}').$$
		
		Inserting this into the momentum condition \eqref{jinfty neq jinftyprime}, we get
		\begin{align*}
			0&=\vec{\jmath}\cdot\ell_{\infty}+j_{\infty}-j_{\infty}'\\
			&=j_{k_0}^{-\kappa}\ell_{\infty}^{[-\kappa,k_0]}+j_{k_0'}^{-\kappa}\ell_{\infty}^{[-\kappa,k_0']}+\mathtt{sgn}(j_{\infty})j_{k_{0}}^{-\kappa}-\mathtt{sgn}(j_{\infty}')j_{k_{0}'}^{-\kappa}\\
			&=2(j_{\infty}-j_{\infty}').
		\end{align*}
		This is a contradiction with the second identity in \eqref{jinfty neq jinftyprime}.
        	\item \texttt{Subcase 1.3: ($j_{\infty}\in\overline{\mathbb{S}}_{-\kappa}$ and $j_{\infty}'\in\mathbb{Z}^*\setminus\overline{\mathbb{S}}_{-\kappa}$) or ($j_{\infty}\in\mathbb{Z}^*\setminus\overline{\mathbb{S}}_{-\kappa}$ and $j_{\infty}'\in\overline{\mathbb{S}}_{-\kappa}$).} This case is a combination of the previous subcases, and its analysis is therefore omitted.
	\end{enumerate}
    \texttt{Case 2: $(\ell_{m})_{m\in\N}$ is bounded, with $(j_m)_{m\in\N}$ bounded and $(j'_m)_{m\in\N}$ unbounded (or viceversa).} This case is actually excluded, as it violates the momentum condition \eqref{momemtum m iv} in the limit $m\to\infty$.\\
	\texttt{Case 3: $(\ell_{m})_{m\in\N}$ is bounded and $(j_m)_{m\in\N}$, $(j'_m)_{m\in\N}$ are unbounded.} Without loss of generality, up to subsequences we have $$\lim_{m\to\infty}j_m=\infty\qquad\textnormal{and}\qquad\lim_{m\to\infty}j'_m=\infty,$$
    Using the splitting \eqref{decompo Omj}, we write
    \begin{equation}\label{split-diff-omg}
        \frac{\Omega_{j_{m}}(a)-\Omega_{j_{m}'}(a)}{|\ell_{m}|} = \frac{a(j_{m}-j_{m}')}{|\ell_{m}|} + \frac{\tr_{j_{m}}(a)}{|\ell_{m}|} - \frac{\tr_{j_{m}'}(a)}{|\ell_{m}|},
    \end{equation}
	According to \eqref{e-ttrj}, we have
	$$\forall\, k\in\mathbb{N},\quad\lim_{m\to\infty}\partial_{a}^{k}\mathtt{r}_{j_m}(\overline{a})=\lim_{m\to\infty}\partial_{a}^{k}\mathtt{r}_{j'_m}(\overline{a})=0.$$
	Thus, taking $m\to\infty$ in \eqref{seqrus-4} and using \eqref{compact iv} imply
	$$\forall\, k\in\mathbb{N},\quad\partial_{a}^{k}\Big(\omega_{\textnormal{Eq}}(a)\cdot\overline{\ell}+\overline{d}\kappa\,a\Big)\Big|_{a=\overline{a}}=0,$$
    Therefore, the analytic function $a\mapsto\omega_{\textnormal{\tiny{Eq}}}(a)\cdot\overline{\ell}+\overline{d}\kappa\,a$ with $(\overline{\ell},\overline{d})\neq0$ vanishes identically, which provides a contradiction as previously.\\
    \texttt{Case 4: $(\ell_m)_{m\in\N}$ is unbounded.} In this case, we have the convergence
	$$\lim_{m\to\infty}|\ell_m|=\infty.$$
    Recall from \eqref{e-ttrj}, that the sequence $(\tr_j(a))_{j\in\mathbb{Z}^*}$ and its derivatives are uniformly bounded. Therefore, using one more time the splitting \eqref{split-diff-omg}, passing to the limit $m\to\infty$ in \eqref{seqrus-4} and using \eqref{compact iv}, we deduce
	$$\forall\, k\in\mathbb{N},\quad\partial_{a}^{k}\Big(\omega_{\textnormal{Eq}}(a)\cdot \overline{\ell}+\overline{d}\kappa\, a\Big)\Big|_{a=\overline{a}}=0,$$
	Therefore, the analytic function $a\mapsto\omega_{\textnormal{\tiny{Eq}}}(a)\cdot\overline{\ell}+\overline{d}\kappa\, a$ with $(\overline{\ell},\overline{d})\neq0$ vanishes identically, which gives a contradiction with Lemma \ref{lem non-deg}.\\
	\textit{(v)} This point is proved similarly to the previous one so we omit it. This concludes the proof of Lemma \ref{Russmann equilibre}.
\end{proof}
We conclude this section with the following lemma stating the existence of traveling quasi-periodic solutions to the linear system \eqref{lin eq QP} obtained exciting only a finite number of Fourier modes.
\begin{lem}\label{lem linear solutions}
	\textbf{(Traveling quasi-periodic solutions to the equilibrium linearized system)}\\
    Let $\S_{+}$, $\S_{-}$ as in \eqref{defin S+S-}, \eqref{inter S+S-}.
	Then, there exists a set $\mathscr{C}_{\textnormal{\tiny{Eq}}}\subset(a_0,a_1)$ of full measure in $(a_0,a_1)$ such that, for any $a\in\mathscr{C}_{\textnormal{\tiny{Eq}}}$, the system \eqref{lin eq QP} admits the following reversible traveling quasi-periodic solutions
    \begin{equation}\label{statement sol lin}
    \begin{pmatrix}
		\rho_+\\
		\rho_{-}
	\end{pmatrix}(t,x)=\sum_{j\in\mathbb{S}_+}A_j\cos\big(\Omega_{j}(a)t-jx\big)\begin{pmatrix}
		1\\
		b_{j}(a)
	\end{pmatrix}+\sum_{j\in\mathbb{S}_-}B_j\cos\big(\Omega_{j}(a)t+jx\big)\begin{pmatrix}
		b_{j}(a)\\
		1
	\end{pmatrix},    
    \end{equation}
	with $(A_j)_{j\in\mathbb{S}_+},(B_j)_{j\in\mathbb{S}_-}\subset\mathbb{R}.$ Such solutions are associated with the frequency vector $\omega_{\textnormal{\tiny{Eq}}}(a)$  as in \eqref{equilibrium vector freq} and velocity vector $\vec{\jmath}$ defined in \eqref{velocity vector}.
\end{lem}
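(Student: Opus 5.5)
The proof has two parts: first check that \eqref{statement sol lin} solves \eqref{lin eq QP} for every $a$, then restrict $a$ to a full-measure set on which the solution is genuinely traveling quasi-periodic in the sense of Definition \ref{def.travQP.intro}.

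\emph{Step 1: each term is a solution.} Fix $j\in\mathbb{S}_+$. By \eqref{diago Mj}, $Q_j(a)(1,0)^\top$, i.e. $(1,b_j(a))^\top$ up to a normalizing constant, is an eigenvector of $M_j(a)$ for the eigenvalue $-\ii\Omega_j(a)$. Hence $e^{\ii(jx-\Omega_j(a)t)}(1,b_j(a))^\top$ solves \eqref{lin eq QP}. Since $b_{-j}=b_j$ and $\Omega_{-j}=-\Omega_j$ (Lemma \ref{lem prop eig VP}-(i)), the conjugate mode at $-j$ is also a solution, so the real part $\cos(\Omega_j(a)t-jx)(1,b_j(a))^\top$ solves \eqref{lin eq QP}. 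For $j\in\mathbb{S}_-$ we use the second eigenvector $(b_j,1)^\top$, whose eigenvalue is $+\ii\Omega_j(a)$, and obtain $\cos(\Omega_j(a)t+jx)(b_j(a),1)^\top$ in the same way. The system is linear, so the finite sum \eqref{statement sol lin} is a solution. This is exactly the reversible form \eqref{comp sol lin rev} restricted to finitely many modes, and it is invariant under $(t,x)\mapsto(-t,-x)$ because cosine is even.

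\emph{Step 2: traveling quasi-periodic form.} For $j\in\mathbb{S}_-$ we rewrite $\cos(\Omega_j t+jx)=\cos(-\Omega_j t-jx)$. Then, with $\varphi=(\varphi_k)\in\mathbb{T}^d$, the solution equals $\check r(\omega_{\textnormal{Eq}}(a)t-\vec{\jmath}x)$, where
$$\check r(\varphi)\triangleq\sum_{k=1}^{d_+}A_{j_k^+}\cos(\varphi_k)\big(1,b_{j_k^+}\big)^\top+\sum_{k=1}^{d_-}B_{j_k^-}\cos(\varphi_{d_++k})\big(b_{j_k^-},1\big)^\top.$$
Here $\omega_{\textnormal{Eq}}(a)$ is given by \eqref{equilibrium vector freq} and $\vec{\jmath}$ by \eqref{velocity vector}; the sign conventions for the $\mathbb{S}_-$ block match \eqref{equilibrium vector freq}. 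The function $\check r$ is smooth and even.

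\emph{Step 3: non-resonance on a full-measure set.} We define $\mathscr{C}_{\textnormal{\tiny{Eq}}}\triangleq\{a\in(a_0,a_1):\ \omega_{\textnormal{Eq}}(a)\cdot\ell\neq0\ \ \forall\ell\in\mathbb{Z}^d\setminus\{0\}\}$. Its complement is the countable union over $\ell\neq0$ of the zero sets $Z_\ell$ of $a\mapsto\omega_{\textnormal{Eq}}(a)\cdot\ell$. This map is real-analytic on a neighborhood of $[a_0,a_1]$ and, by Lemma \ref{lem non-deg}, not identically zero. Its zeros are therefore isolated, and each $Z_\ell$ is finite in the compact interval. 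A countable union of finite sets is null, so $\mathscr{C}_{\textnormal{\tiny{Eq}}}$ has full measure.

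Lemma \ref{Russmann equilibre}-(i) gives a quantitative version of the same fact, with a uniform bound on the number of zeros. It is not needed here. The only delicate point in the whole argument is the analyticity/non-degeneracy step, and it is already settled by Lemma \ref{lem non-deg}.
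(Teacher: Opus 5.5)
Your proof is correct. Steps 1--2 agree with the paper. The paper's proof only points back to the computation \eqref{comp sol lin rev}, while you rederive it from \eqref{diago Mj}; your explicit $\check r$, with the sign flip on the $\mathbb{S}_-$ block, is consistent with $\omega_{\textnormal{Eq}}$ and $\vec{\jmath}$.

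Step 3 takes a genuinely different route. The paper does not work with the non-resonant set directly:
\begin{itemize}
\item it fixes $\tau=(d_++d_-)q_0$ and defines the Diophantine sets $\mathscr{C}^{\gamma}_{\textnormal{Eq}}=\{a:\ |\omega_{\textnormal{Eq}}(a)\cdot\ell|>\gamma\langle\ell\rangle^{-\tau}\ \forall\,\ell\neq0\}$;
\item it bounds each sublevel set by R\"ussmann's lemma, using the transversality of Lemma~\ref{Russmann equilibre}-(i);
\item summing over $\ell$ gives $|(a_0,a_1)\setminus\mathscr{C}^{\gamma}_{\textnormal{Eq}}|\lesssim\gamma^{1/q_0}$, and it sets $\mathscr{C}_{\textnormal{Eq}}=\bigcup_{\gamma}\mathscr{C}^{\gamma}_{\textnormal{Eq}}$.
\end{itemize}

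You instead use two facts: each map $a\mapsto\omega_{\textnormal{Eq}}(a)\cdot\ell$ is real-analytic, and by Lemma~\ref{lem non-deg} it is not identically zero. Hence each resonant set is finite and the bad set is countable.

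Your argument is shorter and needs only the qualitative non-degeneracy. It also produces the largest admissible set, namely all $a$ with non-resonant frequency vector, which is exactly what Definition~\ref{def.travQP.intro} requires.

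The paper's version gives a smaller set of Diophantine parameters, but with a quantitative measure bound. That is what matters later: this argument is the model for Proposition~\ref{prop.measure}. There the perturbed frequencies are only $C^{q}$ in $a$, so zero counting via analyticity is unavailable. One must also control thickened sublevel sets $\{|\cdot|\leqslant\gamma\langle\ell\rangle^{-\tau}\}$ rather than zero sets, because Diophantine bounds are needed to invert $\omega\cdot\partial_\varphi$ with tame estimates.
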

\begin{proof}
The form of the solution \eqref{statement sol lin} has been computed in \eqref{comp sol lin rev} at the end of Section \ref{sect.linear.dyn}. We are left to prove the measure estimate for the non-resonance conditions leading to quasi-periodicity.
	Let us consider $d_+$ and $d_-$ as in \eqref{inter S+S-}. Fix $\tau\triangleq (d_++d_-)q_0$ and define
	$$\mathscr{C}_{\textnormal{\tiny{Eq}}}\triangleq\bigcup_{\gamma\in(0,1)}\mathscr{C}_{\textnormal{\tiny{Eq}}}^{\gamma},\qquad\mathscr{C}_{\textnormal{\tiny{Eq}}}^{\gamma}\triangleq\bigcap_{\ell\in\mathbb{Z}^{d_++d_-}\setminus\{0\}}\Big\{a\in(a_0,a_1)\quad\textnormal{s.t.}\quad|\omega_{\textnormal{\tiny{Eq}}}(a)\cdot\ell|>\frac{\gamma}{\langle\ell\rangle^{\tau}}\Big\}.$$
	Then, using \cite[Lem. 3.6]{HR21} (which is a modified version of the classical Rüssmann Lemma \cite[Thm. 17.1]{R01}) together with Lemma \ref{Russmann equilibre}-\textit{(i)}, we obtain for any $\gamma\in(0,1),$
	\begin{align*}
		\big|(a_0,a_1)\setminus\mathscr{C}_{\textnormal{\tiny{Eq}}}^{\gamma}\big|&\leqslant\sum_{\ell\in\mathbb{Z}^{d_++d_-}\setminus\{0\}}\left|\Big\{a\in(a_0,a_1)\quad\textnormal{s.t.}\quad|\omega_{\textnormal{\tiny{Eq}}}(a)\cdot\ell|\leqslant\frac{\gamma}{\langle\ell\rangle^{\tau}}\Big\}\right|\\
		&\lesssim\gamma^{\frac{1}{q_0}}\sum_{\ell\in\mathbb{Z}^{d_++d_-}\setminus\{0\}}\langle\ell\rangle^{-\frac{\tau+1}{q_0}}\\
		&\lesssim\gamma^{\frac{1}{q_0}}.
	\end{align*}
The convergence of the previous series is due to our choice of $\tau.$ Hence,
$$a_1-a_0-C\gamma^{\frac{1}{q_0}}\leqslant|\mathscr{C}_{\textnormal{\tiny{Eq}}}^{\gamma}|\leqslant|\mathscr{C}_{\textnormal{\tiny{Eq}}}|\leqslant a_1-a_0.$$
Letting $\gamma\to0$, we get that $\mathscr{C}_{\textnormal{\tiny{Eq}}}$ is of full measure in $(a_0,a_1).$ Finally, observe that any choice of $a\in\mathscr{C}_{\textnormal{\tiny{Eq}}}$ implies that $\omega_{\textnormal{\tiny{Eq}}}(a)$ is non-resonant. This achieves the proof of Lemma \ref{lem linear solutions}.
\end{proof}

\section{Reformulation with embedded tori}\label{sect. action-angles}
As we have seen in Lemma \ref{lem linear solutions}, there exist traveling quasi-periodic wave solutions at the linear level. Our aim  now is to construct traveling quasi-periodic waves nearby such linear solutions for the full nonlinear system. Refering to the proof of Lemma \ref{lem linear solutions}, small divisors/resonances appear. Consequently, one must employ a Nash-Moser scheme to construct the desired solutions. The aim of the present section is to introduce a suitable functional adapted to the implementation of the implicit function Nash-Moser theorem in Section \ref{sect.nonlin.sol}. 
We achieve this in two steps: first we transform the Hamiltonian system through a suitable change of unknowns in order to put the equilibrium equation in normal form; then we introduce suitable action-angle-normal symplectic coordinates allowing to reformulate our problem in terms of embedded tori.

\medskip

\noindent {\bf Normal form of the unperturbed linear term.}
Note that the system \eqref{HAM VP} can be seen as a quasilinear perturbation of the equilibrium linear system \eqref{lin eq QP}. Indeed, we can write
\begin{equation}\label{XPE}
    \partial_{t}\begin{pmatrix}
	r_+\\
	r_-
\end{pmatrix} = \frac{1}{\varepsilon^{2}} \cJ \nabla \cE(r) =\mathcal{J}\mathbf{M}_0\begin{pmatrix}
	r_+\\
	r_-
\end{pmatrix}+\varepsilon X_{P_{\mathcal{E}}}(r),\qquad X_{P_{\mathcal{E}}}(r)\triangleq\begin{pmatrix}
-r_+\partial_{x}r_+\\
-r_-\partial_{x}r_-
\end{pmatrix}.
\end{equation}
Let us emphasize that the particular structure of the equations implies that the vector field $X_{P_{\mathcal{E}}}(r)$ is independant of $\varepsilon.$
Our first step is to perfom a symplectic change the unknown so that the new linearized system is in normal form. We define the matrix operator
\begin{equation}\label{bbQ-1}
    \mathbf{Q}\begin{pmatrix}
	\rho_+\\
	\rho_{-}
\end{pmatrix}=\sum_{j\in\mathbb{Z}^*}Q_{j}(a)\begin{pmatrix}
\rho_j^+\\
\rho_{j}^-
\end{pmatrix}\mathbf{e}_{j}, \qquad \textnormal{with inverse} \qquad  \mathbf{Q}^{-1}\begin{pmatrix}
\rho_+\\
\rho_{-}
\end{pmatrix}=\sum_{j\in\mathbb{Z}^*}Q_{j}^{-1}(a)\begin{pmatrix}
\rho_j^+\\
\rho_{j}^-
\end{pmatrix}\mathbf{e}_{j},
\end{equation}
where $Q_j(a)$ and $Q_{j}^{-1}(a)$ are the diagonalizing matrices introduced in \eqref{def:Qj} and \eqref{inverse Qj}. From \eqref{diago Mj}, one has
\begin{equation}\label{bold.Omega}
    \mathbf{Q}^{-1}\mathcal{J}\mathbf{M}_{0}\mathbf{Q}=-\ii\begin{pmatrix}
	\Omega(a,D) & 0\\
	0 & -\Omega(a,D)
\end{pmatrix}\triangleq-\ii\boldsymbol{\Omega}(a,D),\qquad\forall j\in\mathbb{Z}^*,\quad\ii\Omega(a,D)\mathbf{e}_{j}\triangleq\ii\Omega_{j}(a)\mathbf{e}_{j}, 
\end{equation}
with $\Omega(a,D)$ as in Lemma \ref{lem prop eig VP} (see also \eqref{symbol eigenvalues VP}) and $\Omega_{j}(a)$ as in \eqref{eigenvalues VP}.
Let us consider the new unknown $\widetilde{r}\triangleq \mathbf{Q}^{-1}r.$ Then the Hamiltonian system \eqref{HAM VP} becomes
\begin{equation}\label{new:HAMDyn}
    \partial_{t}\widetilde{r}=\tfrac{1}{\varepsilon^2}\mathcal{J}\nabla H(\widetilde{r}),\qquad H(\widetilde{r})\triangleq\mathcal{E}(\mathbf{Q}\widetilde{r}).
\end{equation}
We mention that the linearized equation at the equilibrium state $(\tilde{r}_+,\tilde{r}_-)=(0,0)$ for the new Hamiltonian system is
\begin{equation}\label{new lin system}
	\partial_{t}\begin{pmatrix}
		\rho_+\\
		\rho_-
	\end{pmatrix}=\mathcal{J}\mathbf{L}_0\begin{pmatrix}
		\rho_+\\
		\rho_-
	\end{pmatrix}=\mathcal{J}\nabla H_{\mathbf{L}_0}(\rho_+,\rho_-),
\end{equation}
where
$$\mathbf{L}_{0}\begin{pmatrix}
	\rho_+\\
	\rho_-
\end{pmatrix}\triangleq\mathbf{Q}^{-1}\mathbf{M}_0\mathbf{Q}\begin{pmatrix}
	\rho_+\\
	\rho_-
\end{pmatrix}=\sum_{j\in\mathbb{Z}^*}\frac{\Omega_{j}(a)}{\ii j}\begin{pmatrix}
	\rho_j^+\\
	\rho_j^-
\end{pmatrix}$$
and
\begin{equation}\label{equilibr HAM}
	H_{\mathbf{L}_0}(\rho_+,\rho_-)\triangleq\tfrac{1}{2}\left\langle\mathbf{L}_0\begin{pmatrix}
		\rho_+\\
		\rho_-
	\end{pmatrix}\,,\,\begin{pmatrix}
		\rho_+\\
		\rho_-
	\end{pmatrix}\right\rangle=\sum_{j\in\mathbb{N}^*}\frac{\Omega_{j}(a)}{j}\Big(\big|\rho_{j}^+\big|^2+\big|\rho_j^-\big|^2\Big).
\end{equation}
Note that the Hamiltonian $H_{\mathbf{L}_0}$ is in normal form and, by Lemma \ref{lem linear solutions}, the traveling quasi-periodic reversible solutions to \eqref{new lin system} take the following form
$$\begin{pmatrix}
	\rho_+\\
	\rho_-
\end{pmatrix}(t,x)=\sum_{j\in\mathbb{S}_+}A_j\cos\big(\Omega_{j}(a)-jx\big)\begin{pmatrix}
	1\\
	0
\end{pmatrix}+\sum_{j\in\mathbb{S}_-}B_j\cos\big(\Omega_{j}(a)+jx\big)\begin{pmatrix}
0\\
1
\end{pmatrix},$$
with $\mathbb{S}_+,\mathbb{S}_-$ as in \eqref{defin S+S-}-\eqref{inter S+S-}, for some coefficients $A_j,B_j\in\mathbb{R}.$ 
By \eqref{XPE}, \eqref{bold.Omega}, \eqref{new lin system},  the new Hamiltonian system \eqref{new:HAMDyn} associated with the Hamiltonian $H(\wtr)$ writes
\begin{equation}\label{conj:eq}
    \partial_{t}\begin{pmatrix}
	\widetilde{r}_+\\
	\widetilde{r}_-
\end{pmatrix}=\mathcal{J}\mathbf{L}_{0}\begin{pmatrix}
\widetilde{r}_+\\
\widetilde{r}_-
\end{pmatrix}+\varepsilon X_{P_{H}}(\widetilde{r}),\qquad\mathcal{J}\mathbf{L}_{0}=\mathbf{Q}^{-1}\mathcal{J}\mathbf{M}_{0}\mathbf{Q}=-\ii\boldsymbol{\Omega}(a,D),\qquad X_{P_{H}}(\widetilde{r})=\mathbf{Q}^{-1}X_{P_{\mathcal{E}}}(\mathbf{Q}\widetilde{r}),
\end{equation}
where the  Hamiltonian $H$ reads
\begin{equation}\label{H dec}
	H=H_{\mathbf{L}_0}+\varepsilon P_H, \qquad P_{H}(\wtr)\triangleq P_{\cE}(\bQ \wtr).
\end{equation}

\medskip

\noindent {\bf Action-angle coordinates and embedded tori.}
Next, we split the phase space into tangential and normal subspaces. We recall that the symmetrized tangential sets are introduced in \eqref{symmetrized tangential}. Then, we have the following decomposition of the space $\mathbf{L}_0^2(\mathbb{T})$ in \eqref{def:bbL20}
\begin{equation}\label{split phsp}
	\mathbf{L}_{0}^{2}(\mathbb{T})=\mathbf{H}_{\overline{\mathbb{S}}}\overset{\perp}{\oplus}\mathbf{H}_{\overline{\mathbb{S}}_0}^{\perp}
\end{equation}
where $\bH_{\overline{\S}}$ is the finite dimensional \textit{tangential subspace}
\begin{equation}
    \mathbf{H}_{\overline{\mathbb{S}}}\triangleq\left\lbrace v=\sum_{j\in\overline{\mathbb{S}}_+}v_{j}^{+}\begin{pmatrix}
	1\\
	0
\end{pmatrix}\mathbf{e}_{j}+\sum_{j\in\overline{\mathbb{S}}_-}v_{j}^{-}\begin{pmatrix}
0\\
1
\end{pmatrix}\mathbf{e}_{j},\quad v_{j}^{\pm}\in\mathbb{C},\quad\overline{v_{-j}^{\pm}}=v_{j}^{\pm}\right\rbrace , \label{H.tang.space}
\end{equation}
and $\mathbf{H}_{\overline{\mathbb{S}}_0}^{\perp}$ is the \textit{normal subspace}
\begin{align}
    \mathbf{H}_{\overline{\mathbb{S}}_0}^{\perp}\triangleq\left\lbrace z=\sum_{j\in\mathbb{Z}^*\setminus\overline{\mathbb{S}}_+}z_{j}^{+}\begin{pmatrix}
	1\\
	0
\end{pmatrix}\mathbf{e}_{j}+\sum_{j\in\mathbb{Z}^*\setminus\overline{\mathbb{S}}_-}z_{j}^{-}\begin{pmatrix}
0\\
1
\end{pmatrix}\mathbf{e}_{j},\quad z_{j}^{\pm}\in\mathbb{C},\quad\overline{z_{-j}^{\pm}}=z_{j}^{\pm}\right\rbrace . \label{H.normal.space}
\end{align}
The subspace $\mathbf{H}_{\overline{\mathbb{S}}_0}^{\perp}$ is the $\mathbf{L}_{0}^{2}$-orthogonal of the subspace $\mathbf{H}_{\overline{\mathbb{S}}}$, and any $r\in\mathbf{L}_{0}^2$ can be decomposed into $r=v+z$ with $(v,z)\in\mathbf{H}_{\overline{\mathbb{S}}}\times\mathbf{H}_{\overline{\mathbb{S}}_0}^{\perp}.$ The tangential and normal variables $v$ and $z$ are obtained by considering the orthogonal pojections $\Pi_{\overline{\mathbb{S}}}$ and $\Pi_{\overline{\mathbb{S}}_0}^{\perp}$ associated with the decomposition \eqref{split phsp}, explicitly given by
$$\Pi_{\overline{\mathbb{S}}}=\begin{pmatrix}
	\Pi_{+} & 0\\
	0 & \Pi_{-}
\end{pmatrix},\qquad\Pi_{\overline{\mathbb{S}}_0}^{\perp}=\begin{pmatrix}
\Pi_{+}^{\perp} & 0\\
0 & \Pi_{-}^{\perp}
\end{pmatrix},\qquad\Pi_{\pm}r\triangleq\sum_{j\in\mathbb{S}_{\pm}}r_{j}\mathbf{e}_j\qquad\Pi_{\pm}^{\perp}\triangleq\textnormal{Id}-\Pi_{\pm}.$$
On the tangential subspace $\mathbf{H}_{\overline{\mathbb{S}}}$ we introduce the action-angle variables $(I,\vartheta)\in\mathbb{R}^d\times\mathbb{T}^d$ as follows : fix some amplitudes $(\mathfrak{a}_{j}^{\pm})_{j\in\overline{\mathbb{S}}_{\pm}}\in\mathbb{R}^{d_{\pm}}$ with
$$\forall j\in\overline{\mathbb{S}}_{\pm},\quad\mathfrak{a}_{-j}^{\pm}=\mathfrak{a}_{j}^{\pm}.$$
Then, write the Fourier coefficients of any function $v\in\mathbf{H}_{\overline{\mathbb{S}}}$ in symplectic polar coordinates
$$\forall j\in\overline{\mathbb{S}}_{\pm},\quad v_{j}^{\pm}\triangleq\sqrt{(\mathfrak{a}_{j}^{\pm})^{2}+\tfrac{|j|}{2}I_{j}^{\pm}}\,e^{\ii\vartheta_{j}^{\pm}}, \qquad  \textnormal{with} \qquad I_{-j}^{\pm}=I_j^{\pm},\qquad\vartheta_{-j}^{\pm}=-\vartheta_{j}^{\pm}.  $$
The \textit{actions} and \textit{angles variables} are then given as the vectors
$$I\triangleq\Big(\big(I_j^+\big)_{j\in\mathbb{S}_+},\big(I_j^-\big)_{j\in\mathbb{S}_-}\Big)\in\mathbb{R}^d,\qquad\vartheta\triangleq\Big(\big(\vartheta_j^+\big)_{j\in\mathbb{S}_+},\big(\vartheta_j^-\big)_{j\in\mathbb{S}_-}\Big)\in\mathbb{T}^d.$$
The above definitions allow to consider the following transformation
\begin{equation}\label{transfo aan-state}
	\mathbf{A}:\begin{array}[t]{rcl}
		\mathbb{T}^d\times\mathbb{R}^d\times\mathbf{H}_{\overline{\mathbb{S}}_0}^{\perp} & \rightarrow & \mathbf{L}_{0}^2(\mathbb{T})\\
		(\vartheta,I,z) & \mapsto & v(\vartheta,I)+z,
	\end{array}
\end{equation}
where
$$v(\vartheta,I)\triangleq\sum_{j\in\overline{\mathbb{S}}_{+}}\begin{pmatrix}
\sqrt{(\mathfrak{a}_{j}^{+})^{2}+\tfrac{|j|}{2}I_{j}^+}\,e^{\ii\vartheta_{j}^+}\\
0
\end{pmatrix}\mathbf{e}_{j}+\sum_{j\in\overline{\mathbb{S}}_-}\begin{pmatrix}
0\\
\sqrt{(\mathfrak{a}_{j}^{-})^{2}+\tfrac{|j|}{2}I_{j}^-}\,e^{\ii\vartheta_{j}^-}
\end{pmatrix}\mathbf{e}_{j}.$$
Straightforward calculations lead to
$$\forall\, j\in\overline{\mathbb{S}}_{\pm},\quad\begin{cases}
	\di v_{j}^{\pm}=\frac{|j|e^{\ii\vartheta_j^{\pm}}}{4\sqrt{(\mathfrak{a}_j^{\pm})^2+\frac{|j|}{2}I_j^{\pm}}}\,\di I_j^{\pm}+\ii e^{\ii\vartheta_{j}^{\pm}}\sqrt{(\mathfrak{a}_j^{\pm})^2+\tfrac{|j|}{2}I_j^{\pm}}\,\di\vartheta_{j}^{\pm},\vspace{0.2cm}\\
	\di v_{-j}^{\pm}=\frac{|j|e^{-\ii\vartheta_j^{\pm}}}{4\sqrt{(\mathfrak{a}_j^{\pm})^2+\frac{|j|}{2}I_j^{\pm}}}\,\di I_j^{\pm}-\ii e^{-\ii\vartheta_{j}^{\pm}}\sqrt{(\mathfrak{a}_j^{\pm})^2+\tfrac{|j|}{2}I_j^{\pm}}\,\di \vartheta_{j}^{\pm}.
\end{cases}$$
Therefore,
$$\forall \, j\in\overline{\mathbb{S}},\quad \di v_{j}^{\pm}\wedge \di v_{-j}^{\pm}=\tfrac{\ii|j|}{2} \di \vartheta_j^{\pm}\wedge \di I_j^{\pm}.$$
As a consequence, the symplectic 2-form in \eqref{symplectic.form} reads
\begin{align*}
	\mathscr{W}&=\sum_{j\in\overline{\mathbb{S}}_-}\tfrac{1}{\ii j}\di v_{j}^{-}\wedge \di v_{-j}^{-}-\sum_{j\in\overline{\mathbb{S}}_+}\tfrac{1}{\ii j}\di v_{j}^{+}\wedge \di v_{-j}^{+}+\mathscr{W}|_{\mathbf{H}_{\overline{\mathbb{S}}_0}^{\perp}}\\
	&=\sum_{j\in\mathbb{S}_-}\di \vartheta_{j}^-\wedge \di I_j^--\sum_{j\in\mathbb{S}_+}\di \vartheta_{j}^+\wedge \di I_j^++\mathscr{W}|_{\mathbf{H}_{\overline{\mathbb{S}}_0}^{\perp}}.
\end{align*}
The Poisson structure associated with $\mathscr{W}$ is defined through
\begin{equation}
    \{F,G\}\triangleq\mathscr{W}(X_{F},X_G)=\langle\nabla F,\mathbf{J}\nabla G\rangle, \qquad \textnormal{where} \qquad \langle(\vartheta,I,z),(\vartheta',I',z')\rangle\triangleq\vartheta\cdot\vartheta'+I\cdot I'+\langle z,z'\rangle_{\mathbf{L}_0^2(\mathbb{T})},
\end{equation}
and the unbounded transformation $\mathbf{J}$ is given by
\begin{equation}\label{poisson.aan}
    \mathbf{J}:(\vartheta,I,z)\mapsto\big(\mathtt{J}I,-\mathtt{J}\vartheta,\mathcal{J}z\big),\qquad\mathtt{J}\triangleq\begin{pmatrix}
	-\mathtt{I}_{d_+} & 0\\
	0 & \mathtt{I}_{d_-}
\end{pmatrix},
\end{equation}
with $\mathtt{I}_{d_{\pm}}$ denoting the identity matrix of size $d_{\pm}.$ Any application
\begin{equation}\label{emd.torus.coord}
    \begin{aligned}
        \mathbb{T}^{d} & \rightarrow & \mathbb{T}^d\times\mathbb{R}^d\times\mathbf{H}_{\overline{\mathbb{S}}_0}^{\perp}\\
	\varphi & \mapsto & \big(\vartheta(\varphi),I(\varphi),z(\varphi,\cdot)\big)
    \end{aligned}
\end{equation}
is called an \textit{embedded torus} and can be identified with a function $\varphi\in\mathbb{T}^d\mapsto r(\varphi,\cdot)\in \mathbf{L}_{0}^{2}$ through the transformation $\mathbf{A}.$ Let us emphasize that in this new unknowns, each trivial embedding $\varphi\mapsto(\varphi,0,0)$ corresponds to one of the equilibrium solutions in Lemma \ref{lem linear solutions}. In the new variables, according to \eqref{H dec} and \eqref{equilibr HAM}, we obtain a new Hamiltonian system associated with
\begin{equation}\label{Ham in aan}
    H_{\varepsilon}\triangleq H\circ\mathbf{A}=-\big(\mathtt{J}\omega_{\textnormal{Eq}}(a)\big)\cdot I+\tfrac{1}{2}\big\langle\mathbf{L}_0 z,z\big\rangle_{\mathbf{L}_0^{2}(\mathbb{T})}+\varepsilon\mathcal{P},\qquad\mathcal{P}\triangleq P_{H}\circ\mathbf{A}.
\end{equation}
This corresponds to the vector field
$$X_{H_{\varepsilon}}\triangleq\big(\mathtt{J}\partial_{I}H_{\varepsilon},-\mathtt{J}\partial_{\vartheta}H_{\varepsilon},\Pi_{\overline{\mathbb{S}}_0}^{\perp}\nabla_zH_{\varepsilon}\big).$$
Actually, in the sequel, it is more convenient to replace the frequency vector by $\alpha\in\mathbb{R}^d$ and to consider the following modified Hamiltonian
\begin{equation}\label{Ham.aan.mod}
    H_{\varepsilon}^{\alpha}\triangleq\alpha\cdot I+\tfrac{1}{2}\big\langle\mathbf{L}_{0}z,z\big\rangle_{\mathbf{L}_0^{2}(\mathbb{T})}+\varepsilon\mathcal{P},
\end{equation}
and the nonlinear operator
\begin{align}
    \mathscr{F}_{\varepsilon}(i,\alpha,a,\omega)\triangleq\omega\cdot \partial_{\varphi}i(\varphi)-X_{H_{\varepsilon}^{\alpha}}\big(i(\varphi)\big)=\begin{pmatrix}
	\omega\cdot\partial_{\varphi}\vartheta(\varphi)-\alpha-\varepsilon\mathtt{J}\partial_I\mathcal{P}\big(i(\varphi)\big)\\
	\omega\cdot\partial_{\varphi}I(\varphi)+\varepsilon\mathtt{J}\partial_{\vartheta}\mathcal{P}\big(i(\varphi)\big)\\
	\omega\cdot\partial_{\varphi}z(\varphi)-\mathcal{J}\mathbf{L}_0z(\varphi)-\varepsilon\Pi_{\overline{\mathbb{S}}_0}^{\perp}\mathcal{J}\nabla_{z}\mathcal{P}\big(i(\varphi)\big)
\end{pmatrix}. \label{sF.vare.def}
\end{align}
If $ \mathscr{F}_{\varepsilon}(i,\alpha)=0$, then the embedding $\vf\mapsto i(\vf)$ is an invariant torus for the Hamiltonian vector field $X_{H_{\varepsilon}^\alpha}$ filled with quasi-periodic solutions with frequency $\omega$.
Observe that
\begin{equation}\label{itriv}
    \mathscr{F}_{0}\big(i_{\textnormal{\tiny{triv}}},-\mathtt{J}\omega_{\textnormal{Eq}}(a),a,\omega_{\textnormal{Eq}}(a)\big)=0,\qquad i_{\textnormal{\tiny{triv}}}(\varphi)\triangleq(\varphi,0,0),
\end{equation}
which is just a reformulation of Lemma \ref{lem linear solutions}. Our purpose now becomes to find invariant tori $i$ near the trivial solution $i_{\textnormal{\tiny{triv}}}$. Therefore, we define the following quantities
\begin{equation}\label{varia.Theta}
    \Theta(\varphi)\triangleq\vartheta(\varphi)-\varphi\qquad\textnormal{and}\qquad\mathfrak{I}\triangleq(\Theta,I,z)=i-i_{\textnormal{\tiny{triv}}}.
\end{equation}
In Section \ref{sect.nonlin.sol}, we shall apply the Nash-Moser implicit function theorem  in order to find a non-trivial zeros
$\big(i(a,\omega),\alpha(a,\omega)\big)=\big(i_{\textnormal{\tiny{triv}}} +\fI(a,\omega),\alpha(a,\omega)\big)$ of the functional $\mathscr{F}_{\varepsilon}$ in \eqref{sF.vare.def}.\\

\medskip

\noindent {\bf Reversible and traveling tori.}
We are looking for $r$ a reversible traveling wave, namely satisfying $r(\vf,x)=r(-\vf,-x)$ and $r(\varphi-\vec{\jmath}y,x)=r(\varphi,x+y)$ (see \eqref{traveling.w.app}). In the language of embedded tori, we have the following definition.
\begin{defin}\label{rev trav torus}
We define
\begin{equation}\label{inv trav aan}
    \mathfrak{S}:(\vartheta,I,z)\mapsto(-\vartheta,I,\mathscr{S}z),\qquad\mathfrak{T}_{y}:(\vartheta,I,z)\mapsto(\vartheta-\vec{\jmath}y,I,\mathscr{T}_yz),
\end{equation}
with $\mathscr{S}$ and $\mathscr{T}_{y}$ the involution and translations defined in \eqref{inv trans}.
    A torus $i=(\vartheta,I,z):\mathbb{T}^{d}\to \T^d\times \R^d\times\bH_{\overline{S}_{0}}^\perp$ is called:
    \begin{enumerate}[label=\textbullet]
        \item \textit{reversible} if, for any $\vf\in\T^d$, $\mathfrak{S}i(\vf)=i(-\vf)$;
        \item \textit{anti-reversible} if, for any $\vf\in\T^d$, $-\mathfrak{S}i(\vf)=i(-\vf)$;
        \item \textit{traveling} if, for any $\vf\in\T^d$ and $y\in\R$, $\mathfrak{T}_{y}(i)(\varphi)=i(\varphi-\vec{\jmath}y)$.
    \end{enumerate} 
\end{defin}

By \eqref{Ham.aan.mod}, \eqref{Ham in aan}, \eqref{new:HAMDyn}, \eqref{rev.inv.cE}, \eqref{trans.inv.cE} and Definition \ref{rev trav torus}, we note that the Hamiltonians $H_{\varepsilon}^{\alpha}$ are invariant under the involution $\mathfrak{S}$ and the translations $\mathfrak{T}_{y}$ (for any $y\in\mathbb{T}$), namely
$$H_{\varepsilon}^{\alpha}\circ\mathfrak{S}=H_{\varepsilon}^{\alpha}=H_{\varepsilon}^{\alpha}\circ\mathfrak{T}_{y}.$$
As we look for $\vartheta(\vf)$ of the form $\vartheta(\vf)=\vf+\Theta(\vf)$ with a $(2\pi)^d$-periodic function $\Theta:\R^d\to\R^d$, $\vf\mapsto \Theta(\vf)$ (see \eqref{varia.Theta}), in order to justify the momentum preserving property of the constructed approximate inverse in Section \ref{sect.approx.inverse}, we consider the \textit{traveling variation operator}
$$\forall\, y\in\mathbb{T},\quad\mathfrak{DT}_y\begin{pmatrix}
    \Theta\\
    I\\
    z
\end{pmatrix}\triangleq\begin{pmatrix}
    \Theta\\
    I\\
    \mathscr{T}_yz
\end{pmatrix}.$$
\begin{defin}\label{defi:travivaria}
    We say that an element $\vf\mapsto g(\vf)\in\mathbb{R}^d\times\mathbb{R}^d\times\mathbf{H}_{\overline{\mathbb{S}}_0}^{\perp}$ is a \textit{traveling wave variation} if and only if
$$\forall \, y\in\mathbb{T},\quad\forall\,\varphi\in\mathbb{T}^d,\quad\mathfrak{DT}_yg(\varphi)=g(\varphi-\vec{\jmath}y).$$
\end{defin}
Recall the following result from \cite[Lem. 3.34]{BFM21}.
\begin{lem}\label{lem:MP-torus}
We have the following:
    \begin{enumerate}[label=(\roman*)]
        \item According to the Definition \ref{defin:OP-sym}, a linear operator $A(\varphi)$ on $\mathbb{R}^d\times\mathbb{R}^d\times\mathbf{H}_{\overline{\mathbb{S}}_0}^{\perp}$ is momentum preserving if and only if
    $$\forall \, y\in\mathbb{T},\quad\forall\,\varphi\in\mathbb{T}^d,\quad A(\varphi-\vec{\jmath}y)\circ\mathfrak{DT}_y=\mathfrak{DT}_y\circ A(\varphi).$$
    \item If $g$ is a traveling wave variation and $A(\varphi)$ is a momentum preserving operator, then $\varphi\mapsto A(\varphi)g(\varphi)$ is a traveling wave variation.
    \end{enumerate}
\end{lem}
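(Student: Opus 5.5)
The plan is to unfold Definition \ref{defin:OP-sym} in the action-angle-normal setting and reduce both items to a short algebraic manipulation. The only ingredient is the identification of the translation that defines momentum preservation with the traveling variation operator $\mathfrak{DT}_y$. I expect the only delicate point to be matching the sign and shift conventions of the appendix. Once the correct group action on $\varphi$-dependent variations is written down, everything follows by substitution.

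First, I will read a linear operator $A(\varphi)$ on $\mathbb{R}^d\times\mathbb{R}^d\times\mathbf{H}_{\overline{\mathbb{S}}_0}^{\perp}$ as acting on $\varphi$-dependent variations by $(Ag)(\varphi)\triangleq A(\varphi)g(\varphi)$. I then introduce the translation group on such variations,
$$(\tau_y g)(\varphi)\triangleq \mathfrak{DT}_y\, g(\varphi+\vec{\jmath}y),\qquad y\in\mathbb{T}.$$
This is the analogue, on $(\Theta,I,z)$, of the map $r(\varphi,x)\mapsto r(\varphi+\vec{\jmath}y,x+y)$. Indeed, $\mathfrak{DT}_y$ is the identity on $(\Theta,I)$ because the shift of the angle by $\vec{\jmath}y$ has been absorbed into $\varphi$ through $\vartheta=\varphi+\Theta$, and it acts as $\mathscr{T}_y$ on $z$. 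With this notation, $g$ is a traveling wave variation in the sense of Definition \ref{defi:travivaria} exactly when $\tau_y g=g$ for all $y$. Momentum preservation in the sense of Definition \ref{defin:OP-sym} is the commutation $A\circ\tau_y=\tau_y\circ A$. If the appendix states momentum preservation only on the $z$-component, I will note that it coincides with this definition, because $\mathfrak{DT}_y$ is trivial on the finite-dimensional components.

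For item (i), I compute both sides of the commutation:
$$(A\tau_y g)(\varphi)=A(\varphi)\,\mathfrak{DT}_y\,g(\varphi+\vec{\jmath}y),\qquad (\tau_y A g)(\varphi)=\mathfrak{DT}_y\,A(\varphi+\vec{\jmath}y)\,g(\varphi+\vec{\jmath}y).$$
Taking $g$ arbitrary (for instance constant in $\varphi$), equality for all $g$ is equivalent to $A(\varphi)\mathfrak{DT}_y=\mathfrak{DT}_yA(\varphi+\vec{\jmath}y)$ for all $\varphi$ and $y$. The substitution $\varphi\mapsto\varphi-\vec{\jmath}y$ then gives the stated identity $A(\varphi-\vec{\jmath}y)\circ\mathfrak{DT}_y=\mathfrak{DT}_y\circ A(\varphi)$. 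Conversely, the same computation read backwards gives the commutation from this identity. As a consistency check, I will also verify that on the normal block this is equivalent to the Fourier characterization in Lemma \ref{lem Fourier coeff op}: the matrix entries $A_j^{j'}(\ell)$ vanish unless $\vec{\jmath}\cdot\ell+j-j'=0$. This follows by expanding $A(\varphi)$ in $e^{\ii\ell\cdot\varphi}$ and using $\mathscr{T}_y\mathbf{e}_j=e^{\ii jy}\mathbf{e}_j$.

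For item (ii), I use the identity from (i) together with the traveling property of $g$:
$$\mathfrak{DT}_y\big(A(\varphi)g(\varphi)\big)=A(\varphi-\vec{\jmath}y)\,\mathfrak{DT}_y\,g(\varphi)=A(\varphi-\vec{\jmath}y)\,g(\varphi-\vec{\jmath}y).$$
This is exactly the statement that $\varphi\mapsto A(\varphi)g(\varphi)$ is a traveling wave variation, which concludes the plan.
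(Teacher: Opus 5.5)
Your proposal is correct and is essentially the direct verification behind the result, which the paper does not prove but imports from \cite[Lem. 3.34]{BFM21}. Item (i) is Definition \ref{defin:OP-sym} read block by block with $\mathfrak{DT}_y=\mathrm{diag}(\mathrm{Id},\mathrm{Id},\mathscr{T}_y)$, the translation acting trivially on the $x$-independent $\mathbb{R}^d$ components; item (ii) is your computation $\mathfrak{DT}_y\big(A(\varphi)g(\varphi)\big)=A(\varphi-\vec{\jmath}y)\mathfrak{DT}_y g(\varphi)=A(\varphi-\vec{\jmath}y)g(\varphi-\vec{\jmath}y)$, and your detour through the group $\tau_y$ acting on $\varphi$-dependent variations is harmless but unnecessary, since Definition \ref{defin:OP-sym} is already stated pointwise in $\varphi$.
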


\section{Tame estimates and approximate right inverse}\label{sect.approx.inverse}
In this section, we provide the material to run the Nash-Moser analysis in Section \ref{sect.nonlin.sol} in order to solve (recall \eqref{sF.vare.def}, \eqref{Ham.aan.mod})
\begin{equation}
     \mathscr{F}_{\varepsilon}(i,\alpha,a,\omega)=\omega\cdot \partial_{\varphi}i-X_{H_{\varepsilon}^{\alpha}}(i) = 0 .
\end{equation}
Here start topological considerations and we refer the reader to the functional framework introduced in the Appendix \ref{appendix-gene}. We define $\mathcal{O}$ the initial set of parameters -- where $(a,\omega)$ are taken -- as follows
\begin{equation}\label{cO.ref.set}
    \cO \triangleq (a_0,a_1) \times B_{R_0}(0), \qquad 0<a_0 <a_1,
\end{equation}
where $B_{R_0}(0)$ is the open ball in $\R^d$ such that, for some $R_0>0$,
\begin{equation}\label{omega.Eq.inball}
    \omega_{\rm Eq} ([a_0,a_1]) \subset B_{R_0/2}(0) .
\end{equation}
We also fix $(d,\gamma,q,s_0,S)$ as in \eqref{d gm S s0}. Let us insist on the fact that one needs a $C^q$-regularity with respect to the parameters $(a,\omega)$ in order to apply the transversality analysis of Section \ref{sec transversal} required later for estimating the measure of the final Cantor set in Section \ref{subsect.conclusion}. In what follows, we assume that
\begin{equation}\label{ansatz.fI.small}
\varepsilon\leqslant\varepsilon_0\qquad\textnormal{and}\qquad\|\mathfrak{I}\|_{s_0+\sigma_{\rm max}}^{q,\gamma,\mathcal{O}}\leqslant1,
\end{equation}
where $\sigma_{\rm max}>0$ is a fixed loss of derivatives depending on $d,q$, and where
\begin{equation}
\|\mathfrak{I}\|_{s}^{q,\gamma,\mathcal{O}}\triangleq\|\Theta\|_{s}^{q,\gamma,\mathcal{O}}+\|I\|_{s}^{q,\gamma,\mathcal{O}}+\|z\|_{s}^{q,\gamma,\mathcal{O}}    
\end{equation}
is the norm associated with the periodic component $\mathfrak{I}$ as in \eqref{varia.Theta}.

\subsection{Tame estimates for the functional}
To run the Nash-Moser scheme, it is essential to get tame estimates on the functional, its linearized operator and second order differential in a neighbourhood of the trivial solution and its approximate right inverse. Tame estimates are linear in high regularity norm with potential loss of derivatives. They are the key point to close the induction argument. Here we provide the tame estimates for these quantities except the approximate inverse, which needs a special analysis performed in the next sections.

\begin{lem}\label{lemma.Xp.est}
    Let $\fI$ in \eqref{varia.Theta} satisfy \eqref{ansatz.fI.small}. Then, for any $s\geqslant s_0$ and for all $\whi\triangleq(\wh\vartheta,\whI,\whw)$, it holds
    \begin{align}
    \|X_{\mathcal{P}}\|_{s}^{q,\gamma,\mathcal{O}}&\lesssim_{s,q,d}1+\|\mathfrak{I}\|_{s+1}^{q,\gamma,\mathcal{O}},\\
    \|\di_iX_{\mathcal{P}}[\hat{i}]\|_{s}^{q,\gamma,\mathcal{O}}&\lesssim_{s,q,d}\|\hat{i}\|_{s+1}^{q,\gamma,\mathcal{O}}+\|\mathfrak{I}\|_{s+1}^{q,\gamma,\mathcal{O}}\|\hat{i}\|_{s_0+1}^{q,\gamma,\mathcal{O}},\\
    \|\di_i^2X_{\mathcal{P}}[\hat{i},\hat{i}]\|_{s}^{q,\gamma,\mathcal{O}}&\lesssim_{s,q,d}\|\hat{i}\|_{s+1}^{q,\gamma,\mathcal{O}}\|\hat{i}\|_{s_0+1}^{q,\gamma,\mathcal{O}}+\|\mathfrak{I}\|_{s+1}^{q,\gamma,\mathcal{O}}\left(\|\hat{i}\|_{s_0+1}^{q,\gamma,\mathcal{O}}\right)^2.
\end{align}
\end{lem}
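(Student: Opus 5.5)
The plan is to write the vector field $X_{\mathcal{P}}$ explicitly in the action-angle-normal coordinates and reduce the estimates to standard tame product and composition estimates from the Appendix. By \eqref{XPE} and \eqref{conj:eq}, the perturbation vector field in the original variables is the quadratic Burgers-type field $X_{P_{\mathcal{E}}}(r)=-\big(r_+\partial_x r_+,\,r_-\partial_x r_-\big)$. Therefore
$$X_{P_H}(\widetilde r)=\mathbf{Q}^{-1}X_{P_{\mathcal{E}}}(\mathbf{Q}\widetilde r).$$
The operators $\mathbf{Q}^{\pm1}$ are Fourier multipliers of order $0$. Their symbols $Q_j^{\pm1}(a)$ are smooth in $a\in[a_0,a_1]$, with all $a$-derivatives uniformly bounded in $j$, since $b_j(a)=O(1/(a^2j^2))$. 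So they are bounded on $H^s$ with $\|\cdot\|_s^{q,\gamma,\mathcal{O}}$ norms, uniformly in $s$. Composing with $\mathbf{A}$ gives $r=\mathbf{Q}\big(v(\vartheta(\varphi),I(\varphi))+z(\varphi)\big)$. The components of $X_{\mathcal{P}}$ are then $\mathtt{J}\partial_I\mathcal{P}$, $-\mathtt{J}\partial_\vartheta\mathcal{P}$ and $\Pi^\perp_{\overline{\mathbb{S}}_0}\mathcal{J}\nabla_z\mathcal{P}$. Each is obtained from $X_{P_H}$ by projecting onto finitely many modes or onto the normal subspace, and multiplying by the Jacobian of $v$ in $(\vartheta,I)$. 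This Jacobian consists of the factors $\tfrac{|j|}{4}\big((\mathfrak{a}_j)^2+\tfrac{|j|}{2}I_j\big)^{-1/2}e^{\ii\vartheta_j}$ and $\ii e^{\ii\vartheta_j}\sqrt{\cdots}$.

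First I will estimate the tangential part $v(\vartheta(\varphi),I(\varphi))$. Here $\vartheta=\varphi+\Theta$, so $e^{\ii\vartheta_j}=e^{\ii\varphi_j}e^{\ii\Theta_j}$. The maps $x\mapsto e^{\ii x}$ and $y\mapsto\sqrt{\mathfrak{a}^2+cy}$ are smooth near $0$, since $\mathfrak{a}_j\neq0$ and $\|I\|_{L^\infty}$ is small by \eqref{ansatz.fI.small} and Sobolev embedding. The tame composition lemma in the $\|\cdot\|_s^{q,\gamma,\mathcal{O}}$ norm then gives $\|v\|_s\lesssim 1+\|\mathfrak{I}\|_s$. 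In the $x$-variable, $v$ is a trigonometric polynomial with finitely many modes, so $x$-derivatives cost nothing. Next, the quadratic term $r\partial_x r$ is handled with the tame product estimate. It gives $\|r\partial_xr\|_s\lesssim\|r\|_{s+1}\|r\|_{s_0}+\|r\|_{s_0+1}\|r\|_s$. Using $\|r\|_{s_0+1}\lesssim1$ under \eqref{ansatz.fI.small}, this yields $\lesssim 1+\|\mathfrak{I}\|_{s+1}$. The single loss of one derivative comes exactly from $\partial_x$ acting on $z$, which explains the index $s+1$. The projections and the finite-mode Jacobians preserve these bounds, and this gives the first estimate.

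For the derivatives, I will differentiate the explicit expression in $i$. The map $r\mapsto r\partial_x r$ is bilinear, so
$$\di_i X[\hat\imath]=\mathbf{Q}^{-1}\big(\mathbf{Q}\hat r\,\partial_x\mathbf{Q} r+\mathbf{Q}r\,\partial_x\mathbf{Q}\hat r\big)$$
plus terms coming from differentiating the smooth Jacobian factors. Here $\hat r=\di_i\mathbf{A}(i)[\hat\imath]$, and its tame bound is $\|\hat r\|_s\lesssim\|\hat\imath\|_s+\|\mathfrak{I}\|_s\|\hat\imath\|_{s_0}$. The tame product estimate then gives the bound $\|\hat\imath\|_{s+1}+\|\mathfrak{I}\|_{s+1}\|\hat\imath\|_{s_0+1}$. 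The second derivative works the same way: it is a sum of terms bilinear in $\hat\imath$ whose coefficients are smooth in $i$, and applying the product rule twice yields the stated quadratic-in-$\hat\imath$ bound.

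The main point requiring care is the $C^q$ dependence on the parameters $(a,\omega)$ inside the weighted norm. The $a$-dependence enters only through $\mathbf{Q}^{\pm1}$. I will check that $\partial_a^k Q_j^{\pm1}(a)$ is bounded uniformly in $j$ for $k\leqslant q$, which follows from the explicit formula for $b_j$ on $[a_0,a_1]$. This ensures the weighted norm $\|\cdot\|_s^{q,\gamma,\mathcal{O}}$ is preserved, with the tame product and composition lemmas already formulated in that norm. Everything else is routine bookkeeping.
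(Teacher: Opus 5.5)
Your proposal is correct and follows essentially the same route as the paper. Both use tame product estimates for the quadratic field $X_{P_{\mathcal{E}}}(r)=-(r_+\partial_x r_+,r_-\partial_x r_-)$ and its derivatives, continuity of $\mathbf{Q}^{\pm1}$, and the bound $\|\partial_\vartheta^\alpha\partial_I^\beta v(\vartheta,I)\|_s\lesssim 1+\|\mathfrak{I}\|_s$ for the tangential part, all transported through $\mathbf{A}$ and $\mathcal{P}$; your check of the uniform $a$-regularity of $Q_j^{\pm1}(a)$ makes explicit what the paper leaves implicit.

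One small correction: \eqref{ansatz.fI.small} only gives $\|\mathfrak{I}\|_{s_0+\sigma_{\rm max}}\leqslant 1$, not smallness. Keeping $(\mathfrak{a}_j^{\pm})^2+\tfrac{|j|}{2}I_j^{\pm}$ bounded away from zero, which the square root and $\partial_I v$ need, therefore requires either a smaller ansatz or the a priori smallness of the Nash--Moser iterates. The paper also relies on this only implicitly.
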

\begin{proof}
    From the explicit expression of the vector field $X_{P_{\mathcal{E}}}$ in \eqref{XPE}, one readily obtains
$$\di_{r}X_{P_{\mathcal{E}}}(r)[\rho]=\begin{pmatrix}
	-\rho_+\partial_{x}r_+-r_+\partial_{x}\rho_+\\
	-\rho_-\partial_{x}r_--r_-\partial_{x}\rho_-
\end{pmatrix},\qquad \di_{r}^{2}X_{P_{\mathcal{E}}}(r)[\rho_{1},\rho_2]=\begin{pmatrix}
-\rho_+^{1}\partial_{x}\rho_+^2-\rho_+^2\partial_{x}\rho_+^1\\
-\rho_-^1\partial_{x}\rho_-^2-\rho_-^2\partial_{x}\rho_-^1
\end{pmatrix}.$$
As a consequence, by Lemma \ref{lem functions}-$(i)$, we have the following estimates for any $s\geqslant s_0$
\begin{align}
	\|X_{P_{\mathcal{E}}}(r)\|_{s}^{q,\gamma,\mathcal{O}}&\lesssim_{s,q,d}\|r\|_{s_0+1}^{q,\gamma,\mathcal{O}}\|r\|_{s+1}^{q,\gamma,\mathcal{O}},\label{Taylor0 XPE}\\
	\|\di_rX_{P_{\mathcal{E}}}(r)[\rho]\|_{s}^{q,\gamma,\mathcal{O}}&\lesssim_{s,q,d}\|r\|_{s_0+1}^{q,\gamma,\mathcal{O}}\|\rho\|_{s+1}^{q,\gamma,\mathcal{O}}+\|r\|_{s+1}^{q,\gamma,\mathcal{O}}\|\rho\|_{s_0+1}^{q,\gamma,\mathcal{O}},\label{Taylor1 XPE}\\
	\|\di_r^{2}X_{P_{\mathcal{E}}}(r)[\rho_1,\rho_2]\|_{s}^{q,\gamma,\mathcal{O}}&\lesssim_{s,q,d}\|\rho_1\|_{s_0+1}^{q,\gamma,\mathcal{O}}\|\rho_2\|_{s+1}^{q,\gamma,\mathcal{O}}+\|\rho_1\|_{s+1}^{q,\gamma,\mathcal{O}}\|\rho_2\|_{s_0+1}^{q,\gamma,\mathcal{O}}.\label{Taylor2 XPE}
\end{align}
Combining \eqref{Taylor0 XPE}, \eqref{Taylor1 XPE} and \eqref{Taylor2 XPE} together with the continuity of $\mathbf{Q}^{\pm1}$ on $H^{s}(\mathbb{T})\times H^s(\mathbb{T}),$ we get for any $s\geqslant s_0,$
\begin{align}
	\|X_{P_{H}}(\widetilde{r})\|_{s}^{q,\gamma,\mathcal{O}}&\lesssim_{s,q,d}\|\widetilde{r}\|_{s_0+1}^{q,\gamma,\mathcal{O}}\|\widetilde{r}\|_{s+1}^{q,\gamma,\mathcal{O}},\label{Taylor0 XPH}\\
	\|\di_{\widetilde{r}}X_{P_{H}}(\widetilde{r})[\rho]\|_{s}^{q,\gamma,\mathcal{O}}&\lesssim_{s,q,d}\|\widetilde{r}\|_{s_0+1}^{q,\gamma,\mathcal{O}}\|\rho\|_{s+1}^{q,\gamma,\mathcal{O}}+\|\widetilde{r}\|_{s+1}^{q,\gamma,\mathcal{O}}\|\rho\|_{s_0+1}^{q,\gamma,\mathcal{O}},\label{Taylor1 XPH}\\
	\|\di_{\widetilde{r}}^{2}X_{P_{H}}(\widetilde{r})[\rho_1,\rho_2]\|_{s}^{q,\gamma,\mathcal{O}}&\lesssim_{s,q,d}\|\rho_1\|_{s_0+1}^{q,\gamma,\mathcal{O}}\|\rho_2\|_{s+1}^{q,\gamma,\mathcal{O}}+\|\rho_1\|_{s+1}^{q,\gamma,\mathcal{O}}\|\rho_2\|_{s_0+1}^{q,\gamma,\mathcal{O}}.\label{Taylor2 XPH}
\end{align}
Besides, observe that
$$\forall\,\alpha,\beta\in\mathbb{N}^d,\quad\|\partial_{\vartheta}^{\alpha}\partial_{I}^{\beta}v(\vartheta,I)\|_{s}^{q,\gamma,\mathcal{O}}\lesssim 1+\|\mathfrak{I}\|_{s}^{q,\gamma,\mathcal{O}}.$$
Then, in view of the definition of the transformation $\mathbf{A}$ in \eqref{transfo aan-state} and the definition of $\mathcal{P}$ in \eqref{Ham in aan}, we get from \eqref{Taylor0 XPH}, \eqref{Taylor1 XPH} and \eqref{Taylor2 XPH} the desired estimates.
\end{proof}

\subsection{Approximate right inverse}\label{sect:BB}
	We shall construct an approximate right inverse of the corresponding linearized operator at any given reversible and quasi-periodic traveling state $(i_0,\alpha_0)$ near the flat torus 
	\begin{equation}\label{Linearized-op-F-DC}
		\di_{(i,\alpha)}\mathscr{F}_{\varepsilon}(i_0,\alpha_0)=\omega\cdot\partial_\varphi i_0  - \di_i X_{H_\varepsilon^{\alpha_0}} ( i_0 )-\Big(\begin{smallmatrix}
			\mathtt{J}\widehat\alpha  \\
			0
			\\
			0  
		\end{smallmatrix}\Big).
	\end{equation}
	To achieve this goal, we make appeal to the Berti-Bolle approach \cite{BB15} where they approximately decouple the linearized equations through a triangular system in the action-angle-normal variables. Here, we shall implement a slightly simplified version of this strategy adapted to the vectorial case and developed in \cite[Section 6]{HHR23}; we work directly with the original torus $i_0=(\vartheta_0,I_0,z_0)$ instead of introducing an intermediate isotropic one as in \cite{BB15}. In addition, in our case, we shall also follow the traveling property preservation in the spirit of \cite{BFM21,BFM21-1} in order to guarantee the construction of a traveling quasi-periodic solution at the end of the scheme.\\
	 
	Let us consider $G_0$ the diffeomorphism $G_0:\mathbb{T}^d\times\mathbb{R}^d\times\mathbf{H}_{\overline{\mathbb{S}}_0}^{\perp}\rightarrow\mathbb{T}^d\times\mathbb{R}^d\times\mathbf{H}_{\overline{\mathbb{S}}_0}^{\perp}$ defined by
	\begin{equation}\label{defi:G0}
		\begin{pmatrix}
			\vartheta\\
			I\\
			z
		\end{pmatrix}\triangleq G_0\begin{pmatrix}
			\phi \\
			\mathtt{y} \\
			w
		\end{pmatrix} \triangleq 
		\begin{pmatrix}
			\vartheta_0(\phi)\\
			I_0 (\phi) + \mathtt{L}_1(\phi)\mathtt{y} + \mathtt{L}_2(\phi) w\\
			z_0(\phi) + w
		\end{pmatrix}, 
	\end{equation}
	where
	\begin{equation*}
		\mathtt{L}_1(\phi)\triangleq  \mathtt{J}[\partial_\varphi \vartheta_0(\phi)]^{-\top},\qquad \mathtt{L}_2(\phi) \triangleq \mathtt{J}[(\partial_\vartheta \widetilde{z}_0)(\vartheta_0(\phi))]^\top \mathcal{J}^{-1},\qquad \widetilde{z}_0 (\vartheta) \triangleq z_0 \big(\vartheta_0^{-1} (\vartheta)\big).
	\end{equation*}
	The tensors  $\mathcal{J}$ and $\tJ$ have been introduced in \eqref{HAM VP} and \eqref{poisson.aan}, respectively. In the new coordinates, the torus $i_0$ is trivial (see \eqref{itriv}), that is
	$$G_0\big(i_{\textnormal{\tiny{triv}}}(\varphi)\big)=G_0(\varphi,0,0)=i_0(\varphi).$$
    Straightforward calculations similar to \cite[Lem. 6.3]{BFM21} show that the transformation $G_0$ is invariant under the transformations $\mathfrak{S}$ and $\mathfrak{T}_y$ as in \eqref{rev trav torus}, namely
    \begin{equation}\label{G0-revMP}
        G_0\circ\mathfrak{S}=\mathfrak{S}\circ G_0\qquad\textnormal{and}\qquad\forall \, y\in\mathbb{T},\quad G_0\circ\mathfrak{T}_y=\mathfrak{T}_y\circ G_0.
    \end{equation}
    In what follows, we denote 
        \begin{equation}\label{tilde.G0}
            \widetilde{G}_0 (\phi,y,w,\alpha)\triangleq\big(G_0(\phi,y,w),\alpha\big) 
        \end{equation}
		the diffeomorphism acting as the identity on the $\alpha$-component. We also quantify how an embedded torus $i_0(\mathbb{T})$ is approximately invariant for the Hamiltonian vector field $X_{H_\varepsilon^{\alpha_0}}$ in terms of the ``error function''     
		\begin{equation}\label{def Z}
			Z\triangleq(Z_1,Z_2,Z_3)\triangleq\mathscr{F}_{\varepsilon}(i_0,\alpha_0)=\omega\cdot\partial_\varphi i_0-X_{H^{\alpha_0}_\varepsilon}(i_0).
		\end{equation}
	 We now conjugate the linear operator $\di_{(i,\alpha)} \mathscr{F}_{\varepsilon} (i_0,{\alpha}_0)$ in \eqref{Linearized-op-F-DC} with the linear change of variables given by
	\begin{equation}\label{Differential G0}
		D G_0\big( i_{\textnormal{\tiny{triv}}}(\vf) \big)
		\begin{pmatrix}
			\widehat\phi\\
			\widehat y \\
			\widehat w
		\end{pmatrix} 
		=
		\begin{pmatrix}
			\partial_\varphi \vartheta_0(\varphi) & 0 & 0 \\
			\partial_\varphi I_0(\varphi) & \mathtt{L}_1(\varphi) & 
			\mathtt{L}_2(\varphi)\\
			\partial_\varphi z_0(\varphi) & 0 & \textnormal{Id}
		\end{pmatrix}
		\begin{pmatrix}
			\widehat\phi\\
			\widehat y\\
			\widehat w
		\end{pmatrix}.
	\end{equation}
	 Although the transformation $G_0$ introduced in \eqref{defi:G0} is not symplectic and the
nonlinear Hamiltonian structure is no longer preserved, the conjugation of the linearized operator
via the linear change of variables $DG_{0}(i_{\rm triv})$ leads to a triangular system with small errors of size $Z$ as in \eqref{def Z}. Yet we emphasize that the Hamiltonian structure
of the original system is of paramount importance, namely in the construction of the transformation $G_0$ leading to the final triangular system.
      More precisely, we have the following result.
	\begin{prop}\label{Prop:Conj}
    The following properties hold true.
    \begin{enumerate}[label=(\roman*)]
        \item The map $DG_0(i_{\textnormal{\tiny{triv}}})$ is reversibility and momentum preserving in the sense of Lemma \ref{lem:MP-torus}.
        \item The following conjugation formula holds:
		\begin{align}\label{Id-conj}
			& [D G_0(i_{\textnormal{\tiny{triv}}})]^{-1}\di_{(i,\alpha)} \mathscr{F}_{\varepsilon} (i_0,\alpha_0) D\widetilde G_0(i_{\textnormal{\tiny{triv}}})
			[\widehat \phi, \widehat y, \widehat w, \widehat \alpha ]
			= \mathbb{D} [\widehat \phi, \widehat y, \widehat w, \widehat \alpha ]+\mathbb{E} [\widehat \phi, \widehat y, \widehat w],
		\end{align}
		where:
		\begin{itemize}
			\item the operator $ \mathbb{D}$ has the triangular form
			\begin{align}
				\mathbb{D} [\widehat \phi, \widehat y, \widehat w, \widehat \alpha ]\triangleq
				\begin{pmatrix}
					\omega\cdot\partial_\varphi  \widehat\phi-\big[K_{20}(\varphi) \widehat y+K_{11}^\top(\varphi) \widehat w+\mathtt{L}_1^\top (\varphi)\widehat \alpha\big]
					\\
					\omega\cdot\partial_\varphi  \widehat y+\mathcal{B}(\varphi) \widehat \alpha \\
					\mathscr{L}_{\omega}\widehat w-\mathcal{J}\big[K_{11}(\varphi) \widehat y+\mathtt{L}_2^{\top}(\varphi) \widehat\alpha\big] 
				\end{pmatrix}, \label{DD.def}
			\end{align}
			where $\mathcal{B}(\varphi)$ and $K_{20}(\varphi) $ are $d \times d$ real matrices given by 
			\begin{align}
				\mathcal{B}(\varphi)&\triangleq\mathtt{L}_1^{-1}(\varphi)\partial_\varphi I_0(\varphi)\mathtt{L}_1^\top (\varphi)+[\partial_\varphi z_0(\varphi)]^{\top}\mathtt{L}_2^\top (\varphi),\\
				K_{20}(\varphi)&\triangleq\varepsilon\mathtt{L}_1^\top(\varphi)(\pa_{II}\mathcal{P})\big(i_0(\varphi)\big)\mathtt{L}_1(\varphi),
			\end{align}
            $\mathscr{L}_{\omega}$ is the linearized operator in the normal directions
            \begin{equation}\label{def hat L}
		\mathscr{L}_{\omega}\triangleq \Pi_{\overline{\mathbb{S}}_0}^\bot \big(\omega\cdot \partial_\varphi-\mathcal{J}K_{02}(\varphi) \big)\Pi_{\overline{\mathbb{S}}_0}^\bot
	\end{equation}
			with $K_{02}(\varphi)$ the linear self-adjoint operator of $  \mathbf{H}_{\overline{\mathbb{S}}_0}^{\perp}$ given by 
			\begin{align}\label{def K02}
				K_{02}(\varphi)&\triangleq(\partial_{z}\nabla_z K_\varepsilon^{\alpha_0})\big(i_0(\varphi)\big)+\varepsilon\mathtt{L}_2^\top(\varphi)(\partial_{II}\mathcal{P})\big(i_0(\varphi)\big)\mathtt{L}_2(\varphi)\\
                &\quad+\varepsilon \mathtt{L}_2^\top(\varphi)(\partial_{zI}\mathcal{P})\big(i_0(\varphi)\big)+\varepsilon(\partial_I\nabla_z \mathcal{P})\big(i_0(\varphi)\big)\mathtt{L}_2(\varphi),
			\end{align}
			and where $K_{11}(\varphi)  \in {\mathcal L}({\mathbb R}^d,   \mathbf{H}_{\overline{\mathbb{S}}_0}^{\perp})$ is given by
			\begin{align*}
				K_{11}(\varphi)&\triangleq\varepsilon\mathtt{L}_2^\top(\varphi)(\partial_{II}\mathcal{P})\big(i_0(\varphi)\big)\mathtt{L}_1(\varphi)+\varepsilon(\partial_I\nabla_z\mathcal{P})\big(i_0(\varphi)\big)\mathtt{L}_1(\varphi);
			\end{align*}
			\item the remainder $\mathbb{E} $ is given by
			\begin{align}\label{def:bbE}
				\mathbb{E}[\widehat \phi, \widehat y, \widehat w] &\triangleq
				[D G_0(i_{\textnormal{\tiny{triv}}})]^{-1}    \partial_\varphi Z(\varphi) \widehat\phi + 
                \begin{pmatrix}
                    0 
					\\
					\mathcal{A}(\varphi)\big[K_{20}(\varphi) \widehat y+K_{11}^\top(\varphi) \widehat w\big]-R_{10}(\varphi) \widehat y -R_{01}(\varphi) \widehat w 
					\\
					0  
                \end{pmatrix},\qquad
			\end{align}
			where $\mathcal{A}(\varphi)$ and   $R_{10}(\varphi) $ are $d \times d$ real matrices defined as 
			\begin{align*}
				\mathcal{A}(\varphi)& \triangleq[\partial_\varphi \vartheta_0(\varphi)]^\top{\mathtt J} \partial_\varphi I_0(\varphi)-[\partial_\varphi I_0(\varphi)]^\top {\mathtt J}\partial_\varphi \vartheta_0(\varphi)  -[\partial_\varphi z_0(\varphi)]^{\top} \mathcal{J}^{-1} \partial_\varphi z_0(\varphi),
				\\
				R_{10}(\varphi)&\triangleq  [\partial_\varphi Z_1(\varphi)]^{\top}  [\partial_\varphi \vartheta_0(\varphi)]^{-\top}, 
			\end{align*}
			and where $R_{01}(\varphi)\in {\mathcal L}( \mathbf{H}_{\overline{\mathbb{S}}_0}^{\perp},{\mathbb R}^d)$ is defined by
			\begin{align*}
				R_{01}(\varphi)&\triangleq [\partial_\varphi Z_1(\varphi)]^{\top} [(\partial_\vartheta \widetilde{z}_0)(\vartheta_0(\varphi))]^\top \mathcal{J}^{-1}- [\partial_\varphi  Z_3(\varphi)]^{\top} \mathcal{J}^{-1}.
			\end{align*}
		\end{itemize}
    \end{enumerate}
	\end{prop}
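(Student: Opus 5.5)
For item (i), I will use the invariance \eqref{G0-revMP}, namely $G_0\circ\mathfrak{S}=\mathfrak{S}\circ G_0$ and $G_0\circ\mathfrak{T}_y=\mathfrak{T}_y\circ G_0$. Differentiating these identities at the trivial torus $i_{\textnormal{\tiny{triv}}}(\varphi)=(\varphi,0,0)$ gives the relations
$$DG_0(i_{\textnormal{\tiny{triv}}}(-\varphi))\circ\mathfrak{S}=\mathfrak{S}\circ DG_0(i_{\textnormal{\tiny{triv}}}(\varphi)),\qquad DG_0(i_{\textnormal{\tiny{triv}}}(\varphi-\vec{\jmath}y))\circ\mathfrak{DT}_y=\mathfrak{DT}_y\circ DG_0(i_{\textnormal{\tiny{triv}}}(\varphi)).$$
Here I use that $\mathfrak{S}$ is linear in $(\vartheta,I,z)$, and that the differential of $\mathfrak{T}_y$ acting on variations $(\widehat\phi,\widehat y,\widehat w)$ is exactly $\mathfrak{DT}_y$, since the shift $-\vec{\jmath}y$ disappears under differentiation. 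By Lemma \ref{lem:MP-torus}-(i), these relations are precisely reversibility and momentum preservation. Alternatively, I can check this directly on the matrix \eqref{Differential G0}: if $i_0$ is reversible and traveling, then $\partial_\varphi\vartheta_0$ and $\partial_\varphi I_0$ are even and $\vec{\jmath}$-translation invariant, $\partial_\varphi z_0$ transforms like $z_0$, and $\mathtt{L}_1,\mathtt{L}_2$ inherit these properties. This last point uses that $\mathcal{J}$ commutes with $\mathscr{T}_y$ and anticommutes appropriately with $\mathscr{S}$.

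For item (ii), I will follow the Berti--Bolle computation \cite{BB15} in the simplified non-isotropic form of \cite[Sec. 6]{HHR23} and \cite[Lem. 6.3]{BFM21}. The plan has four steps.
\begin{enumerate}
\item Differentiate the identity $\mathscr{F}_\varepsilon(G_0(u(\varphi)),\alpha)$ along $u=i_{\textnormal{\tiny{triv}}}+\widehat u$. For the $\widehat\phi$ direction, I use that $\partial_\varphi$ of $Z=\mathscr{F}_\varepsilon(i_0,\alpha_0)$ yields
$$\omega\cdot\partial_\varphi(\partial_\varphi i_0)-\di_iX_{H_\varepsilon^{\alpha_0}}(i_0)\partial_\varphi i_0=\partial_\varphi Z.$$
Hence the first column of $DG_0$ contributes only $\partial_\varphi Z\,\widehat\phi$ plus the $\omega\cdot\partial_\varphi\widehat\phi$ term.
\item For the $(\widehat y,\widehat w,\widehat\alpha)$ directions, Taylor expand $H^{\alpha_0}_\varepsilon$ at $i_0$ in the new coordinates. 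This produces the Hamiltonian $K_\varepsilon^{\alpha_0}=H_\varepsilon^{\alpha_0}\circ G_0$ with coefficients $K_{00},K_{10},K_{01},K_{20},K_{11},K_{02}$. The explicit formulas for $K_{20},K_{11},K_{02}$ then follow from the chain rule, using $\mathtt{L}_1,\mathtt{L}_2$ and the fact that the only $I$-dependence of $H_\varepsilon^{\alpha_0}$ beyond the linear term $\alpha\cdot I$ comes from $\varepsilon\mathcal{P}$.
\item Apply $[DG_0(i_{\textnormal{\tiny{triv}}})]^{-1}$, which is explicitly lower block-triangular and so is inverted blockwise. The $\widehat\alpha$ contributions come from $-(\mathtt{J}\widehat\alpha,0,0)$ mapped through this inverse, which gives $\mathtt{L}_1^\top\widehat\alpha$, $\mathcal{B}\widehat\alpha$ and $\mathtt{L}_2^\top\widehat\alpha$.
\item Identify the remaining terms that are not in $\mathbb{D}$. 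These are the non-symplecticity defect $\mathcal{A}(\varphi)$, which is the pullback of $\mathscr{W}$ on $i_0$, multiplying $K_{20}\widehat y+K_{11}^\top\widehat w$, together with the terms $R_{10}$ and $R_{01}$ that involve $\partial_\varphi Z_1$ and $\partial_\varphi Z_3$.
\end{enumerate}

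The main obstacle is the action component of step 4. There, vanishing of the $\widehat y$ and $\widehat w$ coefficients at an exact solution relies on the Hamiltonian structure. I will compute
$$\omega\cdot\partial_\varphi\mathtt{L}_1+\ldots$$
using the identity obtained by differentiating $Z_1$ in $\varphi$, and then rewrite $\partial_\varphi I_0$ and $\partial_\varphi z_0$ in terms of $\partial_\varphi Z$ and $\mathcal{A}$. This is the lemma in \cite{BB15} expressing $\omega\cdot\partial_\varphi\mathcal{A}$ through $Z$. Rather than introduce an isotropic torus, I keep $\mathcal{A}$ explicitly inside $\mathbb{E}$, exactly as in \cite{HHR23}. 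All algebra is finite-dimensional in $(\vartheta,I)$ and linear in $z$, with $\mathcal{J}^{-1}$ acting on normal modes, so no analytic difficulty arises. The care lies in the signs coming from $\mathtt{J}$, since it carries mixed signs for $\mathbb{S}_\pm$.
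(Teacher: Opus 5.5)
Your proposal follows the paper's route: for (i) you differentiate the invariances \eqref{G0-revMP} at $i_{\textnormal{\tiny{triv}}}$, which the paper does for the momentum part while citing \cite[Prop. 6.1]{HHR23} for reversibility, and for (ii) you carry out the non-isotropic Berti--Bolle computation of \cite[Prop. 6.1]{HHR23}, which the paper simply cites. Two minor corrections: in your alternative check of (i), $\varphi$-differentiation flips parity, so $\partial_\varphi I_0$ is odd and $(\partial_\varphi z_0)(-\varphi)=-\mathscr{S}(\partial_\varphi z_0)(\varphi)$ (exactly what reversibility preservation of those entries requires); and in (ii) the lemma expressing $\omega\cdot\partial_\varphi\mathcal{A}$ through $Z$ is only needed afterwards to show that $\mathbb{E}$ is small, whereas the exact identity \eqref{Id-conj} uses only the self-adjointness of the Hessian of $H_\varepsilon^{\alpha_0}$, the $\varphi$-derivative of $Z$, and the relations $\mathscr{W}\big(\partial_\varphi i_0\,\widehat\phi,(0,\mathtt{L}_1\widehat y,0)\big)=\widehat\phi\cdot\widehat y$ and $\mathscr{W}\big(\partial_\varphi i_0\,\widehat\phi,(0,\mathtt{L}_2\widehat w,\widehat w)\big)=0$, which hold identically in $\varphi$ by the choice of $\mathtt{L}_1,\mathtt{L}_2$.
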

    \begin{proof}
        The proof of item $(ii)$ and the fact that $DG_0$ is reversibility preserving is given in \cite[Prop. 6.1]{HHR23}. The momentum preserving property for $DG_0$ in item $(i)$ follows by differentiating the second equation in \eqref{G0-revMP} and Lemma \ref{lem:MP-torus}-$(i)$.
    \end{proof}

    Let us emphasize that the error term $\mathbb{E}$ vanishes at an exact solution. This is clear from its definition in \eqref{def:bbE} and some algebraic manipulations on the matrix $\mathcal{A}(\varphi)$ (see \cite[Lem. 6.2]{HHR23}). Therefore, we are left to invert the triangular system associated with the operator $\mathbb{D}.$ For this, we need to invert the operators $\omega\cdot\partial_{\varphi}$ and $\mathscr{L}_{\omega}.$ We introduce the inverse operator $(\omega\cdot\partial_{\varphi})_{\rm ext}^{-1}$ defined as follows (recalling the definition of the cuf-off function $\chi(\,\cdot\,)$ in \eqref{def chi})
        \begin{equation}\label{ext.omegadivf}
          \forall\,h = \sum_{\ell\in\Z^d\setminus\{0\}} h_{\ell} \be_{\ell} , \qquad   (\omega\cdot\pa_{\vf})_{\rm ext}^{-1} h \triangleq \sum_{\ell\in\Z^{d}\setminus\{0\}} \frac{\chi((\omega\cdot\ell)\gamma^{-1}|\ell|^{\tau_1})}{\im(\omega\cdot\ell) } h_{\ell} \be_{\ell}.
        \end{equation}
    It is an exact inverse of $\omega\cdot\partial_{\varphi}$ on the Diophantine set
    \begin{equation}\label{setDC}
        \mathtt{DC}(\gamma,\tau_1)\triangleq\bigcap_{\ell\in\mathbb{Z}^d\setminus\{0\}}\Big\{ \omega\in \R^d \quad\textnormal{s.t.}\quad|\omega\cdot\ell|>\frac{\gamma}{|\ell|^{\tau_1}}\Big\},\qquad\tau_1>d-1.
    \end{equation}
    The construction of a right inverse to $\mathscr{L}_{\omega}$ is the purpose of Section \ref{sec redu}, see Proposition \ref{prop.almost.inv}. We state here the corresponding result in the following assumption.
    \begin{assu}\label{ARI}
         {\bf (Full right inverse for $\sL_{\omega}$):} Let $(\gamma,q,d,\tau_{1},\tau,s_0,s_h,\mu_2)$ as in \eqref{d gm S s0}, \eqref{setDC}, \eqref{parametres:transport} and \eqref{some.constants}. There exists $\sigma_5\triangleq \sigma_5(\tau_1,\tau,q,d)>0$ such that, if \eqref{ansatz.fI.small} holds with $s_h+\sigma_5\leqslant s_0+\sigma_{\rm max},$ then there exists a linear operator $\widehat{\mathtt{T}}$  defined in $\mathcal{O}$ such that, for any  traveling wave  function $h$, we have that $\wh\tT h$  is a traveling wave function satisfying the estimate
		\begin{equation}
			\forall \, s\in\,[ s_0, S],\quad\|\widehat{\mathtt{T}}\rho\|_{s}^{q,\gamma ,\mathcal{O}}\lesssim_{s,q,d}\gamma^{-1}\big(\|\rho\|_{s+\sigma_5}^{q,\gamma ,\mathcal{O}}+\|\mathfrak{I}_{0}\|_{s+\sigma_5}^{q,\gamma ,\mathcal{O}}\|\rho\|_{s_{0}+\sigma_5}^{q,\gamma,\mathcal{O}}\big).
		\end{equation}
        In addition, if $h$ is anti-reversible, then $\wh\tT h$ is reversible.
		Moreover, in a Cantor set
		$\mathtt{G}\triangleq \mathtt{G}(\gamma,\tau_{1},\tau,i_{0})\subset(a_0,a_1)\times\mathtt {DC} (\gamma, \tau_1),$ we have
        $$\sL_{\omega}\wh\tT=\Pi_{\overline{\mathbb{S}}_0}^{\perp}.$$
    \end{assu}
	The following proposition  shows that the principal term  $\mathbb{D}$ in \eqref{Id-conj} of Proposition \ref{Prop:Conj}-$(ii)$ has an exact inverse. In the following, we denote
    \begin{equation}
        \|(\phi,y,w,\alpha) \|_{s}^{q,\gamma,\cO}\triangleq \max \{ \|(\phi,y,w) \|_{s}^{q,\gamma,\cO}, |\alpha|^{q,\gamma,\cO}\}.
    \end{equation}
	\begin{prop}\label{prop:decomp-lin}
		Let $(\gamma,d,q,\tau_1,\tau,s_0)$ as in \eqref{d gm S s0}, \eqref{setDC} and \eqref{some.constants}. There exists $\sigma_6\triangleq \sigma_6(\tau_1,\tau,q,d) >0$   such that if \eqref{ansatz.fI.small} holds with $s_h+\sigma_6\leqslant s_0+\sigma_{\rm max}$, then there exists an operator $[{\mathbb D}]_{\textnormal{ext}}^{-1}$, well-defined on the whole set of parameters $\mathcal{O},$ such that, for any $ g \triangleq (g_1, g_2, g_3) $ anti-reversible traveling wave variation, namely satisfying (see also Definition \ref{rev trav torus} and Definition \ref{defi:travivaria})
		\begin{equation}\label{symmetry g1 g2 g3}
			g_1(\varphi) = g_1(- \varphi),\quad g_2(\varphi) = - g_2(- \varphi),\quad g_3(\varphi) = - (\mathscr{S} g_3)(\varphi),\qquad\forall\,y\in\R, \quad (\mathscr{T}_{y}g_3)(\varphi)=g_3(\varphi-\vec{\jmath}y), 
		\end{equation}
		the function 
		$ [{\mathbb D}]_{\textnormal{ext}}^{-1} g =(\wh\phi,\why,\whw,\wh\alpha) $ 
		satisfies the estimate
		\begin{equation} \label{esti.Dn.ext.inv}
		\forall\, s \in [s_0,S], \quad	\| [{\mathbb D}]_{\textnormal{ext}}^{-1}g \|_{s}^{q,\gamma,\mathcal{O}}
			\lesssim_{s,q,d} \gamma^{-1} \big( \| g \|_{s + \sigma_6}^{q,\gamma,\mathcal{O}}
			+  \| {\mathfrak I}_0  \|_{s + \sigma_6}^{q,\gamma,\mathcal{O}}
			\| g \|_{s_0 + \sigma_6}^{q,\gamma,\mathcal{O}}  \big)
		\end{equation}
		and for all $(a,\omega) \in \mathtt{G}$ one has
        \begin{equation}\label{Dn.ext.id}
            {\mathbb D} [{\mathbb D}]_{\textnormal{ext}}^{-1} =\textnormal{Id}.
        \end{equation}
        Moreover, the first three components $(\wh\phi,\why,\whw)$ of $ [{\mathbb D}]_{\textnormal{ext}}^{-1} g$ form a reversible traveling wave variation, according to Definition \ref{rev trav torus}.
	\end{prop}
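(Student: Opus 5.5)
We solve the triangular system $\mathbb{D}[\widehat\phi,\widehat y,\widehat w,\widehat\alpha]=(g_1,g_2,g_3)$ by back-substitution, in the order: action equation, normal equation, angle equation. This follows \cite[Prop. 6.2]{HHR23} and the momentum-preserving version in \cite{BFM21}.

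\emph{Step 1: the action equation.} Recall $\mathcal{B}(\varphi)=\mathtt{L}_1^{-1}\partial_\varphi I_0\mathtt{L}_1^\top+[\partial_\varphi z_0]^\top\mathtt{L}_2^\top$. It equals $O(\varepsilon\|\mathfrak{I}_0\|)$ perturbations of zero, but we only need its average. We choose $\widehat\alpha$ so that the $\varphi$-average of $g_2-\mathcal{B}\widehat\alpha$ vanishes, namely
\[
\widehat\alpha\triangleq\langle\mathcal{B}\rangle^{-1}\langle g_2\rangle .
\]
This needs $\langle\mathcal{B}\rangle$ to be invertible, which I expect to come from $\mathcal{B}=\mathrm{Id}\cdot 0+\ldots$. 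More likely, following \cite{HHR23}, one first isolates the $\widehat\alpha$ contribution coupled through the $\widehat\phi$ equation. The cleaner route is to use the anti-reversibility: $g_2$ is odd, so $\langle g_2\rangle=0$. Then
\[
\widehat y\triangleq(\omega\cdot\partial_\varphi)^{-1}_{\rm ext}\big(g_2-\mathcal{B}\widehat\alpha\big)+\langle\widehat y\rangle,
\]
with a free constant $\langle\widehat y\rangle$ to be fixed in Step 3. On $\mathtt{DC}(\gamma,\tau_1)$ the operator in \eqref{ext.omegadivf} is an exact inverse and costs $\gamma^{-1}$ and $\tau_1$ derivatives, with $q$-derivative losses giving a total loss $\tau_1(q+1)$.

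\emph{Step 2: the normal equation.} We set
\[
\widehat w\triangleq\widehat{\mathtt{T}}\big(g_3+\mathcal{J}[K_{11}\widehat y+\mathtt{L}_2^\top\widehat\alpha]\big),
\]
using Assumption \ref{ARI}. Its tame estimates follow from those of $K_{11}$ and $\mathtt{L}_2$, which are $O(\varepsilon)$ or bounded by $\|\mathfrak{I}_0\|$ via Lemma \ref{lemma.Xp.est} and the product laws of Lemma \ref{lem functions}.

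\emph{Step 3: the angle equation.} We solve
\[
\omega\cdot\partial_\varphi\widehat\phi=g_1+K_{20}\widehat y+K_{11}^\top\widehat w+\mathtt{L}_1^\top\widehat\alpha ,
\]
which requires the right-hand side to have zero average. All terms depend affinely on the two constants $(\widehat\alpha,\langle\widehat y\rangle)$. The key point is that $\langle\mathtt{L}_1^\top\rangle=\mathtt{J}+O(\|\mathfrak{I}_0\|)$ is invertible while $K_{20}$ and $K_{11}$ are $O(\varepsilon)$. Hence the $d\times d$ linear system fixing $\widehat\alpha$ (with $\langle\widehat y\rangle$ chosen, e.g., $0$ or via the symmetric choice of \cite{BB15}) is solvable by a Neumann series, and then $\widehat\phi\triangleq(\omega\cdot\partial_\varphi)^{-1}_{\rm ext}(\cdots)$.

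Combining the three steps gives \eqref{esti.Dn.ext.inv} with $\sigma_6\geqslant\sigma_5+\tau_1(q+1)+$ the losses from Lemma \ref{lemma.Xp.est}, using interpolation and the smallness \eqref{ansatz.fI.small}. On $\mathtt{G}$ all inverses used are exact, which yields \eqref{Dn.ext.id}.

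\emph{Symmetries.} Parity is checked step by step. In Step 1, $g_2$ odd forces $\widehat y$ even, since $(\omega\cdot\partial_\varphi)^{-1}$ flips parity. In Step 2, $\widehat{\mathtt{T}}$ maps anti-reversible to reversible by Assumption \ref{ARI}. In Step 3, $\widehat\phi$ is odd. The traveling property follows from Lemma \ref{lem:MP-torus}: all multiplication operators come from traveling tori, $(\omega\cdot\partial_\varphi)^{-1}_{\rm ext}$ commutes with $\varphi\mapsto\varphi-\vec{\jmath}y$, and $\widehat{\mathtt{T}}$ preserves traveling waves.

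The main obstacle is the bookkeeping of these symmetries, in particular checking that the $\varphi$-dependent matrices $\mathcal{B}$, $K_{20}$, $K_{11}$ have the right parity and momentum properties so that every intermediate quantity stays in the correct class.
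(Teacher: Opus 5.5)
Your scheme is the one the paper invokes from \cite[Prop.~6.3]{HHM21} and \cite[Prop.~6.5]{BFM21}. You solve the action equation for $\widehat y$ as an affine function of $\widehat\alpha$, then the normal equation with $\widehat{\mathtt T}$, and finally fix $\widehat\alpha$ by the zero-average condition of the angle equation (with, e.g., $\langle\widehat y\rangle=0$); the paper states precisely that $\widehat\alpha$ removes the $\varphi$-average of the first equation.

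Step~1, however, has a hole. Discard the first attempt $\widehat\alpha=\langle\mathcal B\rangle^{-1}\langle g_2\rangle$: $\mathcal B=O(\|\mathfrak I_0\|)$, with no factor $\varepsilon$, and in fact $\langle\mathcal B\rangle=0$, so it is never invertible. Your replacement only records $\langle g_2\rangle=0$. Since $\widehat\alpha$ is fixed later and is nonzero in general, $\widehat y=(\omega\cdot\partial_\varphi)^{-1}_{\rm ext}(g_2-\mathcal B\widehat\alpha)+\langle\widehat y\rangle$ solves $\omega\cdot\partial_\varphi\widehat y+\mathcal B\widehat\alpha=g_2$ on $\mathtt{DC}(\gamma,\tau_1)$ only if $\langle\mathcal B\rangle\widehat\alpha=0$. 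Otherwise you would get an extra constraint on $\widehat\alpha$ that conflicts with its role in Step~3. The free constant $\langle\widehat y\rangle$ cannot compensate, because it does not enter the average of the action equation. The same term $\mathcal B\widehat\alpha$ is ignored in your parity claim ``$g_2$ odd forces $\widehat y$ even''.

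The missing ingredient is that $\mathcal B$ is odd in $\varphi$, and this comes from the reversibility of $i_0$.
\begin{itemize}
\item Reversibility gives $\partial_\varphi\vartheta_0$ even, $\partial_\varphi I_0$ odd and $\partial_\varphi z_0(-\varphi)=-\mathscr S\partial_\varphi z_0(\varphi)$. Hence $\mathtt L_1$ is even and $\mathtt L_1^{-1}\partial_\varphi I_0\,\mathtt L_1^\top$ is odd.
\item Write $(\partial_\vartheta\widetilde z_0)(\vartheta_0)=\partial_\varphi z_0\,[\partial_\varphi\vartheta_0]^{-1}$ and use $(\mathcal J^{-1})^\top=-\mathcal J^{-1}$ and $\mathcal J^{-1}\mathscr S=-\mathscr S\mathcal J^{-1}$. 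This gives $\mathtt L_2^\top(-\varphi)=\mathscr S\,\mathtt L_2^\top(\varphi)$.
\item Since $\mathscr S$ is orthogonal, the entries $\langle\partial_{\varphi_k}z_0,\mathtt L_2^\top e_m\rangle_{\mathbf L^2}$ of the second term $[\partial_\varphi z_0]^\top\mathtt L_2^\top$ are odd as well.
\end{itemize}
Thus $\langle\mathcal B\rangle=0$, and $g_2-\mathcal B\widehat\alpha$ is odd with zero mean for every constant $\widehat\alpha$. This gives at once the exact solvability on $\mathtt G$ and the evenness of $\widehat y$.

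The other parity checks you list follow in the same way from $\mathcal P\circ\mathfrak S=\mathcal P$: $K_{20}$ is even and $K_{11}(-\varphi)=\mathscr S K_{11}(\varphi)$. Hence $g_3+\mathcal J[K_{11}\widehat y+\mathtt L_2^\top\widehat\alpha]$ is anti-reversible, as $\widehat{\mathtt T}$ requires, and the right-hand side of the angle equation is even, so $\widehat\phi$ is odd. With this added, your argument coincides with the paper's.
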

    \begin{proof}
        The proof of the existence of $\big([{\mathbb D}]_{\textnormal{ext}}^{-1}\big)$ and of \eqref{esti.Dn.ext.inv}, \eqref{Dn.ext.id} is given in \cite[Prop. 6.3]{HHM21}. We are left to prove the claim that $(\wh\phi,\why,\whw)$ is a reversible traveling wave variation. We have that $ [{\mathbb D}]_{\textnormal{ext}}^{-1} g  $ is the solution of the linear system, for any $(a,\omega) \in \tG$,
        \begin{equation}\label{triang.syst}
            \mathbb{D}[\wh\phi,\why,\whw,\wh\alpha] = g
        \end{equation}
        with $\mathbb{D}$ as in \eqref{DD.def} and $g=(g_1,g_2,g_3)$ satisfying \eqref{symmetry g1 g2 g3}. This is a triangular system.  By Assumption \ref{ARI}, the operator $\sL_{\omega}$ has a right inverse that is reversible and momentum preserving. Therefore, one solves the third equation in \eqref{triang.syst}. As for the first and the second equations, we point out that the unknown $\wh\alpha$ is indeed used here to remove the average in $\vf$ from the first equation in \eqref{triang.syst}. Consequently, these equations can be solved by using the operator $[\omega\cdot\pa_{\vf}]_{\rm ext}^{-1}$ in \eqref{ext.omegadivf}, which preserves the reversible structure.
        Then, following the proof of \cite[Prop. 6.3]{HHM21} and arguing as in \cite[Prop. 6.5]{BFM21}, one concludes that $(\wh\phi,\why,\whw)$ is indeed a reversible traveling wave variation.
    \end{proof}
	In view of \eqref{Id-conj} in Proposition \ref{Prop:Conj}-$(ii)$ , we obtain the following decomposition 
	\begin{equation*}
		\begin{aligned}
			\di_{(i,\alpha)}\mathscr{F}_{\varepsilon}(i_{0},\alpha_{0})
			&=DG_{0}(i_{\textnormal{\tiny{triv}}}) \, {\mathbb{D}}\, [D\widetilde{G}_{0}(i_{\textnormal{\tiny{triv}}})]^{-1}+ DG_{0}(i_{\textnormal{\tiny{triv}}}) \, {\mathbb E} \, [D\widetilde{G}_{0}(i_{\textnormal{\tiny{triv}}})]^{-1}.
		\end{aligned}
	\end{equation*}
	Applying the operator 
	\begin{equation}\label{def inverse T} 
		{\rm T}_0 \triangleq {\rm T}_0(i_0) \triangleq D { \widetilde G}_0(i_{\textnormal{\tiny{triv}}})\, [{\mathbb D}]_{\textnormal{ext}}^{-1}\,[D G_0(i_{\textnormal{\tiny{triv}}})]^{-1} 
	\end{equation} 
	to the right of the last identity, for all $(a,\omega)\in\mathtt{G},$
	\begin{align*}
	\di_{(i,\alpha)}\mathscr{F}_{\varepsilon}(i_{0},\alpha_{0}) {\rm T}_{0}-\textnormal{Id}=\mathfrak{E},\qquad\mathfrak{E}\triangleq DG_{0}(i_{\textnormal{\tiny{triv}}}) \,  {\mathbb E} \, [D\widetilde{G}_{0}(i_{\textnormal{\tiny{triv}}})]^{-1}{\rm T}_{0}.
			\end{align*}
	Consequently, the operator ${\rm T}_0$ is an approximate right inverse for $\di_{(i,\alpha)} \mathscr{F}_{\varepsilon}(i_0,\alpha_0)$. More precisely, we have the following result.
	
	\begin{theo}  \label{theo appr inv}
		{\bf (Approximate inverse)}
		Let $(\gamma,q,d,\tau_{1},\tau,s_{0},\mu_2,s_h,S)$ satisfy \eqref{d gm S s0}, \eqref{setDC}, \eqref{parametres:transport} and \eqref{some.constants}. There exists $ { \overline\sigma}= { \overline\sigma}(\tau_1,\tau,d,q)>0$   such that, if \eqref{ansatz.fI.small} holds with $s_h+\overline{\sigma}\leqslant s_0+\sigma_{\rm max}$,
		then, for any smooth $ g = (g_1, g_2, g_3) $ anti-reversible traveling wave variation, that is satisfying \eqref{symmetry g1 g2 g3}, 
		the operator $ {\rm T}_0 $ defined in \eqref{def inverse T} satisfies 
		\begin{equation}\label{tame T0}
			\forall\, s\in [s_0,S],\quad \| {\rm T}_0 g\|_{s}^{q,\gamma,\mathcal{O}}\lesssim_{s,q,d}\gamma^{-1}\big(\|g\|_{s+{\overline\sigma}}^{q,\gamma,\mathcal{O}}+\|\mathfrak{I}_{0}\|_{s+{\overline\sigma}}^{q,\gamma,\mathcal{O}}\|g\|_{s_{0}+\overline{\sigma}}^{q,\gamma,\mathcal{O}}\big).
		\end{equation}
        Moreover, the first three components of ${\rm T}_{0}g$ form a reversible traveling wave variation, according to Definition \ref{rev trav torus} and Definition \ref{defi:travivaria}.
		Finally, ${\rm T}_0$ is an approximate right inverse of $\di_{(i,\alpha)} 
		\mathscr{F}_{\varepsilon}(i_0,\alpha_0)$ on the Cantor set $\mathtt{G}$, that is for all $(a,\omega)\in\mathtt{G}$ one has
		\begin{equation}\label{splitting of approximate inverse}
			\di_{(i,\alpha)}\mathscr{F}_{\varepsilon}(i_0,\alpha_0){\rm T}_0-{\rm Id}= \mathfrak{E},
		\end{equation}
		where the operator $\mathfrak{E}$ is defined in the whole set $\mathcal{O}$ and enjoy the following estimates
		\begin{align}
			\|\mathfrak{E}\rho\|_{s_0}^{q,\gamma,\mathcal{O}} & \lesssim_{q,d}  \gamma^{-1 } \| \mathscr{F}_{\varepsilon}(i_0,\alpha_0)\|_{s_0+\overline\sigma}^{q,\gamma,\mathcal{O}} \|\rho\|_{s_0 + \overline\sigma}^{q,\gamma,\mathcal{O}}.\label{calE1}
		\end{align}
	\end{theo}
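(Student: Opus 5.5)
The plan is to read everything off the factorization ${\rm T}_0=D\widetilde G_0(i_{\textnormal{\tiny{triv}}})\,[\mathbb D]_{\textnormal{ext}}^{-1}\,[DG_0(i_{\textnormal{\tiny{triv}}})]^{-1}$ in \eqref{def inverse T}, using the conjugation formula \eqref{Id-conj} of Proposition \ref{Prop:Conj} together with Proposition \ref{prop:decomp-lin}.

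\textbf{Tame estimates for the changes of variables.} From \eqref{Differential G0}, $DG_0(i_{\textnormal{\tiny{triv}}})$ is a block lower-triangular matrix. Its entries are $\partial_\varphi\vartheta_0$, $\partial_\varphi I_0$, $\partial_\varphi z_0$, $\mathtt{L}_1$ and $\mathtt{L}_2$. Under \eqref{ansatz.fI.small}, $\partial_\varphi\vartheta_0=\mathrm{Id}+\partial_\varphi\Theta_0$ is a small perturbation of the identity. So $[\partial_\varphi\vartheta_0]^{-\top}$ exists by a Neumann series, and the product and composition laws of Lemma \ref{lem functions} give tame bounds for it. The composition with $\vartheta_0^{-1}$ inside $\mathtt{L}_2$ is handled the same way.

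Because of the triangular structure, the inverse $[DG_0(i_{\textnormal{\tiny{triv}}})]^{-1}$ is explicit. Both $DG_0(i_{\textnormal{\tiny{triv}}})^{\pm1}$ and $D\widetilde G_0(i_{\textnormal{\tiny{triv}}})$ then satisfy estimates of the form
$$\|A g\|_s^{q,\gamma,\mathcal O}\lesssim_s\|g\|_{s+\sigma}^{q,\gamma,\mathcal O}+\|\mathfrak I_0\|_{s+\sigma}^{q,\gamma,\mathcal O}\|g\|_{s_0+\sigma}^{q,\gamma,\mathcal O}.$$

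By Proposition \ref{Prop:Conj}-(i), $[DG_0(i_{\textnormal{\tiny{triv}}})]^{-1}$ is reversibility and momentum preserving. Hence it maps an anti-reversible traveling variation $g$ to one satisfying \eqref{symmetry g1 g2 g3}. We apply $[\mathbb D]_{\textnormal{ext}}^{-1}$ to this image: estimate \eqref{esti.Dn.ext.inv} applies, and its first three components form a reversible traveling variation. Applying $D\widetilde G_0(i_{\textnormal{\tiny{triv}}})$ keeps this property, by Lemma \ref{lem:MP-torus}-(ii). Composing the three tame estimates, and using $\|\mathfrak I_0\|_{s_0+\sigma}\leqslant1$ to absorb the cross terms, yields \eqref{tame T0} with $\overline\sigma$ larger than $\sigma_6$ plus the losses above.

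\textbf{The identity \eqref{splitting of approximate inverse}.} On $\mathtt G$, \eqref{Id-conj} gives
$$\di_{(i,\alpha)}\mathscr F_\varepsilon(i_0,\alpha_0)=DG_0\,(\mathbb D+\mathbb E)\,[D\widetilde G_0]^{-1}.$$
Multiplying on the right by ${\rm T}_0$ and using \eqref{Dn.ext.id} produces $\mathrm{Id}+\mathfrak E$, with $\mathfrak E$ defined on all of $\mathcal O$ by its formula.

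\textbf{The bound \eqref{calE1}, which is the main obstacle.} We must show that $\mathbb E$ is controlled linearly by $Z=\mathscr F_\varepsilon(i_0,\alpha_0)$. The terms $\partial_\varphi Z\,\widehat\phi$, $R_{10}$ and $R_{01}$ are explicitly linear in $Z$, so only the term $\mathcal A(\varphi)[K_{20}\widehat y+K_{11}^\top\widehat w]$ needs work. Here $\mathcal A$ is the pullback of the symplectic form to the torus, i.e.\ the measure of non-isotropy.

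For this term I would invoke the identity of \cite[Lem. 6.2]{HHR23}, which holds in our setting thanks to the Hamiltonian structure of Proposition \ref{prop Hamiltonian form}: one has $\omega\cdot\partial_\varphi\mathcal A=$ (expression linear in $\partial_\varphi Z$). Inverting with $(\omega\cdot\partial_\varphi)_{\rm ext}^{-1}$ on $\mathtt{DC}(\gamma,\tau_1)$, and using that $\mathcal A$ has zero average (it is an exact form), gives
$$\|\mathcal A\|_{s_0}\lesssim\gamma^{-1}\|Z\|_{s_0+\sigma}.$$
The point needing care is that this bound holds on the Diophantine set, whereas \eqref{calE1} must hold on the whole of $\mathcal O$. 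If the bound fails there, I would redefine $\mathfrak E$ through the extended inverse, as done in \cite{HHM21}.

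Finally, $K_{20}$ and $K_{11}$ are $O(\varepsilon)$ with tame bounds by Lemma \ref{lemma.Xp.est}. Combining this with the bound on $\mathcal A$ and the $s_0$-level estimate \eqref{tame T0} gives
$$\|\mathfrak E\rho\|_{s_0}\lesssim\gamma^{-1}\|Z\|_{s_0+\overline\sigma}\|\rho\|_{s_0+\overline\sigma},$$
after enlarging $\overline\sigma$ if necessary.
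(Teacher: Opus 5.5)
Your proposal is correct and follows essentially the paper's route: the paper derives the reversible traveling structure of ${\rm T}_0 g$ exactly as you do, from \eqref{def inverse T}, Proposition \ref{Prop:Conj}-$(i)$ and Proposition \ref{prop:decomp-lin}, and defers \eqref{tame T0}, \eqref{splitting of approximate inverse} and \eqref{calE1} to \cite[Theorem 5.1]{HR21}, whose argument is the one you reconstruct. There, the non-isotropy matrix $\mathcal A$ is controlled through the identity for $\omega\cdot\partial_\varphi\mathcal A$, and the $O(\varepsilon)$ size of $K_{20},K_{11}$ together with $\varepsilon\gamma^{-1}\lesssim1$ absorbs the second factor $\gamma^{-1}$. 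Your caveat is a legitimate refinement of a point the paper leaves implicit: the bound on $\mathcal A$ is only available on $\mathtt{DC}(\gamma,\tau_1)$, so for \eqref{calE1} to hold on all of $\mathcal O$, $\mathfrak E$ should be built with $(\omega\cdot\partial_\varphi)_{\rm ext}^{-1}$, which agrees with the original definition on $\mathtt G$.
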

    \begin{proof}
        The proof of \eqref{tame T0}, \eqref{splitting of approximate inverse} and \eqref{calE1} follows as in \cite[Theorem 5.1]{HR21} and we omit it. The claim that the first three components of ${\rm T}_{0}g$ form a reversible traveling wave variation follows from the definition of ${\rm T}_{0}$ in \eqref{def inverse T}, \eqref{tilde.G0}, Proposition \ref{Prop:Conj}-$(i)$ and Proposition \ref{prop:decomp-lin}.
    \end{proof}

\section{Reducibility and inversion of the linearized operator}\label{sec redu}
For this section, we consider again the parameters $(d,\gamma,q,s_0,S)$ fixed as in \eqref{d gm S s0}. Let $i_0=(\vartheta_0,I_0,z_0)$ be a fixed traveling reversible torus, i.e. satisfying \eqref{rev trav torus}, and with the additional smallness ansatz \eqref{ansatz.fI.small}. Our aim here is to study the linearized operator in the normal directions taking the form (recall \eqref{def hat L})
\begin{equation}\label{Lperp}
 	\mathscr{L}_{\omega}=\mathscr{L}_{\omega}(i_0)=\Pi_{\overline{\mathbb{S}}_0}^{\perp}\big(\omega\cdot\partial_\varphi-\mathcal{J}K_{02}(\varphi)\big)\Pi_{\overline{\mathbb{S}}_0}^{\perp}.
\end{equation}
The ultimate goal is to prove Proposition \ref{prop.almost.inv} in order to obtain the existence  of a full right inverse with tame estimates of the operator in \eqref{Lperp} as previously stated in Assumption \ref{ARI}.
We shall achieve this by a reducibility procedure consisting of a series of reversibility and momentum preserving, invertible transformations that conjugate $\mathscr{L}_{\omega}$ to a reversible Fourier multiplier. We refer to Definition \ref{defin:OP-sym} for the definitions of real, reversible, reversibility and momentum preserving linear operators. We emphasize that the particular structure of our equations allows to run this reducibility procedure in the class of operator with homogeneous expansions (see Definition \ref{def.HomExp}), fundamental for applying the Egorov argument of Proposition \ref{prop:Ego}. For a concise presentation of the main reducibility steps, we refer the reader to the introductive Section \ref{sect.ideas}.\\

\noindent\textbf{Notation:} Given two different states $i_1$ and $i_2$ satisfying \eqref{ansatz.fI.small} and a function $f:\mathbb{T}^d\times\mathbb{R}^d\times\mathbf{H}_{\overline{\mathbb{S}}_0}^{\perp}\to X$ (with $X$ a set), we denote
$$\Delta_{12} f\triangleq f(i_2)-f(i_1).$$
The periodic components are also denoted $\displaystyle\mathfrak{I}_{k}(\varphi)\triangleq i_{k}(\varphi)-(\varphi,0,0),$ $k\in\{1,2\}.$


\subsection{Structure of the linearized operator in the normal directions}
In this section we link the linear operator $\mathscr{L}_{\omega}$ in \eqref{Lperp}, which acts only on the normal directions, to the one arising from the linearization at any approximate solution of the complete Hamiltonian system associated with $H$ (see \eqref{new:HAMDyn}). Furthermore, the latter operator will be suitably expanded in Proposition \ref{prop L0} for the purposes of the reducibility scheme in the forthcoming sections.\\

The proof of the following proposition can be adapted from \cite[Prop. 6.1]{HR21}. Let us mention that the traveling property can also be easily tracked (for a similar argument, see for instance \cite[Lemma 7.1]{BFM21}).
\begin{prop}\label{prop Lnormal}
    The following holds:
\begin{enumerate}[label=(\roman*)]
	\item  The  torus $i_0$ is associated with a state $\widetilde{r}=(\widetilde{r}_+,\widetilde{r}_-)$ in the form
	$$\widetilde{r}(\varphi)=\mathbf{A}\big(i_0(\varphi)\big),$$
	where the application $\mathbf{A}$ is given in \eqref{transfo aan-state}. The  reversibility and traveling properties of $i_0$ read
	$$\widetilde{r}(-\varphi,-x)=\widetilde{r}(\varphi,x)\qquad\textnormal{and}\qquad\forall\, y\in\mathbb{T},\quad \widetilde{r}(\varphi,x+y)=\widetilde{r}(\varphi-\vec{\jmath}\,y,x).$$
	In addition, one has the following estimates, for any $s\geqslant s_0,$	
	\begin{align}
		\|\widetilde{r}\|_{s}^{q,\gamma,\mathcal{O}}&\lesssim_{s,q,d}1+\|\mathfrak{I}_{0}\|_{s}^{q,\gamma,\mathcal{O}},\label{esti r I0}\\
		\|\Delta_{12}\widetilde{r}\|_{s}&\lesssim_{s,q,d}\|\Delta_{12}i\|_{s}+\| \Delta_{12}i\|_{s_0}\max_{\ell\in\{1,2\}}\|\mathfrak{I}_{\ell}\|_{s}.\label{esti r I0d}
	\end{align}
\item The linearized operator $\mathscr{L}_{\omega}$ in \eqref{Lperp} admits the following expression
	\begin{equation}\label{sL_omega}
		\mathscr{L}_{\omega}=\Pi_{\overline{\mathbb{S}}_0}^{\perp}\big(\mathcal{L}^{(0)}-\varepsilon\partial_{x}\mathcal{R}\big)\Pi_{\overline{\mathbb{S}}_0}^{\perp}\,,\qquad\mathcal{R}\triangleq \begin{pmatrix} 
			T_{\mathbb{K}_{+}^{(d)}} & T_{\mathbb{K}_{+}^{(o)}}\\
			T_{\mathbb{K}_{-}^{(o)}} & T_{\mathbb{K}_{-}^{(d)}}
		\end{pmatrix},
	\end{equation}
	where $\mathcal{L}^{(0)}$ is the operator defined by
    \begin{equation}\label{def:calL0}
        \mathcal{L}^{(0)}\triangleq\omega\cdot\partial_{\varphi}\mathbb{I}_2-\tfrac{1}{\varepsilon^2}\di_{\widetilde{r}}\big(\mathcal{J}\nabla H(\widetilde{r})\big),\qquad\mathbb{I}_2\triangleq\begin{pmatrix}
            \textnormal{Id} & 0\\
            0 & \textnormal{Id}
        \end{pmatrix}, \qquad \textnormal{Id}\triangleq\textnormal{Id}_{L^2(\mathbb{T})},
    \end{equation}
   whereas, for any $(\kappa,\nu)\in\{-,+\}\times\{d,o\},$ the operator $T_{\mathbb{K}_{\kappa}^{(\nu)}}$ is an integral  operator in the form 
$$T_{\mathbb{K}_{\kappa}^{(\nu)}}[\rho](a;\varphi,x)=\int_{\mathbb{T}}\rho(a;\varphi,x')\mathbb{K}_{\kappa}^{(\nu)}(a;\varphi,x,x'){\rm d}x',$$ 
whose kernel
$\mathbb{K}_{\kappa}^{(\nu)}$ is momentum and reversibility preserving, that is
\begin{align*}
	\mathbb{K}_{\kappa}^{(\nu)}(a;-\varphi,-x,-x')&=\mathbb{K}_{\kappa}^{(\nu)}(a;\varphi,x,x'),\\
	\forall \, y\in\mathbb{T},\quad\mathbb{K}_{\kappa}^{(\nu)}(a;\varphi-\vec{\jmath}y,x,x')&=\mathbb{K}_{\kappa}^{(\nu)}(a;\varphi,x+y,x'+y)
\end{align*}
and satisfies the following estimates for any $s\geqslant s_0,$
 	\begin{align}
		\|\mathbb{K}_{\kappa}^{(\nu)}\|_{s}^{q,\gamma,\mathcal{O}}&\lesssim_{s,q,d} 1+\|\mathfrak{I}_{0}\|_{s+2}^{q,\gamma,\mathcal{O}},\label{e-Kkappanu}\\
		\|\Delta_{12}\mathbb{K}_{\kappa}^{(\nu)}\|_{s}&\lesssim_{s,q,d}\|\Delta_{12}i\|_{s+2}+\|\Delta_{12}i\|_{s_0+2}\max_{\ell\in\{1,2\}}\|\mathfrak{I}_{\ell}\|_{s+2}.\label{e-Kkappanu diff}
	\end{align}
\end{enumerate}
\end{prop}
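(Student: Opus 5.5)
Part $(i)$ is a direct computation from the structure of the transformation $\mathbf{A}$ in \eqref{transfo aan-state}. Writing $\widetilde{r}(\varphi)=v\big(\vartheta_0(\varphi),I_0(\varphi)\big)+z_0(\varphi)$, the reversibility of $i_0$ gives $\vartheta_0(-\varphi)=-\vartheta_0(\varphi)$, $I_0(-\varphi)=I_0(\varphi)$ and $z_0(-\varphi,-x)=z_0(\varphi,x)$. Since $\vartheta_{-j}^{\pm}=-\vartheta_j^{\pm}$ and $I_{-j}^{\pm}=I_j^{\pm}$, each tangential Fourier term $\sqrt{(\mathfrak{a}_j^{\pm})^2+\tfrac{|j|}{2}I_j^{\pm}}\,e^{\ii(\vartheta_j^{\pm}+jx)}$ is invariant under $(\varphi,x)\mapsto(-\varphi,-x)$.

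For the traveling property, the identity $\vartheta_0(\varphi-\vec{\jmath}y)=\vartheta_0(\varphi)-\vec{\jmath}y$ turns $e^{\ii(\vartheta_j+jx)}$ into $e^{\ii(\vartheta_j+j(x+y))}$ once the sign conventions of $\vec{\jmath}$ are checked for the $\mathbb{S}_-$ block. This is the one place I would verify carefully. The normal component then travels by the definition of traveling tori.

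The estimates \eqref{esti r I0}–\eqref{esti r I0d} follow from smoothness of $(\vartheta,I)\mapsto v$ near $(\varphi,0)$ together with the composition and tame estimates of Lemma \ref{lem functions}, exactly as in the proof of Lemma \ref{lemma.Xp.est}. Differences are handled by the mean value theorem.

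For $(ii)$ I would follow \cite[Prop. 6.1]{HR21}. Expand $K_{02}$ in \eqref{def K02}. The first term, $\partial_z\nabla_z K_\varepsilon^{\alpha_0}$ at $i_0$, equals $\Pi^{\perp}_{\overline{\mathbb{S}}_0}\partial_{\widetilde{r}}\nabla_{\widetilde{r}}H(\widetilde{r})\Pi^{\perp}_{\overline{\mathbb{S}}_0}$ with $\widetilde r=\mathbf{A}(i_0)$, by the chain rule. The reason is that $\mathbf{A}$ is affine in $z$ with identity derivative on $\mathbf{H}^\perp_{\overline{\mathbb{S}}_0}$, and the normalization $\tfrac1{\varepsilon^2}$ is built into $H$. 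Applying $\mathcal{J}$ gives $\Pi^\perp(\mathcal{L}^{(0)})\Pi^\perp$.

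The remaining three terms involve $\mathtt{L}_2$ and $\mathtt{L}_2^\top$, which map into and out of the finite-dimensional space $\mathbb{R}^d$. Each is therefore a finite rank operator of the form $\varepsilon\sum_k \langle\cdot,g_k\rangle h_k$. Since $\mathtt{L}_2=\mathtt{J}[\partial_\vartheta\widetilde z_0]^\top\mathcal{J}^{-1}$, one factor $\mathcal{J}^{-1}=\partial_x^{-1}$ sits inside, and the outer $\mathcal{J}$ produces $\pm\partial_x$. Writing $\mathcal{J}=\partial_x\,\mathrm{diag}(-1,1)$, I factor this out as $-\varepsilon\partial_x\mathcal{R}$, with integral kernels of the form
\[
\mathbb{K}(\varphi,x,x')=\sum_k g_k(\varphi,x')h_k(\varphi,x).
\]
The functions $g_k,h_k$ are built from $\partial_\vartheta\widetilde z_0$ (after $\partial_x^{-1}$), $\partial_{II}\mathcal{P}$, $\partial_I\nabla_z\mathcal{P}$ and $\mathtt{L}_1$, all evaluated at $i_0$. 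The derivative loss is two: one from $\partial_\varphi z_0$, one from $\nabla_z\mathcal{P}$ involving $\partial_x$ of $\widetilde r$. This explains the $s+2$ in \eqref{e-Kkappanu}, obtained via Lemma \ref{lemma.Xp.est} and the tame product laws. The difference estimate \eqref{e-Kkappanu diff} follows in the same way.

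The main obstacle is the symmetry of the kernels. I would derive it from the reversibility and momentum preservation of $\mathscr{L}_\omega$ and of $\mathcal{L}^{(0)}$. These in turn follow from \eqref{rev.inv.cE}, \eqref{trans.inv.cE}, the reversibility and traveling property of $\widetilde r$ from $(i)$, and Proposition \ref{Prop:Conj}-$(i)$. It then follows that $\partial_x\mathcal{R}$ is reversible and momentum preserving. Since $\partial_x$ is invertible on zero-mean functions and commutes with $\mathscr{S}$ up to sign and with $\mathscr{T}_y$, the same holds for $\mathcal{R}$, which translates into the stated kernel identities as in \cite[Lemma 7.1]{BFM21}. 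Checking directly on the rank-one components $g_k\otimes h_k$ is an alternative route.
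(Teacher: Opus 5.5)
Apart from one step, your plan is the paper's own. The paper proves this proposition only by referring to \cite[Prop. 6.1]{HR21} and to \cite[Lemma 7.1]{BFM21}. In \cite[Prop. 6.1]{HR21}, the chain rule turns the first term of $K_{02}$ into $\Pi_{\overline{\mathbb{S}}_0}^{\perp}\partial_{\widetilde r}\nabla_{\widetilde r}H\,\Pi_{\overline{\mathbb{S}}_0}^{\perp}$, and the terms with $\mathtt{L}_2,\mathtt{L}_2^{\top}$ are finite rank, so $\mathcal{J}$ applied to them is $\varepsilon\partial_x$ of an integral operator. \cite[Lemma 7.1]{BFM21} handles the symmetries. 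That is what you do.

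One of your justifications is off but harmless. $\nabla\mathcal{P}$ contains no $\partial_x$, since $\nabla P_{\mathcal{E}}(r)=(\tfrac12 r_+^2,-\tfrac12 r_-^2)$ is pointwise. The derivative losses come from $\partial_\varphi z_0$ and $[\partial_\varphi\vartheta_0]^{-1}$ inside $\mathtt{L}_1,\mathtt{L}_2$, so $s+2$ is a safe rather than sharp bound.

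The genuine problem is the traveling identity in $(i)$, exactly where you hesitated. It has nothing to do with the $\mathbb{S}_-$ block, and the computation you state is false. The map $\mathbf{A}$ attaches $e^{\ii\vartheta_j^{\pm}}$ to $\mathbf{e}_j$ on both blocks, and $\vec{\jmath}$ carries $+j_k^{\pm}$ on both blocks. So $\vartheta_0(\varphi-\vec{\jmath}y)=\vartheta_0(\varphi)-\vec{\jmath}y$ gives
\[
e^{\ii(\vartheta_{0,j}(\varphi-\vec{\jmath}y)+jx)}=e^{\ii(\vartheta_{0,j}(\varphi)+j(x-y))},
\]
which is a translation by $-y$ of every tangential mode. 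The traveling condition on the normal part instead gives $z_0(\varphi-\vec{\jmath}y,x)=z_0(\varphi,x+y)$. Hence, with \eqref{transfo aan-state} and \eqref{inv trav aan} taken literally, $\mathbf{A}\circ\mathfrak{T}_y\neq\mathscr{T}_y\circ\mathbf{A}$.

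The claimed identity then already fails for the reversible traveling torus $i_{\textnormal{\tiny{triv}}}$ as soon as some $\mathfrak{a}_j^{\pm}\neq0$. Its tangential part is built from $\cos(\varphi_k+j_kx)$, so it travels with velocity $-\vec{\jmath}$. No amount of checking closes this step; you must first make the conventions consistent.

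One consistent choice is $v_j^{\pm}=\sqrt{(\mathfrak{a}_j^{\pm})^2+\tfrac{|j|}{2}I_j^{\pm}}\,e^{-\ii\vartheta_j^{\pm}}$. This flips the sign of $\mathtt{J}$ in the action-angle Poisson structure. It is also what makes $\vartheta=\omega_{\textnormal{Eq}}(a)t$ reproduce the flow $\partial_t\widetilde r=-\ii\boldsymbol{\Omega}(a,D)\widetilde r$ and the waves $\cos(\varphi_k-j_kx)$ of Lemma \ref{lem linear solutions}. With it, $\mathbf{A}\circ\mathfrak{T}_y=\mathscr{T}_y\circ\mathbf{A}$ and $\mathbf{A}\circ\mathfrak{S}=\mathscr{S}\circ\mathbf{A}$; your reversibility check does not see this sign. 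The traveling part of $(i)$ then follows in one line: $\widetilde r(\varphi-\vec{\jmath}y)=\mathbf{A}(\mathfrak{T}_y i_0(\varphi))=\mathscr{T}_y\widetilde r(\varphi)$.

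You need this same intertwining again in $(ii)$. The invariance $\mathcal{P}\circ\mathfrak{T}_y=\mathcal{P}$, and the momentum preservation of $\mathcal{L}^{(0)}$, of $\mathscr{L}_\omega$ and of the kernels $\mathbb{K}_\kappa^{(\nu)}$, all rest on it.
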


\begin{remark}
    The operator $\cL^{(0)}$ in \eqref{def:calL0} is defined on a dense subspace of $\bL^2$, even though, by the Hamiltonian nature of the nonlinear system, the linearized operator preserves the average in space. We operate in this way because in the end we are interested in proving the existence of a right inverse to the operator $\sL_{\omega}$ in \eqref{sL_omega}, which acts trivially on the average in space.
\end{remark}

\smallskip

Our next goal is to give a new expression of the linear operator $\mathcal{L}^{(0)}$ in \eqref{def:calL0}, more suitable for our following analysis. In particular, we highlight its homogeneous expansion up to smoothing pseudo-differential operators.

\smallskip

The linear operator $\mathbf{Q}$ in \eqref{bbQ-1} admits the following pseudo-differential formulation
$$\mathbf{Q}=\mathbb{I}_{2,0}+\begin{pmatrix}
	\mathtt{K}_+(a,D) & \mathtt{K}_{-}(a,D)\\
	\mathtt{K}_{-}(a,D) & \mathtt{K}_+(a,D)
\end{pmatrix},\qquad\mathbb{I}_{2,0}=\begin{pmatrix}
\textnormal{Id}_{L_0^2(\mathbb{T})} & 0\\
0 & \textnormal{Id}_{L_0^2(\mathbb{T})}
\end{pmatrix}, \qquad $$
where $\textnormal{Id}_{L_0^2(\mathbb{T})}\triangleq \textnormal{Op}\big(\chi(\xi)\big)$ denotes the identity operator of $L_{0}^{2}(\mathbb{T})$ and the Fourier multipliers $\mathtt{K}_{\pm}(a,D)$ are associated with the following symbol
\begin{align}
	\forall\,\xi\in\mathbb{R},\quad\mathtt{K}_{+}(a,\xi)&\triangleq\chi(\xi)\frac{1-\sqrt{1-b^2(a,\xi)}}{\sqrt{1-b^{2}(a,\xi)}},\qquad\mathtt{K}_{-}(a,\xi)\triangleq\chi(\xi)\frac{b(a,\xi)}{\sqrt{1-b^{2}(a,\xi)}}\label{def symb ttKn}\\
	\forall\,|\xi|\geqslant\tfrac{1}{2},\quad b(a,\xi)&\triangleq\frac{1}{2a|\xi|\left(\sqrt{a^2\xi^2+1}+a|\xi|+\frac{1}{2a|\xi|}\right)}\in(0,1)\label{def symb b}.
\end{align}
We recall that the cut-off function $\chi$ is defined in \eqref{def chi}. Actually, we have the following more precise (but still rough) bound
\begin{equation}\label{unif bns symb b}
	\sup_{|\xi|\geqslant\frac{1}{2}\atop a\in[a_0,a_1]}b(a,\xi)\leqslant\frac{2}{2+a_0^2}<1.
\end{equation}
Note that $b(a,\xi)$ can also be written
$$\forall\,|\xi|\geqslant\tfrac{1}{2},\quad b(a,\xi)=1-2a\xi\mathtt{r}(a,\xi),$$
where $\mathtt{r}(a,\xi)$ is the symbol introduced in \eqref{split eig}. Then, proceeding as in the proof of Lemma \ref{lem prop eig VP}-$(iv)$, we obtain the following estimates
\begin{equation}\label{e-symb b}
	\forall\,(k,\ell)\in\mathbb{N}^2,\quad\sup_{|\xi|\geqslant\frac{1}{2}\atop a\in[a_0,a_1]}\langle\xi\rangle^{\ell+2}\left|\partial_{a}^{k}\partial_{\xi}^{\ell}b(a,\xi)\right|\leqslant C(a_0,k,\ell).
\end{equation}
This implies in particular that $\mathtt{K}_{\pm}(a,D)\in OPS^{-2}.$ In addition, since $\tK_{\pm}(a,\xi)\in\mathbb{R}$ and is independent of $(\varphi,x)$, then, according to Proposition \ref{properties OPS}-$(vi)$, the operators $\tK_{\pm}(a,D)$ are reversibility and momentum preserving.
By \eqref{bbQ-1} and \eqref{inverse Qj}, we also have
$$\mathbf{Q}^{-1}=\mathbb{I}_{2,0}+\begin{pmatrix}
	\mathtt{K}_+(a,D) & -\mathtt{K}_{-}(a,D)\\
	-\mathtt{K}_{-}(a,D) & \mathtt{K}_+(a,D)
\end{pmatrix}.$$
Recalling the notations \eqref{linear at general state}-\eqref{lin eq QP}, linearizing \eqref{new:HAMDyn} gives
\begin{equation}\label{new.linH}
    \begin{aligned}
	\tfrac{1}{\varepsilon^2}\di_{\widetilde{r}}\big(\mathcal{J}\nabla H(\widetilde{r})\big)[\rho]&=\tfrac{1}{\varepsilon^2}\mathbf{Q}^{-1}\di_{r}\big(\mathcal{J}\nabla\mathcal{E}\big)(\mathbf{Q}\widetilde{r})[\mathbf{Q}\rho]\\
	&=\mathbf{Q}^{-1}\mathcal{J}\mathbf{M}_{\varepsilon\mathbf{Q}\widetilde{r}}\mathbf{Q}\rho\\
	&=\mathbf{Q}^{-1}\mathcal{J}\mathbf{M}_{0}\mathbf{Q}\rho+\mathbf{Q}^{-1}\mathcal{J}\big(\mathbf{M}_{\varepsilon\mathbf{Q}\widetilde{r}}-\mathbf{M}_{0}\big)\mathbf{Q}\rho.
\end{aligned}
\end{equation}
Besides, we have
\begin{align*}
	\mathbf{M}_{\varepsilon\mathbf{Q}\widetilde{r}}-\mathbf{M}_{0}&=\begin{pmatrix}
		\varepsilon r_{+} & 0\\
		0 & -\varepsilon r_{-}
	\end{pmatrix}\\
&=\varepsilon\begin{pmatrix}
		\widetilde{r}_++\mathtt{K}_{+}(a,D)\widetilde{r}_++\mathtt{K}_{-}(a,D)\widetilde{r}_- & 0\\
		0 & -\widetilde{r}_--\mathtt{K}_{-}(a,D)\widetilde{r}_+-\mathtt{K}_{+}(a,D)\widetilde{r}_-
	\end{pmatrix}\\
&\triangleq\varepsilon\begin{pmatrix}
		f_{+}(\widetilde{r}) & 0\\
		0 & -f_{-}(\widetilde{r})
	\end{pmatrix}.
\end{align*}
Note that
$$f_{-}(\widetilde{r}_+,\widetilde{r}_-)=f_+(\widetilde{r}_-,\widetilde{r}_+).$$
The following estimates hold true for any $s\in[s_0,S]$,
\begin{align}
	\|f_{\pm}(\widetilde{r})\|_{s}^{q,\gamma,\mathcal{O}}&\lesssim_{s,q,d}\|\widetilde{r}\|_{s}^{q,\gamma,\mathcal{O}},\label{e-fk}\\
	\|\Delta_{12}f_{\pm}\|_{s}&\lesssim_{s,q,d}\|\Delta_{12}\widetilde{r}\|_{s}.\label{e12-fk}
\end{align}
Moreover, if $\widetilde{r}(-\varphi,-x)=\widetilde{r}(\varphi,x)=\widetilde{r}(\varphi-\vec{\jmath}x)$ (i.e. $\widetilde{r}$ is a reversible traveling wave), then
\begin{align}
	f_{\pm}(\widetilde{r})(a;-\varphi,-x)&=f_{\pm}(\widetilde{r})(a;\varphi,x),\label{rev fpm}\\
	\forall \,y\in\mathbb{T},\quad f_{\pm}(\widetilde{r})(a;\varphi-\vec{\jmath}y,x)&=f_{\pm}(\widetilde{r})(a;\varphi,x+y).\label{trav fpm}
\end{align}
Then,
\begin{equation}\label{conj:lineps}
    \mathbf{Q}^{-1}\mathcal{J}\big(\mathbf{M}_{\varepsilon\mathbf{Q}\widetilde{r}}-\mathbf{M}_{0}\big)\mathbf{Q}=-\varepsilon\partial_{x}\mathbf{F}-\varepsilon\widetilde{\mathbf{R}},
\end{equation}
where
\begin{equation}\label{def bfF and bfRt}
	\mathbf{F}\triangleq\begin{pmatrix}
		f_+(\widetilde{r})\cdot & 0\\
		0 & f_-(\widetilde{r})\cdot
	\end{pmatrix},\qquad\widetilde{\mathbf{R}}=\begin{pmatrix}
		\widetilde{R}_{+,0}^{(d)} & \widetilde{R}_{+,0}^{(o)}\vspace{0.2cm}\\
		\widetilde{R}_{-,0}^{(o)} & \widetilde{R}_{-,0}^{(d)}
	\end{pmatrix},
\end{equation}
with
\begin{equation}\label{Rtld}
	\begin{aligned}
		\widetilde{R}_{+,0}^{(d)}&\triangleq\partial_{x}\Big(\mathtt{K}_+(a,D)f_{+}(\widetilde{r})+f_{+}(\widetilde{r})\mathtt{K}_{+}(a,D)+\mathtt{K}_{+}(a,D)f_{+}(\widetilde{r})\mathtt{K}_{+}(a,D)-\mathtt{K}_{-}(a,D)f_{-}(\widetilde{r})\mathtt{K}_{-}(a,D)\Big),\\
		\widetilde{R}_{-,0}^{(d)}&\triangleq\partial_{x}\Big(\mathtt{K}_{+}(a,D)f_{-}(\widetilde{r})+f_{-}(\widetilde{r})\mathtt{K}_{+}(a,D)+\mathtt{K}_{+}(a,D)f_{-}(\widetilde{r})\mathtt{K}_{+}(a,D)-\mathtt{K}_{-}(a,D)f_{+}(\widetilde{r})\mathtt{K}_{-}(a,D)\Big),\\
		\widetilde{R}_{+,0}^{(o)}&\triangleq\partial_{x}\Big(f_{+}(\widetilde{r})\mathtt{K}_{-}(a,D)-\mathtt{K}_{-}(a,D)f_{-}(\widetilde{r})-\mathtt{K}_{-}(a,D)f_{-}(\widetilde{r})\mathtt{K}_{+}(a,D)+\mathtt{K}_{+}(a,D)f_{+}(\widetilde{r})\mathtt{K}_{-}(a,D)\Big),\\
		\widetilde{R}_{-,0}^{(o)}&\triangleq\partial_{x}\Big(f_{-}(\widetilde{r})\mathtt{K}_{-}(a,D)-\mathtt{K}_{-}(a,D)f_{+}(\widetilde{r})-\mathtt{K}_{-}(a,D)f_{+}(\widetilde{r})\mathtt{K}_{+}(a,D)+\mathtt{K}_{+}(a,D)f_{-}(\widetilde{r})\mathtt{K}_{-}(a,D)\Big).
	\end{aligned}
\end{equation}
Combining \eqref{new.linH} and \eqref{conj:lineps}, recalling also the definition of $\boldsymbol{\Omega}(a,D)$ in \eqref{bold.Omega}, yields
$$\tfrac{1}{\varepsilon^2}\di_{\widetilde{r}}\big(\mathcal{J}\nabla H(\widetilde{r})\big)=-\ii\boldsymbol{\Omega}(a,D)-\varepsilon\partial_{x}\mathbf{F}-\varepsilon\widetilde{\mathbf{R}}.$$
Therefore, the linearized operator $\mathcal{L}^{(0)}$ in \eqref{def:calL0} at a general state writes
\begin{equation}\label{def cal L0}
	\mathcal{L}^{(0)}=\omega\cdot\partial_{\varphi}\mathbb{I}_2-\tfrac{1}{\varepsilon^2}\di_{\widetilde{r}}\big(\mathcal{J}\nabla H(\widetilde{r})\big)=\omega\cdot\partial_{\varphi}\mathbb{I}_{2}+\ii\boldsymbol{\Omega}(a,D)+\varepsilon\partial_{x}\mathbf{F}+\varepsilon\widetilde{\mathbf{R}}.
\end{equation}
Actually, the previous expression is not well-adapted for our further reduction scheme. In the next proposition, we provide a more suitable expression, where the remainder is decomposed into a diagonal/off-diagonal homogeneous expansion up to smoothing pseudo-differential operators (see Definition \ref{def.HomExp}).

\begin{prop}\label{prop L0}
	For any $M\in\mathbb{N}^*,$ the linearized operator  $\mathcal{L}^{(0)}$ defined in \eqref{def cal L0} admits the decomposition
	$$\mathcal{L}^{(0)}=\omega\cdot\partial_{\varphi}\mathbb{I}_2+\ii\boldsymbol{\Omega}(a,D)+\varepsilon\partial_{x}\mathbf{F}+\mathbf{R}_{0}^{(d)}+\mathbf{R}_{0}^{(o)}+\mathbf{S}_{0,M},$$
	with the following properties:
	\begin{enumerate}[label=(\roman*)]
        \item The matrix pseudo-differential operator $\boldsymbol{\Omega}(a,D)$ is defined in \eqref{bold.Omega};
        \item The multiplication matrix operator $\bF$ is defined in \eqref{def bfF and bfRt}, with entries satisfying \eqref{rev fpm}-\eqref{trav fpm} and the estimates \eqref{e-fk}-\eqref{e12-fk};
		\item The operators $\mathbf{R}_{0}^{(d)}$ and $\mathbf{R}_{0}^{(o)}$ admit an homogeneous expansion of degree $-1$ up to order $1-M$ (see Definition  \ref{def.HomExp}-$(ii)$) in the form
		$$\mathbf{R}_0^{(d)}=\begin{pmatrix}
			\displaystyle\sum_{p=1}^{M-1}\mathfrak{r}_{+,0,p}^{(d)}\partial_{x}^{-p} & 0\\
			0 & \displaystyle\sum_{p=1}^{M-1}\mathfrak{r}_{-,0,p}^{(d)}\partial_{x}^{-p}
		\end{pmatrix},\qquad\mathbf{R}_0^{(o)}=\begin{pmatrix}
		0 & \displaystyle\sum_{p=1}^{M-1}\mathfrak{r}_{+,0,p}^{(o)}\partial_{x}^{-p}\\
		\displaystyle\sum_{p=1}^{M-1}\mathfrak{r}_{-,0,p}^{(o)}\partial_{x}^{-p} & 0
	\end{pmatrix},$$
with for any $(\kappa,\nu,p)\in\{-,+\}\times\{d,o\}\times\llbracket 1,M-1\rrbracket,$ the function $\mathfrak{r}_{\kappa,0,p}^{(\nu)}=\mathfrak{r}_{\kappa,0,p}^{(\nu)}(a;\varphi,x)\in\mathbb{R}$ satisfying
\begin{align}
	\mathfrak{r}_{\kappa,0,p}^{(\nu)}(a;-\varphi,-x)&=(-1)^{p+1}\mathfrak{r}_{\kappa,0,p}^{(\nu)}(a;\varphi,x),\label{rev r0}\\
	\forall \,y\in\mathbb{T},\quad\mathfrak{r}_{\kappa,0,p}^{(\nu)}(a;\varphi-\vec{\jmath}y,x)&=\mathfrak{r}_{\kappa,0,p}^{(\nu)}(a;\varphi,x+y).\label{trav r0}
\end{align}
In particular, the operators $\mathbf{R}_{0}^{(d)}$ and $\mathbf{R}_{0}^{(o)}$ are real, reversible and momentum preserving. Moreover, for any $s\in[s_0,S]$,
\begin{align}
	\|\mathfrak{r}_{\kappa,0,p}^{(\nu)}\|_{s}^{q,\gamma,\mathcal{O}}&\lesssim_{s,q,d,M}\varepsilon\|\widetilde{r}\|_{s+\sigma_0(M)}^{q,\gamma,\mathcal{O}},\\
	\|\Delta_{12}\mathfrak{r}_{\kappa,0,p}^{(\nu)}\|_{s}&\lesssim_{s,q,d,M}\varepsilon\|\Delta_{12}\widetilde{r}\|_{s+\sigma_0(M)}
\end{align}
for some $\sigma_0(M)>0$; 
\item The operator $\mathbf{S}_{0,M}$ is real, reversible, momentum preserving and in $OPS^{-M}$. It satisfies the following estimates for any $s\in[s_0,S]$ and $\alpha\in\mathbb{N},$
\begin{align}
	\|\mathbf{S}_{0,M}\|_{-M,s,\alpha}^{q,\gamma,\mathcal{O}}&\lesssim_{s,q,d,M,\alpha}\varepsilon \|\widetilde{r}\|_{s+\sigma_0(\alpha,M)}^{q,\gamma,\mathcal{O}},\label{e-S0M}\\
	\|\Delta_{12}\mathbf{S}_{0,M}\|_{-M,s,\alpha}&\lesssim_{s,q,d,M,\alpha}\varepsilon\|\Delta_{12}\widetilde{r}\|_{s+\sigma_0(\alpha,M)}\label{e12-S0M}
\end{align}
for some $\sigma_0(\alpha,M)>0.$
	\end{enumerate}
\end{prop}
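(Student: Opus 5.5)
Starting from the expression \eqref{def cal L0}, it suffices to show that $\varepsilon\widetilde{\mathbf{R}}$, with entries \eqref{Rtld}, splits as $\mathbf{R}_0^{(d)}+\mathbf{R}_0^{(o)}+\mathbf{S}_{0,M}$. Items $(i)$ and $(ii)$ are then immediate from \eqref{bold.Omega}, \eqref{def bfF and bfRt}, \eqref{rev fpm}--\eqref{trav fpm} and \eqref{e-fk}--\eqref{e12-fk}. Each entry of $\widetilde{\mathbf{R}}$ is $\partial_x$ applied to a finite sum of compositions of the multiplication operators $f_\pm(\widetilde r)$ with the Fourier multipliers $\mathtt{K}_\pm(a,D)\in OPS^{-2}$. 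Every such term contains at least one $\mathtt{K}_\pm$, so every entry lies in $OPS^{-1}$. The approach is to expand the symbols of $\mathtt{K}_\pm(a,D)$ in homogeneous powers of $\xi^{-1}$ and then move all multiplication operators to the left by commutator formulas.

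\textbf{Step 1: expansion of the multipliers.} Using \eqref{def symb b} and the identity $b(a,\xi)=1-2a\xi\mathtt{r}(a,\xi)$ together with the explicit expansion \eqref{r.(a.xi)}, one Taylor expands $b$, $\sqrt{1-b^2}$ and $1/\sqrt{1-b^2}$ in powers of $(a\xi)^{-1}$, exactly as in the proof of Lemma \ref{lem prop eig VP}-$(iv)$, with integral remainders. This gives
\[
\mathtt{K}_\pm(a,D)=\sum_{p=2}^{M}k_{\pm,p}(a)\,\partial_x^{-p}+S^{\pm}_{M}(a,D),\qquad S^\pm_M\in OPS^{-M-1}.
\]
The coefficients $k_{\pm,p}(a)$ are real, and $S^\pm_M$ is a real, reversible Fourier multiplier. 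Since $\mathtt{K}_\pm(a,\xi)$ is even in $\xi$, only even powers $p$ occur, and the parity of the symbol gives the reversibility of $S^\pm_M$ via Proposition \ref{properties OPS}. Smoothness in $a$ follows from the bounds \eqref{e-symb b}.

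\textbf{Step 2: normal ordering.} To bring each composition $\partial_x^{-p}\circ f$ and $\partial_x^{-p}f\partial_x^{-p'}$ to the form $\sum_m g_m\partial_x^{-m}$ plus a smoothing remainder, I would use the composition and commutator result for homogeneous expansions, Proposition \ref{lem compo commu hom exp}. Concretely, iterated integration by parts gives
\[
\partial_x^{-p}f=\sum_{k\geqslant0}\tbinom{-p}{k}(\partial_x^kf)\,\partial_x^{-p-k}+\text{remainder},
\]
truncated at order $1-M$ after the outer $\partial_x$ is applied. The outer $\partial_x$ is absorbed through $\partial_x(g\partial_x^{-m})=(\partial_xg)\partial_x^{-m}+g\partial_x^{1-m}$. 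Because $m\geqslant2$, all resulting orders lie between $-1$ and $1-M$.

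Collecting terms by entry then defines $\mathfrak{r}^{(\nu)}_{\kappa,0,p}$ as finite linear combinations, with $a$-dependent real coefficients, of $\varepsilon\,\partial_x^kf_\pm(\widetilde r)$. The terms beyond order $1-M$, together with the pieces involving $S^\pm_M$, are gathered into $\mathbf{S}_{0,M}$. The tame estimates then follow. Those for $\mathfrak{r}$ come from \eqref{e-fk}--\eqref{e12-fk}, with a loss $\sigma_0(M)$ of order $M$ derivatives. The estimates \eqref{e-S0M}--\eqref{e12-S0M} come from the pseudo-differential norm bounds for compositions in the appendix, together with the linearity of all terms in $f_\pm$, which produces the factor $\varepsilon\|\widetilde r\|$.

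\textbf{Step 3: symmetries.} Reality is clear. The traveling property \eqref{trav r0} follows from \eqref{trav fpm}, since $\partial_x$ commutes with translations. For \eqref{rev r0}, $f_\pm$ is even under $(\varphi,x)\mapsto(-\varphi,-x)$ by \eqref{rev fpm}, and the coefficient of $\partial_x^{-p}$ contains a total of $p+1$ derivatives modulo $2$: the Fourier multipliers carry only even orders and the outer $\partial_x$ adds one. Hence that coefficient has parity $(-1)^{p+1}$. This matches the definition of reversible operators, because $g\partial_x^{-p}$ is reversible exactly when $g$ has parity $(-1)^{p+1}$. Reversibility and momentum preservation of $\mathbf{S}_{0,M}$ then follow by difference, since $\widetilde{\mathbf{R}}$ itself is reversible and momentum preserving, as a composition of such operators.

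I expect the main obstacle to be the bookkeeping in Step 2. One must verify that the remainder is a genuine pseudo-differential operator in $OPS^{-M}$ with tame estimates in the $\|\cdot\|^{q,\gamma,\mathcal O}_{-M,s,\alpha}$ norms, in particular the $C^q$ dependence on $a$ through $\mathtt{K}_\pm$. I would handle this by estimating the symbol remainders of Step 1 exactly as in the proof of Lemma \ref{lem prop eig VP}, and then invoking the composition estimates of Appendix \ref{appendix pseudo}.
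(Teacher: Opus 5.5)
Your proposal is correct and follows the paper's proof essentially step by step. You expand $\mathtt{K}_\pm(a,D)$ in even negative powers of $\partial_x$, normal-order the compositions in \eqref{Rtld} via Proposition~\ref{lem compo commu hom exp}, put everything below order $1-M$ into $\mathbf{S}_{0,M}$, and obtain the symmetries by parity counting (and, for $\mathbf{S}_{0,M}$, by difference); taking an $OPS^{-M-1}$ remainder for $\mathtt{K}_\pm$ is in fact the right bookkeeping in view of the outer $\partial_x$.

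One small correction, which does not affect the argument: $\mathtt{K}_\pm(a,D)$ and $S^\pm_M(a,D)$ have real even symbols, so they are reversibility preserving rather than reversible. Accordingly, $\widetilde{\mathbf{R}}$ is reversible not as "a composition of such operators", but because each of its terms contains exactly one reversible factor, the outer $\partial_x$, composed with reversibility preserving ones.
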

\begin{proof}
	The proof consists in a reformulation of the remainder term $\varepsilon\widetilde{\mathbf{R}}$ given by \eqref{def bfF and bfRt}-\eqref{Rtld}. Prooceeding as in the proof of Lemma \ref{lem prop eig VP}-\textit{(iv)}, we  expand the symbol \eqref{def symb ttKn} using Taylor's formulas and obtain, for any $M\in\mathbb{N}^*$ and $\kappa\in\{-,+\},$ a decomposition in the form
	\begin{equation}\label{decomp Kkp}
		\mathtt{K}_{\kappa}(a,D)=\sum_{p=1}^{\widetilde{\mathfrak{N}}_M}c_{\kappa,p}(a)\partial_{x}^{-2p}+S_{\kappa,M}(a,D),
	\end{equation}
	where 
    $$\widetilde{\mathfrak{N}}_M\triangleq\max\big\{p\in\mathbb{N}\quad \textnormal{s.t.}\ \, -2p>-M\big\}=\begin{cases}
        \big\lfloor\tfrac{M}{2}\big\rfloor-1, & \textnormal{if }M\equiv0\,\,[2],\\
        \big\lfloor\tfrac{M}{2}\big\rfloor, & \textnormal{if }M\equiv1\,\,[2],
    \end{cases}$$
    each $c_{\kappa,p}(a)$ is a constant depending on $\kappa,p$ and smooth (rational) in $a$, and where the operator $S_{\kappa,M}(a,D)$ is a Fourier multiplier in $OPS^{-M}.$ With these decompositions in hand, one applies Proposition \ref{properties OPS}-$(i)$ and
	Proposition \ref{lem compo commu hom exp}-$(i)$ to \eqref{Rtld} and writes, for any $\kappa\in\{-,+\}$ and $\nu\in\{d,o\},$
	$$\varepsilon \widetilde{R}_{\kappa,0}^{(\nu)}=\sum_{p=1}^{M-1}\mathfrak{r}_{\kappa,0,p}^{(\nu)}\partial_{x}^{-p}+S_{\kappa,0,M}^{(\nu)},$$
	where the $\mathfrak{r}_{\kappa,0,p}^{(\nu)}$ are (modulo multiplicative constants) finite sum and products of $f_{+}(\widetilde{r}),f_-(\widetilde{r})$ and their (space) derivatives up to a certain finite order depending on $M.$ Therefore, there exists some $\sigma_0(M)>0$ such that, using in particular \eqref{e-fk}-\eqref{e12-fk} and \eqref{esti r I0},
	\begin{align*}
		\|\mathfrak{r}_{\kappa,0,p}^{(\nu)}\|_{s}^{q,\gamma,\mathcal{O}}&\lesssim_{s,q,d,M}\varepsilon \max_{\kappa'\in\{-,+\}}\|f_{\kappa'}(\widetilde{r})\|_{s+\sigma_0(M)}^{q,\gamma,\mathcal{O}}\lesssim_{s,q,d,M}\varepsilon \|\widetilde{r}\|_{s+\sigma_0(M)}^{q,\gamma,\mathcal{O}},\\
		\|\Delta_{12}\mathfrak{r}_{\kappa,0,p}^{(\nu)}\|_{s}&\lesssim_{s,q,d,M}\varepsilon \max_{\kappa'\in\{-,+\}}\|\Delta_{12}f_{\kappa'}\|_{s+\sigma_0(M)}\lesssim_{s,q,d,M}\varepsilon \|\Delta_{12}\widetilde{r}\|_{s+\sigma_0(M)}.
	\end{align*}
The traveling and reversibility properties \eqref{rev r0}-\eqref{trav r0} are obtained after tedious computations from \eqref{rev fpm}-\eqref{trav fpm}, \eqref{Rtld} and \eqref{decomp Kkp}, Proposition \ref{lem compo commu hom exp}-$(i)$. By applying Remark \ref{compo:RevMom}, one readily has from \eqref{Rtld}, \eqref{rev fpm}, \eqref{trav fpm} and the fact that $\mathtt{K}_{\pm}(a,D)$ are reversibility and momentum preserving, that the remainders $\widetilde{R}_{\kappa,0}^{(\nu)}$ is reversible and momentum preserving. By difference with what preceeds, so is $S_{\kappa,0,M}^{(\nu)}.$ In addition, from \eqref{e-rem-hom-comp}, we obtain for some $\sigma_0(\alpha,M)>0$,
	\begin{align}
		\|S_{\kappa,0,M}^{(\nu)}\|_{-M,s,\alpha}^{q,\gamma,\mathcal{O}}&\lesssim_{s,q,d,M,\alpha}\varepsilon \|\widetilde{r}\|_{s+\sigma_0(\alpha,M)}^{q,\gamma,\mathcal{O}},\label{e-Skkp}\\
		\|\Delta_{12}S_{\kappa,0,M}^{(\nu)}\|_{-M,s,\alpha}&\lesssim_{s,q,d,M,\alpha}\varepsilon \|\Delta_{12}\widetilde{r}\|_{s+\sigma_0(\alpha,M)}.\label{e12-Skkp}
	\end{align}
Then, we define
	$$\mathbf{S}_{0,M}\triangleq\begin{pmatrix}
		S_{+,0,M}^{(d)} &  S_{+,0,M}^{(o)}\vspace{0.2cm}\\
		 S_{-,0,M}^{(o)} &  S_{-,0,M}^{(d)}
	\end{pmatrix}.$$
The estimates \eqref{e-Skkp}-\eqref{e12-Skkp} give \eqref{e-S0M}-\eqref{e12-S0M}. This ends the proof of Proposition \ref{prop L0}.
\end{proof}

\subsection{Block-decoupling up to sufficiently smoothing remainders}\label{sect.block.anti}
In this section, we transform the linearized operator $\mathcal{L}^{(0)}$ described in Proposition \ref{prop L0} through an iterative procedure in order to make the off-diagonal operator $\bR_{0}^{(o)}$ being smoothing up to a large enough order $M\in\mathbb{N}^*$ that may be chosen later, see \eqref{choice M}.
\begin{prop}\label{prop.blockanti}
	Let $M\in\mathbb{N}^*$. There exists $\varepsilon_{0}>0$ small enough such that, for  any $\varepsilon\in (0,\varepsilon_{0})$, there exist real, reversibility and momentum preserving matrix operators $\mathbf{X}_0,\mathbf{X}_1,\ldots,\mathbf{X}_{M-1}$ in the form
	\begin{equation}\label{def:Xm}
	    \mathbf{X}_{0}=0,\qquad\forall\, m\in\llbracket1,M-1\rrbracket,\quad\mathbf{X}_{m}=\begin{pmatrix}
		0 & \mathfrak{p}_{+,m}\partial_{x}^{-m-1}\\
		\mathfrak{p}_{-,m}\partial_{x}^{-m-1} & 0
	\end{pmatrix},
	\end{equation}
with, for any $\kappa\in\{-,+\}$ and $m\in\llbracket 1,M-1\rrbracket,$ the function $\mathfrak{p}_{\kappa,m}\triangleq\mathfrak{p}_{\kappa,m}(a,\omega;\varphi,x)\in\mathbb{R}$ satisfying
\begin{align}
	\mathfrak{p}_{\kappa,m}(a,\omega;-\varphi,-x)&=(-1)^{m+1}\mathfrak{p}_{\kappa,m}(a,\omega;\varphi,x),\\
	\forall \, y\in\mathbb{T},\quad\mathfrak{p}_{\kappa,m}(a,\omega;\varphi-\vec{\jmath}y,x)&=\mathfrak{p}_{\kappa,m}(a,\omega;\varphi,x+y)
\end{align}
and the estimates, for  some $\sigma_{m}(M)\triangleq\sigma_m(s_0,q,d,M)>0$ and for any $s \in [s_0,S]$,
\begin{align}
\|\mathfrak{p}_{\kappa,m}\|_{s}^{q,\gamma,\mathcal{O}}&\lesssim_{s,q,d,M}\varepsilon\|\widetilde{r}\|_{s+\sigma_{m}(M)}^{q,\gamma,\mathcal{O}},\label{e-pm}\\
\|\Delta_{12}\mathfrak{p}_{\kappa,m}\|_{s}&\lesssim_{s,q,d,M}\varepsilon\Big(\|\Delta_{12}\widetilde{r}\|_{s+\sigma_{m}(M)}+\|\Delta_{12}\widetilde{r}\|_{s_0+\sigma_{m}(M)}\max_{k\in\{1,2\}}\|\widetilde{r}_k\|_{s+\sigma_{m}(M)}\Big),\label{e12-pm}
\end{align}
such that the following holds. Denoting
\begin{equation}\label{compoPhim}
\boldsymbol{\Phi}_0\triangleq\mathbb{I}_2,\qquad \forall\,m\in\llbracket 1,M-1\rrbracket, \quad  \boldsymbol{\Phi}_{m}\triangleq \big(\mathbb{I}_2+\mathbf{X}_{1}\big)\circ\ldots\circ \big(\mathbb{I}_2+\mathbf{X}_{m}\big),
\end{equation}
we have, for any $m\in\llbracket1,M-1\rrbracket$,
\begin{equation}\label{calLM}
	\mathcal{L}^{(m)}\triangleq\boldsymbol{\Phi}_{m}^{-1}\mathcal{L}^{(0)}\boldsymbol{\Phi}_{m}=\omega\cdot\partial_{\varphi}\mathbb{I}_{2}+\ii\boldsymbol{\Omega}(a,D)+\varepsilon\partial_{x}\mathbf{F}+\mathbf{R}_m^{(d)}+\mathbf{R}_{m}^{(o)}+\mathbf{S}_{m,M},
\end{equation}
with the following properties: 
\begin{enumerate}[label=(\roman*)]
	\item The operator $\mathbf{R}_{m}^{(d)}$ admits an homogeneous expansion of degree $-1$ up to order $1-M$ in the form
    \begin{equation}\label{bR.m.diag}
        \mathbf{R}_m^{(d)}=\begin{pmatrix}
		\displaystyle\sum_{p=1}^{M-1}\mathfrak{r}_{+,m,p}^{(d)}\partial_{x}^{-p} & 0\\
		0 & \displaystyle\sum_{p=1}^{M-1}\mathfrak{r}_{-,m,p}^{(d)}\partial_{x}^{-p}
	\end{pmatrix},
    \end{equation}
with for any $\kappa\in\{-,+\}$ and $p\in\llbracket 1,M-1\rrbracket,$ the function $\mathfrak{r}_{\kappa,m,p}^{(d)}\triangleq\mathfrak{r}_{\kappa,m,p}^{(d)}(a,\omega;\varphi,x)\in\mathbb{R}$ satisfying
\begin{align}
	\mathfrak{r}_{\kappa,m,p}^{(d)}(a,\omega;-\varphi,-x)&=(-1)^{p+1}\mathfrak{r}_{\kappa,m,p}^{(d)}(a,\omega;\varphi,x), \label{rev rm diag} \\
	\forall \, y\in\mathbb{T},\quad\mathfrak{r}_{\kappa,m,p}^{(d)}(a,\omega;\varphi-\vec{\jmath}y,x)&=\mathfrak{r}_{\kappa,m,p}^{(d)}(a,\omega;\varphi,x+y). \label{trav rm diag}
\end{align}
In particular, the operator $\mathbf{R}_m^{(d)}$ is real, reversible and momentum preserving. Moreover, for any $s\in[s_0,S],$
\begin{align}
	\|\mathfrak{r}_{\kappa,m,p}^{(d)}\|_{s}^{q,\gamma,\mathcal{O}}&\lesssim_{s,q,d,M}\varepsilon\|\widetilde{r}\|_{s+\sigma_{m}(M)}^{q,\gamma,\mathcal{O}}, \label{e-rm-diag} \\
    \|\Delta_{12}\mathfrak{r}_{\kappa,m,p}^{(d)}\|_{s}&\lesssim_{s,q,d,M}\varepsilon\big(\|\Delta_{12}\widetilde{r}\|_{s+\sigma_{m}(M)}+\|\Delta_{12}\widetilde{r}\|_{s_0+\sigma_{m}(M)}\max_{k\in\{1,2\}}\|\widetilde{r}_k\|_{s+\sigma_{m}(M)}\big).\label{e12-rm-diag}
\end{align}

\item The operator $\mathbf{R}_{m}^{(o)}$ admits an homogeneous expansion of degree $-m-1$ up to order $1-M$ in the form
\begin{equation}\label{bR.m.off}
    \mathbf{R}_m^{(o)}=\begin{pmatrix}
	0 & \displaystyle\sum_{p=m+1}^{M-1}\mathfrak{r}_{+,m,p}^{(o)}\partial_{x}^{-p}\\
	\displaystyle\sum_{p=m+1}^{M-1}\mathfrak{r}_{-,m,p}^{(o)}\partial_{x}^{-p} & 0
\end{pmatrix},
\end{equation}
with for any $\kappa\in\{-,+\}$ and $p\in\llbracket m+1,M-1\rrbracket,$ the function $\mathfrak{r}_{\kappa,m,p}^{(o)}\triangleq\mathfrak{r}_{\kappa,m,p}^{(o)}(a,\omega;\varphi,x)\in\mathbb{R}$ satisfying
\begin{align}
	\mathfrak{r}_{\kappa,m,p}^{(o)}(a,\omega;-\varphi,-x)&=(-1)^{p+1}\mathfrak{r}_{\kappa,m,p}^{(o)}(a,\omega;\varphi,x),\label{rev rm}\\
	\forall \, y\in\mathbb{T},\quad\mathfrak{r}_{\kappa,m,p}^{(o)}(a,\omega;\varphi-\vec{\jmath}y,x)&=\mathfrak{r}_{\kappa,m,p}^{(o)}(a,\omega;\varphi,x+y).\label{trav rm}
\end{align}
In particular, the operator $\mathbf{R}_m^{(o)}$ is real, reversible and momentum preserving. Moreover, for any $s\in[s_0,S],$
\begin{align}
	\|\mathfrak{r}_{\kappa,m,p}^{(o)}\|_{s}^{q,\gamma,\mathcal{O}}&\lesssim_{s,q,d,M}\varepsilon\|\widetilde{r}\|_{s+\sigma_{m}(M)}^{q,\gamma,\mathcal{O}},\label{e-rm}\\
	\|\Delta_{12}\mathfrak{r}_{\kappa,m,p}^{(o)}\|_{s}&\lesssim_{s,q,d,M}\varepsilon\big(\|\Delta_{12}\widetilde{r}\|_{s+\sigma_{m}(M)}+\|\Delta_{12}\widetilde{r}\|_{s_0+\sigma_{m}(M)}\max_{k\in\{1,2\}}\|\widetilde{r}_k\|_{s+\sigma_{m}(M)}\big).\label{e12-rm}
\end{align}
\item The matrix operator $\mathbf{S}_{m,M}$ is real, reversible, momentum preserving and in $OPS^{-M}$. It satisfies the following estimates, for any $s\in[s_0,S]$ and $\alpha\in\mathbb{N},$
\begin{align}
	\|\mathbf{S}_{m,M}\|_{-M,s,\alpha}^{q,\gamma,\mathcal{O}}&\lesssim_{s,q,d,M,\alpha}\varepsilon\|\widetilde{r}\|_{s+\sigma_{m}(\alpha,M)}^{q,\gamma,\mathcal{O}}, \label{e.SmM} \\
	\|\Delta_{12}\mathbf{S}_{m,M}\|_{-M,s,\alpha}&\lesssim_{s,q,d,M,\alpha}\varepsilon\big(\|\Delta_{12}\widetilde{r}\|_{s+\sigma_{m}(\alpha,M)}+\|\Delta_{12}\widetilde{r}\|_{s_0+\sigma_{m}(\alpha,M)}\max_{k\in\{1,2\}}\|\widetilde{r}_k\|_{s+\sigma_{m}(\alpha,M)}\big)\,\,\,\, \label{e12.SmM}
\end{align}
for some $\sigma_m(\alpha,M)\triangleq\sigma_m(s_0,q,d,\alpha,M)>0.$
\end{enumerate} 

\end{prop}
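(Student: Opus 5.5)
We argue by induction on $m\in\llbracket 0,M-1\rrbracket$. The case $m=0$ is exactly Proposition \ref{prop L0}, with $\mathbf{R}_0^{(o)}$ an off-diagonal homogeneous expansion starting at degree $-1$. Assume that \eqref{calLM} holds at step $m-1$, with $\mathbf{R}_{m-1}^{(o)}$ starting at degree $-m$. We then conjugate $\mathcal{L}^{(m-1)}$ by $\mathbb{I}_2+\mathbf{X}_m$, with $\mathbf{X}_m$ as in \eqref{def:Xm}. Since $\mathbf{X}_m\in OPS^{-m-1}$ has size $O(\varepsilon)$ in the norms $\|\cdot\|_{-m-1,s,\alpha}^{q,\gamma,\mathcal{O}}$, the Neumann series $(\mathbb{I}_2+\mathbf{X}_m)^{-1}=\sum_{k\geqslant0}(-\mathbf{X}_m)^k$ converges for $\varepsilon\leqslant\varepsilon_0$. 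Truncating it at $k\leqslant M$, and using the composition and commutator results of Proposition \ref{lem compo commu hom exp}, we obtain a homogeneous expansion up to a remainder in $OPS^{-M}$, which we absorb into $\mathbf{S}_{m,M}$.

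The key computation is the commutator with the principal part. Modulo terms of order $\leqslant -m-2$ and quadratic terms in $\varepsilon$ (which are again of order $\leqslant -m-2$), we have
\begin{equation*}
(\mathbb{I}_2+\mathbf{X}_m)^{-1}\mathcal{L}^{(m-1)}(\mathbb{I}_2+\mathbf{X}_m)=\mathcal{L}^{(m-1)}+\big[\ii\boldsymbol{\Omega}(a,D),\mathbf{X}_m\big]+\ldots
\end{equation*}
By \eqref{hom exp omgD}, $\ii\boldsymbol{\Omega}(a,D)=\mathrm{diag}(a\partial_x,-a\partial_x)+O(\partial_x^{-1})$. For the $(+,-)$ entry this gives
\begin{equation*}
a\partial_x\circ\mathfrak{p}_{+,m}\partial_x^{-m-1}+\mathfrak{p}_{+,m}\partial_x^{-m-1}\circ a\partial_x=2a\,\mathfrak{p}_{+,m}\partial_x^{-m}+a(\partial_x\mathfrak{p}_{+,m})\partial_x^{-m-1},
\end{equation*}
and symmetrically $-2a\,\mathfrak{p}_{-,m}\partial_x^{-m}+\ldots$ for the $(-,+)$ entry. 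The point is that the two transport velocities $\pm a$ are distinct, so the commutator does not cancel at leading order. We therefore choose
\begin{equation*}
\mathfrak{p}_{+,m}\triangleq-\frac{1}{2a}\mathfrak{r}_{+,m-1,m}^{(o)},\qquad\mathfrak{p}_{-,m}\triangleq\frac{1}{2a}\mathfrak{r}_{-,m-1,m}^{(o)}.
\end{equation*}
This is an algebraic choice with no small divisors: no inversion of $\omega\cdot\partial_\varphi$ is needed, since $\omega\cdot\partial_\varphi\mathbf{X}_m$ only produces a term of degree $-m-1$.

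All other contributions are of degree at most $-m-1$. These come from $\omega\cdot\partial_\varphi\mathbf{X}_m$, from $\varepsilon\partial_x\mathbf{F}$ commuted with $\mathbf{X}_m$ (its order-one part gives $\varepsilon(f_+-f_-)$-type terms of degree $-m$, at quadratic size), from the lower-order parts of $\boldsymbol{\Omega}$, from the products $\mathbf{X}_m\mathbf{R}^{(\nu)}_{m-1}$, and from $\mathbf{X}_m^2$. Products of two off-diagonal operators are diagonal and feed $\mathbf{R}_m^{(d)}$; the remaining off-diagonal ones feed $\mathbf{R}_m^{(o)}$ starting at degree $-m-1$.

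\textbf{Degree-$(-m)$ term from $\varepsilon\partial_x\mathbf{F}$.} The commutator $\varepsilon[\partial_x\mathbf{F},\mathbf{X}_m]$ has principal part of degree $-m$, namely $\varepsilon(f_+-f_-)\mathfrak{p}_{+,m}\partial_x^{-m}$, which is not of lower order. We handle it by solving the leading equation with the full coefficient,
\begin{equation*}
2a\,\mathfrak{p}_{+,m}+\varepsilon(f_+-f_-)\mathfrak{p}_{+,m}=-\mathfrak{r}_{+,m-1,m}^{(o)},
\end{equation*}
that is, we replace $2a$ by $2a+\varepsilon(f_+-f_-)$, and analogously for $\mathfrak{p}_{-,m}$. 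This multiplication is invertible for $\varepsilon$ small by \eqref{e-fk}, and the tame estimates follow from Lemma \ref{lem functions}. We expect this step, together with the bookkeeping of the many composition terms, to be the main technical point.

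\textbf{Symmetry and estimates.} The symmetry properties follow from the induction hypotheses \eqref{rev rm}--\eqref{trav rm}, together with \eqref{rev fpm}--\eqref{trav fpm}. The operator $\mathfrak{p}\partial_x^{-m-1}$ is reversibility preserving exactly when $\mathfrak{p}$ has parity $(-1)^{m+1}$, which matches the parity of $\mathfrak{r}^{(o)}_{\kappa,m-1,m}$ given by $(-1)^{m+1}$. Composition then preserves realness, reversibility and the momentum property (Remark \ref{compo:RevMom}). The bounds \eqref{e-pm}--\eqref{e12.SmM} follow from the product and tame composition estimates \eqref{e-rem-hom-comp}. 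Each step loses finitely many derivatives, so the indices $\sigma_m(M)$ and $\sigma_m(\alpha,M)$ increase with $m$.
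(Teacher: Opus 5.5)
Your proposal is correct and follows the paper's proof essentially step by step. The shared ingredients are induction on $m$, conjugation by $\mathbb{I}_2+\mathbf{X}_m$ with a Neumann-series inverse, the leading off-diagonal term $2a\kappa\,\mathfrak{p}_{\kappa,m}\partial_x^{-m}$ coming from the anti-commutator with $\ii\Omega(a,D)$, and the purely algebraic choice $\mathfrak{p}_{\kappa,m}=-\mathfrak{r}^{(o)}_{\kappa,m-1,m}/\big(2a\kappa+\varepsilon(f_\kappa-f_{-\kappa})\big)$, which is exactly \eqref{choice frak p} up to your index shift. One correction: your preliminary remark that the quadratic-in-$\varepsilon$ terms have order $\leqslant -m-2$ is false for the contribution $\varepsilon[\partial_x\mathbf{F},\mathbf{X}_m]$. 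That term has degree $-m$, which is precisely why the denominator must contain $\varepsilon(f_\kappa-f_{-\kappa})$, as you correctly do afterwards.
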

\begin{proof}
	We proceed by induction on $m\in\llbracket0,M-1\rrbracket$. The case $m=0$ follows from Proposition \ref{prop L0}. Assume that the step $m\in\llbracket0,M-2\rrbracket$ is already settled and we have to prove the claims at the step $m+1.$ Consider an a priori unknown invertible transformation $\phi_{m+1}$ in the form
	\begin{equation}\label{form.phimplus1}
	    \Phi_{m+1}\triangleq\mathbb{I}_{2}+\mathbf{X}_{m+1},\qquad\mathbf{X}_{m+1}\triangleq\begin{pmatrix}
		0 & \mathfrak{p}_{+,m+1}\partial_{x}^{-m-2}\\
		\mathfrak{p}_{-,m+1}\partial_{x}^{-m-2} & 0
	\end{pmatrix}.
	\end{equation}
	Then, the following identity holds
	\begin{equation}\label{compo Lm phim+1}
		\begin{aligned}
			\mathcal{L}^{(m)}\Phi_{m+1}&=\Phi_{m+1}\big(\omega\cdot\partial_{\varphi}\mathbb{I}_{2}+\ii\boldsymbol{\Omega}(a,D)+\varepsilon\partial_{x}\mathbf{F}+\mathbf{R}_m^{(d)}\big)\\
			&\quad+\big[\omega\cdot\partial_{\varphi}\mathbb{I}_{2}+\ii\boldsymbol{\Omega}(a,D)+\varepsilon\partial_{x}\mathbf{F}\,,\,\mathbf{X}_{m+1}\big]+\mathbf{R}_m^{(o)}\\
			&\quad+\big[\mathbf{R}_m^{(d)}\,,\,\mathbf{X}_{m+1}\big]+\mathbf{R}_m^{(o)}\mathbf{X}_{m+1}+\mathbf{S}_{m,M}\Phi_{m+1}.
		\end{aligned}
	\end{equation}
	Our purpose is to properly choose the operator $\mathbf{X}_{m+1}$ to get rid of the anti-diagonal terms of order $-m-1$. Such terms appear in the second line in \eqref{compo Lm phim+1}. One has
	\begin{align*}
		&\big[\omega\cdot\partial_{\varphi}\mathbb{I}_2+\ii\boldsymbol{\Omega}(a,D)+\varepsilon\partial_{x}\mathbf{F},\mathbf{X}_{m+1}\big]=\begin{pmatrix}
			0 & A_{+,m+1}\\
			A_{-,m+1} & 0
		\end{pmatrix},
	\end{align*}
where for any $\kappa\in\{-,+\},$
\begin{equation}\label{Ak-m+1}
	\begin{aligned}
		A_{\kappa,m+1}&\triangleq[\omega\cdot\partial_{\varphi},\mathfrak{p}_{\kappa,m+1}\partial_{x}^{-m-2}]+\kappa[\ii\Omega(a,D),\mathfrak{p}_{\kappa,m+1}\partial_{x}^{-m-2}]_{\rm a}\\
		&\quad+\varepsilon\Big(\partial_{x}\, f_{\kappa}\mathfrak{p}_{\kappa,m+1}\partial_{x}^{-m-2}-\mathfrak{p}_{\kappa,m+1}\partial_{x}^{-m-1}f_{-\kappa}\Big);
	\end{aligned}
\end{equation}
we recall the notations $[A,B]$ and $[A,B]_{\rm a}$ to respectively designate the commutator and anti-commutator between operators $A$ and $B$, namely
$$[A,B]\triangleq AB-BA,   \qquad [A,B]_{\rm a} \triangleq AB+BA.$$
We fix $\kappa\in\{-,+\}$ and we separately analyze the contributions to $A_{\kappa,m+1}$ in \eqref{Ak-m+1}.
\\[1mm]
\noindent $\blacktriangleright$ {\sc Contribution from $[\omega\cdot\partial_{\varphi},\mathfrak{p}_{\kappa,m+1}\partial_{x}^{-m-2}]$.}
A direct computation shows that
\begin{align}
	[\omega\cdot\partial_{\varphi},\mathfrak{p}_{\kappa,m+1}\partial_{x}^{-m-2}]=(\omega\cdot\partial_{\varphi}\mathfrak{p}_{\kappa,m+1})\partial_{x}^{-m-2}. \label{pasqua1}
\end{align}
\noindent $\blacktriangleright$ {\sc Contribution from $[\ii\Omega(a,D),\mathfrak{p}_{\kappa,m+1}\partial_{x}^{-m-2}]_{\rm a}$.}
Next, by the expansion in \eqref{hom exp omgD} (recalling also the notation in \eqref{frakNM.def}), we write
\begin{align}
	[\ii\Omega(a,D),\mathfrak{p}_{\kappa,m+1}\partial_{x}^{-m-2}]_{\rm a}&=[a\partial_{x},\mathfrak{p}_{\kappa,m+1}\partial_{x}^{-m-2}]_{\rm a}+\sum_{p=1}^{\mathfrak{N}_M}\alpha_{p}a^{1-2p}[\partial_{x}^{1-2p},\mathfrak{p}_{\kappa,m+1}\partial_{x}^{-m-2}]_{\rm a}\\
	&\quad+[S_{M}(a,D),\mathfrak{p}_{\kappa,m+1}\partial_{x}^{-m-2}]_{\rm a}. \label{pasqua2}
\end{align}
%
One readily has
\begin{align}
	[a\partial_{x},\mathfrak{p}_{\kappa,m+1}\partial_{x}^{-m-2}]_{\rm a}=2a\mathfrak{p}_{\kappa,m+1}\partial_{x}^{-m-1}+a(\partial_{x}\mathfrak{p}_{\kappa,m+1})\partial_{x}^{-m-2}. \label{pasqua2.1}
\end{align}
Now, from Proposition \ref{lem compo commu hom exp}-$(iii)$, we have
\begin{align}
	&\sum_{p=1}^{\mathfrak{N}_M}\alpha_{p}a^{1-2p}[\partial_{x}^{1-2p},\mathfrak{p}_{\kappa,m+1}\partial_{x}^{-m-2}]_{\rm a}\\
	&=\sum_{p=1}^{\mathfrak{N}_M}\alpha_{p}a^{1-2p}\bigg(2\mathfrak{p}_{\kappa,m+1}\partial_{x}^{-m-1-2p}+\sum_{p_2=1}^{M-m-2p-2}C_{p_2,1-2p}(\partial_{x}^{p_2}\mathfrak{p}_{\kappa,m+1})\partial_{x}^{-m-1-2p-p_2}+\mathfrak{R}_{\kappa,M,p}\bigg), \label{pasqua2.1.1}
\end{align}
where, for any $s\in[s_0,S]$ and any $\alpha\in\mathbb{N},$
\begin{align*}
	\max_{p\in\llbracket 1,\mathfrak{N}_M\rrbracket}\|\mathfrak{R}_{\kappa,M,p}\|_{-M,s,\alpha}^{q,\gamma,\mathcal{O}}&\lesssim_{s,q,d,M,\alpha}\|\mathfrak{p}_{\kappa,m+1}\|_{s+\sigma_{m+1}(\alpha,M)}^{q,\gamma,\mathcal{O}},\\
	\max_{p\in\llbracket 1,\mathfrak{N}_M\rrbracket}\|\Delta_{12}\mathfrak{R}_{\kappa,M,p}\|_{-M,s,\alpha}&\lesssim_{s,q,d,M,\alpha}\|\Delta_{12}\mathfrak{p}_{\kappa,m+1}\|_{s+\sigma_{m+1}(\alpha,M)},
\end{align*}
for some $\sigma_{m+1}(\alpha,M)>\sigma_{m}(\alpha,M).$ We mention that in what follows, we may keep writing $\sigma_{m+1}(\alpha,M)$ to denote a loss a derivatives at step $m+1$ for the smoothing remainders. Hence, its value may change (increase) from one line to another but it still remains finite and independent of $s$. 
Collecting \eqref{pasqua2.1}, \eqref{pasqua2.1.1} into \eqref{pasqua2}, we compactly write
\begin{align}\label{pasqua2.1.2}
    [\ii\Omega(a,D),\mathfrak{p}_{\kappa,m+1}\partial_{x}^{-m-2}]_{\rm a}&=2a\mathfrak{p}_{\kappa,m+1}\partial_{x}^{-m-1}+\sum_{p=m+2}^{M-1}\hat{\mathfrak{p}}_{\kappa,m+1,p}\partial_{x}^{-p}+\mathfrak{S}_{\kappa,M,1}
\end{align}
with $\hat{\mathfrak{p}}_{\kappa,m+1,p}$ related to $\mathfrak{p}_{\kappa,m+1}$ and its derivatives up to a finite order depending on $M$, and with $\mathfrak{S}_{\kappa,M,1}$  related to  $\mathfrak{R}_{\kappa,M,p}$, $p\in\llbracket1,\mathfrak{N}_{M}\rrbracket$, and $[S_{M}(a,D),\mathfrak{p}_{\kappa,m+1}\partial_{x}^{-m-2}]_{\rm a}$. They satisfy
\begin{align*}
	\max_{p\in\llbracket m+2,M-1\rrbracket}\|\hat{\mathfrak{p}}_{\kappa,m+1,p}\|_{s}^{q,\gamma,\mathcal{O}}&\lesssim_{s,q,d,M}\|\mathfrak{p}_{\kappa,m+1}\|_{s+\sigma_{m+1}(M)}^{q,\gamma,\mathcal{O}},\\
	\max_{p\in\llbracket m+2,M-1\rrbracket}\|\Delta_{12}\hat{\mathfrak{p}}_{\kappa,m+1,p}\|_{s}&\lesssim_{s,q,d,M}\|\Delta_{12}\mathfrak{p}_{\kappa,m+1}\|_{s+\sigma_{m+1}(M)},
\end{align*}
with $\sigma_{m+1}(M)>\sigma_{m}(M)$ which may also vary from line to line as $\sigma_{m+1}(\alpha,M)$. The remainder also satisfies 
\begin{align*}
	\|\mathfrak{S}_{\kappa,M,1}\|_{-M,s,\alpha}^{q,\gamma,\mathcal{O}}&\lesssim_{s,q,d,M,\alpha}\|\mathfrak{p}_{\kappa,m+1}\|_{s+\sigma_{m+1}(\alpha,M)}^{q,\gamma,\mathcal{O}},\\
	\|\Delta_{12}\mathfrak{S}_{\kappa,M,1}\|_{-M,s,\alpha}&\lesssim_{s,q,d,M,\alpha}\|\Delta_{12}\mathfrak{p}_{\kappa,m+1}\|_{s+\sigma_{m+1}(\alpha,M)}.
\end{align*}
\noindent $\blacktriangleright$ {\sc Contribution from $\partial_{x}\,f_{\kappa}\mathfrak{p}_{\kappa,m+1}\partial_{x}^{-m-2}-\mathfrak{p}_{\kappa,m+1}\partial_{x}^{-m-1}f_{-\kappa}$.}
We now analyze the last term in \eqref{Ak-m+1}. Note that
\begin{align}
	\partial_{x}\,f_{\kappa}\mathfrak{p}_{\kappa,m+1}\partial_{x}^{-m-2}-\mathfrak{p}_{\kappa,m+1}\partial_{x}^{-m-1}f_{-\kappa}&=\big(\partial_{x}(f_{\kappa}\mathfrak{p}_{\kappa,m+1})\big)\partial_{x}^{-m-2}+f_{\kappa}\mathfrak{p}_{\kappa,m+1}\partial_{x}^{-m-1}\\
	&\quad-\mathfrak{p}_{\kappa,m+1}f_{-\kappa}\partial_{x}^{-m-1}-\mathfrak{p}_{\kappa,m+1}[\partial_{x}^{-m-1},f_{-\kappa}].\label{pasqua3}
\end{align}
Using Proposition \ref{lem compo commu hom exp}-$(i)$, we write
\begin{align}
	&\partial_{x}\, f_{\kappa}\mathfrak{p}_{\kappa,m+1}\partial_{x}^{-m-2}-\mathfrak{p}_{\kappa,m+1}\partial_{x}^{-m-1}f_{-\kappa}\\
	&=\mathfrak{p}_{\kappa,m+1}\big(f_{\kappa}-f_{-\kappa}\big)\partial_{x}^{-m-1}+\sum_{p=m+2}^{M-1}\widetilde{\mathfrak{p}}_{\kappa,m+1,p}\partial_{x}^{-p}+\mathfrak{S}_{\kappa,M,2}, \label{pasqua3.1}
\end{align}
where $\widetilde{\mathfrak{p}}_{\kappa,m+1,p}$ is (up to multiplicative constants) sum and products of $f_{+},f_-,\mathfrak{p}_{\kappa,m+1}$ and their derivatives up to a finite order depending on $M.$ Therefore, we have
\begin{align*}
	\max_{p\in\llbracket m+2,M-1\rrbracket}\|\widetilde{\mathfrak{p}}_{\kappa,m+1,p}\|_{s}^{q,\gamma,\mathcal{O}}&\lesssim_{s,q,d,M}\max_{\kappa'\in\{-,+\}}\|f_{\kappa'}\|_{s+\sigma_{m+1}(M)}^{q,\gamma,\mathcal{O}}+\|\mathfrak{p}_{k,m+1}\|_{s+\sigma_{m+1}(M)}^{q,\gamma,\mathcal{O}},\\
	\max_{p\in\llbracket m+2,M-1\rrbracket}\|\Delta_{12}\widetilde{\mathfrak{p}}_{\kappa,m+1,p}\|_{s}&\lesssim_{s,q,d,M}\max_{\kappa'\in\{-,+\}}\|\Delta_{12}f_{\kappa'}\|_{s+\sigma_{m+1}(M)}+\|\Delta_{12}\mathfrak{p}_{\kappa,m+1}\|_{s+\sigma_{m+1}(M)},
\end{align*}
and
\begin{align*}
	\|\mathfrak{S}_{\kappa,M,2}\|_{-M,s,\alpha}^{q,\gamma,\mathcal{O}}&\lesssim_{s,q,d,M,\alpha}\|\mathfrak{p}_{\kappa,m+1}\|_{s+\sigma_{m+1}(\alpha,M)}^{q,\gamma,\mathcal{O}},\\
	\|\Delta_{12}\mathfrak{S}_{\kappa,M,2}\|_{-M,s,\alpha}&\lesssim_{s,q,d,M,\alpha}\|\Delta_{12}\mathfrak{p}_{\kappa,m+1}\|_{s+\sigma_{m+1}(\alpha,M)}.
\end{align*}
\noindent $\blacktriangleright$ {\sc Expansion of $A_{\kappa,m+1}$.}
Combining the expansions \eqref{pasqua1}, \eqref{pasqua2.1.2} and \eqref{pasqua3.1} into \eqref{Ak-m+1}, we end up with
\begin{align}
    A_{\kappa,m+1}& =\big(2a\kappa+\varepsilon(f_{\kappa}-f_{-\kappa})\big)\mathfrak{p}_{\kappa,m+1}\partial_{x}^{-m-1} \\
    & + (\omega\cdot\partial_{\varphi}\mathfrak{p}_{\kappa,m+1})\partial_{x}^{-m-2} +\sum_{p=m+2}^{M-1}(\kappa\hat{\mathfrak{p}}_{\kappa,m+1,p}+\varepsilon\widetilde{\mathfrak{p}}_{\kappa,m+1,p})\partial_{x}^{-p}+\kappa\mathfrak{S}_{\kappa,M,1}+\varepsilon\mathfrak{S}_{\kappa,M,2}.
\end{align}
In order to get rid of the terms of order $\partial_{x}^{-m-1}$ in \eqref{compo Lm phim+1}, with $\bR_{m}^{(o)}$ as in \eqref{bR.m.off}, we make the following choice 
\begin{align}\label{choice frak p}
	\mathfrak{p}_{\kappa,m+1}(a,\omega;\varphi,x)\triangleq-\frac{\mathfrak{r}_{\kappa,m,m+1}^{(o)}(a,\omega;\varphi,x)}{2a\kappa+\varepsilon\big(f_{\kappa}(\widetilde{r})(a,\omega;\varphi,x)-f_{-\kappa}(\widetilde{r})(a,\omega;\varphi,x)\big)}\cdot
\end{align}
By the  induction hypotheses \eqref{rev rm}-\eqref{trav rm} together with \eqref{rev fpm}-\eqref{trav fpm}, we get
\begin{align}
	\mathfrak{p}_{\kappa,m+1}(a,\omega;-\varphi,-x)&=(-1)^{m}\mathfrak{p}_{\kappa,m+1}(a,\omega;\varphi,x), \label{rev pm+1} \\
	\forall\, y\in\mathbb{T},\quad\mathfrak{p}_{\kappa,m+1}(a,\omega;\varphi-\vec{\jmath}y,x)&=\mathfrak{p}_{\kappa,m+1}(a,\omega;\varphi,x+y). \label{trav pm+1}
\end{align}
This implies, by virtue of Lemma \ref{lem:revmomHE}, the reversibility and momentum preserving properties of $\mathbf{X}_{m+1}$ in \eqref{form.phimplus1}.  Using product and composition laws of Lemma \ref{lem functions}, \eqref{e-fk}-\eqref{e12-fk} and \eqref{e-rm}-\eqref{e12-rm} together with \eqref{esti r I0} and \eqref{ansatz.fI.small} (with $\sigma_{\rm max}>\sigma_m(M)$), we get
\begin{align}
	\|\mathfrak{p}_{\kappa,m+1}\|_{s}^{q,\gamma,\mathcal{O}}&\lesssim_{s,q,d,M}\varepsilon\|\widetilde{r}\|_{s+\sigma_{m}(M)}^{q,\gamma,\mathcal{O}},\label{estim:frakpmplus1}\\
	\|\Delta_{12}\mathfrak{p}_{\kappa,m+1}\|_{s}&\lesssim_{s,q,d,M}\varepsilon\big(\|\Delta_{12}\widetilde{r}\|_{s+\sigma_{m}(M)}+\|\Delta_{12}\widetilde{r}\|_{s_0+\sigma_{m}(M)}\max_{\ell\in\{1,2\}}\|\widetilde{r}_{\ell}\|_{s+\sigma_{m}(M)}\big).\label{estim12:frakpmplus1}
\end{align}
With the choice \eqref{choice frak p}, we infer
\begin{equation}\label{whats left bB}
  \big[\omega\cdot\partial_{\varphi}\mathbb{I}_{2}+\ii\boldsymbol{\Omega}(a,D)+\varepsilon\partial_{x}\mathbf{F}\,,\,\mathbf{X}_{m+1}\big]+\mathbf{R}_m^{(o)}  = \bB_{m+1}^{(o)} \triangleq \begin{pmatrix}
	0 & B_{+,m+1}\\
	B_{-,m+1} & 0
\end{pmatrix},
\end{equation}
where, for any $\kappa\in\{+,-\},$ the operator $B_{\kappa,m+1}$ is in $OPS^{-m-2}$ and writes
\begin{equation}
    \begin{aligned}
        B_{\kappa,m+1}\triangleq (\omega\cdot\partial_{\varphi}\mathfrak{p}_{\kappa,m+1})\partial_{x}^{-m-2} +\sum_{p=m+2}^{M-1}(\mathfrak{r}_{\kappa,m,p}^{(o)}\partial_{x}^{-p} +\kappa\hat{\mathfrak{p}}_{\kappa,m+1,p}+\varepsilon\widetilde{\mathfrak{p}}_{\kappa,m+1,p})\partial_{x}^{-p}+\kappa\mathfrak{S}_{\kappa,M,1}+\varepsilon\mathfrak{S}_{\kappa,M,2}.
    \end{aligned}
\end{equation}
According to the explicit expressions \eqref{form.phimplus1} and \eqref{choice frak p}, then using \eqref{normpseudo:matrix}, Proposition \ref{properties OPS}-$(i)$-$(iii)$-$(iv)$ together with \eqref{estim:frakpmplus1}, \eqref{esti r I0} and \eqref{ansatz.fI.small}, we infer
$$\|\mathbf{X}_{m+1}\|_{-m-2,s_0,0}^{q,\gamma,\mathcal{O}}\lesssim_{q,d}\max_{\kappa\in\{-,+\}}\|\mathfrak{p}_{\kappa,m+1}\|_{s_0}^{q,\gamma,\mathcal{O}}\lesssim_{q,d}\varepsilon.$$
Thus, the map $\Phi_{m+1}=\mathbb{I}_2+\mathbf{X}_{m+1}$ is  also invertible by the Neumann series argument, satisfying
\begin{equation}\label{phi.m+1 d and o}
    \begin{aligned}
    \Phi_{m+1}^{-1} & = \big( \mathbb{I}_2+\mathbf{X}_{m+1} \big)^{-1} = \big( \mathbb{I}_2 - \mathbf{X}_{m+1} \big)  \big( \mathbb{I}_2 -\mathbf{X}_{m+1}^2 \big)^{-1} \\
    & = \big( \mathbb{I}_2 -\mathbf{X}_{m+1}^2 \big)^{-1} - \bX_{m+1}\big( \mathbb{I}_2 -\mathbf{X}_{m+1}^2 \big)^{-1} \,,
\end{aligned}
\end{equation}
where $\big( \mathbb{I}_2 -\mathbf{X}_{m+1}^2 \big)^{-1}$ is also well defined by means of Neumann series.
We set 
\begin{equation}\label{bold.Phi.update}
    \boldsymbol{\Phi}_{m+1}\triangleq\boldsymbol{\Phi}_{m}\circ\Phi_{m+1}=\boldsymbol{\Phi}_{m}\circ\big(\mathbb{I}_2+\mathbf{X}_{m+1}\big),\qquad\mathcal{L}^{(m)}\triangleq\boldsymbol{\Phi}_{m}^{-1}\mathcal{L}^{(0)}\boldsymbol{\Phi}_{m}.
\end{equation}
Therefore, by \eqref{compo Lm phim+1}, \eqref{whats left bB} and \eqref{bold.Phi.update}, we obtain
\begin{align}
	\mathcal{L}^{(m+1)}&\triangleq \boldsymbol{\Phi}_{m+1}^{-1} \cL^{(0)}\boldsymbol{\Phi}_{m+1} =   \Phi_{m+1}^{-1}\mathcal{L}^{(m)}\Phi_{m+1}\\
    & = \omega\cdot\partial_{\varphi}\mathbb{I}_{2}+\ii\boldsymbol{\Omega}(a,D)+\varepsilon\partial_{x}\mathbf{F} + \bR_{m}^{(d)}  + \Phi_{m+1}^{-1} \big( \bB_{m+1}^{(o)} + [\bR_{m}^{(d)},\bX_{m}] +  \bR_{m}^{(o)} \bX_{m+1} + \bS_{m,M} \Phi_{m+1} \big) \\
	&=\omega\cdot\partial_{\varphi}\mathbb{I}_{2}+\ii\boldsymbol{\Omega}(a,D)+\varepsilon\partial_{x}\mathbf{F}+\mathbf{R}_{m+1}^{(d)}+\mathbf{R}_{m+1}^{(o)}+\mathbf{S}_{m+1,M},
\end{align}
where, by using \eqref{phi.m+1 d and o} and \eqref{form.phimplus1} to split between the diagonal and off-diagonal contributions, and by expanding in homogeneinity up to order $1-M$, the operators $\mathbf{R}_{m+1}^{(d)},$ $\mathbf{R}_{m+1}^{(o)}$ and $\mathbf{S}_{m+1,M}$ are defined through the following relations
\begin{align}
    \bR_{m+1}^{(d)} + \wt\bS_{m+1,M}^{(d)} & \triangleq  \bR_{m}^{(d)}+ \big( \mathbb{I}_2 -\mathbf{X}_{m+1}^2 \big)^{-1} \bR_{m}^{(o)} \bX_{m+1} - \bX_{m+1}\big( \mathbb{I}_2 -\mathbf{X}_{m+1}^2 \big)^{-1} \big(  \bB_{m+1}^{(o)} + [\bR_{m}^{(d)},\bX_{m}] \big) , \label{impl.Rm+1 diag} \\
    \bR_{m+1}^{(o)} + \wt\bS_{m+1,M}^{(o)} &  \triangleq  \big( \mathbb{I}_2 -\mathbf{X}_{m+1}^2 \big)^{-1} \big( \bB_{m+1}^{(o)} + [\bR_{m}^{(d)},\bX_{m}] \big) - \bX_{m+1}\big( \mathbb{I}_2 -\mathbf{X}_{m+1}^2 \big)^{-1} \bR_{m}^{(o)} \bX_{m+1}, \label{impl.Rm+1 off} \\
     \bS_{m+1,M} & \triangleq  \wt\bS_{m+1,M}^{(d)} +  \wt\bS_{m+1,M}^{(o)} + \Phi_{m+1}^{-1} \bS_{m,M} \Phi_{m+1}, \label{impl.bSm+1}
\end{align}
with $ \bR_{m+1}^{(d)}$ and $ \bR_{m+1}^{(o)}$ satisfying the homogeneous expansions up to order $1-M$ in \eqref{bR.m.diag} and \eqref{bR.m.off} at the step $m+1$, respectively.
The symmetry relations \eqref{rev rm diag}-\eqref{trav rm diag}, \eqref{rev rm}-\eqref{trav rm} at the step $m+1$ and $\bS_{m+1,M}$ being real, reversible and momentum preserving follow by \eqref{impl.Rm+1 diag}-\eqref{impl.bSm+1}, \eqref{whats left bB}, \eqref{rev pm+1}-\eqref{trav pm+1}, Proposition \ref{prop L0}-$(i)$,$(ii)$, Remark \ref{compo:RevMom}, Lemma \ref{lem:revmomHE}, Proposition \ref{lem compo commu hom exp} and the induction hypotheses on $\bR_{m}^{(d)}$, $\bR_{m}^{(o)}$, $\bS_{m,M}$. The estimates \eqref{e-rm-diag}-\eqref{e12-rm-diag}, \eqref{e-rm}-\eqref{e12-rm-diag} and \eqref{e.SmM}-\eqref{e12.SmM} at the step $m+1$ follow by \eqref{impl.Rm+1 diag}-\eqref{impl.bSm+1}, \eqref{whats left bB}, \eqref{estim:frakpmplus1}-\eqref{estim12:frakpmplus1} and the induction hypotheses on $\bR_{m}^{(d)}$, $\bR_{m}^{(o)}$, $\bS_{m,M}$. This concludes the proof of Proposition \ref{prop.blockanti}.
\end{proof}
As a consequence of the previous proposition, we have the following result.
\begin{prop}\label{prop.blockanti.final}
	Let $M\in\mathbb{N}^*$. There exists a real, reversibility and momentum preserving, invertible matrix operator $\boldsymbol{\Phi}_{M-1}$ satisfying the tame estimates for any $s\in[s_0,S],$
    \begin{align}
        \|\boldsymbol{\Phi}_{M-1}^{\pm1}-\mathbb{I}_2\|_{-2,s,0}^{q,\gamma,\mathcal{O}}&\lesssim_{s,q,d,M}\varepsilon\big(1+\|\mathfrak{I}_0\|_{s+\sigma(M)}^{q,\gamma,\mathcal{O}}\big),\label{e-PhiM}
        \\
        \|\Delta_{12}\boldsymbol{\Phi}_{M-1}\|_{-2,s,0}&\lesssim_{s,q,d,M}\varepsilon\big(\|\Delta_{12}i\|_{s+\sigma(M)}+\|\mathfrak{I}_0\|_{s+\sigma(M)}\|\Delta_{12}i\|_{s_0+\sigma(M)}\big)\label{e12-PhiM}
    \end{align}
    for some $\sigma(M)>0$. In addition, denoting $\boldsymbol{\Phi}_{M-1}^{\star}$ the $\mathbf{L}^2(\mathbb{T})$-adjoint of the transformation $\boldsymbol{\Phi}_{M-1},$ we have that $\boldsymbol{\Phi}_{M-1}^{\star}$ is a real, reversibility and momentum preserving operator satisfying the following estimates for any $s\in[s_0,S],$
    \begin{align}
        \|(\boldsymbol{\Phi}_{M-1}^{\star})^{\pm1}-\mathbb{I}_2\|_{-2,s,0}^{q,\gamma,\mathcal{O}}&\lesssim_{s,q,d,M}\varepsilon\big(1+\|\mathfrak{I}_0\|_{s+\sigma(M)}^{q,\gamma,\mathcal{O}}\big),\label{e-PhiMstar}
        \\
        \|\Delta_{12}(\boldsymbol{\Phi}_{M-1}^{\star})^{\pm1}\|_{-2,s,0}&\lesssim_{s,q,d,M}\varepsilon\big(\|\Delta_{12}i\|_{s+\sigma(M)}+\|\mathfrak{I}_0\|_{s+\sigma(M)}\|\Delta_{12}i\|_{s_0+\sigma(M)}\big).\label{e12-PhiMstar}
    \end{align}
    Moreover, we have
	\begin{equation}\label{calLM2}
		\mathcal{L}^{(M-1)}\triangleq\boldsymbol{\Phi}_{M-1}^{-1}\mathcal{L}^{(0)}\boldsymbol{\Phi}_{M-1}=\omega\cdot\partial_{\varphi}\mathbb{I}_2+\ii\boldsymbol{\Omega}(a,D)+\varepsilon\partial_{x}\mathbf{F}+\mathbf{R}_{[-1]}^{(d)}+\mathbf{S}_{M},
	\end{equation}
where $\mathbf{R}_{[-1]}^{(d)}$ and $\mathbf{S}_{M}$ are real, reversible and momentum preserving remainders. The operator $\mathbf{R}_{[-1]}^{(d)}$ admits an homogeneous expansion of degree $-1$ up to order $1-M$ in the form
\begin{equation}\label{bfRd}
    \mathbf{R}_{[-1]}^{(d)}=\begin{pmatrix}
	\displaystyle\sum_{p=1}^{M-1}\mathfrak{r}_{+,p}^{(d)}\partial_{x}^{-p} & 0\\
	0 & \displaystyle\sum_{p=1}^{M-1}\mathfrak{r}_{-,p}^{(d)}\partial_{x}^{-p}
\end{pmatrix},
\end{equation}
with for any $\kappa\in\{-,+\}$ and $p\in\llbracket 1,M-1\rrbracket,$ the function $\mathfrak{r}_{\kappa,p}^{(d)}\triangleq\mathfrak{r}_{\kappa,p}^{(d)}(a,\omega;\varphi,x)\in\mathbb{R}$ satisfying
\begin{align}
	\mathfrak{r}_{\kappa,p}^{(d)}(a,\omega;-\varphi,-x)&=(-1)^{p+1}\mathfrak{r}_{\kappa,p}^{(d)}(a,\omega;\varphi,x), \label{rev rm diag final} \\
	\forall \, y\in\mathbb{T},\quad\mathfrak{r}_{\kappa,p}^{(d)}(a,\omega;\varphi-\vec{\jmath}y,x)&=\mathfrak{r}_{\kappa,p}^{(d)}(a,\omega;\varphi,x+y). \label{trav rm diag final}
\end{align}
Moreover, for any $\kappa\in\{-,+\}$ and $s\in[s_0,S],$
\begin{align}
	\max_{p\in\llbracket 1,M-1\rrbracket}\|\mathfrak{r}_{\kappa,p}^{(d)}\|_{s}^{q,\gamma,\mathcal{O}}&\lesssim_{s,q,d,M}\varepsilon \big( 1+\|\mathfrak{I}_0\|_{s+\sigma(M)}^{q,\gamma,\mathcal{O}}\big),\label{estim frakrkp}\\
	\max_{p\in\llbracket 1,M-1\rrbracket}\|\Delta_{12}\mathfrak{r}_{\kappa,p}^{(d)}\|_{s}&\lesssim_{s,q,d,M}\varepsilon\big(\|\Delta_{12}i\|_{s+\sigma(M)}+\|\Delta_{12}i\|_{s_0}\max_{\ell\in\{1,2\}}\|\mathfrak{I}_{\ell}\|_{s+\sigma(M)}\big)\label{estim frakrkp12}
\end{align}
for some $\sigma(M)\triangleq\sigma(s_0,q,d,M)>0.$
The matrix operator $\mathbf{S}_{M}$ is in $OPS^{-M}$ and satisfies the following estimates for any $s\in[s_0,S]$ and $\alpha\in\mathbb{N},$
\begin{align}
	\|\mathbf{S}_{M}\|_{-M,s,\alpha}^{q,\gamma,\mathcal{O}}&\lesssim_{s,q,d,M,\alpha}\varepsilon\big(1+\|\mathfrak{I}_0\|_{s+\sigma(\alpha,M)}^{q,\gamma,\mathcal{O}}\big), \label{est.bSM} \\
	\|\Delta_{12}\mathbf{S}_{M}\|_{-M,s,\alpha}&\lesssim_{s,q,d,M,\alpha}\varepsilon\big(\|\Delta_{12}i\|_{s+\sigma(\alpha,M)}+\|\Delta_{12}i\|_{s_0}\max_{\ell\in\{1,2\}}\|\mathfrak{I}_{\ell}\|_{s+\sigma(\alpha,M)}\big) \label{est12.bSM}
\end{align}
for some $\sigma(\alpha,M)\triangleq\sigma(s_0,q,d,\alpha,M)>0.$
\end{prop}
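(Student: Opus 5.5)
The plan is to apply Proposition \ref{prop.blockanti} with $m=M-1$ and to read off each claim from the step-by-step structure. Taking $m=M-1$ in \eqref{calLM}, the off-diagonal remainder $\mathbf{R}_{M-1}^{(o)}$ in \eqref{bR.m.off} is a sum over the empty range $p\in\llbracket M,M-1\rrbracket$, so it vanishes. Hence \eqref{calLM2} holds with
$$\mathbf{R}_{[-1]}^{(d)}\triangleq\mathbf{R}_{M-1}^{(d)},\qquad \mathbf{S}_M\triangleq\mathbf{S}_{M-1,M},\qquad \mathfrak{r}_{\kappa,p}^{(d)}\triangleq\mathfrak{r}_{\kappa,M-1,p}^{(d)}.$$
The symmetry relations \eqref{rev rm diag final}--\eqref{trav rm diag final} are then exactly \eqref{rev rm diag}--\eqref{trav rm diag}, and the fact that the remainders are real, reversible and momentum preserving follows from items $(i)$ and $(iii)$ of Proposition \ref{prop.blockanti}.

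To get the estimates \eqref{estim frakrkp}, \eqref{estim frakrkp12}, \eqref{est.bSM} and \eqref{est12.bSM}, I express the bounds \eqref{e-rm-diag}, \eqref{e12-rm-diag}, \eqref{e.SmM} and \eqref{e12.SmM}, which are stated in terms of $\widetilde r$, in terms of the torus. For this I use \eqref{esti r I0} and \eqref{esti r I0d}, with $\widetilde r=\mathbf{A}(i_0)$. I take $\sigma(M)\triangleq\max_m\sigma_m(M)$ and similarly $\sigma(\alpha,M)$. The smallness ansatz \eqref{ansatz.fI.small}, with $\sigma_{\max}$ large, absorbs the low-norm factors.

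Next I treat $\boldsymbol{\Phi}_{M-1}$. Each factor satisfies $\mathbb{I}_2+\mathbf{X}_m$ with $\mathbf{X}_m\in OPS^{-m-1}\subset OPS^{-2}$, since $m\geqslant1$. By \eqref{normpseudo:matrix} and Proposition \ref{properties OPS}, its norm satisfies $\|\mathbf{X}_m\|_{-2,s,0}^{q,\gamma,\mathcal{O}}\lesssim\max_\kappa\|\mathfrak{p}_{\kappa,m}\|_s^{q,\gamma,\mathcal{O}}$, which \eqref{e-pm} bounds by $\varepsilon(1+\|\mathfrak{I}_0\|_{s+\sigma(M)}^{q,\gamma,\mathcal{O}})$. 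I then expand the finite product \eqref{compoPhim} as $\mathbb{I}_2$ plus sums of products of the $\mathbf{X}_m$, and estimate it with the tame composition estimate in the $\|\cdot\|_{-2,s,0}$ norm; a product of operators of order $\leqslant-2$ is again of order $\leqslant-2$. This gives \eqref{e-PhiM}. For the inverse, I use the Neumann series \eqref{phi.m+1 d and o} for each factor. Smallness in the $s_0$ norm, guaranteed by $\varepsilon\leqslant\varepsilon_0$, makes the series converge in the $\|\cdot\|_{-2,s,0}$ norm with tame bounds: the standard estimate $\|\sum_{n\geqslant1}(-\mathbf{X})^n\|_{-2,s,0}\lesssim_s\|\mathbf{X}\|_{-2,s,0}$ holds when $\|\mathbf{X}\|_{-2,s_0,0}$ is small. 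Composing the inverses in reverse order gives the bound on $\boldsymbol{\Phi}_{M-1}^{-1}$. The difference estimate \eqref{e12-PhiM} follows the same way, telescoping $\Delta_{12}$ through the products and using \eqref{e12-pm} and \eqref{esti r I0d}. Invertibility itself also comes from these Neumann series.

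For the adjoint, note that $\boldsymbol{\Phi}_{M-1}^{\star}=(\mathbb{I}_2+\mathbf{X}_{M-1}^{\star})\circ\cdots\circ(\mathbb{I}_2+\mathbf{X}_1^{\star})$, where
$$\mathbf{X}_m^{\star}=\begin{pmatrix}0&(\mathfrak{p}_{-,m}\partial_x^{-m-1})^{\star}\\ (\mathfrak{p}_{+,m}\partial_x^{-m-1})^{\star}&0\end{pmatrix},\qquad (\mathfrak{p}\partial_x^{-m-1})^{\star}=(-1)^{m+1}\partial_x^{-m-1}\mathfrak{p}.$$
By the adjoint and composition properties in Proposition \ref{properties OPS}, this is again a pseudo-differential operator of order $-m-1$ with comparable seminorms, at the cost of a finite loss of derivatives that I absorb into $\sigma(M)$. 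The estimates \eqref{e-PhiMstar}--\eqref{e12-PhiMstar}, including those for the inverse $(\boldsymbol{\Phi}_{M-1}^{\star})^{-1}=(\boldsymbol{\Phi}_{M-1}^{-1})^{\star}$, then follow exactly as before. It remains to check that $\boldsymbol{\Phi}_{M-1}^{\star}$ is real, reversibility preserving and momentum preserving. The adjoint of a real, reversibility-preserving, momentum-preserving operator keeps these properties, which can be seen through the kernel or matrix characterization in Definition \ref{defin:OP-sym}. The same holds for compositions, by Remark \ref{compo:RevMom}.

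I expect the main obstacle to be the tame Neumann-series and composition bookkeeping in the $\|\cdot\|_{-2,s,0}^{q,\gamma,\mathcal{O}}$ norm, in particular keeping the loss $\sigma(M)$ uniform and handling the adjoint seminorms. This is routine given Appendix-type composition estimates, and everything else is direct inheritance from Proposition \ref{prop.blockanti}.
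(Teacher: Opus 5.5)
Your proposal is correct and follows essentially the same route as the paper's proof. Both set $\mathbf{R}_{[-1]}^{(d)}\triangleq\mathbf{R}_{M-1}^{(d)}$ and $\mathbf{S}_M\triangleq\mathbf{S}_{M-1,M}$, using that $\mathbf{R}_{M-1}^{(o)}$ is an empty sum, and convert the $\widetilde r$-bounds into torus bounds via \eqref{esti r I0}--\eqref{esti r I0d}. Both then estimate the finite product $\boldsymbol{\Phi}_{M-1}$ and its Neumann-series inverse in the $\|\cdot\|_{-2,s,0}$ norm, telescope for $\Delta_{12}$, and treat the adjoint through the explicit formula $(\mathfrak{p}\,\partial_x^{-m-1})^{\star}=(-1)^{m+1}\partial_x^{-m-1}\mathfrak{p}$. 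One small point in your favour: you place the off-diagonal blocks of $\mathbf{X}_m^{\star}$ correctly, with $\mathfrak{p}_{-,m}$ in the upper-right entry, whereas the paper's displayed formula transposes them; this is harmless for the estimates.
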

\begin{proof}
    The claims follows directly from Proposition \ref{prop.blockanti}, noting that, for $m=M-1$, the operator in \eqref{calLM} coincides with the one in \eqref{calLM2}, having $\bR_{M-1}^{(o)}\equiv 0$ and setting $\bR_{[-1]}^{(d)}\triangleq\bR^{(d)}_{M-1}$ and $\bS_{M}\triangleq \bS_{M-1,M}$. In view of \eqref{compoPhim}, the estimate \eqref{e-PhiM} for $\boldsymbol{\Phi}_{M-1}$ is a consequence of Proposition \ref{properties OPS}-$(i)$-$(iii)$-$(iv)$, \eqref{def:Xm}, \eqref{e-pm}, \eqref{esti r I0} and \eqref{ansatz.fI.small} provided that $\sigma_{\rm max}$ is chosen large enough. The analogous estimate for $\boldsymbol{\Phi}_{M-1}^{-1}$ follows with a similar argument making appeal to Neumann series. In addition, we note that, from \eqref{compoPhim}, we have
    $$\Delta_{12}\boldsymbol{\Phi}_{M-1}=\sum_{m=1}^{M-1}\big(\mathbb{I}_2+\mathbf{X}_{1}(i_1)\big)\circ\ldots\circ\big(\mathbb{I}_2+\mathbf{X}_{m-1}(i_1)\big)\circ\Delta_{12}\mathbf{X}_m\circ\big(\mathbb{I}_2+\mathbf{X}_{m+1}(i_2)\big)\circ\ldots\circ\big(\mathbb{I}_2+\mathbf{X}_{M-1}(i_2)\big).$$
    And one sees from \eqref{def:Xm} that 
    $$\Delta_{12}\mathbf{X}_m=\begin{pmatrix}
		0 & (\Delta_{12}\mathfrak{p}_{+,m})\partial_{x}^{-m-1}\\
		(\Delta_{12}\mathfrak{p}_{-,m})\partial_{x}^{-m-1} & 0
	\end{pmatrix}.$$
    Therefore, the estimate \eqref{e12-PhiM} follows from Proposition \ref{properties OPS}-$(i)$-$(iii)$-$(iv)$, \eqref{e-pm}, \eqref{e12-pm}, \eqref{esti r I0}, \eqref{esti r I0d} and \eqref{ansatz.fI.small} provided that $\sigma_{\rm max}$ is chosen large enough. Let us now turn to the estimates for the adjoint. In view of \eqref{compoPhim}, one has
    $$\boldsymbol{\Phi}_{M-1}^{\star}=\big(\mathbb{I}_2+\mathbf{X}_{M-1}^{\star}\big)\circ\ldots\circ \big(\mathbb{I}_2+\mathbf{X}_{1}^{\star}\big).$$
    A simple computation using the explicit expression \eqref{def:Xm} shows that for any $m\in\llbracket1,M-1\rrbracket,$
    $$\mathbf{X}_m^{\star}=\begin{pmatrix}
        0 & (-1)^{m+1}\partial_x^{-m-1}\mathfrak{p}_{+,m}\\
        (-1)^{m+1}\partial_x^{-m-1}\mathfrak{p}_{-,m} & 0
    \end{pmatrix}.$$
   From this explicit expression, one can invoke Proposition \ref{properties OPS}-$(i)$-$(iii)$-$(iv)$, \eqref{e-pm} and \eqref{esti r I0} to get
   $$\|\mathbf{X}_m^
   {\star}\|_{-m-1,s,0}^{q,\gamma,\mathcal{O}}\lesssim_{s,q,d}\max_{\kappa\in\{-,+\}}\|\mathfrak{p}_{\kappa,m}\|_{s}^{q,\gamma,\mathcal{O}}\lesssim_{q,d}\varepsilon\big(1+\|\mathfrak{I}_0\|_{s+\sigma(M)}^{q,\gamma,\mathcal{O}}\big).$$
   The estimates \eqref{e-PhiMstar} and \eqref{e12-PhiMstar} for $\boldsymbol{\Phi}_{M-1}^{\star}$ follow immediately proceeding as before for $\boldsymbol{\Phi}_{M-1}$, with the estimates for the inverse adjoint obtained via Neumann series.  Finally, the estimates \eqref{estim frakrkp}-\eqref{estim frakrkp12} and \eqref{est.bSM}-\eqref{est12.bSM} follow by \eqref{e-rm-diag}-\eqref{e12-rm-diag} (setting $\fr_{\kappa,p}^{(d)}\triangleq \fr_{\kappa,M-1,p}^{(d)}$ for any $\kappa\in\{-,+\}$, $p=\llbracket1,M-1\rrbracket$), \eqref{e.SmM}-\eqref{e12.SmM} and Proposition \ref{prop Lnormal}-$(i)$. This achieves the proof of Proposition \ref{prop.blockanti.final}.
\end{proof}

\subsection{Transport reduction and Egorov argument}\label{sect.transport}
In this section, we straighten the first order vector field $\omega\cdot\pa_{\vf}\mathbb{I}_{2}+\big(\begin{smallmatrix}
			1 & 0\\
			0 & -1
		\end{smallmatrix}\big)a\partial_x +\varepsilon \pa_{x}\bF$  in the operator $\mathcal{L}^{(M-1)}$ described in \eqref{calLM2}  to a constant coefficients one, up to an arbitrarily small operator. We achieve this by conjugating the operator $\cL^{(M-1)}$ with a diffeomorphism of the torus, which is determined in the same spirit as in \cite[Prop. 6.2]{HR21}. The conjugation of the terms expanded in homogeneity up to order $M-1$ is computed via Egorov Theorem in Proposition \ref{Egorov thm}. We remark that new $-M$-smoothing remainders do not belong anymore to the class of pseudo-differential operators, but rather to a more general class of operators of $\mathcal{D}^{q}$-tame operators presented in Section \ref{sec:tameOP} and first introduced in \cite{BM18}. 

\smallskip
        
        First, we present the reduction of the transport operator.
\begin{prop}\label{prop trans red}
	Fix also $\upsilon\in(0,\tfrac{1}{q+2}].$ For any $(\mu_2,s_1,s_h,\sigma_1)$ satisfying
	\begin{equation}\label{parametres:transport}
	    \begin{array}{ll}
	    \mu_2\geqslant4\tau_1q+6\tau_1+3, & s_1\geqslant s_0,\\
	     s_h\geqslant\max(s_1,\tfrac{3}{2}\mu_2+s_0+\tau_1q+\tau_1+3),\qquad\qquad & \sigma_1\triangleq s_0+8\tau_1q+12\tau_1+12,
	\end{array}
	\end{equation}
	there exists $\varepsilon_0>0$ such that, if (recalling \eqref{scale.KAM})
	\begin{equation}\label{sml+bnd trans}
	    \varepsilon\gamma^{-1}N_0^{\mu_2}\leqslant\varepsilon_0
	\end{equation}
    and if \eqref{ansatz.fI.small} holds with $s_0+\sigma_{\rm max}>s_h+\sigma_1,$
	then the following hold true: 
\begin{enumerate}[label=(\roman*)]
	\item There exist $\mathtt{c}_{\pm}\in W^{q,\gamma,\infty}(\mathcal{O},\mathbb{C})$ satisfying, for any $\kappa\in\{-,+\},$
	\begin{align}
		\big\|(a,\omega)\mapsto\mathtt{c}_{\kappa}(a,\omega;i_0)-\kappa a\big\|^{q,\gamma,\mathcal{O}}&\lesssim_{q,d}\varepsilon,\label{estimazione ttc}\\
		\|\Delta_{12}\mathtt{c}_{\kappa}\|&\lesssim_d\varepsilon\|\Delta_{12}i\|_{s_0+\sigma_1};\label{estimazione ttc12}
	\end{align}
	\item There exist $\beta_{\pm}\in W^{q,\gamma,\infty}(\mathcal{O},H^{s}(\T))$ and $\wh\beta_{\pm}\in W^{q,\gamma,\infty}(\mathcal{O},H^{s}(\T)) $  related to each other by the relation
    \begin{equation}
        x \mapsto z \triangleq x +\beta_{\pm}(a,\omega;\vf,x) \qquad \Leftrightarrow \qquad z \mapsto x \triangleq z +  \wh\beta_{\pm}(a,\omega;\vf,z)
    \end{equation}
    satisfying
    \begin{align}
		\beta_{\kappa}(a,\omega;-\varphi,-x)&=-\beta_{\kappa}(a,\omega;\varphi,x),\label{symmet beta}\\
		\forall \, y\in\mathbb{T},\quad\beta_{\kappa}(a,\omega;\varphi-\vec{\jmath}y,x)&=\beta_{\kappa}(a,\omega;\varphi,x+y)\label{traveling beta}
	\end{align}
    and
	\begin{align}
		\widehat{\beta}_{\kappa}(a,\omega;-\varphi,-z)&=-\widehat{\beta}_{\kappa}(a,\omega;\varphi,z),\label{symmet betah}\\
		\forall \, y\in\mathbb{T},\quad\widehat{\beta}_{\kappa}(a,\omega;\varphi-\vec{\jmath}y,z)&=\widehat{\beta}_{\kappa}(a,\omega;\varphi,x+z).\label{traveling betah}
	\end{align}
Moreover, the following estimates hold, for any $s \in [s_0,S]$,
	    \begin{align}
	        \|\widehat{\beta}_{\kappa}\|_{s}^{q,\gamma,\mathcal{O}}\lesssim\|\beta_{\kappa}\|_{s}^{q,\gamma,\mathcal{O}}&\lesssim_{s,q,d}\varepsilon\gamma^{-1}\big(1+\|\mathfrak{I}_0\|_{s+\sigma_1}^{q,\gamma,\mathcal{O}}\big),\label{estimazione beta}\\
            \|\Delta_{12}\beta\|_{s_1}+\|\Delta_{12}\widehat{\beta}\|_{s_1}&\lesssim_{s_1,q,d}\varepsilon\gamma^{-1}\|\Delta_{12}i\|_{s_1+\sigma_1};\label{estimazione beta12}
            \end{align}
    \item   For any $\kappa\in\{-,+\},$ the transformation $\mathscr{B}_{\kappa}$, defined as
	$$\mathscr{B}_{\kappa}\rho\triangleq\big(1+\partial_{x}\beta_{\kappa}\big)(\mathcal{B}_{\kappa}\rho),\qquad(\mathcal{B}_{\kappa}\rho)(a,\omega;\varphi,x)\triangleq\rho\big(a,\omega;\varphi,x+\beta_{\kappa}(a,\omega;\varphi,x)\big)$$
	is invertible, with inverse given by
	$$\mathscr{B}_{\kappa}^{-1}\rho=\big(1+\partial_{x}\widehat{\beta}_{\kappa}\big)(\mathcal{B}_{\kappa}^{-1}\rho),\qquad(\mathcal{B}_{\kappa}^{-1}\rho)(a,\omega;\varphi,x)\triangleq\rho\big(a,\omega;\varphi,x+\widehat{\beta}_{\kappa}(a,\omega;\varphi,x)\big).$$
    Moreover, the maps $\mathcal{B}_{\kappa}^{\pm1},\sB_{\kappa}^{\pm 1}$ are reversibility and momentum preserving, and satisfy, for any $s\in[s_0,S]$
            \begin{align}
            \|\mathcal{B}_{\kappa}^{\pm1}\rho\|_{s}^{q,\gamma,\mathcal{O}}+\|\mathscr{B}_{\kappa}^{\pm1}\rho\|_{s}^{q,\gamma,\mathcal{O}}&\lesssim_{s,q,d}\|\rho\|_{s}^{q,\gamma,\mathcal{O}}+\|\beta_{\kappa}\|_{s+\sigma_1}^{q,\gamma,\mathcal{O}}\|\rho\|_{s_0}^{q,\gamma,\mathcal{O}},\label{eB}\\
		\|(\mathcal{B}_{\kappa}^{\pm1}-\textnormal{Id})\rho\|_{s}^{q,\gamma,\mathcal{O}}+\|(\mathscr{B}_{\kappa}^{\pm1}-\textnormal{Id})\rho\|_{s}^{q,\gamma,\mathcal{O}}&\lesssim_{s,q,d}\|\beta_{\kappa}\|_{s_0+\sigma_1}^{q,\gamma,\mathcal{O}}\|\rho\|_{s+1}^{q,\gamma,\mathcal{O}}+\|\beta_{\kappa}\|_{s+\sigma_1}^{q,\gamma,\mathcal{O}}\|\rho\|_{s_0}^{q,\gamma,\mathcal{O}},\label{B-Id}\\
        \|(\Delta_{12}\mathcal{B}_{\kappa}^{\pm1})\rho\|_{s_1}+\|(\Delta_{12}\mathscr{B}_{\kappa}^{\pm1})\rho\|_{s_1}&\lesssim_{s_1,q,d}\varepsilon\gamma^{-1}\|\Delta_{12}i\|_{s_1+\sigma_1}\|\rho\|_{s_1+1};\label{B12}
	    \end{align}
    \item 
	For any $(a,\omega)\in\cO$ belonging to the Borel set
    \begin{equation}\label{transport.nonres}
        \mathcal{O}_{\textnormal{\tiny{T}},\infty}^{\gamma,\tau_1}(i_0)\triangleq\bigcap_{\kappa\in\{-,+\}}\bigcap_{{(\ell,j)\in\mathbb{Z}^d\times\mathbb{Z}\setminus\{(0,0)\}\atop\vec{\jmath}\cdot\ell+j=0}}\bigg\{(a,\omega)\in\mathcal{O}\quad\textnormal{s.t.}\quad\big|\omega\cdot\ell+j\mathtt{c}_{\kappa}(a,\omega;i_0)\big|>\frac{\gamma^{\upsilon}}{\langle\ell\rangle^{\tau_1}}\bigg\},
    \end{equation}
	the following identity holds, for any $\kappa \in \{-,+\}$,
    \begin{equation}\label{redu.trasporto.op}
        \mathscr{B}_{\kappa}^{-1}\Big(\omega\cdot\partial_{\varphi}+\partial_{x}\big(\big[\kappa a+\varepsilon f_{\kappa}(\widetilde{r})\big]\cdot\big)\Big)\mathscr{B}_{\kappa}=\omega\cdot\partial_{\varphi}+\mathtt{c}_{\kappa}\partial_{x}.
    \end{equation}
\end{enumerate}
\end{prop}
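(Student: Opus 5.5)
We will treat each component $\kappa\in\{-,+\}$ separately, since the two transport operators are decoupled. The approach is the KAM-type straightening of a quasi-periodic transport field through symplectic diffeomorphisms of the torus, in the spirit of \cite[Prop. 6.2]{HR21} and \cite{FGMP19,BM21}. The key observation is that conjugating by $\mathscr{B}_\kappa$ reduces \eqref{redu.trasporto.op} to conjugating the vector field $\omega\cdot\partial_\varphi+V_\kappa\partial_x$, with $V_\kappa\triangleq\kappa a+\varepsilon f_\kappa(\widetilde r)$, by the plain composition $\mathcal{B}_\kappa$. Indeed, $\mathscr{B}_\kappa$ is the pullback of densities, and the operator $\omega\cdot\partial_\varphi+\partial_x(V_\kappa\,\cdot)$ is the divergence form of the same field. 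Thus \eqref{redu.trasporto.op} holds as soon as
\begin{equation*}
\omega\cdot\partial_\varphi\beta_\kappa+V_\kappa(\varphi,x)\big(1+\partial_x\beta_\kappa\big)=\mathtt{c}_\kappa+\ldots
\end{equation*}
More precisely, we need $\mathcal{B}_\kappa^{-1}(\omega\cdot\partial_\varphi+V_\kappa\partial_x)\mathcal{B}_\kappa=\omega\cdot\partial_\varphi+\mathtt{c}_\kappa\partial_x$. Written in terms of the inverse diffeomorphism, this becomes the nonlinear equation
\begin{equation*}
\omega\cdot\partial_\varphi\widehat\beta_\kappa+\mathtt{c}_\kappa\partial_z\widehat\beta_\kappa=V_\kappa(\varphi,z+\widehat\beta_\kappa)-\mathtt{c}_\kappa .
\end{equation*}

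We will solve this equation by a quadratic (Nash–Moser-type) KAM iteration. At step $n$ we have a field $\omega\cdot\partial_\varphi+(\mathtt{c}_n+g_n(\varphi,x))\partial_x$ with $g_n$ small. We set $\mathtt{c}_{n+1}=\mathtt{c}_n+\langle g_n\rangle_{\varphi,x}$. We then solve the homological equation
\begin{equation*}
(\omega\cdot\partial_\varphi+\mathtt{c}_n\partial_x)\beta_n=\Pi_{N_n}(g_n-\langle g_n\rangle),
\end{equation*}
after which we conjugate by $x\mapsto x+\beta_n$. The new error $g_{n+1}$ is quadratic in $g_n$ plus the ultraviolet part $\Pi_{N_n}^\perp g_n$. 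Because $g_0=\varepsilon f_\kappa$ is a traveling wave, its Fourier support satisfies $\vec\jmath\cdot\ell+j=0$, so only these divisors occur. This is why the Cantor set \eqref{transport.nonres} carries the momentum constraint. For $\ell=0$ the constraint forces $j=0$, so that mode is absent. Moreover, the solution $\beta_n$ inherits the traveling property \eqref{traveling beta}. Reversibility is preserved as well: $g_n$ is even in $(\varphi,x)$, and dividing by $\im(\omega\cdot\ell+j\mathtt{c}_n)$ produces an odd $\beta_n$. The homological equation is solved on the whole of $\mathcal{O}$ by the cut-off extension, as in \eqref{ext.omegadivf}, with divisor $\gamma^{\upsilon}$. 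The $q$ derivatives in $(a,\omega)$ cost factors $\gamma^{-\upsilon(q+1)}\langle\ell\rangle^{\tau_1(q+1)}$. The restriction $\upsilon\leqslant\frac1{q+2}$ makes the final bound $\varepsilon\gamma^{-1}$ rather than $\varepsilon\gamma^{-\upsilon(q+2)}$.

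The composed diffeomorphism $x\mapsto x+\beta_\kappa$ is the limit of the compositions of the maps $x\mapsto x+\beta_n$. We obtain \eqref{estimazione beta} by the usual super-exponential convergence with $N_n=N_0^{(3/2)^n}$ under \eqref{sml+bnd trans}, using the tame composition estimates of Lemma \ref{lem functions}. The constant $\mathtt{c}_\kappa=\lim\mathtt{c}_n$ satisfies \eqref{estimazione ttc}, since every increment is bounded by $\varepsilon$ in the $\|\cdot\|^{q,\gamma,\mathcal{O}}$ norm. The inverse $\widehat\beta_\kappa$ is obtained from $\beta_\kappa$ by the standard fixed-point argument for inverse diffeomorphisms, which gives the bound $\|\widehat\beta\|\lesssim\|\beta\|$. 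The symmetries \eqref{symmet betah}–\eqref{traveling betah} follow by uniqueness of the inverse.

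Item (iii) will follow from the change-of-variables lemmas for $\rho\mapsto\rho(x+\beta)$, which give \eqref{eB}–\eqref{B-Id}; multiplication by $1+\partial_x\beta$ is handled by the product law. The difference estimates \eqref{estimazione ttc12}, \eqref{estimazione beta12} and \eqref{B12} come from running the same iteration on $\Delta_{12}$ in the low norm $s_1$, with the loss $\sigma_1$. We expect the main obstacle to be the convergence of the scheme in $C^q$ regularity with respect to the parameters. The difficulty is that the divisors $\omega\cdot\ell+j\mathtt{c}_n$ themselves depend on $(a,\omega)$ through $\mathtt{c}_n$, and the set $\mathcal{O}_{\textnormal{\tiny{T}},\infty}$ must be contained in every finite-step set. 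We would first try the standard device of comparing consecutive Cantor sets: the bound $|\mathtt{c}_{n+1}-\mathtt{c}_n|\lesssim\varepsilon N_{n-1}^{-a}$ ensures that the truncated conditions $|\ell|\leqslant N_n$ at step $n$ are implied by those for $\mathtt{c}_\infty$, and the explicit value $\mu_2\geqslant4\tau_1q+6\tau_1+3$ records exactly the loss this comparison requires.
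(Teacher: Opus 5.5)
Your proposal is correct and follows essentially the same route as the paper, which invokes the KAM straightening of \cite[Prop. 6.2]{HR21} and adapts it to traveling waves exactly as you describe: traveling projectors, divisors only on $\vec\jmath\cdot\ell+j=0$ (so that $\ell\neq 0$ and $|j|\lesssim\langle\ell\rangle$), cut-off extension with threshold $\gamma^{\upsilon}$, and propagation of parity and momentum at each step. The only slip is a sign: for the conjugation $u\mapsto u(\varphi,x+\beta_n)$ the homological equation should read $(\omega\cdot\partial_\varphi+\mathtt{c}_n\partial_x)\beta_n=-\Pi_{N_n}(g_n-\langle g_n\rangle)$, which does not affect the argument.
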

\begin{proof}
     We apply \cite[Prop. 6.2]{HR21} to obtain the existence of $\mathtt{c}_{\pm}$, $\beta_{\pm}$  satisfying the estimates \eqref{estimazione ttc}, \eqref{estimazione ttc12}, \eqref{estimazione beta}, \eqref{estimazione beta12}, the symmetry properties \eqref{symmet beta}-\eqref{symmet betah}, and, for any $(a,\omega)\in\mathcal{O}_{\textnormal{\tiny{T}},\infty}^{\gamma,\tau_1}(i_0)$ as in \eqref{transport.nonres} and for any $\kappa\in\{-,+\},$ the conjugation \eqref{redu.trasporto.op} holds.
	The only difference with \cite[Prop. 6.2]{HR21} is the momentum preserving properties \eqref{traveling beta}, \eqref{traveling betah} for $\beta_{\kappa}$ and $\wh\beta_{\kappa}$. These conditions are easily obtained  by the initial traveling wave property \eqref{trav fpm} and a slight modification of the iterative proof in the given reference to provide a quasi-periodic traveling wave at each step. Indeed, along the scheme we use the preservation of the momentum condition $\vec{j}\cdot\ell+j=0$ inside the Cantor sets while solving the homological equation at each step, see \eqref{transport.nonres} for the final Cantor set. The space-time projectors are replaced by the traveling projectors defined in \eqref{proj.traveling}. Note that the momentum condition also allows to modify the non-resonance lower bound in the Cantor set with respect to the given reference. This approach has been first developed in \cite[Appendix A]{BFM21-1}.
\end{proof}

We are now ready to conjugate the full operator $\cL^{(M-1)}$ in \eqref{calLM2}.

\begin{prop}\label{prop trans Ego}
    Let $M\in\N^*$ and $(\mu_2,s_1)$ as in \eqref{parametres:transport} and assume the smallness condition \eqref{sml+bnd trans} of Proposition \ref{prop trans red}. We define the matrix transformation
	\begin{equation}\label{def:matB}
	    \pmb{\mathscr{B}}\triangleq\begin{pmatrix}
		\mathscr{B}_+ & 0\\
		0 & \mathscr{B}_-
	\end{pmatrix}.
	\end{equation}
	Then, in restriction to the Borel set $\mathcal{O}_{\textnormal{\tiny{T}},\infty}^{\gamma,\tau_1}(i_0)$ in \eqref{transport.nonres}, the following identity holds
	\begin{equation}\label{frakL0}
    \mathfrak{L}^{(0)}\triangleq\pmb{\mathscr{B}}^{-1}\mathcal{L}^{(M-1)}\pmb{\mathscr{B}}=\omega\cdot\partial_{\varphi}\mathbb{I}_2+\ii\boldsymbol{\Omega}(a,D)+\begin{pmatrix}
		(\mathtt{c}_+-a)\partial_{x} & 0\\
		0 & (\mathtt{c}_-+a)\partial_{x}
	\end{pmatrix}+\mathcal{R}_{[-1]}^{(d)}+\mathcal{S}_{M},
	\end{equation}
    where:
    \begin{enumerate}[label=(\roman*)]
        \item The constants $\tc_{\pm}$ are as in Proposition \ref{prop trans red}-$(i)$;
        \item The remainder $\mathcal{R}_{[-1]}^{(d)}$ is real, reversible and momentum preserving. It admits an homogeneous expansion of degree $-1$ up to order $1-M$ in the form
	$$\mathcal{R}_{[-1]}^{(d)}\triangleq\begin{pmatrix}
		\displaystyle\sum_{p=1}^{M-1}\mathfrak{d}_{+,p}^{(d)}\,\partial_{x}^{-p} & 0\\
		0 & \displaystyle\sum_{p=1}^{M-1}\mathfrak{d}_{-,p}^{(d)}\,\partial_{x}^{-p}
	\end{pmatrix},$$
	where, for any $\kappa\in\{-,+\}$ and any $p\in\llbracket1,M-1\rrbracket,$ the function $\mathfrak{d}_{\kappa,p}^{(d)}\triangleq\mathfrak{d}_{\kappa,p}^{(d)}(a,\omega;\varphi,x)$ satisfies the symmetry properties
    \begin{align}
        \mathfrak{d}_{\kappa,p}^{(d)}(a,\omega;-\varphi,-x)&=(-1)^{p+1}\mathfrak{d}_{\kappa,p}^{(d)}(a,\omega;\varphi,x),\label{rev delta}\\
	\forall \, y\in\mathbb{T},\quad\mathfrak{d}_{\kappa,p}^{(d)}(a,\omega;\varphi-\vec{\jmath}y,x)&=\mathfrak{d}_{\kappa,p}^{(d)}(a,\omega;\varphi,x+y),\label{trav delta}
    \end{align}
    and the following estimates, for any $s\in[s_0,S],$
	\begin{align}
		\|\mathfrak{d}_{\kappa,p}^{(d)}\|_{s}^{q,\gamma,\mathcal{O}}&\lesssim_{s,q,d,M}\varepsilon\gamma^{-1}\big(1+\|\mathfrak{I}_0\|_{s+\widetilde{\sigma}_0(M)}^{q,\gamma,\mathcal{O}}\big),\label{estimazione frakd}\\
		\|\Delta_{12}\mathfrak{d}_{\kappa,p}^{(d)}\|_{s_1}&\lesssim_{s,q,d,M}\varepsilon\gamma^{-1}\|\Delta_{12}i\|_{s_1+\widetilde{\sigma}_0(M)}\label{estimazione frakd12}
	\end{align}
	for some $\widetilde{\sigma}_0(M)>0;$
	\item The operator $\mathcal{S}_{M}$ is real, reversible and momentum preserving. Moreover,  for any $l\in \mathbb{N}^d$, with $|l|\leqslant l_0$ and any $n_1,n_2\in\mathbb{R}$ with $n_1+n_2\leqslant M - (q+l_0) - 1$, the operator $\langle D\rangle^{n_1}\partial_{\varphi}^{l}\mathcal{S}_{M}\langle D\rangle^{n_2}$ is $\mathcal{D}^{q}$-tame with tame constants satisfying, for any $s\in[s_0,S]$,
	\begin{align}
		\mathfrak{M}_{\langle D\rangle^{n_1}\partial_{\varphi}^{l}\mathcal{S}_{M}\langle D\rangle^{n_2}}(s)&\lesssim_{s,q,d,M}\varepsilon\gamma^{-1}\big(1+\|\mathfrak{I}_0\|_{s+\widetilde{\sigma}_{0}(M,l_0)}^{q,\gamma,\mathcal{O}}\big),\label{estimazione calSM}\\
		\|\langle  D\rangle^{n_1}\pa_{\vf}^{l}\Delta_{12}\mathcal{S}_{M}\langle D\rangle^{n_2}\|_{\mathcal{L}(\mathbf{H}^{s_1})}&\lesssim_{s,q,d,M}\varepsilon\gamma^{-1}\|\Delta_{12}i\|_{s_1+\widetilde{\sigma}_{0}(M,l_0)}\label{estimazione calSM12}
	\end{align}
    for some $\widetilde{\sigma}_{0}(M,l_0)>0.$
    \end{enumerate}
\end{prop}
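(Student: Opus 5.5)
Since $\pmb{\mathscr{B}}$ is block-diagonal, I will conjugate each block of $\mathcal{L}^{(M-1)}$ in \eqref{calLM2} separately, using for the $\kappa$-block the splitting
$$\omega\cdot\partial_\varphi+\kappa\,\ii\Omega(a,D)+\varepsilon\partial_x f_\kappa+\sum_{p=1}^{M-1}\mathfrak{r}_{\kappa,p}^{(d)}\partial_x^{-p}+(\mathbf{S}_M)_{\kappa\kappa},$$
with the off-diagonal blocks of $\mathbf{S}_M$ treated alongside $(\mathbf{S}_M)_{\kappa\kappa}$. By Lemma \ref{lem prop eig VP}-(iv), I write $\kappa\,\ii\Omega(a,D)=\kappa a\partial_x+\kappa\sum_{p=1}^{\mathfrak{N}_M}\alpha_pa^{1-2p}\partial_x^{1-2p}+\kappa S_M(a,D)$. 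The transport part $\omega\cdot\partial_\varphi+\partial_x\big((\kappa a+\varepsilon f_\kappa)\cdot\big)$ is then conjugated by Proposition \ref{prop trans red}-(iv): on $\mathcal{O}_{\textnormal{\tiny{T}},\infty}^{\gamma,\tau_1}(i_0)$ it becomes exactly $\omega\cdot\partial_\varphi+\mathtt{c}_\kappa\partial_x$. I rewrite this as $\omega\cdot\partial_\varphi+\kappa\,\ii\Omega(a,D)+(\mathtt{c}_\kappa-\kappa a)\partial_x$ minus the homogeneous tail $\kappa\sum_p\alpha_pa^{1-2p}\partial_x^{1-2p}+\kappa S_M(a,D)$, and this produces the displayed principal part of \eqref{frakL0}.

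Next I treat the lower-order pieces: the homogeneous tail of $\Omega(a,D)$ (now appearing both conjugated and unconjugated) and $\mathfrak{r}^{(d)}_{\kappa,p}\partial_x^{-p}$. For these I apply the Egorov Proposition \ref{Egorov thm} to $\mathscr{B}_\kappa^{-1}(\mathfrak{p}\partial_x^{m})\mathscr{B}_\kappa$ for $m\le -1$, expanded up to order $-M$. For the $\alpha_p$ terms the leading symbol is unchanged, since $\mathscr{B}^{-1}\partial_x^{1-2p}\mathscr{B}-\partial_x^{1-2p}$ has coefficients of size $O(\|\beta_\kappa\|)=O(\varepsilon\gamma^{-1})$. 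Hence the differences, together with the conjugated $\mathfrak{r}$-terms, define the coefficients $\mathfrak{d}^{(d)}_{\kappa,p}$. The estimate \eqref{estimazione frakd} then follows from \eqref{estimazione beta}, \eqref{estim frakrkp} and the quantitative bounds of Proposition \ref{Egorov thm}, with the loss $\widetilde\sigma_0(M)$ collecting the losses there. The difference estimates \eqref{estimazione frakd12} follow in the same way from \eqref{estimazione beta12} and \eqref{estim frakrkp12}.

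For the symmetries, $\mathscr{B}_\kappa^{\pm1}$ is reversibility and momentum preserving (Proposition \ref{prop trans red}-(iii)), and the input coefficients satisfy \eqref{rev rm diag final}-\eqref{trav rm diag final}. Remark \ref{compo:RevMom} and Lemma \ref{lem:revmomHE} then give that the conjugated operator is reversible and momentum preserving. Matching homogeneous components, each $\mathfrak{d}^{(d)}_{\kappa,p}$ inherits the parity $(-1)^{p+1}$ in \eqref{rev delta} and the traveling property \eqref{trav delta}. To check the parity directly, I will use that $\beta_\kappa$ is odd by \eqref{symmet beta} and that the Egorov coefficients are polynomial in derivatives of $\beta_\kappa$ and $\mathfrak{r}$.

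The remainder $\mathcal{S}_M$ gathers $\pmb{\mathscr{B}}^{-1}\mathbf{S}_M\pmb{\mathscr{B}}$, the off-diagonal blocks $\mathscr{B}_\kappa^{-1}(\mathbf{S}_M)_{\kappa,-\kappa}\mathscr{B}_{-\kappa}$, the conjugated $S_M(a,D)$ and the Egorov remainders $R_N$. I expect this to be the main obstacle. Conjugation by the diffeomorphisms is not pseudo-differential in the class used, especially in the off-diagonal blocks, which mix the two distinct changes of variables $\beta_+$ and $\beta_-$. So the argument must pass to the $\mathcal{D}^q$-tame framework of Section \ref{sec:tameOP}. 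For a term $\mathscr{B}_\kappa^{-1}S\,\mathscr{B}_{\kappa'}$ with $S\in OPS^{-M}$, I write $\langle D\rangle^{n_1}\partial_\varphi^l\mathscr{B}_\kappa^{-1}S\mathscr{B}_{\kappa'}\langle D\rangle^{n_2}$. I then distribute $\partial_\varphi^l$ by Leibniz onto the factors, move $\langle D\rangle^{n_1}$ and $\langle D\rangle^{n_2}$ through $\mathscr{B}^{\pm1}$ using their tame bounds \eqref{eB} (each commutation costs derivatives, which is why a margin is needed), and absorb the powers into $S$. Each parameter derivative up to order $q$ and each $\varphi$-derivative costs one order. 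This accounts for the constraint $n_1+n_2\le M-(q+l_0)-1$ and gives \eqref{estimazione calSM}, with the $\varepsilon\gamma^{-1}$ coming from the prefactor $\varepsilon$ in \eqref{est.bSM} and from the $\beta$-bounds. The difference estimate \eqref{estimazione calSM12} follows analogously from \eqref{B12}.
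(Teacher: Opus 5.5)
Your proposal follows essentially the same route as the paper: conjugate the transport part via Proposition \ref{prop trans red}-$(iv)$, add and subtract the homogeneous tail of $\mathrm{i}\,\Omega(a,D)$, apply Proposition \ref{Egorov thm} to that tail and to $\mathfrak{r}^{(d)}_{\kappa,p}\partial_x^{-p}$, and put $\pmb{\mathscr{B}}^{-1}\mathbf{S}_M\pmb{\mathscr{B}}$, the Egorov remainders and $\mathscr{B}_\kappa^{-1}S\mathscr{B}_\kappa-S$ into $\mathcal{S}_M$, which is then estimated in the $\mathcal{D}^q$-tame framework via Leibniz. The one step you leave implicit is where the smallness of that last difference comes from: $\mathscr{B}^{-1}S\mathscr{B}-S=(\mathscr{B}^{-1}-\textnormal{Id})S\mathscr{B}+S(\mathscr{B}-\textnormal{Id})$, which loses one derivative by \eqref{B-Id}; the paper absorbs this loss by expanding $\Omega(a,D)$ up to a remainder $S_{M+1}(a,D)\in OPS^{-M-1}$, whereas with your $S_M(a,D)\in OPS^{-M}$ this loss is exactly the extra $-1$ in the constraint $n_1+n_2\leqslant M-(q+l_0)-1$, so your version still fits the stated bound, with no spare margin.
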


\begin{proof}
	According to \eqref{calLM2} in Proposition \ref{prop.blockanti.final}, \eqref{redu.trasporto.op} in Proposition \ref{prop trans red}-$(iv)$ and \eqref{def:matB}, we have
    \begin{equation}\label{conj:LM}
        \begin{aligned}
        \pmb{\mathscr{B}}^{-1}\mathcal{L}^{(M-1)}\pmb{\mathscr{B}}&=\omega\cdot\partial_{\varphi}\mathbb{I}_2+\begin{pmatrix}
            \mathtt{c}_+\partial_{x} & 0\\
            0 & \mathtt{c}_-\partial_{x}\end{pmatrix}+\pmb{\mathscr{B}}^{-1}\bigg[\ii\boldsymbol{\Omega}(a,D)-\begin{pmatrix}
                a\partial_x & 0\\
                0 & -a\partial_{x}
                \end{pmatrix}\bigg]\pmb{\mathscr{B}}\\
                &\quad+\pmb{\mathscr{B}}^{-1}\mathbf{R}_{[-1]}^{(d)}\pmb{\mathscr{B}}+\pmb{\mathscr{B}}^{-1}\mathbf{S}_{M}\pmb{\mathscr{B}}.
    \end{aligned}
    \end{equation}
    Our purpose now is to study the conjugation effect on the lower order terms. 
    \\[1mm]
    $\blacktriangleright$ {\sc Action on the negative orders of the equilibrium Fourier multiplier:} Observe, in view of \eqref{bold.Omega} and \eqref{expand up to smooth omg}, that
    \begin{equation}\label{decomp iboldOmg}
        \ii\boldsymbol{\Omega}(a,D)-\begin{pmatrix}
                a\partial_x & 0\\
                0 & -a\partial_{x}
                \end{pmatrix}=\ii\boldsymbol{\Omega}_{\textnormal{neg},M+1}(a,D)+\widetilde{\mathbf{S}}_{M+1}(a,D),
    \end{equation}
    where
    $$\ii\boldsymbol{\Omega}_{\textnormal{neg},M+1}(a,D)\triangleq\begin{pmatrix}
                    \ii\Omega_{\textnormal{neg},M+1}(a,D) & 0\\
                    0 & -\ii\Omega_{\textnormal{neg},M+1}(a,D)
        \end{pmatrix},$$
    with, recalling the notations \eqref{frakNM.def}-\eqref{hom exp omgD},
    \begin{equation}\label{iomg-St}
        \ii\Omega_{\textnormal{neg},M+1}(a,D)\triangleq\sum_{p=1}^{\mathfrak{N}_{M+1}}\alpha_{p}a^{1-2p}\partial_{x}^{1-2p}\qquad\textnormal{and}\qquad\widetilde{\mathbf{S}}_{M+1}(a,D)\triangleq\begin{pmatrix}
        S_{M+1}(a,D) & 0\\
        0 & -S_{M+1}(a,D)
    \end{pmatrix}.
    \end{equation}
    Applying Egorov's Theorem (Proposition \ref{Egorov thm}), we get for any $\kappa\in\{-,+\},$ and $p\in\{1,...,\mathfrak{N}_{M+1}\},$
	\begin{equation}\label{conjug:dx}
	    \mathscr{B}_{\kappa}^{-1}\circ\partial_{x}^{1-2p}\circ\mathscr{B}_{\kappa}=\sum_{k=0}^{M-2p}\mathfrak{p}_{1-2p-k}^{[p,\kappa]}\partial_x^{1-2p-k}+\mathfrak{R}_{M,p,\kappa}^{[1]},
	\end{equation}
	with by virtue of Lemma \ref{lem:revmomHE} and Proposition \ref{prop trans red}-$(iii)$, for any $k\in\llbracket0.M-2p\rrbracket,$ the functions $\mathfrak{p}_{1-2p-k}^{[p,\kappa]}\triangleq\mathfrak{p}_{1-2p-k}^{[p,\kappa]}(a,\omega;\varphi,x)$ satisfying
    \begin{align}
	\mathfrak{p}_{1-2p-k}^{[p,\kappa]}(a,\omega;-\varphi,-x)&=(-1)^{k}\mathfrak{p}_{1-2p-k}^{[p,\kappa]}(a,\omega;\varphi,x), \label{rev fpk} \\
	\forall \, y\in\mathbb{T},\quad\mathfrak{p}_{1-2p-k}^{[p,\kappa]}(a,\omega;\varphi-\vec{\jmath}y,x)&=\mathfrak{p}_{1-2p-k}^{[p,\kappa]}(a,\omega;\varphi,x+y), \label{trav fpk}
\end{align}
and the operator $\mathfrak{R}_{M,p,\kappa}^{[1]}$ is real, reversible and momentum preserving.
In addition, according to \eqref{Egorov estim} and \eqref{estimazione beta}, for any $s\in[s_0,S]$ (here and in the sequel, the value of $\widetilde{\sigma}_0(M)>0$ may increase from line to line),
	\begin{equation}\label{efrp1}
	    \big\|\mathfrak{p}_{1-2p}^{[p,\kappa]}-1\big\|_{s}^{q,\gamma,\mathcal{O}}+\sup_{k\in\llbracket1,M-2p\rrbracket}\big\|\mathfrak{p}_{1-2p-k}^{[p,\kappa]}\big\|_{s}^{q,\gamma,\mathcal{O}}\lesssim_{s,q,d,M}\varepsilon\gamma^{-1}\big(1+\|\mathfrak{I}_0\|_{s+\widetilde{\sigma}_0(M)}^{q,\gamma,\mathcal{O}}\big)
	\end{equation}
	and, by \eqref{Egorov estim12} and \eqref{estimazione beta12}
    \begin{equation}\label{e12frp1}
        \max_{k\in\llbracket 0,M-2p\rrbracket}\big\|\Delta_{12}\mathfrak{p}_{1-2p-k}^{[p,\kappa]}\big\|_{s_1}\lesssim_{s_1,q,d,M}\varepsilon\gamma^{-1}\|\Delta_{12}i\|_{s_1+\widetilde{\sigma}_0(M)}.
    \end{equation}
    Moreover, from \eqref{Egorov Dq tame}-\eqref{Egorov 12R} and \eqref{estimazione beta}-\eqref{estimazione beta12}, one has for any $l\in \mathbb{N}^d$ with $|l|\leqslant l_0$ and $n_1,n_2\in\mathbb{N}$ with $n_1+n_2\leqslant M-(q+l_0)+2p-1$,
	\begin{align}
		\mathfrak{M}_{\langle D\rangle^{n_1}\partial_{\varphi}^{l}\mathfrak{R}_{M,p,\kappa}^{[1]}\langle D\rangle^{n_2}}(s)\lesssim_{s,q,d,M}\varepsilon\gamma^{-1}\big(1+\|\mathfrak{I}_0\|_{s+\widetilde{\sigma}_0(M,l_0)}^{q,\gamma,\mathcal{O}}\big),\label{eRR1}\\
		\|\langle D\rangle^{n_1}\partial_{\varphi}^{l}\Delta_{12}\mathfrak{R}_{M,p,\kappa}^{[1]}\langle D\rangle^{n_2}\|_{\mathcal{L}(\mathbf{H}^{s_1})}\lesssim_{s_1,q,d,M}\|\Delta_{12}i\|_{s_1+\widetilde{\sigma}_0(M,l_0)}\label{e12RR1}
	\end{align}
    for some $\sigma(M,l_0)>0.$ Combining \eqref{iomg-St} and \eqref{conjug:dx}, we obtain
    \begin{equation}\label{conj:iomglow}
        \begin{aligned}
        \mathscr{B}_{\kappa}^{-1}\ii\Omega_{\textnormal{neg},M+1}(a,D)\mathscr{B}_{\kappa}&=\sum_{p=1}^{\mathfrak{N}_{M+1}}\sum_{k=0}^{M-2p}\alpha_pa^{1-2p}\mathfrak{p}_{1-2p-k}^{[p,\kappa]}\partial_{x}^{1-2p-k}+\sum_{p=1}^{\mathfrak{N}_{M+1}}\alpha_pa^{1-2p}\mathfrak{R}_{M,p,\kappa}^{[1]}\\
        &=\ii\Omega_{\textnormal{neg},M+1}(a,D)+\sum_{p=1}^{\mathfrak{N}_{M+1}}\alpha_pa^{1-2p}\big(\mathfrak{p}_{1-2p}^{[p,\kappa]}-1\big)\partial_{x}^{1-2p}\\
        &\quad+\sum_{p=1}^{\mathfrak{N}_{M+1}}\sum_{k=1}^{M-2p}\alpha_pa^{1-2p}\mathfrak{p}_{1-2p-k}^{[p,\kappa]}\partial_{x}^{1-2p-k}+\sum_{p=1}^{\mathfrak{N}_{M+1}}\alpha_pa^{1-2p}\mathfrak{R}_{M,p,\kappa}^{[1]}.
    \end{aligned}
    \end{equation}
	$\blacktriangleright$ {\sc Action on the diagonal remainder term:} Using \eqref{def:matB} and \eqref{bfRd}, we write
	\begin{equation}\label{conj.pmbRd}
	    \pmb{\mathscr{B}}^{-1}\mathbf{R}_{[-1]}^{(d)}\pmb{\mathscr{B}}=\begin{pmatrix}
		\displaystyle\sum_{p=1}^{M-1}\mathscr{B}_{+}^{-1}\mathfrak{r}_{+,p}\partial_{x}^{-p}\mathscr{B}_{+} & 0\\
		0 & \displaystyle\sum_{p=1}^{M-1}\mathscr{B}_{-}^{-1}\mathfrak{r}_{-,p}\partial_{x}^{-p}\mathscr{B}_{-}
	\end{pmatrix}.
	\end{equation}
Employing one more time Egorov's Theorem in Proposition \ref{Egorov thm}, we get for any $\kappa\in\{-,+\}$ and any $p\in\llbracket 1,M-1\rrbracket,$
\begin{equation}\label{conj:rdx}
    \mathscr{B}_{\kappa}^{-1}\mathfrak{r}_{\kappa,p}\partial_{x}^{-p}\mathscr{B}_{\kappa}=\sum_{p_2=0}^{M-1-p}\widetilde{\mathfrak{r}}_{\kappa,p,p_2}\partial_{x}^{-p-p_2}+\mathfrak{R}_{M,p,\kappa}^{[2]},
\end{equation}
where, according to \eqref{Egorov estim},  \eqref{estimazione beta}, \eqref{estim frakrkp} and \eqref{Egorov estim12}, \eqref{estimazione beta12}, \eqref{estim frakrkp12}, we have for any $s\in[s_0,S],$
\begin{align}
	\|\widetilde{\mathfrak{r}}_{k,p,0}-\mathfrak{r}_{k,p}\|_{s}^{q,\gamma,\mathcal{O}}+\max_{p_2\in\llbracket 1,M-1-p\rrbracket}\|\widetilde{\mathfrak{r}}_{k,p,p_2}\|_{s}^{q,\gamma,\mathcal{O}}&\lesssim_{s,q,d,M}\varepsilon\gamma^{-1}\big(1+\|\mathfrak{I}_0\|_{s+\widetilde{\sigma}_0(M)}^{q,\gamma,\mathcal{O}}\big),\label{estim tildefrakrkp}\\
	\max_{p_2\in\llbracket0,M-1-p\rrbracket}\|\Delta_{12}\widetilde{\mathfrak{r}}_{k,p,p_2}\|_{s_1}&\lesssim_{s_1,q,d,M}\varepsilon\gamma^{-1}\|\Delta_{12}i\|_{s_1+\widetilde{\sigma}_{0}(M)}.\label{estim tildefrakrkp12}
\end{align}
In addition, by virtue of \eqref{rev rm diag final}-\eqref{trav rm diag final}, Lemma \ref{lem:revmomHE} and Proposition \ref{prop trans red}-$(iii)$, for any $p_2\in\llbracket 0,M-1-p\rrbracket,$ the function $\widetilde{\mathfrak{r}
}_{\kappa,p,p_2}^{(d)}\triangleq\widetilde{\mathfrak{r}}_{\kappa,p,p_2}^{(d)}(a,\omega;\varphi,x)\in\mathbb{R}$ satisfying
\begin{align}
	\widetilde{\mathfrak{r}}_{\kappa,p,p_2}(a,\omega;-\varphi,-x)&=(-1)^{p+p_2+1}\widetilde{\mathfrak{r}}_{\kappa,p,p_2}(a,\omega;\varphi,x), \label{rev trm diag} \\
	\forall \, y\in\mathbb{T},\quad\widetilde{\mathfrak{r}}_{\kappa,p,p_2}(a,\omega;\varphi-\vec{\jmath}y,x)&=\widetilde{\mathfrak{r}}_{\kappa,p,p_2}(a,\omega;\varphi,x+y), \label{trav trm diag}
\end{align}
and the operator $\mathfrak{R}_{M,p,\kappa}^{[2]}$ is real, reversible and momentum preserving.
Moreover, from \eqref{Egorov Dq tame}, \eqref{Egorov 12R}, \eqref{estim frakrkp}, \eqref{estim frakrkp12}, \eqref{estimazione beta} and \eqref{estimazione beta12} for any $l\in \mathbb{N}^d$ with $|l|\leqslant l_0$ and $n_1,n_2\in\mathbb{R}$ with $n_1+n_2\leqslant M-(q+l_0)+p$,
\begin{align}
		\mathfrak{M}_{\langle D\rangle^{n_1}\partial_{\varphi}^{l}\mathfrak{R}_{M,p,\kappa}^{[2]}\langle D\rangle^{n_2}}(s)\lesssim_{s,q,d,M}\varepsilon\gamma^{-1}\big(1+\|\mathfrak{I}_0\|_{s+\widetilde{\sigma}_0(M,l_0)}^{q,\gamma,\mathcal{O}}\big),\label{eRR2}\\
		\|\langle D\rangle^{n_1}\partial_{\varphi}^{l}\Delta_{12}\mathfrak{R}_{M,p,\kappa}^{[2]}\langle D\rangle^{n_2}\|_{\mathcal{L}(\mathbf{H}^{s_1})}\lesssim_{s_1,q,d,M}\varepsilon\gamma^{-1}\|\Delta_{12}i\|_{s_1+\widetilde{\sigma}_0(M,l_0)}.\label{e12RR2}
	\end{align}
$\blacktriangleright$ {\sc Conclusion:}
The decomposition \eqref{frakL0} is obtained by putting together \eqref{conj:LM}, \eqref{conj:iomglow}, \eqref{conj.pmbRd} and \eqref{conj:rdx}, setting, for $\kappa\in\{-,+\},$
\begin{equation}\label{def:HOMdelta}
    \begin{aligned}
    \sum_{p=1}^{M-1}\mathfrak{d}_{\kappa,p}^{(d)}\partial_{x}^{-p}&\triangleq\sum_{p=1}^{\mathfrak{N}_{M+1}}\alpha_pa^{1-2p}\left(\mathfrak{p}_{1-2p}^{[p,\kappa]}-1\right)\partial_{x}^{1-2p}+\sum_{p=1}^{\mathfrak{N}_{M+1}}\sum_{k=1}^{M-2p}\alpha_pa^{1-2p}\mathfrak{p}_{1-2p-k}^{[p,\kappa]}\partial_{x}^{1-2p-k}\\
        &\quad+\sum_{p_2=0}^{M-1-p}\widetilde{\mathfrak{r}}_{\kappa,p,p_2}\partial_{x}^{-p-p_2}
\end{aligned}
\end{equation}
and 
\begin{align}
    \mathcal{S}_{M}&\triangleq\pmb{\mathscr{B}}^{-1}\mathbf{S}_{M}\pmb{\mathscr{B}}+\begin{pmatrix}
    \mathfrak{R}_{M,+} & 0\\
    0 & \mathfrak{R}_{M,-}
\end{pmatrix},\label{def:calS0M00}\\
\mathfrak{R}_{M,\kappa}&\triangleq\mathscr{B}_{\kappa}^{-1}S_{M+1}(a,D)\mathscr{B}_{\kappa}-S_{M+1}(a,D)+\sum_{p=1}^{\mathfrak{N}_{M+1}}\alpha_pa^{1-2p}\mathfrak{R}_{M,p,\kappa}^{[1]}+\sum_{p=1}^{M-1}\mathfrak{R}_{M,p,\kappa}^{[2]}.\label{def:frakRMkappa00}
\end{align}
Putting together \eqref{def:HOMdelta}, \eqref{rev fpk}, \eqref{trav fpk}, \eqref{rev trm diag} and \eqref{trav trm diag}, we obtain \eqref{rev delta}-\eqref{trav delta}. The operator $\mathcal{S}_M$ is real, reversible and momentum preserving by virtue of \eqref{def:calS0M00}-\eqref{def:frakRMkappa00}, Lemma \ref{lem prop eig VP}-(iv), Proposition \ref{prop trans red}-$(iii)$ and because $\mathfrak{R}_{M,p,\kappa}^{[1]},\mathfrak{R}_{M,p,\kappa}^{[2]}$ are real reversible and momentum preserving.
According to the definition \eqref{def:HOMdelta}, the estimates \eqref{estimazione frakd}-\eqref{estimazione frakd12} follow from \eqref{efrp1}-\eqref{e12frp1} and \eqref{estim tildefrakrkp}-\eqref{estim tildefrakrkp12}.
Now, we shall obtain the estimates \eqref{estimazione calSM}-\eqref{estimazione calSM12}. First observe that
$$\mathscr{B}_{\kappa}^{-1}S_{M+1}(a,D)\mathscr{B}_{\kappa}-S_{M+1}(a,D)=(\mathscr{B}_{\kappa}^{-1}-\textnormal{Id})S_{M+1}(a,D)\mathscr{B}_{\kappa}+S_{M+1}(a,D)(\mathscr{B}_{\kappa}-\textnormal{Id}).$$
Moreover, for any $l\in\N^d$ with $|l|\leqslant l_0$ and any $n_1+n_2+l_0+q\leqslant M$, we have by Leibniz's rule
\begin{align*}
    \langle D\rangle^{n_1}\partial_{\varphi}^l\pmb{\mathscr{B}}^{-1}\boldsymbol{S}_M\pmb{\mathscr{B}}\langle D\rangle^{n_2}=\sum_{(l_1,l_2,l_3)\in(\mathbb{N}^{d})^3\atop |l_1|+|l_2|+|l_3|=|l|}c_{l_1,l_2,l_3}\big(\langle D\rangle^{n_1}\partial_{\varphi}^{l_1}\pmb{\mathscr{B}}^{-1}\langle D\rangle^{-n_1}\big)\big(\langle D\rangle^{n_1}\partial_{\varphi}^{l_2}\mathbf{S}_M\langle D\rangle^{n_2}\big)\big(\langle D\rangle^{-n_2}\partial_{\varphi}^{l_3}\pmb{\mathscr{B}}\langle D\rangle^{n_2}\big).
\end{align*}
The $c_{l_1,l_2,l_3}$ are real constants coming from Leibniz's formula. Note that the estimate \eqref{B-Id} loses one derivative in the test function and that is the reason why we had to push the expansion of the equilibrium symbol up to order $M+1$ in \eqref{decomp iboldOmg}. With this in hand, it suffices to combine \eqref{def:calS0M00}, \eqref{def:frakRMkappa00}, \eqref{eB}, \eqref{est.bSM}, Proposition \ref{properties OPS}-$(v)$, \eqref{B-Id}, \eqref{estimazione beta}, \eqref{eRR1} and \eqref{eRR2} to get \eqref{estimazione calSM}. Similarly, the estimate \eqref{B12} also loses one derivative in the test function. Therefore, for $n_1+n_2+l_0+q+1\leqslant M$, the estimate \eqref{estimazione calSM12} follows by an analogous argument from \eqref{def:calS0M00}, \eqref{def:frakRMkappa00}, \eqref{est12.bSM}, \eqref{eB}, \eqref{estimazione beta}, \eqref{B12}, \eqref{eRR1}, \eqref{e12RR1}, \eqref{eRR2} and \eqref{e12RR2}. This concludes the proof of Proposition \ref{prop trans Ego}.
\end{proof}

\subsection{Reduction of the operator $\mathscr{L}_{\omega}$}
The purpose of this section is to study the action of the previous transformations on of the linear operator $\mathscr{L}_{\omega}$ in \eqref{sL_omega}, acting only on the normal directions.
Recall that in Sections \ref{sect.block.anti}, \ref{sect.transport}, the linear operator $\mathcal{L}^{(0)}$ in Proposition \ref{prop L0} has been conjugated, under the real, reversibility and momentum preserving map
\begin{equation}\label{map.M}
   \mathscr{M}\triangleq \boldsymbol{\Phi}_{M-1}\pmb{\mathscr{B}},
\end{equation}
for any $(a,\omega)\in \mathcal{O}_{\textnormal{\tiny{T}},\infty}^{\gamma,\tau_1}(i_0)$, $n\in\mathbb{N}$ as in \eqref{transport.nonres}, into the real, reversible and momentum preserving operator
\begin{equation}\label{conju.1}
	\mathfrak{L}^{(0)} \triangleq \mathscr{M}^{-1} \mathcal{L}^{(0)} \mathscr{M}=\pmb{\mathscr{B}}^{-1}  \boldsymbol{\Phi}_{M-1}^{-1} \mathcal{L}^{(0)} \boldsymbol{\Phi}_{M-1}\pmb{\mathscr{B}},
\end{equation}
where the maps $ \boldsymbol{\Phi}_{M-1}$, $\pmb{\mathscr{B}}$ are defined respectively in Proposition \ref{prop.blockanti.final} and Proposition \ref{prop trans Ego}. The operator $\mathfrak{L}^{(0)}$ is defined for any $(a,\omega) \in \mathcal{O}$. As we shall discuss now, a similar conjugation result holds for the projected operator $\mathscr{L}_\omega$ in \eqref{sL_omega}, acting in the normal subspace  $\mathbf{H}_{\overline{\mathbb{S}}_0}^{\perp}$. For this aim, we introduce the map
\begin{equation}\label{map.M.perp}
	\mathscr{M}_{\perp}\triangleq\Pi_{\overline{\mathbb{S}}_0}^\perp\mathscr{M}\Pi_{\overline{\mathbb{S}}_0}^{\perp}.
\end{equation}

\begin{lem}\label{lem.inverse.Mperp}
	Let $M\in\mathbb{N}^*$ and $s_1$ as in \eqref{parametres:transport}. There exists $\widetilde{\sigma}_{1}(M)\triangleq\widetilde{\sigma}_{1}(q,d,M)>0$ such that, assuming \eqref{ansatz.fI.small} with $s_0+\sigma_{\rm max}\geqslant s_1+\widetilde{\sigma}_{1}(M)$, the following holds. 
    \begin{enumerate}[label=(\roman*)]
        \item The map $\mathscr{M}$ in \eqref{map.M} is a real, reversibility and momentum preserving invertible transformation satisfying the following estimates, for all $s\in [s_0,S]$,
	\begin{align}
    \|\mathscr{M}^{\pm1}\rho \|_{s}^{q,\gamma,\mathcal{O}} &\lesssim_{s,q,d,M}\|\rho\|_{s}^{q,\gamma,\mathcal{O}}+\varepsilon \gamma^{-1}\|\mathfrak{I}_{0}\|_{s+\widetilde{\sigma}_{1}(M)}^{q,\gamma,\mathcal{O}}   \|\rho\|_{s_0}^{q,\gamma,\mathcal{O}},\label{e:Mcont}\\
		\|(\mathscr{M}^{\pm1}-\mathbb{I}_2)\rho \|_{s}^{q,\gamma,\mathcal{O}} &\lesssim_{s,q,d,M} \varepsilon \gamma^{-1} \big( \|\rho\|_{s+1}^{q,\gamma,\mathcal{O}}  +  \| \mathfrak{I}_{0}\|_{s+\widetilde{\sigma}_{1}(M)}^{q,\gamma,\mathcal{O}}   \|\rho\|_{s_0+1}^{q,\gamma,\mathcal{O}}  \big),\label{e:scrReps}\\
        \|\Delta_{12}\mathscr{M}^{\pm1}\rho\|_{s_1}&\lesssim_{s_1,q,d,M} \varepsilon\gamma^{-1}\|\Delta_{12} i\|_{s_1+\widetilde{\sigma}_{1}(M)}\|\rho\|_{s_1+1};\label{e12:scrReps}
        \end{align}
        \item The map $\mathscr{M}_{\perp}$ in \eqref{map.M.perp} is a real, reversibility and momentum preserving invertible transformation satisfying the following estimates, for all $s\in[s_0,S]$,
		\begin{align}
				\|\mathscr{M}_\perp^{\pm 1}\rho \|_{s}^{q,\gamma,\mathcal{O}} & \lesssim_{s,q,d,M}\|\rho\|_{s}^{q,\gamma,\mathcal{O}}+\varepsilon\gamma^{-1}\|\mathfrak{I}_{0}\|_{s+\widetilde{\sigma}_{1}(M)}^{q,\gamma,\mathcal{O}}\|\rho\|_{s_0}^{q,\gamma,\mathcal{O}},\label{e:Mperp} \\
			\|\Delta_{12} \mathscr{M}_\perp^{\pm 1}\rho\|_{s_1}&\lesssim_{s_1,q,d,M} \varepsilon\gamma^{-1}\|\Delta_{12} i\|_{s_1+\widetilde{\sigma}_{1}(M)}\|\rho\|_{s_1+1}.  \label{e12:Mperp}
		\end{align}
    \end{enumerate}
\end{lem}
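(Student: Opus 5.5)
For item (i), I would treat $\mathscr{M}=\boldsymbol{\Phi}_{M-1}\pmb{\mathscr{B}}$ as a composition of two already controlled maps. Invertibility, reality, reversibility and momentum preservation come directly from Proposition \ref{prop.blockanti.final} (for $\boldsymbol{\Phi}_{M-1}^{\pm1}$) and Proposition \ref{prop trans red}-(iii) (for $\mathscr{B}_\pm^{\pm1}$). These properties are stable under composition by Remark \ref{compo:RevMom}, and $\mathscr{M}^{-1}=\pmb{\mathscr{B}}^{-1}\boldsymbol{\Phi}_{M-1}^{-1}$.

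For the estimates, I would first turn the pseudo-differential bound \eqref{e-PhiM} into an action bound on $H^s$ using Proposition \ref{properties OPS}. Since $\boldsymbol{\Phi}_{M-1}^{\pm1}-\mathbb{I}_2$ has order $-2$, this gives
$$\|(\boldsymbol{\Phi}_{M-1}^{\pm1}-\mathbb{I}_2)\rho\|_s^{q,\gamma,\mathcal{O}}\lesssim \varepsilon\big(\|\rho\|_s^{q,\gamma,\mathcal{O}}+\|\mathfrak{I}_0\|_{s+\sigma(M)}^{q,\gamma,\mathcal{O}}\|\rho\|_{s_0}^{q,\gamma,\mathcal{O}}\big).$$
To get \eqref{e:scrReps}, I would expand
$$\mathscr{M}-\mathbb{I}_2=(\boldsymbol{\Phi}_{M-1}-\mathbb{I}_2)\pmb{\mathscr{B}}+(\pmb{\mathscr{B}}-\mathbb{I}_2),$$
and treat the inverse in the same way. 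The second term is bounded by \eqref{B-Id}, which accounts for the loss of one derivative and the factor $\varepsilon\gamma^{-1}$ through \eqref{estimazione beta}. The first term is bounded by the estimate above combined with \eqref{eB}. Products of the form $\|\mathfrak{I}_0\|_{s_0+\sigma}$ are absorbed using the ansatz \eqref{ansatz.fI.small}, and high norms are collected by interpolation, with the usual tame bookkeeping and $\widetilde\sigma_1(M)\geqslant\sigma(M)+\sigma_1$. Estimate \eqref{e:Mcont} then follows from \eqref{e:scrReps} without the $+1$ loss, simply by using \eqref{eB} in place of \eqref{B-Id}. For \eqref{e12:scrReps}, I would write
$$\Delta_{12}\mathscr{M}=(\Delta_{12}\boldsymbol{\Phi}_{M-1})\pmb{\mathscr{B}}(i_2)+\boldsymbol{\Phi}_{M-1}(i_1)\Delta_{12}\pmb{\mathscr{B}}$$
and bound it with \eqref{e12-PhiM}, \eqref{B12}, \eqref{eB}, working at the fixed regularity $s_1$ where $\|\mathfrak{I}_\ell\|_{s_1+\sigma}\leqslant1$.

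For item (ii), the symmetry properties of $\mathscr{M}_\perp$ hold because $\Pi^\perp_{\overline{\mathbb{S}}_0}$ is a Fourier projector on a symmetric set of modes, and it is real, reversibility and momentum preserving. The estimates for $\mathscr{M}_\perp$ itself are then immediate from (i). The real issue, and the step I expect to be the main obstacle, is invertibility of $\mathscr{M}_\perp$ with tame bounds, because the projection destroys the group structure. I would write
$$\mathscr{M}_\perp=\Pi^\perp_{\overline{\mathbb{S}}_0}+\Pi^\perp_{\overline{\mathbb{S}}_0}(\mathscr{M}-\mathbb{I}_2)\Pi^\perp_{\overline{\mathbb{S}}_0},$$
and invert it by a Neumann series on $\mathbf{H}^\perp_{\overline{\mathbb{S}}_0}$. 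The obstacle is the derivative loss in \eqref{e:scrReps}, which prevents a naive geometric series in $H^s$. Following the standard argument in \cite{BBHM18,HHR23}, I would use that
$$\mathscr{M}_\perp=\mathscr{M}-\Pi_{\overline{\mathbb{S}}_0}\mathscr{M}-\mathscr{M}\Pi_{\overline{\mathbb{S}}_0}+\Pi_{\overline{\mathbb{S}}_0}\mathscr{M}\Pi_{\overline{\mathbb{S}}_0}$$
on the normal space. Since $\Pi_{\overline{\mathbb{S}}_0}$ has finite rank and is therefore infinitely smoothing, this gives
$$\mathscr{M}_\perp(\mathscr{M}^{-1})_\perp=\Pi^\perp_{\overline{\mathbb{S}}_0}+\mathcal{K},$$
where $\mathcal{K}=-\Pi^\perp_{\overline{\mathbb{S}}_0}\mathscr{M}\Pi_{\overline{\mathbb{S}}_0}\mathscr{M}^{-1}\Pi^\perp_{\overline{\mathbb{S}}_0}$. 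Writing $\mathscr{M}=\mathbb{I}_2+(\mathscr{M}-\mathbb{I}_2)$ and using $\Pi^\perp\Pi=0$ shows that $\mathcal{K}$ is a finite-rank operator of size $\varepsilon\gamma^{-1}$ with tame bounds and no derivative loss, since the loss falls on finitely many smooth modes. So $\Pi^\perp_{\overline{\mathbb{S}}_0}+\mathcal{K}$ is invertible by Neumann series under the smallness \eqref{sml+bnd trans}, and the same argument gives a left inverse. This yields \eqref{e:Mperp}. For \eqref{e12:Mperp}, I would use
$$\Delta_{12}\mathscr{M}_\perp^{-1}=-\mathscr{M}_\perp^{-1}(i_1)(\Delta_{12}\mathscr{M}_\perp)\mathscr{M}_\perp^{-1}(i_2)$$
together with \eqref{e12:scrReps}.
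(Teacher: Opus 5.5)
Your item (i), and your bounds for $\mathscr{M}_\perp$ itself, follow the paper's proof essentially verbatim: the same splittings of $\mathscr{M}^{\pm1}-\mathbb{I}_2$ and of $\Delta_{12}\mathscr{M}^{\pm1}$, with the same inputs.

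For the inverse of $\mathscr{M}_\perp$ you take a different route. The paper adapts \cite[Lemma 9.6]{HHM21} and writes $\mathscr{M}_\perp^{-1}$ explicitly as $\mathscr{M}^{-1}$ minus a correction along $\mathscr{M}^{-1}\vec{e}_m$, $m\in\overline{\mathbb{S}}_0$. The coefficients of that correction involve the inverse of the $\varphi$-dependent finite matrix $\mathbf{M}=\big(\langle\vec{e}_m,(\mathscr{M}^{\star})^{-1}\vec{e}_k\rangle\big)$. This forces the paper to prove bounds for $(\mathscr{M}^\star)^{\pm1}$, via \eqref{e-PhiMstar}--\eqref{e12-PhiMstar} and $(\pmb{\mathscr{B}}^\star)^{-1}=\pmb{\mathcal{B}}$, then bounds for the entries $\alpha_{m,k}$, and to obtain \eqref{e12:Mperp} by differentiating the formula. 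You instead build $\mathscr{M}_\perp^{-1}=(\mathscr{M}^{-1})_\perp(\Pi_{\overline{\mathbb{S}}_0}^\perp+\mathcal{K})^{-1}$ with a tame Neumann series, plus the symmetric left inverse, and get \eqref{e12:Mperp} from the resolvent identity. This is shorter and never needs $\mathbf{M}$.

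The one step you assert rather than prove is that $\mathcal{K}$ loses no derivatives. For the factor $\Pi_{\overline{\mathbb{S}}_0}^\perp(\mathscr{M}-\mathbb{I}_2)\Pi_{\overline{\mathbb{S}}_0}$ this is clear: $\mathscr{M}$ commutes with multiplication by functions of $\varphi$, so the loss in \eqref{e:scrReps} falls on $\vec{e}_m$. For $\Pi_{\overline{\mathbb{S}}_0}(\mathscr{M}^{-1}-\mathbb{I}_2)\Pi_{\overline{\mathbb{S}}_0}^\perp$ you must move the operator onto the modes by duality, $\langle(\mathscr{M}^{-1}-\mathbb{I}_2)\rho,\vec{e}_m\rangle_{\mathbf{L}^2_x(\mathbb{T})}=\langle\rho,((\mathscr{M}^{-1})^\star-\mathbb{I}_2)\vec{e}_m\rangle_{\mathbf{L}^2_x(\mathbb{T})}$, so some adjoint information cannot be avoided. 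However, split $\mathscr{M}^{-1}-\mathbb{I}_2=(\pmb{\mathscr{B}}^{-1}-\mathbb{I}_2)\boldsymbol{\Phi}_{M-1}^{-1}+(\boldsymbol{\Phi}_{M-1}^{-1}-\mathbb{I}_2)$. Only the first piece loses a derivative, and for it the change of variables $(\mathscr{B}_\kappa^{-1})^\star=\mathcal{B}_\kappa$ suffices, applied against $\boldsymbol{\Phi}_{M-1}^{-1}\rho$. So your route does not even need the adjoint estimates on $\boldsymbol{\Phi}_{M-1}^\star$.

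Two further points. First, $\mathcal{K}$ is finite rank only in $x$; its $\varphi$-dependent coefficients are handled by the tame product law. Second, the resolvent identity uses \eqref{e:Mperp} at regularity $s_1+1$, which costs one more unit in $\widetilde{\sigma}_1(M)$.
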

\begin{proof}
$(i)$ The map $\mathscr{M}$ is real, reversibility and momentum preserving because $\boldsymbol{\Phi}_{M-1}$ and $\pmb{\mathscr{B}}$ are, see \eqref{map.M}, Proposition \ref{prop.blockanti.final} and Proposition \ref{prop trans red}-$(iii).$ The estimates \eqref{e:Mcont} follow from \eqref{map.M}, \eqref{e-PhiM}, \eqref{eB}, \eqref{estimazione beta} and \eqref{ansatz.fI.small}. By virtue of \eqref{map.M}, \eqref{compoPhim} and \eqref{def:matB}, we write
    \begin{equation}\label{scrM-I}
        \mathscr{M}-\mathbb{I}_2=(\boldsymbol{\Phi}_{M-1}-\mathbb{I}_2)\pmb{\mathscr{B}}+(\pmb{\mathscr{B}}-\mathbb{I}_2),\qquad\pmb{\mathscr{B}}-\mathbb{I}_2=\begin{pmatrix}
            \mathscr{B}_+-\textnormal{Id} & 0\\
            0 & \mathscr{B}_--\textnormal{Id}
        \end{pmatrix}.
    \end{equation}
	Therefore, the estimate \eqref{e:scrReps} for $\mathscr{M}$ follows from \eqref{scrM-I}, \eqref{e-PhiM}, Proposition \ref{properties OPS}-$(v)$, \eqref{B-Id} and \eqref{estimazione beta}. The inverse is given by
    \begin{equation}\label{scrM-1-I}
        \mathscr{M}^{-1}=\pmb{\mathscr{B}}^{-1}\boldsymbol{\Phi}_{M-1}^{-1}=\mathbb{I}_2+(\pmb{\mathscr{B}}^{-1}-\mathbb{I}_2)\boldsymbol{\Phi}_{M-1}^{-1}+(\boldsymbol{\Phi}_{M-1}^{-1}-\mathbb{I}_2),\qquad\pmb{\mathscr{B}}^{-1}-\mathbb{I}_2=\begin{pmatrix}
            \mathscr{B}_+^{-1}-\textnormal{Id} & 0\\
            0 & \mathscr{B}_-^{-1}-\textnormal{Id}
        \end{pmatrix}.
    \end{equation}
        Therefore, the estimate \eqref{e:scrReps} for $\mathscr{M}^{-1}$ follows from \eqref{scrM-1-I}, \eqref{e-PhiM}, Proposition \ref{properties OPS}-$(v)$, \eqref{B-Id} and \eqref{estimazione beta}. Besides, from \eqref{scrM-I}, we have
        \begin{align}
            \Delta_{12}\mathscr{M}&=(\Delta_{12}\boldsymbol{\Phi}_{M-1})\pmb{\mathscr{B}}(i_2)+\boldsymbol{\Phi}_{M-1}(i_1)(\Delta_{12}\pmb{\mathscr{B}}),\label{12scrM}\\
            \Delta_{12}\mathscr{M}^{-1}&=(\Delta_{12}\pmb{\mathscr{B}}^{-1})\boldsymbol{\Phi}_{M-1}^{-1}(i_2)+\pmb{\mathscr{B}}(i_1)(\Delta_{12}\boldsymbol{\Phi}_{M-1}^{-1}).\label{12scrM-1}
        \end{align}
        Thus, the estimates \eqref{e12:scrReps} follow from \eqref{12scrM}, \eqref{12scrM-1}, \eqref{e-PhiM}, \eqref{e12-PhiM}, Proposition \ref{properties OPS}-$(v)$, \eqref{eB}, \eqref{estimazione beta}, \eqref{B12} and \eqref{ansatz.fI.small}.\\
    $(ii)$ By definition, \eqref{map.M.perp} the map $\mathscr{M}_{\perp}$ is a real, reversibility and momentum preserving because $\mathscr{M}$ is. The estimate \eqref{e:Mperp} for $\mathscr{M}_{\perp}$ is a consequence of \eqref{e-PhiM}, Proposition \ref{properties OPS}-$(v)$ and \eqref{eB}. Then, writing
    $$\Delta_{12}\mathscr{M}_{\perp}=\Pi_{\overline{\mathbb{S}}_0}^\perp(\Delta_{12}\mathscr{M})\Pi_{\overline{\mathbb{S}}_0}^{\perp},$$
    the estimate \eqref{e12:Mperp} for $\mathscr{M}_{\perp}$ is obtained from \eqref{e12:scrReps}. Regarding the same estimates for $\mathscr{M}_{\perp}^{-1}$, it is a bit more involved. For this, we easily adapt  \cite[Lemma 9.6]{HHM21} to our matrix case, allowing us to prove that $\mathscr{M}_{\perp}$ is indeed invertible with inverse explicitly given by
    \begin{equation}\label{M-1perp:formula}
        \mathscr{M}_{\perp}^{-1}\rho=\mathscr{M}^{-1}\rho-\sum_{m\in\overline{\mathbb{S}}_0}\big\langle\rho\,,\,\big((\mathscr{M}^{\star})^{-1}-\mathbb{I}_{2}\big)g_m\big\rangle_{\mathbf{L}_{x}^2(\mathbb{T})}\mathscr{M}^{-1}\vec{e}_{m},
    \end{equation}
    where $\mathscr{M}^{\star}$ is the $\mathbf{L}^2(\mathbb{T})$-adjoint of $\mathscr{M},$ for any $m\in\overline{\mathbb{S}}_0,$
    \begin{align*}
        \forall\, x\in\mathbb{T},\quad\vec{e}_m(x)\triangleq\begin{cases}
            \big( \begin{smallmatrix}
                1\\
                0
            \end{smallmatrix}\big)e^{\ii m x}, & \textnormal{if }m\in\overline{\mathbb{S}}_+,\vspace{0.2cm}\\
            \big(\begin{smallmatrix}
                0\\
                1
            \end{smallmatrix}\big)e^{\ii m x}, & \textnormal{if }m\in\overline{\mathbb{S}}_-
        \end{cases}
    \end{align*}
    and under the notation
    $$\mathbf{M}\triangleq\Big(\big\langle\vec{e}_m,(\mathscr{M}^{\star})^{-1}\vec{e}_k\big\rangle_{\mathbf{L}_{x}^2(\mathbb{T})}\Big)_{(m,k)\in\overline{\mathbb{S}}_0^2},\qquad\mathbf{M}^{-1}\triangleq\big(\alpha_{m,k}\big)_{(m,k)\in\overline{\mathbb{S}}_0^2},$$
    we have
    $$g_m(a,\omega;\varphi,x)\triangleq\sum_{k\in\overline{\mathbb{S}}_0}\overline{\alpha_{m,k}(a,\omega;\varphi)}\vec{e}_k(x).$$
    In view of \eqref{map.M}, we have
    \begin{align*}
    (\mathscr{M}^{\star})^{-1}&=(\pmb{\mathscr{B}}^{\star}\boldsymbol{\Phi}_{M-1}^{\star})^{-1}=(\boldsymbol{\Phi}_{M-1}^{\star})^{-1}(\pmb{\mathscr{B}}^{\star})^{-1}=(\boldsymbol{\Phi}_{M-1}^{\star})^{-1}\pmb{\mathcal{B}}.
\end{align*}
We have used Lemma \ref{prop:Ego}-$(iii)$ to write that
$$(\pmb{\mathscr{B}}^{\star})^{-1}=\begin{pmatrix}
        \mathscr{B}_+^{\star} & 0\\
        0 & \mathscr{B}_-^{\star}
    \end{pmatrix}^{-1}=\begin{pmatrix}
        \mathcal{B}_+^{-1} & 0\\
        0 & \mathcal{B}_-^{-1}
    \end{pmatrix}^{-1}=\begin{pmatrix}
        \mathcal{B}_+ & 0\\
        0 & \mathcal{B}_-
    \end{pmatrix}\triangleq\pmb{\mathcal{B}}.$$
    Therefore, we can write
    \begin{align}
        \mathscr{M}^{\star}=\mathbb{I}_2+(\pmb{\mathcal{B}}^{-1}-\mathbb{I}_2)\boldsymbol{\Phi}_{M-1}^{\star}+(\boldsymbol{\Phi}_{M-1}^{\star}-\mathbb{I}_2),\\
        (\mathscr{M}^{\star})^{-1}=\mathbb{I}_2+\big((\boldsymbol{\Phi}_{M-1}^{\star})^{-1}-\mathbb{I}_2\big)\pmb{\mathcal{B}}+(\pmb{\mathcal{B}}-\mathbb{I}_2)
    \end{align}
    and
    \begin{align}
        \Delta_{12}\mathscr{M}^{\star}&=(\Delta_{12}\pmb{\mathcal{B}}^{-1})\boldsymbol{\Phi}_{M-1}^{\star}(i_2)+\pmb{\mathcal{B}}^{-1}(i_1)(\Delta_{12}\boldsymbol{\Phi}_{M-1}^{\star}),\\
        \Delta_{12}(\mathscr{M}^{\star})^{-1}&=\big(\Delta_{12}(\boldsymbol{\Phi}_{M-1}^{\star})^{-1}\big)\pmb{\mathcal{B}}^{-1}(i_2)+(\boldsymbol{\Phi}_{M-1}^{\star})^{-1}(i_1)(\Delta_{12}\pmb{\mathcal{B}}).
    \end{align}
    Similarly to the point $(i),$ using \eqref{e-PhiMstar}, \eqref{e12-PhiMstar}, Proposition \ref{properties OPS}-$(v)$, \eqref{eB}, \eqref{B-Id}, \eqref{B12} and \eqref{ansatz.fI.small}, we find for any $s\in[s_0,S]$,
    \begin{align}
        \|((\mathscr{M}^{\star})^{\pm1}-\mathbb{I}_2)\rho \|_{s}^{q,\gamma,\mathcal{O}}&\lesssim_{s,q,d,M}\varepsilon\gamma^{-1}\big(\|\rho\|_{s+1}^{q,\gamma,\mathcal{O}}+\|\mathfrak{I}_{0}\|_{s+\widetilde{\sigma}_{1}(M)}^{q,\gamma,\mathcal{O}}   \|\rho\|_{s_0+1}^{q,\gamma,\mathcal{O}}  \big),\label{e:Mstar-Id}\\
        \|\Delta_{12}(\mathscr{M}^{\star})^{\pm1}\rho\|_{s_1}&\lesssim_{s_1,q,d,M} \varepsilon\gamma^{-1}\|\Delta_{12} i\|_{s_1+\widetilde{\sigma}_{1}(M)}\|\rho\|_{s_1+1}\label{e12:Mstar}
    \end{align}
    for some $\widetilde{\sigma}_1(M)>0.$ As a consequence, the matrices $\mathbf{M}$ and $\mathbf{M}^{-1}$ are perturbations of the identity matrix. Quantitatively, the $\alpha_{m,k}$ satisfy the following estimates for any $s\in[s_0,S]$
    \begin{align}
        \|\alpha_{m,k}-\delta_{m,k}\|_{s}^{q,\gamma,\mathcal{O}}&\lesssim_{s,q,d,M}\varepsilon\gamma^{-1}\big(1+\|\mathfrak{I}_0\|_{s+\widetilde{\sigma}_1(M)}^{q,\gamma,\mathcal{O}}\big),\label{e:alphamk}\\
        \|\Delta_{12}\alpha_{m,k}\|_{s_1}&\lesssim_{s_1,q,d,M}\varepsilon\gamma^{-1}\|\Delta_{12}i\|_{s_1+\widetilde{\sigma}_1(M)}.\label{e12:alphamk}
    \end{align}
    Consequently, the estimate \eqref{e:Mperp} for $\mathscr{M}_{\perp}^{-1}$ follows from \eqref{M-1perp:formula}, \eqref{e:Mcont}, \eqref{e:Mstar-Id} and \eqref{e:alphamk}. Also, from \eqref{M-1perp:formula}, we can write
    \begin{equation}\label{12Mperp-1:algebra}
        \begin{aligned}
        \Delta_{12}\mathscr{M}_{\perp}^{-1}\rho&=\Delta_{12}\mathscr{M}^{-1}\rho-\sum_{m\in\overline{\mathbb{S}}_0}\big\langle\rho\,,\,\Delta_{12}(\mathscr{M}^{\star})^{-1}g_m\big\rangle_{\mathbf{L}_{x}^2(\mathbb{T})}\mathscr{M}^{-1}(i_2)\vec{e}_{m}\\
        &\quad-\sum_{m\in\overline{\mathbb{S}}_0}\big\langle\rho\,,\,\big((\mathscr{M}^{\star})^{-1}(i_1)-\mathbb{I}_{2}\big)g_m\big\rangle_{\mathbf{L}_{x}^2(\mathbb{T})}\Delta_{12}\mathscr{M}^{-1}\vec{e}_{m}.
    \end{aligned}
    \end{equation}
    Hence, the estimate \eqref{e12:Mperp} for the inverse is obtained combining \eqref{12Mperp-1:algebra}, \eqref{e:Mcont}, \eqref{e12:scrReps}, \eqref{e:Mstar-Id}, \eqref{e12:Mstar} and \eqref{e12:alphamk}. This ends the proof of Lemma \ref{lem.inverse.Mperp}.
\end{proof}
Now, let us consider the operator $\mathscr{L}_{\perp}$ defined by
\begin{equation}\label{def:sL}
    \mathscr{L}_{\perp}\triangleq\mathscr{M}_{\perp}^{-1} \mathscr{L}_{\omega} \mathscr{M}_{\perp},
\end{equation}
with $\mathscr{M}_{\perp}$ as in \eqref{map.M.perp}. The next proposition provides the structure of the operator $\mathscr{L}_{\perp}$ in \eqref{def:sL}.

\begin{prop}\label{lemma.sL.beforeKAM}
{\bf (Reduction of $\mathscr{L}_\omega$ up to smoothing operators)}. Let $M\in\mathbb{N}^*$ and $s_1$ as in \eqref{parametres:transport}. The operator $\mathscr{L}_{\perp}$ in \eqref{def:sL} is well-defined for the whole set of parameters $\mathcal{O},$ real, reversible and momentum preserving. Moreover, in restriction to the set $\mathcal{O}_{\textnormal{\tiny{T}},\infty}^{\gamma,\tau_1}(i_0)$ in \eqref{transport.nonres}, the operator $\mathscr{L}_\omega$ has the form
\begin{equation}\label{sL.perp}
\mathscr{L}_\perp \triangleq \omega\cdot\partial_{\varphi}\Pi_{\overline{\mathbb{S}}_0}^{\perp}+\im \,\mathscr{D}^{[0]}+\mathscr{R}^{[0]},
\end{equation}
	with the following properties:
	\begin{itemize}
		\item[$(i)$] The operator $\mathscr{D}^{[0]}$ is diagonal, real, reversible and momentum preserving of the form
        \begin{align}  \mathscr{D}^{[0]}=\Pi_{\overline{\mathbb{S}}_0}^{\perp}\mathscr{D}^{[0]}\Pi_{\overline{\mathbb{S}}_0}^{\perp}=\begin{pmatrix}
			\mathscr{D}_{+}^{(0)} & 0\\
			0 & \mathscr{D}_{-}^{(0)}
		\end{pmatrix},\label{sD.0}
        \end{align}
	where, for any $\kappa\in\{-,+\}$, the operator $\mathscr{D}_{\kappa}^{[0]}=\Pi_{\kappa}^{\perp}\mathscr{D}_{\kappa}^{[0]}\Pi_{\kappa}^{\perp}$ is the diagonal operator
	\begin{equation} \label{mu.j.0}
		\mathscr{D}_{\kappa}^{[0]}\triangleq\underset{j\in\mathbb{Z}^*\setminus\overline{\mathbb{S}}_{\kappa}}{{\rm diag}}\big\{\mu_{j,\kappa}^{[0]}\big\}, \qquad \mu_{j,\kappa}^{[0]}(a,\omega;i_0)\triangleq \kappa\Omega_{j}(a)+\big(\mathtt{c}_{\kappa}(a,\omega;i_0)-\kappa a\big)j\in \R,
	\end{equation}
	with $\tc_{\kappa}(a,\omega;i_0)$ satisfying the estimates \eqref{estimazione ttc}-\eqref{estimazione ttc12} in Proposition \ref{prop trans Ego};
\item[$(ii)$] The operator $\mathscr{R}^{[0]}=\Pi_{\overline{\mathbb{S}}_0}^{\perp}\mathscr{R}^{[0]}\Pi_{\overline{\mathbb{S}}_0}^{\perp}$ is real, reversible and momentum preserving. Moreover, for any $l_0\in \N$, $M\geqslant q+l_0+2$, there exists a constant $\aleph(M,l_0)\triangleq\aleph(q,d,M,l_0)>0$ such that, assuming \eqref{linear at general state} with $s_0+\sigma_{\rm max}\geqslant s_1+ \aleph(M,l_0)$, 
and for any $l\in \N^d$, $|l|\leqslant l_0$, the operators $\langle D\rangle^{\frac12}\partial_{\varphi}^{l}\mathscr{R}^{[0]}\langle D\rangle^{\frac12}$ and $\langle D\rangle^{\frac12}[\pa_{\vf}^{l}\mathscr{R}^{[0]},\pa_{x}]\langle D\rangle^{\frac12}$ are $\mathcal{D}^{q}$-tame with tame constants satisfying, for any $s\in[s_0,S]$,
\begin{align} 
\mathfrak{M}_{\langle D\rangle^{\frac12}\partial_{\varphi}^{l}\mathscr{R}^{[0]}\langle D\rangle^{\frac12}}(s), \, \mathfrak{M}_{\langle D\rangle^{\frac12}[\pa_{\vf}^{l}\mathscr{R}^{[0]},\pa_{x}]\langle D\rangle^{\frac12}}(s) & \lesssim_{s,q,d,M}\varepsilon\gamma^{-1}\big(1+\|\mathfrak{I}_0\|_{s+\aleph(M,l_0)}^{q,\gamma,\mathcal{O}}\big) . \label{est.sR0} 
\end{align}
Furthermore, for any $l\in \N^d$, $|l|\leqslant l_0$, 
\begin{equation}\label{est.sR0.12}
    \begin{aligned}
     \|\langle D\rangle^{\frac12}\partial_{\varphi}^{l}\Delta_{12}\mathscr{R}^{[0]}\langle D\rangle^{\frac12}\|_{\mathcal{L}(\mathbf{H}_{\perp}^{s_1})} & \lesssim_{s_1,q,d,M}\varepsilon\gamma^{-1}\|\Delta_{12}i\|_{s_1+\aleph(M,l_0)} , \\
     \|\langle D\rangle^{\frac12} [\partial_{\varphi}^{l}\Delta_{12}\mathscr{R}^{[0]},\pa_{x}]\langle D\rangle^{\frac12}\|_{\mathcal{L}(\mathbf{H}_{\perp}^{s_1})} & \lesssim_{s_1,q,d,M}\varepsilon\gamma^{-1}\|\Delta_{12}i\|_{s_1+\aleph(M,l_0)}.
\end{aligned}
\end{equation}
	\end{itemize}
\end{prop}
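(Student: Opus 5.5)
We follow the standard projection argument of \cite[Prop. 6.5]{HR21} and \cite[Lemma 7.3]{BFM21}, adapted to the matrix setting. The idea is to compare $\mathscr{L}_\perp$ with the projection of the full conjugated operator $\mathfrak{L}^{(0)}$ from Proposition \ref{prop trans Ego}.

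\textbf{Step 1 (splitting).} Starting from \eqref{sL_omega}, we write
$$\mathscr{L}_\omega=\Pi_{\overline{\mathbb{S}}_0}^{\perp}\mathcal{L}^{(0)}\Pi_{\overline{\mathbb{S}}_0}^{\perp}-\varepsilon\Pi_{\overline{\mathbb{S}}_0}^{\perp}\partial_x\mathcal{R}\Pi_{\overline{\mathbb{S}}_0}^{\perp}.$$
Setting $\Pi_{\overline{\mathbb{S}}_0}\triangleq\mathbb{I}_2-\Pi_{\overline{\mathbb{S}}_0}^{\perp}$ and using $\mathscr{M}_\perp=\Pi^\perp\mathscr{M}\Pi^\perp$, we get
$$\mathscr{L}_\omega\mathscr{M}_\perp=\Pi^\perp\mathcal{L}^{(0)}\mathscr{M}\Pi^\perp-\Pi^\perp\mathcal{L}^{(0)}\Pi_{\overline{\mathbb{S}}_0}\mathscr{M}\Pi^\perp-\varepsilon\Pi^\perp\partial_x\mathcal{R}\mathscr{M}_\perp.$$
On $\mathcal{O}_{\textnormal{\tiny{T}},\infty}^{\gamma,\tau_1}(i_0)$ we have $\mathcal{L}^{(0)}\mathscr{M}=\mathscr{M}\mathfrak{L}^{(0)}$, and $\mathscr{M}\Pi^\perp=\mathscr{M}_\perp+\Pi_{\overline{\mathbb{S}}_0}\mathscr{M}\Pi^\perp$. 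This gives
$$\mathscr{L}_\omega\mathscr{M}_\perp=\mathscr{M}_\perp\Pi^\perp\mathfrak{L}^{(0)}\Pi^\perp+\mathscr{M}_\perp\Pi^\perp\mathfrak{L}^{(0)}\Pi_{\overline{\mathbb{S}}_0}+(\text{finite-rank terms})-\varepsilon\Pi^\perp\partial_x\mathcal{R}\mathscr{M}_\perp.$$

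\textbf{Step 2 (main part).} We apply $\mathscr{M}_\perp^{-1}$, using Lemma \ref{lem.inverse.Mperp}-(ii). The diagonal Fourier-multiplier part of $\mathfrak{L}^{(0)}$ in \eqref{frakL0}, namely $\omega\cdot\partial_\varphi+\ii\boldsymbol{\Omega}(a,D)+\mathrm{diag}((\mathtt{c}_\pm\mp a)\partial_x)$, commutes with $\Pi^\perp$. Restricted to the normal subspace, it is $\omega\cdot\partial_\varphi\Pi^\perp+\ii\mathscr{D}^{[0]}$ with eigenvalues \eqref{mu.j.0}. Reality, reversibility and the momentum property of $\mathscr{D}^{[0]}$ follow because $\mathtt{c}_\kappa$ is real and $\Omega$ is odd (Lemma \ref{lem prop eig VP}). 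Everything else goes into
$$\mathscr{R}^{[0]}\triangleq\Pi^\perp(\mathcal{R}_{[-1]}^{(d)}+\mathcal{S}_M)\Pi^\perp+\mathscr{M}_\perp^{-1}\big(\text{finite-rank terms}-\varepsilon\Pi^\perp\partial_x\mathcal{R}\mathscr{M}_\perp\big).$$
We define this expression on all of $\mathcal{O}$, so that $\mathscr{L}_\perp$ is defined everywhere and \eqref{sL.perp} holds on the Cantor set. The symmetries of $\mathscr{R}^{[0]}$ follow from Remark \ref{compo:RevMom} and the fact that every factor is real, reversibility preserving or reversible, and momentum preserving.

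\textbf{Step 3 (tame estimates).} This is the main obstacle. We must show that each piece, sandwiched as $\langle D\rangle^{1/2}\partial_\varphi^l(\cdot)\langle D\rangle^{1/2}$ and also commuted with $\partial_x$, is $\mathcal{D}^q$-tame with constant $\varepsilon\gamma^{-1}$.
\begin{itemize}
\item[$\bullet$] For $\mathcal{S}_M$, we use \eqref{estimazione calSM} with $n_1=n_2=\tfrac12$, or $n_1+n_2=2$ for the commutator. This requires $M\geqslant q+l_0+3$, which the hypothesis on $M$ essentially provides; otherwise we enlarge $M$.
\item[$\bullet$] For $\mathcal{R}_{[-1]}^{(d)}$, a pseudo-differential operator of order $-1$, we invoke the tame bounds for pseudo-differential operators of Appendix \ref{appendix pseudo} together with \eqref{estimazione frakd}. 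We note $[\mathfrak{d}\partial_x^{-p},\partial_x]=-(\partial_x\mathfrak{d})\partial_x^{-p}$, which is still of order $-1$.
\item[$\bullet$] The finite-rank terms involve $\Pi_{\overline{\mathbb{S}}_0}$, a projection onto finitely many modes, so $\langle D\rangle^{n}$ is harmless on both sides. These terms are handled with \eqref{e:scrReps} for $\mathscr{M}-\mathbb{I}_2$ and the fact that $\Pi^\perp\Pi_{\overline{\mathbb{S}}_0}=0$, so only perturbative terms of size $\varepsilon\gamma^{-1}$ survive.
\item[$\bullet$] The term $\varepsilon\partial_x T_{\mathbb{K}}$ is an integral operator with smooth kernel, so derivatives can be placed on the kernel via \eqref{e-Kkappanu}.
\end{itemize}
The $\Delta_{12}$ bounds \eqref{est.sR0.12} follow by the same decomposition, using \eqref{estimazione calSM12}, \eqref{estimazione frakd12}, \eqref{e12:Mperp} and \eqref{e-Kkappanu diff}. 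This fixes the loss $\aleph(M,l_0)$ as the maximum of all the losses incurred.
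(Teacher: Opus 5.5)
Your proposal follows the paper's proof. It uses the same splitting through $\mathcal{L}^{(0)}\mathscr{M}=\mathscr{M}\mathfrak{L}^{(0)}$ on $\mathcal{O}_{\textnormal{\tiny{T}},\infty}^{\gamma,\tau_1}(i_0)$, the same remainder $\mathscr{R}^{[0]}=\Pi_{\overline{\mathbb{S}}_0}^{\perp}(\mathcal{R}_{[-1]}^{(d)}+\mathcal{S}_M)\Pi_{\overline{\mathbb{S}}_0}^{\perp}+\mathscr{R}_{\rm f}$, and the same inputs for the tame bounds: \eqref{estimazione frakd}, \eqref{estimazione calSM}, \eqref{e-Kkappanu} and Lemma~\ref{lem.inverse.Mperp}.

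Two small corrections.

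\textbf{The cross term in Step~1.} It comes from $\Pi_{\overline{\mathbb{S}}_0}^{\perp}\mathscr{M}=\mathscr{M}_\perp+\Pi_{\overline{\mathbb{S}}_0}^{\perp}(\mathscr{M}-\mathbb{I}_2)\Pi_{\overline{\mathbb{S}}_0}$, so it equals $\Pi_{\overline{\mathbb{S}}_0}^{\perp}(\mathscr{M}-\mathbb{I}_2)\Pi_{\overline{\mathbb{S}}_0}\mathfrak{L}^{(0)}\Pi_{\overline{\mathbb{S}}_0}^{\perp}$. The term you wrote, $\mathscr{M}_\perp\Pi_{\overline{\mathbb{S}}_0}^{\perp}\mathfrak{L}^{(0)}\Pi_{\overline{\mathbb{S}}_0}$, vanishes on the normal subspace. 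The Fourier-multiplier part of $\mathfrak{L}^{(0)}$ commutes with the projectors, so only $\Pi_{\overline{\mathbb{S}}_0}(\mathcal{R}_{[-1]}^{(d)}+\mathcal{S}_M)\Pi_{\overline{\mathbb{S}}_0}^{\perp}$ survives in this term. This is why no $\omega\cdot\partial_\varphi$ enters $\mathscr{R}^{[0]}$.

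\textbf{The commutator with $\partial_x$.} You are right that $[\partial_\varphi^l\mathcal{S}_M,\partial_x]$ costs one extra order when estimated through \eqref{estimazione calSM}: it needs $n_1+n_2=2$, hence $M\geqslant q+l_0+3$. The paper's proof only uses $(n_1,n_2)=(\tfrac12,\tfrac12)$ and does not address this. However, ``enlarging $M$'' is not available inside the statement, because $\mathscr{M}=\boldsymbol{\Phi}_{M-1}\pmb{\mathscr{B}}$, and hence $\mathscr{L}_\perp$, depend on $M$. The simplest fix is to assume $M\geqslant q+l_0+3$, that is, one more unit in \eqref{choice M}.
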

\begin{proof}
In view of \eqref{def:sL}, \eqref{sL_omega}. \eqref{map.M.perp}, using the facts that 
$$\mathbb{I}_2=\Pi_{\overline{\mathbb{S}}_0}+\Pi_{\overline{\mathbb{S}}_0}^\perp,\qquad (\Pi_{\overline{\mathbb{S}}_0}^\perp)^2=\Pi_{\overline{\mathbb{S}}_0}^\perp,\qquad\Pi_{\overline{\mathbb{S}}_0}\Pi_{\overline{\mathbb{S}}_0}^\perp=\Pi_{\overline{\mathbb{S}}_0}^\perp\Pi_{\overline{\mathbb{S}}_0}=0$$
and that from Propositions \ref{prop.blockanti.final} and \ref{prop trans Ego}, in restriction to the set $\mathcal{O}_{\textnormal{\tiny{T}},\infty}^{\gamma,\tau_1}(i_0)$, the following holds
$$\mathcal{L}^{(0)}\mathscr{M}=\mathscr{M}\mathfrak{L}^{(0)},$$
we find
\begin{align}
	\mathscr{L}_{\perp} & = \mathscr{M}_{\perp}^{-1} \Pi_{\overline{\mathbb{S}}_0}^\perp \big(  \cL^{(0)} - \varepsilon  \pa_{x} \cR \big)\Pi_{\overline{\mathbb{S}}_0}^\perp \mathscr{M}\Pi_{\overline{\mathbb{S}}_0}^\perp \\
    & = \mathscr{M}_{\perp}^{-1}\Pi_{\overline{\mathbb{S}}_0}^\perp \cL^{(0)} \mathscr{M} \Pi_{\overline{\mathbb{S}}_0}^\perp - \mathscr{M}_{\perp}^{-1} \cL^{(0)} \Pi_{\overline{\mathbb{S}}_0} \mathscr{M} \Pi_{\overline{\mathbb{S}}_0}^\perp - \varepsilon \mathscr{M}_{\perp}^{-1} \pa_{x}\cR \mathscr{M}_{\perp}  \\
    & = \mathscr{M}_{\perp}^{-1} \Pi_{\overline{\mathbb{S}}_0}^\perp\mathscr{M} \fL^{(0)} \Pi_{\overline{\mathbb{S}}_0}^\perp - \mathscr{M}_{\perp}^{-1} \cL^{(0)} \Pi_{\overline{\mathbb{S}}_0} (\mathscr{M}-\mathbb{I}_2) \Pi_{\overline{\mathbb{S}}_0}^\perp - \varepsilon \mathscr{M}_{\perp}^{-1} \pa_{x}\cR \mathscr{M}_{\perp}  \\
     & = \mathscr{M}_{\perp}^{-1} \Pi_{\overline{\mathbb{S}}_0}^\perp\mathscr{M}(\Pi_{\overline{\mathbb{S}}_0}^\perp+\Pi_{\overline{\mathbb{S}}_0}) \fL^{(0)} \Pi_{\overline{\mathbb{S}}_0}^\perp - \mathscr{M}_{\perp}^{-1} \cL^{(0)} \Pi_{\overline{\mathbb{S}}_0} (\mathscr{M}-\mathbb{I}_2)\Pi_{\overline{\mathbb{S}}_0}^\perp - \varepsilon \mathscr{M}_{\perp}^{-1} \pa_{x}\cR \mathscr{M}_{\perp}  \\
    &= \Pi_{\overline{\mathbb{S}}_0}^\perp \mathfrak{L}^{(0)} \Pi_{\overline{\mathbb{S}}_0}^\perp + \mathscr{R}_{\rm f} , \label{sL.perp.conj}
\end{align}
where $\mathscr{R}_{\rm f}$ is the infinitely smooth operator
\begin{equation}\label{new.finiterank}
	\begin{aligned}
		\mathscr{R}_{\rm f} & \triangleq \mathscr{M}_\perp^{-1} \Pi_{\overline{\mathbb{S}}_0}^\perp (\mathscr{M}-\mathbb{I}_2)\Pi_{\overline{\mathbb{S}}_0}\mathfrak{L}^{(0)} \Pi_{\overline{\mathbb{S}}_0}^\perp - \mathscr{M}_\perp^{-1} \Pi_{\overline{\mathbb{S}}_0}^\perp \mathcal{L}^{(0)}  \Pi_{\overline{\mathbb{S}}_0} (\mathscr{M}-\mathbb{I}_2)\Pi_{\overline{\mathbb{S}}_0}^\perp - \varepsilon \mathscr{M}_\perp^{-1} \Pi_{\overline{\mathbb{S}}_0}^\perp \partial_{x} \mathcal{R} \mathscr{M}_\perp .
	\end{aligned}
\end{equation}
The operator $\sR_{\rm f}$ in \eqref{new.finiterank} is
    infinitely smoothing due to the presence of the finite dimensional projection operator $\Pi_{\overline{\mathbb{S}}_0}$ in the first two terms (finite rank contributions, therefore infinitely smoothing) and since $\cR$ in Proposition \ref{prop Lnormal}-$(ii)$ is a matrix integral operator with smooth kernels.
    Let $l_0\in\mathbb{N}$ and $M \geqslant q+l_0+1$. There exists $\aleph(M,l_0)\triangleq \aleph(q,d,M,l_0)>0$ such that, for any $n_1,n_2 \in \mathbb{R}$, with $n_1+n_2 \leqslant M - (q+l_0)-1$ and any $l\in\mathbb{N}^d$, with $|l|\leqslant l_0$, the operator $\langle D \rangle^{n_1} \partial_{\varphi}^{l} \mathscr{R}_{\rm f}\langle D \rangle^{n_2} $ is $\mathcal{D}^{q}$-tame, with a tame constant satisfying, for any $s\in[s_0,S]$ and any $s_1$ as in \eqref{parametres:transport},
    \begin{align}
       & \mathfrak{M}_{\langle D \rangle^{n_1} \partial_{\varphi}^{l} \mathscr{R}_{\rm f}\langle D \rangle^{n_2}}(s) \lesssim_{s,q,d,M,l_0} \varepsilon \gamma^{-1} \big( 1 + \|\mathfrak{I}_{0} \|_{s+\aleph(M,l_0)}^{q,\gamma,\mathcal{O}} \big), \label{stima.Rf}\\
       & \| \langle D \rangle^{m_1} \partial_{\varphi}^{l} \Delta_{12}\mathscr{R}_{\rm f}\langle D \rangle^{m_2}  \|_{\mathcal{L}(\mathbf{H}_{\perp}^{s_1})} \lesssim_{s_1,q,d,M,l_0} \varepsilon \gamma^{-1} \| \Delta_{12}i \|_{s_1+\aleph(M,l_0)} .\label{stima.Rf.12}
    \end{align}
    The estimate \eqref{stima.Rf} follows by \eqref{new.finiterank}, \eqref{map.M.perp}, \eqref{map.M}, \eqref{frakL0},  Lemma \ref{lem.inverse.Mperp} and Proposition \ref{prop Lnormal}-$(ii)$. The estimate \eqref{stima.Rf.12} follows similarly by using \eqref{e:scrReps}, \eqref{e12:scrReps}, \eqref{e:Mperp}, \eqref{e12:Mperp}, Propositions \ref{prop L0} and \ref{prop trans Ego}. Finally, the fact that $\sR_{\rm f}$ is real, reversible and momentum preserving follows by Lemma \ref{lem.inverse.Mperp}, Proposition \ref{prop trans Ego}, Proposition \ref{prop L0} and Proposition \ref{prop Lnormal}-$(ii)$.
    Combining \eqref{frakL0} and \eqref{sL.perp.conj} we get, in restriction to the Cantor set $\mathcal{O}_{\textnormal{\tiny{T}},\infty}^{\gamma,\tau_1}(i_0)$ in \eqref{transport.nonres},
    $$\Pi_{\overline{\mathbb{S}}_0}^\perp \mathfrak{L}^{(0)} \Pi_{\overline{\mathbb{S}}_0}^\perp+\mathscr{R}_{\rm f}=\omega\cdot\partial_{\varphi}\Pi_{\overline{\mathbb{S}}_0}^\perp+\ii\,\mathscr{D}^{[0]}+\mathscr{R}^{[0]},$$
    where
    $$\ii\,\mathscr{D}^{[0]}\triangleq\Pi_{\overline{\mathbb{S}}_0}^{\perp}\bigg(\ii\boldsymbol{\Omega}(a,D)+\begin{pmatrix}
		(\mathtt{c}_+-a)\partial_{x} & 0\\
		0 & (\mathtt{c}_-+a)\partial_{x}
	\end{pmatrix}\bigg)\Pi_{\overline{\mathbb{S}}_0}^{\perp}$$
    and
    \begin{equation}\label{def:init-remainder}
        \mathscr{R}^{[0]}\triangleq\Pi_{\overline{\mathbb{S}}_0}^{\perp} \Big( \cR_{[-1]}^{(d)} + \cS_{M}\Big)\Pi_{\overline{\mathbb{S}}_0}^{\perp} + \mathscr{R}_{\rm f}.
    \end{equation}
    The estimates \eqref{est.sR0} follow from \eqref{def:init-remainder}, Proposition \ref{properties OPS}-$(i)$-$(iii)$-$(iv)$-$(v)$, \eqref{estimazione frakd}, \eqref{estimazione calSM} and \eqref{stima.Rf} with $(n_1,n_2)=(\tfrac{1}{2},\tfrac{1}{2})$. The estimates \eqref{est.sR0.12} follow from \eqref{def:init-remainder}, \eqref{estimazione frakd12}, \eqref{estimazione calSM12} and \eqref{stima.Rf.12}. 
\end{proof}

\subsection{Diagonalization of the linearized operator}\label{sect.KAM}

In this section we diagonalize the operator 
\begin{equation}\label{sL.0.start}
    \mathscr{L}^{[0]}\triangleq\omega\cdot\partial_{\varphi}\Pi_{\overline{\mathbb{S}}_0}^{\perp}+\im \,\mathscr{D}^{[0]}+\mathscr{R}^{[0]}.
\end{equation}
The diagonalization is performed as a KAM iterative scheme. Recall that the operator $\mathscr{L}^{[0]}$ is defined for all $(a,\omega)\in\cO$ and acting on $\bH_{\overline{\S}_{0}}^\perp$ (see \eqref{H.normal.space}), where $\sD^{[0]}$ is as in \eqref{sD.0}, with $\sD_{\kappa}^{[0]}$ acting on $H_{\overline{\S}_{0,\kappa}}^\perp$, $\kappa\in\{-,+\}$, and 
\begin{equation}
    \sR^{[0]}\triangleq \begin{pmatrix}
        \sR_{+}^{[0,d]} & \sR_{+}^{[0,o]} \\
        \sR_{-}^{[0,o]} & \sR_{-}^{[0,d]}
    \end{pmatrix}, \qquad \begin{aligned}
       & \sR_{\kappa}^{[0,d]} : H_{\overline{\S}_{0,\kappa}}^\perp \to H_{\overline{\S}_{0,\kappa}}^\perp, \\
       &  \sR_{\kappa}^{[0,o]} : H_{\overline{\S}_{0,-\kappa}}^\perp \to H_{\overline{\S}_{0,\kappa}}^\perp,
    \end{aligned}
\end{equation}
which satisfies \eqref{est.sR0}, \eqref{est.sR0.12} in Proposition \ref{lemma.sL.beforeKAM}: here we denoted
\begin{equation}
    H_{\overline{\S}_{0,\kappa}}^\perp \triangleq \Big\{\sum_{j \in \Z^*\setminus \overline{\S}_{\kappa}} h_j \be_{j} \in L_0^2(\T) \Big\}, \quad \kappa\in\{-,+\}.
\end{equation}
From now on, we fix $s_1\triangleq s_0$ and we define the constants 
\begin{align}\label{some.constants}
   & \tb \triangleq \left[\tfrac{3}{2}\ta\right] + 1 \in \N^* , \qquad \ta \triangleq {\rm max} \{ 2\tau_{2}+1 ,\mu_2\}, \qquad\tau>\tau_1,\qquad  \tau_{2} \triangleq \tau (q+1) +  q,
\end{align}
where $[\cdot]$ denotes the integer part function. We now fix the constant $M$ large enough, namely
\begin{equation}\label{choice M}
    M\triangleq q+s_0 + \tb+2\in\mathbb{N}.
\end{equation}
We also set
\begin{equation}\label{mu.tb}
    \mu(\tb)\triangleq \aleph(M,s_0+\tb),
\end{equation}
where the constant $\aleph(M,l_0)$ is given in Proposition \ref{lemma.sL.beforeKAM}, with $l_0 = s_0+\tb$. 
\begin{lem}[Smallness of $\sR^{[0]}$]\label{lem:initKAM}
    Assume \eqref{ansatz.fI.small} with $\sigma_{\rm max}\geqslant \mu(\tb)$. Then the operators 
  \begin{equation}
      \sR^{[0]}, \quad \pa_{\vf_{k}^{s_0}} \sR^{[0]}, \quad [\pa_{\vf_{k}^{s_0}}
  \sR^{[0]},\pa_{x}], \quad \pa_{\vf_{k}^{s_0+\tb}}
  \sR^{[0]}, \quad [\pa_{\vf_{k}^{s_0+\tb}}
  \sR^{[0]},\pa_{x}], \quad k\in\llbracket1,d\rrbracket, 
  \end{equation}
  are $\cD^{q}$-tame and, defining
  \begin{align}
    \mathbb{M}^{[0]}(s) & \triangleq \max_{\kappa\in\{-,+\}\atop \delta\in\{o,d\}} \max_{k\in\llbracket1,d\rrbracket} \bigg\{ \fM_{\braket{D}^{\frac12}\sR_{\kappa}^{[0,\delta]}\braket{D}^{\frac12}}(s) 
 ,  \fM_{\braket{D}^{\frac12}[\sR_{\kappa}^{[0,\delta]},\pa_{x}]\braket{D}^{\frac12}}(s),  \\
 & \qquad \qquad \qquad \qquad \fM_{\braket{D}^{\frac12} \pa_{\vf_{k}^{s_0}}
  \sR_{\kappa}^{[0,\delta]}\braket{D}^{\frac12}}(s) , \fM_{\braket{D}^{\frac12} [\pa_{\vf_{k}^{s_0}}
  \sR_{\kappa}^{[0,\delta]},\pa_{x}]\braket{D}^{\frac12}}(s) 
    \bigg\}, \label{bbM.s} \\
    \mathbb{M}^{[0]}(s,\tb) & \triangleq \max_{\kappa\in\{-,+\}\atop \delta\in\{o,d\}} \max_{k\in\llbracket1,d\rrbracket} \bigg\{  \fM_{\braket{D}^{\frac12} \pa_{\vf_{k}^{s_0+\tb}}
  \sR_{\kappa}^{[0,\delta]}\braket{D}^{\frac12}}(s) , \fM_{\braket{D}^{\frac12} [\pa_{\vf_{k}^{s_0+\tb}}
  \sR_{\kappa}^{[0,\delta]},\pa_{x}]\braket{D}^{\frac12}}(s) 
    \bigg\},\label{bbM.s.b}
\end{align}
we have, for all $s\in[s_0,S]$,
\begin{align}
    \fM_{0}(s,\tb) & \triangleq \max \big\{ \mathbb{M}^{[0]}(s), \mathbb{M}^{[0]}(s,\tb) \big\} \leqslant C(S) \varepsilon \gamma^{-1} \big(  1 + \| \fI_{0} \|_{s+\mu(\tb)}^{q,\gamma,\cO} \big), \label{def:fM0}\\
    \fM_{0}(s_0,\tb) & \leqslant C(S) \varepsilon \gamma^{-1}. \label{sR0.initial.KAM}
\end{align}
Moreover, for all $l\in \N^d$, with $|l|\leqslant s_0 + \tb$,
\begin{equation}
     \label{sR0.initial.12}
     \begin{aligned}
         \|\braket{D}^\frac12 \pa_{\vf}^{l} \Delta_{12} \sR^{[0]} \braket{D}^\frac12 \|_{\cL(\mathbf{H}_{\perp}^{s_0})},   \ \|\braket{D}^\frac12 \pa_{\vf}^{l} \Delta_{12} [\sR^{[0]} ,\pa_{x}]\braket{D}^\frac12 \|_{\cL(\mathbf{H}_{\perp}^{s_0})}\leqslant C(S) \varepsilon \gamma^{-1} \|\Delta_{12}i\|_{s_0+\mu(\tb)},
     \end{aligned}
\end{equation}
for some constant $C(S)\triangleq C(q,d,S)>0$.
\end{lem}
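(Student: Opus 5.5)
This is a direct specialization of Proposition \ref{lemma.sL.beforeKAM}-$(ii)$, applied with $l_0 \triangleq s_0+\tb$. The plan is to check that the admissibility condition on $M$ holds for this choice, and then to read off each tame constant entering \eqref{bbM.s}-\eqref{bbM.s.b} from \eqref{est.sR0}.

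\emph{Step 1 (admissibility).} Proposition \ref{lemma.sL.beforeKAM}-$(ii)$ needs $M\geqslant q+l_0+2$. The choice \eqref{choice M} gives $M=q+s_0+\tb+2=q+l_0+2$, so the condition holds exactly. With $\mu(\tb)=\aleph(M,s_0+\tb)$ as in \eqref{mu.tb}, the requirement $\sigma_{\max}\geqslant\mu(\tb)$ in \eqref{ansatz.fI.small} supplies the regularity $s_0+\sigma_{\max}\geqslant s_1+\aleph(M,l_0)$ that the proposition needs, since $s_1=s_0$.

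\emph{Step 2 (tame constants).} Each operator $\partial_{\vf_k}^{n}\sR^{[0]}$ with $n\in\{0,s_0,s_0+\tb\}$ is of the form $\partial_\vf^l\sR^{[0]}$ with $l=n e_k$, so $|l|\leqslant l_0$. The same holds for the commutators $[\partial_\vf^l\sR^{[0]},\partial_x]$. Estimate \eqref{est.sR0} then shows that, after conjugation by $\braket{D}^{\frac12}$, these operators are $\mathcal D^q$-tame with constants bounded by $C\varepsilon\gamma^{-1}(1+\|\fI_0\|_{s+\aleph(M,l_0)}^{q,\gamma,\cO})$.

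\emph{Step 3 (passing to the blocks).} The quantities in \eqref{bbM.s}-\eqref{bbM.s.b} involve the individual blocks $\sR^{[0,\delta]}_\kappa$, not the full matrix $\sR^{[0]}$. I obtain them by composing with the diagonal projectors $\begin{pmatrix}1&0\\0&0\end{pmatrix}$ and $\begin{pmatrix}0&0\\0&1\end{pmatrix}$ on the left and on the right. These projectors are Fourier multipliers that commute with $\braket{D}^{\frac12}$, $\partial_x$ and $\partial_\vf$, and they are bounded uniformly in $H^s$. Hence the tame constant of each block is controlled by that of the matrix.

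\emph{Step 4 (conclusion).} Taking the maximum over the finitely many indices $\kappa,\delta,k$ gives \eqref{def:fM0}, with $C(S)$ the largest of the constants over $s\in[s_0,S]$. For \eqref{sR0.initial.KAM}, I evaluate \eqref{def:fM0} at $s=s_0$ and use \eqref{ansatz.fI.small}, since $s_0+\mu(\tb)\leqslant s_0+\sigma_{\max}$ gives $\|\fI_0\|_{s_0+\mu(\tb)}\leqslant1$. The difference estimates \eqref{sR0.initial.12} follow in the same way from \eqref{est.sR0.12} with $s_1=s_0$, for every $|l|\leqslant s_0+\tb$. The commutator there is written as $\Delta_{12}[\sR^{[0]},\partial_x]$; it equals $[\Delta_{12}\sR^{[0]},\partial_x]$ because $\partial_x$ does not depend on the torus $i$, and $\partial_\vf^l$ commutes with $\partial_x$.

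There is no real obstacle here. The only step that needs care is Step 1: the inequality $M\geqslant q+l_0+2$ is an equality for this $M$, so $M$ has no room to spare for $l_0=s_0+\tb$. This is what makes the Egorov/tame bounds behind \eqref{est.sR0} (with $n_1=n_2=\tfrac12$) available up to $s_0+\tb$ derivatives in $\vf$.
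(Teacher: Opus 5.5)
Your proposal is correct and follows the paper's proof. Both specialize Proposition~\ref{lemma.sL.beforeKAM}-$(ii)$ to $l_0=s_0+\tb$, using \eqref{choice M} to get $M=q+l_0+2$, \eqref{mu.tb} to match the loss $\mu(\tb)=\aleph(M,l_0)$, and $s_1=s_0$ for \eqref{est.sR0.12}. Your extra steps (isolating the blocks $\sR^{[0,\delta]}_\kappa$ with the componentwise projectors, and noting that $\Delta_{12}$ commutes with $[\,\cdot\,,\pa_x]$ and $\pa_\vf^l$) only spell out details that the paper's one-line proof leaves implicit.
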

\begin{proof}
    Recalling \eqref{bbM.s}, \eqref{bbM.s.b}, the bounds \eqref{sR0.initial.KAM}, \eqref{sR0.initial.12} follow by \eqref{choice M} and \eqref{mu.tb}, and by \eqref{est.sR0}, \eqref{est.sR0.12} in Proposition \ref{lemma.sL.beforeKAM}. 
\end{proof}


The purpose of this subsection is to prove the reducibility of $\sL^{[0]}$ by using a KAM iteration along the scale $(N_n)_{n\in\N\cup\{-1\}}$ given by
	\begin{equation}\label{scale.KAM}
		N_{-1}\triangleq 1,\qquad \forall \, n\in\mathbb{N},\quad N_{n}\triangleq N_{0}^{\left(\frac{3}{2}\right)^{n}}, \qquad N_0\geqslant2.
	\end{equation}
We have the following proposition.
\begin{prop}[Diagonalization of $\sL^{[0]}$: the KAM iteration]\label{prop.KAM.iter}
	 Let $(\ta,\tb,\tau_{2},\mu(\tb))$ as in \eqref{some.constants}, \eqref{mu.tb}. There exist $\tau_{3}>\tau_{2}+1+\ta$ and $\delta=\delta(s_0,\tau,d)\in (0,1)$ such that,  for all $S>s_0$, there is $N_{0}\triangleq N_0(S,\tb) \in \N$ such that, if
     \begin{equation}\label{small.cond.KAM}
         N_0^{\tau_3}\fM_{0}(s_0,\tb) \gamma^{-1} \leqslant \delta \,,
     \end{equation}
     then, for any $ n\in\mathbb{N}$, the following properties hold true:
	\begin{enumerate}
		\item [$(S1)_n$] There exists a real, reversible and momentum preserving operator in the form
		\begin{equation}
\mathscr{L}^{[n]}=\omega\cdot\partial_{\varphi}\Pi_{\overline{\mathbb{S}}_0}^{\perp}+\im\,\mathscr{D}^{[n]}+\mathscr{R}^{[n]},\label{sL.n.form}
		\end{equation}
		with
        \begin{align}
            \mathscr{D}^{[n]}\triangleq\begin{pmatrix}
			\mathscr{D}_{+}^{[n]} & 0\\
			0 & \mathscr{D}_{-}^{[n]}
		\end{pmatrix},\qquad\forall\,\kappa\in\{-,+\},\quad\mathscr{D}_{\kappa}^{[n]}\triangleq\mathtt{diag}\big(\mu_{j,\kappa}^{[n]}\big)_{j\in\mathbb{Z}^*\setminus\overline{\mathbb{S}}_{\kappa}}, \label{sD.n.form}
        \end{align}
	defined for all  $(a,\omega)\in \cO$, where $\mu_{j,\kappa}^{[n]}$ are $q$-times differentiable  real functions
    \begin{equation}
        \mu_{j,\kappa}^{[n]}(a,\omega;i_0)\triangleq\mu_{j,\kappa}^{[0]}(a,\omega;i_0)+\mathtt{r}_{j,\kappa}^{[n]}(a,\omega;i_0) \in \R, \label{mu.j.n}
    \end{equation}
    with $\mu_{j,\kappa}^{[0]}(a,\omega;i_0)$ as in Proposition \ref{lemma.sL.beforeKAM}-$(i)$, satisfying $\tr_{j,\kappa}^{[0]}\triangleq 0$ and, for $n\geqslant 1$, $\kappa\in\{-,+\}$ and any $j\in\Z^* \setminus\overline{\S}_{\kappa} $,
    \begin{equation}\label{rev:trjk}
        \tr_{-j,\kappa}^{[n]}=-\tr_{j,\kappa}^{[n]}
    \end{equation}
    and
    \begin{align}
        |j| \big\|\tr_{j,\kappa}^{[n]}\big\|^{q,\gamma,\cO} \leqslant C(S,\tb) \varepsilon \gamma^{-1}, \qquad |j| \big\|\mu_{j,\kappa}^{[n]} - \mu_{j,\kappa}^{[n-1]}\big\|^{q,\gamma,\cO} \leqslant C(S,\tb) \varepsilon \gamma^{-1} N_{n-2}^{-\ta}, \label{mu.j.n.est}
    \end{align}
    for some constant $C(S,\tb)\triangleq C(q,d,S,\tb)>0$.
    \\[1mm]
    The remainder $\sR^{[n]}$ is a real, reversible and momentum preserving operator, with
  \begin{equation}
    \sR^{[n]}\triangleq \begin{pmatrix}
        \sR_{+}^{[n,d]} & \sR_{+}^{[n,o]} \vspace{0.1cm}\\
        \sR_{-}^{[n,o]} & \sR_{-}^{[n,d]}
    \end{pmatrix}, \qquad \begin{aligned}
       & \sR_{\kappa}^{[n,d]} : H_{\overline{\S}_{0,\kappa}}^\perp \to H_{\overline{\S}_{0,\kappa}}^\perp, \\
       &  \sR_{\kappa}^{[n,o]} : H_{\overline{\S}_{0,-\kappa}}^\perp \to H_{\overline{\S}_{0,\kappa}}^\perp,
    \end{aligned} \label{sR.n.form}
\end{equation}
and the operator $\braket{\pa_{\vf}}^{\tb} \sR^{[n]}$ are $\cD^{q}$-$(-1)$-modulo-tame, with modulo-tame constants
\begin{align}
    \fM_{n}^\sharp(s) \triangleq \fM_{\braket{D}^\frac12 \sR^{[n]}\braket{D}^\frac12 }^\sharp (s), \qquad \fM_{n}^\sharp(s,\tb) \triangleq \fM_{\braket{D}^\frac12 \braket{\pa_{\vf}}^{\tb}\sR^{[n]}\braket{D}^\frac12 }^\sharp (s),\label{Mn.sharp.def}
\end{align}
which satisfy, for some constant $C_*(s_0,\tb)\triangleq C_*(q,d,s_0,\tb)>0$, for all $s\in[s_0,S]$,
\begin{align}
     \fM_{n}^\sharp(s)&  \leqslant C_*(s_0,\tb) \fM_{0}(s,\tb) N_{0}^{\ta} N_{n}^{-\ta}, \label{Mn.lowest}\\
      \fM_{n}^\sharp(s,\tb)&  \leqslant C_*(s_0,\tb)\big( 2- \tfrac{1}{n+1} \big)\fM_{0}(s,\tb)  .  \label{Mn.highest}
\end{align}
	We define the sets $\cO_{\mathtt{KAM},0}^{\gamma,\tau}(i_0)\triangleq \mathcal{O}$ and, for $n\geqslant 1$, 
	\begin{align}
		&\mathcal{O}_{\mathtt{KAM},n}^{\gamma,\tau}(i_0)\triangleq
        \bigcap_{\kappa\in\{-,+\}}\bigcap_{(\ell,j,j')\in\mathbb{Z}^d\times(\mathbb{Z}\setminus\overline{\mathbb{S}}_{\kappa})^2\atop\underset{|\ell|\leqslant N_{n-1}}{\vec{\jmath}\cdot \ell+j-j'=0}}  \bigg\lbrace(a,\omega)\in\mathcal{O} \,\textnormal{ s.t. }\big|\omega\cdot \ell+\mu_{j,\kappa}^{[n-1]}(a,\omega;i_0)-\mu_{j',\kappa}^{[n-1]}(a,\omega;i_0)\big|>\frac{\gamma\langle j-j'\rangle}{\langle \ell\rangle^{\tau}}\bigg\rbrace\\
		& \cap \bigcap_{(\ell,j,j')\in\mathbb{Z}^d\times(\mathbb{Z}\setminus\overline{\mathbb{S}}_{+})\times(\mathbb{Z}\setminus\overline{\mathbb{S}}_{-})\atop\underset{|\ell|\leqslant N_{n-1}}{\vec{\jmath}\cdot \ell+j-j'=0}}  \bigg\lbrace(a,\omega)\in\mathcal{O}\,\textnormal{ s.t. }\big|\omega\cdot \ell+\mu_{j,+}^{[n-1]}(a,\omega;i_0)-\mu_{j',-}^{[n-1]}(a,\omega;i_0)\big|>\frac{\gamma \braket{j,j'}}{\langle \ell\rangle^{\tau}}\bigg\rbrace. \label{second.meln.KAM}
	\end{align}
    There exists a real, reversibility preserving and momentum preserving map, defined for all $(a,\omega)\in \cO$, of the form $\boldsymbol{\Psi}_{n}= \Pi_{\overline{\mathbb{S}}_0}^{\perp} + \bX^{[n]}$, where $\bX^{[0]}\triangleq 0$ and, for $n\in\mathbb{N}^{*}$,
      \begin{equation}
    \bX^{[n]}\triangleq \begin{pmatrix}
        X_{+}^{[n,d]} & X_{+}^{[n,o]}\vspace{0.1cm} \\
        X_{-}^{[n,o]} & X_{-}^{[n,d]}
    \end{pmatrix}, \quad \begin{aligned}
       & X_{\kappa}^{[n,d]} : H_{\overline{\S}_{0,\kappa}}^\perp \to H_{\overline{\S}_{0,\kappa}}^\perp, \\
       &  X_{\kappa}^{[n,o]} : H_{\overline{\S}_{0,-\kappa}}^\perp \to H_{\overline{\S}_{0,\kappa}}^\perp,
    \end{aligned}
\end{equation}
such that, for all $(a,\omega) \in \mathcal{O}_{\mathtt{KAM},n}^{\gamma,\tau}$
the following identity holds
\begin{equation}
\mathscr{L}^{[n]}=\boldsymbol{\Psi}_{n}^{-1}\mathscr{L}^{[0]}\boldsymbol{\Psi}_{n}. \label{sL.n.conj.to.0}
\end{equation}
The operator $\bX^{[n]}$ is $\cD^{q}$-$(-1)$-modulo-tame satisfying, for all $s\in[s_0,S]$, 
\begin{align}
     \fM_{\braket{D}^\frac12 \bX^{[n]}\braket{D}^\frac12 }^\sharp (s) & \leqslant C(q)C_{*}(s_0,\tb) \gamma^{-1} N_0^{\tau_{2}} \prod_{j=1}^{n-1} \big( 1 + 2 C(q)  N_{0}^{\bar\ta} N_{j}^{-\bar\ta} \big)\fM_{0}(s,\tb),\label{Xn.est}
     \end{align}
and, for $n\geqslant 1$,
     \begin{align}
      \fM_{\braket{D}^\frac12 (\bX^{[n]}-\bX^{[n-1]})\braket{D}^\frac12 }^\sharp (s) &\leqslant 2 C^2(q)C_{*}(s_0,\tb) \gamma^{-1} N_0^{\ta} N_{n-1}^{-\bar\ta}\prod_{j=1}^{n-2} \big( 1 + 2 C(q)  N_{0}^{\bar\ta} N_{j}^{-\bar\ta} \big)\fM_{0}(s,\tb),\label{Xn-n-1.est}
\end{align}
where $\bar\ta\triangleq \ta-\tau_{2}>0$ (see \eqref{some.constants}) and $\prod_{j=1}^{-1}=\prod_{j=1}^{0}\triangleq 1$.
\item[$(S2)_n$] Let $i_1(a,\omega)$ and $i_2(a,\omega)$ are two reversible and momentum preserving tori such that $\sR^{[n]}(i_1)$, $\sR^{[n]}(i_2)$ satisfy \eqref{sR0.initial.KAM}, \eqref{sR0.initial.12}. Then, for all $(a,\omega) \in \mathcal{O}$,
\begin{align}
    \big\| \braket{D}^{\frac12} |\Delta_{12}\sR^{[n]}| \braket{D}^{\frac12} \big\|_{\cL(\mathbf{H}_{\perp}^{s_0})} & \lesssim_{S,\tb} \varepsilon \gamma^{-1} N_{n-1}^{-\ta}\| \Delta_{12} i \|_{s_0+\mu(\tb)}, \label{Rn.12.lowest} \\
     \big\| \braket{D}^{\frac12} |\braket{\pa_{\vf}}^{\tb}\Delta_{12}\sR^{[n]}| \braket{D}^{\frac12} \big\|_{\cL(\mathbf{H}_{\perp}^{s_0})} & \lesssim_{S,\tb} \varepsilon \gamma^{-1} N_{n-1}\| \Delta_{12} i \|_{s_0+\mu(\tb)}.\label{Rn.12.highest}
\end{align}
Furthermore, for $n\geqslant 1$, $k\in\{-,+\}$ and any $j\in\Z^* \setminus\overline{\S}_{\kappa}$,
\begin{align}
|j|\big\|\Delta_{12}(\tr_{j,\kappa}^{[n]}-\tr_{j,\kappa}^{[n-1]})\big\| & \lesssim C(S,\tb)\varepsilon \gamma^{-1} N_{n-1}^{-\ta}\| \Delta_{12} i \|_{s_0+\mu(\tb)},  \label{tr.n.n-1.12} \\
|j|\big\|\Delta_{12}\tr_{j,\kappa}^{[n]}\big\| & \lesssim C(S,\tb)\varepsilon \gamma^{-1}\| \Delta_{12} i \|_{s_0+\mu(\tb)}. \label{tr.n.12}
\end{align}
	\end{enumerate}
\end{prop}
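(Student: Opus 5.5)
We argue by induction on $n$, following the KAM reducibility scheme of \cite{BM18} in the modulo-tame framework, adapted to the $2\times2$ block structure as in \cite[Prop. 7.1]{HHR23} and \cite{FQW25}, and tracking momentum as in \cite{BFM21}. For $n=0$, $(S1)_0$ is Proposition \ref{lemma.sL.beforeKAM} together with Lemma \ref{lem:initKAM}. The modulo-tame bounds \eqref{Mn.lowest}--\eqref{Mn.highest} at $n=0$ come from the tame bounds of $\langle D\rangle^{\frac12}\partial_{\varphi}^{l}\mathscr{R}^{[0]}\langle D\rangle^{\frac12}$ and of its commutator with $\partial_x$. Control of $s_0+\mathtt{b}$ derivatives in $\varphi$ turns tame into modulo-tame constants via the standard lemma: the $\varphi$-derivatives give the decay $\langle\ell\rangle^{-s_0-\mathtt{b}}$ needed for absolute convergence, and the commutator with $\partial_x$ gives the $\langle j-j'\rangle$ decay needed to gain on the off-diagonal entries. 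The statement $(S2)_0$ is \eqref{sR0.initial.12}.

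For the inductive step we look for $\boldsymbol{\Phi}_n=\Pi^{\perp}_{\overline{\mathbb{S}}_0}+\mathbf{Y}_n$ solving the homological equation
\[
\omega\cdot\partial_\varphi \mathbf{Y}_n+\ii[\mathscr{D}^{[n]},\mathbf{Y}_n]+\Pi_{N_n}\mathscr{R}^{[n]}=[\mathscr{R}^{[n]}],
\]
where $[\mathscr{R}^{[n]}]$ is the diagonal part, with only the $\ell=0$, $j=j'$ entries of the diagonal blocks kept. In Fourier coordinates, the diagonal blocks $\kappa$ have entries $(\mathbf{Y}_n)^{j}_{j'}(\ell)=-\mathscr{R}^{j}_{j'}(\ell)/\ii(\omega\cdot\ell+\mu^{[n]}_{j,\kappa}-\mu^{[n]}_{j',\kappa})$. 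The off-diagonal blocks use the mixed divisors $\omega\cdot\ell+\mu^{[n]}_{j,\kappa}-\mu^{[n]}_{j',-\kappa}$. Only indices with $\vec{\jmath}\cdot\ell+j-j'=0$ appear, since $\mathscr{R}^{[n]}$ is momentum preserving, so the sets \eqref{second.meln.KAM} at step $n+1$ control every divisor. These divisors are extended to all of $\mathcal{O}$ with the cutoff $\chi$ as in \eqref{ext.omegadivf}. The weights $\langle j-j'\rangle$ and $\langle j,j'\rangle$ on the right-hand sides of \eqref{second.meln.KAM}, together with the $\langle D\rangle^{\frac12}$ weights, let us bound $\mathbf{Y}_n$ in the $\mathcal{D}^q$-$(-1)$-modulo-tame norm. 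The loss is $\gamma^{-1}N_n^{\tau_2}$, where $\tau_2=\tau(q+1)+q$ accounts for differentiating the divisors $q$ times.

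Reversibility and reality of $\mathbf{Y}_n$ follow from the symmetries of $\mathscr{R}^{[n]}$ and from \eqref{rev:trjk}. The new operator is
\[
\mathscr{L}^{[n+1]}=\omega\cdot\partial_\varphi\Pi^{\perp}_{\overline{\mathbb{S}}_0}+\ii\mathscr{D}^{[n+1]}+\mathscr{R}^{[n+1]},\qquad \ii\mathscr{D}^{[n+1]}=\ii\mathscr{D}^{[n]}+[\mathscr{R}^{[n]}],
\]
with
\[
\mathscr{R}^{[n+1]}=\boldsymbol{\Phi}_n^{-1}\Big(\Pi_{N_n}^{\perp}\mathscr{R}^{[n]}+\mathscr{R}^{[n]}\mathbf{Y}_n-\mathbf{Y}_n[\mathscr{R}^{[n]}]\Big).
\]
The corrections $\mathtt{r}^{[n+1]}_{j,\kappa}-\mathtt{r}^{[n]}_{j,\kappa}$ are diagonal matrix entries of $\mathscr{R}^{[n]}$. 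The $\langle D\rangle^{\frac12}$ conjugation gives them the $|j|^{-1}$ decay in \eqref{mu.j.n.est}, and realness follows from reversibility. We then set $\boldsymbol{\Psi}_{n+1}=\boldsymbol{\Psi}_n\boldsymbol{\Phi}_n$, and \eqref{Xn.est}--\eqref{Xn-n-1.est} follow by multiplying the modulo-tame bounds.

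The main obstacle is closing the quadratic estimates \eqref{Mn.lowest}--\eqref{Mn.highest}. We use the smoothing bound $\mathfrak{M}^\sharp_{\Pi_N^{\perp}A}(s)\le N^{-\mathtt{b}}\mathfrak{M}^\sharp_{\langle\partial_\varphi\rangle^{\mathtt{b}}A}(s)$ together with the tame product bounds. This gives
\[
\mathfrak{M}_{n+1}^\sharp(s)\lesssim N_n^{-\mathtt{b}}\mathfrak{M}_n^\sharp(s,\mathtt{b})+N_n^{\tau_2}\gamma^{-1}\mathfrak{M}_n^\sharp(s)\mathfrak{M}_n^\sharp(s_0),
\]
and a similar high-norm estimate whose increment is summable. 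The choices $\mathtt{b}>\frac32\mathtt{a}$, $\mathtt{a}>2\tau_2+1$ and the smallness condition \eqref{small.cond.KAM} with $\tau_3$ large make the induction on the super-exponential scale $N_n$ close. The same smallness gives invertibility of $\boldsymbol{\Phi}_n$ by Neumann series in the modulo-tame algebra.

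Finally, $(S2)_{n+1}$ is obtained by applying $\Delta_{12}$ to the homological solution and to the formula for $\mathscr{R}^{[n+1]}$, working only at the low norm $s_0$. Differences of divisors are bounded by $|\Delta_{12}\mu|\lesssim|j-j'|\,\varepsilon\gamma^{-1}\|\Delta_{12}i\|$ on the common Cantor set, where we use \eqref{estimazione ttc12} and the $|j|^{-1}$ decay of the $\Delta_{12}\mathtt{r}$. Interpolating the low and high bounds yields the factors $N_{n-1}^{-\mathtt{a}}$ and $N_{n-1}$ in \eqref{Rn.12.lowest}--\eqref{Rn.12.highest}, and hence \eqref{tr.n.n-1.12}--\eqref{tr.n.12}.
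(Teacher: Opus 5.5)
Your proposal is correct and follows essentially the same route as the paper. Both arguments proceed by induction on $n$. The base case converts the tame bounds of $\langle D\rangle^{\frac12}\partial_{\varphi}^{l}\mathscr{R}^{[0]}\langle D\rangle^{\frac12}$ and of its commutator with $\partial_x$ into modulo-tame bounds by a Cauchy--Schwarz argument. The inductive step solves the homological equation with cutoff-extended diagonal and mixed divisors, updates the eigenvalues by the diagonal entries, and closes the same quadratic iterative estimates using $\mathtt{b}>\frac32\mathtt{a}$, $\mathtt{a}\geqslant 2\tau_2+1$ and \eqref{small.cond.KAM}.

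One small precision: the generator $\mathbf{Y}_n=\mathrm{i}\,\varrho\,\mathscr{R}^{[n]}$, with real $\varrho$, has real Fourier entries. It is therefore reversibility \emph{preserving} rather than reversible, and that is exactly what keeps $\mathscr{R}^{[n+1]}$ reversible.
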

\begin{proof} We proceed by induction on $n\in\mathbb{N}.$ First, we prove that $(S1)_{n}$, $(S2)_{n}$
hold when $n=0$. 
\\
$\blacktriangleright$ {\sc Proof of $(S1)_{0}$-$(S2)_{0}$.} The property \eqref{mu.j.n} holds at $n=0$ by Proposition \ref{lemma.sL.beforeKAM}-$(i)$ with $\tr_{j,\kappa}^{[0]}\triangleq 0$.
The estimates \eqref{Mn.lowest}, \eqref{Mn.highest} at $n=0$ follow by the following claim (with the same argument as in \eqref{Mn.sharp.def} and Lemma \ref{lem:initKAM}, arguing as in Lemma 8.4 in \cite{BFM21}.
Take $R\in\big\{\sR_{+}^{[0,d]},\sR_{+}^{[0,o]},\sR_{-}^{[0,o]},\sR_{-}^{[0,d]}\big\}.$ From the Cauchy-Schwarz inequality, we infer
\begin{align}
    &\||\langle D\rangle^{\frac{1}{2}}\langle\partial_{\varphi}\rangle^{\tb}\partial_{\lambda}^k R\langle D\rangle^{\frac{1}{2}}|\rho\|_{s}^2\\
    &\leqslant\sum_{(\ell,j)\in\mathbb{Z}^{d+1}}\langle\ell,j\rangle^{2s}\bigg(\sum_{(\ell',j')\in\mathbb{Z}^{d+1}}\langle j\rangle^{\frac{1}{2}}\langle\ell-\ell'\rangle^{\tb}|(\partial_{\lambda}^kR)(\ell-\ell')|\langle j'\rangle^{\frac{1}{2}}\,\,|\rho_{\ell',j'}|\bigg)^2\\
    &\leqslant\sum_{(\ell,j)\in\mathbb{Z}^{d+1}}\langle\ell,j\rangle^{2s}\bigg(\sum_{(\ell',j')\in\mathbb{Z}^{d+1}}\langle j\rangle^{\frac{1}{2}}\langle\ell-\ell'\rangle^{s_0+\tb}\langle j-j'\rangle|(\partial_{\lambda}^kR)(\ell-\ell')|\langle j'\rangle^{\frac{1}{2}}\,\,|\rho_{\ell',j'}|\frac{1}{\langle\ell-\ell'\rangle^{s_0}\langle j-j'\rangle}\bigg)^2\\
    &\lesssim_{s_0}\sum_{(\ell,j)\in\mathbb{Z}^{d+1}}\langle j\rangle^{\frac{1}{2}}\langle\ell,j\rangle^{2s}\sum_{(\ell',j')\in\mathbb{Z}^{d+1}}\langle\ell-\ell'\rangle^{2(s_0+\tb)}\langle j-j'\rangle^2|(\partial_{\lambda}^kR)(\ell-\ell')|^2\langle j'\rangle^{\frac{1}{2}}|\rho_{\ell',j'}|^2.
\end{align} 
Let us mention that the matrix representation of the commutator $[R,\partial_x]$ is given by $\big(\ii(j-j')R_j^{j'}(\ell-\ell')\big)_{(\ell,\ell',j,j')\in(\mathbb{Z})^d\times\mathbb{Z}^2}.$ Therefore, by \eqref{useful:e-tame} and \eqref{def:fM0}, we find
\begin{align*}
    &\gamma^{2|k|}\sum_{(\ell,j)\in\mathbb{Z}^{d+1}}\langle\ell,j\rangle^{2s}\langle j\rangle^{\frac{1}{2}}\langle\ell-\ell'\rangle^{2(s_0+\tb)}\langle j-j'\rangle^2|(\partial_{\lambda}^kR)(\ell-\ell')|^2\langle j'\rangle^{\frac{1}{2}}\\
    &\lesssim_{s_0,\tb}\sum_{(\ell',j')\in\mathbb{Z}^{d+1}}|\rho_{\ell',j'}|^2\big(\fM_0^2(s_0,\tb)\langle\ell',j'\rangle^{2s}+\fM_0^2(s,\tb)\langle\ell',j'\rangle^{2s_0}\big).
\end{align*}
Combining the last two estimates, we deduce that
$$\||\langle D\rangle^{\frac{1}{2}}\langle\partial_{\varphi}\rangle^{\tb}\partial_{\lambda}^kR\langle D\rangle^{\frac{1}{2}}|\rho\|_{s}^2\lesssim_{s_0,\tb}\gamma^{-2|k|}\sum_{(\ell',j')\in\mathbb{Z}^{d+1}}|\rho_{\ell',j'}|^2\big(\fM_0^2(s_0,\tb)\langle\ell',j'\rangle^{2s}+\fM_0^2(s,\tb)\langle\ell',j'\rangle^{2s_0}\big).$$
We thus have proved \eqref{Mn.lowest}-\eqref{Mn.highest} for $n=0.$
The conjugation in \eqref{sL.n.conj.to.0} and the estimate \eqref{Xn.est} with $n=0$ hold trivially with $\bX^{[0]}\triangleq 0$. The estimates \eqref{Rn.12.lowest}-\eqref{Rn.12.highest} similarly hold at $n=0$ by Lemma \ref{lem:initKAM}.\\
$\blacktriangleright$ {\sc The reducibility step.} Take now $n\in\mathbb{N}^*$ and we assume that $(S1)_{n-1}$, $(S2)_{n-1}$
hold. We have to prove $(S_1)_{n}$ and $(S_2)_{n}$.
Let
   \begin{equation}
   \wt{\boldsymbol{\Psi}}_{n}:=\Pi_{\overline{\mathbb{S}}_0}^{\perp}+ \bY^{[n]}, \qquad \bY^{[n]}\triangleq \begin{pmatrix}
        Y_{+}^{[n,d]} & Y_{+}^{[n,o]} \vspace{0.1cm}\\
        Y_{-}^{[n,o]} & Y_{-}^{[n,d]}
    \end{pmatrix}, \qquad \begin{aligned}
       & Y_{\kappa}^{[n,d]} : H_{\overline{\S}_{0,\kappa}}^\perp \to H_{\overline{\S}_{0,\kappa}}^\perp, \\
       &  Y_{\kappa}^{[n,o]} : H_{\overline{\S}_{0,-\kappa}}^\perp \to H_{\overline{\S}_{0,\kappa}}^\perp,
    \end{aligned}\label{gen.tilde}
\end{equation}
with $Y_{\kappa}^{[n,d]}$, $Y_{\kappa}^{[n,o]}$ to determine. We transform $\sL^{[n-1]}$, given by \eqref{sL.n.form}, \eqref{sD.n.form}, \eqref{sR.n.form} at the step $n-1$, into
\begin{align}
    \sL^{[n]} & \triangleq \wt{\boldsymbol{\Psi}}_{n}^{-1} \sL^{[n-1]} \wt{\boldsymbol{\Psi}}_{n} \label{sL.n.redustep} \\
    & = \omega\cdot\partial_{\varphi}\Pi_{\overline{\mathbb{S}}_0}^{\perp}+\im\,\mathscr{D}^{[n-1]}+\wt{\boldsymbol{\Psi}}_{n}^{-1}\Big(\big[\omega\cdot\partial_{\varphi}\Pi_{\overline{\mathbb{S}}_0}^{\perp}+\im\,\mathscr{D}^{[n-1]},\bY^{[n]}\big]+\Pi_{N_{n-1}}\mathscr{R}^{[n-1]}+\Pi_{N_{n-1}}^\perp\mathscr{R}^{[n-1]}+\mathscr{R}^{[n-1]} \bY^{[n]} \Big),
\end{align}
with $\Pi_{N_{n-1}}$ defined as in \eqref{def:projectors}, \eqref{def:projectors-matrix}, and $\Pi_{N_{n-1}}^\perp \triangleq {\rm Id} - \Pi_{N_{n-1}}$.
 We shall solve the homological equation
 \begin{equation}     \big[\omega\cdot\partial_{\varphi}\Pi_{\overline{\mathbb{S}}_0}^{\perp}+\im\,\mathscr{D}^{[n-1]},\bY^{[n]}\big]+\Pi_{N_{n-1}}\mathscr{R}^{[n-1]}=\big\lfloor \Pi_{N_{n-1}}\mathscr{R}^{[n-1]}\big\rfloor, \label{homol.compact}
 \end{equation}
 with $\big\lfloor \Pi_{N_n}\mathscr{R}^{[n-1]}\big\rfloor$ defined as in \eqref{def:diagcomp}, \eqref{def:diagcomp-matrix}.
    This latter corresponds to a system of four equations
   \begin{equation}
        \begin{cases}
\big[\omega\cdot\partial_{\varphi}\Pi_{\kappa}^{\perp}+\im\,\mathscr{D}_{\kappa}^{[n-1]},Y_{\kappa}^{[n,d]}\big]=\big\lfloor \Pi_{N_{n-1}}\mathscr{R}_{\kappa}^{[n-1,d]}\big\rfloor-\Pi_{N_{n-1}}\mathscr{R}_{\kappa}^{[n-1,d]}, \vspace{0.1cm}\\   \big(\omega\cdot\partial_{\varphi}\Pi_{\kappa}^{\perp}+\im\,\mathscr{D}_{\kappa}^{[n-1]}\big)Y_{\kappa}^{[n,o]}-Y_{\kappa}^{[n,o]}\big(\omega\cdot\partial_{\varphi}\Pi_{-\kappa}^{\perp}+\im\,\mathscr{D}_{-\kappa}^{[n-1]}\big)=-\Pi_{N_{n-1}}\mathscr{R}_{\kappa}^{[n-1,o]},
        \end{cases} \kappa\in \{ -,+ \} . \label{hom.eq.system}
    \end{equation}
We now define the components of $\bY^{[n]}$ in \eqref{gen.tilde}, for any $(a,\omega)\in \cO$ and $\kappa\in\{-,+\}$, by
    \begin{equation}\label{choice psin diag}
        \big( Y_{\kappa}^{[n,d]} \big)_{j}^{j'}(\ell)\triangleq\begin{cases}
        \im \,\varrho_{\ell,j,j',k}^{[n-1,d]}\big( \sR_{\kappa}^{[n-1,d]} \big)_{j}^{j'}(\ell)  & \textnormal{ if } \begin{cases}
            (\ell,j,j')\neq(0,j,j) , \ \ell \in \Z^d,  \ j,j'\in \Z\setminus \overline{\S}_{\kappa}, \\
            \vec{\jmath}\cdot \ell +j-j' = 0 , \  \braket{\ell} \leqslant N_{n-1},
        \end{cases} \\
        0  & \textnormal{ otherwise },
    \end{cases}
    \end{equation}
    and 
      \begin{equation}\label{choice psin antidiag}
        \big( Y_{\kappa}^{[n,o]} \big)_{j}^{j'}(\ell)\triangleq\begin{cases}
        \im\,\varrho_{\ell,j,j',k}^{[n-1,o]}\big( \sR_{\kappa}^{[n-1,o]} \big)_{j}^{j'}(\ell)  & \textnormal{ if } \begin{cases}
           \ell \in \Z^d,  \ j\in \Z\setminus \overline{\S}_{\kappa}, \ j'\in \Z\setminus \overline{\S}_{-\kappa}, \\
            \vec{\jmath}\cdot \ell +j-j' = 0 , \  \braket{\ell} \leqslant N_{n-1},
        \end{cases} \\
        0  & \textnormal{ otherwise },
    \end{cases}
    \end{equation}
    where
    \begin{align}
\varrho_{\ell,j,j',\kappa}^{[n-1,d]}(a,\omega;i_0) &  \triangleq\frac{\chi\Big(\big(\omega\cdot\ell+\mu_{j,\kappa}^{[n-1]}(a,\omega;i_0)-\mu_{j',\kappa}^{[n-1]}(a,\omega;i_0)\big)(\gamma\langle j-j'\rangle)^{-1}\langle\ell\rangle^{\tau}\Big)}{\omega\cdot\ell+\mu_{j,\kappa}^{[n-1]}(a,\omega;i_0)-\mu_{j',\kappa}^{[n-1]}(a,\omega;i_0)}, \label{rhodiag} \\
\varrho_{\ell,j,j',\kappa}^{[n-1,o]}(a,\omega;i_0) & \triangleq\frac{\chi\Big(\big(\omega\cdot\ell+\mu_{j,\kappa}^{[n-1]}(a,\omega;i_0)-\mu_{j',-\kappa}^{[n-1]}(a,\omega;i_0)\big)\big(\gamma\braket{j,j'}\big)^{-1}\langle\ell\rangle^{\tau}\Big)}{\omega\cdot\ell+\mu_{j,\kappa}^{[n-1]}(a,\omega;i_0)-\mu_{j',-\kappa}^{[n-1]}(a,\omega;i_0)}, \label{rho antidiag}
    \end{align}
and $\chi \in C^{\infty}(\R,\R)$ is an even, positive $C^\infty$ cut-off function as in \eqref{def chi}. From the definition of the components in \eqref{choice psin diag}-\eqref{choice psin antidiag}, we deduce that the operators $\bY^{[n]}$ and $\wt{\boldsymbol{\Psi}}_{n}$ are reversibility and momentum preserving, using Lemma \ref{lem Fourier coeff op} and the induction hypothesis on $\sR^{[n-1]}$.
With this choice, by \eqref{homol.compact}, \eqref{hom.eq.system} \eqref{choice psin diag}, \eqref{choice psin antidiag}, for any $(a,\omega)\in \cO_{\mathtt{KAM},n}^{\gamma,\tau}(i_0)$ as in \eqref{second.meln.KAM}, the conjugation in \eqref{sL.n.redustep} takes the form in \eqref{sL.n.form}, with
\begin{align}
    \mathscr{D}^{[n]} & \triangleq\mathscr{D}^{[n-1]} -\im \,\big\lfloor \Pi_{N_{n-1}}\mathscr{R}^{[n-1]}\big\rfloor, \label{sD.update}\\
    \mathscr{R}^{[n]} & \triangleq \wt{\boldsymbol{\Psi}}_{n}^{-1} \Big(-\bY^{[n]} \big\lfloor \Pi_{N_{n-1}}\mathscr{R}^{[n-1]}\big\rfloor+\Pi_{N_{n-1}}^{\perp}\mathscr{R}^{[n-1]}+\mathscr{R}^{[n-1]} \bY^{[n]} \Big). \label{sR.update}
\end{align}
From \eqref{sD.update}-\eqref{sR.update}, we deduce that the operator $\sL^{[n]}$ in \eqref{sL.n.form} and $\sR^{[n]}$ are real, reversible and momentum preserving, using Remark \ref{compo:RevMom}, the fact that $\bY^{[n]}$ and $\wt{\boldsymbol{\Psi}}_{n}$ are reversibility and momentum preserving, and the induction hypothesis on $\sR^{[n-1]}$.
Moreover, the conjugation in \eqref{sL.n.conj.to.0} holds at the step $n$ by defining
\begin{equation}\label{bX.n.update}
    \boldsymbol{\Psi}_{n} \triangleq  \boldsymbol{\Psi}_{n-1} \wt{\boldsymbol{\Psi}}_{n} = \Pi_{\overline{\mathbb{S}}_0}^{\perp} + \bX^{[n]}, \quad \bX^{[n]} \triangleq \bX^{[n-1]} + \big( \Pi_{\overline{\mathbb{S}}_0}^{\perp} + \bX^{[n-1]} \big) \bY^{[n]}, 
\end{equation}
where $\boldsymbol{\Psi}_{n-1} = \Pi_{\overline{\mathbb{S}}_0}^{\perp} + \bX^{[n-1]}$ is given by induction and $\wt{\boldsymbol{\Psi}}_{n}$ is as in \eqref{gen.tilde}, \eqref{choice psin diag}, \eqref{choice psin diag}. From \eqref{bX.n.update}, we deduce that $\boldsymbol{\Psi}_{n}$ is real, reversibility and momentum preserving, using Remark \ref{compo:RevMom}, the fact that $\bY^{[n]}$ is reversibility and momentum preserving, and the induction hypothesis on $\bX^{[n-1]}$.

Before continuing proving the remaining claimed estimate, we first show the estimates for the solutions of the homological equations.
\\
$\blacktriangleright$ {\sc Estimates for the operator $\bY^{[n]}$.} We claim that the operator $\bY^{[n]}$, defined in \eqref{gen.tilde}, \eqref{choice psin diag}, \eqref{choice psin antidiag}, and the operator $\braket{\pa_{\vf}}^{\tb}\bY^{[n]}$ are $\cD^{q}$-$(-1)$-modulo-tame satisfying, for all $s \in [s_0,S-\mu(\tb)]$,
\begin{align}
     \fM_{\braket{D}^\frac12 \bY^{[n]}\braket{D}^\frac12 }^\sharp (s)& \lesssim_{q} \gamma^{-1} N_{n-1}^{\tau_2} \fM_{n-1}^\sharp(s), \label{bY.n.lowest}\\ 
    \fM_{\braket{D}^\frac12 \braket{\pa_{\vf}}^{\tb}\bY^{[n]}\braket{D}^\frac12 }^\sharp (s) &  \lesssim_{q} \gamma^{-1} N_{n-1}^{\tau_2}  \fM_{n-1}^\sharp(s,\tb), \label{bY.n.highest}
\end{align}
where $\tau_{2}\triangleq \tau(q+1)+q$ (see also \eqref{some.constants}), and
\begin{align}
    \big\| \braket{D}^{\frac12} |\Delta_{12}\bY^{[n]}| \braket{D}^{\frac12} \big\|_{\cL(\mathbf{H}_{\perp}^{s_0})} & \lesssim \gamma^{-1} N_{n-1}^{2\tau+1}\Big(  \big\| \braket{D}^{\frac12} |\Delta_{12}\sR^{[n-1]}| \braket{D}^{\frac12} \big\|_{\cL(\mathbf{H}_{\perp}^{s_0})}  \label{bY.n.12.lowest} \\
    & \quad + \big\| \braket{D}^{\frac12} |\sR^{[n-1]}(i_2)| \braket{D}^{\frac12} \big\|_{\cL(\mathbf{H}_{\perp}^{s_0})}  \| \Delta_{12} i \|_{s_0+\mu(\tb)} \Big), \\
     \big\| \braket{D}^{\frac12} |\braket{\pa_{\vf}}^{\tb}\Delta_{12}\sR^{[n]}| \braket{D}^{\frac12} \big\|_{\cL(\mathbf{H}_{\perp}^{s_0})} &\lesssim \gamma^{-1} N_{n-1}^{2\tau+1}\Big(  \big\| \braket{D}^{\frac12} |\braket{\pa_{\vf}}^{\tb}\Delta_{12}\sR^{[n-1]}| \braket{D}^{\frac12} \big\|_{\cL(\mathbf{H}_{\perp}^{s_0})}  \label{bY.n.12.highest} \\
    & \quad + \big\| \braket{D}^{\frac12} |\braket{\pa_{\vf}}^{\tb}\sR^{[n-1]}(i_2)| \braket{D}^{\frac12} \big\|_{\cL(\mathbf{H}_{\perp}^{s_0})}  \| \Delta_{12} i \|_{s_0+\mu(\tb)} \Big)  .
\end{align}
We prove that the estimate \eqref{bY.n.highest} holds for $\braket{\pa_{\vf}}^{\tb}Y_{\kappa}^{[n,d]}$. The analogous estimates \eqref{bY.n.lowest}, \eqref{bY.n.highest} for  $Y_{\kappa}^{[n,d]}$, $Y_{\kappa}^{[n,o]}$, $\braket{\pa_{\vf}}^{\tb}Y_{\kappa}^{[n,o]}$ and the estimates \eqref{bY.n.12.lowest}, \eqref{bY.n.12.highest} follow similarly and therefore we omit them.  \\
First, setting $\lambda\triangleq (a,\omega) \in \mathcal{O}$, we rewrite $\varrho_{\ell,j,j',\kappa}^{[n-1,d]}$ in \eqref{rhodiag} as
\begin{equation}
    \varrho_{\ell,j,j',\kappa}^{[n-1,d]}(\lambda) = \frac{\chi(f(\lambda)\rho^{-1})}{f(\lambda)} , \qquad f(\lambda) \triangleq \omega\cdot\ell + \mu_{j,\kappa}^{[n-1]}(\lambda) - \mu_{j',\kappa}^{[n-1]}(\lambda), \qquad \rho\triangleq \gamma \braket{j-j'} \braket{\ell}^{-\tau} . \label{rho.diag.simple}
\end{equation}
By Lemma \ref{lem prop eig VP}-$(iv)$, \eqref{estimazione ttc} in Proposition \ref{prop trans Ego}-$(i)$ and the induction assumption on \eqref{mu.j.n.est}, we have that
\begin{equation}
    \forall \, k_0 \in \N^{d+1},\quad |k_0|\leqslant q,\quad |\pa_{\lambda}^{k_0} f(\lambda) | \lesssim_{k_0} \braket{\ell} \braket{j-j'}.
\end{equation}
Therefore, together with \eqref{rho.diag.simple}, \eqref{second.meln.KAM}, \eqref{def chi} and the Fa\'a di Bruno formula, we deduce that, for any $k_1 \in \N^{d+1}$, $|k_1|\leqslant q$,
\begin{equation}
    |\pa_{\lambda}^{k_1}\varrho_{\ell,j,j',\kappa}^{[n-1,d]}(\lambda) | \lesssim_{k_1} \gamma^{-1-|k_1|} \braket{\ell}^{\tau(|k_1|+1)+|k_1|} \braket{j-j'}^{-1} \lesssim_{k_1} \gamma^{-1-|k_1|} \braket{\ell}^{\tau(|k_1|+1)+|k_1|} .
\end{equation}
Consequently, by \eqref{choice psin diag} and Leibniz's rule, for any $k \in \N^{d+1}$, $|k|\leqslant q$,
\begin{equation}
    \big| \pa_{\lambda}^{k}\big( Y_{\kappa}^{[n,d]} \big)_{j}^{j'}(\ell) \big| \lesssim_{q} \gamma^{-1-|k|} \braket{\ell}^{\tau_{2}} \sum_{k_2\in \N^{d+1} \atop |k_2|\leqslant |k|}  \gamma^{|k_2|}  \big| \pa_{\lambda}^{k_2}\big( \sR_{\kappa}^{[n,d]} \big)_{j}^{j'}(\ell) \big|,
\end{equation}
with $\tau_{2}$ as in \eqref{some.constants}.
By \eqref{choice psin diag}, we have $\big( \sR_{\kappa}^{[n,d]} \big)_{j}^{j'}(\ell)= 0 $ for all $\braket{\ell}> N_{n-1}$. For any $k\in \N^{d+1}$, $|k|\leqslant q$, we get
\begin{align}
     \big\| \braket{D}^{\frac12} |\braket{\pa_{\vf}}^{\tb}\pa_{\lambda}^{k}Y_{\kappa}^{[n,d]}| \braket{D}^{\frac12} h \big\|_{s}^2 & \lesssim_{q} \gamma^{-2(1+|k|)} N_{n-1}^{2\tau_{2}}  \sum_{k_2\in \N^{d+1} \atop |k_2|\leqslant |k|}  \gamma^{|k_2|}  \big\| \braket{D}^{\frac12} |\braket{\pa_{\vf}}^{\tb}\pa_{\lambda}^{k}\sR_{\kappa}^{[n-1,d]}| \braket{D}^{\frac12} h \big\|_{s}^2 \\
     & \lesssim_{q}\gamma^{-2(1+|k|)} N_{n-1}^{2\tau_{2}}  \big( \fM_{n-1}^\sharp(s,\tb)^2 \| h \|_{s_0}^{2} + \fM_{n-1}^\sharp(s_0,\tb)^2 \| h \|_{s}^{2}  \big),
\end{align}
where we used Definition \ref{modulo.tame.def} and \eqref{Mn.sharp.def}, from which we conclude that $$\fM_{\braket{D}^\frac12 \braket{\pa_{\vf}}^{\tb}Y_{\kappa}^{[n,d]}\braket{D}^\frac12 }^\sharp (s)   \lesssim_{q} \gamma^{-1} N_{n-1}^{\tau_2}  \fM_{n-1}^\sharp(s,\tb)$$
as claimed.
\\
$\blacktriangleright$ {\sc Estimates on the new diagonal $\sD^{[n]}$.}
The diagonal operator $\sD^{[n]}$ in \eqref{sD.update} has the form in \eqref{sD.n.form}, with eigenvalues $\mu_{j,\kappa}^{[n]}(a,\omega,i_0)$ of the form \eqref{mu.j.n}, where we define, using \eqref{sD.update}, Lemma \ref{lem Fourier coeff op} and the fact that $\sR^{[n-1]}$ is reversible by induction assumption,
\begin{equation}\label{tr.update}
    \tr_{j,\kappa}^{[n]} \triangleq \tr_{j,\kappa}^{[n-1]} -\im \big( \sR_{\kappa}^{[n-1,d]} \big)_{j}^{j}(0) \in \R.
\end{equation}
The property \eqref{rev:trjk} follows from \eqref{tr.update} and the reversibility of $\mathscr{R}^{[n-1]}$ by virtue of Lemma \ref{lem Fourier coeff op}.
We now prove that the estimates in \eqref{mu.j.n.est} hold. The proof of \eqref{tr.n.12}, \eqref{tr.n.n-1.12} follows analogously and therefore we omit it. Recalling the definition of $\fM_{n-1}^\sharp(s_0)$ in \eqref{Mn.sharp.def} (with $s=s_0$) and Definition \ref{modulo.tame.def}, we deduce that for all $k\in\N^{d+1}$ with $|k|\leqslant q$,
\begin{equation}\label{tr.update.est}
    |j| \big| \pa_{\lambda}^{k}\big(\tr_{j,\kappa}^{[n]} -\tr_{j,\kappa}^{[n-1]}\big)\big| = |j|^{\frac12} \big| \pa_{\lambda}^{k}\big( \sR_{\kappa}^{[n-1,d]} \big)_{j}^{j}(0)\big| |j|^{\frac12} \lesssim \gamma^{-|k|} \fM_{n-1}^\sharp(s_0).
\end{equation}
Therefore, the estimates in \eqref{mu.j.n.est} at the step $n$ follow by \eqref{tr.update}, \eqref{tr.update.est} and the induction assumption on \eqref{mu.j.n.est}, \eqref{Mn.lowest} at  the step $n-1$.
\\
$\blacktriangleright$ {\sc Estimates for the new remainder $\sR^{[n]}$.} We now prove that $\sR^{[n]}$ in \eqref{sR.update} and $\braket{\pa_{\vf}}^{\tb} \sR^{[n]}$ are $\cD^{q}$-$(-1)$-modulo-tame, and that \eqref{Mn.lowest}, \eqref{Mn.highest} hold. The proof of \eqref{Rn.12.lowest}, \eqref{Rn.12.highest} follows analogously and therefore we omit it.
The estimates in  \eqref{Mn.lowest}, \eqref{Mn.highest} follow by the following iterative estimates, for some positive constants $C(q), C(q,\tb)>0$,
\begin{align}
    \fM_{n}^\sharp(s) & \leqslant N_{n-1}^{-\tb}\fM_{n-1}^\sharp(s,\tb)+C(q)\gamma^{-1} N_{n-1}^{\tau_2}\fM_{n-1}^\sharp(s)\fM_{n-1}^\sharp(s_0), \label{iter.lowest} \\
    \fM_{n}^\sharp(s,\tb) & \leqslant \fM_{n-1}^\sharp(s,\tb)+C(q,\tb) \gamma^{-1} N_{n-1}^{\tau_{2}}\big(\fM_{n-1}^\sharp(s,\tb)\fM_{n-1}^\sharp(s_0)+\fM_{n-1}^\sharp(s_0,\tb)\fM_{n-1}^\sharp(s)\big) . \label{iter.highest}
\end{align}
The above estimates \eqref{iter.lowest}-\eqref{iter.highest} follow by \eqref{sR.update}, \eqref{Mn.sharp.def}, Proposition \ref{prop.modulo.tame} and estimates \eqref{bY.n.lowest}-\eqref{bY.n.highest}. The claimed estimates are then deduced by an induction argument on \eqref{iter.lowest} and \eqref{iter.highest}, imposing the smallness conditions
\begin{equation}
    \big(2-\tfrac{1}{n}\big) N_{n-1}^{-\tb} \leqslant \tfrac12 N_{0}^{\ta} N_{n}^{-\ta}, \qquad C(q)C_{*}(s_0,\tb) \gamma^{-1} \fM_{0}(s_0,\tb) N_{0}^{\ta} N_{n-1}^{\tau_{2}-2\ta} \leqslant \tfrac12 N_{n}^{-\ta},
\end{equation}
to obtain \eqref{Mn.lowest}, and
\begin{equation}
    2\big(2-\tfrac{1}{n}\big) C(q,\tb) C_{*}(s_0,\tb) \gamma^{-1}  \fM_{0}(s_0,\tb) N_{0}^{\ta} N_{n-1}^{\tau_{2}-\ta} \leqslant \tfrac{1}{n}-\tfrac{1}{n+1}
\end{equation}
to obtain \eqref{Mn.highest}: both the above conditions hold by the choice of parameters in \eqref{some.constants}, \eqref{scale.KAM}, and the smallness condition \eqref{small.cond.KAM}, with $N_{0}=N_{0}(S,\tb)\gg 1$ sufficiently large.
\\
$\blacktriangleright$ {\sc Estimates on the transformation $\boldsymbol{\Psi}_{n}$.} We now prove the estimates \eqref{Xn.est}, \eqref{Xn-n-1.est} for $\bX^{[n]}$ defined in \eqref{bX.n.update}. First, we show \eqref{Xn-n-1.est}. By \eqref{bX.n.update} and Proposition \ref{prop.modulo.tame}-$(i)$-$(ii)$, we have
\begin{align}
    &\fM_{\braket{D}^\frac12 (\bX^{[n]}-\bX^{[n-1]}) \braket{D}^\frac12}^\sharp(s)   \leqslant  \fM_{\braket{D}^\frac12 \bY^{[n]}\braket{D}^\frac12}^\sharp(s) \\
    & \quad \quad + C(q) \Big( \fM_{\braket{D}^\frac12 \bX^{[n-1]}\braket{D}^\frac12}^\sharp(s) \fM_{\braket{D}^\frac12 \bY^{[n]}\braket{D}^\frac12}^\sharp(s_0) + \fM_{\braket{D}^\frac12 \bX^{[n-1]}\braket{D}^\frac12}^\sharp(s_0) \fM_{\braket{D}^\frac12 \bY^{[n]}\braket{D}^\frac12}^\sharp(s)\Big).
\end{align}
By \eqref{bY.n.lowest}, \eqref{Mn.lowest} at the step $n-1$, \eqref{scale.KAM} and the induction assumption on \eqref{Xn.est} at the step $n-1$, we have
\begin{align}
    &\fM_{\braket{D}^\frac12 (\bX^{[n]}-\bX^{[n-1]}) \braket{D}^\frac12}^\sharp(s)   \leqslant C(q)C_{*}(s_0,\tb) \gamma^{-1} N_0^{\ta} N_{n-1}^{\tau_{2}-\ta}\fM_{0}(s,\tb) \\
    & \quad  +2 C(q)^3C_{*}(s_0,\tb) \gamma^{-2} N_0^{\tau_2+\ta} N_{n-1}^{\tau_{2}-\ta} \prod_{j=1}^{n-2} \big( 1 + 2 C(q) N_0^{\bar\ta} N_{j}^{-\bar\ta} \big) \fM_{0}(s,\tb)\fM_{0}(s_0,\tb) \\
    & =  C(q)C_{*}(s_0,\tb) \gamma^{-1} N_0^{\ta} N_{n-1}^{-\bar\ta} \fM_{0}(s,\tb)\Big( 1 + 2 C(q)^2 C_{*}(s_0,\tb)\gamma^{-1} N_0^{\tau_{2}}  \prod_{j=1}^{n-2} \big( 1 + 2 C(q) N_0^{\bar\ta} N_{j}^{-\bar\ta} \big) \fM_{0}(s_0,\tb) \Big)\fM_{0}(s,\tb) \\
    & \leqslant 2 C(q)^2 C_*(s_0,\tb) \gamma^{-1} N_0^{\ta} N_{n-1}^{-\bar\ta}  \fM_{0}(s,\tb) \prod_{j=1}^{n-2} \big( 1 + 2 C(q) N_0^{\bar\ta} N_{j}^{-\bar\ta} \big) \fM_{0}(s,\tb),
\end{align}
where in the last inequality we used the smallness condition \eqref{small.cond.KAM}. This proves \eqref{Xn-n-1.est}. We now prove \eqref{Xn.est}. By Proposition \ref{prop.modulo.tame}-$(i)$ and \eqref{Xn-n-1.est}, we get
\begin{align}
    \fM_{\braket{D}^\frac12 \bX^{[n]}\braket{D}^\frac12}^\sharp(s)   & \leqslant \fM_{\braket{D}^\frac12 \bX^{[n-1]} \braket{D}^\frac12}^\sharp(s) + \fM_{\braket{D}^\frac12 (\bX^{[n]}-\bX^{[n-1]}) \braket{D}^\frac12}^\sharp(s)  \\
    & \leqslant C(q) C_{*}(s_0,\tb) \gamma^{-1} N_{0}^{\tau_{2}}  \prod_{j=1}^{n-2} \big( 1 + 2 C(q) N_0^{\bar\ta} N_{j}^{-\bar\ta} \big) \fM_{0}(s,\tb) \\
    &\quad +2 C(q)^2 C_*(s_0,\tb) \gamma^{-1} N_0^{\ta} N_{n-1}^{-\bar\ta}  \fM_{0}(s,\tb) \prod_{j=1}^{n-2} \big( 1 + 2 C(q) N_0^{\bar\ta} N_{j}^{-\bar\ta} \big) \fM_{0}(s,\tb) \\
    & = C(q) C_{*}(s_0,\tb) \gamma^{-1} N_{0}^{\tau_{2}}  \big( 1 + 2C(q) N_0^{\ta-\tau_{2}} N_{n-1}^{\bar\ta} \big)\prod_{j=1}^{n-2} \big( 1 + 2 C(q) N_0^{\bar\ta} N_{j}^{-\bar\ta} \big) \fM_{0}(s,\tb),
\end{align}
which proves \eqref{Xn.est}. The proof of Proposition \ref{prop.KAM.iter} is now complete.
\end{proof}

We are now in position to fully diagonalize the operator $\sL^{[0]}$ in \eqref{sL.0.start}.
\begin{prop}[Diagonalization of $\sL^{[0]}$: convergence]\label{prop.KAM.conv}
     Under the assumptions of Proposition \ref{prop.KAM.iter},
     the following properties hold true:
	\begin{itemize}
    \item [$(i)$]
    For any $\kappa\in\{-,+\}$, $j\in\Z^*\setminus\overline{\S}_\kappa$, the real eigenvalues $\big(\mu_{j,\kappa}^{[n]}(a,\omega;i_0)\big)_{n\in\N}$ form a Cauchy sequence in $\| \,\cdot\, \|^{q,\gamma,\cO}$, defined for all $(a,\omega)\in\mathcal{O}$, that converges to
    \begin{equation}
        \mu_{j,\kappa}^{[\infty]}(a,\omega;i_0)\triangleq\mu_{j,\kappa}^{[0]}(a,\omega;i_0)+\mathtt{r}_{j,\kappa}^{[\infty]}(a,\omega;i_0) \in \R, \label{mu.j.infty}
    \end{equation}
    with $\mu_{j,\kappa}^{[0]}(a,\omega,i_0)$ as in Proposition \ref{lemma.sL.beforeKAM}-$(i)$, satisfying 
    \begin{equation}\label{rev:trjinfty}
        \tr_{-j,\kappa}^{[\infty]}=-\tr_{j,\kappa}^{[\infty]},
    \end{equation}
    \begin{align}
        |j| \big\|\tr_{j,\kappa}^{[\infty]}\big\|^{q,\gamma,\cO} \leqslant C(S,\tb) \varepsilon \gamma^{-1}, \qquad  |j| \big\|\tr_{j,\kappa}^{[\infty]} - \tr_{j,\kappa}^{[n]} \big\|^{q,\gamma,\cO} \leqslant C(S,\tb) \varepsilon \gamma^{-1} N_{n-1}^{-\ta}, \quad n\in\N, \label{mu.j.infty.est}
    \end{align}
    and, for any $i_1(a,\omega)$ and $i_2(a,\omega)$ two reversible and momentum preserving tori such that $\sR^{[n]}(i_1)$, $\sR^{[n]}(i_2)$ satisfy \eqref{sR0.initial.KAM}, \eqref{sR0.initial.12}, for all $(a,\omega) \in \mathcal{O}$,
\begin{align}
|j|\big\|\Delta_{12}\tr_{j,\kappa}^{[\infty]}\big\| & \lesssim C(S,\tb)\varepsilon \gamma^{-1}\| \Delta_{12} i \|_{s_0+\mu(\tb)}  . \label{tr.inf.12}
\end{align}
\item[$(ii)$]
 Define the set 
\begin{align}
		&\mathcal{O}_{\mathtt{KAM},\infty}^{2\gamma,\tau}(i_0)\triangleq
        \bigcap_{\kappa\in\{-,+\}}\bigcap_{(\ell,j,j')\in\mathbb{Z}^d\times(\mathbb{Z}\setminus\overline{\mathbb{S}}_{\kappa})^2\atop{\vec{\jmath}\cdot \ell+j-j'=0}}  \bigg\lbrace(a,\omega)\in\mathcal{O} \,\textnormal{ s.t. }\big|\omega\cdot \ell+\mu_{j,\kappa}^{[\infty]}(a,\omega;i_0)-\mu_{j',\kappa}^{[\infty]}(a,\omega;i_0)\big|>\frac{2\gamma\langle j-j'\rangle}{\langle \ell\rangle^{\tau}}\bigg\rbrace\\
		& \cap \bigcap_{(\ell,j,j')\in\mathbb{Z}^d\times(\mathbb{Z}\setminus\overline{\mathbb{S}}_{+})\times(\mathbb{Z}\setminus\overline{\mathbb{S}}_{-})\atop{\vec{\jmath}\cdot \ell+j-j'=0}}  \bigg\lbrace(a,\omega)\in\mathcal{O}\,\textnormal{ s.t. }\big|\omega\cdot \ell+\mu_{j,+}^{[\infty]}(a,\omega;i_0)-\mu_{j',-}^{[\infty]}(a,\omega;i_0)\big|>\frac{2\gamma \braket{j,j'}}{\langle \ell\rangle^{\tau}}\bigg\rbrace. \label{second.meln.KAM.infty}
	\end{align}
    Then, it holds that
    \begin{equation}
        \mathcal{O}_{\mathtt{KAM},\infty}^{2\gamma,\tau}(i_0) \subseteq 
        \bigcap_{n\in\N} \mathcal{O}_{\mathtt{KAM},n}^{\gamma,\tau}(i_0),
    \end{equation}
    where $\mathcal{O}_{\mathtt{KAM},n}^{\gamma,\tau}(i_0)$ is given in \eqref{second.meln.KAM}.
		\item [$(iii)$]
         There exists a real, invertible, reversibility preserving and momentum preserving map, defined for all $(a,\omega)\in \mathcal{O}$, of the form $\boldsymbol{\Psi}_{\infty}= \Pi_{\overline{\mathbb{S}}_0}^{\perp} + \bX^{[\infty]}$, where $\bX^{[\infty]}$ is a $\cD^q$-$(-1)$-modulo-tame satisfying, for all $s\in[s_0,S]$,
         \begin{equation}
\fM_{\braket{D}^\frac12\bX^{[\infty]}\braket{D}^\frac12}^\sharp(s) \lesssim_{S,\tb} \varepsilon \gamma^{-2}N_{0}^{\tau_{2}}\big( 1 + \| \fI_{0} \|_{s+\mu(\tb)}^{q,\gamma,\cO} \big),  
         \end{equation}
such that, for all $(a,\omega) \in \mathcal{O}_{\mathtt{KAM},\infty}^{2\gamma,\tau}(i_0)$,
the operator $\sL^{[0]}$ in \eqref{sL.0.start}
is conjugated to the real, reversible and momentum preserving operator
		\begin{equation}
\mathscr{L}^{[\infty]}\triangleq \boldsymbol{\Psi}_{\infty}^{-1}\mathscr{L}^{[0]}\boldsymbol{\Psi}_{\infty}=\omega\cdot\partial_{\varphi}\Pi_{\overline{\mathbb{S}}_0}^{\perp}+\im\,\mathscr{D}^{[\infty]},\label{sL.infty.conj}
		\end{equation}
		with
        \begin{align}
            \mathscr{D}^{[\infty]}\triangleq\begin{pmatrix}
			\mathscr{D}_{+}^{[\infty]} & 0\\
			0 & \mathscr{D}_{-}^{[\infty]}
		\end{pmatrix},\qquad\forall\,\kappa\in\{-,+\},\quad\mathscr{D}_{\kappa}^{[\infty]}\triangleq\underset{j\in\mathbb{Z}^*\setminus\overline{\mathbb{S}}_{\kappa}}{\mathtt{diag}}\Big(\mu_{j,\kappa}^{[\infty]}\Big), \label{sD.infty.form}
        \end{align}
	defined for all  $(a,\omega)\in \mathcal{O}$, where $\mu_{j,\kappa}^{[\infty]}$ are as in \eqref{mu.j.infty}.
	\end{itemize}
\end{prop}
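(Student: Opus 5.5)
The plan is to derive the three items directly from the iterative estimates of Proposition \ref{prop.KAM.iter}, in the standard way of KAM reducibility schemes (compare \cite{BM18,BFM21}).

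\emph{Item (i).} By \eqref{mu.j.n.est}, for each $\kappa$ and $j$ we have $|j|\,\|\mu_{j,\kappa}^{[n]}-\mu_{j,\kappa}^{[n-1]}\|^{q,\gamma,\mathcal{O}}\leqslant C(S,\tb)\varepsilon\gamma^{-1}N_{n-2}^{-\ta}$. The sequence $N_n=N_0^{(3/2)^n}$ grows super-exponentially, so the series $\sum_n N_{n-2}^{-\ta}$ converges. Hence $(\mathtt{r}_{j,\kappa}^{[n]})_n$ is Cauchy in $W^{q,\gamma,\infty}(\mathcal{O})$. We define $\mathtt{r}_{j,\kappa}^{[\infty]}$ as its limit. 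Summing the tail bounds gives the second estimate in \eqref{mu.j.infty.est}, with $N_{n-1}^{-\ta}$ after absorbing $\sum_{k\geqslant n+1}N_{k-2}^{-\ta}\lesssim N_{n-1}^{-\ta}$. The first estimate follows from the uniform bound in \eqref{mu.j.n.est}. The oddness \eqref{rev:trjinfty} passes to the limit from \eqref{rev:trjk}. The difference estimate \eqref{tr.inf.12} comes in the same way by telescoping \eqref{tr.n.n-1.12}.

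\emph{Item (ii).} We argue by induction on $n$; the case $n=0$ is trivial. Take a triple $(\ell,j,j')$ with $|\ell|\leqslant N_{n-1}$ satisfying the momentum condition. By the triangle inequality and \eqref{mu.j.infty.est},
$$\big|\omega\cdot\ell+\mu_{j,\kappa}^{[n-1]}-\mu_{j',\kappa}^{[n-1]}\big|\geqslant\frac{2\gamma\langle j-j'\rangle}{\langle\ell\rangle^{\tau}}-C\varepsilon\gamma^{-1}N_{n-2}^{-\ta}\Big(\tfrac{1}{|j|}+\tfrac{1}{|j'|}\Big).$$
It then suffices that $C\varepsilon\gamma^{-1}N_{n-2}^{-\ta}\leqslant\gamma N_{n-1}^{-\tau}$. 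Since $N_{n-1}^{\tau}=N_{n-2}^{3\tau/2}$ and $\ta\geqslant 2\tau_2+1>\tfrac32\tau$, this reduces to $\varepsilon\gamma^{-2}$ being small, which holds under \eqref{small.cond.KAM} together with \eqref{def:fM0}. The mixed $\pm$ conditions, with weight $\langle j,j'\rangle$, are handled identically. For $n=1$ one uses $\mu^{[0]}$ and the full tail bound.

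\emph{Item (iii).} This is the step I expect to be the main obstacle. By \eqref{Xn-n-1.est}, the increments $\langle D\rangle^{\frac12}(\mathbf{X}^{[n]}-\mathbf{X}^{[n-1]})\langle D\rangle^{\frac12}$ have modulo-tame constants bounded by $\gamma^{-1}N_0^{\ta}N_{n-1}^{-\bar\ta}\fM_0(s,\tb)$ times the product $\prod_j(1+2C(q)N_0^{\bar\ta}N_j^{-\bar\ta})$. That product is uniformly bounded, so the increments are summable. The sequence $\mathbf{X}^{[n]}$ therefore converges in the $\mathcal{D}^q$-$(-1)$-modulo-tame norm to $\mathbf{X}^{[\infty]}$. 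Its bound follows from \eqref{Xn.est} and \eqref{def:fM0}, which give $\varepsilon\gamma^{-2}N_0^{\tau_2}(1+\|\mathfrak{I}_0\|_{s+\mu(\tb)})$. Smallness of this constant at $s_0$, by \eqref{small.cond.KAM}, gives invertibility of $\boldsymbol{\Psi}_\infty$ via Neumann series. Being real, reversibility preserving and momentum preserving are closed conditions on matrix coefficients, so they pass to the limit.

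For the conjugation, on $\mathcal{O}_{\mathtt{KAM},\infty}^{2\gamma,\tau}(i_0)$ item (ii) gives \eqref{sL.n.conj.to.0} for every $n$, that is, $\mathscr{L}^{[0]}\boldsymbol{\Psi}_n=\boldsymbol{\Psi}_n(\omega\cdot\partial_\varphi\Pi^\perp_{\overline{\mathbb{S}}_0}+\ii\mathscr{D}^{[n]}+\mathscr{R}^{[n]})$. We pass to the limit in this identity in $\mathcal{L}(\mathbf{H}^{s+\sigma},\mathbf{H}^{s})$ for a fixed loss $\sigma$. Here $\mathscr{R}^{[n]}\to0$ by \eqref{Mn.lowest}, and $\mathscr{D}^{[n]}\to\mathscr{D}^{[\infty]}$ entrywise with the uniform weight $|j|^{-1}$. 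The delicate point is handling the unbounded terms $\omega\cdot\partial_\varphi$ and $\mathscr{D}$ composed with $\boldsymbol{\Psi}_n-\boldsymbol{\Psi}_\infty$. I would test the identity on trigonometric polynomials in $(\varphi,x)$ and use the convergence of the matrix entries together with density, which yields \eqref{sL.infty.conj}.
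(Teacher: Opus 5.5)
Your proposal is correct and follows the paper's proof essentially step by step. For $(i)$ you telescope \eqref{mu.j.n.est} and \eqref{tr.n.n-1.12}; for $(ii)$ you perturb the Melnikov conditions by the triangle inequality, which needs no induction; for $(iii)$ you sum \eqref{Xn-n-1.est} and pass to the limit in \eqref{sL.n.conj.to.0}. Your remarks on invertibility, on the symmetries and on the unbounded terms add detail that the paper leaves implicit.

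One small fix is needed in $(ii)$, at $n=1$ (where $N_{-1}=1$). There the required condition is $C\varepsilon\gamma^{-2}N_0^{\tau}\leqslant 1$, and \eqref{small.cond.KAM} combined with the upper bound \eqref{def:fM0} does not give smallness of $\varepsilon\gamma^{-2}$. It is cleaner to bound the eigenvalue corrections by $\fM_0(s_0,\tb)$, as they actually arise in \eqref{tr.update.est} and \eqref{Mn.lowest}. Then \eqref{small.cond.KAM}, with $\tau_3>\tau$, closes the estimate uniformly in $n$.
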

\begin{proof}
     $(i)$ Follows by \eqref{mu.j.n}, \eqref{mu.j.infty.est}, \eqref{tr.n.12}, \eqref{tr.n.n-1.12} in Proposition \ref{prop.KAM.iter}, together with \eqref{scale.KAM}, \eqref{some.constants} and a telescopic series argument. The reversibility property \eqref{rev:trjinfty} is obtained at the limit from \eqref{rev:trjk}.\\
     $(ii)$ Given $n\in\N$, we aim to show that $ \mathcal{O}_{\mathtt{KAM},\infty}^{2\gamma,\tau}(i_0) \subseteq \mathcal{O}_{\mathtt{KAM},n}^{\gamma,\tau}(i_0)$. When $n=0$, the claim is trivial because $\mathcal{O}_{\mathtt{KAM},0}^{\gamma,\tau}(i_0)= \mathcal{O}$. Take $n\in\mathbb{N}^*$ and $(a,\omega)\in\mathcal{O}_{\mathtt{KAM},\infty}^{2\gamma,\tau}(i_0)$. Then, for any $\kappa\in\{-,+\}$, $(\ell,j,j')\in\mathbb{Z}^d\times(\mathbb{Z}\setminus\overline{\mathbb{S}}_{\kappa})^2$, $\vec{\jmath}\cdot \ell+j-j'=0$ and $|\ell|\leqslant N_{n-1}$, we have, by \eqref{mu.j.n}, \eqref{mu.j.infty}, \eqref{mu.j.infty.est},
    \begin{align}
        \big|\omega\cdot \ell+\mu_{j,\kappa}^{[n-1]}(a,\omega;i_0)-\mu_{j',\kappa}^{[n-1]}(a,\omega;i_0)\big| & \geqslant \big|\omega\cdot \ell+\mu_{j,\kappa}^{[\infty]}(a,\omega;i_0)-\mu_{j',\kappa}^{[\infty]}(a,\omega;i_0)\big| \\
        & \quad - \big| \tr_{j,\kappa}^{[\infty]}(a,\omega;i_0)-\tr_{j,\kappa}^{[n-1]}(a,\omega;i_0) \big| - \big| \tr_{j',\kappa}^{[\infty]}(a,\omega;i_0)-\tr_{j',\kappa}^{[n-1]}(a,\omega;i_0) \big| \\
        & \geqslant \frac{2\gamma\braket{j-j'}}{\braket{\ell}^\tau}  - C\varepsilon\gamma^{-1} |j|^{-1} N_{n-2}^{-\ta}  - C\varepsilon\gamma^{-1} |j'|^{-1} N_{n-2}^{-\ta} \\
        & \geqslant \frac{2\gamma\braket{j-j'}}{\braket{\ell}^\tau}  - 2C\varepsilon\gamma^{-1} N_{n-2}^{-\ta} \braket{j-j'} \geqslant  \frac{\gamma\braket{j-j'}}{\braket{\ell}^\tau} 
    \end{align}
    as soon as
    \begin{equation}\label{cond.incl.KAM}
        2C\varepsilon\gamma^{-1} N_{n-2}^{-\ta} \leqslant \frac{\gamma}{\braket{\ell}^\tau}, \qquad |\ell|\leqslant N_{n-1}
    \end{equation}
    which holds by \eqref{some.constants}, \eqref{scale.KAM}, for $N_0\gg 1$ large enough, independently of $n\in\N$. Similarly, for all $(\ell,j,j')\in\mathbb{Z}^d\times(\mathbb{Z}\setminus\overline{\mathbb{S}}_{+})\times(\mathbb{Z}\setminus\overline{\mathbb{S}}_{-})$, ${\vec{\jmath}\cdot \ell+j-j'=0}$, $|\ell|\leqslant N_{n-1}$, we get
    \begin{align}
        \big|\omega\cdot \ell+\mu_{j,+}^{[n-1]}(a,\omega;i_0)-\mu_{j',-}^{[n-1]}(a,\omega;i_0)\big| & \geqslant  \frac{\gamma\braket{j-j'}}{\braket{\ell}^\tau} 
    \end{align}
    as soon as \eqref{cond.incl.KAM} is satisfied. This concludes the proof of the claim.\\
    $(iii)$ We note that, for any $n\in\mathbb{N}^*$, by a convexity argument and \eqref{scale.KAM}, we have
    \begin{align}
        \prod_{j=1}^{n-1}\underbrace{\big( 1 + 2C(q) N_{0}^{\bar\ta} N_{j}^{-\bar\ta} \big)}_{\geqslant1} \leqslant \prod_{j=1}^{n-1} {\rm exp}\big(  2C(q) N_{0}^{\bar\ta} N_{j}^{-\bar\ta} \big) \leqslant {\rm exp}\Big(  2C(q) N_{0}^{\bar\ta} \sum_{j=1}^{\infty}N_{j}^{-\bar\ta}  \Big)<\infty.
    \end{align}
    This implies the convergence of the product. By \eqref{scale.KAM}, \eqref{some.constants} (recall that $\bar\ta=\ta-\tau_{2}>0$) and \eqref{Xn.est}, \eqref{Xn-n-1.est} in Proposition \ref{prop.KAM.iter}, we deduce the existence of the $\cD^{q}$-$(-1)$-modulo-tame operator $\bX^{[\infty]}$, defined for all $(a,\omega)\in\mathcal{O}$ by
    \begin{equation}
        \boldsymbol{\Psi}_{\infty} \triangleq \Pi_{\overline{\mathbb{S}}_0}^{\perp} + \bX^{[\infty]} \triangleq \lim_{n\to\infty}\big( \Pi_{\overline{\mathbb{S}}_0}^{\perp} + \bX^{[n]}  \big) = \lim_{n\to\infty} \boldsymbol{\Psi}_{n},
    \end{equation}
    such that \eqref{sL.infty.conj}, \eqref{sD.infty.form} hold for any $(a,\omega)\in\cO_{\mathtt{KAM},\infty}^{2\gamma,\tau}(i_0)$ by taking the limit in \eqref{sL.n.conj.to.0}, \eqref{sL.n.form}, \eqref{sD.n.form} in Proposition \ref{prop.KAM.iter}, using also items $(i)$, $(ii)$. This concludes the proof of item $(iii)$ and of Proposition \ref{prop.KAM.conv}.
\end{proof}

\subsection{Inversion of the linearized operator}\label{sect.almost.inverse}
We conclude this section by explicitly constructing a full right inverse for the linearized operator in the normal directions. Owing to the prior introduction of the diagonal operator $\sL^{[\infty]}$, whose entries are given by Fourier multipliers, the problem reduces to the analysis of two independent diagonal scalar components. We therefore appeal to the argument developed in \cite[Proposition 6.6]{HR21} and \cite[Lemma 12.10]{FMT25}. The result reads as follows.

\begin{prop}\label{prop.almost.inv}
    Let $(\gamma,q,d,\tau_1,\tau,s_0,\mu_{2})$ as in \eqref{d gm S s0}, \eqref{setDC}, \eqref{parametres:transport} and \eqref{some.constants}. There exists  $\sigma \triangleq \sigma(\tau,d,q) \gg 1$ large enough such that, if \eqref{ansatz.fI.small} holds with $\sigma_{\rm max}\geqslant\sigma,$
    then the following assertions are true:
    \begin{itemize}
        \item [$(i)$] Let $\sL^{[\infty]}$ be the operator defined in Proposition \ref{prop.KAM.conv}-$(iii)$. There exists a  real, reversible and momentum preserving operator $\tT$, defined for all $(a,\omega)\in\mathcal{O}$ and satisfying, for all $s\in[s_0,S]$,
        \begin{equation}\label{Est-T.full}
           \|\tT \rho \|_{s}^{q,\gamma,\cO} \lesssim \gamma^{-1} \|\rho \|_{s+\tau(q+1)+q}^{q,\gamma,\cO},
        \end{equation}
        such that for any $(a,\omega)\in \mathcal{O}_{\mathrm{\tiny{Inv}},\infty}^{\gamma,\tau}(i_0)$, where
        \begin{align}
            \mathcal{O}_{\mathrm{\tiny{Inv}},\infty}^{\gamma,\tau}(i_0)\triangleq\bigcap_{\kappa\in\{-,+\}}\bigcap_{(\ell,j)\in\mathbb{Z}^{d}\times(\mathbb{Z}^*\setminus\overline{\mathbb{S}}_{\kappa})\atop{\vec{\jmath}\cdot \ell+j=0}}\bigg\{(a,\omega)\in\mathcal{O}\quad\textnormal{s.t.}\quad\big|\omega\cdot \ell+\mu_{j,\kappa}^{[\infty]}(a,\omega;i_0)\big|>\frac{\gamma\braket{j}}{\braket{\ell}^{\tau}}\bigg\} ,\label{first.meln.cond.n}
        \end{align}
        we have
        \begin{equation}\label{LT}
            \sL^{[\infty]} \tT = \Pi_{\overline{\mathbb{S}}_0}^{\perp}.
        \end{equation}
        \item [$(ii)$] There exists a real, reversible and momentum preserving operator $\wh\tT$, defined for all $(a,\omega)\in\mathcal{O}$ and satisfying, for all $s\in[s_0,S]$,
        \begin{equation}\label{tT.est.full}
           \|\wh\tT \rho \|_{s}^{q,\gamma,\cO} \lesssim \gamma^{-1}\big( \|\rho \|_{s+\sigma}^{q,\gamma,\cO} + \| \fI_{0} \|_{s+\sigma}^{q,\gamma,\cO} \|\rho \|_{s+\sigma}^{q,\gamma,\cO}\big),
        \end{equation}
        such that, for any $n\in\N$ and for any $(a,\omega)\in \tG(\gamma,\tau_{1},\tau,i_0)$, where (see \eqref{transport.nonres}, \eqref{second.meln.KAM.infty}, \eqref{first.meln.cond.n})
        \begin{align}\label{Gn.set}
           \tG(\gamma,\tau_{1},\tau,i_0) \triangleq \cO_{{\rm T},\infty}^{\gamma,\tau_{1}} (i_0) \cap \cO_{{\tt KAM},\infty}^{2\gamma,\tau}(i_0) \cap \mathcal{O}_{\mathrm{\tiny{Inv}},\infty}^{\gamma,\tau}(i_0),
        \end{align}
        we have
        \begin{equation}\label{LT.full}
            \mathscr{L}_{\omega} \wh\tT = \Pi_{\overline{\mathbb{S}}_0}^{\perp},
        \end{equation}
        with $\mathscr{L}_{\omega}$ as in \eqref{Lperp}.
    \end{itemize}
\end{prop}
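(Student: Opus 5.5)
For item $(i)$ I will define $\tT$ directly as a Fourier multiplier built from the diagonal operator $\sL^{[\infty]}$ of Proposition \ref{prop.KAM.conv}-$(iii)$. Since $\sL^{[\infty]}=\omega\cdot\partial_{\varphi}\Pi_{\overline{\mathbb{S}}_0}^{\perp}+\im\,\mathscr{D}^{[\infty]}$ acts componentwise on $e^{\im(\ell\cdot\varphi+jx)}$, for $\kappa\in\{-,+\}$ and $j\in\mathbb{Z}^*\setminus\overline{\mathbb{S}}_\kappa$, I set
$$(\tT\rho)_{\kappa}\triangleq\sum_{(\ell,j)}\frac{\chi\big((\omega\cdot\ell+\mu_{j,\kappa}^{[\infty]})(\gamma\langle j\rangle)^{-1}\langle\ell\rangle^{\tau}\big)}{\im(\omega\cdot\ell+\mu_{j,\kappa}^{[\infty]})}\rho_{\kappa,\ell,j}\,e^{\im(\ell\cdot\varphi+jx)},$$
with $\chi$ the cut-off from \eqref{def chi}. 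A traveling wave $\rho$ only has modes with $\vec{\jmath}\cdot\ell+j=0$. On $\mathcal{O}_{\mathrm{\tiny{Inv}},\infty}^{\gamma,\tau}(i_0)$ the cut-off equals $1$ on exactly these modes, so \eqref{LT} holds. The multiplier keeps the momentum support, so $\tT$ is momentum preserving. It is reversible because \eqref{rev:trjinfty} and the oddness $\Omega_{-j}=-\Omega_j$ give $\mu_{-j,\kappa}^{[\infty]}=-\mu_{j,\kappa}^{[\infty]}$, as in Lemma \ref{lem Fourier coeff op}.

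For the estimate \eqref{Est-T.full}, I first note that \eqref{mu.j.infty.est}, \eqref{estimazione ttc} and Lemma \ref{lem prop eig VP} give $|\partial_\lambda^k\mu_{j,\kappa}^{[\infty]}|\lesssim\langle j\rangle$. Combined with $|\partial^k_\lambda(\omega\cdot\ell)|\lesssim\langle\ell\rangle$, this is the same computation as for $\varrho^{[n-1,d]}$ in the proof of Proposition \ref{prop.KAM.iter}. Applying Faà di Bruno, with the cut-off forcing $|f|\geqslant\gamma\langle j\rangle\langle\ell\rangle^{-\tau}/3$ on its support, gives
$$|\partial_\lambda^k(\chi(\cdot)/f)|\lesssim\gamma^{-1-|k|}\langle\ell\rangle^{\tau(|k|+1)+|k|}.$$
Here each derivative costs a factor $\langle\ell\rangle\langle j\rangle(\gamma\langle j\rangle)^{-1}\langle\ell\rangle^{\tau}$, so the $\langle j\rangle$ factors cancel. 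I then use the momentum relation $|j|\lesssim|\ell|$ to dominate any stray $\langle j\rangle$, and Leibniz in the weighted norm yields \eqref{Est-T.full} with the loss $\tau(q+1)+q$.

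For item $(ii)$ I chain the conjugations. On $\cO_{{\rm T},\infty}^{\gamma,\tau_1}(i_0)$, Proposition \ref{lemma.sL.beforeKAM} gives $\mathscr{L}_\omega=\mathscr{M}_\perp\mathscr{L}^{[0]}\mathscr{M}_\perp^{-1}$. On $\cO_{\tt KAM,\infty}^{2\gamma,\tau}(i_0)$, Proposition \ref{prop.KAM.conv} gives $\mathscr{L}^{[0]}=\boldsymbol{\Psi}_\infty\mathscr{L}^{[\infty]}\boldsymbol{\Psi}_\infty^{-1}$. I then define
$$\wh\tT\triangleq\mathscr{M}_\perp\boldsymbol{\Psi}_\infty\tT\boldsymbol{\Psi}_\infty^{-1}\mathscr{M}_\perp^{-1},$$
for all parameters in $\mathcal{O}$, and on $\tG$ compute
$$\mathscr{L}_\omega\wh\tT=\mathscr{M}_\perp\boldsymbol{\Psi}_\infty\sL^{[\infty]}\tT\boldsymbol{\Psi}_\infty^{-1}\mathscr{M}_\perp^{-1}=\Pi_{\overline{\mathbb{S}}_0}^\perp.$$
All factors are real, reversibility and momentum preserving, so $\wh\tT$ is real, reversible and momentum preserving (Remark \ref{compo:RevMom}).

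The main obstacle is the tame estimate \eqref{tT.est.full}, and in particular the control of $\boldsymbol{\Psi}_\infty^{-1}$. I would get invertibility of $\boldsymbol{\Psi}_\infty=\Pi^\perp+\bX^{[\infty]}$ by a Neumann series in the $\cD^q$-$(-1)$-modulo-tame class. The input is the bound $\fM^\sharp_{\langle D\rangle^{1/2}\bX^{[\infty]}\langle D\rangle^{1/2}}(s_0)\lesssim\varepsilon\gamma^{-2}N_0^{\tau_2}\ll1$, which follows from the smallness \eqref{small.cond.KAM}. Proposition \ref{prop.modulo.tame} then gives tame bounds on the inverse, and modulo-tame operators are tame with the same constants. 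Composing with \eqref{Est-T.full} and with the tame bounds \eqref{e:Mperp} for $\mathscr{M}_\perp^{\pm1}$ (using $\varepsilon\gamma^{-1}\lesssim1$), and collecting the derivative losses into a single $\sigma$, gives \eqref{tT.est.full}.
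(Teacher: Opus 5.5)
Your proposal is correct and follows the paper's proof essentially step by step: the same cut-off Fourier multiplier $\mathtt{T}$ (the paper writes $\upsilon^{-1}$ inside $\chi$ where your $\gamma^{-1}$ is what makes \eqref{LT} hold on $\mathcal{O}_{\mathrm{\tiny{Inv}},\infty}^{\gamma,\tau}(i_0)$), and $\widehat{\mathtt{T}}=\mathscr{W}\mathtt{T}\mathscr{W}^{-1}$ with $\mathscr{W}=\mathscr{M}_{\perp}\boldsymbol{\Psi}_{\infty}$. Your Neumann-series argument for $\boldsymbol{\Psi}_{\infty}^{-1}$ in the modulo-tame class makes explicit the invertibility and tame bounds of $\boldsymbol{\Psi}_{\infty}^{-1}$, which the paper only asserts when citing Proposition~\ref{prop.KAM.conv}-$(iii)$.
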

\begin{proof}
    $(i)$ By the definition of $\sL^{[\infty]}$ in \eqref{sL.infty.conj}, \eqref{sD.infty.form} of Proposition \ref{prop.KAM.conv}, we have
\begin{equation}
    \sL^{[\infty]} 
    = \begin{pmatrix}
        \omega\cdot \pa_{\vf} \Pi_{\overline{\S}_+}^\perp + \im \sD_{+}^{[\infty]} & 0 \\ 0 &   \omega\cdot \pa_{\vf} \Pi_{\overline{\S}_-}^\perp + \im \sD_{-}^{[\infty]}
    \end{pmatrix}  \triangleq \begin{pmatrix}
             \sL_{+}^{[\infty]} & 0 \\ 0 &  \sL_{-}^{[\infty]}
        \end{pmatrix},
\end{equation}
with
\begin{equation}
\forall\,\kappa\in \{-,+\}, \ \  \forall\, (\ell,j)\in \Z^d \times \Z^*\setminus \overline{\S}_{\kappa}, \quad  \mathscr{L}_{\kappa}^{[\infty]}{\bf e}_{\ell,j}=
 \im\big(\omega\cdot\ell+\mu_{j,\kappa}^{[\infty]}\big) {\bf e}_{\ell,j}.
\end{equation}
We define the  operator  $\mathtt{T}$ by 
\begin{equation}
    \tT \triangleq \begin{pmatrix}
        \tT_{+} & 0 \\ 0 & \tT_{-}
    \end{pmatrix},
\end{equation}
where, for any $\kappa\in \{-,+\}$, the diagonal operator $\tT_{\kappa}$ is defined by
\begin{equation}\label{T.in.Fourier}
h=\sum_{(\ell,j)\in \Z^{d} \times (\Z^*\setminus\overline{\mathbb{S}}_{\kappa}) \atop \vec{\jmath}\cdot\ell+j=0}h_{\ell,j}{\bf e}_{\ell,j},\qquad\mathtt{T}_{\kappa}h \triangleq
 \sum_{ (\ell,j)\in \Z^d \times (\Z^*\setminus\overline{\mathbb{S}}_{\kappa})\atop \vec{\jmath}\cdot\ell+j=0}\frac{\chi\big((\omega\cdot\ell+\mu_{j,\kappa}^{[\infty]})\upsilon^{-1}\langle\ell\rangle^{\tau}\langle j\rangle^{-1}\big)}{\im\big(\omega\cdot\ell+\mu_{j,\kappa}^{[\infty]}\big)}h_{\ell,j}\,{\bf e}_{\ell,j},
\end{equation}
where $\chi$ is the cut-off function defined in \eqref{def chi}. The operator $\tT_{\kappa}$ in \eqref{T.in.Fourier} is real, reversible and momentum preserving by \eqref{mu.j.infty}, \eqref{rev:trjinfty}, \eqref{mu.j.0}, Lemma \ref{lem prop eig VP}-$(i)$ and Lemma \ref{lem Fourier coeff op}.
Thus, for $(a,\omega)\in \cO_{{\rm Inv},\infty}^{\gamma,\tau}(i_0)$, one deduces \eqref{LT}.
Since $\|\mu_{j,\kappa}^{[\infty]}\|^{q,\gamma,\mathcal{O}}\underset{j\to\infty}{\sim}|j|$, then one easily checks the estimate \eqref{Est-T.full} directly from \eqref{T.in.Fourier} and \eqref{first.meln.cond.n}.
\\[1mm]
$(ii)$ The relation between $\sL_{\omega}$ and $\sL^{[\infty]}$ is given by 
   \begin{align}
\mathscr{W}^{-1}\sL_{\omega} \mathscr{W}=\sL^{[\infty]}, \qquad  \mathscr{W} \triangleq \mathscr{M}_{\perp} \boldsymbol{\Psi}_{\infty},
        \end{align}
        with $\mathscr{M}_{\perp}$ as in \eqref{map.M.perp} and $\boldsymbol{\Psi}_{\infty}$ as in Proposition \ref{prop.KAM.conv}-$(iii)$. Note that the map $\mathscr{W}$ is invertible by Lemma \ref{lem.inverse.Mperp}-$(ii)$ and Proposition \ref{prop.KAM.conv}-$(iii)$.
        In view of item $(i)$, it follows that 
        \begin{align}
		\mathscr{W}^{-1}\sL_{\omega} \mathscr{W} \tT =\sL^{[\infty]} \tT = \Pi_{\overline{\S}_0}^\perp.
        \end{align}
        Conjugating back, we obtain the identity
        \begin{align}
		\sL_{\omega} \mathscr{W} \tT \mathscr{W}^{-1}
        & = \mathscr{W} \mathscr{W}^{-1} = \Pi_{\overline{\S}_0}^\perp,
        \end{align}
        which proves \eqref{LT.full}  with $\wh\tT$ defined as
        \begin{equation}\label{ARI.cT.n}
            \wh\tT \triangleq  \mathscr{W} \tT \mathscr{W}^{-1}.
        \end{equation}
        The corresponding estimate \eqref{tT.est.full} for $\wh\tT$  and the fact that $\wh\tT$ is real, reversible and momentum preserving follow from the properties  of  $\tT$ in item $(i)$, together with those of $\mathscr{M}_{\perp}$ and $\boldsymbol{\Psi}_{\infty}$ established in Lemma \ref{lem.inverse.Mperp}-$(ii)$ and Proposition \ref{prop.KAM.conv}-$(iii)$, respectively, and Proposition \ref{prop.modulo.tame}-$(v)$, Proposition \ref{prop.tameconst}-$(ii)$. This ends the proof.
\end{proof}

\section{Construction of quasi-periodic solutions}\label{sect.nonlin.sol}
We provide, in this last section, a construction of a non-trivial solution to the equation 
\begin{equation}
    \mathscr{F}_{\varepsilon}(i,\alpha)\triangleq\mathscr{F}_{\varepsilon}(i,\alpha,a,\omega) = 0, \label{solve.equa.NM}
\end{equation}
with $\mathscr{F}_{\varepsilon}(i,\alpha,a,\omega)$ as in \eqref{sF.vare.def} This is done in two steps. 
First, we implement a Nash–Moser iteration, where we find a solution provided that the parameters $(a,\omega)$ belong to a suitable Borel set. The latter is constructed as the intersection of the Cantor sets required to invert the linearized operator in the normal modes for all the steps of the procedure. Then we rigidified the frequencies in order to get a solution for the original problem where $\alpha=-\tJ \omega_{\rm Eq}(a)$.
This gives rise to a final set described in terms of $a$ that we should estimate its Lebesgue measure. Actually, we prove that it has asymptotically full measure as the parameter $\varepsilon$ vanishes.

\subsection{Nash-Moser construction of a priori solutions}\label{subsect.NASH}
Here, we perform the Nash-Moser scheme which allows to find a solution of \eqref{solve.equa.NM}. This method is nowadays classic in the context of quasi-periodic solutions for quasilinear PDEs, see for instance \cite{BBHM18, BFM21, BM18} and recent developments \cite{BHM23, HHM21, HR21}. The iterative construction of the approximate solutions is summarized in the following proposition: it follows \cite[Prop. 8.1]{HHR23} (with \cite[Prop. 9.2]{BFM21} to prove the tori are traveling reversible) and therefore we only present the general scheme of the proof, omitting all the explicit estimations. 
\begin{prop}\label{prop.NASH}
	\textbf{(Nash-Moser scheme)}
	Let $(\tau_{1},\tau,q,d,S,s_0)$ as in \eqref{d gm S s0} and \eqref{some.constants}. We consider the following parameters 
    \begin{equation}\label{param.NASH.1}
        \begin{cases}
            \tilde{\ta} = \tau+3,\\
            \mu_1=3q(\tau+3)+6\overline{\sigma}+6,\\
            \nu_1=6q(\tau+3)+12\overline{\sigma}+15,\\
            \nu_2=3q(\tau+3)+6\overline{\sigma}+9,\\
            \mathtt{a}=2q(\tau+3)+5\overline{\sigma}+7,\\
            s_h=s_0+4q(\tau+3)+9\overline{\sigma}+11,\\
            \kappa_1=2s_h-s_0,
        \end{cases}
    \end{equation}
    where $\bar\sigma\triangleq \bar\sigma(\tau_{1},\tau,d)>0$ is the total loss of regularity given by Theorem \ref{theo appr inv}. 
    For $n\in\N$, we introduce the finite dimensional subspace $E_{n}$ defined by
    \begin{align}
         E_{n} \triangleq \Big\{ \fI(\vf)=\big( \Theta(\vf),I(\vf),z(\vf) \big) \ \textnormal{s.t. \eqref{rev trav torus} holds} \ : \, \Theta = \Pi_{n}\Theta , \ I = \Pi_{n}I, \ z=\Pi_{n}z  \Big\},
    \end{align}
    where $\Pi_{n} z \triangleq \Pi_{N_n}z$ as in \eqref{proj.traveling} with $N_n$ in \eqref{scale.KAM}, and $\Pi_{n} g(\vf) \triangleq \sum_{|\ell|\leqslant N_{n}} g_{\ell}e^{\im \ell\cdot \vf}$.
    \\[1mm]
    There exist $C_*>0$ and $\varepsilon_{0}>0$ such that, for any $\varepsilon\in [0,\varepsilon_{0}]$, imposing the constraint relating $\gamma$ and $N_0$ to $\varepsilon$,
    \begin{equation}\label{param.NASH.2}
        0< \varrho <  (\mathtt{a}+q+1)^{-1}, \qquad \gamma  \triangleq \varepsilon^{\varrho}, \qquad N_0 \triangleq \gamma^{-1} ,
    \end{equation}
     then following properties hold true, for any $n\in\N$:
    \begin{itemize}
        \item [$(\cP 1)_{n}$] There exists a $q$-times differentiable function
        \begin{equation}
            \tW_{n}: \cO \to E_{n-1} \times \R^d , \qquad \lambda\triangleq (a,\omega) \mapsto \big(\fI_{n}(a,\omega),\alpha_{n}(a,\omega)-\tJ \omega\big),
        \end{equation}
        satisfying $\tW_{0}\triangleq 0$ and, for $n\in\mathbb{N}^*$,
        \begin{equation}
            \| \tW_{n} \|_{s_0+\bar\sigma}^{q,\gamma,\cO} \leqslant C_{*}\varepsilon \gamma^{-1} N_0^{q\tilde{\ta}} .
        \end{equation}
        We set $\tU_{0} \triangleq \big( (\vf,0,0), -\tJ\omega \big)$ and, for $n\in\mathbb{N}^*$,
        \begin{equation}\label{Un}
            \tU_{n} \triangleq \tU_{0} + \tW_{n}=(i_n,\alpha_n), \qquad \tH_{n}\triangleq \tU_{n}- \tU_{n-1}.
        \end{equation}
        Then, for any $s\in [s_0,S]$,
        \begin{align}
            \| \tH_{1} \|_{s}^{q,\gamma,\cO} &  \leqslant \tfrac12 C_{*}\varepsilon \gamma^{-1} N_0^{q\tilde{\ta}}, \\
            \| \tH_{m} \|_{s_0+\overline{\sigma}}^{q,\gamma,\cO} & \leqslant C_{*}\varepsilon \gamma^{-1} N_{m-1}^{-\nu_2}, \quad \forall \, 2\leqslant m \leqslant n; \label{H.m.per.Cauchy}
        \end{align}
        \item [$(\cP 2)_{n}$] We set
        \begin{align}
            i_{n}(\vf) \triangleq (\vf,0,0) + \fI_{n}(\vf), \qquad \gamma_{n} \triangleq \gamma(1+2^{-n}) \in [\gamma,2\gamma] . \label{gamma.n}
        \end{align}
        Then $i_{n}$ is a traveling reversible torus, that is, according to Definition \ref{rev trav torus}, for any $\varphi\in\mathbb{T}^d$ and any $y\in\mathbb{T},$
        \begin{equation}\label{rev-trav-NM}
            \mathfrak{S}i_n(\varphi)=i_n(-\varphi)\qquad\textnormal{and}\qquad\mathfrak{T}_yi_n(\varphi)=i_n(\varphi-\vec{\jmath}y).
        \end{equation}
        We also define
        \begin{equation}\label{An.set}
            \tA_{0}^{\gamma} \triangleq \cO, \qquad \tA_{n+1}^{\gamma} \triangleq \tA_{n}^{\gamma} \cap \tG(\gamma_{n+1},\tau_{1},\tau,i_{n}) \cap \tD\tC(\gamma_{n+1},\tau),
        \end{equation}
        where $\tG(\gamma_{n+1},\tau_{1},\tau,i_{n})$ and $\tD\tC(\gamma_{n+1},\tau)$ are given respectively through \eqref{Gn.set}, \eqref{setDC}, and for any $\tv >0$,
        \begin{align}
            {\rm O}_{n}^{\tv} \triangleq \Big\{ \lambda=(a,\omega) \in \cO \ \textnormal{ s.t. } \ {\rm dist} \big( \lambda , \tA_{n}^{2\gamma} \big) < \tv N_{n}^{-\nu_1} \Big\}, \qquad {\rm dist}(x,\tA) \triangleq \inf_{y\in\tA}\| x-y \| .
        \end{align}
        Then we have the following estimate
        \begin{equation}\label{decayF}
            \| {\mathscr{F}}_{\varepsilon}(\tU_{n}) \|_{s_0}^{q,\gamma,{\rm O}_{n}^{2\gamma}} \triangleq \sum_{k \in \N^{d+1} \atop |k|\leqslant q} \gamma^{|k|} \sup_{\lambda \in {\rm O}_{n}^{2\gamma}} \| \pa_{\lambda}^{k} \mathscr{F}_{\varepsilon}(\tU_{n},\lambda) \|_{s_{0}-|k|} \leqslant C_{*} \varepsilon N_{n-1}^{-\nu_1} ;
        \end{equation}
        \item [$(\cP 3)_{n}$] We have the following growth in high regularity norm
        \begin{equation}\label{ttW:high}
            \| \tW_{n} \|_{\kappa_{1}+\bar\sigma}^{q,\gamma,\cO} \leqslant C_{*} \varepsilon \gamma^{-1} N_{n-1}^{\mu_{1}} .
        \end{equation}
    \end{itemize}
\end{prop}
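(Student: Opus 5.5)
The proof is by induction on $n\in\mathbb{N}$, following the standard Nash-Moser scheme of \cite[Prop. 8.1]{HHR23}. The initial step $n=0$ is essentially trivial: $\tW_0=0$, $i_0=(\varphi,0,0)$ is reversible and traveling, and $\tA_0^\gamma=\cO$. The only estimate to check is \eqref{decayF} with $N_{-1}=1$. Since $\mathscr{F}_\varepsilon(\tU_0)=(-\varepsilon\tJ\partial_I\mathcal{P},\varepsilon\tJ\partial_\vartheta\mathcal{P},-\varepsilon\Pi^\perp_{\overline{\mathbb S}_0}\mathcal{J}\nabla_z\mathcal{P})$ evaluated at the flat torus, Lemma \ref{lemma.Xp.est} gives $\|\mathscr{F}_\varepsilon(\tU_0)\|_s^{q,\gamma,\cO}\lesssim\varepsilon$.

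For the induction step I assume $(\mathcal P1)_n$–$(\mathcal P3)_n$ and define
$$\tU_{n+1}\triangleq\tU_n+\tH_{n+1},\qquad \tH_{n+1}\triangleq-\widetilde{\Pi}_n\,\mathrm{T}_n\,\Pi_n\,\mathscr{F}_\varepsilon(\tU_n),$$
where $\mathrm{T}_n={\rm T}_0(i_n)$ is the approximate inverse of Theorem \ref{theo appr inv}. Here $\Pi_n$ is the traveling projector on $E_n$, and $\widetilde{\Pi}_n$ also acts on the $\alpha$-component. This requires checking the hypotheses of Theorem \ref{theo appr inv} at $i_n$. The ansatz \eqref{ansatz.fI.small} follows from $(\mathcal P1)_n$, since $\|\tW_n\|_{s_0+\bar\sigma}\lesssim\varepsilon\gamma^{-1}N_0^{q\tilde\ta}\ll1$ by \eqref{param.NASH.2}. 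The smallness \eqref{small.cond.KAM} and \eqref{sml+bnd trans} follow because $N_0^{\tau_3}\varepsilon\gamma^{-2}=\varepsilon^{1-(\tau_3+2)\varrho}$ is small for $\varrho$ as in \eqref{param.NASH.2}.

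Before applying $\mathrm{T}_n$, I need $\mathscr{F}_\varepsilon(\tU_n)$ to be an anti-reversible traveling wave variation. This holds because $i_n$ is reversible and traveling and $H^\alpha_\varepsilon$ is invariant under $\mathfrak S,\mathfrak T_y$ (the argument of \cite[Prop. 9.2]{BFM21}). Theorem \ref{theo appr inv} then produces a reversible traveling correction, and the projectors preserve these symmetries, which gives \eqref{rev-trav-NM} at step $n+1$.

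Since $\mathrm{T}_n$ only inverts on the Cantor set $\tA_{n+1}^\gamma$, I extend $\tH_{n+1}$ to all of $\cO$ by multiplying with a smooth cut-off adapted to the neighbourhood ${\rm O}^{\gamma}_{n+1}$, as in \cite{BM18,HHR23}. The loss of $q$ derivatives in this extension is absorbed by $\tilde\ta$.

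The core estimates come from Taylor's formula:
$$\mathscr F_\varepsilon(\tU_{n+1})=\underbrace{\mathscr F_\varepsilon(\tU_n)+d\mathscr F_\varepsilon(\tU_n)\tH_{n+1}}_{=\,\Pi_n^\perp\mathscr F+\mathfrak E_n\Pi_n\mathscr F+d\mathscr F\,\widetilde\Pi_n^\perp\mathrm T_n\Pi_n\mathscr F}+Q_n,$$
with $Q_n$ quadratic in $\tH_{n+1}$. I bound each term in $s_0$-norm as follows:
\begin{itemize}
\item the projection tails are controlled by $N_n^{-(\kappa_1-s_0)}$ times the high norm $(\mathcal P3)_n$;
\item the term $\mathfrak E_n$ is controlled by \eqref{calE1}, which gives $\gamma^{-1}\|\mathscr F\|^2$;
\item the quadratic term $Q_n$ is controlled by Lemma \ref{lemma.Xp.est}, which gives $\varepsilon\|\tH_{n+1}\|^2$.
\end{itemize}
This yields the superexponential recursion $\|\mathscr F(\tU_{n+1})\|_{s_0}\lesssim N_n^{\mu_1+\dots-\kappa_1}+\gamma^{-2}N_n^{2\bar\sigma}\|\mathscr F(\tU_n)\|_{s_0}^2$. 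The high norm grows at most like $N_n^{\bar\sigma}$ times itself, via the tame estimate \eqref{tame T0}.

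The exponent choices \eqref{param.NASH.1} are made so that these recursions close into \eqref{decayF}, \eqref{ttW:high} and \eqref{H.m.per.Cauchy} at step $n+1$. Concretely, this requires $\kappa_1-s_0>\nu_1+\mu_1$, $2\nu_1>\tfrac32\nu_1+$ losses, and $N_0=\gamma^{-1}$ large. For $n=0$, i.e. $\tH_1$, the loss $N_0^{q\tilde\ta}$ from the cut-off appears explicitly.

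I expect the main obstacle to be the bookkeeping of the parameter sets, namely showing that $\tU_{n+1}$ satisfies \eqref{decayF} on ${\rm O}^{2\gamma}_{n+1}$. The argument is that points within $2\gamma N_{n+1}^{-\nu_1}$ of $\tA^{2\gamma}_{n+1}$ still lie in $\tA^{\gamma}_{n+1}$ with $\gamma_{n+1}$ in place of $\gamma$. This in turn needs the Lipschitz dependence in $\lambda$ of $\mu^{[\infty]}_{j,\kappa}$ and $\mathtt c_\kappa$, together with $\gamma_{n+1}-\gamma_{n+2}=\gamma2^{-n-2}\gg N_{n+1}^{-\nu_1}\gamma^{-1}$. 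I will treat this exactly as in \cite[Prop. 8.1]{HHR23}, omitting the explicit exponent arithmetic.
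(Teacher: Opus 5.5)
Your scheme is the one the paper sketches: the update $\mathtt{H}_{n+1}=-\widetilde{\Pi}_n\mathrm{T}_n\Pi_n\mathscr{F}_\varepsilon(\mathtt{U}_n)$, propagation of reversibility and of the traveling property, and the Taylor splitting estimated with \eqref{calE1}, Lemma \ref{lemma.Xp.est} and Lemma \ref{lem:projn}. However, the step you single out as the main obstacle does not work as you describe it.

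The set $\mathtt{A}^{\gamma}_{n+1}$ is contained in $\mathtt{DC}(\gamma_{n+1},\tau)$ and in the sets \eqref{transport.nonres}, \eqref{second.meln.KAM.infty}, \eqref{first.meln.cond.n} at $i_n$. All of these impose conditions for \emph{all} $\ell\in\mathbb{Z}^d$. A displacement of $\lambda$ by $\delta$ moves $\omega\cdot\ell$ by up to $\delta|\ell|$, which stays below the threshold $\gamma_{n+1}|\ell|^{-\tau}$ only when $|\ell|^{\tau+1}\lesssim\gamma/\delta$. Indeed, for $d\geqslant2$ the set $\mathtt{DC}(\gamma_{n+1},\tau)$ has empty interior, because the resonant hyperplanes $\omega\cdot\ell=0$ are dense. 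Hence no neighbourhood $\mathrm{O}^{2\gamma}_{n+1}$ of positive radius is contained in $\mathtt{A}^{\gamma}_{n+1}$, whatever $\nu_1$ and the margin $\gamma_{n+1}-\gamma_{n+2}$. Consequently you cannot use the identity $\mathrm{d}_{(i,\alpha)}\mathscr{F}_\varepsilon(\mathtt{U}_n)\mathrm{T}_n-\mathrm{Id}=\mathfrak{E}$, nor its $\lambda$-derivatives up to order $q$, on the open set where \eqref{decayF} is required.

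To close this step you need non-resonance conditions truncated at $|\ell|\lesssim N_n$; for those sets your perturbation argument is correct. You also need an approximate inverse valid on these truncated (open) sets, up to extra remainders decaying like negative powers of $N_n$, and these remainders must be added to your Taylor splitting. This is how \cite{HR21} proceeds, and it is not what Theorem \ref{theo appr inv} provides as stated.

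There is also a gap in your verification of the hypotheses of Theorem \ref{theo appr inv} at $i_n$. That theorem requires \eqref{ansatz.fI.small} with $s_0+\sigma_{\rm max}\geqslant s_h+\overline{\sigma}$, i.e. smallness of $\mathfrak{I}_n$ in $H^{s_h+\overline{\sigma}}$, whereas $(\mathcal{P}1)_n$ only controls the $s_0+\overline{\sigma}$ norm. The missing step is interpolation between the low-norm decay \eqref{H.m.per.Cauchy} and the high-norm growth $(\mathcal{P}3)_n$. Since $s_h=\tfrac12(s_0+\kappa_1)$, one gets for $m\geqslant2$
\[
\|\mathtt{H}_m\|_{s_h+\overline{\sigma}}^{q,\gamma,\mathcal{O}}\lesssim\varepsilon\gamma^{-1}N_{m-1}^{(\mu_1-\nu_2)/2},
\]
which is summable because $\nu_2-\mu_1=3>0$. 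This is precisely why $\kappa_1=2s_h-s_0$ and $\nu_2>\mu_1$ appear in \eqref{param.NASH.1}.

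Relatedly, in the tail term $\Pi_n^{\perp}\mathscr{F}_\varepsilon(\mathtt{U}_n)$ the high norm available at step $n$ is $\|\mathscr{F}_\varepsilon(\mathtt{U}_n)\|_{\kappa_1}\lesssim\varepsilon\gamma^{-1}N_{n-1}^{\mu_1}=\varepsilon\gamma^{-1}N_n^{2\mu_1/3}$, not $N_n^{\mu_1}$. The condition you state, $\kappa_1-s_0>\nu_1+\mu_1$, is false for \eqref{param.NASH.1}: the difference equals $1-q(\tau+3)<0$. The correct requirement, using $\gamma^{-1}=N_0\leqslant N_n$, is $\kappa_1-s_0>\nu_1+\tfrac23\mu_1+1$, and it holds with margin $2\overline{\sigma}+2$.
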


\begin{proof}
We briefly sketch the iterative construction of the approximate solution, which is built over an induction argument. The starting point is $\tU_{0} \triangleq \big( (\vf,0,0), -\tJ\omega \big)$, which is an approximate zero of $\mathscr{F}_{\varepsilon}$ in \eqref{sF.vare.def}, that is up to $\varepsilon$-small error (recall also $i_{\rm triv}=(\vf,0,0)$ in \eqref{itriv}).
Let now $n\in\N$ and define
\begin{equation}
    L_{n} \triangleq \di_{i,\alpha} \mathscr{F}_{\varepsilon}(\tU_{n}),
\end{equation}
where $\tU_{n}$ is the approximate solution at the step $n$.
By Theorem \ref{theo appr inv}, we have the existence of the real, reversible and momentum preserving operator ${\rm T}_{n}$, which is an approximate right inverse of $L_{n}$ on the Cantor set $\tA_{n+1}^{\gamma}$, that is, for all $(a,\omega)\in \tA_{n+1}^{\gamma}$, one has
\begin{equation}\label{splitting.proof.NASH}
			L_{n}{\rm T}_n-{\rm Id}= \mathfrak{E}^{(n)},
		\end{equation}
where ${\rm T}_{n}$
satisfies the estimates 
\begin{align}
    	\forall\, s\in [s_0,S],\quad \| {\rm T}_{n} g\|_{s}^{q,\gamma,\mathcal{O}}\lesssim_{s,q,d}\gamma^{-1}\big(\|g\|_{s+{\overline\sigma}}^{q,\gamma,\mathcal{O}}+\|\tU_{n}\|_{s+{\overline\sigma}}^{q,\gamma,\mathcal{O}}\|g\|_{s_{0}+\overline{\sigma}}^{q,\gamma,\mathcal{O}}\big),
\end{align}
and $\mathfrak{E}^{(n)}$ satisfies the estimate \eqref{calE1}. At the step $n+1$, we construct the new approximate solution by defining
\begin{equation}\label{H-n+1}
    \tU_{n+1}\triangleq\tU_n + H_{n+1}, \quad  H_{n+1}\triangleq(\mathfrak{I}_{n+1},\alpha_{n+1})\triangleq-\mathbf{\Pi}_n{\rm T}_n \Pi_{n}\mathcal{F}_{\varepsilon}(\tU_n),
\end{equation}
where
$$\mathbf{\Pi}_n(\mathfrak{I},\alpha)\triangleq(\Pi_n\mathfrak{I},\alpha),\qquad\mathbf{\Pi}_n^{\perp}(\mathfrak{I},\alpha)\triangleq(\Pi_n^{\perp}\mathfrak{I},0).$$ Since ${\rm T}_{n}$ is reversible and momentum preserving and since $\mathscr{F}_{\varepsilon}$ is a reversible, space translation invariant vector field, one deduces that $\tU_{n+1}$ is a reversible traveling wave, as is $\tU_{n}$ by induction. 
By Taylor formula and the choice of $H_{n+1}$ in \eqref{H-n+1}, we have
\begin{align}\label{TaylorNM}
    \mathscr{F}_{\varepsilon}(\tU_{n+1}) & =  \mathscr{F}_{\varepsilon}(\tU_{n}) + L_{n} H_{n+1} + Q_{n} \\
    & = \mathscr{F}_{\varepsilon}(\tU_{n}) - L_{n} \mathbf{\Pi}_n{\rm T}_n \Pi_{n}\mathcal{F}_{\varepsilon}(\tU_n) + Q_{n} \\
    &=\Pi_{n}^{\perp}\mathscr{F}(\tU_n)-\Pi_n(L_n{\rm T}_{n}-{\rm Id})\Pi_n\mathscr{F}(\tU_n)+ (L_n\mathbf{\Pi}_n^{\perp}-\Pi_n^{\perp}L_n){\rm T}_{n}\Pi_n\mathscr{F}_{\varepsilon}(\tU_n)+Q_n,
\end{align}
 where
    $$Q_n\triangleq\mathscr{F}_{\varepsilon}(\tU_n+H_{n+1})-\mathscr{F}(\tU_n)-L_nH_{n+1}.$$
The estimate \eqref{decayF} is obtained by induction using \eqref{TaylorNM}, \eqref{calE1}, Lemma \ref{lemma.Xp.est} Lemma \ref{lem:projn} and the choice of parameters \eqref{param.NASH.1}-\eqref{param.NASH.2}. the estimate \eqref{ttW:high} is obtained by a direct estimation using \eqref{H-n+1}, the induction hypothesis and the choice of parameters \eqref{param.NASH.1}. For more details, we refer to \cite{HR21}.
    \end{proof}

The previous iteration procedure converges and allows to find a non trivial reversible
quasi-periodic solution of our problem provided some restriction on 
$a\in [a_0,a_1]$. More precisely, we have the following result.
\begin{cor}\label{cor.limit.NM}
    Under the assumption of Theorem \ref{prop.NASH}, there exists $\varepsilon_{0}>0$ such that, for all $\varepsilon\in (0,\varepsilon_{0})$, the following assertions are true. There exists a $q$-times differentiable function
        \begin{equation}
            \tU_{\infty}: \cO \to \big( \T^d \times \R^d \times \bH_{\perp}^{s_0} \big) \times \R^d, \quad  (a,\omega) \mapsto \big(i_{\infty}(a,\omega),\alpha_{\infty}(a,\omega) \big) ,
        \end{equation}
    such that, for any  $(a,\omega)\in\tG_{\infty}^{\gamma}$, where $\tG_{\infty}^{\gamma}$ is defined by
    \begin{equation}\label{tG.infty.set}
        \tG_{\infty}^{\gamma} \triangleq \bigcap_{n\in\N} \tA_{n}^{\gamma},
    \end{equation}
    with $\tA_{n}^{\gamma}$ as in \eqref{An.set}, we have
    \begin{equation}
        \mathscr{F}_{\varepsilon}\big( \tU_{\infty}(a,\omega) \big) = 0 .
    \end{equation}
    The torus $i_{\infty}$ is traveling and reversible, that is \eqref{rev trav torus} holds. The vector $\alpha_{\infty} \in W^{q,\infty,\gamma}(\cO,\R^{d})$ satisfies
    \begin{equation}
        \alpha_{\infty}(a,\omega) = - \tJ \omega + {\rm r}_{\varepsilon}(a,\omega), \qquad \| {\rm r}_{\varepsilon} \|^{q,\gamma,\cO} \lesssim \varepsilon \gamma^{-1} N_{0}^{q\bar\ta} .
    \end{equation}
    In addition, there exists a $q$-times differentiable function $[a_0,a_1]\ni a \mapsto \vec\omega_{\varepsilon}(a)$ implicitly defined by
    \begin{equation}
        \alpha_{\infty}\big( a, \vec\omega_{\varepsilon}(a) \big) = -\tJ \omega_{\rm Eq} (a),
    \end{equation}
    with
    \begin{equation}\label{final.omega}
       \vec\omega_{\varepsilon}(a) =  \omega_{\rm Eq}(a) + \bar r_{\varepsilon}(a), \qquad \| \bar r_{\varepsilon} \|^{q,\gamma,\cO} \lesssim \varepsilon \gamma^{-1} N_{0}^{q\bar\ta},
    \end{equation}
    such that, for any $a \in \cC_{\infty}^\varepsilon$,
    \begin{equation}
        \mathscr{F}_{\varepsilon}\big( \tU_{\infty}(a,\vec\omega_{\varepsilon}(a)) \big) =0,
    \end{equation}
    where
    \begin{equation}
        \mathscr{C}(\varepsilon) \triangleq \Big\{  a \in [a_0,a_1] \ \textnormal{ s.t. } \ \big( a,\vec\omega_{\varepsilon}(a)\big) \in \tG_{\infty}^{\gamma} \Big\} . \label{cC.infty.set}
    \end{equation}
\end{cor}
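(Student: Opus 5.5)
The plan is to pass to the limit in the Nash--Moser scheme of Proposition \ref{prop.NASH}, and then to rigidify the frequencies by an implicit function argument in the finite-dimensional variable $\omega$.

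\textbf{Step 1: convergence and the equation on $\tG_{\infty}^{\gamma}$.} By $(\cP1)_n$, the increments satisfy $\|\tH_m\|_{s_0+\overline\sigma}^{q,\gamma,\cO}\leqslant C_*\varepsilon\gamma^{-1}N_{m-1}^{-\nu_2}$. Since $N_m$ grows super-exponentially by \eqref{scale.KAM}, $(\tU_n)$ is Cauchy in $\|\cdot\|_{s_0+\overline\sigma}^{q,\gamma,\cO}$. Its limit $\tU_\infty=\tU_0+\tW_\infty$ is $q$-times differentiable on the whole set $\cO$, and the bound on $\tW_\infty$ is the sum of the bound $C_*\varepsilon\gamma^{-1}N_0^{q\tilde\ta}$ for $\tH_1$ and a convergent tail. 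For $\lambda\in\tG_\infty^\gamma=\bigcap_n\tA_n^\gamma$, we have $\lambda\in\tA_n^{\gamma}\subset{\rm O}_n^{2\gamma}$ for every $n$. Hence \eqref{decayF} gives $\|\mathscr F_\varepsilon(\tU_n)\|_{s_0}\leqslant C_*\varepsilon N_{n-1}^{-\nu_1}\to0$. The tame bound of Lemma \ref{lemma.Xp.est} shows that $\mathscr F_\varepsilon$ is continuous from $s_0+\overline\sigma$ to $s_0$. Passing to the limit yields $\mathscr F_\varepsilon(\tU_\infty)=0$ on $\tG^\gamma_\infty$.

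The reversibility and traveling properties \eqref{rev-trav-NM} are closed conditions, preserved under $H^{s_0}$-limits, so $i_\infty$ is a reversible traveling torus. Writing $\alpha_\infty=-\tJ\omega+{\rm r}_\varepsilon$, the bound on ${\rm r}_\varepsilon$ is just the $\alpha$-component of the bound on $\tW_\infty$.

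\textbf{Step 2: rigidification of the frequencies.} Fix $a$ and solve $\alpha_\infty(a,\omega)=-\tJ\omega_{\rm Eq}(a)$ for $\omega$. Since $\tJ^2={\rm I}_d$, this is equivalent to the fixed point equation
$$\omega=\omega_{\rm Eq}(a)+\tJ\,{\rm r}_\varepsilon(a,\omega).$$
The map $\omega\mapsto \omega_{\rm Eq}(a)+\tJ{\rm r}_\varepsilon(a,\omega)$ sends $B_{R_0}(0)$ into itself, because $\omega_{\rm Eq}([a_0,a_1])\subset B_{R_0/2}(0)$ by \eqref{omega.Eq.inball} and ${\rm r}_\varepsilon$ is small. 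It is also a contraction, since $|\partial_\omega {\rm r}_\varepsilon|\lesssim\gamma^{-1}\|{\rm r}_\varepsilon\|^{q,\gamma,\cO}\lesssim\varepsilon\gamma^{-2}N_0^{q\tilde\ta}$, and this quantity is small under \eqref{param.NASH.2}: indeed $\gamma=\varepsilon^{\varrho}$ and $N_0=\gamma^{-1}$, so the bound equals $\varepsilon^{1-\varrho(2+q\tilde\ta)}$, which is small once $\varrho$ is small, as the constraint $\varrho<(\ta+q+1)^{-1}$ is designed to ensure.

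The fixed point $\vec\omega_\varepsilon(a)$ therefore exists and is unique. Its $C^q$ regularity in $a$ follows from the implicit function theorem, with $\partial_\omega$ of the equation invertible by a Neumann series. Differentiating the identity $\bar r_\varepsilon(a)=\tJ{\rm r}_\varepsilon(a,\vec\omega_\varepsilon(a))$ up to order $q$ (Fa\`a di Bruno), with $\gamma$-weights, gives $\|\bar r_\varepsilon\|^{q,\gamma}\lesssim\varepsilon\gamma^{-1}N_0^{q\tilde\ta}$.

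\textbf{Step 3: conclusion and main obstacle.} For $a\in\mathscr C(\varepsilon)$, we have $(a,\vec\omega_\varepsilon(a))\in\tG^\gamma_\infty$ by definition, so Step 1 gives the vanishing of $\mathscr F_\varepsilon$ there. Moreover $\alpha=-\tJ\omega_{\rm Eq}(a)$, which means the solution solves the original Hamiltonian system. The main obstacle is Step 2, specifically controlling the $\gamma$-weighted derivative losses when composing ${\rm r}_\varepsilon$ with the curve $\vec\omega_\varepsilon$. I would handle this through the weighted Fa\`a di Bruno estimates, using that the derivatives of $\vec\omega_\varepsilon$ are $O(1)$ thanks to the smallness of the contraction constant.
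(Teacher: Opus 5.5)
Your proposal is correct and follows essentially the same route as the paper, which obtains $\tU_\infty$ as the limit of the Cauchy sequence $(\tU_n)$ given by \eqref{H.m.per.Cauchy}, gets $\mathscr{F}_{\varepsilon}(\tU_\infty)=0$ on $\tG_\infty^\gamma$ by letting $n\to\infty$ in \eqref{decayF}, and observes that \eqref{rev-trav-NM} passes to the limit. The paper defers the rigidification of the frequencies to \cite[Cor.~8.1]{HHR23}, which is the implicit function (contraction in $\omega$) argument of your Step 2, using the smallness $\varepsilon\gamma^{-2}N_0^{q\tilde{\ta}}\ll 1$ that \eqref{param.NASH.2} guarantees.
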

\begin{proof}
    It follows analogously as in \cite[Cor. 8.1]{HHR23}. In particular, $i_{\infty}$ is the traveling reversible torus obtained as the limit of the Cauchy sequence of traveling reversible tori $(i_{n})_{n\in\N}$ in \eqref{Un}, \eqref{H.m.per.Cauchy} (noting that \eqref{rev-trav-NM} holds in the limit), which is a solution of the system by passing in the limit $n\to \infty$ in \eqref{decayF}.
\end{proof}

\subsection{Measure of the final Cantor set}\label{subsect.conclusion}

In Corollary \ref{cor.limit.NM},  we proved the existence of solutions to the equation \eqref{solve.equa.NM}. In this last section, in order to finally prove Theorem \ref{thm QP traveling electron layers}, we check that the final Cantor set $\mathscr{C}(\varepsilon)$ in the variable $a$ given by \eqref{cC.infty.set} is massive set, which proves the existence of non-trivial quasi-periodic solution to our problem. Actually, we prove that the measure of $\mathscr{C}(\varepsilon)$ is $\varepsilon$-close to $\big|[a_0,a_1] \big| = a_1-a_0 >0$. One of the main technical ingredient is R\"ussmann Theorem \cite[Thm. 17.1]{R01} (see also its slight reformulation in \cite[Lemma 8.1]{HHR23}) that will allows to estimate the measure of the sublevel sets for non-degenerate functions. 

First, we prove the following proposition, which is the main result of this section. The proof relies of several lemmata that, for clarity of the presentation, we state and prove right after.
\begin{prop}\label{prop.measure}
    Let $q_0$ be as in Lemma \ref{Russmann equilibre}, $\varrho$ as in \eqref{param.NASH.2} and assume that \eqref{param.NASH.1}, \eqref{param.NASH.2} hold with $q\triangleq q_0+2$. Assume the additional conditions
    \begin{equation}\label{fix.tau}
        \tau_{1}> d q_0, \qquad \tau > (d + 2\tau_{1}-1)q_0, \qquad \upsilon \triangleq (4q_0+1)^{-1}.
    \end{equation}
    Then there exists $C>0$ such that
    \begin{equation}
        a_1 - a_0 - C \varepsilon^{\frac{\varrho\upsilon}{q_0}} \leqslant |\mathscr{C}(\varepsilon)| \leqslant a_1 - a_0 .
    \end{equation}
\end{prop}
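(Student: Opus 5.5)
The plan is the standard measure estimate for a KAM Cantor set. First, the upper bound is trivial since $\mathscr{C}(\varepsilon)\subset[a_0,a_1]$. For the lower bound, I would write $[a_0,a_1]\setminus\mathscr{C}(\varepsilon)$ as the set of $a$ such that $(a,\vec\omega_\varepsilon(a))\notin\tA_{n}^{\gamma}$ for some $n$. Using the inclusion structure \eqref{An.set} together with \eqref{Gn.set}, this set is contained in a countable union of resonant sets. Each resonant set corresponds to exactly one of the conditions in \eqref{setDC}, \eqref{transport.nonres}, \eqref{second.meln.KAM.infty} or \eqref{first.meln.cond.n}, evaluated at $i_n$ and at the level $\gamma_{n+1}$. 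I would index them as $\mathcal{R}^{(0)}_{\ell}$ (Diophantine condition on $\vec\omega_\varepsilon$), $\mathcal{R}^{(T)}_{\ell,j,\kappa}$ (transport), $\mathcal{R}^{(1)}_{\ell,j,\kappa}$ (first Melnikov), $\mathcal{R}^{(2)}_{\ell,j,j',\kappa}$ and $\mathcal{R}^{(2,\pm)}_{\ell,j,j'}$ (second Melnikov). The momentum constraints $\vec\jmath\cdot\ell+j=0$ and $\vec\jmath\cdot\ell+j-j'=0$ are kept throughout. Following \cite{BFM21,HHR23}, I would first reduce the union over $n$ to the union over the final torus $i_\infty$, up to sets with $|\ell|>N_n$. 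The reduction uses the Cauchy estimates \eqref{H.m.per.Cauchy}, the difference bounds \eqref{estimazione ttc12} and \eqref{tr.inf.12}, and the fact that $\gamma_{n}-\gamma_{n+1}\sim\gamma 2^{-n}$ absorbs $\varepsilon\gamma^{-1}N_n^{-\nu_2}$. This lets me work with perturbed frequencies that are independent of $n$.

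The core step is a perturbed transversality estimate. Along the curve $a\mapsto(a,\vec\omega_\varepsilon(a))$, by \eqref{final.omega}, \eqref{estimazione ttc}, Lemma \ref{lem prop eig VP}-(iv) and \eqref{mu.j.infty.est}, every small divisor is $\varepsilon\gamma^{-1}N_0^{q\bar\ta}$-close in $C^{q_0}$ to its equilibrium counterpart. For example, $\omega\cdot\ell+j\mathtt{c}_\kappa$ is close to $(\omega_{\rm Eq}(a)-\vec\jmath a\kappa)\cdot\ell$ after using $j=-\vec\jmath\cdot\ell$, up to sign conventions. Similarly, $\omega\cdot\ell+\mu^{[\infty]}_{j,\kappa}-\mu^{[\infty]}_{j',\kappa'}$ is close to $\omega_{\rm Eq}\cdot\ell+\kappa\Omega_j-\kappa'\Omega_{j'}$, with an error carrying an extra factor $\langle\ell\rangle$ or $\langle j-j'\rangle$. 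By Lemma \ref{Russmann equilibre}, items (i)--(v), and the smallness $\varepsilon\gamma^{-1}N_0^{q\bar\ta}\ll\rho_0$ guaranteed by \eqref{param.NASH.2}, I obtain $\max_{k\le q_0}|\partial_a^k f|\geq\tfrac{\rho_0}{2}\langle\ell\rangle$ on $[a_0,a_1]$ for each divisor $f$. Here $q=q_0+2$ supplies the needed regularity. R\"ussmann's lemma \cite[Lem. 8.1]{HHR23} then gives, for instance,
$|\mathcal{R}^{(2)}_{\ell,j,j',\kappa}|\lesssim\big(\gamma\langle j-j'\rangle\langle\ell\rangle^{-\tau-1}\big)^{1/q_0}$, and $|\mathcal{R}^{(T)}_{\ell,j,\kappa}|\lesssim(\gamma^{\upsilon}\langle\ell\rangle^{-\tau_1-1})^{1/q_0}$.

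The remaining step, which I expect to be the main obstacle, is summing over the infinitely many indices $(j,j')$. By momentum, $j-j'=-\vec\jmath\cdot\ell$, so $|j-j'|\lesssim\langle\ell\rangle$. The sum over $j$ for fixed $\ell$ is still infinite, however, and must be controlled. My first attempt would be the standard trick: for $|j|,|j'|\geq C\langle\ell\rangle^{\tau_1}\gamma^{-\upsilon}$, the asymptotics $\mu_{j,\kappa}^{[\infty]}=\kappa\Omega_j+(\mathtt{c}_\kappa-\kappa a)j+O(\varepsilon\gamma^{-1}/|j|)$ and $\Omega_j=aj+O(1/|j|)$ show that $\mathcal{R}^{(2)}_{\ell,j,j',\kappa}\subset\mathcal{R}^{(T)}_{\ell,j-j',\kappa}$. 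This holds because the divisor differs from $\omega\cdot\ell+(j-j')\mathtt{c}_\kappa$ by $O(1/\min(|j|,|j'|))$, and the gap between $\gamma^\upsilon$ and $\gamma$ gives room. So only finitely many $j$, namely $\lesssim\langle\ell\rangle^{\tau_1}\gamma^{-\upsilon}$ of them, need to be counted. For the mixed $+/-$ case and for the first Melnikov case, the divisor grows like $|j|+|j'|$ (respectively $|j|$) unless $|j|\lesssim\langle\ell\rangle$, so a trivial lower bound removes large $j$. Summing then gives $\sum_\ell\langle\ell\rangle^{\tau_1}\gamma^{-\upsilon}\gamma^{1/q_0}\langle\ell\rangle^{(1-\tau)/q_0}$, which is finite under \eqref{fix.tau}. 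The total measure of all bad sets is therefore $\lesssim\gamma^{\upsilon/q_0}+\gamma^{1/q_0-\upsilon}$. The choice $\upsilon=(4q_0+1)^{-1}$ makes the exponent $1/q_0-\upsilon\geq\upsilon/q_0$, so everything is $\lesssim\gamma^{\upsilon/q_0}=\varepsilon^{\varrho\upsilon/q_0}$. This proves the claim.
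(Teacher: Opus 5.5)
Your proposal is correct and follows essentially the paper's proof. It uses the same decomposition into Diophantine, transport, first- and second-Melnikov resonant sets under the momentum constraints, perturbed transversality inherited from Lemma~\ref{Russmann equilibre}, R\"ussmann's lemma, elimination of large indices in the first-Melnikov and mixed $\pm$ cases via momentum and the growth of $\Omega_j+\Omega_{j'}$ (Lemmas~\ref{neg.moment.lemma} and~\ref{Q.empty.lemma}-$(ii)$), and inclusion of the same-$\kappa$ second-Melnikov sets with $\min(|j|,|j'|)\gtrsim\gamma^{-\upsilon}\langle\ell\rangle^{\tau_1}$ into the single transport set indexed by $(\ell,-\vec{\jmath}\cdot\ell)$ (Lemma~\ref{Q.empty.lemma}-$(i)$).

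The differences are minor. You absorb the $n$-dependence by comparing $i_n$ with $i_\infty$ for $|\ell|\leqslant N_n$, whereas the paper compares $i_n$ with $i_{n-1}$ and uses that consecutive differences $\mathcal{C}_{n-1}^{\varepsilon}\setminus\mathcal{C}_{n}^{\varepsilon}$ are disjoint (Lemma~\ref{lem.emptysets.low}). You also use momentum to fix $j'$ from $(\ell,j)$, which gives the better factor $\gamma^{1/q_0-\upsilon}$ instead of the paper's $\gamma^{1/q_0-4\upsilon}=\gamma^{\upsilon/q_0}$. One small correction: the $C^{q_0}$-distance to the equilibrium divisors carries an extra factor $\gamma^{-q_0}$ from the weights of $\|\cdot\|^{q,\gamma,\mathcal{O}}$, not just $\varepsilon\gamma^{-1}N_0^{q\bar{\mathtt a}}$, but \eqref{param.NASH.2} still makes it harmless.
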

\begin{proof}
We split the presentation of the proof in several steps.
\\[1mm]
$\blacktriangleright$ {\sc Decomposition in ``nearly-resonant sets''.}
    The definitions in \eqref{tG.infty.set}, \eqref{cC.infty.set} provide the decomposition of the final Cantor set
    \begin{equation}\label{cC.n.set}
        \mathscr{C}(\varepsilon)=\bigcap_{n\in\N} \cC_{n}^{\varepsilon}, \quad \textnormal{where} \quad \cC_{n}^{\varepsilon} \triangleq \Big\{  a \in [a_0,a_1] \ \textnormal{ s.t. } \ \big( a,\vec\omega_{\varepsilon}(a) \big) \in \tA_{n}^{\gamma} \Big\},
    \end{equation}
    with $\tA_{n}^{\gamma}$ as in \eqref{An.set} and $\omega(a,\varepsilon)$ as in \eqref{final.omega}. We can write
    \begin{equation}\label{decomp.complement}
        [a_0,a_1] \setminus \mathscr{C}(\varepsilon) = \big( [a_0,a_1]\setminus \cC_{0}^{\varepsilon} \big) \sqcup  \bigsqcup_{n\in\N} \big( \cC_{n}^{\varepsilon} \setminus \cC_{n+1}^{\varepsilon} \big) .
    \end{equation}
    First, we claim that
    \begin{equation}
        [a_0,a_1] \setminus \cC_{0}^{\varepsilon} = \varnothing, \quad \textnormal{that is} \quad \cC_{0}^\varepsilon = [a_0,a_1] .
    \end{equation}
    By \eqref{cC.n.set}, \eqref{An.set}, \eqref{cO.ref.set}, this is equivalent to prove that
    \begin{equation}\label{omega.eps.inball}
        \vec\omega_{\varepsilon}\big( [a_0,a_1] \big) \subset B_{R_0}(0),
    \end{equation}
    with $R_0>0$ fixed such that \eqref{omega.Eq.inball} holds. By \eqref{final.omega} and \eqref{param.NASH.2}, we have that
    \begin{align}
        \sup_{a\in[a_0,a_1]} |\vec\omega_{\varepsilon}(a)| & \leqslant \frac{R_0}{2} + C \varepsilon \gamma^{-1} N_0^{q\bar\ta} \\
        & \leqslant \frac{R_0}{2} + C \varepsilon^{1- \varrho(1+q\bar\ta)} \leqslant R_0,
    \end{align}
    which holds for $0<\varepsilon\ll 1$ small enough, with $0<\varrho<(1+q\bar\ta)^{-1}$ as in \eqref{param.NASH.2}. This shows \eqref{omega.eps.inball} and the claim is proved. Back to \eqref{decomp.complement}, we have
    \begin{equation}
        \big| [a_0,a_1] \setminus \cC_{\infty}^\varepsilon \big| \leqslant \sum_{n\in\N} \big| \cC_{n}^\varepsilon \setminus \cC_{n+1}^\varepsilon \big| \triangleq \sum_{n\in\N} \cS_{n} . \label{cSn.compl.set}
    \end{equation}
    With a slight abuse of notation, by \eqref{mu.j.0}, \eqref{mu.j.infty}, we denote the perturbed frequencies
associated with the reduced linearized operator at state $i_n$ in the following way
    \begin{align}
        \mathtt{c}_{\kappa,n}(a) & \triangleq \mathtt{c}_{\kappa}(a,\vec{\omega}_{\varepsilon}(a);i_{n}), \\
        \mu_{j,\kappa,n}^{[\infty]}(a) & \triangleq \mu_{j,\kappa}^{[\infty]}(a,\vec{\omega}_{\varepsilon}(a);i_{n}) = \kappa\Omega_{j}(a) + (\mathtt{c}_{\kappa,n}(a) - \kappa a ) j + \mathtt{r}_{j,\kappa,n}^{[\infty]}(a), \label{mu.j.infty:Cant}\\
         \mathtt{r}_{j,\kappa,n}^{[\infty]}(a) & \triangleq \mathtt{r}_{j,\kappa}^{[\infty]}(a,\vec{\omega}_{\varepsilon}(a);i_{n}).
    \end{align}
Moreover, by Corollary \ref{cor.limit.NM}, Proposition \ref{prop.KAM.conv} and Proposition \ref{prop trans Ego}, we have the estimates
\begin{align}
    |\partial_{a}^{k}\vec{r}_{\varepsilon}(a)| & \leqslant C \varepsilon \gamma^{-1-k} \,, \quad \forall \, k \in \llbracket 0,q_0 \rrbracket, \quad \textnormal{uniformly in } a \in [a_0,a_1]\,, \label{natale 1} \\
    |\partial_{a}^{k}\big( \mathtt{c}_{\kappa,n}(a)-\kappa a \big)| &\leqslant C \varepsilon\gamma^{-k}, \quad \forall \, k \in \llbracket 0,q_0 \rrbracket, \quad \textnormal{uniformly in } a \in [a_0,a_1], \label{natale 2} \\
    \sup_{j\in \mathbb{Z}\setminus \overline{\mathbb{S}}_{\kappa}}|\partial_{a}^k \mathtt{r}_{j,\kappa,n}^{[n]}(a)|& \leqslant C\varepsilon\gamma^{-1-k}, \quad \forall \, k \in \llbracket 0,q_0 \rrbracket, \quad \textnormal{uniformly in } a \in [a_0,a_1]. \label{natale 3}
\end{align}
According to \eqref{cC.n.set}, \eqref{An.set} and Propositions \ref{prop trans Ego}, \ref{prop.KAM.conv}, \ref{prop.almost.inv}, we have
\begin{align}
 \cC_{n}^\varepsilon \setminus\cC_{n+1}^{\varepsilon} &= \Bigg[ \bigcup_{\ell \in \mathbb{Z}^d\setminus\{0\} } R_{\ell}^{(0,n)}(i_n) \Bigg] \cup \bigcup_{\kappa\in\{-,+\}}\Bigg[
    \bigcup_{{(\ell,j)\in\mathbb{Z}^d\times\mathbb{Z}\setminus\{(0,0)\}\atop\vec{\jmath}\cdot\ell+j=0}} R_{\ell,j,\kappa}^{({\rm Tr},n)}(i_n)\Bigg]  \cup  \bigcup_{\kappa\in\{-,+\}} \Bigg[ \bigcup_{(\ell,j)\in\mathbb{Z}^{d}\times(\mathbb{Z}^*\setminus\overline{\mathbb{S}}_{\kappa})\atop{\vec{\jmath}\cdot \ell+j=0}} R_{\ell,j,\kappa}^{(I,n)}(i_n) \Bigg]  \\
    &
    \cup 
    \bigcup_{\kappa\in\{-,+\}} \Bigg[ \bigcup_{(\ell,j,j')\in\mathbb{Z}^d\times(\mathbb{Z}\setminus\overline{\mathbb{S}}_{\kappa})^2\atop{\vec{\jmath}\cdot \ell+j-j'=0 ,\, j\neq j' }} R_{\ell,j,j',\kappa}^{(II,n)}(i_n) \Bigg]	\cup \Bigg[ \bigcup_{(\ell,j,j')\in\mathbb{Z}^d\times(\mathbb{Z}\setminus\overline{\mathbb{S}}_{+})\times(\mathbb{Z}\setminus\overline{\mathbb{S}}_{-})\atop{\vec{\jmath}\cdot \ell+j-j'=0}} Q_{\ell,j,j'}^{(II,n)}(i_n) \Bigg],\label{compl.set.C.eps}
\end{align}
where the ‘‘nearly-resonant sets'' are
\begin{align}
    R_{\ell}^{(0,n)}(i_n) & \triangleq \bigg\{ a\in  \cC_{n}^\varepsilon \quad \textnormal{s.t.} \quad |\vec{\omega}_{\varepsilon}(a)\cdot \ell| \leqslant \frac{4\gamma_{n+1}}{\langle\ell\rangle^{\tau}} \bigg\}, \label{R.Dioph.set} \\
    R_{\ell,j,\kappa}^{({\rm Tr},n)}(i_n) & \triangleq \bigg\{
    a \in \cC_{n}^\varepsilon  \quad\textnormal{s.t.}\quad\big|\vec{\omega}_{\varepsilon}(a)\cdot\ell+ j\tc_{\kappa,n}(a)\big|\leqslant \frac{4\gamma_{n+1}^{\upsilon}}{\langle\ell\rangle^{\tau_1}}\bigg\}, \label{R.TR.set}\\
    R_{\ell,j,k}^{(I,n)}(i_n) & \triangleq \bigg\{ a \in \cC_{n}^\varepsilon  \quad\textnormal{s.t.}\quad\big|\vec{\omega}_{\varepsilon}(a)\cdot \ell+\mu_{j,\kappa,n}^{[\infty]}(a)\big| \leqslant \frac{2\gamma_{n+1}|j|}{\langle \ell \rangle^{\tau_1}}\bigg\}, \label{R.1st.set} \\
    R_{\ell,j,j',\kappa}^{(II,n)}(i_n) & \triangleq \bigg\{ a \in \cC_{n}^\varepsilon  \quad\textnormal{s.t.}\quad\big|\vec{\omega}_{\varepsilon}(a)\cdot \ell+\mu_{j,\kappa,n}^{[\infty]}(a)-\mu_{j',\kappa,n}^{[\infty]}(a)\big| \leqslant \frac{2\gamma_{n+1}\langle j-j'\rangle}{\langle \ell \rangle^{\tau}} \bigg\},\label{R.2nd.set} \\
    Q_{\ell,j,j}^{(II,n)}(i_n) & \triangleq \bigg\{ a \in \cC_{n}^\varepsilon \quad\textnormal{s.t.}\quad\big|\vec{\omega}_{\varepsilon}(a)\cdot \ell+\mu_{j,+,n}^{[\infty]}(a)-\mu_{j',-,n}^{[\infty]}(a)\big|\leqslant \frac{2\gamma_{n+1}\braket{j,j'}}{\langle \ell \rangle^{\tau}} \bigg\} . \label{Q.2nd.set}
\end{align}
Note that, in \eqref{compl.set.C.eps}, we require the union for $R_{\ell,j,j',\kappa}^{(II,n)}$ only for $j\neq j'$ because we have $R_{\ell,j,j,\kappa}^{(II,n)}\subset R_{\ell}^{(0,n)}$. 

In the sequel, we will always suppose the momentum conditions on the indexes $\ell,j,j'$ in \eqref{compl.set.C.eps}. 
\\[1mm]
$\blacktriangleright$ {\sc Application of R\"ussmann's Theorem.}
Since $W^{q,\infty,\gamma}(\cO,\C) \hookrightarrow C^{q-1}(\cO,\C)$ and $q=q_0+2$, one obtains, for any $n\in\N$ and any $k\in \{-,+\}$, the $C^{q_0+1}$-regularity of the curves
\begin{align}
    & a\mapsto \vec\omega_{\varepsilon}(a)\cdot \ell, & \ell \in \Z^{d}\setminus\{0\}, \\
    & a\mapsto \vec\omega_{\varepsilon}(a)\cdot \ell +  j \big(\tc_{\kappa,n}(a)-\kappa a\big), & (\ell,j) \in \Z^{d}\times \Z \setminus\{(0,0)\},  \\
    & a\mapsto \vec\omega_{\varepsilon}(a)\cdot \ell + \mu_{j,\kappa,n}^{[\infty]}(a), & (\ell,j) \in \Z^{d}\times (\Z^* \setminus \overline{\S}_\kappa), \\
    & a\mapsto \vec\omega_{\varepsilon}(a)\cdot \ell + \mu_{j,\kappa,n}^{[\infty]}(a) - \mu_{j',\kappa,n}^{[\infty]}(a), & (\ell,j,j') \in \Z^{d}\times (\Z^* \setminus \overline{\S}_\kappa)^2, \\
    & a\mapsto \vec\omega_{\varepsilon}(a)\cdot \ell + \mu_{j,+,n}^{[\infty]}(a) - \mu_{j',-,n}^{[\infty]}(a), & (\ell,j,j') \in \Z^{d}\times (\Z^* \setminus \overline{\S}_{+})\times (\Z^* \setminus \overline{S}_{-}).
\end{align}
Therefore, applying R\"ussmann Theorem \cite[Thm. 17.1]{R01} together with the perturbed transversality estimates in Lemma \ref{pert.transversality} and the momentum conditions, we obtain that the measure of the sets \eqref{R.Dioph.set}-\eqref{Q.2nd.set} satisfy
\begin{align}
    &|R_{\ell}^{(0,n)}(i_n)| \lesssim \gamma_{n+1}^{\frac{1}{q_0}} \braket{\ell}^{-\frac{\tau+1}{q_0}-1}, \quad \quad  \quad \   |R_{\ell,j}^{({\rm Tr},n)}(i_n)| \lesssim \gamma_{n+1}^\frac{\upsilon}{q_0} \braket{\ell}^{-\frac{\tau_{1}+1}{q_0}-1}, \\
    & |R_{\ell,j}^{(I,n)}(i_n)| \lesssim \gamma_{n+1}^{\frac{1}{q_0}} \braket{\ell}^{-\frac{\tau}{q_0}-1},   \quad  \quad \quad \ \ \,  |R_{\ell,j,j'}^{(II,n)}(i_n)| \lesssim \gamma_{n+1}^{\frac{1}{q_0}} \braket{\ell}^{-\frac{\tau}{q_0}-1}, \label{russ.est.pert}  \\
     &  |Q_{\ell,j,j'}^{(II,n)}(i_n)| \lesssim \gamma_{n+1}^{\frac{1}{q_0}} \braket{j,j'}^{\frac{1}{q_0}}\braket{\ell}^{-\frac{\tau+1}{q_0}-1} .
\end{align}
$\blacktriangleright$ {\sc Estimates of the leading orders $\cS_{0}$, $\cS_{1}$.}
We first estimate the size of $\cS_{0}$ and $\cS_{1}$ defined in \eqref{compl.set.C.eps} and \eqref{cSn.compl.set}. By  \eqref{R.Dioph.set}, \eqref{R.TR.set}, \eqref{R.1st.set} and \eqref{russ.est.pert}, we have, for $n=0,1$, $\kappa=\{-,+\}$
\begin{align}
    &\Big|\bigcup_{\ell\in\mathbb{Z}^d\setminus\{0\}} R_{\ell}^{(0,n)}(i_n) \Big| \leqslant \sum_{\ell\in\mathbb{Z}^d\setminus\{0\}} |R_{\ell}^{(0,n)}(i_n)| \lesssim \gamma_{n+1}^{\frac{1}{q_0}} \sum_{\ell\in\mathbb{Z}^d\setminus\{0\}} \braket{\ell}^{-\frac{\tau+1}{q_0}-1}\lesssim \gamma^{\frac{1}{q_0}}, \\
    &\Big|\bigcup_{(\ell,j) \neq (0,0) \atop \vec \jmath \cdot \ell + j = 0} R_{\ell,j,\kappa}^{({\rm Tr},n)}(i_n) \Big| \leqslant \sum_{\ell\in\mathbb{Z}^d\setminus\{0\}} |R_{\ell,-\vec\jmath\cdot\ell,\kappa}^{({\rm Tr},n)}(i_n)| \lesssim \gamma_{n+1}^{\frac{\upsilon}{q_0}} \sum_{\ell\in\mathbb{Z}^d\setminus\{0\}} \braket{\ell}^{-\frac{\tau_{1}+1}{q_0}-1} \lesssim \gamma^{\frac{\upsilon}{q_0}},  \\
    & \Big| \bigcup_{(\ell,j)\in\mathbb{Z}^{d}\times(\mathbb{Z}^*\setminus\overline{\mathbb{S}}_{\kappa})\atop{\vec{\jmath}\cdot \ell+j=0}} R_{\ell,j,\kappa}^{(I,n)}(i_n) \Big| \leqslant \sum_{|j|\leqslant C\braket{\ell}} |R_{\ell,j,\kappa}^{(I,n)}| \lesssim \gamma_{n+1}^{\frac{1}{q_0}} \sum_{\ell\in\mathbb{Z}^d\setminus\{0\}} \braket{\ell}^{-\frac{\tau}{q_0} } \lesssim \gamma^{\frac{1}{q_0}}, 
\end{align}
where we used the choice of $\tau$ in \eqref{fix.tau} to ensure the convergence of the series.
We now estimate the measure, for any $n=0,1$ $\kappa\in\{-,+\}$,  of
\begin{align}
\bigcup_{(\ell,j,j')\in\mathbb{Z}^d\times(\mathbb{Z}\setminus\overline{\mathbb{S}}_{\kappa})^2\atop{\vec{\jmath}\cdot \ell+j-j'=0 ,\, j\neq j' }} R_{\ell,j,j',\kappa}^{(II,n)}(i_n) = \Bigg[ \bigcup_{{\vec{\jmath}\cdot \ell+j-j'=0 ,\, j\neq j' } \atop j\cdot j' <0 } R_{\ell,j,j',\kappa}^{(II,n)}(i_n)\Bigg] \cup \Bigg[\bigcup_{{\vec{\jmath}\cdot \ell+j-j'=0 ,\, j\neq j' } \atop j\cdot j' >0 } R_{\ell,j,j',\kappa}^{(II,n)}(i_n)\Bigg] \triangleq \tI_{1,n} \cup \tI_{2,n} .
\end{align}
We first estimate the measure of $\tI_{1,n}$. By Lemma \ref{neg.moment.lemma} and \eqref{russ.est.pert}, we deduce that, for any $n=0,1$ $\kappa\in\{-,+\}$,
\begin{align}
    |\tI_{1,n}| \leqslant \sum_{\ell \in \Z^{d}\atop |j|,|j'|\leqslant C \braket{\ell}} |R_{\ell,j,j',\kappa}^{(II,n)}| \lesssim \gamma_{n+1}^{\frac{1}{q_0}} \sum_{\ell\in\Z^{d} } \braket{\ell}^{-\frac{\tau}{q_0}+1} \lesssim \gamma^{\frac{1}{q_0}},
\end{align}
where we used the choice of $\tau$ in \eqref{fix.tau} to ensure the convergence of the series. To estimate the measure of $\tI_{2,n}$, we need to use Lemma \ref{Q.empty.lemma}-$(i)$. We note that the set of indexes $(\ell,j,j')$ such that $\vec\jmath\cdot \ell + j-j'=0$ and $\min\{|j|,|j'|\} < C_{1} \gamma_{n+1}^{-\upsilon} \braket{\ell}^{\tau_{1}}$ is included, for $\gamma=\varepsilon^{\varrho}\in (0,1)$ small enough, into the set
\begin{equation}
    \cI_{\ell} \triangleq \big\{ (\ell,j,j') \,:\, \vec\jmath\cdot \ell + j-j'=0, \  |j|,|j'|< \gamma_{n+1}^{-2\upsilon} \braket{\ell}^{\tau_{1}} \big\}
\end{equation}
because 
\begin{equation}
    \max\{ |j|,|j'| \} \leqslant \min\{|j|,|j'|\}+ |j-j'| <  C_{1} \gamma_{n+1}^{-\upsilon} \braket{\ell}^{\tau_{1}} + C\braket{\ell} \leqslant \gamma_{n+1}^{-2\upsilon} \braket{\ell}^{\tau_{1}} .
\end{equation}
As a consequence, by Lemma \ref{Q.empty.lemma}-$(i)$, we deduce that
\begin{equation}
    \tI_{2,n} = \bigcup_{{\vec{\jmath}\cdot \ell+j-j'=0 ,\, j\neq j' } \atop j\cdot j' >0 } R_{\ell,j,j',\kappa}^{(II,n)}(i_n) \subset \Bigg[ \bigcup_{\ell\in\mathbb{Z}^d\setminus\{0\}} R_{\ell,-\vec\jmath\cdot\ell,\kappa}^{({\rm Tr},n)}(i_n) \Bigg] \cup \Bigg[ \bigcup_{(\ell,j,j')\in \cI_{\ell}} R_{\ell,j,j',\kappa}^{(II,n)}(i_n) \Bigg].
\end{equation}
We compute
\begin{align}
    & \Bigg| \bigcup_{\ell\in\mathbb{Z}^d\setminus\{0\}} R_{\ell,-\vec\jmath\cdot\ell,\kappa}^{({\rm Tr},n)}(i_n) \Bigg| \leqslant \sum_{\ell\in\mathbb{Z}^d\setminus\{0\}}|R_{\ell,-\vec\jmath\cdot\ell,\kappa}^{({\rm Tr},n)}(i_n) |\lesssim \gamma_{n+1}^{\frac{\upsilon}{q_0}} \sum_{\ell\in\mathbb{Z}^d\setminus\{0\}} \braket{\ell}^{-\frac{\tau_{1}+1}{q_0}-1} \lesssim \gamma^{\frac{\upsilon}{q_0}},  \\
    & \Bigg| \bigcup_{(\ell,j,j')\in \cI_{\ell}} R_{\ell,j,j',\kappa}^{(II,n)}(i_n)\Bigg| \leqslant \sum_{\ell \in \Z^d \atop |j|,|j'|< \gamma_{n+1}^{-2\upsilon}\braket{\ell}^{\tau_{1}}} |R_{\ell,j,j',\kappa}^{(II,n)}(i_n)| \lesssim \gamma_{n+1}^{\frac{1}{q_0}-4\upsilon} \sum_{\ell\in\Z^{d}} \braket{\ell}^{-\frac{\tau}{q_0}-1 + 2\tau_{1}} \lesssim \gamma^{\frac{\upsilon}{q_0}},
\end{align}
where we used the choice of $\tau$ and $\upsilon$ in \eqref{fix.tau} to ensure the convergence of the series, and we obtaing that
\begin{equation}
    |\tI_{2,n}| \lesssim \gamma^{\frac{\upsilon}{q_0}} .
\end{equation}
Finally, we estimate the measure, for any $n=0,1$, $\kappa\in\{-,+\}$, of
\begin{align} \bigcup_{(\ell,j,j')\in\mathbb{Z}^d\times(\mathbb{Z}\setminus\overline{\mathbb{S}}_{+})\times(\mathbb{Z}\setminus\overline{\mathbb{S}}_{-})\atop{\vec{\jmath}\cdot \ell+j-j'=0}} Q_{\ell,j,j'}^{(II,n)}(i_n) = \Bigg[\bigcup_{\vec{\jmath}\cdot \ell+j-j'=0 \atop j\cdot j'< 0} Q_{\ell,j,j'}^{(II,n)}(i_n) \Bigg]  \cup \Bigg[\bigcup_{\vec{\jmath}\cdot \ell+j-j'=0 \atop j\cdot j'> 0 } Q_{\ell,j,j'}^{(II,n)}(i_n) \Bigg]. 
\end{align}
By Lemma \ref{Q.empty.lemma}-$(ii)$, Lemma \ref{neg.moment.lemma} and \eqref{russ.est.pert}, we deduce that
\begin{align}
    \Bigg| \bigcup_{(\ell,j,j')\in\mathbb{Z}^d\times(\mathbb{Z}\setminus\overline{\mathbb{S}}_{+})\times(\mathbb{Z}\setminus\overline{\mathbb{S}}_{-})\atop{\vec{\jmath}\cdot \ell+j-j'=0}} Q_{\ell,j,j'}^{(II,n)}(i_n) \Bigg| \leqslant \sum_{\ell \in \Z^{d} \atop \braket{j,j'}\leqslant C\braket{\ell}} |Q_{\ell,j,j',\kappa}^{(II,n)}| \lesssim \gamma_{n+1}^{\frac{1}{q_0}} \sum_{\ell \in \Z^{d}} \braket{\ell}^{-\frac{\tau+1}{q_0}-1 +\frac{1}{q_0}+1} \lesssim \gamma^{\frac{1}{q_0}},
\end{align}
where we used the choice of $\tau$  in \eqref{fix.tau} to ensure the convergence of the series. Summing up all the above contributions, we conclude that
\begin{equation}
    |\cS_{0}|, |\cS_{1}| \leqslant C \gamma^{\frac{\upsilon}{q_0}} .
\end{equation}
$\blacktriangleright$ {\sc Estimates of the lower orders $\cS_{n}$, $n\geqslant 2$, and conclusion.} It remains now to estimate the size of $\cS_{n}$ in \eqref{compl.set.C.eps} and \eqref{cSn.compl.set} for $n\geqslant 2$. In particular, we claim that they can be estimated with decreasing terms of a convergent geometric series: by Lemma \ref{lem.emptysets.low}-$(vi)$ one can show that, for $n\geqslant 2$
\begin{align}
    \cS_{n} \lesssim \gamma^{\frac{\upsilon}{q_0}} N_{n-1}^{-\tp}, \quad \textnormal{for some} \quad \tp\triangleq \tp(\tau,d)\ll 1 .
\end{align}
For instance, we have to estimate, for any $\kappa\in\{-,+\}$ and $n\geqslant 2$, using \eqref{russ.est.pert} and \eqref{scale.KAM},
\begin{align}
    \Bigg| \bigcup_{(\ell,j)\in\mathbb{Z}^{d}\times(\mathbb{Z}^*\setminus\overline{\mathbb{S}}_{\kappa})\atop\underset{ |\ell|\geqslant N_{n-1} }{\vec{\jmath}\cdot \ell+j=0}} R_{\ell,j,\kappa}^{(I,n)}(i_n) \Bigg| & \lesssim \gamma_{n+1}^{\frac{1}{q_0}}\sum_{|j|\leqslant C\braket{\ell} \atop |\ell|\geqslant N_{n-1}}  \braket{\ell}^{-\frac{\tau}{q_0}-1} \\
    & \lesssim \gamma^{\frac{1}{q_0}} \sum_{|\ell|\geqslant N_{n-1}}\braket{\ell}^{-\frac{\tau}{q_0}} \lesssim \gamma^{\frac{1}{q_0}} N_{n-1}^{-\frac{\tau}{q_0}+1}
\end{align}
having $\frac{\tau}{q_0}>d$ by \eqref{fix.tau}. The other terms are estimated analogously with similar arguments as in the previous step, and therefore we omit the details.

Finally, we conclude that the measure in \eqref{cSn.compl.set} is given by
\begin{align}
    |[a_0,a_1]\setminus \mathscr{C}(\varepsilon)| & \leqslant \cS_{0} + \cS_{1} + \sum_{n \geqslant 2} \cS_{n}  \lesssim \gamma^{\frac{\upsilon}{q_0}}\Big( 1 + \sum_{n\geqslant 2} N_{n-1}^{-\tp} \Big) \lesssim \gamma^{\frac{\upsilon}{q_0}}
\end{align}
and we deduce that, together with \eqref{param.NASH.2},
\begin{equation}
   |[a_0,a_1]| \geqslant |\mathscr{C}(\varepsilon)| \geqslant |[a_0,a_1]| - |[a_0,a_1]\setminus \mathscr{C}(\varepsilon)|  \geqslant a_1-a_0 - C\gamma^{\frac{\upsilon}{q_0}} = a_1-a_0 - C\varepsilon^{\frac{\varrho\upsilon}{q_0}}.
\end{equation}
The proof of the main claim is concluded.
\end{proof}

We now state and prove the lemmata that we used in the above proof for Proposition \ref{prop.measure}.

\begin{lem}\label{Q.empty.lemma}
    For any $n\in\N$ and for $\varepsilon \in (0,\varepsilon_{0})$ small enough, the following assertions are true:
    \begin{itemize}
        \item [$(i)$] Let $(\ell,j,j')\in \Z^{d}\times (\Z\setminus\overline{\S}_{\kappa})^2$, $\kappa\in\{-,+\}$. There exists a constant $C_1>0$, independent of $n\in\N$ and $\varepsilon>0$, such that, if $\vec\jmath\cdot \ell + j-j' = 0$, $j\cdot j'>0$ and  $\min\{ |j|,|j'| \} \geqslant C_1 \gamma_{n+1}^{-\upsilon} \braket{\ell}^{\tau_1}$, then $R_{\ell,j,j',\kappa}^{(II,n)} (i_n) \subset R_{\ell,j-j',\kappa}^{({\rm Tr},n)}(i_n)$;
        \item [$(ii)$] Let $(\ell,j,j')\in \Z^{d}\times (\Z\setminus\overline{\S}_{+})\times (\Z\setminus\overline{\S}_{-})$.  If $Q_{\ell,j,j'}^{(II,n)}\neq \varnothing$ and $j\cdot j'>0$, then $\braket{j,j'}\leqslant C \braket{\ell}$.
    \end{itemize}
\end{lem}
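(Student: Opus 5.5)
For item $(i)$, I will start from the explicit form \eqref{mu.j.infty:Cant} of the perturbed eigenvalues and write, for $a\in R_{\ell,j,j',\kappa}^{(II,n)}(i_n)$,
$$\vec\omega_\varepsilon(a)\cdot\ell+\mu_{j,\kappa,n}^{[\infty]}(a)-\mu_{j',\kappa,n}^{[\infty]}(a)=\vec\omega_\varepsilon(a)\cdot\ell+(j-j')\mathtt{c}_{\kappa,n}(a)+\kappa\big(\mathtt{r}_{j}(a)-\mathtt{r}_{j'}(a)\big)+\mathtt{r}_{j,\kappa,n}^{[\infty]}(a)-\mathtt{r}_{j',\kappa,n}^{[\infty]}(a).$$
Here I use the splitting $\Omega_j(a)=aj+\mathtt{r}_j(a)$ from \eqref{decompo Omj}. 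The term $\kappa a(j-j')$ cancels against the $-\kappa a$ inside $(\mathtt{c}_{\kappa,n}-\kappa a)(j-j')$. The remaining error terms are then all bounded by $C/\min\{|j|,|j'|\}$. For the $\mathtt{r}_{j,\kappa,n}^{[\infty]}$ terms this follows from \eqref{mu.j.infty.est}, which gives $C\varepsilon\gamma^{-1}|j|^{-1}\leqslant C|j|^{-1}$ since $\varepsilon\gamma^{-1}\leqslant1$. For $\mathtt{r}_j-\mathtt{r}_{j'}$ it follows from \eqref{e-ttrj}.

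The key point in $(i)$ is the sign condition $j\cdot j'>0$. By the mean value theorem on the symbol $\mathtt{r}(a,\xi)\sim \frac{1}{2a\xi}$, with $j,j'$ of the same sign, one gets the sharper bound $|\mathtt{r}_j-\mathtt{r}_{j'}|\lesssim |j-j'|/(|j||j'|)$. In any case the crude bound $C/\min\{|j|,|j'|\}$ is all I need. Using the momentum condition, $j-j'=-\vec\jmath\cdot\ell$, so $\langle j-j'\rangle\leqslant C\langle\ell\rangle$. With $\tau>\tau_1$ this gives
$$\big|\vec\omega_\varepsilon(a)\cdot\ell+(j-j')\mathtt{c}_{\kappa,n}(a)\big|\leqslant \frac{2\gamma_{n+1}C\langle\ell\rangle}{\langle\ell\rangle^{\tau}}+\frac{C}{\min\{|j|,|j'|\}}\leqslant \frac{2C\gamma_{n+1}}{\langle\ell\rangle^{\tau_1}}+\frac{C\gamma_{n+1}^{\upsilon}}{C_1\langle\ell\rangle^{\tau_1}}.$$
Since $\gamma_{n+1}\leqslant\gamma_{n+1}^{\upsilon}$ (as $\gamma<1$), choosing $C_1$ large and $\varepsilon$ small makes the right-hand side at most $4\gamma_{n+1}^{\upsilon}\langle\ell\rangle^{-\tau_1}$. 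This is exactly membership in $R_{\ell,j-j',\kappa}^{({\rm Tr},n)}(i_n)$, as defined in \eqref{R.TR.set}.

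Two points in $(i)$ need care. First, the case $\ell=0$: then $j=j'$, and that case is excluded in the union where this lemma is used. Second, the constant $C_1$ must be uniform in $n$, which holds because all the estimates \eqref{natale 2}--\eqref{natale 3} are uniform in $n$.

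For item $(ii)$, I write the small divisor the same way, now with opposite branches:
$$\vec\omega_\varepsilon\cdot\ell+\Omega_j(a)+\Omega_{j'}(a)+(\mathtt{c}_{+,n}-a)j-(\mathtt{c}_{-,n}+a)j'+\mathtt{r}^{[\infty]}_{j,+,n}-\mathtt{r}^{[\infty]}_{j',-,n}.$$
When $j\cdot j'>0$, the sum $\Omega_j(a)+\Omega_{j'}(a)$ has modulus $\sqrt{a^2j^2+1}+\sqrt{a^2j'^2+1}\geqslant a_0(|j|+|j'|)$, with no cancellation. The other terms satisfy
$$|\vec\omega_\varepsilon\cdot\ell|\leqslant C|\ell|,\qquad |\mathtt{c}_{+,n}-a|\,|j|+|\mathtt{c}_{-,n}+a|\,|j'|\leqslant C\varepsilon(|j|+|j'|),\qquad |\mathtt{r}^{[\infty]}|\leqslant C,$$
where the middle bound uses \eqref{estimazione ttc}. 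Hence the modulus of the small divisor is at least $\tfrac{a_0}{2}\langle j,j'\rangle-C\langle\ell\rangle$ for $\varepsilon$ small. If $Q^{(II,n)}_{\ell,j,j'}(i_n)\neq\varnothing$, this quantity is at most $2\gamma_{n+1}\langle j,j'\rangle\langle\ell\rangle^{-\tau}\leqslant 4\gamma\langle j,j'\rangle$. Absorbing this last term into $\frac{a_0}{4}\langle j,j'\rangle$ for $\gamma$ small yields $\langle j,j'\rangle\leqslant C\langle\ell\rangle$. I expect no real obstacle here; the main care in the whole lemma is the uniform bookkeeping of constants in $(i)$.
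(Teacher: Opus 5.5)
Your proof is correct and follows the same route as the paper. In $(i)$ you split the second-Melnikov divisor into the transport divisor $\vec\omega_\varepsilon(a)\cdot\ell+(j-j')\mathtt{c}_{\kappa,n}(a)$ plus errors of size $O(|j|^{-1}+|j'|^{-1})$ coming from $\Omega_j-aj$ and $\mathtt{r}^{[\infty]}$. In $(ii)$ you use the no-cancellation bound $|\Omega_j+\Omega_{j'}|\geqslant a_0(|j|+|j'|)$ for $j\cdot j'>0$.

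One point is actually better than the paper. Trading $\langle j-j'\rangle$ for $\langle\ell\rangle$ via the momentum condition makes your final bound match \eqref{R.TR.set} exactly, whereas the paper's last line keeps a stray $\langle j-j'\rangle$ factor. Note that this step needs $\tau\geqslant\tau_1+1$, not merely $\tau>\tau_1$ as you wrote; the stronger condition does hold under \eqref{fix.tau}.
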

\begin{proof}
$(i)$ We observe that, by \eqref{eigenvalues VP}, for any $a\in[a_0,a_1]$,
\begin{align}
    |\Omega_{j}(a)-ja| = a|j| \bigg| \sqrt{1+ \frac{1}{a^2j^2}}-1 \bigg| = \frac{1}{a|j|} \bigg( \sqrt{1+ \frac{1}{a^2j^2}}+1 \bigg)^{-1} \leqslant \frac{1}{2a_0 |j|} \cdot\label{stima.easy}
\end{align}
Let $a\in R_{\ell,j,j',\kappa}^{(II,n)}(i_n)$. By \eqref{R.2nd.set}, \eqref{mu.j.infty.est}, \eqref{stima.easy} and \eqref{natale 3} (recalling also the choice of $\upsilon$ in Proposition \ref{prop trans Ego} and the choice of $\tau > \tau_{1}$ in \eqref{fix.tau}), we have, for $0<\varepsilon\ll 1$ small enough,
\begin{align}
    \big|\vec{\omega}_{\varepsilon}(a)\cdot \ell  + (j-j')\tc_{\kappa,n}(a) \big| &\leqslant \big|\vec{\omega}_{\varepsilon}(a)\cdot \ell+\mu_{j,\kappa,n}^{[\infty]}(a)-\mu_{j',\kappa,n}^{[\infty]}(a)\big| \\
    & + \big| \kappa(ja - \Omega_{j}(a)) - \kappa(j'a-\Omega_{j'}(a)) - \tr_{j,\kappa,n}^{[\infty]}(a) + \tr_{j',\kappa,n}^{[\infty]}(a)  \big| \\
    & \leqslant \frac{2\gamma_{n+1}\braket{j-j'}}{\braket{\ell}^{\tau}} +\Big( \frac{1}{2a_0} + C\varepsilon\gamma^{-1} \Big)\Big( \frac{1}{|j|} + \frac{1}{|j'|} \Big) \\
    & \leqslant \frac{2\gamma_{n+1}^\upsilon\braket{j-j'}}{\braket{\ell}^{\tau_{1}}} + \frac14 \Big( \frac{1}{|j|} + \frac{1}{|j'|} \Big) \braket{j-j'} \leqslant  \frac{4\gamma_{n+1}^\upsilon\braket{j-j'}}{\braket{\ell}^{\tau_{1}}} 
\end{align}
if we require that
\begin{equation}
    |j|, |j'|\geqslant C_{1} \gamma_{n+1}^{\upsilon} \braket{\ell}^{-\tau_{1}}, \qquad C_{1}\triangleq 4a_0,
\end{equation}
and the claim is proved.
\\[1mm]
$(ii)$ Since $Q_{\ell,j,j'}^{(II,n)}\neq \varnothing$, there exists $a \in [a_0,a_1]$ such that, using also \eqref{natale 1}, \eqref{gamma.n},
    \begin{equation}\label{majoration diff muinftypm}
        \begin{aligned}
            \big| \mu_{j,+,n}^{[\infty]}(a)-\mu_{j',-,n}^{[\infty]}(a) \big| &\leqslant \frac{2\gamma_{n+1} \braket{j,j'}}{\braket{\ell}^{\tau}} + C |\ell| \leqslant 4\gamma \braket{j,j'}+C\braket{\ell}.
        \end{aligned}
    \end{equation}
    By \eqref{mu.j.infty:Cant}, we have
    \begin{align}
         \mu_{j,+,n}^{[\infty]}(a)-\mu_{j',-,n}^{[\infty]}(a) & = \Omega_{j}(a) + \Omega_{j'}(a) + j \big( \tc_{+,n}(a)- a \big) -  j' \big( \tc_{-,n}(a) + a \big) + \fr_{j,+,n}^{[\infty]}(a) - \fr_{j',-,n}^{[\infty]}(a).
    \end{align}
     Using that $j\cdot j'>0$ and recalling \eqref{eigenvalues VP} (see also Lemma \ref{lem prop eig VP}-$(iii)$), we get
 \begin{align}
     \big|  \Omega_{j}(a) + \Omega_{j'}(a)  \big| = \Omega_{|j|}(a) + \Omega_{|j'|}(a) 
     \geqslant a_0\big( |j| + |j'| \big) =a_0\braket{j,j'}.
 \end{align}
    Therefore, making appeal to the estimates \eqref{natale 2}, \eqref{natale 3}, we get that
    \begin{equation}\label{minoration diff muinftypm}
        \begin{aligned}
         \big| \mu_{j,+,n}^{[\infty]}(a)-\mu_{j',-,n}^{[\infty]}(a) \big| &\geqslant  \big|  \Omega_{j}(a) + \Omega_{j'}(a)  \big| -C\varepsilon\gamma^{-1}\\
         &\geqslant(a_0-C\varepsilon \gamma^{-1})\braket{j,j'}.
    \end{aligned}
    \end{equation}
    Combining \eqref{majoration diff muinftypm} and \eqref{minoration diff muinftypm} and recalling \eqref{param.NASH.2}, we conclude the desired result for $0<\varepsilon \ll 1$ sufficiently small.
\end{proof}

Under the momentum condition, we have some immediate restrictions of the Fourier indexes.
\begin{lem}\label{neg.moment.lemma}
    If $(\ell,j,j') \in \Z^d \times \Z^2$ are such that $\vec\jmath\cdot \ell +j-j'= 0$ and $j\cdot j'< 0$, then there exists $C>0$ such that $|j|,|j'|\leqslant C\braket{\ell}$.
\end{lem}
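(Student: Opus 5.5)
The argument is elementary and uses only the momentum condition, the sign assumption and the definition of the velocity vector $\vec{\jmath}$ in \eqref{velocity vector}.

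From $\vec\jmath\cdot\ell+j-j'=0$ we get $j'-j=\vec\jmath\cdot\ell$. The Cauchy--Schwarz inequality then gives
$$|j-j'|=|\vec\jmath\cdot\ell|\leqslant|\vec\jmath|\,|\ell|\leqslant C\braket{\ell},\qquad C\triangleq|\vec\jmath|=\Big(\sum_{\kappa\in\{-,+\}}\sum_{k=1}^{d_\kappa}(j_k^\kappa)^2\Big)^{1/2}.$$
This constant depends only on the fixed tangential sets $\mathbb{S}_\pm$.

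Next we use the sign condition. Since $j\cdot j'<0$, the integers $j$ and $j'$ are nonzero and of opposite signs. For opposite signs the distance between them is the sum of their moduli:
$$|j-j'|=|j|+|j'|.$$
Combining the two displays yields $|j|+|j'|\leqslant C\braket{\ell}$. In particular $\max\{|j|,|j'|\}\leqslant C\braket{\ell}$, which is the claim, and moreover $\braket{j,j'}\lesssim\braket{\ell}$.

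No step is a genuine obstacle. The only point requiring care is that the constant $C$ is independent of $n$, $\varepsilon$ and $a$, which holds because $\vec\jmath$ is fixed once $\mathbb{S}_\pm$ are chosen.
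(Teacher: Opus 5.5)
Your proof is correct and follows essentially the same route as the paper: the sign condition $j\cdot j'<0$ turns the momentum condition into $|j|+|j'|=|j-j'|=|\vec\jmath\cdot\ell|$, which is bounded by a constant depending only on $\vec\jmath$ times $\braket{\ell}$. Your explicit Cauchy--Schwarz constant is fine, since the paper's $|\ell|$ is the $\ell^1$-norm, which dominates the Euclidean one.
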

\begin{proof}
    For $j\cdot j'<0$,  the momentum condition reads
    \begin{equation}
        j-j' = {\rm sgn}(j) |j| - {\rm sgn}(j') |j'| = {\rm sgn}(j) (|j|+|j'|) = - \vec\jmath\cdot \ell,
    \end{equation}
    from which we deduce the claim.
\end{proof}

The perturbed frequencies satisfy estimates similar to the ones in Lemma \ref{Russmann equilibre}.

\begin{lem}[Perturbed transversality]\label{pert.transversality}
    There exist $\rho_0>0$ and $q_0\in\mathbb{N}$ such that the following hold true.
	\begin{enumerate}[label=(\roman*)]
		\item For all $\ell\in\mathbb{Z}^d\setminus\{0\}$,
		$$\inf_{a\in[a_0,a_1]}\max_{k\in\llbracket 0,q_0\rrbracket}\big|\partial_{a}^{k}\vec\omega_{\varepsilon}(a)\cdot\ell\big|\geqslant \tfrac{\rho_{0}}{2} \langle\ell\rangle.$$
		\item  Let $\kappa \in \{ -,+ \}$, $n\in\N$. For all $(\ell,j)\in\mathbb{Z}^{d}\times\mathbb{Z}^*$, with $\vec{\jmath}\cdot\ell + j = 0$,
		$$\inf_{a\in[a_0,a_1]}\max_{k\in\llbracket 0,q_0\rrbracket}\big|\partial_{a}^{k}\big(\vec\omega_{\varepsilon}(a)\cdot\ell+ j \tc_{\kappa,n}(a) \big)\big|\geqslant \tfrac{\rho_{0}}{2} \langle\ell\rangle.$$
		\item Let $\kappa\in\{-,+\}$, $n\in\N$. For all $(\ell,j)\in\mathbb{Z}^{d}\times(\mathbb{Z}^*\setminus\overline{\mathbb{S}}_{\kappa})$ with $\vec{\jmath}\cdot\ell+j=0,$
		$$\inf_{a\in[a_0,a_1]}\max_{k\in\llbracket 0,q_0\rrbracket}\big|\partial_{a}^{k}\big(\vec\omega_{\varepsilon}(a)\cdot\ell+\mu_{j,\kappa,n}^{[\infty]}(a)\big)\big|\geqslant \tfrac{\rho_{0}}{2} \langle\ell\rangle.$$
		\item Let $\kappa\in\{-,+\}$, $n\in\N$. For all $(\ell,j,j')\in\mathbb{Z}^{d}\times(\mathbb{Z}^*\setminus\overline{\mathbb{S}}_{\kappa})^2$ with $(\ell,j,j')\neq(0,j,j)$ and $\vec{\jmath}\cdot\ell+j-j'=0,$
		$$\inf_{a\in[a_0,a_1]}\max_{k\in\llbracket 0,q_0\rrbracket}\big|\partial_{a}^{k}\big(\vec\omega_{\varepsilon}(a)\cdot\ell+\mu_{j,\kappa,n}^{[\infty]}(a)-\mu_{j',\kappa,n}^{[\infty]}(a)\big)\big|\geqslant \tfrac{\rho_{0}}{2} \langle\ell\rangle.$$
		\item Let $n\in\N$. For all $(\ell,j,j')\in\mathbb{Z}^{d}\times(\mathbb{Z}^*\setminus\overline{\mathbb{S}}_+)\times(\mathbb{Z}^*\setminus\overline{\mathbb{S}}_-)$ with $\vec{\jmath}\cdot\ell+j-j'=0$,
    $$\inf_{a\in[a_0,a_1]}\max_{k\in\llbracket 0,q_0\rrbracket}\big|\partial_{a}^{k}\big(\vec\omega_{\varepsilon}(a)\cdot\ell+\mu_{j,+,n}^{[\infty]}(a)-\mu_{j,-,n}^{[\infty]}(a)\big)\big|\geqslant \tfrac{\rho_{0}}{2} \langle\ell\rangle.$$
	\end{enumerate}
\end{lem}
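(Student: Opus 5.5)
The plan is a perturbative argument from the equilibrium transversality of Lemma~\ref{Russmann equilibre}. Each perturbed function is written as its equilibrium counterpart plus a remainder whose first $q_0$ derivatives in $a$ are small relative to $\langle\ell\rangle$. The equilibrium bound $\rho_0\langle\ell\rangle$ then survives with constant $\rho_0/2$. The remainder bounds come from \eqref{final.omega}, \eqref{natale 1}, \eqref{natale 2} and \eqref{natale 3}, which control $\partial_a^k$, $k\leqslant q_0$, of $\bar r_\varepsilon$, $\mathtt{c}_{\kappa,n}-\kappa a$ and $\mathtt{r}^{[\infty]}_{j,\kappa,n}$ by $C\varepsilon\gamma^{-1-k}\leqslant C\varepsilon\gamma^{-1-q_0}$. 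By \eqref{param.NASH.2}, with $\gamma=\varepsilon^{\varrho}$ and $\varrho(1+q_0)<1$, this is $o(1)$ as $\varepsilon\to0$.

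Item (i) is direct. We have $\partial_a^k(\vec\omega_\varepsilon\cdot\ell)=\partial_a^k(\omega_{\rm Eq}\cdot\ell)+\partial_a^k\bar r_\varepsilon\cdot\ell$, and the second term is at most $C\varepsilon\gamma^{-1-q_0}|\ell|\leqslant\tfrac{\rho_0}{2}\langle\ell\rangle$ for $\varepsilon$ small. Lemma~\ref{Russmann equilibre}-(i) then gives the claim.

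For item (ii), write $j\mathtt{c}_{\kappa,n}=\kappa ja+j(\mathtt{c}_{\kappa,n}-\kappa a)$. The momentum condition gives $j=-\vec\jmath\cdot\ell$, so $\kappa ja=-\kappa(\vec\jmath\cdot\ell)a$. The unperturbed function is therefore $(\omega_{\rm Eq}(a)-\kappa\vec\jmath a)\cdot\ell$. For $\kappa=+$ this is exactly Lemma~\ref{Russmann equilibre}-(ii). For $\kappa=-$ a variant is needed, namely non-degeneracy of $a\mapsto\omega_{\rm Eq}(a)+\vec\jmath a$, which follows from the same Vandermonde/analyticity argument as in Lemma~\ref{lem non-deg}, since only the linear part changes. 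The $\ell=0$ case forces $j=0$ and is excluded. The perturbation is bounded by $C\varepsilon\gamma^{-1-q_0}|j|\leqslant C'\varepsilon\gamma^{-1-q_0}\langle\ell\rangle$, using $|j|\leqslant|\vec\jmath||\ell|$.

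Items (iii)–(v) use \eqref{mu.j.infty:Cant}. For example, in (iv),
\[
\mu^{[\infty]}_{j,\kappa,n}-\mu^{[\infty]}_{j',\kappa,n}=\kappa(\Omega_j-\Omega_{j'})+(j-j')(\mathtt{c}_{\kappa,n}-\kappa a)+\mathtt{r}^{[\infty]}_{j,\kappa,n}-\mathtt{r}^{[\infty]}_{j',\kappa,n}.
\]
The momentum condition gives $|j-j'|\leqslant C\langle\ell\rangle$. The second term is therefore $O(\varepsilon\gamma^{-1-q_0}\langle\ell\rangle)$ and the third is $O(\varepsilon\gamma^{-1-q_0})$ by \eqref{natale 3}. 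Since $\ell\neq0$ (otherwise $j=j'$), both are absorbed into $\tfrac{\rho_0}{2}\langle\ell\rangle$, together with the $\bar r_\varepsilon\cdot\ell$ term, by Lemma~\ref{Russmann equilibre}-(iv). Item (iii) works the same way: momentum gives $|j|\leqslant C\langle\ell\rangle$, the term $j(\mathtt{c}_{\kappa,n}-\kappa a)$ is $O(\varepsilon\gamma^{-1-q_0}\langle\ell\rangle)$, and Lemma~\ref{Russmann equilibre}-(iii) applies.

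The main obstacle is item (v). There the term $j(\mathtt{c}_{+,n}-a)-j'(\mathtt{c}_{-,n}+a)$ is not controlled by $j-j'$ alone, because the two transport constants differ. It is of size $\varepsilon\gamma^{-1-q_0}\langle j,j'\rangle$, and this is not bounded by $\langle\ell\rangle$ in general. I would split into cases.

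\begin{itemize}
\item If $j\cdot j'<0$, Lemma~\ref{neg.moment.lemma} gives $|j|,|j'|\leqslant C\langle\ell\rangle$, and the perturbative argument goes through.
\item If $j\cdot j'>0$ and $\langle j,j'\rangle\leqslant C_0\langle\ell\rangle$, the same perturbative argument applies.
\item If $j\cdot j'>0$ and $\langle j,j'\rangle> C_0\langle\ell\rangle$, I would argue directly at order $k=0$ as in Lemma~\ref{Q.empty.lemma}-(ii). The sum $\Omega_j+\Omega_{j'}$ has size at least $a_0\langle j,j'\rangle$, while $\vec\omega_\varepsilon\cdot\ell$ and the perturbations are $O(\langle\ell\rangle)+o(1)\langle j,j'\rangle$. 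Hence the zeroth derivative is already at least $\tfrac{a_0}{2}\langle j,j'\rangle\geqslant\tfrac{\rho_0}{2}\langle\ell\rangle$ once $C_0$ is large and $\rho_0\leqslant a_0$.
\end{itemize}

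In the bounded range, the equilibrium bound is Lemma~\ref{Russmann equilibre}-(v), which is stated with the same sign conventions.
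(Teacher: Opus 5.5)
Your argument is correct and follows the paper's proof. Each function is written as its equilibrium counterpart plus remainders controlled by \eqref{natale 1}--\eqref{natale 3}. The momentum condition bounds $|j|$ or $|j-j'|$ by $C\langle\ell\rangle$, and everything is then absorbed into $\tfrac{\rho_0}{2}\langle\ell\rangle$ via Lemma~\ref{Russmann equilibre}.

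\textbf{Where you differ in (v).} In the large-index regime the paper does not prove the bound. It restricts to $|j|,|j'|\leqslant C\langle\ell\rangle$, using Lemma~\ref{neg.moment.lemma} when $j\cdot j'<0$ and Lemma~\ref{Q.empty.lemma}-(ii) when $j\cdot j'>0$; outside that range $Q^{(II,n)}_{\ell,j,j'}=\varnothing$, so nothing needs to be measured. You instead use the same order-zero computation as Lemma~\ref{Q.empty.lemma}-(ii) to show the function itself is $\geqslant\tfrac{a_0}{2}\langle j,j'\rangle$ there. This proves the lemma as literally stated, for all indices and independently of the Cantor sets. The paper's shortcut only gives what the measure estimate uses.

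\textbf{Item (ii) with $\kappa=-$.} Your remark here fills a detail the paper leaves implicit. The needed non-degeneracy of $a\mapsto\omega_{\rm Eq}(a)+\vec\jmath\,a$ follows at once from that of $a\mapsto(\omega_{\rm Eq}(a),a)$ in Lemma~\ref{lem non-deg}.

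\textbf{One correction, shared with the paper's own proof.} Lemma~\ref{Russmann equilibre}-(v) is printed with $\Omega_j(a)-\Omega_{j'}(a)$, not with your sign convention. As printed it actually fails for $\ell=0$, $j=j'\notin\overline{\mathbb{S}}$. What you need is the version with $\Omega_j(a)+\Omega_{j'}(a)$. That version does hold by rerunning the compactness and non-degeneracy argument of item (iv):
\begin{itemize}
\item if $|\ell|\to\infty$, you get a contradiction with the non-degeneracy of $(\omega_{\rm Eq}(a),a)$;
\item if $\ell$ stays bounded while $j,j'$ are unbounded, momentum forces $j\cdot j'>0$, and then $\Omega_{|j|}+\Omega_{|j'|}\geqslant a_0(|j|+|j'|)$;
\item in the fully bounded case, Lemma~\ref{lem non-deg} together with $\mathbb{S}_+\cap\mathbb{S}_-=\varnothing$ excludes an identically vanishing combination.
\end{itemize}
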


\begin{proof}
    We prove items $(iv)$, $(v)$. The proofs of items $(i)$, $(ii)$ and $(iii)$ are similar and therefore omitted. We start with item $(iv)$. By \eqref{final.omega}, \eqref{mu.j.infty:Cant} we have
    \begin{align}
        \vec\omega_{\varepsilon}(a)\cdot\ell  +\mu_{j,\kappa,n}^{[\infty]}(a)-\mu_{j',\kappa,n}^{[\infty]}(a) & = \omega_{\rm Eq}(a) \cdot \ell + \vec{r}_{\varepsilon}(a) \cdot \ell + \kappa\,\Omega_{j}(a) - \kappa\,\Omega_{j'}(a)  \\
        & +  (j-j') \big( \tc_{\kappa,n}(a) - \kappa a \big) + \tr_{j,\kappa,n}^{[\infty]}(a) - \tr_{j',\kappa,n}^{[\infty]}(a) . \label{smalldiv.II.diag}
    \end{align}
    By \eqref{smalldiv.II.diag}, \eqref{natale 1}, \eqref{natale 2}, \eqref{natale 3} and the momentum condition $\vec\jmath\cdot \ell + j - j' = 0$, we have that, for any $q\in \llbracket0,q_{0}\rrbracket $,
    \begin{align}
        \big|\pa_{a}^{q}\big(\vec\omega_{\varepsilon}(a)\cdot\ell+\mu_{j,\kappa,n}^{[\infty]}(a)-\mu_{j',\kappa,n}^{[\infty]}(a)\big)\big| \geqslant \big|\pa_{a}^{q}\big( \omega_{\rm Eq}(a) \cdot \ell  + \kappa\,\Omega_{j}(a) - \kappa\,\Omega_{j'}(a)\big)\big| - C \varepsilon \gamma^{-1-q_0}\braket{\ell}  .
    \end{align}
    Therefore, by Lemma \ref{Russmann equilibre}-$(iv)$, we deduce that
    \begin{align}
        \inf_{a\in[a_0,a_1]}\max_{q\in\llbracket 0,q_0\rrbracket}\big|\pa_{a}^{q}\big(\vec\omega_{\varepsilon}(a)\cdot\ell+\mu_{j,\kappa,n}^{[\infty]}(a)-\mu_{j,\kappa,n}^{[\infty]}(a)\big)\big|\geqslant \rho_{0} \braket{\ell} - C \varepsilon \gamma^{-1-q_0}\braket{\ell}  \geqslant \tfrac{\rho_{0}}{2}\braket{\ell}
    \end{align}
    for $\varepsilon>0$ small enough. This proves item $(iv)$. \\
    We now prove item $(v)$. By \eqref{final.omega}, \eqref{mu.j.infty:Cant} we have
    \begin{align}
        \vec\omega_{\varepsilon}(a)\cdot\ell  +\mu_{j,+,n}^{[\infty]}(a)-\mu_{j',-,n}^{[\infty]}(a) & = \omega_{\rm Eq}(a) \cdot \ell + \vec{r}_{\varepsilon}(a) \cdot \ell + \Omega_{j}(a) + \Omega_{j'}(a)  \\
        & \quad+ j \big( \tc_{+,n}(a) -  a \big) - j' \big( \tc_{-,n}(a) +  a \big) + \tr_{j,+,n}^{[\infty]}(a) - \tr_{j',-,n}^{[\infty]}(a) . \label{smalldiv.II.antidiag}
    \end{align}
    Note that we can restrict to indexes $\ell,j,j'$ such that $ |j|,|j'|\leqslant C\braket{\ell}$
    (when $j\cdot j'>0$ by Lemma \ref{Q.empty.lemma}, otherwise $Q_{\ell,j,j'}^{(II,n)}=\varnothing$; when $j\cdot j'<0$ by Lemma \ref{neg.moment.lemma}, using the momentum condition). By \eqref{smalldiv.II.antidiag}, \eqref{natale 1}, \eqref{natale 2}, \eqref{natale 3} and the restriction $ |j|,|j'|\leqslant C\braket{\ell}$, we have that, for any $q\in \llbracket0,q_{0}\rrbracket $,
    \begin{align}
        \big|\pa_{a}^{q}\big(\vec\omega_{\varepsilon}(a)\cdot\ell+\mu_{j,+,n}^{[\infty]}(a)-\mu_{j',-,n}^{[\infty]}(a)\big)\big| \geqslant \big|\pa_{a}^{q}\big( \omega_{\rm Eq}(a) \cdot \ell  + \Omega_{j}(a) + \Omega_{j'}(a)\big)\big| - C \varepsilon \gamma^{-1-q_0}\braket{\ell}  .
    \end{align}
    Therefore, by Lemma \ref{Russmann equilibre}-$(v)$, we deduce that
    \begin{align}
        \inf_{a\in[a_0,a_1]}\max_{q\in\llbracket 0,q_0\rrbracket}\big|\pa_{a}^{q}\big(\vec\omega_{\varepsilon}(a)\cdot\ell+\mu_{j,+}^{[\infty]}(a)-\mu_{j,-}^{[\infty]}(a)\big)\big|\geqslant \rho_{0} \braket{\ell} - C \varepsilon \gamma^{-1-q_0}\braket{\ell}  \geqslant \tfrac{\rho_{0}}{2}\braket{\ell}
    \end{align}
    for $\varepsilon>0$ small enough. This proves item $(v)$ and concludes the proof of the lemma.
\end{proof}

In the following lemma, we show that some of the ``nearly-resonant set'' are empty for lower Fourier modes. 
\begin{lem}\label{lem.emptysets.low}
    Let $n\in\mathbb{N}\setminus\{0,1\}$ and $\kappa\in\{-,+\}$. The following assertions hold:
    \begin{itemize}
        \item [$(i)$] For $\ell \in \Z^{d}\setminus\{0\}$, with $|\ell| \leqslant N_{n-1}$, we have $R_{\ell}^{(0,n)}(i_n)= \varnothing$;
        \item [$(ii)$] For $(\ell,j) \in \Z^{d}\times \Z \setminus\{(0,0)\}$, with $|\ell| \leqslant N_{n-1}$, 
        we have $R_{\ell,j,\kappa}^{({\rm Tr},n)}(i_n)= \varnothing$;
        \item [$(iii)$] For $(\ell,j) \in \Z^{d}\times (\Z\setminus\overline{\S}_{\kappa}) $, with $|\ell| \leqslant N_{n-1}$,
        we have $R_{\ell,j,\kappa}^{(I,n)}(i_n)= \varnothing$;
        \item [$(iv)$] For $(\ell,j,j') \in \Z^{d}\times (\Z\setminus\overline{\S}_{\kappa})^2 $, with $|\ell| \leqslant N_{n-1}$
        and $(\ell,j,j')\neq (0,j,j)$,  we have $R_{\ell,j,j',\kappa}^{(II,n)}(i_n)= \varnothing$;
        \item [$(v)$] For $(\ell,j,j') \in \Z^{d}\times (\Z\setminus\overline{\S}_{+})\times (\Z\setminus\overline{\S}_{-}) $, with $|\ell| \leqslant N_{n-1}$,
        we have $Q_{\ell,j,j'}^{(II,n)}(i_n)= \varnothing$;
        \item [$(vi)$] The splitting of $\cC_{n}^\varepsilon \setminus \cC_{n+1}^{\varepsilon}$ in \eqref{compl.set.C.eps} reduces to
    \begin{footnotesize}
    \begin{align}
    \cC_{n}^\varepsilon \setminus\cC_{n+1}^{\varepsilon} &= \Bigg[ \bigcup_{\ell \in \mathbb{Z}^d\setminus\{0\} \atop |\ell|\geqslant N_{n-1}} R_{\ell}^{(0,n)}(i_n) \Bigg] \cup \bigcup_{\kappa\in\{-,+\}}\Bigg[
    \bigcup_{\underset{|\ell|\geqslant N_{n-1}}{(\ell,j)\in\mathbb{Z}^d\times\mathbb{Z}\setminus\{(0,0)\}\atop\vec{\jmath}\cdot\ell+j=0}} R_{\ell,j,\kappa}^{({\rm Tr},n)}(i_n)\Bigg]  \cup  \bigcup_{\kappa\in\{-,+\}} \Bigg[ \bigcup_{(\ell,j)\in\mathbb{Z}^{d}\times(\mathbb{Z}^*\setminus\overline{\mathbb{S}}_{\kappa})\atop\underset{|\ell|\geqslant N_{n-1}}{\vec{\jmath}\cdot \ell+j=0}} R_{\ell,j,\kappa}^{(I,n)}(i_n) \Bigg]  \\
    &
    \cup 
    \bigcup_{\kappa\in\{-,+\}} \Bigg[ \bigcup_{(\ell,j,j')\in\mathbb{Z}^d\times(\mathbb{Z}\setminus\overline{\mathbb{S}}_{\kappa})^2\atop\underset{|\ell|\geqslant N_{n-1}}{\vec{\jmath}\cdot \ell+j-j'=0 ,\, j\neq j' }} R_{\ell,j,j',\kappa}^{(II,n)}(i_n) \Bigg]	\cup \Bigg[ \bigcup_{(\ell,j,j')\in\mathbb{Z}^d\times(\mathbb{Z}\setminus\overline{\mathbb{S}}_{+})\times(\mathbb{Z}\setminus\overline{\mathbb{S}}_{-})\atop\underset{|\ell|\geqslant N_{n-1}}{\vec{\jmath}\cdot \ell+j-j'=0}} Q_{\ell,j,j'}^{(II,n)}(i_n) \Bigg] .
\end{align}
         \end{footnotesize}
    \end{itemize}
\end{lem}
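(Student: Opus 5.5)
The plan is the standard comparison between consecutive steps of the Nash--Moser scheme, carried out item by item. Items $(i)$--$(v)$ all follow one pattern. Take $a\in\mathcal{C}_n^\varepsilon$. By \eqref{cC.n.set}--\eqref{An.set}, this means $(a,\vec\omega_\varepsilon(a))\in\mathtt{A}_n^\gamma$, so the non-resonance conditions built from the torus $i_{n-1}$ hold with constant $\gamma_n$. These conditions are the Diophantine condition $\mathtt{DC}(\gamma_n,\tau)$, the transport set $\mathcal{O}^{\gamma_n,\tau_1}_{{\rm T},\infty}(i_{n-1})$, the second Melnikov set $\mathcal{O}^{2\gamma_n,\tau}_{\mathtt{KAM},\infty}(i_{n-1})$ and the first Melnikov set $\mathcal{O}^{\gamma_n,\tau}_{\rm Inv,\infty}(i_{n-1})$. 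I would then pass from $i_{n-1}$ to $i_n$ using the Lipschitz dependence of the perturbed frequencies on the torus. The relevant estimates are \eqref{estimazione ttc12} for $\mathtt{c}_\kappa$, \eqref{tr.inf.12} for $\mathtt{r}^{[\infty]}_{j,\kappa}$, and the Cauchy bound \eqref{H.m.per.Cauchy}, which gives $\|i_n-i_{n-1}\|_{s_0+\overline\sigma}\lesssim\varepsilon\gamma^{-1}N_{n-1}^{-\nu_2}$. Here $\overline\sigma$ must be taken at least as large as the losses $\sigma_1$ and $\mu(\tb)$.

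Item $(i)$ is immediate: the frequency $\vec\omega_\varepsilon(a)$ does not depend on $n$. Since $\gamma_n-\gamma_{n+1}=\gamma 2^{-n-1}>0$, the condition $|\vec\omega_\varepsilon\cdot\ell|>\gamma_n\langle\ell\rangle^{-\tau}$ rules out the smaller threshold. I would check here that the constants in \eqref{R.Dioph.set} are consistent with \eqref{setDC}; if they are not, only the normalisation of $\gamma_n$ needs adjusting.

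Item $(ii)$ works as follows. For $|\ell|\le N_{n-1}$, the momentum condition gives $|j|\lesssim\langle\ell\rangle$. Then
\[
|j|\,|\mathtt{c}_{\kappa,n}-\mathtt{c}_{\kappa,n-1}|\lesssim \langle\ell\rangle\,\varepsilon\,\|i_n-i_{n-1}\|_{s_0+\sigma_1}\lesssim \varepsilon^2\gamma^{-1}\langle\ell\rangle N_{n-1}^{-\nu_2}.
\]
This is at most $(\gamma_n^\upsilon-\gamma_{n+1}^\upsilon)\langle\ell\rangle^{-\tau_1}\gtrsim 2^{-n}\gamma^\upsilon\langle\ell\rangle^{-\tau_1}$ provided $\varepsilon^2\gamma^{-1-\upsilon}N_{n-1}^{\tau_1+1-\nu_2}2^n\ll1$. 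That holds because $\nu_2$ is large and $N_0=\gamma^{-1}$, so the super-exponential decay of $N_{n-1}^{-\nu_2}$ beats $2^n$.

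Items $(iii)$ and $(iv)$ follow the same route using \eqref{mu.j.infty:Cant}. For example,
\[
|\mu^{[\infty]}_{j,\kappa,n}-\mu^{[\infty]}_{j,\kappa,n-1}|\le |j|\,|\Delta\mathtt{c}|+|\Delta\mathtt{r}^{[\infty]}_{j}|\lesssim \varepsilon\gamma^{-1}\langle j\rangle\,\|i_n-i_{n-1}\|_{s_0+\mu(\tb)}.
\]
For $(iv)$, the transport parts combine into $(j-j')\Delta\mathtt{c}$, which is of order $\langle\ell\rangle$ by the momentum condition. The remainders are small uniformly because $|j|\,|\Delta\mathtt{r}_j|\lesssim\varepsilon\gamma^{-1}\|\Delta i\|$ carries the factor $|j|^{-1}$, as in \eqref{tr.inf.12}. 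For $(iii)$ the threshold is proportional to $\langle j\rangle$, so the $|j|$ factor from $\Delta\mathtt{c}$ is absorbed. In every case the error is of size $\varepsilon^2\gamma^{-2}N_{n-1}^{-\nu_2}\langle\ell\rangle\langle j-j'\rangle$ (or the analogous expression with $\langle j\rangle$), and it is dominated by $2^{-n}\gamma\langle j-j'\rangle\langle\ell\rangle^{-\tau}$ once $\nu_2>\tau+1$.

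Item $(v)$ needs more care, since $\mu_{j,+}-\mu_{j',-}$ contains $j\Delta\mathtt{c}_+-j'\Delta\mathtt{c}_-$, and this does not combine into $j-j'$. I would first invoke Lemma \ref{Q.empty.lemma}-$(ii)$ and Lemma \ref{neg.moment.lemma} to reduce to $\langle j,j'\rangle\lesssim\langle\ell\rangle\le N_{n-1}$. Then the error is $\lesssim\varepsilon^2\gamma^{-2}N_{n-1}^{1-\nu_2}$, and it is absorbed by the threshold proportional to $\langle j,j'\rangle$. Item $(vi)$ then follows by removing these empty sets from \eqref{compl.set.C.eps}.

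I expect the main obstacle to be the bookkeeping of the exponents. One must confirm that the fixed choices \eqref{param.NASH.1}--\eqref{param.NASH.2}, in particular $\nu_2$ and $\gamma=\varepsilon^\varrho$, actually dominate $\tau+1$, the factor $2^n$ and the powers of $\gamma^{-1}$ uniformly in $n\ge2$. One must also check that the losses $\sigma_1$ and $\mu(\tb)$ in the difference estimates are no larger than $\overline\sigma$. If $\nu_2$ turns out to be too small, I would use the high-norm bound \eqref{ttW:high} together with interpolation to obtain a better decay rate for $\|i_n-i_{n-1}\|$.
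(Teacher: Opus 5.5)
Your proposal is correct and matches the paper's argument. The paper proves $(v)$ by showing $Q^{(II,n)}_{\ell,j,j'}(i_n)\subset Q^{(II,n-1)}_{\ell,j,j'}(i_{n-1})$, so the set lies in $(\mathcal{C}_n^\varepsilon\setminus\mathcal{C}_{n+1}^\varepsilon)\cap(\mathcal{C}_{n-1}^\varepsilon\setminus\mathcal{C}_n^\varepsilon)=\varnothing$; this is the contrapositive of your ``non-resonance at step $n-1$ propagates to step $n$'' argument, using the same bounds \eqref{estimazione ttc12}, \eqref{tr.inf.12}, \eqref{H.m.per.Cauchy} and the gap $\gamma_n-\gamma_{n+1}=\gamma 2^{-(n+1)}$. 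Your reduction to $\langle j,j'\rangle\lesssim\langle\ell\rangle$ in $(v)$ is not needed, since the error is proportional to $\langle j,j'\rangle$ and the threshold $2(\gamma_n-\gamma_{n+1})\langle j,j'\rangle\langle\ell\rangle^{-\tau}$ absorbs it directly; your caveat that the constants in \eqref{R.Dioph.set}, \eqref{R.TR.set}, \eqref{R.1st.set} do not match the Cantor-set thresholds is also well taken.
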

\begin{proof}
    Item $(vi)$ is a direct consequence of \eqref{compl.set.C.eps} and items $(i)$-$(v)$. We prove now item $(v)$. The proofs of items $(i)$-$(iv)$ are similar and therefore omitted. First, by Proposition \ref{prop.NASH}-$(\cP 2)_{n}$, we have, for any $n\geqslant 2$,
    \begin{equation}\label{esti.tori.nn-1}
        \| i_{n} - i_{n-1} \|_{s_0+\overline{\sigma}}^{q,\gamma,\cO} \leqslant \| \tU_{n} - \tU_{n-1} \|_{s_0+\overline{\sigma}}^{q,\gamma,\cO} = \| \tH_{n}  \|_{s_0+\overline{\sigma}}^{q,\gamma,\cO} \leqslant C_* \varepsilon \gamma^{-1} N_{n-1}^{-\nu_2} .
    \end{equation}
    To conclude the claim it is sufficient to prove that $Q_{\ell,j,j'}^{(II,n)} (i_n) \subset Q_{\ell,j,j'}^{(II,n-1)} (i_{n-1})$. Indeed, if this holds, then
    \begin{equation}
       Q_{\ell,j,j'}^{(II,n)} (i_n) \subset Q_{\ell,j,j'}^{(II,n)} (i_n) \cap  Q_{\ell,j,j'}^{(II,n-1)} (i_{n-1}) \subset (\cC_{n}^{\varepsilon} \setminus \cC_{n+1}^{\varepsilon}) \cap (\cC_{n-1}^{\varepsilon} \setminus \cC_{n}^\varepsilon) = \varnothing . 
    \end{equation}
    Let $a \in Q_{\ell,j,j'}^{(II,n)}(i_{n})$. By \eqref{mu.j.infty:Cant} and triangle inequality, we have that
    \begin{equation}
        \big| \vec\omega_{\varepsilon}(a) + \mu_{j,+,n-1}^{[\infty]}(a) - \mu_{j',-,n-1}^{[\infty]}(a) \big| \leqslant \frac{2\gamma_{n+1}\braket{j,j'}}{\braket{\ell}^\tau} + \varrho_{j,j',n}(a,\varepsilon), \label{lunedi.1}   
    \end{equation}
    where, recalling \eqref{mu.j.infty:Cant}, \eqref{estimazione ttc12} in Proposition \ref{prop trans Ego}-$(i)$,  \eqref{tr.inf.12} in Proposition \ref{prop.KAM.conv}-$(i)$ and \eqref{esti.tori.nn-1},
    \begin{align}
        \varrho_{j,j',n}(a,\varepsilon) & \triangleq \big| \mu_{j,+,n}^{[\infty]}(a) - \mu_{j,+,n-1}^{[\infty]}(a) - \big( \mu_{j',-,n}^{[\infty]}(a)-\mu_{j',-,n-1}^{[\infty]}(a) \big) \big| \\
        & \leqslant |j| \big| \tc_{+,n}(a)- \tc_{+,n-1}(a) \big| + |j'| \big| \tc_{-,n}(a)- \tc_{-,n-1}(a) \big| \\
        & \quad + \big| \tr_{j,+,n}^{[\infty]}(a) - \tr_{j,+,n-1}^{[\infty]}(a)\big| + \big| \tr_{j',-,n}^{[\infty]}(a) - \tr_{j',-,n-1}^{[\infty]}(a)\big| \\
        & \lesssim  \varepsilon \braket{j,j'} \| i_n - i_{n-1} \|_{s_0 +\overline{\sigma}} \lesssim \varepsilon^{2} \gamma^{-1} N_{n-1}^{-\nu_2} \braket{j,j'} . \label{lunedi.2}
    \end{align}
    Therefore, by \eqref{lunedi.1} and \eqref{lunedi.2}, we deduce that $a \in Q_{\ell,j,j'}^{(II,n-1)}(i_{n-1})$ if it holds, for any $|\ell| \leqslant N_{n-1}$,
    \begin{align}
      2\gamma_{n+1}\braket{\ell}^{-\tau} + C \varepsilon^{2} \gamma^{-1} N_{n-1}^{-\nu_2} \leqslant 2\gamma_{n}\braket{\ell}^{-\tau},
    \end{align}
    which is equivalent to ask, by \eqref{gamma.n} and for any $|\ell| \leqslant N_{n-1}$, that
    \begin{align}
        \tfrac12 C (\varepsilon \gamma^{-1})^2 N_{n-1}^{\tau-\nu_2} \leqslant 2^{-n}-2^{-(n+1)} = 2^{-(n+1)} . \label{small.cond.emptysetQ}
    \end{align}
    Recalling the choice of parameters in \eqref{param.NASH.1}, \eqref{param.NASH.2} and \eqref{scale.KAM}, the condition in \eqref{small.cond.emptysetQ} is equivalent, by taking the logarithm, to
    \begin{equation}
        \frac{(\frac{3}{2})^{n-1}}{n+1}(a_2-\tau) \log(N_0) \geqslant \log(2),
    \end{equation}
    which is satisfied for $N_0> 1$ large enough, uniformly in $n\geqslant 2$. This concludes the proof of Lemma \ref{lem.emptysets.low}.
\end{proof}

\appendix
\section{Functional tools}\label{appendix-gene}
In this appendix, we gather all the technical material used throughout the document. In particular, we introduce the functions spaces, the pseudo-differential calculus and we discuss a useful Egorov Theorem for homogeneous operators.\\

Along the manuscript, we use the notations
$$\mathbb{N}\triangleq\{0,1,2,\ldots\},\qquad\mathbb{N}\triangleq\mathbb{N}\setminus\{0\},\qquad\mathbb{Z}\triangleq\mathbb{N}\cup(-\mathbb{N}),\qquad\mathbb{Z}\triangleq\mathbb{Z}\setminus\{0\}$$
and for $N\in\mathbb{N}^*,$
$$\llbracket1,N\rrbracket\triangleq\{1,2,\ldots,N\}.$$
Throughout the whole article, the following parameters are fixed.
\begin{equation}\label{d gm S s0}
	d,q\in\mathbb{N}^*, \qquad \gamma\in(0,1), \qquad S\gg s_0\geqslant\frac{d+1}{2}+q+2 .
\end{equation}
Let us consider $\mathcal{O}$ an open bounded subset of $\mathbb{R}^{d+1}.$ The set $\mathcal{O}$ is precised in  \eqref{cO.ref.set}.
Along the manuscript, we use the notation $A\lesssim_{\alpha,\beta,...}B$ to designate $A\leqslant C B$, with $C$ a constant depending only on the parameters $\alpha,\beta,...$

\subsection{Function spaces}
We consider the space $L^{2}(\mathbb{T}^{d+1},\mathbb{C})$ of square integrable functions, which is a Hilbert space endowed with the Hermitian scalar product
$$\forall u_1,u_2\in L^{2}(\mathbb{T}^{d+1},\mathbb{C}),\quad\langle u_1,u_2\rangle_{L^2(\mathbb{T}^{d+1},\mathbb{C})}\triangleq\int_{\mathbb{T}^{d+1}}u_1(\varphi,x)\overline{u_2(\varphi,x)}{\rm d}\varphi{\rm d}x.$$
This space admits a Hilbert basis $(\mathbf{e}_{\ell,j})_{(\ell,j)\in\mathbb{Z}^{d+1}}$ given by
$$\mathbf{e}_{\ell,j}(\varphi,x)\triangleq e^{\ii(\ell\cdot\varphi+jx)}.$$
A function $u\in L^{2}(\mathbb{T}^{d+1},\mathbb{C})$ can be decomposed into Fourier series
\begin{equation}\label{fourier.u}
    u=\sum_{(\ell,j)\in\mathbb{Z}^{d+1}}u_{\ell,j}\mathbf{e}_{\ell,j},\qquad u_{\ell,j}\triangleq\langle u,\mathbf{e}_{\ell,j}\rangle_{L^2(\mathbb{T}^{d+1},\mathbb{C})}.
\end{equation}
For a given $s\geqslant0,$ we define the Sobolove space of regularity $s$ by
\begin{equation}\label{sobolev.space}
    H^s(\mathbb{T}^{d+1},\mathbb{C})\triangleq\Big\{u\in L^{2}(\mathbb{T}^{d+1},\mathbb{C})\quad\textnormal{s.t.}\quad\|u\|_{s}<\infty\Big\},\qquad\|u\|_{s}^2\triangleq\sum_{(\ell,j)\in\mathbb{Z}^{d+1}}\langle\ell,j\rangle^{2s}|u_{\ell,j}|^2 .
\end{equation}
Here for $\ell=(\ell_1,...,\ell_d)\in\mathbb{Z}^d$ and $j\in\mathbb{Z},$ we denote 
$$|\ell|\triangleq|\ell_1|+...+|\ell_d|\qquad\textnormal{and}\qquad\langle\ell,j\rangle\triangleq\max(1,|l|,|j|).$$
The closed sub-vector space of real valued functions is denoted
\begin{align*}
	\nonumber H^{s}\triangleq H^{s}(\mathbb{T}^{d+1},\mathbb{R})&\triangleq\left\lbrace\rho\in H^{s}(\mathbb{T}^{d +1},\mathbb{C})\quad\textnormal{s.t.}\quad\forall\, (\varphi,x)\in\mathbb{T}^{d+1 },\,u(\varphi,x)=\overline{u(\varphi,x)}\right\rbrace\\
	&=\Big\{u\in H^{s}(\mathbb{T}^{d +1},\mathbb{C})\quad\textnormal{s.t.}\quad\forall \,(l,j)\in\mathbb{Z}^{d+1},\,u_{-\ell,-j}=\overline{u_{\ell,j}}\Big\}.
\end{align*}
We shall also consider functions depending on a parameter $\lambda\in\mathcal{O}\subset\mathbb{R}^{d+1}$ and taken among the following function spaces
\begin{align*}
W^{q,\gamma,\mathcal{O}}\big(\mathcal{O},H^{s}(\mathbb{T}^{d+1},\mathbb{C})\big)&\triangleq\big\{u:\mathcal{O}\to H^{s}(\mathbb{T}^{d+1},\mathbb{C})\quad\textnormal{s.t.}\quad\|u\|_{s}^{q,\gamma,\mathcal{O}}<\infty\big\},\\
W^{q,\gamma,\mathcal{O}}(\mathcal{O},\mathbb{C})&\triangleq\big\{u:\mathcal{O}\to\mathbb{C}\quad\textnormal{s.t.}\quad\|u\|^{q,\gamma,\mathcal{O}}<\infty\big\},
\end{align*}
associated with the norms
\begin{align*}
\|u\|_{s}^{q,\gamma,\mathcal{O}}&\triangleq\max_{\alpha\in\mathbb{N}^{d+1}\atop|\alpha|\leqslant q}\gamma^{|\alpha|}\sup_{\lambda\in\mathcal{O}}\|(\partial_{\lambda}^{\alpha}u)(\lambda,\cdot,\cdot)\|_{s-|\alpha|},\quad \|u\|^{q,\gamma,\mathcal{O}}\triangleq\max_{\alpha\in\mathbb{N}^{d+1}\atop|\alpha|\leqslant q}\gamma^{|\alpha|}\sup_{\lambda\in\mathcal{O}}|(\partial_{\lambda}^{\alpha}u)(\lambda)|.
\end{align*}
The following lemma gathers the classical estimates.
\begin{lem}\label{lem functions}
	Let $(d,\gamma,q,s_{0})$ as in \eqref{d gm S s0} and $s\geqslant s_0$. Take $u_1,u_2\in W^{q,\infty,\gamma}(\mathcal{O},H^s).$ Then the following properties hold true:
	\begin{enumerate}[label=(\roman*)]
		\item {(Product law)}: We have $u_{1}u_{2}\in W^{q,\infty,\gamma}(\mathcal{O},H^{s})$ and 
		$$\|u_{1}u_{2}\|_{s}^{q,\gamma,\mathcal{O}}\lesssim\|u_{1}\|_{s_{0}}^{q,\gamma,\mathcal{O}}\| u_{2}\|_{s}^{q,\gamma,\mathcal{O}}+\| u_{1}\|_{s}^{q,\gamma,\mathcal{O}}\| u_{2}\|_{s_{0}}^{q,\gamma,\mathcal{O}};$$
		\item {(Composition law)}: For $f\in C^{\infty}(\mathcal{O}\times\mathbb{R},\mathbb{R})$, if there exists $\mathtt{M}>0$ such that
		$$\|u_{1}\|_{s}^{q,\gamma,\mathcal{O}},\|u_{2}\|_{s}^{q,\gamma,\mathcal{O}}\leqslant \mathtt{M},$$ 
		then $f(u_{1})-f(u_{2})\in W^{q,\infty,\gamma}(\mathcal{O},H^{s})$ with 
		$$\|f(u_{1})-f(u_{2})\|_{s}^{q,\gamma,\mathcal{O}}\leqslant C(s,d,q,f,\mathtt{M})\|u_{1}-u_{2}\|_{s}^{q,\gamma,\mathcal{O}},$$
		where we used the notation
		$$\forall \,(\lambda,\varphi,x)\in \mathcal{O}\times\mathbb{T}^{d+1},\quad f(u)\equiv f(u)(\lambda,\varphi,x)\triangleq  f(\lambda,u(\lambda,\varphi,x)).$$
	\end{enumerate}
\end{lem}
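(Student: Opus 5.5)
The plan is to reduce both estimates to the corresponding statements for parameter-independent functions in $H^s(\mathbb{T}^{d+1})$, and then recombine through the Leibniz rule (for the product law) and the Faà di Bruno formula (for the composition law). For fixed $\lambda$, the classical tame product estimate $\|uv\|_{s}\lesssim_s \|u\|_{s_0}\|v\|_s+\|u\|_s\|v\|_{s_0}$ holds for $s\geqslant s_0>\frac{d+1}{2}$. I would prove it by splitting the Fourier convolution $\sum_{(\ell',j')}u_{\ell-\ell',j-j'}v_{\ell',j'}$ according to which frequency dominates, using $\langle \ell,j\rangle^{s}\lesssim_s \langle \ell-\ell',j-j'\rangle^{s}+\langle\ell',j'\rangle^{s}$ and Young's inequality. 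The factor $\sum\langle\ell,j\rangle^{-2s_0}<\infty$ is available exactly because $s_0>\frac{d+1}{2}$.

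For item (i), fix a multi-index $\alpha$ with $|\alpha|\leqslant q$ and write $\partial_\lambda^\alpha(u_1u_2)=\sum_{\beta\leqslant\alpha}\binom{\alpha}{\beta}\partial_\lambda^\beta u_1\,\partial_\lambda^{\alpha-\beta}u_2$. Each term must be estimated in $H^{s-|\alpha|}$. The tame product law is applied at regularity $s-|\alpha|$ with base index $s_0-q$, rather than $s_0$; this is legitimate because \eqref{d gm S s0} gives $s_0-q\geqslant \frac{d+1}{2}+2$. Since $s-|\alpha|\leqslant s-|\beta|$ and $s_0-q\leqslant s_0-|\beta|$, we can bound $\|\partial^\beta u_1\|_{s_0-q}\leqslant\|\partial^\beta u_1\|_{s_0-|\beta|}$, and similarly for the other factor. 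Multiplying by $\gamma^{|\alpha|}=\gamma^{|\beta|}\gamma^{|\alpha-\beta|}$ then gives exactly the weighted norms $\|u_i\|^{q,\gamma,\mathcal{O}}_{s_0}$ and $\|u_i\|^{q,\gamma,\mathcal{O}}_{s}$. Taking the supremum in $\lambda$ and the maximum over $\alpha$ concludes.

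For item (ii), write $f(u_1)-f(u_2)=(u_1-u_2)\,g$ with $g\triangleq\int_0^1(\partial_uf)(\lambda,u_2+t(u_1-u_2))\,{\rm d}t$. By (i), the estimate reduces to showing $\|g\|_{s}^{q,\gamma,\mathcal{O}}\leqslant C(s,d,q,f,\mathtt M)$. For this we need a composition (Moser) estimate: for $F$ smooth, $\|F(\lambda,w)\|_s^{q,\gamma,\mathcal{O}}\leqslant C(s,F,\mathtt M)$ whenever $\|w\|_s^{q,\gamma,\mathcal{O}}\leqslant\mathtt M$. Here $w=u_2+t(u_1-u_2)$, which is bounded by $3\mathtt M$ uniformly in $t$. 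At fixed $\lambda$, the classical Moser estimate $\|F(w)\|_\sigma\leqslant C(\sigma,F,\|w\|_{L^\infty})(1+\|w\|_\sigma)$ follows from repeated use of the product law together with Sobolev embedding $H^{s_0-q}\hookrightarrow L^\infty$. The $\lambda$-derivatives are then expanded via Faà di Bruno: $\partial_\lambda^\alpha[F(\lambda,w)]$ is a sum of terms $(\partial_\lambda^{\alpha_0}\partial_u^kF)(\lambda,w)\prod_{i=1}^k\partial_\lambda^{\beta_i}w$ with $\alpha_0+\sum_i\beta_i=\alpha$. Each such term is estimated in $H^{s-|\alpha|}$ by the product law at base $s_0-q$, exactly as in (i), with the weights $\gamma^{|\beta_i|}$ distributing correctly.

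I expect the main obstacle to be the bookkeeping of Sobolev indices in the composition step. The factors carry regularity $s-|\beta_i|$, while the target is $s-|\alpha|$, and the $L^\infty$ control of the low factors must come from $s_0-q>\frac{d+1}{2}$. This is precisely why the margin $+q+2$ is built into $s_0$ in \eqref{d gm S s0}, and I would check carefully that every term lands in the right space. The constant becomes polynomial in $\mathtt M$ and depends on finitely many derivatives of $f$, uniformly on compact sets of $\mathcal{O}\times[-C\mathtt M,C\mathtt M]$.
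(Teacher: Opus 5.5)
Your argument is correct. The paper does not prove this lemma: it is stated in the appendix as a collection of ``classical estimates'', the standard ones from the KAM literature it cites, e.g.\ \cite{BM18}, \cite{HR21}. So there is no competing route to compare with. What you wrote is the standard proof behind those citations: Leibniz and Fa\`a di Bruno in $\lambda$, combined with fixed-$\lambda$ tame product and Moser estimates taken at the low index $s_0-q$. At that index $\gamma^{|\beta|}\|\partial_\lambda^\beta u\|_{s_0-|\beta|}$ controls every low-norm factor, and $\gamma^{|\alpha|}=\gamma^{|\alpha_0|}\prod_i\gamma^{|\beta_i|}$ with $\gamma^{|\alpha_0|}\leqslant 1$.

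Two small points should be tightened.

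First, $\mathcal{O}$ is open, so $\mathcal{O}\times[-C\mathtt{M},C\mathtt{M}]$ is not compact. Bounds on compact subsets of it do not give the $\sup_{\lambda\in\mathcal{O}}$ control you need. You need the derivatives of $f$, up to the order used, to be bounded on $\mathcal{O}\times[-R,R]$, for instance by taking $f$ smooth up to $\overline{\mathcal{O}}$. This hypothesis is also implicit in the statement, and it is harmless in the paper's applications, where $\lambda$ enters through $a\in(a_0,a_1)$ with $a_0>0$.

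Second, the sharp fixed-$\lambda$ Moser bound $\|F(w)\|_\sigma\leqslant C(\sigma,F,\|w\|_{L^\infty})(1+\|w\|_\sigma)$ does not follow from simply iterating the product law. It is classically proved via Gagliardo--Nirenberg interpolation or paraproducts. You only need the weaker bound $\|F(\lambda,w)\|_s^{q,\gamma,\mathcal{O}}\leqslant C(s,F,\mathtt{M})$, so quoting the classical estimate is legitimate.
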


\subsection{Reversible and traveling quasi-periodic waves}\label{sec travQP}
Let $\mathscr{S}$ by the involution defined in \eqref{inv trans} acting on the real variable $r=(r_{+},r_{-})\in\R^2$.
\begin{defin}\label{revanti.r}
    A function $r:\mathbb{T}^{d}\times\mathbb{T}\to\mathbb{R}^2$ is called:
    \begin{enumerate}[label=\textbullet]
        \item \textit{reversible} if, for any $\vf\in\T^d$, $\mathscr{S}r(\vf,\,\cdot\,)=r(-\vf,\,\cdot\,)$;
        \item \textit{anti-reversible} if, for any $\vf\in\T^d$, $-\mathscr{S}r(\vf,\,\cdot\,)=r(-\vf,\,\cdot\,)$.
    \end{enumerate} 
\end{defin}

With slight modifications, we recall the definition of traveling quasi-periodic waves given in Definition \ref{def.travQP.intro}
\begin{defin}{\bf (Quasi-periodic traveling waves)}\label{def.travQP.app}
        Let $d\in\N^*$ and $\vec\jmath\triangleq (\bar\jmath_{1},...,\bar\jmath_{d})\in\Z^{d}\setminus\{0\}$. A function $r:\T^d\times\mathbb{T}\to\C^K$, $K\in\N^*$, is called \textit{quasi-periodic traveling} with velocity vector $\vec{\jmath}$ if there exists a $(2\pi)^d$-periodic function $R:\mathbb{T}^{d}\to \C^K $ such that $r(\vf,x)= R(\vf-\vec\jmath x)$.
    \end{defin}
Comparing with Definition \ref{def.travQP.intro}, we find convenient to call {\it traveling quasi-periodic} wave both the function $r(\vf,x)= R(\vf-\vec\jmath x)$ and the function of time $r(t,x)= R(\omega t-\vec\jmath x)$. Quasi-periodic traveling waves are characterized by the relation
\begin{equation}\label{traveling.w.app}
    \forall\,y\in\R,\quad r(\vf-\vec{\jmath} y,\,\cdot\,) = \mathscr{T}_{y} r,
\end{equation}
where $\mathscr{T}_{y}$ is the translation operator in \eqref{inv trans}. Product and composition of quasi-periodic traveling waves are quasi-periodic traveling waves. 
Expanded in Fourier series as in \eqref{fourier.u}, a quasi-periodic traveling wave has the form
\begin{equation}
    r(\vf,x) = \sum_{(\ell,j)\in\Z^{d+1} \atop \vec\jmath\cdot \ell +j=0} r_{\ell,j} \be_{\ell,j}(\vf,x),
\end{equation}
namely, comparing with Definition \ref{def.travQP.app},
\begin{equation}\label{fourier.cond.T}
    r(\vf,x)= R(\vf-\vec\jmath x), \qquad R(\psi) = \sum_{\ell \in \Z^d} R_\ell
e^{\im\ell\cdot\psi}, \qquad R_{\ell} = r_{\ell,-\vec\jmath\cdot \ell} .
\end{equation}
The traveling waves $r(\vf,x)=R(\vf-\vec{\jmath} x)$, where $R(\,\cdot\,)$ belongs to the Sobolev space $H^s(\T^d,\C^K)$ as in \eqref{sob.space.d.intro} (with values in $\C^K$, $K\in\N^*$)
form a subspace of the Sobolev space $H^s(\T^{d+1},\C^K)$ as in \eqref{sobolev.space} (with values in $\C^K$, $K\in\N^*$). Note the equivalence of the norms (use \eqref{fourier.cond.T})
\begin{equation}
    \|r\|_{H^s(\T_{\vf}^d\times \T_{x})} \simeq_{s} \|R\|_{H^s(\T^d)}.
\end{equation}
For $N\geqslant 1$, we define the smoothing operator $\Pi_{N}$ on the traveling waves
\begin{equation}\label{proj.traveling}
    \Pi_{N}: r = \sum_{(\ell,j)\in\in \Z^{d+1} \atop \vec\jmath\cdot\ell+j=0} r_{\ell,j} \be_{\ell,j} \mapsto \Pi_{N} r \triangleq \sum_{(\ell,j)\in\in \Z^{d+1}, \ \braket{\ell}\leqslant N\atop \vec\jmath\cdot\ell +j= 0 } r_{\ell,j} \be_{\ell,j}, \qquad \Pi_{N}^\perp \triangleq {\rm Id} - \Pi_{N}.
\end{equation}
Note that, writing a traveling wave as in \eqref{fourier.cond.T}, the projector $\Pi_{N}$ in \eqref{proj.traveling} is equal to
\begin{equation}
    (\Pi_{N}r)(\vf,x) = R_N(\vf-\vec\jmath x), \qquad R_{N}(\psi) \triangleq \sum_{\ell\in\Z^{d} \atop \braket{\ell}\leqslant N} R_{\ell} e^{\im \ell\cdot \psi} .
\end{equation}
The following smoothing estimates are classical.
\begin{lem}{\bf (Smoothing).}\label{lem:projn}
    Let $(d,\gamma,q)$ as in \eqref{d gm S s0}. For any $N\geqslant 1$ and any traveling wave $r$, we have
    \begin{align}
        \| \Pi_{N}r\|_{s}^{q,\gamma,\cO} & \leqslant N^{\alpha} \|r\|_{s-\alpha}^{q,\gamma,\cO}, \quad 0\leqslant \alpha \leqslant s, \\
         \| \Pi_{N}^\perp r\|_{s}^{q,\gamma,\cO} & \leqslant N^{-\alpha} \|r\|_{s+\alpha}^{q,\gamma,\cO}, \quad  \alpha \geqslant 0 .
    \end{align}
\end{lem}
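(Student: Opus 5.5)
The plan is to work entirely with the Fourier coefficients of a traveling wave, using the representation \eqref{fourier.cond.T}. Since $\Pi_N$ acts only on the $\varphi$-frequencies, and it does so through the cut-off $\langle\ell\rangle\leqslant N$, it commutes with $\partial_\lambda^{k}$ for every multi-index $k$. We first prove the two inequalities for a fixed parameter $\lambda$ in the plain Sobolev norm $\|\cdot\|_{s}$. We then apply them to $\partial_\lambda^k r$ with the index $s-|k|$ in place of $s$. Multiplying by $\gamma^{|k|}$, taking the supremum over $\lambda\in\mathcal{O}$ and the maximum over $|k|\leqslant q$ gives the weighted statements, because the shift $\alpha$ is the same for every $k$.

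The key observation is the comparison of weights on the traveling lattice. If $\vec\jmath\cdot\ell+j=0$, then $|j|=|\vec\jmath\cdot\ell|\leqslant |\vec\jmath|_\infty|\ell|$. It follows that
\[
\langle\ell\rangle\leqslant\langle\ell,j\rangle\leqslant C(\vec\jmath)\langle\ell\rangle
\]
on the support of the Fourier series of $r$. On this support the cut-off $\langle\ell\rangle\leqslant N$ therefore controls $\langle\ell,j\rangle$ up to the constant $C(\vec\jmath)$.

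For the first estimate, we write
\[
\|\Pi_N r\|_s^2=\sum_{\langle\ell\rangle\leqslant N}\langle\ell,j\rangle^{2\alpha}\langle\ell,j\rangle^{2(s-\alpha)}|r_{\ell,j}|^2
\]
and bound $\langle\ell,j\rangle^{2\alpha}\leqslant C^{2\alpha}N^{2\alpha}$, which requires $\alpha\geqslant0$. For the second estimate, on the support of $\Pi_N^\perp r$ we have $\langle\ell,j\rangle\geqslant\langle\ell\rangle>N$. This gives $\langle\ell,j\rangle^{2s}\leqslant N^{-2\alpha}\langle\ell,j\rangle^{2(s+\alpha)}$, with no constant at all.

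The only delicate point is the constant $C(\vec\jmath)^{\alpha}$ appearing in the first bound. Two remedies are available. The first is to absorb it into an implicit constant $\lesssim_{s,\vec\jmath}$, which is harmless for every use of the lemma in the Nash--Moser scheme. The second is to use the equivalence of norms $\|r\|_{H^s(\mathbb{T}^{d+1})}\simeq_s\|R\|_{H^s(\mathbb{T}^d)}$ and to work directly with $R$. There the weight is exactly $\langle\ell\rangle$, and both inequalities hold with constant one, since $\Pi_N r$ corresponds to $R_N$.

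The restriction $\alpha\leqslant s$ in the first estimate ensures that $s-\alpha\geqslant0$, so that the right-hand side is a genuine Sobolev norm; the same holds for the shifted indices $s-|k|-\alpha$ appearing in the $\lambda$-derivatives.
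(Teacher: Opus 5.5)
Your argument is correct and is the standard one. The paper gives no proof and simply calls these estimates classical. The second inequality does hold with constant one, since $\langle\ell,j\rangle\geqslant\langle\ell\rangle>N$ on the support of $\Pi_N^\perp r$. Your reduction of the weighted norm to the fixed-$\lambda$ estimate is also fine: you use $s-|k|$ in place of $s$, and $\Pi_N$ commutes with $\partial_\lambda^k$ and preserves the traveling lattice.

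What you call the "delicate point" deserves a sharper statement. Set $C(\vec\jmath)=\max(1,|\vec\jmath|_\infty)$. The factor $C(\vec\jmath)^{\alpha}$ cannot be removed, so the first inequality is false as literally written for the $H^s(\mathbb{T}^{d+1})$ norm with weight $\langle\ell,j\rangle$.

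For a counterexample, take $d=1$, $\vec\jmath=(2)$ and $r=\cos(N\varphi-2Nx)$, independent of $\lambda$. Then $\Pi_Nr=r$ and $\|r\|_s=c(2N)^s$ for some $c>0$ independent of $N$ and $s$. On the other hand, $N^{\alpha}\|r\|_{s-\alpha}=c\,2^{-\alpha}(2N)^s$, which is strictly smaller when $\alpha>0$.

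Your first remedy is therefore the correct reading of the lemma: an implicit constant depending on $s$ and $\vec\jmath$, which is harmless in the Nash--Moser scheme. Your second remedy gives constant one only if traveling waves are measured by the profile norm $\|R\|_{H^s(\mathbb{T}^d)}$. Transferring back through the equivalence $\simeq_s$ costs exactly a constant of this type. So it does not yield constant one in the norm $\|\cdot\|_s^{q,\gamma,\mathcal{O}}$ defined in the appendix.

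Finally, the claim in your last paragraph that $s-|k|-\alpha\geqslant 0$ is false in general; take $\alpha=s$ and $|k|\geqslant 1$. Nothing in your argument uses it. The Fourier-weight comparison holds for every real index, and the only restriction needed is $\alpha\geqslant 0$.
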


\subsection{Linear operators}
Here we give the basic notations for our functional framework together. Then, we introduce the notion of $\mathcal{D}^{q}$-$\sigma$-tame and $\mathcal{D}^{q}$-modulo-tame operators useful for the reduction of the linearized operator.
\subsubsection{Basic notations for functions and operators}
Throughout the document, we consider $\varphi$-dependent families of operators acting on $L^2(\mathbb{T}^{d+1},\mathbb{C})$ functions as follows
$$(Au)(\varphi,x)\triangleq\big(A(\varphi)u(\varphi,\cdot)\big)(x).$$
In Fourier, we can write this action
\begin{equation}\label{action.A.fou}
    (Au)(\varphi,x)=\sum_{(\ell,\ell')\in(\mathbb{Z}^d)^2\atop(j,j')\in\mathbb{Z}^2}A_{j}^{j'}(\ell-\ell')u_{\ell',j'}e^{\ii(\ell\cdot\varphi+jx)} .
\end{equation}
This allows to identify the operator $A$ with the matrix $\big(A_{j}^{j'}(\ell-\ell')\big)_{(\ell,\ell',j,j')\in(\mathbb{Z}^d)^2\times\mathbb{Z}^2}$ which is T\"oplitz with respect to the index $\ell.$ Given $A$ and $B$ two operators, we denote:
\begin{itemize}
    \item $[A,B]\triangleq AB-BA$ the commutator and $[A,B]_{\rm a}\triangleq AB+BA$ the anti-commutator;
    \item $|A|$ the majorant operator, defined by
    $$(|A|u)(\varphi,x)\triangleq\sum_{(\ell,\ell')\in(\mathbb{Z}^d)^2\atop(j,j')\in\mathbb{Z}^2}\big|A_{j}^{j'}(\ell-\ell')\big|u_{\ell',j'}e^{\ii(\ell\cdot\varphi+jx)};$$
    \item $\lfloor A\rfloor$ the diagonal component, defined by
    \begin{equation}\label{def:diagcomp}
        \left(\lfloor A\rfloor u\right)(\varphi,x)\triangleq\sum_{(\ell,j)\in\mathbb{Z}^d\times\mathbb{Z}}A_j^j(0)u_{\ell,j}e^{\ii(\ell\cdot\varphi+jx)};
    \end{equation}
    \item Given $N\in\mathbb{N}$, the projected operator $\Pi_{N}A$, defined by
    \begin{equation}\label{def:projectors}
        (\Pi_NAu)(\varphi,x)\triangleq\sum_{(\ell,\ell',j,j')\in(\mathbb{Z}^d)^2\times\mathbb{Z}^2\atop|\ell-\ell'|\leqslant N}A_{j}^{j'}(\ell-\ell')u_{\ell',j'}e^{\ii(\ell\cdot\varphi+jx)};
    \end{equation}
	\item Given $l\in\mathbb{N}$ and $k\in\llbracket 0,d\rrbracket,$ the operators $\partial_{\varphi_k}^lA$ and $\langle\partial_{\varphi}\rangle^{l}A$, defined respectively by
    \begin{align}
        \big(\partial_{\varphi_k}^lAu\big)(\varphi,x)& \triangleq\sum_{(\ell,\ell')\in(\mathbb{Z}^d)^2\atop(j,j')\in\mathbb{Z}^2}(\ell_k-\ell_k')^{l}A_{j}^{j'}(\ell-\ell')u_{\ell',j'}e^{\ii(\ell\cdot\varphi+jx)}, \\
        \big(\langle\partial_{\varphi}\rangle^{l}Au\big)(\varphi,x)&\triangleq\sum_{(\ell,\ell')\in(\mathbb{Z}^d)^2\atop(j,j')\in\mathbb{Z}^2}\langle \ell-\ell'\rangle^{l}A_{j}^{j'}(\ell-\ell')u_{\ell',j'}e^{\ii(\ell\cdot\varphi+jx)};
    \end{align}
    
    \item  Given $n_1,n_2\in\mathbb{R},$ the operator $\langle D\rangle^{n_1}A\langle D\rangle^{n_2}$, defined by
    $$\left(\langle D\rangle^{n_1}A\langle D\rangle^{n_2}u\right)(\varphi,x)\triangleq\sum_{(\ell,\ell')\in(\mathbb{Z}^d)^2\atop(j,j')\in\mathbb{Z}^2}\langle j\rangle^{n_1}A_{j}^{j'}(\ell-\ell')\langle j'\rangle^{n_2}u_{\ell',j'}e^{\ii(\ell\cdot\varphi+jx)}.$$
\end{itemize}

We distinguish symmetry properties of linear operators based on reality properties, and on their behavious with the involution $\mathscr{S}$ and the translation operator $\mathscr{T}_{y}$, $y\in\R$, (see their definition in \eqref{inv trans}).
\begin{defin}\label{defin:OP-sym}
	A $\varphi$-dependent family of operators $\varphi\mapsto A(\varphi)$ is said to be:
	\begin{itemize}
	    \item \textit{real} if, for any $u\in L^{2}(\mathbb{T}^{d+1},\mathbb{C})$ such that $\overline u = u$, it holds $\overline{Au}=Au$;
		\item \textit{reversible} if, for any $\vf\in\T$, it holds $A(-\varphi)\circ\mathscr{S}=-\mathscr{S}\circ A(\varphi)$;
		\item \textit{reversibility preserving} if, for any $\vf\in\T$, it holds $A(-\varphi)\circ\mathscr{S}=\mathscr{S}\circ A(\varphi)$;
		\item \textit{momentum preserving} if, for any $\vf \in \T^d$ and $y\in\R$, it holds $A(\varphi-\vec{\jmath}y)\circ\mathscr{T}_{y}=\mathscr{T}_{y}\circ A(\varphi)$.
	\end{itemize}
\end{defin}

\begin{remark}\label{compo:RevMom} 
Let us make the following remarks:
\begin{enumerate}[label=\textbullet]
    \item Note that a reversible operator sends reversible function onto anti-reversible functions and vice-versa, in the sense of Definition \ref{revanti.r}. Conversely, a reversibility preserving operator let invariant such structures. Besides, a momentum preserving operator sends quasi-periodic traveling waves with velocity vector $\vec{\jmath}$ onto quasi-periodic traveling waves with velocity vector $\vec{\jmath}$.
    \item The composition of two reversible or reversibility preserving operators is reversibility preserving. The composition of a reversible operator with a reversibility preserving one is reversible. The composition of two momentum preserving operators is momentum preserving.  
\end{enumerate}
    
\end{remark}

Operators with symmetries as in Definition \ref{defin:OP-sym} are characterized with in Fourier by means of the expansion in \eqref{action.A.fou}.
\begin{lem}\label{lem Fourier coeff op} 
A $\varphi$-dependent family of operators $\varphi\mapsto A(\varphi)$ is:
	\begin{itemize}
    \item real if and only if, for all $(\ell,j,j')\in\mathbb{Z}^d\times\mathbb{Z}^2$, it holds $A_{-j}^{-j'}(-\ell)=\overline{A_{j}^{j'}(\ell)}$;
		\item reversible if and only if, for all $(\ell,j,j')\in\mathbb{Z}^d\times\mathbb{Z}^2$, it holds $A_{j}^{j'}(\ell)=-\overline{A_{j}^{j'}(\ell)}$;
		\item reversibility preserving if and only if, for all $(\ell,j,j')\in\mathbb{Z}^d\times\mathbb{Z}^2$, it holds $A_{j}^{j'}(\ell)=\overline{A_{j}^{j'}(\ell)}$;
		\item momentum preserving if and only if
        \begin{equation}
            \forall\,(\ell,j,j')\in\mathbb{Z}^d\times\mathbb{Z}^2,\qquad A_{j}^{j'}(\ell)\neq 0\quad\Rightarrow\quad\vec{\jmath}\cdot\ell+j-j'=0.
        \end{equation}
	\end{itemize}
\end{lem}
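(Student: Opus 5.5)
The plan is to read off each characterization from the Fourier action \eqref{action.A.fou}, by testing the defining identity on the basis elements $\mathbf{e}_{\ell',j'}$ and comparing coefficients of $\mathbf{e}_{\ell,j}$. Writing $A\mathbf{e}_{\ell',j'}=\sum_{(\ell,j)}A_j^{j'}(\ell-\ell')\mathbf{e}_{\ell,j}$, each property in Definition \ref{defin:OP-sym} is linear in $A$. It therefore suffices to compute how complex conjugation, $\mathscr{S}$, the reflection $\varphi\mapsto-\varphi$ and $\mathscr{T}_y$ act on the $\mathbf{e}_{\ell,j}$, and then identify matrix entries.

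\emph{Reality.} For real $u$ we have $u_{-\ell,-j}=\overline{u_{\ell,j}}$. We compute $\overline{Au}$ via \eqref{action.A.fou}, re-index with $(\ell,j,\ell',j')\mapsto(-\ell,-j,-\ell',-j')$, and see that $\overline{Au}=Au$ for all real $u$ is equivalent to $\overline{A}_{-j}^{-j'}(-(\ell-\ell'))=A_j^{j'}(\ell-\ell')$. For the converse direction we test on the real functions $\mathbf{e}_{\ell',j'}+\mathbf{e}_{-\ell',-j'}$ and $\ii(\mathbf{e}_{\ell',j'}-\mathbf{e}_{-\ell',-j'})$.

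\emph{Reversibility.} Here $\mathscr{S}\mathbf{e}_{\ell,j}(\varphi,x)=e^{\ii(\ell\cdot\varphi-jx)}$ and the reflection sends $\ell\mapsto-\ell$. Computing $A(-\varphi)\mathscr{S}\mathbf{e}_{\ell',j'}$ and $\mathscr{S}A(\varphi)\mathbf{e}_{\ell',j'}$ gives the entry identity $A_{-j}^{-j'}(-\ell)=\mp A_j^{j'}(\ell)$. Combining this with the reality relation $A_{-j}^{-j'}(-\ell)=\overline{A_j^{j'}(\ell)}$, which the paper implicitly assumes since it works with real operators, turns it into $A_j^{j'}(\ell)=\mp\overline{A_j^{j'}(\ell)}$, as stated. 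Everything hinges on getting this bookkeeping right. I will state explicitly that the reversibility characterizations are proved under the reality assumption: without it one only obtains the unconjugated relation.

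\emph{Momentum.} We have $\mathscr{T}_y\mathbf{e}_{\ell,j}=e^{\ii jy}\mathbf{e}_{\ell,j}$, and shifting $\varphi\mapsto\varphi-\vec{\jmath}y$ multiplies the $\ell$-component by $e^{-\ii\vec{\jmath}\cdot\ell\,y}$. Comparing coefficients in $A(\varphi-\vec{\jmath}y)\mathscr{T}_y\mathbf{e}_{\ell',j'}=\mathscr{T}_yA(\varphi)\mathbf{e}_{\ell',j'}$ gives
\[
A_j^{j'}(\ell-\ell')\big(e^{\ii(j'y-\vec{\jmath}\cdot(\ell-\ell')y)}-e^{\ii jy}\big)=0\qquad\forall\, y\in\mathbb{R}.
\]
This holds for all $y$ if and only if $A_j^{j'}(\ell-\ell')=0$ or $\vec{\jmath}\cdot(\ell-\ell')+j-j'=0$. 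The converse direction follows by summing the same identity over the basis, using density and linearity.
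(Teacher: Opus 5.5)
Your argument is correct, and it is the standard Fourier-coefficient comparison that the paper takes for granted: the lemma is stated there without proof. Your reality caveat is needed, not pedantic. Without reality both reversibility items fail: $\ii\,\textnormal{Id}$ has purely imaginary entries but is reversibility preserving (not reversible), while $\ii\partial_x$ is reversible with real entries. For general operators the correct conditions are $A_{-j}^{-j'}(-\ell)=-A_j^{j'}(\ell)$ (reversible) and $A_{-j}^{-j'}(-\ell)=A_j^{j'}(\ell)$ (reversibility preserving). These reduce to the stated ones exactly for real operators, which are the only ones the paper applies the lemma to.
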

\noindent In our analysis, we shall also consider matrices of operators acting on
\begin{equation*}
    \mathbf{L}^{2}(\mathbb{T}^{d+1},\mathbb{C}^2)\triangleq L^{2}(\mathbb{T}^{d+1},\mathbb{C})\times L^{2}(\mathbb{T}^{d+1},\mathbb{C})
\end{equation*}
and taking the form
\begin{equation}\label{matrix op}
    \mathbf{A}=\begin{pmatrix}
    A_+^{(d)} & A_+^{(o)}\vspace{0.1cm}\\
    A_-^{(o)} & A_-^{(d)}
\end{pmatrix}.
\end{equation}
All the previous definitions naturally extend to matrix operators $\bA$ as in \eqref{matrix op}, in particular:
\begin{itemize}
    \item A matrix operator $\mathbf{A}$ in the form \eqref{matrix op} is said to be {\it real} (resp. {\it reversible}, {\it reversibility preserving}, {\it momentum preserving}) if each entry $A_+^{(d)},$  $A_+^{(o)},$ $A_-^{(o)},$ $A_-^{(d)}$ is real (resp. reversible, reversibility preserving, momentum preserving);
    \item We denote\begin{equation}\label{def:diagcomp-matrix}
            \lfloor\mathbf{A}\rfloor\triangleq\begin{pmatrix}
            \lfloor A_+^{(d)}\rfloor & 0\\
            0 & \lfloor A_-^{(d)} \rfloor
        \end{pmatrix},
        \end{equation}
        and, for $N \in \N^*$, 
        \begin{equation}\label{def:projectors-matrix}
            \Pi_N\mathbf{A}=\begin{pmatrix}
    \Pi_NA_+^{(d)} & \Pi_NA_+^{(o)}\vspace{0.1cm}\\
    \Pi_NA_-^{(o)} & \Pi_NA_-^{(d)}
\end{pmatrix}.
        \end{equation}
\end{itemize}

\subsubsection{$\mathcal{D}^{q}$(-modulo) tame operators}\label{sec:tameOP}
\noindent In our analysis, the operators might depend on an external parameter $\lambda\in\mathcal{O}.$ We shall emphasize this dependence by denoting $A=A(\lambda).$ In this subsection, we recall the formalism of (modulo)-tame operator with $q$ derivatives with respect to the external parameter $\lambda$. This formalism was introduced in \cite[Section 2.2]{BM18} and is used for the KAM reduction of Section \ref{sect.KAM}. It is a rather general framework capturing the tame estimates for operators, essential property for running KAM reduction schemes.
\begin{defin}
	\textbf{($\mathcal{D}^{q}$-$\sigma$-tame operator)}\\
	Let $\sigma\geqslant0.$ We say that a linear operator $A$ is \textit{$\mathcal{D}^{q}$-$\sigma$-tame} if there exists a non-decreasing function
	$$\begin{array}{rcl}
		[s_0,S] & \rightarrow & \mathbb{R}_+\\
		s & \mapsto & \mathfrak{M}_{A}(q,\sigma,s)\triangleq\mathfrak{M}_{A}(s)
	\end{array}$$
	such that for any $s\in[s_0,S]$ and any $u\in H^{s+\sigma}$, we have
	$$\sup_{\alpha\in\mathbb{N}^{d+1}\atop|\alpha|\leqslant q}\sup_{\lambda\in\mathcal{O}}\gamma^{|\alpha|}\big\|\big(\partial_{\lambda}^{\alpha}A(\lambda)\big)u\big\|_{s}\leqslant\mathfrak{M}_{A}(s_0)\|u\|_{s+\sigma}+\mathfrak{M}_{A}(s)\|u\|_{s_0+\sigma}.$$
	In this case, the function $\mathfrak{M}_{A}$ is called \textit{tame constant of the operator $A$.} The $\mathcal{D}^{q}$-$0$-tame operators are simply called $\mathcal{D}^{q}$-tame operators.
\end{defin}
\begin{remark}
    Representing $A$ by its
matrix elements $\big(A_j^{j'}(\ell-\ell')\big)_{(\ell,\ell',j,j')\in(\mathbb{Z}^d)^2\times\mathbb{Z}^2}$ as in \eqref{action.A.fou}, we have for all $|k|\leqslant q,$ $j'\in\mathbb{Z}$ and $\ell'\in\mathbb{Z}^d,$
\begin{equation}\label{useful:e-tame}
    \gamma^{2|k|}\sum_{(\ell,j)\in\mathbb{Z}^{d+1}}\langle\ell,j\rangle^{2s}|\partial_{\lambda}^kA_j^{j'}(\ell-\ell')|^2\leqslant2\fM_A^2(s_0)\langle\ell',j'\rangle^{2(s+\sigma)}+2\fM_A^2(s)\langle\ell',j'\rangle^{2(s_0+\sigma)}.
\end{equation}
\end{remark}
\begin{prop}\label{prop.tameconst}
	\textbf{(Properties of $\mathcal{D}^{q}$-$\sigma$-tame operators)}\\
	The following properties hold true:
	\begin{itemize}
		\item[$(i)$] (Composition): Let $A$ (resp. $B$) be a $\mathcal{D}^{q}$-$\sigma_A$-tame (resp. $\mathcal{D}^{q}$-$\sigma_B$-tame) operator. Then $AB$ is a $\mathcal{D}^{q}$-$\sigma_A+\sigma_B$-tame operator with
		$$\mathfrak{M}_{AB}(s)\lesssim_{q}\mathfrak{M}_A(s_0)\mathfrak{M}_{B}(s+\sigma_A)+\mathfrak{M}_A(s)\mathfrak{M}_{B}(s_0+\sigma_A).$$
		\item[$(ii)$] (Link with operator norm): For any $u\in W^{q,\infty,\gamma}(\mathcal{O},H^{s+\sigma})$, we have $$\|Au\|_{s}^{q,\gamma,\mathcal{O}}\lesssim_q\mathfrak{M}_A(s_0)\|u\|_{s+\sigma}^{q,\gamma,\mathcal{O}}+\mathfrak{M}_A(s)\|u\|_{s_0+\sigma}^{q,\gamma,\mathcal{O}}.$$
		This means
		$$\|A\|_{\mathcal{L}\big(W^{q,\gamma,\infty}(\mathcal{O},H^{s+\sigma}),W^{q,\gamma,\infty}(\mathcal{O},H^{s})\big)}\lesssim\mathfrak{M}_{A}(s).$$
	\end{itemize}
\end{prop}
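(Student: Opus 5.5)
This is the standard tame-operator calculus of \cite[Section 2.2]{BM18}, so I would argue directly from the definition of $\mathcal{D}^q$-$\sigma$-tame operators, treating the two items separately.

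For item $(i)$, fix $\alpha\in\mathbb{N}^{d+1}$ with $|\alpha|\leqslant q$ and expand by Leibniz's rule:
$$\partial_\lambda^\alpha(AB)=\sum_{\alpha_1+\alpha_2=\alpha}c_{\alpha_1,\alpha_2}(\partial_\lambda^{\alpha_1}A)(\partial_\lambda^{\alpha_2}B),$$
and use $\gamma^{|\alpha|}=\gamma^{|\alpha_1|}\gamma^{|\alpha_2|}$. For each term:
\begin{enumerate}
\item Apply the tame bound of $A$ at regularity $s$ to the function $(\partial^{\alpha_2}_\lambda B)u$. This produces $\mathfrak{M}_A(s_0)\|\gamma^{|\alpha_2|}\partial^{\alpha_2}_\lambda Bu\|_{s+\sigma_A}+\mathfrak{M}_A(s)\|\gamma^{|\alpha_2|}\partial^{\alpha_2}_\lambda Bu\|_{s_0+\sigma_A}$.
\item Apply the tame bound of $B$ at regularities $s+\sigma_A$ and $s_0+\sigma_A$. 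Collect the products $\mathfrak{M}_A(s_0)\mathfrak{M}_B(s_0+\sigma_A)\|u\|_{s+\sigma_A+\sigma_B}$, $\mathfrak{M}_A(s_0)\mathfrak{M}_B(s+\sigma_A)\|u\|_{s_0+\sigma_A+\sigma_B}$ and $\mathfrak{M}_A(s)\mathfrak{M}_B(s_0+\sigma_A)\|u\|_{s_0+\sigma_A+\sigma_B}$.
\item Read off the tame constant. The coefficient of $\|u\|_{s+\sigma_A+\sigma_B}$ is $\mathfrak{M}_A(s_0)\mathfrak{M}_B(s_0+\sigma_A)$, which is the claimed constant evaluated at $s=s_0$. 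The remaining terms give the claimed constant at $s$. Monotonicity in $s$ is inherited from $\mathfrak{M}_A,\mathfrak{M}_B$.
\end{enumerate}
The only care needed is that $s+\sigma_A$ stays in $[s_0,S]$. I would assume this implicitly, as in \cite{BM18}, by working with $S$ large.

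For item $(ii)$, fix $|\alpha|\leqslant q$ and write
$$\partial_\lambda^\alpha(Au)=\sum_{\alpha_1+\alpha_2=\alpha}c\,(\partial^{\alpha_1}_\lambda A)(\partial^{\alpha_2}_\lambda u).$$
The norm $\|\cdot\|_s^{q,\gamma,\mathcal{O}}$ measures $\gamma^{|\alpha|}\|\partial^\alpha_\lambda(Au)\|_{s-|\alpha|}$. For each term, apply the tame estimate of $A$ at regularity $s-|\alpha|$ to $\partial^{\alpha_2}_\lambda u$. This gives
$$\mathfrak{M}_A(s_0)\gamma^{|\alpha_2|}\|\partial^{\alpha_2}_\lambda u\|_{s-|\alpha|+\sigma}+\mathfrak{M}_A(s-|\alpha|)\gamma^{|\alpha_2|}\|\partial^{\alpha_2}_\lambda u\|_{s_0+\sigma}.$$
Since $s-|\alpha|\leqslant s-|\alpha_2|$, the first factor is bounded by $\|u\|^{q,\gamma,\mathcal{O}}_{s+\sigma}$, and $\mathfrak{M}_A(s-|\alpha|)\leqslant\mathfrak{M}_A(s)$ by monotonicity.

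The main obstacle is the regularity bookkeeping in the low-norm factor $\|\partial^{\alpha_2}_\lambda u\|_{s_0+\sigma}$ when $s-|\alpha|<s_0$. Matching it with $\|u\|^{q,\gamma,\mathcal{O}}_{s_0+\sigma}$, which only controls $\partial^{\alpha_2}_\lambda u$ in $H^{s_0+\sigma-|\alpha_2|}$, loses up to $q$ derivatives. I would first try to absorb this loss using the convention of \cite{BM18}: the sup over $\lambda$ is taken at fixed $s$, and tame constants are allowed to depend on $q$, giving the implicit constant in $\lesssim_q$. Failing that, I would state $(ii)$ with the low norm at $s_0+\sigma+q$, which is harmless since $s_0$ is fixed. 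The operator-norm reformulation then follows by taking the supremum over $\|u\|^{q,\gamma,\mathcal{O}}_{s+\sigma}\leqslant1$ and using $\mathfrak{M}_A(s_0)\leqslant\mathfrak{M}_A(s)$.
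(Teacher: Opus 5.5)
Your argument for $(i)$ is correct. It is the standard Leibniz-rule proof behind the statement, which the paper imports from \cite{BM18} without proof. One small precision: the coefficient of $\|u\|_{s+\sigma_A+\sigma_B}$ is really $\mathfrak{M}_A(s_0)\mathfrak{M}_B(s_0)$, and you dominate it by $\mathfrak{M}_A(s_0)\mathfrak{M}_B(s_0+\sigma_A)$ using monotonicity, which is fine.

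For $(ii)$ there is a genuine gap, and it cannot be closed as stated. The loss you detected is real. The paper's norm measures $\partial_\lambda^\alpha u$ in $H^{s-|\alpha|}$, while the tame definition puts the low norm exactly at $s_0+\sigma$; with these conventions the inequality in $(ii)$ is false.

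Here is a counterexample. Take $q=1$, $\sigma=0$ and the $\lambda$-independent rank-one operator $Aw\triangleq\langle w,g\rangle_{L^2}f$, with $f\neq0$ smooth and $g\in H^{-s_0}\setminus H^{-s_0+1}$. It is tame with $\mathfrak{M}_A(s)=\|g\|_{-s_0}\|f\|_s$. For finitely supported $w$ and $\lambda^0\in\mathcal{O}$, set $u(\lambda)\triangleq\sum_k\langle k\rangle^{-1}\sin\big((\lambda_1-\lambda_1^0)\langle k\rangle\big)w_k\mathbf{e}_k$. Then $\|u\|_{s_0}^{1,\gamma,\mathcal{O}}\leqslant\|w\|_{s_0-1}$. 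On the other hand, $\partial_{\lambda_1}u(\lambda^0)=w$ gives $\|Au\|_{s_0}^{1,\gamma,\mathcal{O}}\geqslant\gamma|\langle w,g\rangle|\,\|f\|_{s_0-1}$. Since $\sup_{\|w\|_{s_0-1}\leqslant1}|\langle w,g\rangle|=+\infty$, $(ii)$ fails at $s=s_0$, and so does its operator-norm form.

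Your first remedy therefore conflates two things. Under the unshifted norm of \cite{BM18} there is no loss to absorb. Under the paper's norm, a loss of derivatives cannot be hidden in the constant of $\lesssim_q$. Note also that you may not apply the tame bound at level $s-|\alpha|$ when this is below $s_0$, since neither the estimate nor $\mathfrak{M}_A(s-|\alpha|)$ is available there.

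What you can actually prove is one of two things.

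\emph{Option 1: change the norm.} Replace the paper's norm by $\sum_{|\alpha|\leqslant q}\gamma^{|\alpha|}\sup_\lambda\|\partial_\lambda^\alpha u\|_s$. Applying the tame bound of $\partial_\lambda^{\alpha_1}A$ at level $s$ to $\partial_\lambda^{\alpha_2}u$ then gives $(ii)$ with no loss. The operator-norm bound follows from $\mathfrak{M}_A(s_0)\leqslant\mathfrak{M}_A(s)$.

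\emph{Option 2: keep the paper's norm and prove your fallback.} Bound $\|\cdot\|_{s-|\alpha|}$ by $\|\cdot\|_{\max\{s-|\alpha_2|,s_0\}}$ and apply the tame estimate at that level. This yields
\[
\|Au\|_{s}^{q,\gamma,\mathcal{O}}\lesssim_q\mathfrak{M}_A(s_0)\|u\|^{q,\gamma,\mathcal{O}}_{s+\sigma}+\mathfrak{M}_A(s)\|u\|^{q,\gamma,\mathcal{O}}_{s_0+\sigma+q}.
\]
The operator-norm statement then holds only for $s\geqslant s_0+q$. This extra loss of $q$ derivatives is harmless where $(ii)$ is used (Proposition \ref{prop.almost.inv}), because the losses there are absorbed into a large $\sigma$.
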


\begin{defin}\label{modulo.tame.def}
	\textbf{($\mathcal{D}^q$-$(-1)$-modulo-tame operator)}\\
	We say that a linear operator $A=A(\lambda)$ is {\it $\cD^{q}$-$(-1)$-modulo-tame} if there exists a non-decreasing function
    \begin{equation}
        \begin{array}{rcl}
             [s_0,S] &  \rightarrow  & \mathbb{R}_+,\\
             s & \mapsto &  \fM_{\braket{D}^\frac12 A \braket{D}^\frac12}^{\sharp}(s),
        \end{array}
    \end{equation}
	such that, for all $k\in \N^{d+1}$, $|k|\leqslant q$, the majorant operator $\braket{D}^\frac12 \big|\pa_{\lambda}^k A\big|\braket{D}^\frac12$ satisfies, for any $s\in[s_0,S]$ and any $u\in H^{s}$, 
    \begin{align}
        \sup_{\alpha\in\mathbb{N}^{d+1}\atop|\alpha|\leqslant q}\sup_{\lambda\in\mathcal{O}}\gamma^{|\alpha|}\big\|  \braket{D}^\frac12 \big|\pa_{\lambda}^k A\big|\braket{D}^\frac12u\big\|_{s} \leqslant \fM_{\braket{D}^\frac12 A \braket{D}^\frac12}^{\sharp}(s_0)\|u\|_{s}+\fM_{\braket{D}^\frac12 A \braket{D}^\frac12}^{\sharp}(s)\|u\|_{s_0}.
    \end{align}
	In this case, the function $\mathfrak{M}_{\langle D\rangle^{\frac{1}{2}}A\langle D\rangle^{\frac{1}{2}}}^{\sharp}$ is called \textit{modulo-tame constant of the operator $A$.} For a matrix $\bA$ as in \eqref{matrix op}, we denote
    \begin{equation}
        \fM_{\braket{D}^\frac12 \bA \braket{D}^\frac12}^{\sharp}(s) \triangleq \max_{\kappa\in\{-,+\}} \Big\{ \fM_{\braket{D}^\frac12 A_{\kappa}^{(d)} \braket{D}^\frac12}^{\sharp}(s), \fM_{\braket{D}^\frac12 A_{\kappa}^{(o)} \braket{D}^\frac12}^{\sharp}(s) \Big\} .
    \end{equation}
\end{defin}
\begin{prop}\label{prop.modulo.tame}
	\textbf{(Properties of $\mathcal{D}^{q}$-modulo-tame operators)}\\
	Let $A$ and $B$ be two $\mathcal{D}^{q}$-modulo-tame operators. Then, the following properties hold true:
	\begin{itemize}
		\item[$(i)$] (Sum) The operator $A+B$ is $\mathcal{D}^q$-($-1$)-modulo-tame with, for any $s\in[s_0,S],$
		$$\mathfrak{M}_{\langle D\rangle^{\frac{1}{2}}(A+B)\langle D\rangle^{\frac{1}{2}}}^{\sharp}(s)\leqslant\mathfrak{M}_{\langle D\rangle^{\frac{1}{2}}A\langle D\rangle^{\frac{1}{2}}}^{\sharp}(s)+\mathfrak{M}_{\langle D\rangle^{\frac{1}{2}}B\langle D\rangle^{\frac{1}{2}}}^{\sharp}(s);$$
		\item[$(ii)$] (Composition) For any $\mathtt{b}\in\mathbb{N}$, the operator $\langle\partial_{\varphi}\rangle^{\mathtt{b}}(AB)$ is $\mathcal{D}^q$-($-1$)-modulo-tame with, for any $s\in[s_0,S],$
        \begin{align}
        \mathfrak{M}_{\braket{D}^\frac12\langle\partial_{\varphi}\rangle^{\mathtt{b}}(AB)\braket{D}^\frac12}^{\sharp}(s)& \lesssim_{q,\mathtt{b}}\mathfrak{M}_{\braket{D}^\frac12\langle\partial_{\varphi}\rangle^{\mathtt{b}}A \braket{D}^\frac12}^{\sharp}(s)\mathfrak{M}_{\langle D\rangle^{\frac{1}{2}}B\langle D\rangle^{\frac{1}{2}}}^{\sharp}(s_0)+\mathfrak{M}_{ \braket{D}^\frac12\langle\partial_{\varphi}\rangle^{\mathtt{b}}A\braket{D}^\frac12}^{\sharp}(s_0)\mathfrak{M}_{\braket{D}^\frac12 B\braket{D}^\frac12}^{\sharp}(s) \\
        &+\mathfrak{M}_{\braket{D}^\frac12 A\braket{D}^\frac12}^{\sharp}(s)\mathfrak{M}_{\braket{D}^\frac12\langle\partial_{\varphi}\rangle^{\mathtt{b}}B\braket{D}^\frac12}^{\sharp}(s_0)+\mathfrak{M}_{\braket{D}^\frac12 A\braket{D}^\frac12}^{\sharp}(s_0)\mathfrak{M}_{\braket{D}^\frac12\langle\partial_{\varphi}\rangle^{\mathtt{b}}B\braket{D}^\frac12}^{\sharp}(s);
        \end{align}
        \item[$(iii)$] (Projection) For any $N\in\mathbb{N},$ the operator $\Pi_N^{\perp}A$ is $\mathcal{D}^q$-($-1$)-modulo-tame with, for any $\mathtt{b}\in\mathbb{N}$ and any $s\in[s_0,S],$
        $$\mathfrak{M}_{\langle D\rangle^{\frac{1}{2}}\Pi_N^{\perp}A\langle D\rangle^{\frac{1}{2}}}^{\sharp}(s)\leqslant N^{-\mathtt{b}}\mathfrak{M}_{\langle D\rangle^{\frac{1}{2}}\langle\partial_{\varphi}\rangle^{\mathtt{b}}A\langle D\rangle^{\frac{1}{2}}}^{\sharp}(s);$$
        \item[$(iv)$] $A$ is also a $\cD^{q}$-tame operator, with, for any $s\in[s_0,S],$ $$\fM_{A}(s)\leqslant\fM_{\langle D\rangle^{\frac{1}{2}}A\langle D\rangle^{\frac{1}{2}}}^{\sharp}(s).$$
	\end{itemize}
\end{prop}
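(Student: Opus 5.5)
The plan is to prove each item of Proposition \ref{prop.modulo.tame} directly at the level of matrix entries, using that modulo-tame estimates concern the majorant operator $\braket{D}^{\frac12}|\partial_\lambda^k A|\braket{D}^{\frac12}$. Every operation will be read through the representation \eqref{action.A.fou}, and the estimates will then follow from elementary inequalities on the coefficients $|A_j^{j'}(\ell-\ell')|$. This is the approach of \cite[Lemma 2.25]{BM18}, adapted to the weight $\braket{D}^{\frac12}\cdot\braket{D}^{\frac12}$ and to matrix operators by taking maxima over the four entries.

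For item $(i)$, the triangle inequality $|\partial_\lambda^k(A+B)_j^{j'}(\ell)|\leqslant |\partial_\lambda^k A_j^{j'}(\ell)|+|\partial_\lambda^k B_j^{j'}(\ell)|$ holds entrywise. Since the Sobolev norm is monotone in the moduli of the Fourier coefficients, this gives $\|\,|\cdot|\,u\|_s\leqslant\|\,|\cdot|\,|u|\,\|_s$, and the sum bound follows at once. For item $(iii)$, the entries of $\Pi_N^\perp A$ are supported on $|\ell-\ell'|>N$. There one has $1\leqslant N^{-\mathtt b}\braket{\ell-\ell'}^{\mathtt b}$, so entrywise $|\Pi_N^\perp A|\leqslant N^{-\mathtt b}|\braket{\partial_\varphi}^{\mathtt b}A|$ and monotonicity gives the claim. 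For item $(iv)$, we first note $|Au|\leqslant|A||u|$ coefficientwise. We also use that $\braket{D}^{-\frac12}$ is bounded with norm $\leqslant1$ on $H^s$, so $\|\partial_\lambda^kAu\|_s\leqslant\|\braket{D}^{\frac12}|\partial_\lambda^kA|\braket{D}^{\frac12}\braket{D}^{-\frac12}|u|\|_s$. Applying the modulo-tame bound to $\braket{D}^{-\frac12}|u|$, whose $H^s$ norm is at most $\|u\|_s$, yields tameness with $\fM_A\leqslant\fM^\sharp$.

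Item $(ii)$ is the main step. Write $\partial_\lambda^k(AB)=\sum_{k_1+k_2=k}\binom{k}{k_1}\partial_\lambda^{k_1}A\,\partial_\lambda^{k_2}B$. Entrywise, we use $\braket{\ell-\ell'}^{\mathtt b}\lesssim_{\mathtt b}\braket{\ell-\ell''}^{\mathtt b}+\braket{\ell''-\ell'}^{\mathtt b}$, which gives
\begin{equation*}
|\braket{\partial_\varphi}^{\mathtt b}\partial_\lambda^k(AB)|\lesssim_{\mathtt b}\sum_{k_1+k_2=k}\Big(|\braket{\partial_\varphi}^{\mathtt b}\partial_\lambda^{k_1}A|\,|\partial_\lambda^{k_2}B|+|\partial_\lambda^{k_1}A|\,|\braket{\partial_\varphi}^{\mathtt b}\partial_\lambda^{k_2}B|\Big).
\end{equation*}
Then we insert the weights. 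The key observation is that in the product, $\braket{D}^{\frac12}\,|X|\,|Y|\,\braket{D}^{\frac12}=\big(\braket{D}^{\frac12}|X|\braket{D}^{\frac12}\big)\braket{D}^{-1}\big(\braket{D}^{\frac12}|Y|\braket{D}^{\frac12}\big)$. The middle factor $\braket{D}^{-1}$ is a contraction on every $H^s$ and has nonnegative entries, so the monotonicity argument still applies. Composing the two modulo-tame estimates in the usual tame way, namely
\begin{equation*}
\|XYu\|_s\leqslant \fM_X(s_0)\|Yu\|_s+\fM_X(s)\|Yu\|_{s_0},
\end{equation*}
and using the $s_0$ and $s$ bounds for $Y$, produces exactly the four terms of the statement. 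The factors $\gamma^{|k|}=\gamma^{|k_1|}\gamma^{|k_2|}$ distribute over the Leibniz sum, and the binomial constants account for $\lesssim_{q,\mathtt b}$.

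The main obstacle is the bookkeeping that ensures the tame structure survives after the majorant is taken. The point to watch is that the extra $\braket{D}^{-1}$ produced when splitting the weights does not cost regularity. Since it only improves estimates, it causes no harm. The other point is that $s_0$ must not exceed $s$ in the intermediate step; this holds because $s\geqslant s_0$ throughout.
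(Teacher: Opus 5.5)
Your proposal is correct. The entrywise majorant bounds you use are exactly the standard argument of \cite[Section 2.2]{BM18}: the triangle inequality for the sum, $1\leqslant N^{-\mathtt b}\braket{\ell-\ell'}^{\mathtt b}$ on the support of $\Pi_N^\perp A$, the splitting $\braket{\ell-\ell'}^{\mathtt b}\lesssim_{\mathtt b}\braket{\ell-\ell''}^{\mathtt b}+\braket{\ell''-\ell'}^{\mathtt b}$ together with the factorization through $\braket{D}^{-1}$ for the product, and the contraction property of $\braket{D}^{-\frac12}$ for item $(iv)$. The paper states this proposition without proof and relies on that reference, so your argument follows the same route.
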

\subsection{Pseudo-differential calculus}\label{app sect pseudo}
In this last subsection, we introduce the basic definitions and properties for pseudo-differential operators. We also focus on the particular class of pseudo-differential operators, useful in our study, given by homogeneous expansions. Finally, we recall the Egorov theorem for homogeneous operators.
\subsubsection{Pseudo-differential operators}\label{appendix pseudo}
\begin{defin}\label{defin OPS}
	A pseudo-differential symbol $\mathtt{a}(\varphi,x,j)$ of order $m$ is the restriction to $\mathbb{T}^d\times\mathbb{R}\times\mathbb{Z}$ of a function $\mathtt{a}(\varphi,x,\xi)$ which is of class $C^{\infty}$ on $\mathbb{T}^d\times\mathbb{R}\times\mathbb{R}$, $2\pi$-periodic in $x$, and satisfying
	$$\forall\,(\alpha,\beta)\in\mathbb{N}^2,\qquad\exists \,C_{\alpha,\beta}>0,\qquad\Big|\partial_{x}^{\alpha}\partial_{\xi}^{\beta}\mathtt{a}(\varphi,x,\xi)\Big|\leqslant C_{\alpha,\beta}\langle\xi\rangle^{m-\beta}.$$
	We denote by $S^m$ the class of symbols of order $m$ and $S^{-\infty}=\displaystyle\bigcap_{m\in\mathbb{N}}S^m.$ To a symbol $\mathtt{a}(x,\xi)$ in $S^m$ we associate its quantization $\textnormal{Op}(\mathtt{a})$ defined by
	$$u(\varphi,x)=\sum_{j\in\mathbb{Z}}u_j(\varphi)e^{\ii jx}\qquad\Rightarrow\qquad\big[\textnormal{Op}(\mathtt{a})u\big](\varphi,x)=\sum_{j\in\mathbb{Z}}\mathtt{a}(\varphi,x,j)u_j(\varphi)e^{\ii jx}.$$
	Throughout this paper, we may denote either $\textnormal{Op}(\mathtt{a})$ or $\mathtt{a}(x,D)$ the pseudo-differential operator associated with the symbol $\mathtt{a}(x,\xi).$
	We denote by $OPS^m$ the set of pseudo-differential operators of order $m$ and $OPS^{-\infty}=\displaystyle\bigcap_{m\in\mathbb{R}}OPS^m.$
	\begin{equation}\label{def pseudo-norm}
		\|A\|_{m,s,\alpha}^{q,\gamma,\mathcal{O}}\triangleq\sum_{k\in\mathbb{N}^d\atop|k|\leqslant q}\gamma^{|k|}\sup_{\lambda\in\mathcal{O}}\|\partial_{\lambda}^{k}A(\lambda)\|_{m,s-|k|,\alpha},\qquad\|A(\lambda)\|_{m,s,\alpha}\triangleq\max_{0\leqslant\beta\leqslant\alpha}\sup_{\xi\in\mathbb{R}}\,\langle\xi\rangle^{\beta-m}\|\partial_{\xi}^{\beta}\mathtt{a}(\lambda;\cdot,\cdot,\xi)\|_{H^{s}}.
	\end{equation}
   We also say that a $2\times2$-matrix operator $\bA$ in the form \eqref{matrix op}
    is a pseudo-differential operator in the $OPS^{m}$ if $A_{+}^{(d)}, A_{+}^{(o)}, A_{-}^{(o)}, A_{-}^{(d)}\in OPS^{m}$, and its norm is given by
    \begin{equation}\label{normpseudo:matrix}
        \|\bA\|_{m,s,\alpha}^{q,\gamma\cO}\triangleq \max\big\{\|A_{\kappa}^{(\nu)}\|_{m,s,\alpha}^{q,\gamma,\cO},\,\,\kappa\in\{-,+\},\,\,\nu\in\{o,d\}\big\}.
    \end{equation}
\end{defin}
In the next proposition, we gather all the important algebraic and analytic properties of pseudo-differential operators which are  applied all along the Section \ref{sec QPS VP}. For their proofs, we refer for instance to \cite[Section 2]{BM18} and \cite[Section 3]{BFM21}.
\begin{prop}\label{properties OPS}
	Let $m,m'\in \mathbb{R}.$ Consider two pseudo-differential operators $A(\lambda)=\textnormal{Op}\big(\mathtt{a}(\lambda;\varphi,x,\xi)\big)\in OPS^{m}$ and $B(\lambda)=\textnormal{Op}\big(\mathtt{b}(\lambda;\varphi,x,\xi)\big)\in OPS^{m'}.$ The following properties hold:
	\begin{enumerate}[label=(\roman*)]
		\item (Composition) The operator $AB\triangleq A\circ B$ is pseudo-differential with symbol $\mathtt{a}\#\mathtt{b}\in S^{m+m'}$ in the form : for any $N\in\mathbb{N}^*,$
		$$\mathtt{a}\#\mathtt{b}(\lambda;\varphi,x,\xi)\triangleq\sum_{k=0}^{N-1}\frac{1}{k!\ii^{k}}\partial_{\xi}^{k}\mathtt{a}(\lambda;\varphi,x,\xi)\partial_{x}^{k}\mathtt{b}(\lambda;\varphi,x,\xi)+r_{N}(\lambda;\varphi,x,\xi),\qquad r_{N}(\lambda;\varphi,x,\xi)\in S^{m+m'-N}.$$
		In addition, denoting $R_{N}\triangleq\textnormal{Op}(r_{N}),$ we have for any $s\in[s_0,S]$ and any $\alpha\in\mathbb{N},$ 
		\begin{align}
			\|AB\|_{m+m',s,\alpha}^{q,\gamma,\mathcal{O}}&\lesssim\|A\|_{m,s,\alpha}^{q,\gamma,\mathcal{O}}\|B\|_{m',s_0+\alpha+|m|,\alpha}^{q,\gamma,\mathcal{O}}+\|A\|_{m,s,\alpha}^{q,\gamma,\mathcal{O}}\|B\|_{m',s+\alpha+|m|,\alpha}^{q,\gamma,\mathcal{O}},\label{e-pseudo-comp}\\
			\|R_{N}\|_{m+m'-N,s,\alpha}^{q,\gamma,\mathcal{O}}&\lesssim\frac{1}{N!}\Big(\|A\|_{m,s,N+\alpha}^{q,\gamma,\mathcal{O}}\|B\|_{m',s_0+2N+\alpha+|m|,N+\alpha}^{q,\gamma,\mathcal{O}}+\|A\|_{m,s,N+\alpha}^{q,\gamma,\mathcal{O}}\|B\|_{m',s+2N+\alpha+|m|,N+\alpha}^{q,\gamma,\mathcal{O}}\Big).\nonumber
		\end{align}
	Finally, the remainder symbol expresses as	$$r_{N}(\lambda;\varphi,x,\xi)=\frac{1}{\ii^{N}(N-1)!}\int_{0}^{1}(1-\tau)^{N-1}\sum_{j\in\mathbb{Z}}(\partial_{\xi}^{N}\mathtt{a})(\lambda;\varphi,x,\xi+\tau j)(\widehat{\partial_{x}^{N}\mathtt{b}})(\lambda;\varphi,j,\xi)e^{\ii jx}{\rm d}\tau ;$$
		\item (Commutator) The operator $[A,B]\triangleq AB-BA$ is pseudo-differential with symbol $\mathtt{a}\star\mathtt{b}\in S^{m+m'-1}$ in the form : for any $N\in\mathbb{N}^*,$
		$$\mathtt{a}\star\mathtt{b}(\lambda;\varphi,x,\xi)\triangleq -\ii\lbrace\mathtt{a},\mathtt{b}\rbrace(\lambda;\varphi,x,\xi)+\widetilde{r}_2(\lambda;\varphi,x,\xi),\qquad\lbrace\mathtt{a},\mathtt{b}\rbrace\triangleq\partial_{\xi}\mathtt{a}\partial_{x}\mathtt{b}-\partial_{x}\mathtt{a}\partial_{\xi}\mathtt{b},\qquad\widetilde{r}_{2}\in S^{m+m'-2}.$$
		In addition, we have for any $s\in[s_0,S]$ and any $\alpha\in\mathbb{N},$ 
		\begin{align*}
			\|[A,B]\|_{m+m'-1,s,\alpha}^{q,\gamma,\mathcal{O}}&\lesssim\|A\|_{m,s+|m'|+\alpha+2,\alpha+1}^{q,\gamma,\mathcal{O}}\|B\|_{m',s_0+|m|+\alpha+2,\alpha+1}^{q,\gamma,\mathcal{O}}\\
			&\quad+\|A\|_{m,s+|m'|+\alpha+2,\alpha+1}^{q,\gamma,\mathcal{O}}\|B\|_{m',s+|m|+\alpha+2,\alpha+1}^{q,\gamma,\mathcal{O}};
		\end{align*}
	\item (Multiplication operator) If $\mathtt{a}(\lambda;\varphi,x,D)=\mathtt{a}(\lambda;\varphi,x),$ then for any $s\in[s_0,S]$ and any $\alpha\in\mathbb{N},$
	\begin{equation}\label{pseudo-mul}
		\|A\|_{0,s,\alpha}^{q,\gamma,\mathcal{O}}=\|\mathtt{a}\|_{s}^{q,\gamma,\mathcal{O}}.
	\end{equation}
	\item (Fourier multiplier) If $\mathtt{a}(\lambda;\varphi,x,D)=\mathtt{a}(\lambda;D),$ then for any $s\in[s_0,S]$ and any $\alpha\in\mathbb{N},$
	\begin{equation}\label{pseudo FM}
		\|A\|_{m,s,\alpha}^{q,\gamma,\mathcal{O}}=\|A\|_{m,0,\alpha}^{q,\gamma,\mathcal{O}};
	\end{equation}
\item (Link with tame constants) For any $s\in[s_0,S],$
$$\mathfrak{M}_{A}(s)\lesssim_s\|A\|_{0,s,0}^{q,\gamma,\mathcal{O}};$$
\item (Reversibility and momentum preserving properties) 
\begin{enumerate}[label=\textbullet]
	\item The operator $A$ is reversible if and only if
	$$\mathtt{a}(\lambda;-\varphi,-x,\xi)=-\overline{\mathtt{a}(\lambda;\varphi,x,\xi)};$$
	\item The operator $A$ is reversibility preserving if and only if
	$$\mathtt{a}(\lambda;-\varphi,-x,\xi)=\overline{\mathtt{a}(\lambda;\varphi,x,\xi)};$$
	\item The operator $A$ is momentum preserving if and only if
	$$\forall\, y\in\mathbb{T},\quad\mathtt{a}(\lambda;\varphi-\vec{\jmath}y,x,\xi)=\mathtt{a}(\lambda;\varphi,x+y,\xi).$$
\end{enumerate}
	\end{enumerate}
\end{prop}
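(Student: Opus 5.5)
The plan is to derive everything from the explicit action of $\textnormal{Op}(\mathtt{a})$ on Fourier modes, following \cite[Section 2]{BM18} and \cite[Section 3]{BFM21}, with the parameter dependence on $\lambda$ handled at the end by the Leibniz rule. For item $(i)$, I would write $\mathtt{b}(\varphi,x,\xi)=\sum_{j}\widehat{\mathtt{b}}(\varphi,j,\xi)e^{\ii jx}$ and compute $AB\,\mathbf{e}_{k}$. This shows that $AB=\textnormal{Op}(\mathtt{a}\#\mathtt{b})$ with
$$\mathtt{a}\#\mathtt{b}(\varphi,x,\xi)=\sum_{j\in\mathbb{Z}}\mathtt{a}(\varphi,x,\xi+j)\,\widehat{\mathtt{b}}(\varphi,j,\xi)\,e^{\ii jx}.$$
Taylor-expanding $\xi\mapsto\mathtt{a}(\varphi,x,\xi+j)$ at $j=0$ to order $N$ with integral remainder, and using $(\ii j)^k\widehat{\mathtt{b}}(j)=\widehat{\partial_x^k\mathtt{b}}(j)$, gives exactly the stated expansion and the stated integral formula for $r_N$.

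The symbol bounds come next. Each $\xi$-derivative of the $k$-th term gains $\langle\xi\rangle^{-1}$. For the remainder I would use Peetre's inequality $\langle\xi+\tau j\rangle^{m-N}\lesssim\langle\xi\rangle^{m-N}\langle j\rangle^{|m-N|}$. The growth in $j$ is then absorbed by the decay of $\widehat{\partial_x^N\mathtt{b}}$, which is summable thanks to the extra $s_0+|m|+N$ derivatives placed on $\mathtt{b}$. The $H^s_{\varphi,x}$ norms of the products are handled by the tame product law of Lemma \ref{lem functions}-$(i)$. Finally, each $\partial_\lambda^k$ falls on $\mathtt{a}$ or $\mathtt{b}$ through the Leibniz rule, and the weights $\gamma^{|k|}$ together with the loss $s-|k|$ built into \eqref{def pseudo-norm} combine in the standard way.

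Item $(ii)$ follows by applying $(i)$ with $N=2$ to both $AB$ and $BA$. The zeroth-order terms cancel, the first-order terms combine into $-\ii\{\mathtt{a},\mathtt{b}\}$, and the two remainders form $\widetilde r_2$; this is the source of the extra $\alpha+1$ and $+2$ losses in the estimate. Items $(iii)$ and $(iv)$ are immediate from \eqref{def pseudo-norm}: the symbol is independent of $\xi$ in the first case and independent of $(\varphi,x)$ in the second. For $(v)$, I would write $Au=\sum_j\mathtt{a}(\cdot,\cdot,j)u_j e^{\ii jx}$ and bound each term in $H^s$ using Lemma \ref{lem functions}-$(i)$. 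Summing in $j$ via Cauchy--Schwarz with the weight $\langle j\rangle^{-s_0}$ yields the tame bound with constant $\|A\|_{0,s,0}$.

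For $(vi)$, I would translate the definitions of Definition \ref{defin:OP-sym} directly at the level of symbols. Conjugation by $\mathscr{S}$ maps $\textnormal{Op}(\mathtt{a}(\varphi,x,\xi))$ to $\textnormal{Op}(\mathtt{a}(\varphi,-x,-\xi))$, and conjugation by $\mathscr{T}_y$ shifts $x$. Combining this with the reality condition, one checks through the Fourier characterization in Lemma \ref{lem Fourier coeff op}, with coefficients $A_j^{j'}(\ell)=\widehat{\mathtt{a}}(\ell,j-j',j')$, that the stated symbol identities are equivalent to the operator identities.

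The main obstacle is the remainder estimate in $(i)$. One must track precisely how many derivatives in $(\varphi,x)$ on $\mathtt{b}$ are needed to sum over $j$ after Peetre's inequality, uniformly in $\xi$ and in $\lambda$-derivatives. Since this is a classical computation, I would follow \cite[Lemma 2.13]{BM18} verbatim rather than redo it.
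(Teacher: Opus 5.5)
Your overall route (Fourier action of $\textnormal{Op}(\mathtt{a})$, Taylor expansion in $\xi$, symmetries read off from $A_j^{j'}(\ell)=\widehat{\mathtt{a}}(\ell,j-j',j')$) is the standard one behind the references the paper cites in place of a proof. Your algebra is correct: the composed symbol, the integral formula for $r_N$, the Poisson bracket in $(ii)$, and the symbol identities in $(vi)$. You are also right that the conditions in $(vi)$ characterize reversibility only in combination with realness.

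The gap is in the \emph{estimates}. The mechanism you propose loses derivatives and cannot give the stated bounds: you apply the product law of Lemma \ref{lem functions}-$(i)$ to each term, then sum over the single index $j$ by weighted Cauchy--Schwarz.

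In $(v)$ this yields $\sum_j\|u_je^{\ii jx}\|_s\lesssim\|u\|_{s+s_0}$. Hence you only get $\|Au\|_s\lesssim\|A\|_{0,s_0,0}\|u\|_{s+s_0}+\|A\|_{0,s,0}\|u\|_{2s_0}$, a $\mathcal{D}^q$-$s_0$-tame bound, whereas $(v)$ asserts a $\mathcal{D}^q$-tame ($\sigma=0$) bound. No choice of weight repairs this, because $\sum_j\|u_je^{\ii jx}\|_s$ is not controlled by $\|u\|_s$. For instance, $u_j=\langle j\rangle^{-s-1/2}\log^{-1}(2+|j|)$, independent of $\varphi$, has $\|u\|_s<\infty$ but $\sum_j\|u_je^{\ii jx}\|_s=\infty$. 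The zero loss matters: in the proof of Proposition \ref{lemma.sL.beforeKAM}, $(v)$ is what makes $\langle D\rangle^{\frac12}\mathcal{R}^{(d)}_{[-1]}\langle D\rangle^{\frac12}$ tame without loss.

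Your sketch of $(i)$ has the same double counting, although you ultimately defer to \cite{BM18} there. The $s_0$ derivatives you place on $\mathtt{b}$ are already consumed by the low-norm slot of the product law, so summing in $j$ costs another $s_0$ (at best more than $\tfrac12$). You would obtain $\|B\|_{m',2s_0+|m|+\alpha,\alpha}$ and $\|B\|_{m',s+s_0+|m|+\alpha,\alpha}$ instead of the indices in \eqref{e-pseudo-comp}, and a similar extra loss in the bound for $R_N$.

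The missing idea is almost-orthogonality on the full $(\varphi,x)$-Fourier lattice. For $(v)$, proceed in three steps.
\begin{enumerate}
\item Write $(Au)_{\ell,j}=\sum_{\ell',j'}\widehat{\mathtt{a}}(\ell-\ell',j-j',j')\,u_{\ell',j'}$ and split $\langle\ell,j\rangle^s\lesssim_s\langle\ell-\ell',j-j'\rangle^s+\langle\ell',j'\rangle^s$.
\item Apply Cauchy--Schwarz with weight $\langle\ell',j'\rangle^{-s_0}$ in the first piece and $\langle\ell-\ell',j-j'\rangle^{-s_0}$ in the second; both are summable since $s_0>\frac{d+1}{2}$.
\item Exchange the order of summation. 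For fixed $(\ell',j')$, $\sum_{\ell,j}\langle\ell-\ell',j-j'\rangle^{2s}|\widehat{\mathtt{a}}(\ell-\ell',j-j',j')|^2=\|\mathtt{a}(\cdot,\cdot,j')\|_s^2\leqslant(\|A\|_{0,s,0})^2$.
\end{enumerate}
This gives $\|Au\|_s\lesssim_s\|A\|_{0,s_0,0}\|u\|_s+\|A\|_{0,s,0}\|u\|_{s_0}$ with no loss.

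For $(i)$, run the same argument on the $(\ell,k)$-coefficient $\sum_{\ell',j}\widehat{\mathtt{a}}(\ell-\ell',k-j,\xi+j)\,\widehat{\mathtt{b}}(\ell',j,\xi)$ of the composed symbol. Only afterwards apply Peetre's inequality to $\|\partial_\xi^{\beta_1}\mathtt{a}(\cdot,\cdot,\xi+j)\|_s$. The losses on $\mathtt{b}$ are then only $|m|+\alpha$ (from Peetre) and, for $r_N$, an additional $2N$ (from $\partial_x^N$ and Peetre), as stated.

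A minor point on $(ii)$: the stated norm bound, with losses $\alpha+1$ in $\xi$ and $2+\alpha+|m|$, $2+\alpha+|m'|$ in space, comes from the $N=1$ remainders of $AB$ and $BA$. Taking $N=2$ gives the structure $-\ii\{\mathtt{a},\mathtt{b}\}+\widetilde r_2$ but costs more derivatives in the estimate.
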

We now introduce the notion of homogeneous symbols/operators used throughout the article.
\begin{defin}\label{def.HomExp}
	Let $m\in\mathbb{Z}$ and $N\in\mathbb{N}.$ Consider a pseudo-differential operator $A(\lambda)=\textnormal{Op}\big(\mathtt{a}(\lambda;\varphi,x,\xi)\big)\in OPS^{m}$. We say that:
	\begin{enumerate}[label=(\roman*)]
		\item $A$ is \textit{homogeneous of degree $m$} if it can be written as
		$$A=\mathfrak{p}\partial_{x}^{m},\qquad \textnormal{i.e.}\qquad\mathtt{a}(\lambda;\varphi,x,\xi)=\mathfrak{p}(\lambda;\varphi,x)\chi(\xi)(\ii\xi)^{m},$$
		with $\mathfrak{p}$ a real valued function;
		\item $A$ has an \textit{homogeneous expansion of degree $m$ up to order $m-N$} if it can be written as
		$$A=\sum_{k=0}^{N}\mathfrak{p}_{m-k}\partial_{x}^{m-k}\qquad\textnormal{i.e.}\qquad\mathtt{a}(\lambda;\varphi,x,\xi)=\sum_{k=0}^{N}\mathfrak{p}_{m-k}(\lambda;\varphi,x)\chi(\xi)(\ii\xi)^{m-k},$$
		with $(\mathfrak{p}_{m-k})_{0\leqslant k\leqslant N}$ real valued functions.
	\end{enumerate}
\end{defin}
One has the following result.
\begin{lem}\label{lem:revmomHE}
	Let $m\in\mathbb{Z}$ and $N\in\mathbb{N}.$ Consider a pseudo-differential operator $A$ 
    in the form
    \begin{equation}\label{A.op.app}
        A=\sum_{k=0}^{N}\mathfrak{p}_{m-k}\partial_{x}^{m-k} + R_{N},
    \end{equation}
    with $(\mathfrak{p}_{m-k})_{0\leqslant k\leqslant N}$ real valued functions and  $R_{N}={\rm Op}(\tr_n(\lambda;\vf,x,\xi))\in OPS^{m-N-1}$.
	Then:
	\begin{enumerate}[label=\textbullet]
		\item The operator $A$ is reversible if and only if
        \begin{align}
            \forall \, k\in\llbracket 0,N\rrbracket,\quad & \mathfrak{p}_{m-k}(\lambda;-\varphi,-x)=(-1)^{m-k+1}\mathfrak{p}_{m-k}(\lambda;\varphi,x), \\
            \textnormal{and} \quad & \tr_{N}(\lambda;-\vf,-x,\xi)= -\overline{\tr_{N}(\lambda;\vf,x,\xi)} ;
        \end{align}
		\item The operator $A$ is reversibility preserving if and only if
        \begin{align}
            \forall \, k\in\llbracket 0,N\rrbracket,\quad & \mathfrak{p}_{m-k}(\lambda;-\varphi,-x)=(-1)^{m-k}\mathfrak{p}_{m-k}(\lambda;\varphi,x) , \\
            \textnormal{and} \quad & \tr_{N}(\lambda;-\vf,-x,\xi)= \overline{\tr_{N}(\lambda;\vf,x,\xi)} ;
        \end{align}
		\item The operator $A$ is momentum preserving if and only if
        \begin{align}
            \forall \, y\in\mathbb{T},\quad\forall \,k\in\llbracket 0,N\rrbracket,\quad & \mathfrak{p}_{m-k}(\lambda;\varphi-\vec{\jmath}y,x)=\mathfrak{p}_{m-k}(\lambda;\varphi,x+y), \\
            \textnormal{and} \quad & \tr_{N}(\lambda;\vf,-\vec{\jmath}y,x,\xi) = \tr_{N}(\lambda;\vf,x+y,\xi).
        \end{align}
	\end{enumerate}
\end{lem}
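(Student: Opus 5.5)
The plan is to reduce everything to the symbol characterizations in Proposition \ref{properties OPS}-$(vi)$, applied to the full symbol of $A$ in \eqref{A.op.app},
$$\mathtt{a}(\lambda;\varphi,x,\xi)=\sum_{k=0}^{N}\mathfrak{p}_{m-k}(\lambda;\varphi,x)\chi(\xi)(\ii\xi)^{m-k}+\mathtt{r}_N(\lambda;\varphi,x,\xi).$$
For the sufficiency direction in each bullet, I will substitute the stated parity or translation relations term by term. Since $\mathfrak{p}_{m-k}$ is real and $\chi(\xi)(\ii\xi)^{m-k}$ has complex conjugate $(-1)^{m-k}\chi(\xi)(\ii\xi)^{m-k}$, the reversibility condition $\mathtt{a}(-\varphi,-x,\xi)=-\overline{\mathtt{a}(\varphi,x,\xi)}$ for the homogeneous part becomes
$$\mathfrak{p}_{m-k}(-\varphi,-x)=-(-1)^{m-k}\mathfrak{p}_{m-k}(\varphi,x)=(-1)^{m-k+1}\mathfrak{p}_{m-k}(\varphi,x).$$
The remainder condition is exactly the one stated for $\mathtt{r}_N$, so summing gives the symbol identity and hence reversibility. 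The reversibility preserving case is the same with the sign removed. For momentum preservation, the factor $\chi(\xi)(\ii\xi)^{m-k}$ does not depend on $(\varphi,x)$, so the translation identity transfers term by term.

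For necessity I would separate the terms using their different homogeneity in $\xi$. Assume the symbol identity holds for $\mathtt{a}$. Set $\mathtt{b}(\varphi,x,\xi)\triangleq\mathtt{a}(-\varphi,-x,\xi)+\overline{\mathtt{a}(\varphi,x,\xi)}\equiv0$. Writing $\mathtt{b}$ as the sum over $k$ of $\big[\mathfrak{p}_{m-k}(-\varphi,-x)-(-1)^{m-k+1}\mathfrak{p}_{m-k}(\varphi,x)\big]\chi(\xi)(\ii\xi)^{m-k}$ plus the corresponding $\mathtt{r}_N$ combination, I then argue by descending induction on the order. Multiply by $(\ii\xi)^{-m}$ and let $\xi\to\infty$. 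Since $\mathtt{r}_N\in S^{m-N-1}$, every term except $k=0$ vanishes in the limit, which forces the $k=0$ bracket to be zero. Removing that term and repeating with $(\ii\xi)^{-m+k}$ kills each coefficient bracket in turn. What is left is the remainder identity for $\mathtt{r}_N$. The other two bullets follow the same pattern.

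The main obstacle is the meaning of ``if and only if''. The decomposition \eqref{A.op.app} is not unique, since lower homogeneous terms can be absorbed into $R_N$ only at orders below $m-N$. Still, the limit argument separates the orders $m,\dots,m-N$ from $S^{m-N-1}$, so for the given decomposition the asymptotic step does carry through. I would check that the symbol estimates from Definition \ref{defin OPS} give $\sup_{|\xi|\ge1}|\xi|^{-(m-N-1)}|\mathtt{r}_N|<\infty$ pointwise in $(\varphi,x)$, which is what justifies the limits.
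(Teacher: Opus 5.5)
Your proposal is correct and follows the paper's proof. The paper also passes to the symbol characterizations of Proposition \ref{properties OPS}-$(vi)$, isolates the coefficients $\mathfrak{p}_{m-k}$ one order at a time by multiplying by $(\ii\xi)^{k-m}$ and letting $\xi\to\infty$ (using $\mathtt{r}_N\in S^{m-N-1}$), then reads off the remainder identity, and treats the converse as direct substitution. The obstacle you flag does not arise: the same limit argument shows the decomposition \eqref{A.op.app} is unique for fixed $N$.
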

\begin{proof}
    We only prove item $(i)$, as the proof for items $(ii)$ and $(iii)$ follow similarly. Moreover, for sake of simplicity, in what follows we omit to write the dependence on $\lambda$ that plays no role here. Moreover, we prove only the direct implication, the converse one being trivial. The operator $A$ in \eqref{A.op.app} is a pseudo-differential operator $A={\rm Op}(\ta(\vf,x,\xi))\in OPS^{m}$ with symbol
    \begin{equation}\label{simbolo.a}
        \ta(\vf,x,\xi) \triangleq \sum_{k=0}^{N} \fp_{m-k}(\vf,x) (\im \xi)^{m-k} + \tr_{N}(\vf,x,\xi).
    \end{equation}
    Since we are assuming that $A$ is reversible, then, by Proposition \ref{properties OPS}-$(vi)$, we have that
    \begin{equation}
        \ta(-\vf,x,\xi) = -\overline{\ta(\vf,x,\xi)}.
    \end{equation}
    In particular, it holds that
    \begin{equation}\label{limite.zero}
        \lim_{\xi\to \infty} (\im\xi)^{-m}\big( \ta(-\vf,x,\xi) + \overline{\ta(\vf,x,\xi)} \big) = 0.
    \end{equation}
    By \eqref{simbolo.a}, it follows that
    \begin{align}
      (\im\xi)^{-m} \big( \ta(-\vf,x,\xi) + \overline{\ta(\vf,x,\xi)} \big)  & = \fp_{m}(-\vf,-x) + (-1)^{m} \fp_{m}(\vf,x) \\
        &\quad+ \sum_{k=1}^{N}\big(  \fp_{m-k}(-\vf,-x) + (-1)^{m-k} \fp_{m-k}(\vf,x) \big) (\im\xi)^{-k} \\
        &\quad+(\im\xi)^{-m}\big( \tr_{N}(-\vf,-x,\xi) + \overline{\tr_{N}(\vf,x,\xi)}\big).
    \end{align}
    From \eqref{limite.zero} and Definition \ref{defin OPS}, we deduce that
    \begin{equation}
        \fp_{m}(-\vf,-x) = (-1)^{m+1}  \fp_{m}(\vf,x).
    \end{equation}
    Repeating this process with multiplicative factor $(\ii\xi)^{k-m}$ for $k\in\llbracket1,N\rrbracket$, we get
    $$\fp_{m-k}(-\vf,-x) = (-1)^{m-k+1}  \fp_{m-k}(\vf,x).$$
    With this in hand, we come back to \eqref{simbolo.a} to obtain
    $$\tr_{N}(-\vf,-x,\xi) = -\overline{\tr_{N}(\vf,x,\xi)}.$$
    This ends the proof of Lemma \ref{lem:revmomHE}.
\end{proof}

The following result is taken from \cite[Prop. 2.11]{BKM21} (up to slight modifications to fit with our functional framework and purposes). It states that the class of homogeneous expansions is stable under composition and (anti-)commutation up to smoothing operators. The proof is based on Proposition \ref{properties OPS}-1.
\begin{prop}\label{lem compo commu hom exp}
	Consider two homogeneous operators $\mathfrak{p}_1\partial_{x}^{m}$ and $\mathfrak{p}_2\partial_{x}^{m'}$ of degree $m\in\mathbb{Z}$ and $m'\in\mathbb{Z}$, respectively. Then, there exist combinatorial constants given by $(C_{k,m})_{k\in\mathbb{N}}$ with
	$$C_{0,m}\triangleq1,\qquad \forall k\in\mathbb{N}^*,\quad C_{k,m}\triangleq\frac{1}{k!}\prod_{p=1}^{k}(m-p+1),$$
	such that:
	\begin{enumerate}[label=(\roman*)]
		\item for any $N\in\mathbb{N},$ the composition admits an homogeneous expansion of degree $m+m'$ up to order $m+m'-N$ up to smoothing pseudo-differential oprators in $OPS^{m+m'-N-1}$. More precisely,
		$$\mathfrak{p}_1\partial_{x}^{m}\circ\mathfrak{p}_2\partial_{x}^{m'}=\sum_{k=0}^{N}C_{k,m}\,\mathfrak{p}_1(\partial_{x}^{k}\mathfrak{p}_2)\partial_{x}^{m+m'-k}+\mathfrak{R}_{N},$$
		where $\mathfrak{R}_{N}$ is in $OPS^{m+m'-N-1}$ with the bound, for any $s\geqslant s_0$ and $\alpha\in\mathbb{N},$
		\begin{equation}\label{e-rem-hom-comp}
			\|\mathfrak{R}_{N}\|_{m+m'-N-1,s,\alpha}^{q,\gamma,\mathcal{O}}\lesssim_{m,m',N,\alpha,s}\|\mathfrak{p}_1\|_{s+\sigma(\alpha,m)}^{q,\gamma,\mathcal{O}}\|\mathfrak{p}_2\|_{s_0+\sigma(\alpha,m)}^{q,\gamma,\mathcal{O}}+\|\mathfrak{p}_1\|_{s_0+\sigma(\alpha,m)}^{q,\gamma,\mathcal{O}}\|\mathfrak{p}_2\|_{s+\sigma(\alpha,m)}^{q,\gamma,\mathcal{O}}
		\end{equation}
		for some $\sigma(\alpha,m)>0;$
		\item for any $N\in\mathbb{N}^*,$ the commutator admits an homogeneous expansion of degree $m+m'-1$ up to order $m+m'-N$ up to smoothing pseudo-differential oprators in $OPS^{m+m'-N-1}$. More precisely,
		$$[\mathfrak{p}_1\partial_{x}^{m},\mathfrak{p}_2\partial_{x}^{m'}]=\sum_{k=1}^{N}\big(C_{k,m}\,\mathfrak{p}_1(\partial_{x}^{k}\mathfrak{p}_2)-C_{k,m'}\,(\partial_{x}^{k}\mathfrak{p}_1)\mathfrak{p}_2\big)\partial_{x}^{m+m'-k}+\mathfrak{R}_{N},$$
		where $\mathfrak{R}_{N}$ is in $OPS^{m+m'-N-1}$ with the bound, for any $s\geqslant s_0$ and $\alpha\in\mathbb{N},$
		$$\|\mathfrak{R}_{N}\|_{m+m'-N-1,s,\alpha}^{q,\gamma,\mathcal{O}}\lesssim_{m,m',N,\alpha,s}\|\mathfrak{p}_1\|_{s+\sigma(\alpha,m,m')}^{q,\gamma,\mathcal{O}}\|\mathfrak{p}_2\|_{s_0+\sigma(\alpha,m,m')}^{q,\gamma,\mathcal{O}}+\|\mathfrak{p}_1\|_{s_0+\sigma(\alpha,m,m')}^{q,\gamma,\mathcal{O}}\|\mathfrak{p}_2\|_{s+\sigma(\alpha,m,m')}^{q,\gamma,\mathcal{O}}$$
		for some $\sigma(\alpha,m,m')>0;$
		\item for any $N\in\mathbb{N}^*,$ the anti-commutator admits an homogeneous expansion of degree $m+m'$ up to order $m+m'-N$ up to smoothing pseudo-differential oprators in $OPS^{m+m'-N-1}$. More precisely,
		$$[\mathfrak{p}_1\partial_{x}^{m},\mathfrak{p}_2\partial_{x}^{m'}]_{\rm a}=\sum_{k=1}^{N}\big(C_{k,m}\,\mathfrak{p}_1(\partial_{x}^{k}\mathfrak{p}_2)+C_{k,m'}\,(\partial_{x}^{k}\mathfrak{p}_1)\mathfrak{p}_2\big)\partial_{x}^{m+m'-k}+\mathfrak{R}_{N},$$
		where $\mathfrak{R}_{N}$ is in $OPS^{m+m'-N-1}$ with the bound, for any $s\geqslant s_0$ and $\alpha\in\mathbb{N},$
		$$\|\mathfrak{R}_{N}\|_{m+m'-N-1,s,\alpha}^{q,\gamma,\mathcal{O}}\lesssim_{m,m',N,\alpha,s}\|\mathfrak{p}_1\|_{s+\sigma(\alpha,m,m')}^{q,\gamma,\mathcal{O}}\|\mathfrak{p}_2\|_{s_0+\sigma(\alpha,m,m')}^{q,\gamma,\mathcal{O}}+\|\mathfrak{p}_1\|_{s_0+\sigma(\alpha,m,m')}^{q,\gamma,\mathcal{O}}\|\mathfrak{p}_2\|_{s+\sigma(\alpha,m,m')}^{q,\gamma,\mathcal{O}}$$
		for some $\sigma(\alpha,m,m')>0.$
	\end{enumerate}
\end{prop}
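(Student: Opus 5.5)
The plan is to reduce everything to Proposition \ref{properties OPS}-$(i)$, applied to symbols that are products of a function of $(\varphi,x)$ and the Fourier multiplier symbol $\chi(\xi)(\ii\xi)^{m}$. For item $(i)$ we write $\mathfrak{p}_1\partial_x^m=\textnormal{Op}(\mathtt{a})$ with $\mathtt{a}=\mathfrak{p}_1(\varphi,x)\chi(\xi)(\ii\xi)^m\in S^m$, and $\mathfrak{p}_2\partial_x^{m'}=\textnormal{Op}(\mathtt{b})$ with $\mathtt{b}=\mathfrak{p}_2\chi(\xi)(\ii\xi)^{m'}\in S^{m'}$. The composition formula, truncated at order $N+1$, gives
\[
\mathtt{a}\#\mathtt{b}=\sum_{k=0}^{N}\frac{1}{k!\,\ii^k}\,\partial_\xi^k\big(\chi(\xi)(\ii\xi)^m\big)\,\mathfrak{p}_1\,(\partial_x^k\mathfrak{p}_2)\,\chi(\xi)(\ii\xi)^{m'}+r_{N+1}.
\]
For $|\xi|\geqslant\tfrac23$ we have $\chi\equiv1$, so $\partial_\xi^k(\ii\xi)^m=\ii^k\,m(m-1)\cdots(m-k+1)(\ii\xi)^{m-k}$. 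This produces exactly the coefficient $C_{k,m}$ in front of $\mathfrak{p}_1(\partial_x^k\mathfrak{p}_2)\chi(\xi)(\ii\xi)^{m+m'-k}$.

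The discrepancy between $\partial_\xi^k(\chi(\ii\xi)^m)\chi$ and $\chi(\ii\xi)^{m-k}$ is compactly supported in $\xi$. It is therefore a symbol in $S^{-\infty}$, with seminorms controlled by $\|\mathfrak{p}_1\partial_x^k\mathfrak{p}_2\|_s$. We absorb it, together with $\textnormal{Op}(r_{N+1})\in OPS^{m+m'-N-1}$, into $\mathfrak{R}_N$. The bound \eqref{e-rem-hom-comp} then comes from the remainder estimate in Proposition \ref{properties OPS}-$(i)$ with $N+1$ in place of $N$. Here we use item $(iii)$ there for multiplication operators, item $(iv)$ for Fourier multipliers (whose norms are $s$-independent), and the product law of Lemma \ref{lem functions} to estimate $\mathfrak{p}_1\partial_x^k\mathfrak{p}_2$ tamely. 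The loss $\sigma(\alpha,m)$ collects the $2(N+1)+\alpha+|m|$ derivatives appearing there, plus the $k\leqslant N$ derivatives on $\mathfrak{p}_2$.

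For items $(ii)$ and $(iii)$ we apply $(i)$ twice, once for $\mathfrak{p}_1\partial_x^m\circ\mathfrak{p}_2\partial_x^{m'}$ and once for $\mathfrak{p}_2\partial_x^{m'}\circ\mathfrak{p}_1\partial_x^{m}$, and then subtract or add. The $k=0$ terms are both $\mathfrak{p}_1\mathfrak{p}_2\partial_x^{m+m'}$ because multiplication by functions commutes. In the commutator they cancel, which leaves the sum from $k=1$. In the anticommutator they give $2\mathfrak{p}_1\mathfrak{p}_2\partial_x^{m+m'}$, which must be included as the $k=0$ contribution (the statement's sum should be read accordingly). The remainders are the difference or sum of the two remainders from $(i)$, and the tame bounds follow with $\sigma(\alpha,m,m')=\max(\sigma(\alpha,m),\sigma(\alpha,m'))$.

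The only delicate point is the bookkeeping that keeps the tame structure, with high norm on one factor and low norm on the other. It is handled by the product law and by the fact that Fourier multiplier norms do not depend on $s$.
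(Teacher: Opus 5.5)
Your proposal is correct and follows the paper's route. The paper gives no separate argument: it cites \cite[Prop.~2.11]{BKM21} and notes that the result rests on the composition formula of Proposition~\ref{properties OPS}-$(i)$, which is exactly your symbol computation $\frac{1}{k!\,\ii^{k}}\partial_\xi^k(\ii\xi)^m=C_{k,m}(\ii\xi)^{m-k}$ for $|\xi|\geqslant\frac23$, with $(ii)$ and $(iii)$ obtained by composing in both orders. You are also right that the anti-commutator expansion in $(iii)$ must contain the $k=0$ term $2\mathfrak{p}_1\mathfrak{p}_2\partial_x^{m+m'}$; this is how item $(iii)$ is used in \eqref{pasqua2.1.1}.
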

\subsubsection{Symplectic diffeomorphisms of the torus and Egorov's theorem}
In this last section, we give the definition and properties of symplectic diffeomorphisms on the torus, used in our analysis for the transport reduction in Section \ref{sect.transport}. Then, we discuss a quantitative Egorov-type theorem adapted to our functional context. This latter deals with the conjugation of pseudo-differential operators with symplectic diffeomorphisms.\\

A symplectic diffeomorphism on the torus $\mathbb{T}$ is a transformation $\mathscr{B}$ in the form
\begin{equation}\label{diffeo.app}
   \begin{aligned}
        (\mathscr{B}u)(\lambda;\varphi,x) &  \triangleq\big(1+\partial_{x}\beta(\lambda;\varphi,x)\big) (\cB u) (\lambda;\vf,x),\qquad(\lambda;\varphi,x)\in\mathcal{O}\times\mathbb{T}^d\times\mathbb{T}, \\
        (\cB u) (\lambda;\vf,x) & \triangleq u \big(\lambda,\varphi,x+\beta(\lambda;\varphi,x)\big) .
   \end{aligned}
\end{equation}
The following proposition is a collection of standard properties of the map $\sB$ in \eqref{diffeo.app}, see for instance \cite[Lemma 6.1, Lemma 6.2]{HR21} and \cite[Lemma 3.30]{BFM21}.
\begin{prop}\label{prop:Ego}
	\textbf{(Basic properties of symplectic diffeomorphisms on the torus)}\\
	There exist $\sigma,\delta>0$ such that if  
    \begin{equation}\label{beta.small.app}
        \|\beta\|_{s_0+\sigma}^{q,\gamma,\mathcal{O}}\leqslant\delta,
    \end{equation}
	then, the following properties hold true:
	\begin{enumerate}[label=(\roman*)]
		\item The operator $\mathscr{B}$ is invertible with inverse
        \begin{align}
            (\mathscr{B}^{-1}u)(\lambda;\varphi,x) &  =\big(1+\partial_{x}\wh\beta(\lambda;\varphi,x)\big) (\cB^{-1} u) (\lambda;\vf,x), \\
        (\cB^{-1} u) (\lambda;\vf,x) & =u \big(\lambda,\varphi,x+\wh\beta(\lambda;\varphi,x)\big) .
        \end{align}
		In addition,
		$$\|\widehat{\beta}\|_{s}^{q,\gamma,\mathcal{O}}\lesssim_{s,q,d}\|\beta\|_{s}^{q,\gamma,\mathcal{O}}$$
		and the following estimates occur
		\begin{align}
			&\|\mathcal{B}^{\pm1}u\|_{s}^{q,\gamma,\mathcal{O}}+\|\mathscr{B}^{\pm1}u\|_{s}^{q,\gamma,\mathcal{O}}\lesssim_{s,q,d}\|u\|_{s}^{q,\gamma,\mathcal{O}}+\|\beta\|_{s+1}^{q,\gamma,\mathcal{O}}\|u\|_{s_0}^{q,\gamma,\mathcal{O}},\\
			&\|(\mathcal{B}^{\pm1}-\textnormal{Id})u\|_{s}^{q,\gamma,\mathcal{O}}+\|(\mathscr{B}^{\pm1}-\textnormal{Id})u\|_{s}^{q,\gamma,\mathcal{O}}\lesssim_{s,q,d}\|\beta\|_{s_0}^{q,\gamma,\mathcal{O}}\|u\|_{s+1}^{q,\gamma,\mathcal{O}}+\|\beta\|_{s+1}^{q,\gamma,\mathcal{O}}\|u\|_{s_0}^{q,\gamma,\mathcal{O}};
		\end{align}
    \item  The operator $\sB$ preserves the average in space, namely
    \begin{equation}
        \int_{\T} (\sB u)(\vf,x) \wrt x = \int_{\T}(1+\pa_x \beta(\vf,x))u(\vf,x+\beta(\vf,x)) \wrt x \stackrel{y=x+\beta(\vf,x)}{=} \int_{\T} u(\vf,y) \wrt y .
    \end{equation}
    In particular, functions with zero average in space are preserved by the action of the operator;
    \item Denoting by $\sB^{\star}$ the $L_{x}^2(\T)$-adjoint of $\sB$, then $\sB^{\star}= \cB^{-1}$;
	\item Let $\beta_1,\beta_2$ satisfying \eqref{beta.small.app}, then for any $s\in[s_0,S],$
	\begin{align}
		\|\Delta_{12}\widehat{\beta}\|_{s}\lesssim_{s,d}\|\Delta_{12}\beta\|_{s}+\|\Delta_{12}\beta\|_{s_0}\max_{k\in\{1,2\}}\|\beta_k\|_{s+1}.
	\end{align}
    \item The operator $\mathscr{B}$ (or $\mathcal{B}$) is:
    \begin{enumerate}
        \item reversibility preserving if and only if
        $$\beta(\lambda;-\varphi,-x)=-\beta(\lambda;\varphi,x);$$
        \item momentum preserving if and only if
        $$\forall \, y\in\mathbb{T},\quad\beta(\lambda;\varphi-\vec{\jmath}y,x)=-\beta(\lambda;\varphi,x+y).$$
    \end{enumerate}
	\end{enumerate}
\end{prop}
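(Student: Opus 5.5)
The plan is to treat $\mathcal{B}$ and $\mathscr{B}=(1+\partial_x\beta)\mathcal{B}$ separately, deriving everything for $\mathscr{B}$ from composition and product laws. For item $(i)$, under the smallness \eqref{beta.small.app} and Sobolev embedding ($s_0>\tfrac{d+1}{2}+q+2$), $\|\partial_x\beta\|_{L^\infty}\leqslant\tfrac12$, so for each fixed $(\lambda,\varphi)$ the map $x\mapsto x+\beta(\lambda;\varphi,x)$ is a diffeomorphism of $\mathbb{T}$. Its inverse $y\mapsto y+\widehat\beta(\lambda;\varphi,y)$ is characterized by the fixed-point equation $\widehat\beta(y)=-\beta\big(y+\widehat\beta(y)\big)$. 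I would solve this equation by a contraction in a low norm. The tame bound $\|\widehat\beta\|_s^{q,\gamma,\mathcal{O}}\lesssim\|\beta\|_s^{q,\gamma,\mathcal{O}}$ would then follow by differentiating the identity in $x$, $\varphi$ and $\lambda$, and running an induction on $s$ with the composition law of Lemma \ref{lem functions}-$(ii)$ and tame interpolation (the standard argument of \cite[Lemma 6.1]{HR21}, \cite[Lemma 2.3.6]{BM18}).

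The formula for $\mathscr{B}^{-1}$ is a direct check. Composing $(1+\partial_x\widehat\beta)\,u(x+\widehat\beta)$ with $\mathscr{B}$ and using the chain rule $(1+\partial_x\beta)(x)\,\big(1+\partial_x\widehat\beta\big)(x+\beta(x))=1$ gives the identity. The continuity estimates for $\mathcal{B}^{\pm1}u$ come from the tame composition estimate $\|u(\cdot+\beta)\|_s\lesssim\|u\|_s+\|\beta\|_{s}\|u\|_{s_0+1}$, which I would prove by the Faà di Bruno formula, again in the $q,\gamma$-weighted norms. Multiplying by $1+\partial_x\beta$ and applying the product law costs one derivative on $\beta$, and this accounts for the $\|\beta\|_{s+1}$ appearing in the statement. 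For the difference estimates $(\mathcal{B}^{\pm1}-\mathrm{Id})u$, I would write $u(x+\beta)-u(x)=\beta\int_0^1(\partial_xu)(x+\tau\beta)\,{\rm d}\tau$ and apply the same bounds, which costs one derivative on $u$. The only delicate point in $(i)$, and the main obstacle overall, is the bookkeeping of $\lambda$-derivatives. Each $\partial_\lambda$ hitting $u(\lambda;\varphi,x+\beta)$ produces $\partial_\lambda\beta\,\partial_xu$, so one space derivative is lost per parameter derivative. This loss is exactly absorbed by the convention $\|\cdot\|_{s-|\alpha|}$ in the definition of the weighted norm, and checking that it is absorbed is the step I would work through carefully.

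Items $(ii)$ and $(iii)$ are changes of variables. For $(ii)$, substitute $y=x+\beta(\varphi,x)$, as already displayed in the statement. For $(iii)$, compute $\int(1+\partial_x\beta)u(x+\beta)v(x)\,{\rm d}x$, substitute $y=x+\beta$ so that $x=y+\widehat\beta(y)$, and obtain $\int u(y)\,v(y+\widehat\beta(y))\,{\rm d}y$. Hence $\mathscr{B}^\star v=\mathcal{B}^{-1}v$.

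For $(iv)$, I would subtract the two fixed-point equations:
\[
\Delta_{12}\widehat\beta=-\Delta_{12}\beta\big(\cdot+\widehat\beta_2\big)-\Big[\beta_1\big(\cdot+\widehat\beta_2\big)-\beta_1\big(\cdot+\widehat\beta_1\big)\Big].
\]
The bracket is $\Delta_{12}\widehat\beta\int_0^1\partial_x\beta_1(\cdot+\widehat\beta_1+\tau\Delta_{12}\widehat\beta)\,{\rm d}\tau$, whose multiplier has small norm $\|\partial_x\beta_1\|$. Absorbing it into the left-hand side via the tame estimates gives the claimed bound, with the $\|\beta_k\|_{s+1}$ term coming from that multiplier. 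For $(v)$, I would verify $\mathcal{B}\circ\mathscr{S}=\mathscr{S}\circ\mathcal{B}$ with $\varphi\mapsto-\varphi$ (using the oddness of $\beta$ in $(\varphi,x)$), and the commutation with $\mathscr{T}_y$ (using the traveling property of $\beta$) by direct substitution. The converses follow by testing on $u(x)=e^{\ii x}$ and comparing phases, since $\mathcal{B}e^{\ii x}=e^{\ii(x+\beta)}$ determines $\beta$. The factor $1+\partial_x\beta$ inherits the right parity, so the same conclusions hold for $\mathscr{B}$.
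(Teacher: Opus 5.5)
Your proposal is correct and is the standard argument behind the references the paper cites in place of a proof (\cite[Lemmas 6.1, 6.2]{HR21}, \cite[Lemma 3.30]{BFM21}). The one bookkeeping point is that the term $\|\beta\|_{s}\|u\|_{s_0+1}$ produced by your composition bound must be lowered to $\|\beta\|_{s}\|u\|_{s_0}$ to reach the stated form; this is possible because $s_0\geqslant\frac{d+1}{2}+q+2$ leaves enough room to control $\partial_x u$ in $L^\infty$, together with its weighted $\lambda$-derivatives, by $\|u\|_{s_0}^{q,\gamma,\mathcal{O}}$. Note also that your substitution in $(v)$(b) yields $\beta(\lambda;\varphi-\vec{\jmath}y,x)=\beta(\lambda;\varphi,x+y)$, without the minus sign printed in the statement. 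That sign is a typo: at $y=0$ it would force $\beta\equiv0$ (compare \eqref{traveling beta}).
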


We consider
\begin{equation}\label{diffeo tau}
	\mathscr{B}^{\tau}u(\lambda;\varphi,x)\triangleq\big(1+\tau\partial_{x}\beta(\lambda;\varphi,x)\big)u\big(\lambda;\varphi,x+\tau\beta(\lambda;\varphi,x)\big).
\end{equation}
Observe that $\mathscr{B}^{\tau}$ is the flow associated with $\partial_{x}\circ b=\textnormal{Op}(\ii b\xi+\partial_{x}b)$ with
$$b(\tau,\lambda;\varphi,x)\triangleq \frac{\beta(\lambda;\varphi,x)}{1+\tau\partial_{x}\beta(\lambda;\varphi,x)}\cdot$$
Namely, we have
$$\partial_{\tau}\big(\mathscr{B}^{\tau}u\big)=\partial_{x}\big(b\mathscr{B}^{\tau}u\big)\qquad\textnormal{and}\qquad\mathscr{B}^{0}=\textnormal{Id}.$$
The following result states the stability of the class of operator admitting an homogeneous expansion  under conjugation by a diffeomorphism of the torus. The proof is a small variation of \cite[Prop. 2.28]{BKM21}.In particular the point $(iv)$ can be obtained tracking the corresponding symmetries along the constructive proof in the given reference.

\begin{prop}\label{Egorov thm}\textbf{(Egorov Theorem for homogeneous operators)}
	Let $N,l_0\in\mathbb{N},$ $m\in\mathbb{Z}$ and $\mathfrak{p}=\mathfrak{p}(\lambda,\varphi,x)$ be a smooth function. Let us simply denote $\mathscr{B}\triangleq\mathscr{B}^{1}$ the time $1$ of the flow introduced in \eqref{diffeo tau}. There exist $C_0,\sigma_{N},\sigma_{N}(l_0)>0$ and $\delta\triangleq\delta(S,N,l_0,q)\in(0,1)$ such that, if
	\begin{equation}\label{small ego}
		\|\beta\|_{s_0+\sigma_{N}(l_0)}^{q,\gamma,\mathcal{O}}\leqslant\delta,\qquad\|\mathfrak{p}\|_{s_0+\sigma_{N}(l_0)}^{q,\gamma,\mathcal{O}}\leqslant C_0,
	\end{equation}
	then the conjugation of $\mathfrak{p}\partial_{x}^{m}$ by the diffeomorphism $\mathscr{B}$ is a pseudo-differential operator of order $m$ admitting an expansion in the form
	$$\mathscr{B}^{-1}\big(\mathfrak{p}\partial_{x}^{m}\big)\mathscr{B}=\sum_{k=0}^{N}\mathfrak{p}_{m-k}\partial_{x}^{m-k}+R_{N},$$
	with the following properties:
	\begin{enumerate}[label=(\roman*)]
		\item The principal symbol $\mathfrak{p}_{m}$ is
		$$\mathfrak{p}_{m}(\lambda;\varphi,x)=\Big[\big(1+\partial_{\eta}\widehat{\beta}(\lambda;\varphi,y)\big)^{m}\mathfrak{p}(\lambda;\varphi,y)\Big]|_{y=x+\beta(\lambda;\varphi,x)}$$
		and we have the estimates
		\begin{equation}\label{Egorov estim}
		    \forall \, s\geqslant s_0,\quad\forall \, k\in\llbracket1,N\rrbracket,\quad\|\mathfrak{p}_{m}-\mathfrak{p}\|_{s}^{q,\gamma,\mathcal{O}}+\|\mathfrak{p}_{m-k}\|_{s}^{q,\gamma,\mathcal{O}}\lesssim_{s,N}\|\beta\|_{s+\sigma_N}^{q,\gamma,\mathcal{O}}+\|\mathfrak{p}\|_{s+\sigma_N}^{q,\gamma,\mathcal{O}}\|\beta\|_{s_0+\sigma_N}^{q,\gamma,\mathcal{O}};
		\end{equation}
		\item The remainder term $R_N$ is a pseudo-differential operator in $OPS^{m-N-1}.$ Moreover, for any $l\in\mathbb{N}^d,$ with $|l|\leqslant l_0$ and for any $n_1,n_2\in\mathbb{R}$ with $n_1+n_2+l_0\leqslant N-q-m$, the operator $\langle D\rangle^{n_1}\partial_{\varphi}^{l}R_{N}\langle D\rangle^{n_2}$ is $\mathcal{D}^{q}$-tame and satisfies the estimate, for any $s\in[s_0,S]$
		\begin{equation}\label{Egorov Dq tame}
		    \mathfrak{M}_{\langle D\rangle^{n_1}\partial_{\varphi}^{l}R_{N}\langle D\rangle^{n_2}}(s)\lesssim_{s,q,d,N,l_0}\|\beta\|_{s+\sigma_{N}(l_0)}^{q,\gamma,\mathcal{O}}+\|\mathfrak{p}\|_{s+\sigma_N(l_0)}^{q,\gamma,\mathcal{O}}\|\beta\|_{s_0+\sigma_N(l_0)}^{q,\gamma,\mathcal{O}};
		\end{equation}
		\item Let $s_1\geqslant s_0.$ Consider $\beta_1,\beta_2,\mathfrak{p}^{[1]},\mathfrak{p}^{[2]}$ satisfying the additional smallness condition 
		$$\max_{k\in\{1,2\}}\|\beta_k\|_{s_1+\sigma_{N}(l_0)}^{q,\gamma,\mathcal{O}}\leqslant\delta,\qquad \max_{k\in\{1,2\}}\|\mathfrak{p}^{[k]}\|_{s_1+\sigma_{N}(l_0)}^{q,\gamma,\mathcal{O}}\leqslant C_0,$$
		then 
		\begin{equation}\label{Egorov estim12}
		    \forall \, k\in\llbracket0,N\rrbracket,\quad\|\Delta_{12}\mathfrak{p}_{m-k}\|_{s_1}\lesssim_{s_1,d,N}\|\Delta_{12}\mathfrak{p}\|_{s_1+\sigma_N}+\|\Delta_{12}\beta\|_{s_1+\sigma_N}
		\end{equation}
		and for any $l\in\mathbb{N}^d,$ with $|l|\leqslant l_0$ and for any $n_1,n_2\in\mathbb{R}$ with $n_1+n_2+l_0\leqslant N-q-m,$
		\begin{equation}\label{Egorov 12R}
		    \|\langle D\rangle^{n_1}\partial_{\varphi}^{l}\Delta_{12}R_{N}\langle D\rangle^{n_2}\|_{\mathcal{L}(H^{s_1})}\lesssim_{s_1,d,N,n_1,n_2,l_0}\|\Delta_{12}\mathfrak{p}\|_{s_1+\sigma_{N}(l_0)}+\|\Delta_{12}\beta\|_{s_1+\sigma_{N}(l_0)}.
		\end{equation}
	\end{enumerate}
\end{prop}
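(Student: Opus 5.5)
The plan is to follow the constructive proof of \cite[Prop. 2.28]{BKM21} and track the symmetries. We work with the flow $\mathscr{B}^{\tau}$ in \eqref{diffeo tau}, generated by $\partial_x\circ b$ with $b=\beta/(1+\tau\partial_x\beta)$. Set $P(\tau)\triangleq(\mathscr{B}^{\tau})^{-1}\big(\mathfrak{p}\partial_{x}^{m}\big)\mathscr{B}^{\tau}$. It solves the Heisenberg equation
$$\partial_\tau P(\tau)=-\big[\partial_x\circ b(\tau),P(\tau)\big],\qquad P(0)=\mathfrak{p}\partial_x^m .$$
We then seek $P(\tau)$ in the homogeneous-expansion ansatz $\sum_{k=0}^{N}\mathfrak{p}_{m-k}(\tau)\partial_x^{m-k}+R_N(\tau)$.

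Inserting the ansatz and expanding the commutator with Proposition \ref{lem compo commu hom exp}-$(ii)$ (note that $\partial_x\circ b=b\partial_x+(\partial_xb)$ is an expansion of degree $1$) gives a triangular system of transport equations in $\tau$ for the coefficients. At principal order we get $\partial_\tau\mathfrak{p}_m=b\,\partial_x\mathfrak{p}_m-m(\partial_xb)\mathfrak{p}_m$. We solve this by characteristics: the solution at $\tau=1$ is exactly the stated formula, namely the $m$-th power of the Jacobian factor times $\mathfrak{p}$ composed with $y=x+\beta$. The lower coefficients $\mathfrak{p}_{m-k}$ satisfy the same transport equation with source terms that are polynomial in $\partial_x^{j}b$ and in the already computed $\mathfrak{p}_{m-k'}$, $k'<k$. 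The Duhamel formula along the characteristics then gives the tame bounds \eqref{Egorov estim} from Lemma \ref{lem functions} and Proposition \ref{prop:Ego}-$(i)$, with loss $\sigma_N$. The bounds on $\Delta_{12}$ in \eqref{Egorov estim12} follow by differentiating the same formulas.

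For the remainder, $R_N(\tau)$ solves $\partial_\tau R_N=-[\partial_x b,R_N]+Q_N(\tau)$, where $Q_N\in OPS^{m-N}$ collects the truncation errors of the commutator expansions. We therefore write
$$R_N(1)=\int_0^1 \mathscr{B}^{1}(\mathscr{B}^{\tau})^{-1}\,Q_N(\tau)\,\mathscr{B}^{\tau}(\mathscr{B}^{1})^{-1}\,d\tau$$
(up to the exact ordering of the flows). Since $Q_N$ has order $m-N$ (in fact $m-N-1$ after one more step), we can absorb $\langle D\rangle^{n_1}$ and $\langle D\rangle^{n_2}$ and $l$ derivatives in $\varphi$ whenever $n_1+n_2+l_0\le N-q-m$. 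Each $\partial_\lambda$ or $\partial_\varphi$ falling on $\mathscr{B}^\tau$ costs one $x$-derivative, which is why the margin must include $q$ and $l_0$. The $\mathcal{D}^q$-tame estimate \eqref{Egorov Dq tame} then follows from Proposition \ref{properties OPS}-$(v)$, Proposition \ref{prop.tameconst} and Proposition \ref{prop:Ego}-$(i)$, and \eqref{Egorov 12R} is obtained in the same way. This bookkeeping of derivative losses is the main technical obstacle.

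For the symmetry statement, if $\beta$ is odd in $(\varphi,x)$ and momentum preserving, then $b$ has the same parity, so $\mathscr{B}^\tau$ is reversibility and momentum preserving. Hence $P(1)$ inherits the symmetry of $\mathfrak{p}\partial_x^m$, and Lemma \ref{lem:revmomHE} transfers it coefficient by coefficient to the $\mathfrak{p}_{m-k}$ and to $R_N$.
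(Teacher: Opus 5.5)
Your strategy is the same as the paper's, which only invokes the constructive proof of \cite[Prop. 2.28]{BKM21} and asks you to track the symmetries. However, the identity you build everything on is wrong, and a later sign slip means you actually compute the other conjugation. Since $\partial_\tau\mathscr{B}^\tau=(\partial_x\circ b(\tau))\mathscr{B}^\tau$ with a $\tau$-dependent generator, for $P(\tau)=(\mathscr{B}^\tau)^{-1}(\mathfrak{p}\partial_x^m)\mathscr{B}^\tau$ one gets
\[
\partial_\tau P=-\big[(\mathscr{B}^\tau)^{-1}(\partial_x\circ b(\tau))\mathscr{B}^\tau,P\big]=-\big[\partial_x\circ V_\tau,P\big],\qquad V_\tau\triangleq\beta\circ(\mathrm{Id}+\tau\beta)^{-1}
\]
(inverse in $x$). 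In general $V_\tau\neq b(\tau)$. The generator $\partial_x\circ b(\tau)$ is exact only for the opposite ordering, namely $\partial_\tau\big(\mathscr{B}^\tau A(\mathscr{B}^\tau)^{-1}\big)=\big[\partial_x\circ b(\tau),\mathscr{B}^\tau A(\mathscr{B}^\tau)^{-1}\big]$.

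The sign slip is this: $-[\partial_x\circ b,\mathfrak{p}_m\partial_x^m]$ has principal part $-\big(b\,\partial_x\mathfrak{p}_m-m(\partial_xb)\mathfrak{p}_m\big)\partial_x^m$, which is the opposite of the transport equation you wrote. The equation you wrote is the exact one for $\mathscr{B}^\tau(\mathfrak{p}\partial_x^m)(\mathscr{B}^\tau)^{-1}$. Its solution $(1+\tau\partial_x\beta)^{-m}\mathfrak{p}(x+\tau\beta)$ is indeed the formula displayed in $(i)$ at $\tau=1$, but it is the principal symbol of $\mathscr{B}(\mathfrak{p}\partial_x^m)\mathscr{B}^{-1}$.

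For the operator in the statement, the principal coefficient is $\big[(1+\partial_y\beta(y))^m\mathfrak{p}(y)\big]|_{y=x+\widehat\beta(x)}$, i.e. the displayed formula with $\beta$ and $\widehat\beta$ exchanged. Already for $m=1$, $\mathfrak{p}=1$, one has $\mathscr{B}^{-1}\partial_x\mathscr{B}=(1+\partial_x\beta)\partial_x+\partial_x^2\beta+O(\beta^2)$, while the displayed formula gives $(1+\partial_x\beta)^{-1}\partial_x\approx(1-\partial_x\beta)\partial_x$. So recovering ``exactly the stated formula'' is not evidence that your computation is right.

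The fix is the one used in \cite{BKM21}. By Proposition \ref{prop:Ego}-$(i)$, $\mathscr{B}^{-1}$ is the time-one map of the flow \eqref{diffeo tau} built on $\widehat\beta$. Call this flow $\Phi(\tau)$ and set $\widehat b\triangleq\widehat\beta/(1+\tau\partial_x\widehat\beta)$. Then $P(\tau)\triangleq\Phi(\tau)(\mathfrak{p}\partial_x^m)\Phi(\tau)^{-1}$ satisfies exactly $\partial_\tau P=[\partial_x\circ\widehat b(\tau),P]$, and $P(1)=\mathscr{B}^{-1}(\mathfrak{p}\partial_x^m)\mathscr{B}$. Equivalently, you can keep your $P$ and use the generator $\partial_x\circ V_\tau$.

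With this correction the rest of your argument goes through unchanged:
the triangular transport system, now with $\widehat b$ and the sign you wrote;
the Duhamel formula $R_N(1)=\Phi(1)\int_0^1\Phi(\tau)^{-1}Q_N(\tau)\Phi(\tau)\,{\rm d}\tau\,\Phi(1)^{-1}$ with $Q_N\in OPS^{m-N-1}$;
the derivative count behind $n_1+n_2+l_0\leqslant N-q-m$;
and the parity and momentum argument.

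Since $\|\widehat\beta\|_{s}^{q,\gamma,\mathcal{O}}\lesssim\|\beta\|_{s}^{q,\gamma,\mathcal{O}}$, and $\Delta_{12}\widehat\beta$ is controlled by Proposition \ref{prop:Ego}-$(iv)$, the estimates \eqref{Egorov estim}--\eqref{Egorov 12R} are unaffected. Only the explicit principal symbol in $(i)$ changes. This is harmless for how the proposition is used in Section \ref{sect.transport}, where only bounds on $\mathfrak{p}_m-\mathfrak{p}$, parities and momentum conservation of the coefficients enter.
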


\end{document}